\newcommand{\LB}[1]{\textbf{\fontsize{10pt}{12pt}\selectfont #1}}
\newcommand{\LBmorespace}[1]{\textbf{\fontsize{10pt}{12pt}\selectfont #1}\par\vspace*{\baselineskip}}
\newcommand{\enlarge}[1]{\scalebox{.9}{${#1}$}}
\renewenvironment{theindex}{\newpage
\addcontentsline{toc}{chapter}{List of Symbols}%
\pagestyle{plain}\let\item\@idxitem
\setlength{\columnsep}{30pt}
\begin{multicols}{3}[\centering{\Large\bfseries List of Symbols}]
\par\bigskip}{\end{multicols}}
\newcommand{\noqed}{\renewcommand{\qedsymbol}{{}}\vspace*{-18pt}}
\DeclareMathOperator{\nheadarrow}{{\!\!\!\xymatrix@C=12pt{\ar@{->}[r]&{}}}\hspace*{-4pt}}
\let\to=\nheadarrow
\let\rightarrow=\nheadarrow
\newcommand{\cW}{\mathcal{W}}
\newcommand{\cC}{\mathcal{C}}
\newcommand{\dotcC}{{\mathcal{C}}}
\newtheorem*{proposition1.2}{Proposition 1.2}
\newtheorem*{proposition1.4}{Proposition 1.4}
\newtheorem*{proposition2.2}{Proposition 2.2}
\newtheorem*{proposition2.6}{Proposition 2.6}
\newtheorem*{theoremA}{Theorem A}
\newtheorem*{theoremB}{Theorem B}
\newtheorem{Theorem}{Theorem}[chapter]
\newtheorem{ExtensionTheorem}[Theorem]{Extension Theorem}
\newtheorem{CompatibilityTheorem}[Theorem]{Compatibility Theorem}
\newtheorem{InductiveHypothesis}[Theorem]{Inductive Hypothesis}
\newtheorem{HRL}[Theorem]{Homotopy Rotation Lemma}
\newtheorem*{corollaryC}{Corollary C}
\newtheorem*{corollaryD}{Corollary D}
\newtheorem*{theorem3.7}{Theorem 3.7}
\newtheorem*{theorem3.9}{Theorem 3.9}
\newtheorem*{theorem5.4}{Theorem 5.4}
\newtheorem*{theorem5.7}{Theorem 5.7}
\newtheorem*{corollary6.4}{Corollary 6.4}
\newtheorem*{theorem*}{Theorem}
\newtheorem*{MainTheorem*}{Main Theorem}
\newtheorem{proposition}[Theorem]{Proposition}
\newtheorem{corollary}[Theorem]{Corollary}
\newtheorem{lemma}[Theorem]{Lemma}
\theoremstyle{definition}
\newtheorem*{definition5.6}{Definition 5.6}
\newtheorem{definition}[Theorem]{Definition}
\newtheorem*{remark*}{Remark}
\newtheorem*{conjectureE}{Conjecture E}
\theoremstyle{remark}
\newtheorem*{note*}{Note}
\numberwithin{section}{chapter}
\numberwithin{equation}{chapter}
\begin{document}

\frontmatter

\title{Abelian Properties of Anick Spaces}

\author{Brayton Gray}
\address{Department of Mathematics, Statistics and Computer Science,
         University of Illinois at Chicago,
         851 S.~Morgan Street,
         Chicago, IL, 60607-7045, USA} 
\email{brayton@uic.edu}

%\date{}

\subjclass[2010]{Primary:
55Q15, 55Q20, 55Q51.
Secondary: 55Q40, 55Q52, 55R99}

%    Recognition of the 2010 edition of the Mathematics Subject
%    Classification requires a version of amsbook.cls from July 2009
%    or later.  If "2010" is not recognized, please upgrade.

%%%%\keywords{}

%%%%%\dedicatory{Dedication text (use \\[2pt] for line break if necessary)}

\setcounter{tocdepth}{3} 
\tableofcontents

\begin{abstract}
Anick spaces are closely connected with
both EHP sequences and the study of torsion
exponents. In addition they refine the secondary
suspension and enter unstable periodicity. In
this work we describe their
$H$-space properties as well as universal properties.
Techniques include a new kind on Whitehead product defined for maps out of co-H spaces, calculations in an additive category that lies between the unstable category and the stable category, and a controlled version of the extension theorem of \cite{GT10}.
\end{abstract}

\maketitle

\mainmatter

\chapter{Introduction}\label{chap1}

\section{Statement of Results}\label{subsec1.1}

By an Anick space we mean a homotopy CW
complex $T_{2n-1}$\index{$T_{2n-1}$|LB} which occurs in a fibration sequence.
\begin{equation}\label{eq1.1a}
\xymatrix{
\Omega^2S^{2n+1}\ar@{->}[r]^{\pi_n}
&S^{2n-1}\ar@{->}[r] 
&T_{2n-1}\ar@{->}[r]
&\Omega S^{2n+1}
}
\end{equation}
where the composition
\begin{equation*}
\xymatrix{
\Omega^2S^{2n+1}\ar@{->}[r]^{\pi_n}
&S^{2n-1}\ar@{->}[r]^{E^2} 
&\Omega^2S^{2n+1}
}
\end{equation*}
is homotopic to the $p^r$th power map on $\Omega^2S^{2n+1}$.

We will say that a space is a homotopy-Abelian
 $H$-space
if it has a homotopy associative and homotopy
commutative $H$-space structure. 

Throughout this work we will assume that all spaces
are localized at $p\geqslant 5$ unless otherwise indicated.
\begin{theoremA}
There is a homotopy-Abelian Anick space for any
$n\geqslant 1$ and $r\geqslant 1$.
\end{theoremA}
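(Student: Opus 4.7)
The plan is to build a model of $T_{2n-1}$ carrying a homotopy-Abelian $H$-space structure from the outset rather than attempting to perturb a previously given structure. The three tools flagged in the abstract — a Whitehead product for maps out of co-H spaces, an additive category lying between the unstable and stable categories, and a controlled extension theorem refining the extension theorem of \cite{GT10} — are the natural ingredients, and I would use them as follows.

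First, I would construct a sequence of finite approximations $T^{(k)}$ to the Anick space, each assembled from co-H spaces (mod $p^r$ Moore spaces and their smash products) by cofibrations that encode the $p^r$-divisibility of $E^2 \circ \pi_n$ in the defining fibration sequence. Presenting $T_{2n-1}$ in this form is crucial because maps out of each $T^{(k)}$ can be analyzed cell by cell via the new Whitehead product, which by construction is bilinear on the co-H factors and therefore interacts well with a candidate multiplication.

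Second, on each $T^{(k)}$ I would define a candidate multiplication $\mu_k: T^{(k)}\times T^{(k)} \to T^{(k)}$ and measure the obstructions to associativity and commutativity in the intermediate additive category. In that category the obstructions reduce to sums of Whitehead-type bilinear expressions, and one can then make successive adjustments to $\mu_k$ to kill them, using that $p\geqslant 5$ to absorb the small-prime indeterminacy that typically blocks such arguments at $p=2,3$.

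Third, I would assemble the $\mu_k$ into a single multiplication on $T_{2n-1}$, regarded as $\mathrm{hocolim}\, T^{(k)}$, by invoking the controlled version of the extension theorem. The point of the "control" is that homotopy-associativity and homotopy-commutativity are structure (choices of homotopies), not merely properties, so one must extend $\mu_k$ along with its associating and commuting homotopies coherently across the telescope. Producing a bare multiplication on the limit is routine; producing one whose higher homotopies also extend is where the controlled extension theorem does the real work, and verifying its hypotheses in the inductive setup above is the step I expect to be the main obstacle.
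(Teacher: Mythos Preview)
Your proposal misidentifies where the work happens and, in doing so, misses the key reduction that makes the theorem tractable. The paper does \emph{not} construct a multiplication $\mu$ on $T$ directly and then kill associativity/commutativity obstructions in $[T\wedge T\wedge T,T]$ and $[T\wedge T,T]$. Instead, it recalls from \cite{GT10} that $T$ is a retract of a loop space $\Omega G$ via maps $g\colon T\to\Omega G$ and $h\colon\Omega G\to T$ with $hg\sim 1$, where $h$ arises as the fiber inclusion in a diagram governed by a retraction $\nu_\infty\colon E\to BW_n$. The entire argument is then reduced, via Theriault's criterion (Proposition~\ref{prop2.9}), to the single statement that the composition
\[
\Omega G\ast\Omega G\xrightarrow{\ \Gamma\ } E\xrightarrow{\ \nu_\infty\ } BW_n
\]
is null homotopic. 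This turns a problem about coherent higher homotopies on a multiplication into a problem about one map into one fixed target, and that is what makes everything else possible.

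All three tools you name are deployed on \emph{this} problem, not on a multiplication: the co-$H$ Whitehead products decompose $\Gamma$ into iterated products $G_k^{[i]}\to E_k$ (Theorem~\ref{theor3.22}); the additive ``congruence homotopy'' category is used to simplify these obstruction classes in $\pi_*(J_k;\mathbb Z/p^s)$, exploiting that $BW_n$ is an $H$-space so congruent classes have the same image; and the controlled extension theorem (Theorem~\ref{theor6.1}) is about extending $\nu_{k}$ over successive cells of an auxiliary space $F_k$ while annihilating prescribed classes $x_m(k),y_{m+1}(k)$, not about extending a multiplication together with its homotopies. Your step~3 in particular is not what the controlled extension theorem does, and there is no mechanism in the paper (or an obvious one elsewhere) for extending a multiplication with its associating and commuting homotopies across a telescope. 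Moreover, $T$ is not presented as a homotopy colimit of co-$H$ spaces built by cofibrations; it is the co-$H$ space $G$ (a retract of $\Sigma T$) that carries the cell filtration $G_k$, and the induction runs over $G_k$, $D_k$, $J_k$, $F_k$, never over skeleta of $T$ itself. Without the loop-space retraction and Theriault's criterion, your direct attack on $\mu$ would face obstruction groups you have no handle on.
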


We will write $P^{2n}(p^r)$\index{$P^{2n}(p^r)$|LB} for the $2n$ dimensional Moore space
$S^{2n-1}\cup_{p^r} e^{2n}$.
From (\ref{eq1.1a}) we see that $T_{2n-1}$ is $2n-2$ connected
and there is a $2n$-equivalence
\[
i\colon P^{2n}(p^r)\to T_{2n-1}
\]

For any homotopy-Abelian $H$-space $Z$, let $[T_{2n-1},Z]_H$\index{[\ ,\ ]@$[\ ,\ ]_H$|LB}
be the Abelian group of homotopy classes of based $H$-maps
from $T_{2n-1}$ to~$Z$. Let
\[
p_k(Z)\index{$p_k(Z)$|LB}=p^{r+k-1}\pi_{2np^k-1}(Z;Z/p^{r+k}).
\]
\begin{theoremB}
$[T_{2n-1},Z]_H\cong \underset{\leftarrow}{\lim} G_k(Z)$\index{$G_k(Z)$|LB} where $G_0(Z)=[P^{2n},Z]
=\pi_{2n}(Z;Z/p^r)$ and there are exact sequences:
\[
\xymatrix{
0\ar@{->}[r]&p_k(\Omega
Z)\ar@{->}[r]^{\enlarge{e}}&G_k(Z)\ar@{->}[r]^{\enlarge{r}}&G_{k-1}(Z)\ar@{->}[r]^{\enlarge{\beta}}&p_k(Z).
}
\]
In particular, if $p^r\pi_*(Z)=0$, there is an isomorphism
\[
[P^{2n}(p^r),Z]\approx[T_{2n-1}, Z]_H
\]
given by the restriction
\[
P^{2n}(p^r)\to T_{2n-1}.
\]
\end{theoremB}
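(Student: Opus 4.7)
The plan is to present $T_{2n-1}$ as a homotopy colimit along a telescoping filtration
\[
P^{2n}(p^r) = T^{(0)} \subset T^{(1)} \subset T^{(2)} \subset \cdots, \qquad T_{2n-1} \simeq \mathrm{hocolim}\,T^{(k)},
\]
in which each step is a cofibration
\[
P^{2np^k-1}(p^{r+k}) \xrightarrow{\;\phi_k\;} T^{(k-1)} \to T^{(k)}
\]
whose attaching map $\phi_k$ is, in the additive category from the abstract, a $p^{r+k-1}$-multiple. This is the Anick-style filtration produced by the preparatory constructions in the paper. I would set $G_k(Z)$ to be the set of $H$-homotopy classes of partial $H$-maps $T^{(k)} \to Z$, so that $G_0(Z) = [P^{2n}(p^r),Z]$ (there is no $H$-constraint at level zero since the source is a co-$H$ space). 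A Milnor-tower argument then gives the identification $[T_{2n-1},Z]_H \cong \underset{\leftarrow}{\lim}\,G_k(Z)$, with the $\lim^{1}$ term vanishing because each $G_k(Z)$ has bounded $p$-exponent.

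The exact sequence at stage $k$ is extracted from the cofibration above. Restriction defines $r \colon G_k(Z) \to G_{k-1}(Z)$. Composition with $\phi_k$ defines a map $\beta \colon G_{k-1}(Z) \to \pi_{2np^k-1}(Z; Z/p^{r+k})$, and the $p^{r+k-1}$-divisibility of $\phi_k$ forces the image of $\beta$ to lie in the subgroup $p_k(Z)$. When $\beta(g) = 0$, the set of $H$-extensions of $g$ across $T^{(k)}$ is a torsor which the controlled version of the extension theorem of \cite{GT10} identifies with $p_k(\Omega Z)\subset \pi_{2np^k}(Z;Z/p^{r+k})$, producing $e$.

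The main obstacle is the twin identification: that the obstruction to an $H$-extension lies in the subgroup $p_k(Z)$ rather than in the ambient group $\pi_{2np^k-1}(Z;Z/p^{r+k})$, and dually that the torsor of $H$-extensions is parametrized by $p_k(\Omega Z)$ rather than by the full $\pi_{2np^k}(Z;Z/p^{r+k})$. Both reductions by the factor $p^{r+k-1}$ encode the $H$-map constraint, and establishing them requires both the controlled extension theorem and the new co-$H$ Whitehead product calculus developed earlier, which track how the $H$-structure forces the attaching and obstruction classes to sit in the indicated subgroup.

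For the final assertion, when $p^r\pi_*(Z)=0$ the $p^r$-power map on the homotopy-Abelian CW $H$-space $Z$ is nullhomotopic, so the $p^{r+k-1}$-power map $Z\to Z$ is null for every $k\ge 1$. Precomposition shows that $p^{r+k-1}$ annihilates every class in $[P^{2np^k-1}(p^{r+k}),Z] = \pi_{2np^k-1}(Z;Z/p^{r+k})$, and hence $p_k(Z)=0$; the same argument for $\Omega Z$ gives $p_k(\Omega Z)=0$. The exact sequence then degenerates to $G_k(Z)\cong G_{k-1}(Z)$ at every stage, so $G_k(Z)\cong G_0(Z) = [P^{2n}(p^r),Z]$ for all $k$, and the inverse limit yields the claimed isomorphism, realized by restriction along $i\colon P^{2n}(p^r)\to T_{2n-1}$.
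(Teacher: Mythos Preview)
Your proposal has a genuine gap at its foundation: the filtration you posit does not exist. The mod~$p$ homology of $T$ is $Z/p[v]\otimes\Lambda(u)$ with $|v|=2n$, $|u|=2n-1$ (see~\ref{theor2.14}(g)), so $T$ has cells in every dimension $2ni$ and $2ni-1$, not just near $2np^k$. There is no cellular filtration of $T$ with successive cofibers single Moore spaces $P^{2np^k-1}(p^{r+k})$. Moreover, even granting some filtration $T^{(k)}$, the pieces are not $H$-spaces, so ``$H$-homotopy classes of partial $H$-maps $T^{(k)}\to Z$'' is not well-defined; you cannot run an ordinary cofiber-sequence obstruction theory in the category of $H$-maps this way.

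The paper's route avoids both problems by \emph{not} filtering $T$ at all. It defines $G_k(Z)$ as the group of \emph{proper} $H$-maps $\Omega G_k\to Z$, where $G_k$ is the filtration of the co-$H$ space $G$ (so $\Omega G_k$ really is an $H$-space), and ``proper'' means the map kills the classes $a(i)$ and $c(i)$ for $i\le k$. The identification $[T,Z]_H\cong G(Z)$ (Proposition~\ref{prop7.5}) then hinges on a structural decomposition $R\simeq A\vee C\vee\Sigma P$ of the fiber $R$ of $h\colon\Omega G\to T$ (Proposition~\ref{prop7.7new}), where $\Sigma P$ factors through the universal Whitehead product map $\Gamma$. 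This decomposition is the real work: it requires constructing explicit classes $f(m)\colon P^{2mn}(p^{r+\nu_p(m)})\to J_k$ via generalized Whitehead products and $H$-space based Whitehead products (Proposition~\ref{prop7.8}). The divisibility of $\beta$ by $p^{r+k-1}$ (your ``main obstacle'') is Theorem~\ref{theor7.20}, and its proof again uses this decomposition of $R$ to show that $\beta_{k+1}\rho$ and $\beta_{k+1}\delta_1$ differ from Whitehead-product classes only by terms killed by any proper $H$-map. None of this is captured by a cofiber-sequence argument on a skeletal filtration of $T$.
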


Several examples are given in Chapter~\ref{chap7}. In particular
\begin{corollaryC}
Given two homotopy-Abelian Anick spaces for the same values of $n$, $r$ and $p>3$, there is an $H$ map between them which is a homotopy equivalence.
\end{corollaryC}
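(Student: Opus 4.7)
Write $T = T_{2n-1}$ and $T' = T'_{2n-1}$ for the two homotopy-Abelian Anick spaces, with $2n$-equivalences $i\colon P^{2n}(p^r) \to T$ and $i'\colon P^{2n}(p^r) \to T'$. The plan is to construct an $H$-map $f\colon T \to T'$ by transporting the identity map of $T'$ across Theorem~B, and then to verify $f$ is a homotopy equivalence via mod~$p$ homology.

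Since the inverse system $\varprojlim G_k(T')$ appearing on the right of the isomorphism in Theorem~B depends only on the target, applying Theorem~B with target $T'$ to both of the source $H$-spaces $T$ and $T'$ yields a composite bijection
\[
[T',T']_H \,\cong\, \varprojlim G_k(T') \,\cong\, [T,T']_H,
\]
compatible at the $G_0$ level with restriction along $i'$ and $i$, respectively. Transporting $[\mathrm{id}_{T'}]$ across this bijection, the first isomorphism sends it to a system with leading term $[\mathrm{id}_{T'} \circ i'] = [i']$, and the second isomorphism sends that system back to an $H$-map $f\colon T \to T'$ satisfying $f \circ i \simeq i'$.

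It then remains to verify that $f$ is a homotopy equivalence. The standard consequence of the Anick fibration~(\ref{eq1.1a}) identifies $H_*(T; \mathbb{F}_p) \cong \Lambda(u_{2n-1}) \otimes \mathbb{F}_p[v_{2n}]$ as a Hopf algebra under the homotopy-Abelian $H$-structure, and likewise for $T'$, with $u,v$ (and $u',v'$) being the images of the mod~$p$ generators of $H_*(P^{2n}(p^r); \mathbb{F}_p)$ under $i$ (respectively~$i'$). Being an $H$-map, $f_*$ is a Hopf-algebra homomorphism, and $f \circ i \simeq i'$ forces $f_*(u)=u'$ and $f_*(v)=v'$. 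Since these are the algebra generators of $H_*(T';\mathbb{F}_p)$, $f_*$ is an algebra isomorphism, and Whitehead's theorem (both spaces are simply connected $p$-local CW complexes) yields that $f$ is a homotopy equivalence. The main obstacle is precisely this last step: the existence of $f$ is essentially formal from Theorem~B, but upgrading it to an equivalence requires the Hopf-algebra description of $H_*(T;\mathbb{F}_p)$ generated in dimensions $2n-1$ and $2n$, together with the elementary observation that a map of such polynomial Hopf algebras sending generators to generators is an isomorphism; the hypothesis $p \ge 5$ is used here via the homotopy-commutative, homotopy-associative structure assumed throughout.
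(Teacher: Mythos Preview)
Your argument has a genuine gap in the first step. Theorem~B, as stated and proved in the paper, applies to the \emph{specific} homotopy-Abelian Anick space $T$ constructed in Theorem~A: the isomorphism $[T,Z]_H\cong\varprojlim G_k(Z)$ (Theorem~\ref{theor7.2}, proved via Propositions~\ref{prop7.4} and~\ref{prop7.5}) uses the retraction $h\colon\Omega G\to T$, its section $g$, and the splitting $R\simeq A\vee C\vee\Sigma P$ of Proposition~\ref{prop7.7new}, all of which are tied to the particular $H$-structure built in Chapters~\ref{chap2}--\ref{chap6}. The groups $G_k(Z)$ themselves are defined using the auxiliary spaces $G_k$ and the classes $a(i),c(i)$ from that construction. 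For an \emph{arbitrary} homotopy-Abelian Anick space $T'$ you have no such presentation, so the isomorphism $[T',T']_H\cong\varprojlim G_k(T')$ that you invoke is exactly what is not yet known---indeed, knowing it would already give $T'$ the universal property and make Corollary~C immediate. Your transport of $\mathrm{id}_{T'}$ is therefore circular.

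The paper avoids this by using Theorem~B only with the constructed $T$ as source. One must then verify that the restriction $[T,T']_H\to[P^{2n}(p^r),T']$ is a bijection, and for this the exact sequence of Theorem~\ref{theor7.3} forces one to check torsion conditions on $\pi_*(T')$: specifically $p^{r+1}\pi_*(T')=0$ and $p^r\pi_i(T')=0$ for $2np-2\le i\le 2np$. These are supplied by the fibration $W_n\to T'\to\Omega S^{2n+1}\{p^r\}$ (available for any Anick space) together with the exponent results of Neisendorfer and Cohen--Moore--Neisendorfer; see Corollary~\ref{cor7.28}. This is the substantive input your shortcut tries to bypass. Your final paragraph---showing the resulting $H$-map is an equivalence via the Hopf-algebra structure on $H_*(T;\mathbb{F}_p)$---is correct and is essentially what the paper leaves implicit (equivalently one can cite atomicity, Theorem~\ref{theor2.14}(b)).
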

\begin{corollaryD}
In any homotopy-Abelian $H$-space structure on an
Anick space, the identity map has order $p^r$.
\end{corollaryD}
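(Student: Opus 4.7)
The plan is to apply Theorem B with $Z = T_{2n-1}$, extracting the order of $\mathrm{id}_{T_{2n-1}}$ from its image under the restriction map
\[
[T_{2n-1},T_{2n-1}]_H \longrightarrow G_0(T_{2n-1}) = \pi_{2n}(T_{2n-1};\mathbb{Z}/p^r),
\]
which sends $\mathrm{id}_{T_{2n-1}}$ to the inclusion $i\colon P^{2n}(p^r)\to T_{2n-1}$. The order of the identity will be controlled on both sides by the order of $i$ in the target.

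For the lower bound, I would compute $\pi_{2n}(T_{2n-1};\mathbb{Z}/p^r)$ directly from the Anick fibration (\ref{eq1.1a}). Extracting $S^{2n-1}\to T_{2n-1}\to\Omega S^{2n+1}$, the connecting map $\pi_{2n}(\Omega S^{2n+1})\cong\mathbb{Z}\to\pi_{2n-1}(S^{2n-1})\cong\mathbb{Z}$ is multiplication by $p^r$ (by the defining relation $E^2\circ\pi_n\simeq p^r$), so $\pi_{2n}(T_{2n-1}) = 0$ and $\pi_{2n-1}(T_{2n-1}) = \mathbb{Z}/p^r$. The universal coefficient sequence then gives $\pi_{2n}(T_{2n-1};\mathbb{Z}/p^r)\cong\mathbb{Z}/p^r$, generated by $i$. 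Since the restriction map is a group homomorphism sending $\mathrm{id}_{T_{2n-1}}$ to the order $p^r$ element $i$, the identity has order at least $p^r$ in $[T_{2n-1},T_{2n-1}]_H$.

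For the matching upper bound I would invoke the ``in particular'' clause of Theorem B: under the hypothesis $p^r\pi_*(T_{2n-1}) = 0$, the restriction above is an isomorphism $[P^{2n}(p^r),T_{2n-1}]\approx[T_{2n-1},T_{2n-1}]_H$. For $p\geq 5$ the identity of $P^{2n}(p^r)$ has order $p^r$ (Neisendorfer), so
\[
p^r\cdot i = i\circ (p^r\cdot\mathrm{id}_{P^{2n}(p^r)}) = 0;
\]
transporting across the isomorphism yields $p^r\cdot\mathrm{id}_{T_{2n-1}} = 0$ in $[T_{2n-1},T_{2n-1}]_H$.

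The main obstacle is verifying the exponent hypothesis $p^r\pi_*(T_{2n-1}) = 0$. Morally this is intrinsic to the Anick construction and should follow either from prior exponent results on $T_{2n-1}$ or by combining the Cohen--Moore--Neisendorfer exponent theorem for $S^{2n+1}$ with (\ref{eq1.1a}). Should such an appeal be unavailable in the local context, a self-contained alternative is to work directly in $\underset{\leftarrow}{\lim} G_k(T_{2n-1})$ from Theorem B: let $\phi_k$ denote the image of $\mathrm{id}_{T_{2n-1}}$ in $G_k(T_{2n-1})$, start from $p^r\phi_0 = 0$, and propagate inductively through the exact sequences by placing $p^r\phi_k$ in $p_k(\Omega T_{2n-1})$ and then vanishing it by a $p^r$-divisibility analysis of $\pi_{2np^k-1}(\Omega T_{2n-1};\mathbb{Z}/p^{r+k})$.
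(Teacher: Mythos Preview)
Your framework is exactly the paper's: apply Theorem~B (in the form of Proposition~\ref{prop7.27}) with $Z=T$ to identify $[T,T]_H$ with $[P^{2n}(p^r),T]\cong\mathbb{Z}/p^r$, so that the identity map corresponds to a generator and has order exactly~$p^r$. The lower bound computation via the Anick fibration is fine.

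The gap is in the upper bound. You propose to invoke the ``in particular'' clause of Theorem~B, which requires $p^r\pi_*(T)=0$. But this exponent statement is not available as input: in the paper it is obtained \emph{as a consequence} of Corollary~D (see the last line of the proof of~\ref{cor7.28}). Your suggestion that it might follow from the fibration~(\ref{eq1.1a}) together with the Cohen--Moore--Neisendorfer exponent for spheres does not work, since that exponent is~$p^n$, not~$p^r$. Your alternative inductive scheme through the $G_k$ would still need exponent information about $\pi_*(T)$ to control the groups $p_k(\Omega T)$, so it does not escape the circularity.

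The paper's resolution is to use the \emph{other} fibration from~\cite{GT10},
\[
W_n \longrightarrow T_{2n-1} \stackrel{E}{\longrightarrow} \Omega S^{2n+1}\{p^r\} \stackrel{H}{\longrightarrow} BW_n,
\]
together with two known exponent results: $p^r\pi_*(S^{2n+1}\{p^r\})=0$ (Neisendorfer~\cite{Nei83}) and $p\,\pi_*(W_n)=0$ (\cite{CMN79b}). These give only $p^{r+1}\pi_*(T)=0$, which is weaker than what you assumed but already forces $p_k(T)=p_k(\Omega T)=0$ for all $k\geqslant 2$; hence $[T,T]_H\cong G_1(T)$. For the remaining step $G_1(T)\cong G_0(T)$ one needs $p^r\pi_i(T)=0$ only in the narrow range $2np-2\leqslant i\leqslant 2np$, and this holds because $\pi_i(W_n)=0$ in that range (the bottom cell of $W_n$ is in dimension $2np-3$ and the next one is far above). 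This two-step reduction is the missing idea.
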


Let $T_{2n}\index{$T_{2n}$|LB}=S^{2n+1}\{p^r\}$. Then analogous results\footnote{In this case no torsion requirements are needed for
Theorem~B.} to theorems~A and ~B are well known
(\cite{Nei83}, \cite{Gra93a}) when $p>3$, and in particular, any
map 
\[
\alpha\colon P^{n+1}(p^r)\to P^{m+1}(p^r)
\]
corresponds to a
unique $H$-map $\widehat{\alpha}$ such that the diagram
\[
\xymatrix{
P^{n+1}(p^r)\ar@{->}[r]^{\alpha}\ar@{->}[d]&P^{m+1}(p^r)\ar@{->}[d]\\
T_n\ar@{->}[r]^{\widehat{\alpha}}&T_m
}
\]
commutes up to homotopy, for any $n$ and $m$.
This corollary was the object of the conjectures in~\cite{Gra93a}.

In developing these results, several new techniques
of geometric homotopy theory are introduced. These may
be of some use in other problems. A summary of some of
these techniques can be found in section~\ref{subsec1.4}.

In an appendix, we show that these results do not
generally hold if $p=3$, and we treat the
special case when $n=1$.

The author would like to thank
Joseph Neisendorfer for many
helpful conversations during
this work.

\section{History}\label{subsec1.2}

A map $\pi_n\colon \Omega^2S^{2n+1}\to S^{2n-1}$ with the property that
the composition
\[
\xymatrix{
\Omega^2S^{2n+1}\ar@{->}[r]^{\pi_n}&S^{2n-1}\ar@{->}[r]^{E^2}&\Omega^2S^{2n+1}
}
\]
is homotopic to the $p^r$th power map  was first
discovered when $p\geqslant 3$ by Cohen, Moore and
Neisendorfer (\cite{CMN79b}) and played a
crucial role in determining the maximal
exponent for the torsion in the homotopy groups
of spheres.

In \cite{CMN79c}, the authors raised the question
of whether a fibration such as \ref{eq1.1a} could exist.
The feasibility of constructing a
secondary EHP sequence refining the
secondary suspension (\cite{Ma75}, \cite{Co83})
together with a theory of compositions (\cite{Tod56})
was studied in \cite{Gra93a}, \cite{Gra93b}. This led the author
to conjecture the existence of Anick spaces
with theorems~A and~B
and corollaries~C and~D.

At about the same time, David Anick was studying the
decomposition of the loop space on a finite complex (\cite{Ani92}).
His intention was to find a list of indecomposable spaces
which, away from a few small primes, could be used for
decomposition. This led to the construction of a sequence of
spaces for $p\geqslant 5$. The limit of this sequence is the space
sought after in \cite{Gra93a}. This work of Anick was
published in a 370-page book (\cite{Ani93}). In \cite{AG95},
the authors showed that the Anick space so
constructed admitted an $H$-space structure when $p\geqslant 5$.
They also proved a weaker version of Theorem~B.
They showed that if $p_k(Z)=0$ for all $k$, an extension to an Anick space existed,
but there was no indication
that the extension would be an $H$-map or that it
would be unique. At that time it was thought that
the torsion condition was a peculiarity
of the approach and,  it was
conjectured that, as in the case of $T_{2n}=S^{2n+1}\{p^r\}$,
this requirement was
unnecessary.

In \cite{T01}, the author asserted theorem A and corollary C
and that for each homotopy Abelian $H$-space $Z$,
there is an equivalence $[P^{2n}(p^r),Z]\simeq[T_{2n-1},Z]_H$.
This, however, is not consistent with the results in~\cite{Gra12},
where counter examples are provided with $Z$ is as Eilenberg--MacLane space. The assertions in~\cite{T01} depend on the author's Theorem~2.1 which is quoted as ``to appear in Topology'' in the author's
bibliography. This, however, did not appear. Many of
the results in the author's section~5 are inconsistent
with results we obtain here. The main
result of the author's section~4 is valid and we give a
much simplified proof of it here~(2.9).

In \cite{GT10} a much simpler construction of
the Anick spaces was obtained which worked for all $p\geqslant 3$. This
result replicated the results of \cite{Ani93} and \cite{AG95}
and extended them to the case $p=3$. Furthermore they showed that the homotopy type of an Anick
space that supports an $H$-space structure is uniquely
characterized by $n$, $p$ and $r$.

In \cite{Gra12}, a proof was given 
that if $p_k(\Omega Z)=0$
for all $k$, there
is at most one
extension of a
map $\alpha\colon P^{2n}\to Z$
to an $H$-map
$\widehat{\alpha}\colon T\to Z$ and examples were presented
to show that this torsion condition is necessary. The proof
we give here is entirely different.

In \cite{GT10}, the authors constructed the EHP
sequences conjectured in \cite{Gra93a}.
\begin{align*}
\xymatrix{
&T_{2n-1}\ar@{->}[r]^{E}&\Omega T_{2n}\ar@{->}[r]^H &BW_n\\
&T_{2n}\ar@{->}[r]^-<<<<{E}&\Omega T_{2n+1}\ar@{->}[r]^{H}&BW_{n+1}
}
\end{align*}
(where $T_{2n}=S^{2n+1}\{p^r\}$), and $BW_n$\index{$BW_n$|LB} lies in a fibration sequence:
\begin{equation*}
\xymatrix{
S^{2n-1}\ar@{->}[r]^{E^2}&\Omega^2S^{2n+1}\ar@{->}[r]^{\nu}&BW_n. 
}
\end{equation*}\index{nu@$\nu$|LB}
More details appear in the next section.
The only remaining unsettled conjecture
in \cite{Gra93a} for the Anick spaces is the following:
\begin{conjectureE}
There is a homotopy equivalence
\[
BW_n\simeq \Omega T_{2np-1}(p)
\]
where $T_{2np-1}(p)$ is the Anick space with $r=1$.
\end{conjectureE}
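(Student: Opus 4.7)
The plan is to construct an explicit map $\psi\colon BW_n\to\Omega T_{2np-1}(p)$ via the $p$-th James--Hopf invariant and then verify it is a homotopy equivalence by a mod $p$ homology comparison. First, take the odd-primary James--Hopf map $H\colon\Omega S^{2n+1}\to\Omega S^{2np+1}$ and loop it once to obtain $\Omega H\colon\Omega^2 S^{2n+1}\to\Omega^2 S^{2np+1}$. The target is $(2np-2)$-connected while the source of $E^2\colon S^{2n-1}\to\Omega^2 S^{2n+1}$ has dimension $2n-1<2np-1$, so the composite $\Omega H\circ E^2$ is null-homotopic. Hence $\Omega H$ descends along the principal fibration $S^{2n-1}\to\Omega^2 S^{2n+1}\to BW_n$ to a map $\phi\colon BW_n\to\Omega^2 S^{2np+1}$.

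Next, I would show that $\pi_{np}\circ\phi\colon BW_n\to S^{2np-1}$ is null-homotopic, where $\pi_{np}$ is the Anick projection of (\ref{eq1.1a}) for $r=1$. One approach uses the defining identity $E^2\circ\pi_{np}\simeq p$ together with the explicit formula for the James--Hopf invariant on the bottom Moore cells; another argues that any map $BW_n\to S^{2np-1}$ is detected by its restriction to the bottom Moore cell $P^{2np-1}(p)\hookrightarrow BW_n$, on which $\pi_{np}\circ\phi$ can be computed directly to vanish. Granted this, $\phi$ lifts through the fibration $\Omega T_{2np-1}(p)\to\Omega^2 S^{2np+1}\to S^{2np-1}$ obtained by looping (\ref{eq1.1a}), yielding the desired $\psi$.

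To verify that $\psi$ is an equivalence, both spaces are $(2np-3)$-connected of finite mod $p$ type, so it suffices to show $\psi_*$ is an isomorphism on $H_*(-;\F_p)$. Serre spectral sequence calculations for the $BW_n$ fibration and for the looped Anick fibration identify $H_*(BW_n;\F_p)$ and $H_*(\Omega T_{2np-1}(p);\F_p)$ as free graded-commutative algebras on generators inherited from $H_*(\Omega^2 S^{2n+1};\F_p)$ and $H_*(\Omega^2 S^{2np+1};\F_p)$ respectively, with matching Poincar\'e series; the action of $\psi_*$ on generators can then be read off from $(\Omega H)_*$.

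The main obstacle, and the reason Conjecture~E has remained unsettled for two decades, is not the formal construction of $\psi$ but the secondary Hopf-invariant calculations needed to verify $\pi_{np}\circ\phi\simeq *$ throughout the EHP tower of \cite{GT10} and to match generators under $\psi_*$ beyond the lowest few. An attractive alternative, in the spirit of the present paper, is to bypass the Hopf-invariant construction by invoking the universal property of Theorem~B for $T_{2np-1}(p)$ mapping into a suitable delooping of $BW_n$, using a controlled refinement of the extension theorem of \cite{GT10} of the kind developed here to exhibit $\psi^{-1}$ directly from a Moore-space representative.
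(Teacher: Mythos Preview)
The paper does not prove this statement. It is explicitly labelled \emph{Conjecture~E} and is introduced in the History section as ``the only remaining unsettled conjecture in \cite{Gra93a} for the Anick spaces.'' There is no proof in the paper to compare against; the authors state it as an open problem and move on.

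Your proposal is therefore not a proof to be checked against the paper's argument, but a sketch of a possible attack on an open problem, and you essentially acknowledge as much in your final paragraph. The two substantive steps you flag as incomplete are exactly the places where the argument does not go through as written. First, the claim that $\pi_{np}\circ\phi\simeq *$ is not established: the suggestion that ``any map $BW_n\to S^{2np-1}$ is detected by its restriction to the bottom Moore cell'' is false in general (there is no reason the cofibre of that inclusion should map trivially to $S^{2np-1}$), and the alternative via $E^2\circ\pi_{np}\simeq p$ does not by itself force $\pi_{np}\circ\phi$ to vanish. Second, the homology comparison is asserted rather than carried out: identifying $H_*(BW_n;\F_p)$ and $H_*(\Omega T_{2np-1}(p);\F_p)$ as abstractly isomorphic algebras is not the issue; showing that $\psi_*$ realises that isomorphism requires control of $(\Omega H)_*$ on the Kudo--Araki generators of $H_*(\Omega^2 S^{2n+1};\F_p)$ that survive to $BW_n$, and this is precisely the unstable Hopf-invariant information that has resisted computation. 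Your closing suggestion to use Theorem~B is also not available in the direction you need: Theorem~B produces $H$-maps \emph{out of} $T_{2np-1}(p)$, and $BW_n$ is not known to be a loop space, so there is no delooping to map into.
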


In \cite{The11}, Theriault constructed $T_{2n-1}(2^r)$ for
$r\geqslant 3$, but there is no $H$-space structure in this case. 

\section{Methods and Modifications}\label{subsec1.3}

Throughout this work we will fix $n$ and abbreviate $T_{2n-1}$
as~$T$\index{$T$|LB} if
this will not lead to confusion. The construction in~\cite{GT10} begins with a fibration
sequence:
\begin{equation}\label{eq1.2new}
\xymatrix{
W_n\ar@{->}[r]&S^{2n-1}\ar@{->}[r]^{\enlarge{E^2}}& \Omega^2S^{2n+1}\ar@{->}[r]^{\enlarge{\nu}}&BW_n
}
\end{equation}
where $E^2$ is the double suspension map (\cite{Gra88}). The
authors construct a factorization
of the map~$\nu$:
\[
\xymatrix{
\Omega^2S^{2n+1}\ar@{->}[r]^->>>>{\enlarge{\Omega\partial}}& \Omega
S^{2n+1}\{p^r\}\ar@{->}[r]^-<<<{\enlarge{H}}&BW_n
}
\]
where $\partial$ occurs in the fibration sequence defining $S^{2n+1}\{p^r\}$,
the fiber of the degree $p^r$ map on $S^{2n+1}$. For any choice of~$H$,
there is a homotopy commutative diagram:
\begin{equation}\label{eq1.3new}
\begin{split}
\xymatrix{\Omega^2S^{2n+1}\ar@{=}[r]\ar@{->}[d]_{\pi_n}&\Omega^2S^{2n+1}\ar@{->}[d]_{p^r}\\
S^{2n+1}\ar@{->}[r]\ar@{->}[d]&\Omega^2S^{2n+1}\ar@{->}[r]^{\nu}\ar@{->}[d]_{\Omega \partial}&BW_n\ar@{=}[d]\\
T\ar@{->}[r]^->>>>>>E\ar@{->}[d]&\Omega S^{2n+1}\{p^r\}\ar@{->}[r]^->>>H\ar@{->}[d]&BW_n\\
\Omega S^{2n+1}\ar@{=}[r]&\Omega S^{2n+1}
}
\end{split}
\end{equation}
and consequently for each choice of~$H$, the fiber of~$H$ is an
Anick space by~(\ref{eq1.1a}).

In \cite{GT10}, the authors proceed to show that for
any such choice, the Anick space admits an $H$-space
structure such that the fibration~\ref{eq1.1a} is an $H$-fibration.
Furthermore, they prove
\begin{proposition1.4}[{\cite[4.9]{GT10}}]
Fix $n$, $r$ and $p\geqslant 3$. Then any two Anick spaces which admit
an $H$-space structure are homotopy equivalent.
\end{proposition1.4}

The $H$-space structure is constructed in~\cite{GT10}
as follows. The authors construct a
splitting:\addtocounter{equation}{1}
\begin{equation}\label{eq1.5new}
\Sigma T\simeq G\vee W
\end{equation}
where $G$ is atomic and $W$ is a $4n-1$ connected wedge of
Moore spaces. A map $\varphi\index{varphi@$\varphi$|LB}\colon G\to S^{2n+1}\{p^r\}$ is constructed
from (\ref{eq1.5new}) and the adjoint of the map~$E$ in (\ref{eq1.3new})
\[
\xymatrix{
G\ar@{->}[r]&\Sigma T\ar@{->}[r]^->>>>{\enlarge{\widetilde{E}}}&S^{2n+1}\{p^r\}
}
\]
and a space $E$ is constructed as the fiber of~$\varphi$:
\[
\xymatrix{
\Omega S^{2n+1}\{p^r\}\ar@{->}[r]^-<<<<{\partial}&
E\ar@{->}[r]&G\ar@{->}[r]^->>>>{\enlarge{\varphi}}&S^{2n+1}\{p^r\}.
}
\]\index{$G$|LB}

The $H$-space structure arises from a factorization
 of~$H$ through the space~$E$:
\begin{equation}\label{eq1.6new}
\begin{split}
\xymatrix{
\Omega G\ar@{=}[r]\ar@{->}[d]_{\enlarge{h}}&\Omega G\ar@{->}[d]\\
T\ar@{->}[r]^->>>>{\enlarge{E}}\ar@{->}[d]&\Omega
S^{2n+1}\{p^r\}\ar@{->}[r]^-<<<{\enlarge{H}}\ar@{->}[d]_{\enlarge{\partial'}}&BW_n\ar@{=}[d]\\
R
\ar@{->}[r]\ar@{->}[d]&E\ar@{->}[r]^{\enlarge{\nu_{\infty}}}\ar@{->}[d]_{\enlarge{\pi}}
&BW_n\\
G\ar@{=}[r]&G
}
\end{split}
\end{equation}\index{$R$|LB}
$h$ has a right homotopy inverse $g\colon T\to \Omega G$ defined by
the splitting (\ref{eq1.5new}) since $T$ is atomic. Thus $T$ inherits
an $H$ space structure as a retract of~$\Omega G$. The $H$-space structure
depends on $h$ and consequently on the choice of $\nu_{\infty}$. The
splitting (\ref{eq1.5new}) and map $\nu_{\infty}$ are defined inductively. The space $G$ is filtered
by subspaces $G_k$ where
\begin{equation}\label{eq1.7new}
G_k=G_{k-1}\cup_{\alpha_k}\index{alphak@$\alpha_k$|LB}CP^{2np^k}(p^{r+k})
\end{equation}
and the fibration $\xymatrix{E\ar@{->}[r]^{\enlarge{\pi}}&G}$ is the union of the induced
fibrations
\[
\Omega S^{2n+1}\{p^r\}\to E_k\to G_k.\index{$E$|LB}\index{$E_k$|LB}
\]
The splitting (\ref{eq1.5new}) is approximated by a sequence
of splittings:
\[
\Sigma T^{2np^k}\simeq G_k\vee W_k
\]
and $\nu_{\infty}$ is simultaneously constructed by induction
over the restriction of~(\ref{eq1.6new}) to~$G_k$.
The map $\nu_{\infty}$ is then the limit of maps
\[
\nu_k\index{nuk@$\nu_k$|LB}\colon E_k\to BW_n.
\]
The extension theorem \cite[21]{GT10} is applied which
guarantees that any map $\nu_{k-1}\colon E_{k-1}\to BW_n$ extends
to~$E_k$. An arbitrary choice is made for each $k>0$.
It seems likely that the number of choices for $\nu_{\infty}$\index{nuinfty@$\nu_{\infty}$|LB} is
uncountable.

\section{Outline of Modifications}\label{subsec1.4}

The basis of this paper is to modify and sharpen the
construction in \cite{GT10} as described in section~\ref{subsec1.3}. As
explained there, the $H$-space properties that the Anick
space inherits depend on a choice of a map
\[
\nu_k\colon\allowbreak E_k\to\nobreak BW_n.
\]

In Chapter~\ref{chap2}, we introduce maps
\[
\Gamma_k\index{Gammak@$\Gamma_{k}$|LB}\colon \Omega G_k * \Omega G_k\to E_k
\]
and prove that if we choose $\nu_k$ such that $\nu_k\Gamma_k$ is null homotopic for each $k$, the induced $H$-space structure will be homotopy-Abelian. We also recall, at this point, various facts about the Anick spaces that were developed in \cite{GT10} which will be needed in the sequel.

In Chapter~\ref{chap3}, we recall from \cite{Gra11} the construction
of a Whitehead product pairing
\[
[G,X]\times[H,X]\to [G\circ H,X]
\]
where $G$ and $H$ are simply connected co-$H$ spaces and
$G\circ H$\index{$G\circ H$|LB} is a new simply connected co-$H$ space. This
generalizes the classical pairing:
\[
[\Sigma A,X]\times [\Sigma B,X]\to [\Sigma A\wedge B,X].
\]
We also generalize Neisendorfer's theory of relative
Whitehead products and $H$-space based Whitehead
products in the $\bmod\, p^r$ homotopy of a principal
fibration \cite{Nei10a} by replacing Moore spaces with arbitrary co-$H$
spaces. We then
reduce the question of whether $\nu_k\Gamma_k$ is null
homotopic to whether a sequence of iterated $H$-space based
and relative Whitehead
products $G_k\circ(G_k\circ\dots G_k)\to E_k$ are annihilated by $\nu_k$
for all $i\geqslant 2$.

In Chapter~\ref{chap4} we construct $\bmod\, p^{r+i-1}$ homotopy classes
$a(i)$ and $c(i)$ in $E_k$ for $i\leqslant k$ and a $\bmod\, p^{r+k}$ homotopy
class $\beta_k$\index{betak@$\beta_k$|LB} which will play a key role. We also introduce
``index $p$ approximation'' and show that the iterated
Whitehead product under investigation can be approximated
by iterated Whitehead products in homotopy groups with
coefficients in $Z/p^s$ for $r\leqslant s\leqslant r+k$. This approximation
excludes
the case $n=1$ which is handled in the appendix. These classes are the obstructions and
we seek to choose $\nu_k$ which annihilates them.

When $k>0$, the obstructions actually belong to two classes,
$A$ and~$C$ depending on whether they involve the $a(i)$ or the~$c(i)$. (It turns out that any obstruction involving
both $a(i)$ and $c(j)$ is automatically in the kernel of~$\nu_k$.)
In \ref{subsec5.1} we simplify the procedure by defining a
quotient space $J_k$ of~$E_k$ which is universal for
annihilating the classes as~$C$. $J_k$\index{$J_k$|LB} is a principal
fibration over a space $D_k$ which is a quotient\footnote{The space $D_k$,
defined differently, occurs in the original construction of Anick~(\cite{Ani93}).
The results of~\cite{AG95} 
are obtained by
replacing $D_k$ by~$G_k$.}
of~$G_k$
and we seek a factorization of~$\nu_k$:\index{gammak@$\gamma_{k}$|LB}
\[
\xymatrix{
E_k\ar@{->}[r]^{\enlarge{\tau_k}}
&J_k\ar@{->}[r]^->>>>{\enlarge{\gamma_k}}& BW_n
}
\index{tauk@$\tau_k$|LB}
\]
where $\gamma_k$
annihilates the obstructions in~$A$.
In section~\ref{subsec5.2} we introduce a congruence relation
among homotopy classes and the relative Whitehead
products and $H$-space based Whitehead products have
better properties in the congruence homotopy category.
This allows for a further reduction in 
obstructions to a collection of $\bmod\, p^r$ homotopy classes.

In Chapter~\ref{chap6} we introduce the controlled extension theorem~(\ref{theor6.1}).
This is a modification of the extension theorem in \cite[2.2]{GT10}
which allows maps defined on the total space of an induced
fibration of a principal fibration over a subspace to be extended over
the total space
under certain conditions. In the controlled
extension theorem, conditions are given for the extension 
to annihilate certain maps maps $u\colon P\to E$. This is
immediately applied to the case $k=0$ where we construct $\nu_o$ by
induction over the skeleta of a space~$F_0$.

A complexity arises because for each $k>0$, there are
level~$k$ obstructions in infinitely many dimensions.
When we modify $\nu_k$ to eliminate these obstructions,
we can't assume that it will be an extension of~$\nu_{k-1}$
and consequently the level $k-1$ obstructions may
reappear. A separate argument (\ref{lem6.40}) dispenses
with this issue.  
In section~\ref{subsec6.2}
we introduce the inductive hypothesis (\ref{theor6.7}) and
a space $F_k$ is analyzed to prepare for the inductive 
step. This is accomplished in section~\ref{subsec6.3}.

In Chapter~\ref{chap7}, we discuss the universal properties
of the Anick spaces. From the fibration sequence (\ref{eq1.6new})
we extract the following fibration sequence
\[
\xymatrix{
\ar@{->}[r]^-<<<{*}&\Omega R\ar@{->}[r]&\Omega
G\ar@{->}[r]^{h}&T\ar@{->}[r]^{*}&\mbox{}
}
\]\index{$h$|LB}
which we think of as a presentation of~$T$. 

The proof of Theorem~B depends on an understanding
of the map $R\to G$. From~\cite[4.8]{GT10} we
know that for any choice of~$\nu_{\infty}$, $R$ is a wedge of Moore spaces.
Certain of these Moore spaces are needed to resolve the
relationship between $H_*(\Omega G;Z/p)$, which has infinitely
many generators and $H_*(T;Z/p)$ which has two
generators. These are the classes $a(i)$ and~$c(i)$. The others
are necessary to enforce the homotopy commutativity 
in~$H_*(T;Z/p^r)$. These are either Whitehead products
or generalized Whitehead products\footnote{In particular, the first element in $\pi_*(G)$ of order
$p^{r+k}$ could not be a classical Whitehead product for~$k>0$. It is
defined as a composition 
\[
\xymatrix{
P^{4np^k}(p^{r+k})\ar@{->}[r]&G_k\circ G_k\ar@{->}[r]^-<<<<{\enlarge{W}}&G_k
}
\]
where
$W$ is a generalized Whitehead product.} defined by co-$H$-spaces
in~\cite{Gra11}. The obstruction to extensions
depend on certain homotopy classes
\[
\widetilde{\beta_k}\colon P^{2np^k-1}(p^{r+k})\to \Omega G_{k-1}
\]
which must be annihilated in order for an extension to proceed.

In the appendix we discuss the case $n=1$ and the case $p=3$.  

\section{Conventions and Notation}\label{subsec1.5}

All spaces will be localized at a prime $p\geqslant 3$ and
usually we will assume $p\geqslant 5$. $H_*(X)$ and $H^*(X)$
will designate the $\bmod\, p$ homology and cohomology.
If other coefficients are used (usually $Z_{(p)}$) they will
be specified in the usual way.

We write $P^m(p^s)=S^{m-1}\cup_{p^s} e^m$ for the Moore space.
Throughout we will fix $r\geqslant 1$ and we will
always have $s\geqslant r$. We will
abbreviate $P^m(p^r)$ simply as $P^m$\index{$P^m$|LB}. We will write
$\iota_{m-1}$ and $\pi_m$ for the usual maps
\[
\xymatrix{
S^{m-1}\ar@{->}[r]^->>>>{\iota_{m-1}}&P^m(p^s)\ar@{->}[r]^{\pi_m}&S^m.
}
\]

We designate the symbols $\beta$\index{beta@$\beta$|LB}, $\sigma$\index{sigma@$\sigma$|LB},
$\rho$\index{rho@$\rho$|LB} for the maps
\begin{align*}
&\beta\colon P^m(p^s)\to P^{m+1}(p^s)\\
&\rho\colon P^m(p^s)\to P^{m}(p^{s+1})\\
&\sigma\colon P^m(p^s)\to P^{m}(p^{s-1})
\end{align*}
with $\beta=\iota_m\pi_m$, $\pi_m\rho=\pi_m$ and
$\sigma\iota_{m-1}=\iota_{m-1}$.
These symbols will not be indexed by the dimension
and can be composed, so that we have formulas
\begin{align*}
&\beta=\sigma\beta\rho\\
&p=\sigma\rho=\rho\sigma\\
&\beta\sigma^t=p^t\sigma^t\beta
\end{align*}
where $p$ is the degree $p$ self map. We write
\[
\delta_t\index{deltat@$\delta_t$|LB}=\beta\rho^t
\]
and will frequently use the cofibration sequence
\begin{multline}\label{eq1.9}
\xymatrix@C=37pt{
P^{m-1}(p^s)\vee P^m(p^s)\ar@{->}[r]^->>>>>>{-\delta_t\vee
\rho^t}&P^m(p^{s+t})}\\
\xymatrix@C=33pt{\ar@{->}[r]^-<<<<{p^s}&P^m(p^{s+t})\ar@{->}[r]^->>>>>{\sigma^t\vee\sigma^t\beta}&P^m(p^s)\vee
P^{m+1}(p^s)
}
\end{multline}
especially when $s=r+k-1$ and $t=1$.

We will write $\nu_p(m)$%\index{nupm@$\nu_p(m)$|LB}
for the largest exponent of $p$ that
divides $m$ and often set $s=\nu_p(m)$. For any map $x$ we
will write $\widetilde{x}$ for either its left or right adjoint, if there
is no possibility of confusion.

By a diagram of fibration sequences, we will mean
a diagram in which any sequences, either
vertical or horizontal are fibration sequences up to
homotopy.

\chapter{Abelian Structures}\label{chap2}

We begin by reviewing some material about
principal fibrations. In section~\ref{subsec2.1}, we recall the
construction of $BW_n$ (\cite{Gra88}) and the extension theorem
(\cite{GT10}), and for certain principal fibrations we
construct a natural map $\Gamma\colon \Omega B*\Omega B\to E$ in section~\ref{subsec2.2}. $\Gamma$~ is the
lynchpin for
generalizing Neisendorfer's $H$-space based Whitehead
products. In \ref{prop2.3} we give a short proof of a
result of Theriault giving a criterion for an $H$-space structure to be 
homotopy-Abelian. We use this to relate the map~$\Gamma$ to the obstructions
for the induced $H$-space
structure on the fiber being
homotopy Abelian. We conclude with Proposition~\ref{prop2.12}
 which presents the conditions we
will establish
in the
next 4 chapters. Finally, we recall some results
from \cite{GT10} that will be used in the sequel.

\section{Preliminaries}\label{subsec2.1}

In \cite{Gra88}, a clutching construction
was described for Hurewicz fibrations in case that the base
is a mapping cone. This construction is particularly simple
in the case of a principal fibration.

Suppose $\varphi\colon B\to X$. We describe a principal fibration
\[
\xymatrix{
\Omega X\ar@{->}[r]^{i}&E\ar@{->}[r]^{\pi}&B
}
\]
where $E=\{(b,\omega)\in B\times PX\mid  \omega(1)=\varphi(b)\}$, where $PX$
is the
space of paths $\omega\colon I\to X$ with $\omega(0)=\ast$.

In case $B=B_0\cup_{\theta}CA$, we have a pair of principal
fibrations:
\[
\xymatrix{
&\Omega X\ar@{->}[d]\ar@{=}[r]&\Omega X\ar@{->}[d]\\
&E_0\ar@{->}[r]\ar@{->}[d]_{\pi_0}&E\ar@{->}[d]_{\pi}\\
A\ar@{-->}[ur]\ar@{->}[r]^{\theta}&B_0\ar@{->}[r]&B
}
\]
Clearly $\theta$ lifts to a map $\theta'\colon A\to E_0$. We assert that
there is a lifting $\overline{\theta}\colon A\to E_0$ such
that the composition:
\[
\xymatrix{
A\ar@{->}[r]^{\enlarge{\overline{\theta}}}& E_0\ar@{->}[r]&E
}
\]
is null homotopic.
For if the composition
\[
\xymatrix{
A\ar@{->}[r]^{\theta'}&E_0\ar@{->}[r]&E
}
\]
is essential, it factors through $\Omega X$ up to homotopy, and we can
use the principal action
\[
\xymatrix{
\Omega X\times (E,E_0)\ar@{->}[r]^->>>>{a}&(E,E_0)
}
\]
to define a different lifting $\overline{\theta}\colon A\to E_0$ of
$\theta$ in which
the composition into $E$ is null homotopic. In
particular, the
composition
\[
\xymatrix{
(CA,A)\ar@{->}[r]^{\overline{\theta}}&(E,E_0)\ar@{->}[r]^{\pi}&(B,B_0)
}
\]
induces an isomorphism in homology.
\begin{proposition}[{\cite{Gra88}) (Clutching Construction}]\label{prop2.1}
Suppose 
\[
\xymatrix{
(E,E_0)\ar@{->}[r]^{\enlarge{\pi}}&(B,B_0)}
\]
is a Hurewicz
fibration with fiber $F$ where $B=B_0\cup_{\theta}CA$.
Then there is a map $\xymatrix{
F\times(CA,A)\ar@{->}[r]^-<<<{\enlarge{\varphi}}&(E,E_0),
}
$
and a pushout diagram
\[
\xymatrix{
F\times CA\ar@{->}[r]^-<<<{\varphi}&E\\
F\times A\ar@{->}[r]^{\varphi}\ar@{->}[u]&E_0\ar@{->}[u]
}
\]
where $\pi\varphi\colon F\times CA\to B$ is the projection onto $CA\subset
B$.
In particular\addtocounter{equation}{1}
\begin{equation}\label{eq2.2}
E/E_0\cong F\ltimes  \Sigma A.
\end{equation}
In case that $\pi$ is a principal fibration, we can take $\varphi$
to be the composition:
\[
\xymatrix@C=30pt{
\Omega X\times (CA,A)\ar@{->}[r]^{1\times\overline{\theta}}
&\Omega X\times (E,E_0)
\ar@{->}[r]^-<<<<<{a}
&(E,E_0)
}
\]
where $a$ \index{$a$|LB} is the principal action map.
\end{proposition}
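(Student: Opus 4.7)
My plan is to build $\varphi$ in the principal setting first, where the principal action hands us an explicit formula, and then deduce the general Hurewicz case by trivializing $\pi$ over the cone. The paragraph preceding the statement already produces a lift $\overline{\theta}\colon A\to E_0$ whose composite with $j\colon E_0\hookrightarrow E$ is null homotopic; I would fix such a null homotopy and extend (still writing $\overline{\theta}$) to a map $CA\to E$ with $\overline{\theta}|_A$ landing in $E_0$. In the principal case, set
\[
\varphi(\omega,c)=a(\omega,\overline{\theta}(c))\qquad\text{for }(\omega,c)\in\Omega X\times CA.
\]
Because $a$ is fiberwise over $B$, one has $\pi\varphi(\omega,c)=\pi\overline{\theta}(c)$, which factors through the cone, so $\pi\varphi$ is exactly the asserted projection onto $CA\subset B$; and for $c\in A$ we have $\overline{\theta}(c)\in E_0$, so the restricted principal action $\Omega X\times E_0\to E_0$ shows $\varphi$ sends $\Omega X\times A$ into $E_0$.

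Next I would verify that the resulting square is a pushout. The base square exhibiting $B=B_0\cup_\theta CA$ is a strict pushout, so it suffices to show that $F\times CA$ and $E_0$, glued along $\varphi|_{F\times A}$, reconstruct $E$. In the principal case this is immediate from the fact that $a$ exhibits a fiber-homotopy trivialization of $\pi$ over $CA$, compatible by construction with the identification of $\pi^{-1}(A)$ inside $E_0$. In the general Hurewicz case one uses the homotopy lifting property to produce a trivialization $\pi^{-1}(CA)\simeq F\times CA$ and then adjusts its restriction over $A$, by a covering homotopy, to match the fiber identification into $E_0$; the map $\varphi$ is then read off the adjusted trivialization. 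The universal property then makes the square a pushout, and the cofiber formula $E/E_0\cong F\ltimes\Sigma A$ drops out by collapsing $F\times A$ inside $F\times CA$.

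The main obstacle is the fiber-homotopy trivialization step in the general Hurewicz case, i.e.\ matching a trivialization of $\pi^{-1}(CA)$ with the prescribed behaviour over $A\subset CA$. In the principal setting this is free once $\overline{\theta}$ is in hand, because the principal action is a genuine fibrewise action and the diagram simply chases itself. For a general Hurewicz fibration it uses only the standard trivialization of a Hurewicz fibration over a cone plus a covering-homotopy adjustment over $A$, but one must be careful that the adjustment is compatible, on the common boundary, with the chosen lift $\overline{\theta}$ and with the inclusion $E_0\hookrightarrow E$. After that step the remaining assertions reduce to formal manipulations with pushouts and quotients.
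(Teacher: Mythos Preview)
The paper does not actually prove this proposition; it is imported from \cite{Gra88} and stated without proof. What the paper does provide is the paragraph immediately preceding the statement, which constructs the lift $\overline{\theta}\colon A\to E_0$ with null composite into $E$ in the principal case---precisely the ingredient you take as your starting point. Your construction of $\varphi$ via the principal action is exactly the formula the proposition records, and your outline for the general Hurewicz case (trivialize over the contractible cone, adjust over $A$ by covering homotopy, read off the pushout and the cofiber) is the standard argument and is correct.

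One small point: you should be slightly more careful about the word ``pushout.'' In the general Hurewicz case the square is a pushout only up to homotopy (or after replacing the relevant maps by cofibrations), since the trivialization $\pi^{-1}(CA)\simeq F\times CA$ is a fiber-homotopy equivalence rather than a homeomorphism. This does not affect the conclusion $E/E_0\simeq F\ltimes\Sigma A$, which is what is actually used downstream, but it is worth flagging since the paper's statement reads as a strict pushout.
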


The following result (\cite{Gra88}) is a simple application of~\ref{prop2.1}.\addtocounter{Theorem}{1}
\begin{proposition}\label{prop2.3}
Localized at a prime $p>2$, there is a fibration
sequence
\[
\xymatrix{
\Omega^2S^{2n+1}\ar@{->}[r]^->>>{\partial}&BW_n\times
S^{4n-1}\ar@{->}[r]&S^{2n}\ar@{->}[r]^->>>>>{E}&\Omega S^{2n+1}
}
\]
where $\partial$ factors 
$\xymatrix{\Omega^2S^{2n+1}\ar@{->}[r]^{\nu}&BW_n\ar@{->}[r]^->>>{i_1}&BW_n\times
S^{4n-1}}$
and the homotopy fiber of $\nu$ is $S^{2n-1}$
\[
\xymatrix{
S^{2n-1}\ar@{->}[r]^{E^2}&\Omega^2S^{2n+1}\ar@{->}[r]^{\nu}&BW_n.
}
\]
Furthermore if $p\geqslant 5$, $BW_n$ has
a homotopy-Abelian $H$-space
structure and $\nu$ is an $H$-map. 
\end{proposition}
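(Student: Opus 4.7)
The plan is to apply the clutching construction (Proposition~\ref{prop2.1}) to the homotopy-fiber fibration of the suspension map $E\colon S^{2n}\to\Omega S^{2n+1}$, and then to identify the resulting space using the construction of $\nu\colon\Omega^2 S^{2n+1}\to BW_n$ from \cite{Gra88}. Throughout we work at an odd prime.

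First I would view the homotopy fiber $F$ of $E$ as the total space of a principal $\Omega^2 S^{2n+1}$-bundle over $S^{2n}$ classified by the map $E$. Writing $S^{2n}=\ast\cup CS^{2n-1}$, Proposition~\ref{prop2.1} identifies $F$ with the pushout of $\Omega^2 S^{2n+1}\leftarrow \Omega^2 S^{2n+1}\times S^{2n-1}\to\Omega^2 S^{2n+1}\times CS^{2n-1}$, where the attaching map is the principal action twisted by a clutching datum $\bar\theta\colon S^{2n-1}\to\Omega^2 S^{2n+1}$. Via the adjunction $[S^{2n},\Omega S^{2n+1}]\cong\pi_{2n-1}(\Omega^2 S^{2n+1})$, the canonical choice of $\bar\theta$ is the double suspension $E^2$. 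Since $\nu\circ E^2\simeq\ast$ by the defining fibration $S^{2n-1}\to\Omega^2 S^{2n+1}\to BW_n$, the composite $\Omega^2 S^{2n+1}\xrightarrow{\nu}BW_n$ extends across the cone to give a well-defined map $F\to BW_n$.

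Next I would run the Serre spectral sequence for $\Omega^2 S^{2n+1}\to F\to S^{2n}$: the transgression sends $\iota_{2n}$ to the Hurewicz image $y_0$ of $E^2$, and by multiplicativity of the differential this kills every class divisible by $y_0$. What remains is $H_*(F;\mathbb F_p)\cong H_*(BW_n;\mathbb F_p)\oplus\Sigma^{4n-1}H_*(BW_n;\mathbb F_p)\cong H_*(BW_n\times S^{4n-1};\mathbb F_p)$. A complementary map $F\to S^{4n-1}$ realizing the $\Sigma^{4n-1}$-summand is then constructed from the $(4n-1)$-class in $F$ coming from $\iota_{2n}\cdot y_0$; combining the two yields $F\to BW_n\times S^{4n-1}$, which is a homology isomorphism of simply connected $p$-local spaces and hence an equivalence by Whitehead. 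The factorization of the connecting map $\partial$ is immediate from the clutching description: $\partial$ is induced by $E^2$, which passes through $\nu$ and lands trivially in the $S^{4n-1}$-factor.

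For the homotopy-Abelian $H$-space assertion when $p\geqslant 5$, the space $\Omega^2 S^{2n+1}$ is a double loop space and the defining map $\nu$ may be arranged to respect the double loop structure, so $BW_n$ inherits a homotopy associative and homotopy commutative $H$-space multiplication. At $p\geqslant 5$ the relevant Samelson products on $BW_n$ vanish by a standard connectivity/obstruction count (the first nontrivial homotopy of $BW_n$ sits well above the range where commutativity obstructions can be nontrivial), which promotes the structure to homotopy-Abelian. The map $\nu$ is then an $H$-map by construction. The principal obstacle is the construction of the complementary map $F\to S^{4n-1}$ realizing the $\Sigma^{4n-1}$-summand and verifying that the combined map is the desired splitting; this uses in an essential way the vanishing of certain Whitehead products at odd primes, and constitutes the technical content of \cite{Gra88}.
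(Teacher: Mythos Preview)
The paper does not supply a proof; it simply cites \cite{Gra88} and remarks that the result is a simple application of the clutching construction (Proposition~\ref{prop2.1}). Your outline takes exactly this route---apply clutching to the fiber of $E\colon S^{2n}\to\Omega S^{2n+1}$, identify the clutching datum with $E^2$, and split the result---so at the level of strategy you and the paper agree.

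Two points in your sketch deserve correction. First, there is a logical reversal: you invoke the fibration $S^{2n-1}\to\Omega^2 S^{2n+1}\xrightarrow{\nu}BW_n$ as input in order to build the map $F\to BW_n$, but in \cite{Gra88} that fibration is a \emph{consequence} of the splitting $F\simeq BW_n\times S^{4n-1}$; the space $BW_n$ is \emph{defined} as the factor, and $\nu$ as the composite $\Omega^2 S^{2n+1}\xrightarrow{\partial}F\to BW_n$. Within the present paper's exposition (where $BW_n$ has already been introduced in~(\ref{eq1.2new})) your order is internally coherent, but it inverts the actual dependency. Second, your argument for the $H$-space structure is not valid as written: a map \emph{out of} a double loop space does not endow its target with any multiplication, so the sentence ``$\nu$ may be arranged to respect the double loop structure, so $BW_n$ inherits\ldots'' establishes nothing. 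Likewise ``standard connectivity/obstruction count'' is not an argument. The homotopy-Abelian structure on $BW_n$ for $p\geqslant 5$ in \cite{Gra88} requires real work---one exhibits $BW_n$ as a retract of an appropriate $H$-space and checks the associativity and commutativity obstructions directly; the restriction $p\geqslant 5$ enters because the relevant obstructions are $3$-torsion phenomena, not because of a connectivity gap.
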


Using the work of Cohen, Moore and Neisendorfer,
Theriault has proved
\begin{proposition}[{\cite{The08}}]\label{prop2.4}
$BW_n$ has $H$-space exponent\footnote{We say that an $H$-space has
$H$-space exponent $q$
if the $q^{\text{th}}$ power map is null homotopic in some association.}
$p$.
\end{proposition}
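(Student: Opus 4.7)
The plan is to combine the Cohen--Moore--Neisendorfer factorization of the $p$-th power map on $\Omega^2 S^{2n+1}$ with the $H$-map property of $\nu$ from Proposition~\ref{prop2.3}, and then to promote the resulting nullhomotopy from $\nu$ to $1_{BW_n}$ by exhibiting a homotopy section of $\nu$. Since $BW_n$ is homotopy-Abelian when $p\geqslant 5$ (Proposition~\ref{prop2.3}), in particular homotopy-associative, the $p$-th power map $p_{BW_n}\colon BW_n\to BW_n$ is well defined up to homotopy and independent of association, so it suffices to prove $p_{BW_n}\simeq *$.

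First I would specialize the definition of $\pi_n$ in (\ref{eq1.1a}) to $r=1$ to obtain the CMN identity $E^2\pi_n\simeq p_{\Omega^2 S^{2n+1}}$. Composing on the left with $\nu$ and using $\nu\circ E^2\simeq *$ from the fibration in Proposition~\ref{prop2.3} immediately gives $\nu\circ p_{\Omega^2 S^{2n+1}}\simeq *$. Because $\nu$ is an $H$-map and both source and target are homotopy-associative, the $p$-th power commutes with $\nu$ up to homotopy, yielding $p_{BW_n}\circ\nu\simeq *$. This is the first reduction and uses only the naturality of $p$-th powers for $H$-maps together with the fibration data.

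The main obstacle is the passage from $p_{BW_n}\circ\nu\simeq *$ to $p_{BW_n}\simeq *$; this is where the $p\geqslant 5$ hypothesis does real work. My approach would be to construct a homotopy section $s\colon BW_n\to\Omega^2 S^{2n+1}$ of $\nu$, from which $p_{BW_n}\simeq p_{BW_n}\circ\nu\circ s\simeq *$ follows. The existence of such an $s$ comes from the product decomposition of $\Omega^2 S^{2n+1}$ at $p\geqslant 5$ associated to the fibration $S^{2n-1}\to\Omega^2 S^{2n+1}\to BW_n$: delooping the splitting of $W_n$ off the fiber of $E^2$ exhibits $BW_n$ as a retract of $\Omega^2 S^{2n+1}$. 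The delicate point is that $s$ must be compatible with the $H$-structures (otherwise commuting $p$-th powers across $s$ is not legitimate); showing that the retraction can in fact be chosen to be an $H$-map—equivalently, showing that the splitting lifts to the $H$-category of homotopy-Abelian $H$-spaces at $p\geqslant 5$—is the technical heart of the argument and the step where one genuinely uses the hypothesis on the prime.
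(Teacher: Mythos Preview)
The paper does not give a proof; it simply cites \cite{The08}. Your reduction is sound up to the point where you obtain $p_{BW_n}\circ\nu\simeq *$ and observe that a section $s$ of $\nu$ would finish the argument.

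Two remarks. First, your stated concern is misplaced: the section $s$ need \emph{not} be an $H$-map. You already have $p_{BW_n}\circ\nu\simeq *$, so for any $s$ with $\nu s\simeq 1$ one gets $p_{BW_n}\simeq (p_{BW_n}\circ\nu)\circ s\simeq *$ by plain associativity of composition of maps---no $p$-th power is being commuted across $s$. So the ``technical heart'' you identify is not where the work lies.

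Second, and this is the genuine gap: the existence of the section is exactly the difficulty, and you have not supplied it. You appeal to a ``product decomposition of $\Omega^2 S^{2n+1}$'', but no splitting $\Omega^2 S^{2n+1}\simeq S^{2n-1}\times BW_n$ is available as prior input; had it been, the exponent result would have followed immediately from \cite{CMN79b} long before \cite{The08}. Your phrase ``delooping the splitting of $W_n$ off the fiber of $E^2$'' is also unclear, since $W_n$ \emph{is} that fiber. Producing a right homotopy inverse to $\nu$---or, equivalently, exhibiting $BW_n$ as a retract of a space with known $H$-space exponent $p$---is precisely the substantive content of Theriault's theorem, and it requires a genuine construction beyond the fibration data you invoke. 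Your outline correctly locates the crux but treats as given what is in fact the result to be proved.
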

An important application of \ref{prop2.1} is the extension theorem
\begin{ExtensionTheorem}[{\cite[2.1]{GT10}}]\label{theor2.5}
Suppose $A$ is a co-$H$
space such that the map $\overline{\theta}\colon A\to E_0$ is divisible by $q$ in
the
co-$H$-space structure, and $Z$ is a connected $H$-space
with $H$-space exponent $q$. Then the restriction
\[
[E,Z]\to [E_0,Z]
\]
is onto.
\end{ExtensionTheorem}

This is a powerful tool and is the key to showing
that each map $\nu_{k-1}\colon\allowbreak E_{k-1}\to\nobreak BW_n$ extends to a map
$\nu_k\colon E_k\to BW_n$ in \cite{GT10}. In section~\ref{subsec6.1} we will
enhance
this to the controlled
extension theorem, which will allow for a good
choice of extension.

\section{Whitehead Products}\label{subsec2.2}

Whitehead and Samelson products in homotopy groups
with coefficients were introduced by Neisendorfer (\cite{Nei80}).
These included relative Whitehead products and in
\cite{Nei10a}, he introduced $H$-space based Whitehead
products. These will be useful in Chapter~\ref{chap3}
where we will generalize these constructions to classes
defined on co-$H$ spaces. At this point we will introduce
a basic construction and show how it is related to
the question of homotopy-Abelian $H$-space structures.

We begin with the construction of a map
\[
\omega\index{omega@$\omega$|LB}\colon \Omega U*\Omega V\to U\vee V.
\]
We use the decomposition $\Omega U*\Omega V=\Omega U\times C(\Omega V)\cup
C(\Omega U)\times \Omega V$
where $C$ is the cone functor with vertex at $0$ and define $\omega$ by 
\begin{align*}
&\xymatrix{
\Omega U\times C(\Omega V)\ar@{->}[r]^-<<<{\pi_2}&C(\Omega
V)\ar@{->}[r]^-<<<{\epsilon}&V
\ar@{->}[r]&U\vee V
}\\
&\xymatrix{
C(\Omega U)\times \Omega V\ar@{->}[r]^-<<<{\pi_1}&C(\Omega
U)\ar@{->}[r]^-<<<{\epsilon}&U
\ar@{->}[r]&U\vee V}
\end{align*}
where $\epsilon$ is an evaluation.
\begin{proposition}[{\cite{Gan70}}]\label{prop2.6}
$\omega$ lifts to a homotopy
equivalence with the homotopy fiber of the inclusion $U\vee V\to U\times V$.
\end{proposition}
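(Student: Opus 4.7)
The plan is to identify the homotopy fibre of $i\colon U\vee V\to U\times V$ directly with the join $\Omega U*\Omega V$, and then check that this identification is realised by the map~$\omega$. I would describe the homotopy fibre $F$ as the space of triples $(w,\gamma_U,\gamma_V)$ with $w\in U\vee V$ and $\gamma_U$, $\gamma_V$ paths from the basepoints to $p_U(w)$, $p_V(w)$ in $U$, $V$ respectively. Since every point of $U\vee V$ lies in one of the wedge summands, $F$ splits as a union $F_U\cup F_V$: on $F_U$ we have $w\in U$, forcing $\gamma_V$ to be a loop, so $F_U\cong PU\times\Omega V$; symmetrically $F_V\cong\Omega U\times PV$; and the intersection $F_U\cap F_V$, where $w=\ast$, is $\Omega U\times\Omega V$. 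Contracting the contractible factors $PU$ and $PV$ presents $F$ as the homotopy pushout
\begin{equation*}
\xymatrix{
\Omega U\times\Omega V\ar@{->}[r]^-{\pi_2}\ar@{->}[d]_{\pi_1}&\Omega V\\
\Omega U&
}
\end{equation*}
which by definition is the join $\Omega U*\Omega V$.

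Next I would verify that, under this identification, $\omega$ is exactly the comparison map from the join to $F$. Using the decomposition $\Omega U*\Omega V=\Omega U\times C\Omega V\cup C\Omega U\times\Omega V$, a point $(\alpha,[\beta,t])\in\Omega U\times C(\Omega V)$ is sent by the defining formula for $\omega$ to $\beta(t)\in V\subset U\vee V$; this lifts naturally to the triple $(\beta(t),\alpha,\beta|_{[0,t]})\in F_U$, with $\alpha$ regarded as a loop in $U$ at the basepoint $p_U(\beta(t))=\ast$. The symmetric formula on $C\Omega U\times\Omega V$ lands in $F_V$, and on the common face $\Omega U\times\Omega V$ the two prescriptions agree on the canonical pair of loops. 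Thus the lift of $\omega$ realises the pushout comparison map to~$F$.

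The main obstacle is the technical step of showing that the set-theoretic union actually computes the homotopy pushout; this amounts to knowing that the projections out of $\Omega U\times\Omega V$ into $PU\times\Omega V$ and $\Omega U\times PV$ are cofibrations, which one extracts from well-pointedness of $U$ and $V$. Once that is in hand, a comparison of long exact homotopy sequences (via the five-lemma applied to the fibration $F\to U\vee V\to U\times V$ and to the pushout defining the join) confirms that the lift of $\omega$ is a weak equivalence $\Omega U*\Omega V\simeq F$.
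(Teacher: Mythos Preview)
Your proposal is correct and follows essentially the same line as the paper's proof: both identify the homotopy fibre explicitly as $\Omega U\times PV\cup PU\times\Omega V$ and compare it with the join via the map sending $(\alpha,[\beta,t])$ to $(\beta(t),\alpha,\beta_t)$ where $\beta_t(s)=\beta(st)$. The paper packages this as an explicit homotopy equivalence of pairs $\xi\colon (C\Omega W,\Omega W)\to(PW,\Omega W)$ and invokes Str{\o}m's result that $(PW,\Omega W)$ is an NDR pair to conclude the glued map is a homotopy equivalence; your appeal to the cofibration/homotopy-pushout property is the same point, and the five-lemma step you mention at the end is then superfluous. (One small slip: your lift of $\Omega U\times C\Omega V$ lands in the piece over $V$, which in your labelling is $F_V$, not $F_U$.)
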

\begin{proof}
The homotopy fiber is the union of the parts over $U$ and $V$; i.e.,
\[
F= \Omega U\times PV\cup PU\times \Omega V.
\]
For any space $W$, we define a homotopy equivalence of pairs
\[
\xi\colon (C(\Omega W),\Omega W)\to (PW,\Omega W)
\]
by $\xi(\omega,s)(t)=\omega(st)$. According to \cite{ST72}, the
pair $(PW,\Omega W)$ is an NDR pair, so
the induced map
\[
\widehat{\xi}\index{xih@$\widehat{\xi}$|LB}
\colon \Omega U\times C(\Omega V)\cup C(\Omega U)\times \Omega V\to
\Omega U\times PV\cup PU\times \Omega V
\]
is a homotopy equivalence.
\end{proof}

Now suppose that
\[
\xymatrix{
\Omega X\ar@{->}[r]^{i}&E\ar@{->}[r]^{\pi}&B
}
\]
is a principal fibration induced by a map $\varphi\colon B\to X$
where $X$ is an $H$-space. We can and will assume that
the multiplication on~$X$ has a strict unit (\cite[11.1.11]{Nei10a}).
\begin{proposition}\label{prop2.7}
There is a strictly commutative diagram,
\[
\xymatrix{
\Omega B *\Omega
B\ar@{->}[d]_{\omega}\ar@{->}[r]^-<<<{\Gamma}&E\ar@{->}[d]_{\pi}\\
B\vee B\ar@{->}[r]^{\nabla}&B
}
\]
where $\nabla$\index{\{\ ,\ \}$_{\times}\nabla$@$\nabla$|LB}
is the folding map. Furthermore, $\Gamma$\index{Gamma@$\Gamma$|LB} is natural
with respect to the data.
\end{proposition}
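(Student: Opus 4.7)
The plan is to build $\Gamma$ by hand, using the explicit description of $E$ as $\{(b,\omega)\in B\times PX\mid \omega(1)=\varphi(b)\}$ together with the strict unit on $X$ promised for the $H$-multiplication $f\colon X\times X\to X$.

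First I would unwind $\nabla\omega$ on the two halves of the join. Writing
\[
\Omega B*\Omega B = \bigl(\Omega B\times C\Omega B\bigr)\cup_{\Omega B\times \Omega B}\bigl(C\Omega B\times \Omega B\bigr),
\]
a point $(\alpha,(\beta,s))$ in the first piece has $\omega\bigl(\alpha,(\beta,s)\bigr)=\beta(s)$ in the second wedge summand, so $\nabla\omega\bigl(\alpha,(\beta,s)\bigr)=\beta(s)\in B$; symmetrically on the other piece. This tells me the first coordinate of $\Gamma$ is forced: $\Gamma(\alpha,(\beta,s))$ must have $B$-coordinate $\beta(s)$, and I only get to choose a path in $X$ from $*$ to $\varphi(\beta(s))$.

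Then I define the path using $f$ to blend $\alpha$ and $\beta$. On $\Omega B\times C\Omega B$ set
\[
\Gamma\bigl(\alpha,(\beta,s)\bigr)=\Bigl(\beta(s),\ t\mapsto f\bigl(\varphi\alpha(t),\varphi\beta(st)\bigr)\Bigr),
\]
and on $C\Omega B\times \Omega B$ set
\[
\Gamma\bigl((\alpha,s),\beta\bigr)=\Bigl(\alpha(s),\ t\mapsto f\bigl(\varphi\alpha(st),\varphi\beta(t)\bigr)\Bigr).
\]
The strict unit of $f$ makes each path start at $*$ and end at $\varphi\beta(s)$ (resp.\ $\varphi\alpha(s)$), so the image lies in $E$. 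I would then verify the three compatibilities that make this well defined: at the cone vertex $s=0$ on the first piece the path reduces to the loop $\varphi\alpha$ and the $B$-coordinate is $*$, so the formula is independent of $\beta$ (and dually on the other piece at the other cone vertex); at $s=1$, where the two pieces meet along $\Omega B\times\Omega B$, both formulas deliver the same pair $\bigl(*,\, t\mapsto f(\varphi\alpha(t),\varphi\beta(t))\bigr)$. These checks are routine consequences of the strict unit axiom $f(*,x)=x=f(x,*)$.

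Once $\Gamma$ is well defined, strict commutativity $\pi\Gamma=\nabla\omega$ is immediate from the first-coordinate formula. Naturality with respect to the data $(B,X,\varphi)$ follows because $\Gamma$ is written entirely in terms of $\varphi$, $f$ and the loop/cone coordinates, all of which are functorial; a morphism of such data (i.e.\ an $H$-map $g\colon X\to X'$ and $b\colon B\to B'$ with $\varphi' b=g\varphi$) sends one formula to the other. There is no real obstacle here — the only subtle point, and the place I would be most careful, is the gluing at $s=1$, which is exactly where the $H$-space hypothesis on $X$ enters essentially to force the two formulas to agree.
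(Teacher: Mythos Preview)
Your proof is correct and is essentially the paper's argument with the intermediate identification unwound. The paper first observes that the strictly commutative square
\[
\xymatrix{
B\vee B\ar[r]^{\nabla}\ar[d]&B\ar[d]^{\varphi}\\
B\times B\ar[r]^{\mu(\varphi\times\varphi)}&X
}
\]
induces a map from the fiber $\Omega B\times PB\cup PB\times\Omega B$ of $B\vee B\to B\times B$ to $E$, namely $(\omega_1,\omega_2)\mapsto\bigl(\omega_i(1),\,t\mapsto\mu(\varphi\omega_1(t),\varphi\omega_2(t))\bigr)$, and then precomposes with the equivalence $\widehat{\xi}\colon\Omega B*\Omega B\to\Omega B\times PB\cup PB\times\Omega B$ from Proposition~2.6, where $\xi(\beta,s)(t)=\beta(st)$. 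Substituting $\widehat{\xi}$ into the paper's formula gives exactly your formulas on the two cone pieces, so the two constructions coincide on the nose. Your direct verification of the gluing at $s=1$ and the cone-vertex collapse at $s=0$ replaces the paper's appeal to the induced map of principal fibrations; nothing of substance is different.
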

\begin{proof}
The square
\[
\xymatrix@C=30pt{
B\vee B\ar@{->}[rr]^{\nabla}\ar@{->}[d]&&B\ar@{->}[d]_{\varphi}\\
B\times B\ar@{->}[r]^{\varphi\times \varphi}&X\times X\ar@{->}[r]^{\mu}&X
}
\]
is strictly commutative and induces a map of the induced
principal fibrations. $\Gamma$ is the composition of
$\widehat{\xi}$
with the map
\[
\Omega B\times PB\cup PB\times \Omega B\to E=\{(b,\omega)\in B\times PX\mid
\omega(1)=\varphi(b)\}
\]
given by the formula
\[
(\omega_1,\omega_2)\to\begin{cases}
(\omega_1(1),\overline{\omega}(t))&\text{if}\ \omega_2(1)=\ast\\
(\omega_2(1),\overline{\omega}(t))&\text{if}\ \omega_1(1)=\ast\\
\end{cases}
\]
where
$\overline{\omega}(t)=\mu(\varphi(\omega_1(t)),\varphi(\omega_2(t))$.
\end{proof}

Recall (for example \cite[3.4]{Gra11}) that there is a
natural homotopy equivalence
\[
\Sigma(X\wedge Y)\simeq X *Y
\]
such that the diagram
\[
\xymatrix@C=0pt{
\Sigma(X\wedge Y)\ar@{->}[d]_{-\tau}&{}\simeq{}&X*Y\ar@{->}[d]_{\tau}\\
\Sigma(Y\wedge X)&{}\simeq{}&Y * X
}
\]
commutes up to homotopy, where the maps labeled $\tau$ are the
transposition maps.
\begin{proposition}\label{prop2.8}
Suppose that $X$ is homotopy commutative.
Then there is a homotopy commutative diagram:
\[
\xymatrix{
\Sigma(\Omega B\wedge \Omega B)\ar@{->}[dd]_{-\tau}&{}\simeq{}&\Omega
B*\Omega B\ar@{->}[dr]^{\Gamma}\ar@{->}[dd]_{\tau}&\\
&&&E\\
\Sigma(\Omega B\wedge \Omega B)&{}\simeq{}&\Omega B*\Omega
B\ar@{->}[ur]_{\Gamma} 
}
\]
\end{proposition}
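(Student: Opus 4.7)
The plan is to reduce the whole square to the single assertion that $\Gamma\circ\tau\simeq\Gamma$ as maps $\Omega B*\Omega B\to E$, where $\tau$ is the join transposition. Indeed, the left square of the diagram, viewed as
\[
\xymatrix{
\Sigma(\Omega B\wedge\Omega B)\ar[r]^-{\simeq}\ar[d]_{-\tau}&\Omega B*\Omega B\ar[d]^{\tau}\\
\Sigma(\Omega B\wedge\Omega B)\ar[r]^-{\simeq}&\Omega B*\Omega B
}
\]
commutes (up to homotopy) by the naturality statement recalled just before the proposition (from \cite[3.4]{Gra11}): under the chosen equivalence, the suspension transposition $-\tau$ matches the join transposition $\tau$. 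So the outer triangles agree once we know $\Gamma\circ\tau\simeq\Gamma$.

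To establish $\Gamma\circ\tau\simeq\Gamma$, I would unwind the explicit formula from the proof of \ref{prop2.7}. The map $\widehat{\xi}$ is manifestly equivariant with respect to the involution swapping the two factors, so $\tau$ on $\Omega B*\Omega B$ corresponds, via $\widehat{\xi}$, to the involution $(\omega_1,\omega_2)\mapsto(\omega_2,\omega_1)$ on $\Omega B\times PB\cup PB\times\Omega B$. Consequently $\Gamma\circ\tau$ sends a point $(\omega_1,\omega_2)$ (with one of $\omega_1(1),\omega_2(1)$ equal to the basepoint) to $(b,\overline{\omega}')$ with
\[
\overline{\omega}'(t)=\mu\bigl(\varphi(\omega_2(t)),\varphi(\omega_1(t))\bigr),
\]
while $\Gamma$ itself gives the analogous expression with the arguments reversed. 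Both paths lift the same base point $b$ and therefore specify two lifts into $E$ of the same map to $B$.

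Homotopy commutativity of $X$ provides a homotopy $H\colon X\times X\times I\to X$ from $\mu$ to $\mu\circ T$, where $T$ is the switch. Since $\mu$ has a strict unit, both $\mu$ and $\mu\circ T$ restrict to the fold map on $X\vee X\subset X\times X$, and because $(X\times X,X\vee X)$ is a cofibration one can choose $H$ stationary on $X\vee X$. Evaluating $H$ on $(\varphi\omega_1(t),\varphi\omega_2(t))$ produces a one-parameter family of paths in $X$, all starting at $*$ (because $\omega_i(0)=*$) and all ending at $\varphi(b)$ (because $H$ is stationary on $X\vee X$ and one of the $\omega_i(1)$ is the basepoint). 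This family therefore defines a homotopy in $E$ from $\Gamma$ to $\Gamma\circ\tau$, and assembling over the cell decomposition $\Omega B\times C(\Omega B)\cup C(\Omega B)\times\Omega B$ yields the desired global homotopy.

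The main obstacle is the bookkeeping in the last step: ensuring that the induced deformation stays inside $E=\{(b,\omega)\mid\omega(1)=\varphi(b)\}$, i.e., that throughout the homotopy the path still lands at $\varphi(b)$ at $t=1$. This is exactly what the stationarity of $H$ on $X\vee X$ buys us, together with the strict unit assumption on $X$ that is justified by \cite[11.1.11]{Nei10a} and was already invoked in the construction of $\Gamma$.
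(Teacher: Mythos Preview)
Your argument is correct and follows essentially the same route as the paper: reduce to $\Gamma\tau\simeq\Gamma$, invoke \cite[11.1.11]{Nei10a} to make the commutation homotopy $\mu_t$ stationary on the axes, and then define $\Gamma_t$ by plugging $\mu_t$ into the path-component formula for $\Gamma$. The paper's proof is terser but identical in content; your extra care about why the deformed path still ends at $\varphi(b)$ is exactly the point the stationarity hypothesis is there to secure.
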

\begin{proof}
By \cite[11.1.11]{Nei10a}, we can assume that the homotopy
of commutation $\mu_t(x_1,x_2)$ is stationary on the axes, where
$\mu_0(x_1,x_2)=\mu(x_1,x_2)$ and $\mu_1(x_1,x_2)=\mu(x_2,x_1)$. We then
define
\[
\overline{\omega}_t(s)=\mu_t\left(\varphi(\omega_1(s)),\varphi(\omega_2(s))\right)
\]
and use this to define $\Gamma_t\colon \Omega B *\Omega B\to E$, a homotopy
between $\Gamma$ and $\Gamma \tau$.
\end{proof}

\section{Theriault's Criterion}\label{subsec2.3}

The map $\omega\colon \Omega B * \Omega B\to B\vee B$ plays a role in a
useful condition for an $H$-space to have a homotopy-Abelian structure.
\begin{proposition}[{\cite[4.12]{T01}}]\label{prop2.9}
Suppose that
\[
\xymatrix{
\Omega B\ar@{->}[r]^{h}&F\ar@{->}[r]^{i}&E\ar@{->}[r]^{\pi}&B
}
\]
is a fibration sequence in which $i$ is null homotopic.
Suppose that there is a lifting $\overline{\omega}$ of $\nabla\omega$ in
the
diagram:
\[
\xymatrix{
\Omega B*\Omega B\ar@{->}[r]^-<<<{\overline{\omega}}\ar@{->}[d]_{\omega}&E\ar@{->}[d]_{\pi}\\ 
B\vee B\ar@{->}[r]^{\nabla}&B
}
\]
Then the $H$-space structure defined on $F$ by any right
inverse $g\colon F\to \Omega B$ of $h$ defines a homotopy-Abelian
$H$-space structure.
\end{proposition}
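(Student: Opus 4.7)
The plan is to show that the induced multiplication on $F$ is homotopy-commutative by reducing its commutator to the Samelson commutator on $\Omega B$ and applying the hypothesized lift together with the fibration sequence. Homotopy-associativity, needed for the full ``Abelian'' conclusion, follows from the strict associativity of loop multiplication together with the retraction $hg \simeq 1_F$ by a standard retract argument, so the essential content is commutativity.

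First I would unwind the definition $\mu_F = h \circ \mu \circ (g \times g)$, where $\mu$ is loop concatenation on $\Omega B$. The obstruction to homotopy-commutativity is the commutator $c_F\colon F \wedge F \to F$; expanding $\mu_F - \mu_F T$ and factoring through the smash yields
\[
c_F \simeq h \circ c_{\Omega B} \circ (g \wedge g),
\]
where $c_{\Omega B}\colon \Omega B \wedge \Omega B \to \Omega B$ is the Samelson commutator. It therefore suffices to prove $h \circ c_{\Omega B} \simeq *$.

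Next I would apply the classical Samelson--Whitehead adjunction: under the equivalence $\Omega B * \Omega B \simeq \Sigma(\Omega B \wedge \Omega B)$, the loop-adjoint of $c_{\Omega B}$ is, up to sign, the Whitehead product of the evaluation $\varepsilon\colon \Sigma\Omega B \to B$ with itself. Inspecting the construction of $\omega$ in Section~\ref{subsec2.2} as built from evaluations on the two halves of the join decomposition $\Omega B \times C\Omega B \cup C\Omega B \times \Omega B$, one recognizes this Whitehead product as precisely $\nabla \omega$. Taking adjoints of the hypothesized lift $\overline{\omega}\colon \Omega B * \Omega B \to E$ of $\nabla\omega$ then produces $\widetilde{\overline{\omega}}\colon \Omega B \wedge \Omega B \to \Omega E$ with $\Omega\pi \circ \widetilde{\overline{\omega}} \simeq c_{\Omega B}$.

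Finally, in the fibration sequence $\Omega E \xrightarrow{\Omega\pi} \Omega B \xrightarrow{h} F$ consecutive maps compose to zero, so $h \circ \Omega\pi \simeq *$, giving
\[
h \circ c_{\Omega B} \simeq h \circ \Omega\pi \circ \widetilde{\overline{\omega}} \simeq *,
\]
and hence $c_F \simeq *$. The main obstacle is the middle step: the precise identification of $c_{\Omega B}$ with the adjoint of $\nabla\omega$. This requires carefully matching the join decomposition used to define $\omega$ with the canonical equivalence $\Omega B * \Omega B \simeq \Sigma(\Omega B \wedge \Omega B)$ furnished by Proposition~\ref{prop2.6} and then recognizing the resulting map as the Whitehead product of the evaluation with itself, whose adjoint is the Samelson commutator by standard theory. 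Sign conventions are irrelevant here since only null-homotopy is needed.
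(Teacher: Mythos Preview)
Your identification of the crucial step—that the lift $\overline{\omega}$ forces $h \circ c_{\Omega B} \simeq *$ via the fibration sequence $\Omega E\xrightarrow{\Omega\pi}\Omega B\xrightarrow{h}F$—is correct and is exactly what drives the argument. But the dismissal of associativity as ``a standard retract argument'' is a genuine gap. A retract of an associative operation need not be associative: writing the holonomy action of $\Omega B$ on $F$ as $\gamma\cdot x$ so that $h(\gamma)=\gamma\cdot *$, one finds
\[
\mu_F(\mu_F(a,b),c)=\bigl(gh(g(a)g(b))\,g(c)\bigr)\cdot *,\qquad \mu_F(a,\mu_F(b,c))=\bigl(g(a)\,gh(g(b)g(c))\bigr)\cdot *,
\]
and since only $hg\simeq 1$ is given (not $gh\simeq 1$), the element $gh(\gamma)$ differs from $\gamma$ by something in the stabilizer $N$ of the basepoint. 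The two expressions agree only if $N$ is normal in $G=[Z,\Omega B]$, which is \emph{not} automatic from the retraction—it follows from $[G,G]\subset N$, i.e., from the very condition $hc_{\Omega B}\simeq *$. So associativity and commutativity are both consequences of the lift hypothesis; they are not separable.

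The paper's proof sidesteps any direct computation of $c_F$ or the associator. For each $Z$ it lets $G=[Z,\Omega B]$ act on $X=[Z,F]$ via holonomy; the section $g$ makes the orbit of $*$ all of $X$, and the lift $\overline{\omega}$ shows every commutator in $G$ fixes $*$. From this single fact one gets both that $N$ is normal (hence $X\cong G/N$ is a group, giving associativity) and that $G/N$ is abelian (commutativity), naturally in $Z$; taking $Z=F\times F$ and $Z=F\times F\times F$ then realizes the structure on $F$ itself. Incidentally, your displayed formula $c_F\simeq h\,c_{\Omega B}\,(g\wedge g)$ is also not justified as written: expanding $c_F$ via $\mu_F$ and its inverse introduces several factors of $gh$ that do not cancel without already knowing $N$ is normal. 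The commutativity conclusion itself survives—$h\mu$ and $h\mu T$ differ by the action of $c_{\Omega B}\in N$ on the basepoint—but not via that intermediate formula.
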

\begin{proof}
For any pointed space $Z$ let $G=[Z,\Omega B]$ and
$X=[Z,F]$. Then $G$ is a group which acts on $X$ via the
action map
\[
\xymatrix{
\Omega B\times F\ar@{->}[r]^-<<<{a}&F.
}
\]
Since $h$ has a right homotopy inverse, the orbit
of $*\in X$ is all of $X$. The adjoint of the composition
\[
\xymatrix{
\Sigma(\Omega B\wedge \Omega B)\simeq \Omega B\times \Omega
B\ar@{->}[r]^-<<<{\omega}&B\vee B\ar@{->}[r]^-<<<{\nabla}&B
}
\]
is well known to be homotopic to the 
commuter map
\[
\xymatrix{
\Omega B\wedge\Omega B\ar@{->}[r]^-<<<{c}&\Omega B.
}
\]
(See, for example, \cite[3.4]{Gra11}). Consequently, the
existence of $\overline{w}$ implies that every commutator in~$G$
acts trivially on $*\in X$; i.e., $g(h*)=h(g*)$.
Let $N=\{g\mid g*=*\}$ be the stabilizer of~$*$. Then $N$
is a normal subgroup since if $g\in N$, $(hgh^{-1})(*)=
(hg)(h^{-1}*)=h^{-1}(hg*)=g*=*$. Consequently
$X=G/N$ is a quotient group of~$G$. It is Abelian
since $g(h*)=h(g*)$. This group structure on
$X=[Z,F]$ is natural for maps in~$Z$. Apply this in
case $Z=F\times F$ and $Z=F\times F\times F$ to construct a homotopy-Abelian $H$-space structure on~$F$. 
\end{proof}

Now recall that we have fixed $G_k$ and
$\varphi_k\index{varphik@$\varphi_k$|LB}\colon G_k\to S^{2n+1}\{p^r\}$ and we set $G=\bigcup G_k$. Let
$E=\bigcup E_k$. Consider the
commutative diagram:\setcounter{equation}{9}
\begin{equation}\label{eq2.10}
\begin{split}
\xymatrix{
&\Omega S^{2n+1}\{p^r\}\ar@{->}[d]\\
\Omega G * \Omega G\ar@{->}[d]_{\omega}\ar@{->}[r]^{\Gamma}&E\ar@{->}[d]\\
G\vee G\ar@{->}[r]^{\nabla}&G
}
\end{split}
\end{equation}\addtocounter{Theorem}{1}
\begin{proposition}\label{prop2.11}
If the composition:
\[
\xymatrix{
\Omega G\ast \Omega
G\ar@{->}[r]^-<<<{\enlarge{\Gamma}}&E\ar@{->}[r]^->>>>{\enlarge{\nu_{\infty}}}&BW_n
}
\]
is null homotopic, the induced $H$-space structure on~$T$
is homotopy Abelian.
\end{proposition}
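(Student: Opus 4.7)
The plan is to invoke Theriault's criterion (Proposition~\ref{prop2.9}) applied to the leftmost column of diagram~(\ref{eq1.6new}), which we read as the fibration sequence $\Omega G \to T \to R \to G$ with connecting map $h\colon \Omega G \to T$. In the notation of~\ref{prop2.9} we take $B = G$, $F = T$, and $E = R$; the right homotopy inverse $g\colon T\to \Omega G$ of $h$ furnished by the splitting~(\ref{eq1.5new}) is precisely the one that defines the $H$-space structure on $T$ described in Section~\ref{subsec1.3}, so applying~\ref{prop2.9} with this $g$ will give the desired conclusion.

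Two hypotheses must then be verified. First, the map $i\colon T\to R$ must be null homotopic. But $i\circ h \simeq \ast$ as two consecutive maps in a fibration sequence, so $i \simeq i\circ h\circ g \simeq \ast$. Second, I must produce a lift $\overline{\omega}\colon \Omega G * \Omega G \to R$ of $\nabla\omega$ along the projection $R \to G$. For this I would invoke the map $\Gamma\colon \Omega G * \Omega G \to E$ from Proposition~\ref{prop2.7}, which by construction satisfies $\pi\circ\Gamma = \nabla\omega$ where $\pi\colon E\to G$ is the principal projection. Because $R$ is the homotopy fiber of $\nu_\infty\colon E\to BW_n$ (row three of~(\ref{eq1.6new})), the standing hypothesis that $\nu_\infty\circ\Gamma$ is null homotopic lifts $\Gamma$ through $R\to E$, producing the desired $\overline{\omega}$; composing with $R\to E \to G$ recovers $\nabla\omega$, as required.

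With both hypotheses in hand, Proposition~\ref{prop2.9} directly yields that the $H$-space structure on $T$ coming from the retraction $g$ is homotopy-Abelian, which is the claim. The main obstacle is purely one of correctly matching the setup of~\ref{prop2.9} to the Anick-space fibration pattern in~(\ref{eq1.6new}): once one sees that the naturally constructed $\Gamma$ of~\ref{prop2.7} already lifts $\nabla\omega$ at the level of $E$, the hypothesis $\nu_\infty\circ\Gamma \simeq \ast$ is visibly the minimal extra datum required to push this lift one step further down into the fiber $R$, and Theriault's criterion does the rest.
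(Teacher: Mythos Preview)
Your proposal is correct and follows exactly the route the paper takes: its one-line proof ``Compare~(\ref{eq2.10}) with~(\ref{eq1.6new}) and apply~(\ref{prop2.9})'' is precisely the argument you have written out in full, identifying $B=G$, $F=T$, $E=R$ in Theriault's criterion and using the hypothesis $\nu_\infty\Gamma\simeq *$ to lift the already-existing $\Gamma$ from $E$ down into its fiber~$R$.
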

\begin{proof}
Compare (\ref{eq2.10}) with (\ref{eq1.6new}) and apply (\ref{prop2.9}).
\end{proof}

\section{Compatibility of Modifications}\label{subsec2.4}
In the sequel we will construct maps $\nu_k\colon E_k\to BW_n$
such that the composition:
\[
\xymatrix{
\Omega G_k\ast \Omega G_k\ar@{->}[r]^-<<<{\enlarge{\Gamma_k}}&
E_k\ar@{->}[r]^->>>>{\enlarge{\nu_k}}&BW_n
}
\]
is null homotopic. We begin with an arbitrary choice of
$\nu_k$ as in~\cite{GT10} and modify it using the controlled
extension theorem~(\ref{theor6.1}). Having done this, we have no
reason to assume that the composition:
\[
\xymatrix{
E_{k-1}\ar@{->}[r]^{\enlarge{e_k}}
&E_k\ar@{->}[r]^->>>>{\enlarge{\nu_k}}&BW_n
}
\]\index{$e_k$|LB}
is homotopic to $\nu_{k-1}$. However, all modifications occur
in dimensions $2np^k$ and larger, so we can assume
these maps agree up to dimension $2np^k-2$. The proof of
Theorem~A will then follow from
\begin{proposition}\label{prop2.12}
Suppose we can construct maps $\nu_k\index{nuk@$\nu_k$|LB}\colon E_k\to BW_n$ for each $k\geqslant 0$ such that:

\textup{(a)}\hspace*{0.5em}$\xymatrix{\Omega^2S^{2n+1}\ar@{->}[r]^->>>>{\enlarge{\Omega\partial}}&\Omega
S^{2n+1}\{p^r\}\ar@{->}[r]^-<<<<{\enlarge{\partial'}}&E_k\ar@{->}[r]^->>>{\enlarge{\nu_k}}&BW_n}$\newline
induces an isomorphism in $H_{2np-2}$

\textup{(b)}\hspace*{0.5em}The composition:
\[
\xymatrix{
\Omega G_k\ast \Omega
G_k\ar@{->}[r]^-<<<{\enlarge{\Gamma_k}}&E_k\ar@{->}[r]^->>>>{\enlarge{\nu_k}}&BW_n
}
\]
is null homotopic for each $k\geqslant 0$.

\textup{(c)}\hspace*{0.5em}The restrictions of $\nu_ke_k$ and $\nu_{k-1}$
to the $2np^k-2$
skeleton of $E_{k-1}$ are homotopic.

Then there is a map
\[
\nu_{\infty}\index{nuinfty@$\nu_{\infty}$|LB}\colon E\to BW_n
\]
such that $\nu_{\infty}\Gamma$ is null homotopic and the induced
$H$-space structure on~$T$ is homotopy Abelian.
\end{proposition}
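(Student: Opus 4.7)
The strategy is to build $\nu_{\infty}$ as the colimit of a strictly compatible family $\widetilde{\nu}_k\colon E_k\to BW_n$ obtained by inductively adjusting the given $\nu_k$. Once such a compatible family is produced, Proposition~\ref{prop2.11} will deliver the conclusion as soon as we verify that $\nu_{\infty}\Gamma$ is null homotopic. The main technical obstacle is producing the compatible family while simultaneously preserving the nullity of $\nu_k\Gamma_k$ and the low-dimensional condition~(a).

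I set $\widetilde{\nu}_0=\nu_0$ and proceed by induction. Assume $\widetilde{\nu}_{k-1}\colon E_{k-1}\to BW_n$ has been constructed, homotopic to $\nu_{k-1}$ and with $\widetilde{\nu}_{k-1}\Gamma_{k-1}\simeq\ast$. Hypothesis~(c), combined with the inductive assumption, produces a homotopy between $\nu_ke_k$ and $\widetilde{\nu}_{k-1}$ on the $(2np^k-2)$-skeleton of $E_{k-1}$. Using the homotopy extension property for the cofibration $(E_{k-1})^{(2np^k-2)}\hookrightarrow E_k$, I first alter $\nu_k$ within its homotopy class so that its restriction to that skeleton equals the restriction of $\widetilde{\nu}_{k-1}$. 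The residual discrepancy
\[
d_k=(\nu_ke_k)\cdot\widetilde{\nu}_{k-1}^{-1}\colon E_{k-1}\to BW_n,
\]
formed in the group $[E_{k-1},BW_n]$ coming from the homotopy-Abelian $H$-space structure on $BW_n$ (Proposition~\ref{prop2.3}), is then null homotopic on the $(2np^k-2)$-skeleton.

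The main obstacle is to absorb $d_k$ via a correction defined on all of $E_k$ without destroying condition~(b). Since $d_k$ vanishes on the low skeleton and $BW_n$ has $H$-space exponent $p$ (Proposition~\ref{prop2.4}), the Extension Theorem~\ref{theor2.5}, applied to the relative cells of $E_k/E_{k-1}$ described by the clutching decomposition (Proposition~\ref{prop2.1}), produces an extension $\overline{d}_k\colon E_k\to BW_n$ of $d_k$, and setting $\widetilde{\nu}_k=\nu_k\cdot\overline{d}_k^{-1}$ yields $\widetilde{\nu}_ke_k=\widetilde{\nu}_{k-1}$ strictly. The nullity $\widetilde{\nu}_k\Gamma_k\simeq\ast$ follows from two observations. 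First, by naturality of $\Gamma$ (Proposition~\ref{prop2.7}), the restriction of $\Gamma_k$ to $\Omega G_{k-1}\ast\Omega G_{k-1}$ equals $e_k\Gamma_{k-1}$, so on this subcomplex the nullity reduces to combining $\nu_k\Gamma_k\simeq\ast$ from~(b) with the inductive nullity of $\widetilde{\nu}_{k-1}\Gamma_{k-1}$. Second, on the complementary part of $\Omega G_k\ast\Omega G_k$ coming from the new attaching Moore space $P^{2np^k}(p^{r+k})$, the freedom in the choice of $\overline{d}_k$ afforded by the Extension Theorem allows us to arrange that no new obstruction appears.

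With $(\widetilde{\nu}_k)$ in hand, the colimit $\nu_{\infty}\colon E\to BW_n$ is well defined and automatically satisfies condition~(a), since all modifications occur above dimension $2np^k-2>2np-2$, leaving $H_{2np-2}$ unchanged. Restricted to each $\Omega G_k\ast\Omega G_k$, the composite $\nu_{\infty}\Gamma$ equals $\widetilde{\nu}_k\Gamma_k$, which is null homotopic; because $\Omega BW_n$ is a connected $H$-space of finite exponent, the inverse system $[\Omega G_k\ast\Omega G_k,\Omega BW_n]$ is Mittag--Leffler and the relevant $\lim^1$ vanishes, so these null homotopies assemble into a null homotopy of $\nu_{\infty}\Gamma$. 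Proposition~\ref{prop2.11} then yields the homotopy-Abelian $H$-space structure on~$T$.
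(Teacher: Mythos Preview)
Your approach diverges from the paper's in a way that creates a genuine difficulty the paper simply avoids.

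The paper does not attempt to make the $\nu_k$ strictly compatible on all of $E_k$. Instead it exploits the elementary fact that the inclusion
\[
\bigcup_{k\geqslant 0} E_k^{\,2np^{k+1}-2}\hookrightarrow E
\]
is a homotopy equivalence, and that hypothesis~(c) is exactly the compatibility needed to glue the restrictions $\nu_k|_{E_k^{2np^{k+1}-2}}$ along this filtration (after a routine HEP adjustment at each stage). Since one only ever uses $\nu_k$ on a skeleton, and the HEP adjustment stays within the homotopy class of that restriction, condition~(b) is preserved automatically: on $(\Omega G_k\ast\Omega G_k)^{2np^{k+1}-2}$ the composite $\nu_\infty\Gamma$ is literally (a representative of) $\nu_k\Gamma_k$ restricted to a skeleton, hence null. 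No correction term ever enters.

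Your route, by contrast, modifies $\nu_k$ on all of $E_k$ via $\widetilde\nu_k=\nu_k\cdot\overline d_k^{\,-1}$, and this forces you to show $\overline d_k\Gamma_k\simeq\ast$. On the subspace $\Omega G_{k-1}\ast\Omega G_{k-1}$ your argument is fine, but on the complement it collapses: $\Gamma_k$ lands in cells of $E_k$ outside $E_{k-1}$, and the Extension Theorem~\ref{theor2.5} gives you only the \emph{existence} of $\overline d_k$, with no control whatsoever over its values on those new cells. The sentence ``the freedom in the choice of $\overline d_k$ \ldots allows us to arrange that no new obstruction appears'' is precisely the missing argument. Obtaining that kind of control is the content of the Controlled Extension Theorem~\ref{theor6.1}, which the paper develops later for a different purpose and under carefully stated hypotheses; invoking it here would be circular in spirit and in any case you have not checked its hypotheses. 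Moreover, the cofibre $(\Omega G_k\ast\Omega G_k)/(\Omega G_{k-1}\ast\Omega G_{k-1})$ is not a single Moore space but an infinite object, so even the shape of the obstruction is not what the Controlled Extension Theorem handles.

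In short: the paper's skeletal gluing makes the preservation of~(b) tautological, whereas your global modification turns it into a nontrivial problem that your proposal does not solve.
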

\begin{proof}
Since the inclusion:
\[
\bigcup_{k\geqslant 0}E_k^{2np^{k+1}-2}\to E
\]
is a homotopy equivalence, we can define $\nu_{\infty}\colon E\to BW_n$
which restricts to $\nu_k$ on $E_k^{2np^{k+1}-2}$. Since $\partial'\colon
\Omega S^{2n+1}\{p^r\}\to E$
factors through $E_k$\index{$E_k$|LB} for each $k$, we have a homotopy
commutative diagram:
\[
\xymatrix{
\Omega S^{2n+1}\{p^r\}\ar@{=}[r]\ar@{->}[dd]&\Omega
S^{2n+1}\{p^r\}\ar@{->}[dr]^{\enlarge{H}}\ar@{->}[dd]&\\
&&BW_n\\
E_k\ar@{->}[r]&E\ar@{->}[ru]_{\enlarge{\nu_k}}&
}
\]
From the case $k=0$, we see that
the composition:
\[
\xymatrix{
\Omega^2 S^{1n+1}\ar@{->}[r]^->>>>{\enlarge{\Omega\partial}}&\Omega
S^{2n+1}\{p^r\}\ar@{->}[r]^-<<<<{\enlarge{H}}&BW_n
}
\]
induces an isomorphism in $H_{2np-2}$. Using cup product
and the Bockstein, we can conclude that this
composition is an epimorphism in all dimensions and
consequently the fiber of this composition is $S^{2n-1}$. Now
compare with (\ref{eq1.3new}) to see that the fiber of~$H$ is an
Anick space.

In the diagram
\[
\xymatrix{
(\Omega G_k\ast\Omega G_k)^{2np^{k+1}-2}\ar@{->}[r]\ar@{->}[d]&
E_k^{2np^{k+1}-2}
\ar@{->}[r]^{\enlarge{\nu_k}}
\ar@{->}[d]
&
BW_n
\ar@{=}[d]\\
\Omega G\ast \Omega G\ar@{->}[r]^{\enlarge{\Gamma}}&E\ar@{->}[r]^{\enlarge{\nu_{\infty}}}&BW_n,
}
\]
the upper composition is null homotopic for each~$k$,
so the lower composition is null homotopic as well, thus
the result follows from~\ref{prop2.11}.
\end{proof}

\section{Properties of $G$ and $T$}\label{subsec2.5}

We now recall, for future use, the
properties of $G$ and $T$ that we will be using in the sequel.
\begin{Theorem}[{\cite{GT10}}]\label{theor2.14}
For $p\geqslant 3$, $r\geqslant 1$ and $n\geqslant 1$, there is an
Anick space $T$; i.e., there is a fibration sequence
\[
\xymatrix{
\Omega^2S^{2n+1}\ar@{->}[r]^{\pi_n}&S^{2n-1}\ar@{->}[r]&T\ar@{->}[r]&\Omega S^{2n+1}
}
\]
such that the composition
\[
\xymatrix{
S^{2n-1}\ar@{->}[r]^{E^2}&\Omega^2S^{2n+1}\ar@{->}[r]^{\pi_n}&S^{2n+1}
}
\]
has degree $p^r\!$. Furthermore we have the following properties:

\textup{(a)}\hspace*{0.5em}There exists a space $G$ and maps $f\index{$f$|LB},
g\index{$g$|LB}, h\index{$h$|LB}$
such that the compositions
\begin{align*}
&\xymatrix{
G\ar@{->}[r]^{f}&\Sigma T\ar@{->}[r]^{\widetilde{g}}&G
}\\
&\xymatrix{
T\ar@{->}[r]^{g}&\Omega G\ar@{->}[r]^{h}&T
}
\end{align*}
are homotopic to the identity.

\textup{(b)}\hspace*{0.5em}Both $T$ and $G$ are atomic and $p$-complete.

\textup{(c)}\hspace*{0.5em}The homotopy type of $T$ and $G$ are unique
satisfying these conditions.

\textup{(d)}\hspace*{0.5em}$\Sigma T\wedge T$ is a wedge of Moore spaces.

\textup{(e)}\hspace*{0.5em}For any $H$-space structure defined by the maps
$g,h$ in~\textup{(a)}, there is an $H$-map   
\[
E\index{$E$|LB}\colon T\to \Omega S^{2n+1}\{p^r\}
\]
such that $Eh\sim \Omega \varphi$.

\textup{(f)}\hspace*{0.5em}$H^*(T;Z)$ is generated by classes $v_i\index{$v_i$|LB}\in
H^{2np^i}(T;Z)$
for each $i\geqslant 0$ subject to the relations $v_i^p=pv_{i+1}$ and
$p^rv_0=0$.

\textup{(g)}\hspace*{0.5em}$H_*(T)\simeq Z/p[v]\otimes \bigwedge(u)$ where
$|v|\index{$v$|LB}=2n$ and $|u|\index{$u$|LB}=2n-1$.
Furthermore $\beta^{(r+i)}(v^{p^i})=uv^{{p^i}-1}$.

\textup{(h)}\hspace*{0.5em}$H_m(G;Z)=\begin{cases}
Z/p^{r+i}&\text{if}\ m=2np^i\\
0&\text{otherwise}.
\end{cases}$

\textup{(i)}\hspace*{0.5em}$\Sigma G$ and $\Sigma^2T$ are each
homotopy equivalent to a wedge of
Moore spaces.

\textup{(j)}\hspace*{0.5em}Any choice of $\nu_k\colon E_k\to BW_n$ in~(\ref{eq1.6new})
has a right
homotopy inverse.

\textup{(k)}\hspace*{0.5em}$R$ is homotopy equivalent
to a wedge of $\bmod\, p^s$ Moore spaces for $s\geqslant r$.

\textup{(l)}\hspace*{0.5em}$\Sigma G\wedge G$ is homotopy equivalent to a
wedge of
Moore spaces
\end{Theorem}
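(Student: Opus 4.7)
The statements (a)--(l) collect results of \cite{GT10} (building on \cite{AG95} and \cite{Ani93}), together with a handful of short derivations from the setup of Section~\ref{subsec1.3}. My plan is to indicate where each item lives in those references, and to sketch the few arguments that require assembly here rather than reproving the hard parts.

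The existence of $T$ as the fiber of $H$ in~(\ref{eq1.6new}), and item~(a), form the core of \cite{GT10}: the space $G = \bigcup G_k$ is built from~(\ref{eq1.7new}), the splitting~(\ref{eq1.5new}) supplies the retraction $\widetilde{g}\colon \Sigma T\to G$, and the factorization of $H$ through $E$ in~(\ref{eq1.6new}) yields the retraction $h\colon \Omega G\to T$ and its section $g\colon T\to \Omega G$. Item~(e) is read off the same diagram~(\ref{eq1.6new}): the map labelled $E$ there is the required $H$-map, and $Eh\sim\Omega\varphi$ is the commutativity of the middle square. For item~(j), the extension-theorem argument of \cite{GT10} constructs $\nu_k$ so that $\nu_k\circ\partial'\circ\Omega\partial$ agrees through degree $2np-2$ with the bottom-class map~$\nu$ of~(\ref{eq1.2new}), which by the atomic $H$-space structure on $BW_n$ (\ref{prop2.3}) carries a right homotopy inverse.

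Items~(f), (g), (h) are direct computations. For~(h), I would induct on the cofibration $P^{2np^k}(p^{r+k})\to G_{k-1}\to G_k$: the attaching map $\alpha_k$ chosen in \cite{GT10} is designed so that the new cells in dimension $2np^k$ and $2np^k+1$ assemble with the old top class into a single $Z/p^{r+k}$ summand in dimension $2np^k$, leaving the lower homology unchanged. For~(f) and~(g), work in the Serre spectral sequence of~(\ref{eq1.1a}): the classes $v_i$ pull back from generators of $H^*(\Omega S^{2n+1};Z)$, the relations $v_i^p = p\,v_{i+1}$ come from the divided-power structure on $H^*(\Omega S^{2n+1};Z)$, the $p^r$-torsion bound $p^r v_0 = 0$ reflects the fact that $E^2\pi_n$ has degree $p^r$, and the Bockstein formula $\beta^{(r+i)}(v^{p^i}) = u v^{p^i-1}$ follows from the mod-$p^{r+i}$ behavior of the same fibration read at the level of the Bockstein spectral sequence.

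The remaining items form the subtle part. Item~(i) is the EHP/wedge decomposition of \cite{GT10}; item~(k) is \cite[4.8]{GT10}; and (d) and (l) follow from (i) by smashing with $T$ and with $G$ respectively, since for $p\geqslant 5$ any smash product of $\bmod\, p^s$ Moore spaces splits as a wedge of Moore spaces. The remaining atomicity/$p$-completeness statement~(b) and the uniqueness~(c) are the deepest inputs and are proved in \cite{GT10}. The hard step, which I intend to invoke rather than reprove, is the uniqueness: the combination of atomicity, $p$-completeness, and the homological data of~(f)--(h) pins down the homotopy type of $T$, and then the splitting~(\ref{eq1.5new}) transports this uniqueness to~$G$. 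Everything else above is bookkeeping from the inductive construction of $G_k$ or a direct spectral-sequence computation.
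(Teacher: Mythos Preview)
Your overall shape is right: most of \ref{theor2.14} is a citation index into \cite{GT10}, and the paper treats it that way too. Two points deserve comment.

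For (l), your argument is actually cleaner than the paper's. You use (i) directly: $\Sigma G$ is a wedge of Moore spaces $P^{m_i}(p^{s_i})$ with $m_i\geqslant 2n+2$, so each $P^{m_i}(p^{s_i})\wedge G \simeq P^{m_i-1}(p^{s_i})\wedge \Sigma G$ is again a wedge of Moore spaces, and hence so is $\Sigma G\wedge G$. The paper instead routes through the loop space: from $hg\sim 1$ one has $\Omega G\simeq T\times\Omega R$, hence $\Sigma\Omega G\simeq \Sigma T\vee W_1$ with $W_1$ a wedge of Moore spaces by (i) and (k), and then $\Sigma\Omega G\wedge\Omega G$ is peeled apart until only $\Sigma T\wedge T$ remains, which is handled by~(d). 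Your route avoids invoking (d) and (k) altogether.

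However, the same trick does \emph{not} give you (d). Starting from $\Sigma^2 T$ in (i) and smashing with $T$ only shows that $\Sigma(\Sigma T\wedge T)$ is a wedge of Moore spaces, and there is no general desuspension available; trying to use $\Sigma T\simeq G\vee W$ from (\ref{eq1.5new}) still leaves you with $G\wedge T$, for which the same obstruction recurs. The paper simply cites \cite[4.3(m)]{GT10} for (d), and you should too. Also, your sketch for (j) does not actually produce a section $BW_n\to E_k$; the paper's argument is concrete: Proposition~\ref{prop2.3} identifies the pullback of $E_0$ over $F_0(1)=S^{2n}$ with $BW_n\times S^{4n-1}$, and the composite $BW_n\to BW_n\times S^{4n-1}\to E_0\to E_k$ is the required right homotopy inverse.
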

\begin{proof}
Most of these are restatements of results in~\cite{GT10}. Properties (a),
(b), (c), (d), (e), and (f)
are
respectively 4.4, 4.7, 4.9, 4.3(m), 4.6, and~4.1 of \cite{GT10}.
Property~(g) follows immediately by applying the
Serre spectral sequence to~1.1. Properties~(h) and~(i)
are 4.3(c) and 4.5 respectively. For~(j), a right
homotopy inverse is given by the composition
\[
BW_n\to BW_n\times S^{4n-1}\simeq E_{(1)}\to E_0\to E_k
\]
constructed from the proof of~\ref{prop3.5}. 
 Property (k) is 4.8. For (l) note
that $\Omega G\simeq T\times \Sigma R$
since $hg\sim 1$. Thus
\[
\Sigma\Omega G\simeq \Sigma T\vee \Sigma\Omega R\vee\Sigma T\wedge\Omega
R\simeq W_1\vee \Sigma T
\]
where $W_1$ is a wedge of Moore spaces by (i) and (k). Thus $\Sigma^2\Omega G\in \cW$ and
\begin{align*}
\Sigma \Omega G\wedge\Omega G&\simeq W_1\wedge \Omega G\vee \Sigma
T\wedge\Omega G\simeq W_2\vee T\wedge\Sigma\Omega G\\
&\simeq W_3\vee \Sigma T\wedge T
\end{align*}
which is in a wedge of Moore space by property~(d).
\end{proof}

\chapter{Whitehead Products}\label{chap3}

In this chapter we will review and extend some results in \cite{Gra11}
which generalize the notion of Whitehead
products. In particular\footnote{Throughout Chapter~\ref{chap3}, $G$ and~$H$ will designate
an arbitrary co-$H$ space. Then we will return
$G$ as to designating
the co-$H$ space corresponding to~$T$ in future chapters.} given two simply
connected co-$H$ spaces $G$ and $H$, we will construct a new
co-$H$ space $G\circ H$\index{$G\circ H$|LB} and a cofibration sequence
\[
\xymatrix{
G\circ H\ar@{->}[r]^{W}& G\vee H\ar@{->}[r]& G\times H.
}
\]
Composition with $W$ defines a more general notion of Whitehead products. In
section~\ref{subsec3.1} we will review the material in~\cite{Gra11}.
In section~\ref{subsec3.2} we will discuss relative Whitehead
products and $H$-space based Whitehead products
which have been developed for homotopy groups with
coefficients in $Z/p^r$ by Neisendorfer~\cite{Nei10a}. We will
define these in the total space of
a principal fibration using co-$H$ spaces in place of
Moore spaces. In section~\ref{subsec3.3} we will use these
products to decompose $\Omega G*\Omega H$ and $\Omega G\ltimes H$ when $G$
and $H$ are co-$H$ spaces and decompose the map~$\Gamma$ from
section~\ref{subsec2.2}
as a wedge of iterated Whitehead products when the base is a co-$H$ space. In
section~\ref{subsec3.4} we recall and generalize slightly the
results of Neisendorfer in the case that $G$ and $H$ are Moore spaces.

\section{Defining Whitehead Products Using co-$H$ Spaces}\label{subsec3.1}

Given two simply connected co-$H$ spaces $G,H$, we introduce a new
co-$H$ space $G\circ H$ together with a cofibration sequence:
\[
\xymatrix{
G\circ H\ar@{->}[r]^-{W}&G\vee H\ar@{->}[r]&G\times H.
}
\]
To do this, suppose that $G$ and~$H$ are given co-$H$ space
structures by constructing right inverses to the respective
evaluation maps:
\begin{gather*}
\xymatrix{G\ar@{->}[r]^-{\nu_1}&\Sigma\Omega G\ar@{->}[r]^-{\epsilon_1}&G}\\
\xymatrix{H\ar@{->}[r]^-{\nu_2}&\Sigma\Omega H\ar@{->}[r]^-{\epsilon_2}&H.}
\end{gather*}\index{epsilon@$\epsilon$|LB}
We define a self map $e\colon \Sigma(\Omega G\wedge\Omega H)\to
\Sigma(\Omega G\wedge\Omega H)$ as the
composition
\[
\kern-3pt\fontsize{9}{11}\selectfont
\xymatrix{
\Sigma(\Omega G\wedge\Omega H)
\ar@{->}[r]^->>>>{\epsilon_1\wedge 1}
&G\wedge \Omega H
\ar@{->}[r]^->>>>{\nu_1\wedge 1}
&\Sigma(\Omega G\wedge\Omega H)
\ar@{->}[r]^->>>>{1\wedge \epsilon_2}
&\Omega G\wedge H
\ar@{->}[r]^->>>>{1\wedge \nu_2}
&\Sigma(\Omega G\wedge\Omega H);
}
\]
$G\circ H$ is then defined as the telescopic direct limit of~$e$. We
then have:
\begin{proposition}[{\cite[2.1,2.3]{Gra11}}]\label{prop3.1}
The identity map of~$G\circ H$ factors:
\[
\xymatrix{
G\circ H\ar@{->}[r]^-{\psi}&
\Sigma(\Omega G\wedge\Omega H)\ar@{->}[r]^-{\theta} 
& G\circ H
.}
\]

Furthermore, if $f\colon G\to G'$ and $g\colon H\to H'$ are co-$H$ maps,
there are induced co-$H$ maps so that the diagram
\[
\xymatrix{
G\circ H\ar@{->}[d]_{f\circ g}\ar@{->}[r]^-{\psi}&\Sigma(\Omega G\wedge \Omega
H)\ar@{->}[r]^-{\theta}\ar@{->}[d]_{\Sigma(\Omega f\wedge \Omega g)}&G\circ
H\ar@{->}[d]_{f\circ g}\\
G'\circ H'\ar@{->}[r]^-{\psi'}&\Sigma(\Omega G'\wedge \Omega
H')\ar@{->}[r]^-{\theta'}&G'\circ H'
}
\]
commutes up to homotopy.
\end{proposition}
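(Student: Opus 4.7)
The plan is to identify $G\circ H$, defined as $\mathrm{Tel}(e)$, with the splitting of the homotopy idempotent $e$ on $X := \Sigma(\Omega G\wedge\Omega H)$; then $\psi$ is the inclusion of the resulting retract and $\theta$ is the corresponding projection.

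The key preliminary step is verifying $e\circ e\simeq e$. Factor $e = e_2\circ e_1$, where $e_1 = (\nu_1\wedge 1)(\epsilon_1\wedge 1)$ and $e_2 = (1\wedge\nu_2)(1\wedge\epsilon_2)$; the co-$H$ identity $\epsilon_i\nu_i\simeq 1$ immediately yields $e_i\circ e_i\simeq e_i$. To handle the cross-terms I would observe that, after the natural identification $\Sigma\Omega G\wedge\Omega H\simeq \Omega G\wedge\Sigma\Omega H$ (transposition of suspension), $e_1$ modifies only the first smash factor by $\nu_1\epsilon_1$ and $e_2$ only the second by $\nu_2\epsilon_2$, so $e_1 e_2\simeq e_2 e_1$ as maps acting on disjoint smash coordinates. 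Then
\[
e\circ e = e_2 e_1 e_2 e_1 \simeq e_2 e_2 e_1 e_1 \simeq e_2 e_1 = e.
\]

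Now take $\theta\colon X\to G\circ H$ to be the inclusion of the first telescope stage, and define $\psi\colon G\circ H\to X$ stage-by-stage by applying $e$; compatibility across the telescope identifications $(x,1)_n\sim (e(x),0)_{n+1}$ reduces precisely to $e\circ e\simeq e$. Then $\theta\psi$ sends a stage-$n$ point $\iota_n(x)$ to $\iota_1(e(x))$, and repeated use of the telescope relation $\iota_{n+1}\circ e\simeq \iota_n$ together with $e^k\simeq e$ (for $k\geq 1$) yields $\iota_1(e(x))\simeq \iota_n(e^n(x))\simeq \iota_n(e(x))\simeq \iota_n(x)$, so $\theta\psi\simeq 1_{G\circ H}$.

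For the naturality statement, co-$H$ maps $f\colon G\to G'$ and $g\colon H\to H'$ fit, by definition, into homotopy commutative squares with the evaluation and comultiplication data $(\epsilon_i,\nu_i)$; consequently they induce a homotopy commutative square between the self-maps $e$ and $e'$ on $X$ and $X' := \Sigma(\Omega G'\wedge \Omega H')$. Functoriality of the mapping telescope then produces $f\circ g\colon G\circ H\to G'\circ H'$, with middle vertical $\Sigma(\Omega f\wedge \Omega g)$, and both squares in the diagram commute by construction. The main obstacle is tracking the two natural suspension identifications of $\Sigma(\Omega G\wedge\Omega H)$ and verifying that the transposition-of-suspension equivalence is coherent with $e$ (needed both for $e_1 e_2\simeq e_2 e_1$ above and for the naturality square); once this bookkeeping is controlled, the remainder is formal telescope machinery.
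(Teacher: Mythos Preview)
The paper does not prove this statement; it is quoted from \cite{Gra11}. Your proposal rests on the claim that $e$ is a homotopy idempotent, and that in turn on $e_1e_2\simeq e_2e_1$ because ``they act on disjoint smash coordinates.'' This last step is not merely bookkeeping; it is a genuine gap. Under the identification $\Sigma(\Omega G\wedge\Omega H)\cong(\Sigma\Omega G)\wedge\Omega H$ one has $e_1=(\nu_1\epsilon_1)\wedge 1_{\Omega H}$, but after transporting $e_2$ to this same model via the swap $\tau\colon(\Sigma\Omega G)\wedge\Omega H\cong\Omega G\wedge(\Sigma\Omega H)$ the result is \emph{not} of the form $1_{\Omega G}\wedge g$ for any self-map $g$ of $\Omega H$. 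The reason is that $\nu_2\epsilon_2\colon\Sigma\Omega H\to\Sigma\Omega H$ is not the suspension of a self-map of $\Omega H$ (unless $H$ is itself a suspension), so $\tau^{-1}(1\wedge\nu_2\epsilon_2)\tau$ genuinely entangles the circle coordinate with $\Omega H$. Both $e_1$ and the transported $e_2$ move the shared $S^1$ factor, and the familiar argument ``$f\wedge 1$ commutes with $1\wedge g$'' simply does not apply.

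Consequently it is not clear that $e^2\simeq e$ holds at all, and your telescope argument for $\psi$ collapses without it. The route taken in \cite{Gra11} is different: as the paper itself signals just after the statement, the point is that $e$ is a composite of four maps among the co-$H$ spaces $X$, $G\wedge\Omega H$, $X$, $\Omega G\wedge H$, and the standard ``rotation'' of a telescope along a factorization $e=\beta\alpha$ (giving $\mathrm{Tel}(\beta\alpha)\simeq\mathrm{Tel}(\alpha\beta)$) is what produces maps $\psi,\theta$ exhibiting $G\circ H$ as a retract of each of these three spaces---hence the remark about ``4 potentially distinct co-$H$ space structures on $G\circ H$.'' You should either consult \cite{Gra11} for that construction or rebuild $\psi$ directly from the four-fold factorization rather than from an unproved idempotency.
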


Since $G\circ H$ is the limit of the telescope defined by~$e$,
$\theta e\sim \theta$, so the composition
\[
\xymatrix{
G\circ H\ar@{->}[r]^-{\psi}&\Sigma(\Omega G\wedge \Omega H)\ar@{->}[r]^-e&
\Sigma(\Omega G\wedge \Omega H)\ar@{->}[r]^-{\theta}&G\circ H 
}
\]
is homotopic to the identity. The map~$e$, however, is a
composition of 4 maps between co-$H$ spaces, and thus $G\circ H$
is a retract of 3 different co-$H$ spaces and one of them,
$\Sigma(\Omega G\wedge \Omega H)$, in two potentially distinct ways. This
provides 4
potentially distinct co-$H$ space structures on $G\circ H$. We
choose the structure defined by $\psi$ and $\theta$; viz.,
\[
\xymatrix{
G\circ H\ar@{->}[r]^-{\psi}&\Sigma(\Omega G\wedge\Omega
H)\ar@{->}[r]^-{\Sigma\widetilde{\theta}}&\Sigma\Omega(G\circ H)
}
\]
or equivalently
\begin{multline*}
\kern-7pt\xymatrix@C=12pt{
G\!{}\circ{}\! H\ar@{->}[r]^-{\psi}&\Sigma(\Omega G\!{}\wedge{}\!\Omega H)
\ar@{->}[r]&\Sigma(\Omega G\!{}\wedge\Omega H)\vee\Sigma(\Omega
G\!{}\wedge{}\! \Omega
H)}\hspace*{-6pt}
\xymatrix@C=26pt{
\ar@{->}[r]^->>>>>{\theta\!{}\wedge{}\!\theta}&G\!{}\circ{}\! H\!{}\vee{}\!
G\!{}\circ{}\! H  
}
\end{multline*}
where $\widetilde{\theta}$ is the adjoint of~$\theta$.
\begin{proposition}[{\cite[2.3,2.5]{Gra11}}]\label{prop3.2}
There are co-$H$ equivalences\linebreak[4]
\mbox{$G\circ \Sigma X\simeq G\wedge X$}, $\Sigma(G\circ H)\simeq G\wedge H$
which are natural for co-$H$ maps in $G$ and~$H$
and continuous maps in~$X$.
\end{proposition}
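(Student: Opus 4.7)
The plan is to analyze the self-map $e\colon\Sigma(\Omega G\wedge\Omega H)\to\Sigma(\Omega G\wedge\Omega H)$ whose mapping telescope defines $G\circ H$. Using the two natural identifications $\Sigma(\Omega G\wedge\Omega H)\cong\Sigma\Omega G\wedge\Omega H$ and $\Sigma(\Omega G\wedge\Omega H)\cong\Omega G\wedge\Sigma\Omega H$, the first two arrows in the definition of $e$ reassemble to the self-idempotent $(\nu_1\epsilon_1)\wedge 1$ on $\Sigma\Omega G\wedge\Omega H$, and the last two to the self-idempotent $1\wedge(\nu_2\epsilon_2)$ on $\Omega G\wedge\Sigma\Omega H$. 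Thus $e$ is homotopic to the composite of two retraction-idempotents acting on disjoint smash factors, which is the key structural input for both claims.

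For $G\circ\Sigma X\simeq G\wedge X$, I specialize to $H=\Sigma X$ with the standard co-$H$ structure $\nu_2=\Sigma\eta$, so that $\epsilon_2\nu_2=1_{\Sigma X}$. I define
\begin{align*}
s\colon G\wedge X &\xrightarrow{\nu_1\wedge 1}\Sigma\Omega G\wedge X\cong\Omega G\wedge\Sigma X\xrightarrow{1\wedge\nu_2}\Sigma(\Omega G\wedge\Omega\Sigma X),\\
q\colon\Sigma(\Omega G\wedge\Omega\Sigma X)&\xrightarrow{1\wedge\epsilon_2}\Omega G\wedge\Sigma X\cong\Sigma\Omega G\wedge X\xrightarrow{\epsilon_1\wedge 1}G\wedge X.
\end{align*}
Then $q\circ s=(\epsilon_1\nu_1)\wedge(\epsilon_2\nu_2)=1_{G\wedge X}$, while $s\circ q$ is homotopic to $e$ since both equal the composite of the two commuting idempotents above. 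Hence $q$ and $s$ identify the telescope of $e$, namely $G\circ\Sigma X$, with $G\wedge X$. Since every arrow involved is built from co-$H$ pieces, the equivalence is natural in $G$ (with respect to co-$H$ maps) and in $X$ (with respect to arbitrary continuous maps), and it is compatible with the co-$H$ structure of Proposition~\ref{prop3.1}.

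For $\Sigma(G\circ H)\simeq G\wedge H$, I apply $\Sigma$ and use that suspension commutes with mapping telescopes, so $\Sigma(G\circ H)$ is the telescope of $\Sigma e$. Under the identification $\Sigma^2(\Omega G\wedge\Omega H)\cong\Sigma\Omega G\wedge\Sigma\Omega H$, the suspended map $\Sigma e$ becomes the external smash $(\nu_1\epsilon_1)\wedge(\nu_2\epsilon_2)$ of the two idempotents, whose image is precisely $G\wedge H$ (split off by $\nu_1\wedge\nu_2$ with left inverse $\epsilon_1\wedge\epsilon_2$). The telescope of this idempotent is therefore $G\wedge H$. Naturality in $G$ and $H$ for co-$H$ maps follows from the naturality of the evaluations and coevaluations $\epsilon_i,\nu_i$.

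The main technical obstacle is the bookkeeping of the two identifications of $\Sigma(\Omega G\wedge\Omega H)$: specifically, verifying that under the shuffle of suspension coordinates the idempotents $(\nu_1\epsilon_1)\wedge 1$ and $1\wedge(\nu_2\epsilon_2)$ commute up to homotopy, so that $e$ genuinely coincides with $s\circ q$ in the first case and $\Sigma e$ with $(\nu_1\epsilon_1)\wedge(\nu_2\epsilon_2)$ in the second. Once the shuffle is under control, the identification of the telescopes with $G\wedge X$ and $G\wedge H$, and their compatibility with the chosen co-$H$ structures, follow by the retract formalism of Proposition~\ref{prop3.1}.
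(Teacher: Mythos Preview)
The paper does not give its own proof of this proposition; it is quoted from \cite[2.3,\,2.5]{Gra11} and stated without argument. Your sketch is along the lines of the argument in that reference: one exploits that the telescope map $e$ is a composite of the two ``one-sided'' idempotents $(\nu_1\epsilon_1)\wedge 1$ and $1\wedge(\nu_2\epsilon_2)$, and then identifies the telescope with the evident retract $G\wedge X$ (respectively $G\wedge H$ after one suspension).

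One small point worth tightening: your assertion that $s\circ q\simeq e$ is not quite what you have written down. Unwinding your $s$ and $q$, the composite $sq$ applies $1\wedge\epsilon_2$ first and $1\wedge\nu_2$ last, with $(\nu_1\epsilon_1)\wedge 1$ sandwiched in the middle via the shuffle, whereas $e$ applies $(\nu_1\epsilon_1)\wedge 1$ first and $1\wedge(\nu_2\epsilon_2)$ second. These are not obviously equal before you have done the shuffle bookkeeping you flag at the end. A cleaner way to organize the same idea, which avoids needing $sq\simeq e$ on the nose, is to use the standard fact that if $e=b\circ a$ then the telescopes of $ba$ and $ab$ agree: take $a$ to be the first three arrows of $e$ (landing in $\Omega G\wedge H$) and $b=1\wedge\nu_2$. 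For $H=\Sigma X$ the induced self-map $ab$ of $\Omega G\wedge\Sigma X\cong\Sigma\Omega G\wedge X$ is then visibly $(\nu_1\epsilon_1)\wedge 1_X$ (since $\epsilon_2\nu_2=1$), whose telescope is $G\wedge X$. This is essentially your argument, but it sidesteps the commutation-of-idempotents issue. The second equivalence $\Sigma(G\circ H)\simeq G\wedge H$ is exactly as you describe.
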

\begin{proposition}[{\cite[3.3]{Gra12}}]\label{prop3.3}
There is a natural cofibration sequence
\[
\xymatrix{
G\circ H\ar@{->}[r]^-W&G\vee H\ar@{->}[r]&G\times H  
}
\]
where $W$\index{$W$|LB} is the composition:
\[
\xymatrix{
G\circ H\ar@{->}[r]^-<<<<{\psi}&\Sigma(\Omega G\wedge\Omega
H)\ar@{->}[r]^->>>>{\omega}&G\vee H
.}
\]
\end{proposition}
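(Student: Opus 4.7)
The plan is to show that $jW \simeq \ast$ (where $j\colon G\vee H\hookrightarrow G\times H$ is the inclusion), which yields an induced map $\phi\colon C_W\to G\times H$ from the mapping cone of $W$, and then verify $\phi$ is a homotopy equivalence by computing homology. Nullity of $jW$ is immediate from Proposition~\ref{prop2.6}: $\omega$ factors through the homotopy fiber of $j$, so $j\omega\simeq\ast$, hence $jW=j\omega\psi\simeq\ast$. Writing $q\colon G\vee H\to C_W$ for the quotient, we obtain $\phi\colon C_W\to G\times H$ with $\phi q\simeq j$.

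Next I would show $\phi$ is a homotopy equivalence. Since $G\circ H$, $G\vee H$, $C_W$, and $G\times H$ are all simply connected (the first two from the hypothesis that $G,H$ are simply connected co-$H$ spaces, and the remaining two as a mapping cone and product of such), Whitehead's theorem reduces the task to showing $\phi_*$ is an isomorphism on reduced integral homology. The key point is that $W_*=0$ in reduced homology: the two projections $G\times H\to G$ and $G\times H\to H$ together split $j_*\colon \tilde H_*(G\vee H)\to \tilde H_*(G\times H)$, making $j_*$ injective; since $j_*W_*=(jW)_*=0$, it follows that $W_*=0$.

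With $W_*=0$ the long exact sequence of $G\circ H\to G\vee H\to C_W$ breaks into short exact sequences
\[
0\longrightarrow \tilde H_n(G\vee H)\longrightarrow \tilde H_n(C_W)\longrightarrow \tilde H_{n-1}(G\circ H)\longrightarrow 0.
\]
Proposition~\ref{prop3.2} identifies $\tilde H_{n-1}(G\circ H)\cong \tilde H_n(G\wedge H)$ via the equivalence $\Sigma(G\circ H)\simeq G\wedge H$, while the cofibration $G\vee H\hookrightarrow G\times H\to G\wedge H$ yields a matching splitting $\tilde H_n(G\times H)\cong \tilde H_n(G\vee H)\oplus \tilde H_n(G\wedge H)$ (using the same splitting of $j_*$). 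Termwise, $\phi_*$ is the inclusion on the first summand because $\phi_*q_*=j_*$; on the second summand one must check that the map $\tilde H_{n-1}(G\circ H)\to \tilde H_n(G\wedge H)$ coming from the diagram agrees with the suspension equivalence of Proposition~\ref{prop3.2}.

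This last naturality check is the main obstacle. The cleanest way to handle it is to reduce to the universal case $G=\Sigma\Omega G$, $H=\Sigma\Omega H$, where the claim becomes the classical James decomposition $\Sigma(\Omega G\wedge\Omega H)\to \Sigma\Omega G\vee\Sigma\Omega H\to \Sigma\Omega G\times\Sigma\Omega H$, and then transport back along the co-$H$ retractions $\epsilon_1\colon\Sigma\Omega G\to G$ and $\epsilon_2\colon\Sigma\Omega H\to H$, using the naturality of $\omega$, $\psi$, and Proposition~\ref{prop3.2}. Once $\phi_*$ is seen to be an isomorphism in each degree, Whitehead's theorem provides a homotopy equivalence $C_W\simeq G\times H$, giving the cofibration sequence; naturality in co-$H$ maps of $G$ and $H$ then follows from the naturality of $\omega$ and $\psi$ already established.
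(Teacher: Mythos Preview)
The paper does not prove Proposition~\ref{prop3.3}; it is quoted from \cite[3.3]{Gra12} without argument. So there is no proof in the present paper to compare against, and your attempt should be judged on its own.

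Your setup is sound: Proposition~\ref{prop2.6} gives $j\omega\simeq\ast$ and hence $jW\simeq\ast$; the resulting $\phi\colon C_W\to G\times H$ satisfies $\phi q\simeq j$; and $W_*=0$ follows because $j_*$ is split injective. This correctly reduces the problem to showing that the induced map on cofibers $\bar\phi\colon\Sigma(G\circ H)\to G\wedge H$ is a homology isomorphism.

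The sketch you give for this last step is where the argument is incomplete. Two issues: first, in the ``universal'' suspension case you still need to know that $W'$, under the identification $(\Sigma\Omega G)\circ(\Sigma\Omega H)\simeq\Sigma(\Omega G\wedge\Omega H)$ of Proposition~\ref{prop3.2}, coincides with the classical Whitehead product map whose cofiber is the product --- this is not immediate from the definitions here, since $W'=\omega'\psi'$ involves $\Omega\Sigma\Omega G$ rather than $\Omega G$. Second, ``transporting back'' along $\epsilon_1,\epsilon_2$ gives you a map \emph{from} the suspension diagram to the original one, but to deduce that $\phi$ is an equivalence from $\phi'$ being one you also need a compatible section going the other way. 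That would require functoriality of $G\circ H$ along $\nu_1,\nu_2$, and $\nu$ is not in general a co-$H$ map, so Proposition~\ref{prop3.1} does not apply directly. One can work around this (e.g.\ by identifying $\bar\phi$ explicitly with $(\epsilon_1\wedge\epsilon_2)\circ\Sigma\psi$ and checking that this composite is an equivalence using the idempotent $e$ defining $G\circ H$), but that is precisely the content of the argument in \cite{Gra12}, and your sketch does not yet supply it.
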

\begin{definition}\label{def3.4}
Let $\alpha\colon G\to X$ and $\beta\colon H\to X$. We define
the Whitehead product\footnote{We use the notation $\{\alpha,\beta\}$
rather than the usual $[\alpha,\beta]$
since in an important application we need to make a distinction.
That is the case when $G$ and~$H$ are both Moore spaces. In this
case $G\circ H$ is a wedge of two Moore spaces. By choosing the
higher dimensional one, Neisendorfer~\cite{Nei80} defines internal
Whitehead products in homotopy with coefficients in $Z/p^r$. This is
denoted $[\alpha,\beta]$, while $\{\alpha,\beta\}$ is the ``external''
Whitehead product.}
\[
\{\alpha,\beta\}\colon G\circ H\to X
\]
as the composition
\[
\xymatrix{
G\circ H\ar@{->}[r]^-W&G\vee H\ar@{->}[r]^-{\alpha\vee\beta}&X  
.}
\]
\end{definition}
\begin{proposition}\label{prop3.5}
Each Whitehead product $\{\alpha,\beta\}\colon G\circ H\to X$
factors through the ``universal Whitehead product''
\[
w\index{$w$|LB}=\nabla\omega\colon \Sigma(\Omega X\wedge\Omega X)\to X\vee X\to X.
\]
\end{proposition}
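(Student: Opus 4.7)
The plan is to unwind the definitions and appeal to the naturality of the map $\omega$ of Proposition~\ref{prop2.6}. By Definition~\ref{def3.4} and Proposition~\ref{prop3.3}, the Whitehead product $\{\alpha,\beta\}$ admits the presentation
\[
\{\alpha,\beta\}\colon G\circ H\xrightarrow{\psi}\Sigma(\Omega G\wedge\Omega H)\xrightarrow{\omega_{G,H}} G\vee H\xrightarrow{\alpha\vee\beta} X,
\]
where the final map is the folding of $\alpha$ and $\beta$, which we split as $\nabla\circ(\alpha\sqcup\beta)\colon G\vee H\to X\vee X\to X$.

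Next, I would invoke the naturality of $\omega$ with respect to the maps $\alpha\colon G\to X$ and $\beta\colon H\to X$. The construction of $\omega$ in Section~\ref{subsec2.2} from the cofibred decomposition $\Omega U*\Omega V=\Omega U\times C\Omega V\cup C\Omega U\times\Omega V$ together with the evaluation maps $\epsilon\colon C\Omega W\to W$ is visibly functorial in $U$ and $V$, so the square
\[
\xymatrix{
\Sigma(\Omega G\wedge\Omega H)\ar@{->}[r]^-{\omega_{G,H}}\ar@{->}[d]_{\Sigma(\Omega\alpha\wedge\Omega\beta)}
&G\vee H\ar@{->}[d]^{\alpha\sqcup\beta}\\
\Sigma(\Omega X\wedge\Omega X)\ar@{->}[r]^-{\omega_{X,X}}&X\vee X
}
\]
commutes (strictly, using the $*$-description of the domain). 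Substituting this identity into the presentation above yields
\[
\{\alpha,\beta\}\sim\nabla\circ\omega_{X,X}\circ\Sigma(\Omega\alpha\wedge\Omega\beta)\circ\psi
=w\circ\bigl(\Sigma(\Omega\alpha\wedge\Omega\beta)\circ\psi\bigr),
\]
which is the desired factorization through $w=\nabla\omega\colon\Sigma(\Omega X\wedge\Omega X)\to X$.

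The only step that requires genuine care is the naturality claim for $\omega$: one has to verify that the homotopy equivalence $\widehat{\xi}$ of Proposition~\ref{prop2.6} is natural in the spaces $U,V$, and that the resulting map into $U\vee V$ is compatible with maps $f\colon U\to U'$, $g\colon V\to V'$. Since the evaluation $\epsilon\colon C\Omega W\to W$ is natural in $W$ and the NDR structure on $(PW,\Omega W)$ is preserved by based maps, this is routine. With naturality in hand, the proposition is immediate, and the auxiliary map $G\circ H\to\Sigma(\Omega X\wedge\Omega X)$ realising the factorisation is simply $\Sigma(\Omega\alpha\wedge\Omega\beta)\circ\psi$.
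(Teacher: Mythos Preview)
Your proof is correct but takes a different route from the paper. The paper constructs the factoring map $\xi\colon G\circ H\to\Sigma(\Omega X\wedge\Omega X)$ by comparing the cofibration sequence $G\circ H\to G\vee H\to G\times H$ with the fibration sequence $\Sigma(\Omega X\wedge\Omega X)\to X\vee X\to X\times X$: the right-hand square $(\alpha\vee\beta,\alpha\times\beta)$ induces a map of fibres, and $\xi$ is then argued to be unique up to homotopy because $G\circ H$ is a co-$H$ space and $\Omega(X\vee X)$ splits as $\Omega(X\times X)\times\Omega\Sigma(\Omega X\wedge\Omega X)$. You instead produce the factoring map explicitly as $\Sigma(\Omega\alpha\wedge\Omega\beta)\circ\psi$ by unwinding the definition of $W$ and invoking the evident naturality of $\omega$. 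Your argument is more elementary and gives an explicit formula for the factoring map; the paper's approach is more conceptual and yields, as a byproduct, the uniqueness of $\xi$ (which is not needed for the bare statement but is consistent with the paper's later use of such comparison arguments). Both are valid, and by the uniqueness clause in the paper's proof your explicit $\Sigma(\Omega\alpha\wedge\Omega\beta)\circ\psi$ is homotopic to the paper's~$\xi$.
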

\begin{proof}
The right hand square in the following diagram
determines a map from the upper row which is a
cofibration sequence to the lower row which is a
fibration sequence
\[
\xymatrix{
G\circ H\ar@{->}[r]^-W\ar@{->}[d]_{\xi}&G\vee H\ar@{->}[r]\ar@{->}[d]_{\alpha\vee\beta}&G\times H\ar@{->}[d]\\  
\Sigma(\Omega X\wedge\Omega X)\ar@{->}[r]^-{\omega}&X\vee X\ar@{->}[r]& X\times X 
}
\]
and $\xi$ is unique up to homotopy since $G\circ H$ is a co-$H$
space and $\Omega(X\vee X)\simeq \Omega(X\times X)\times
\Omega\Sigma(\Omega X\wedge\Omega X)$. It
follows that
\[
\makebox[78pt]{}\{\alpha,\beta\}=\nabla(\alpha\vee\beta)W\sim
\nabla\omega\xi=w\xi.\makebox[78pt]{}\rlap{\qed}
\]\noqed
\end{proof}

\section{$H$-space Based and Relative Whitehead Products}\label{subsec3.2}

In this section we will discuss
$H$-space based Whitehead products and relative
Whitehead products. In the case that $G$ and $H$ are
Moore spaces, this material is covered in~\cite{Nei10a}, and
what we present is a mild generalization. We need to consider
Whitehead products instead of their adjoints---the
Samelson products (which Neisendorfer considered) since the domains are not necessarily suspensions. We also consider principal fibrations,
so these products occur in the total space rather than the fiber of a fibration as in 
Neisendorfer's version. We wish to thank Joe Neisendorfer for
several interesting conversations during the
development of this material.

We begin with a principal fibration
\[
\xymatrix{
\Omega X\ar@{->}[r]^-{i}&E\ar@{->}[r]^-{\pi}&B  
}
\]
induced by a map $\varphi\colon B\to X$. The (external)
relative
Whitehead product then is a pairing
\[
[G,B]\times[H,E]\to [G\circ H,E].
\]
In the case that $X$ is a homotopy commutative $H$-space with
strict unit, we also define the $H$-space based
Whitehead product. It is a pairing
\[
[G,B]\times[H,B]\to [G\circ H,E].
\]

Suppose we are given maps:
\[
\xymatrix{
G\ar@{->}[r]^{\enlarge{\alpha}}&B,\ H\ar@{->}[r]^{\enlarge{\beta}}&B,\
G\ar@{->}[r]^{\enlarge{\gamma}}& E,\ H\ar@{->}[r]^{\enlarge{\delta}}& E.
}
\]

We will use the notation
\[
\{\alpha,\gamma\}_{r}\in[G\circ H,E]
\]\index{\{\ ,\ \}$_r$|LB}
for the relative Whitehead product and
\[
\{\alpha,\beta\}_{\times}\in [G\circ H,E]
\index{\{\ ,\ \}$_{\times}$|LB}
\]
for the $H$-space based Whitehead product. These
products and the absolute Whitehead product
are related by the following formulas to be proved:
\begin{align*}
\pi\{\alpha,\beta\}_{\times}\sim\{\alpha,\beta\}
\colon G\circ H\to B;\tag{3.6\textup{c}}\\
\{\pi\gamma,\pi\delta\}_{\times}\sim\{\gamma,\delta\}\colon G\circ H\to E;\tag{3.6\textup{e}}\\
\pi\{\alpha,\delta\}_r\sim\{\alpha,\pi\delta\}\colon G\circ H\to
B;\tag{3.11\textup{c}}\\
\{\pi\gamma,\delta\}_r\sim\{\gamma,\delta\}\colon G\circ H\to
E.\tag{3.11\textup{e}}\\
\{\alpha,\delta\}_r\sim\{\alpha,\pi\delta\}_{\times}\colon G\circ H\to
E;\tag{3.12}
\end{align*}\index{\{\ ,\ \}|LB}

We begin with the $H$-space based Whitehead
product. These are defined using the map $\Gamma$ from~(\ref{prop2.7}).  
The product $\{\alpha,\beta\}_{\times}$ is defined as the homotopy
class of the upper composition in the diagram:
\[
\xymatrix{
G\circ H\ar@{->}[r]^-<<<{\psi}\ar@{->}[dr]_W&
\Sigma(\Omega G\wedge\Omega H)\simeq\Omega G *\Omega H\ar@{->}[d]_{\omega}\ar@{->}[r]&\Omega
B*\Omega B\ar@{->}[r]^->>>>>{\Gamma}\ar@{->}[d]_{\omega}&
E\ar@{->}[d]_{\pi}\\
&G\vee H\ar@{->}[r]^-{\alpha\vee\beta}&B\vee B\ar@{->}[r]^{\nabla}&B.}
\]
\begin{proposition}\label{prop3.6}
Given $\alpha\colon G\to B$ and $\beta\colon H\to B$, the
homotopy class of the $H$-space based Whitehead product
\[
\{\alpha,\beta\}_{\times}\colon G\circ H\to E
\]
depends only on the homotopy classes of $\alpha$ and~$\beta$.
Furthermore

\textup{(a)}\hspace*{0.5em}If $f\colon G'\to G$ and $g\colon H'\to H$ are
co-$H$ maps,
\[
\{\alpha,\beta\}_{\times}(f\circ g)\sim\{\alpha f,\beta g\}_{\times}.
\]

\textup{(b)}\hspace*{0.5em}Given an induced fibration
\[
\xymatrix{
E'\ar@{->}[r]^-{\widetilde{\xi}}\ar@{->}[d]&
E\ar@{->}[d]\\
B'\ar@{->}[r]^-{\xi}&
B\ar@{->}[r]^-{\varphi}&
X
}
\]
and $\alpha'\colon G\to B'$, $\beta'\colon H\to B'$, we have
\[
\widetilde{\xi}\left\{\alpha',\beta'\right\}_{\times}\sim
\left\{\xi\alpha',\xi\beta'\right\}_{\times}\colon G\circ H\to E.
\]

\textup{(c)}\hspace*{0.5em}$\pi\{\alpha,\beta\}_{\times}\sim\{\alpha,\beta\}\colon
G\circ H\to B$.

\textup{(d)}\hspace*{0.5em}Suppose $\eta\colon
X\to X'$ is a strict $H$-map
and we have a pointwise commutative diagram
\[
\xymatrix{
B\ar@{->}[r]^-{\xi}\ar@{->}[d]_{\varphi}&
B'\ar@{->}[d]_{\varphi'}\\
X\ar@{->}[r]^-{\eta}&
X'
}
\]
which defines a map of principal fibrations:
\[
\xymatrix{
E\ar@{->}[r]^-{\widetilde{\xi}}\ar@{->}[d]&
E'\ar@{->}[d]\\
B\ar@{->}[r]^-{\xi}&
B'
.}
\]
Then
\[
\widetilde{\xi}\{\alpha,\beta\}_{\times}\sim\{\xi\alpha,\xi\beta\}_{\times}\colon
G\circ H\to E'.
\]

\textup{(e)}\hspace*{0.5em}$\{\pi\gamma,\pi\delta\}_{\times}\sim\{\gamma,\delta\}\colon
G\circ H\to E$.
\end{proposition}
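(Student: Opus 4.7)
My approach would be to unwind the definition $\{\alpha,\beta\}_{\times}=\Gamma\circ(\Omega\alpha*\Omega\beta)\circ\psi$ (under the identification $\Sigma(\Omega G\wedge\Omega H)\simeq\Omega G*\Omega H$) and then derive each of the six assertions by invoking naturality of three ingredients: $\psi$ (Proposition~\ref{prop3.1}), the evaluation map $\omega$, and the lift $\Gamma$ (Proposition~\ref{prop2.7}). Homotopy invariance and part~(a) should be immediate from functoriality of $\Omega$ and $*$ together with the naturality square for $\psi$ in Proposition~\ref{prop3.1}. Part~(c) is a short chase: post-composing with $\pi$ and applying $\pi\Gamma=\nabla\omega_{B,B}$ (Proposition~\ref{prop2.7}), naturality of $\omega$, and $W=\omega\psi$ (Proposition~\ref{prop3.3}) rewrite $\pi\{\alpha,\beta\}_{\times}$ as $\nabla(\alpha\vee\beta)W=\{\alpha,\beta\}$. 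Parts~(b) and~(d) should follow from the naturality of $\Gamma$: in each case the hypotheses assemble a map of principal fibrations, yielding $\widetilde{\xi}\,\Gamma'\sim\Gamma\,(\Omega\xi*\Omega\xi)$, and precomposing with the appropriate $(\Omega\alpha*\Omega\beta)\psi$ concludes.

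The delicate assertion is~(e). The first step will be to rewrite both sides: by naturality of $\omega$, $(\gamma\vee\delta)\omega_{G,H}=\omega_{E,E}(\Omega\gamma*\Omega\delta)$, so
\[
\{\gamma,\delta\}=\nabla_E\omega_{E,E}(\Omega\gamma*\Omega\delta)\psi\quad\text{and}\quad\{\pi\gamma,\pi\delta\}_{\times}=\Gamma_E(\Omega\pi*\Omega\pi)(\Omega\gamma*\Omega\delta)\psi,
\]
so~(e) reduces to the universal identity $\Gamma_E\circ(\Omega\pi*\Omega\pi)\sim\nabla_E\circ\omega_{E,E}\colon\Omega E*\Omega E\to E$. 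To establish this, I would introduce the pullback principal fibration $\widetilde{\pi}\colon E'=E\times_B E\to E$ induced from $\varphi\pi\colon E\to X$. The projection $\pi\colon E\to B$ is a map of principal fibrations from $\widetilde{\pi}$ to $\pi$, inducing $\widehat{\pi}\colon E'\to E$ on total spaces. Naturality of $\Gamma$ gives $\widehat{\pi}\,\Gamma_{E'}\sim\Gamma_E(\Omega\pi*\Omega\pi)$, while Proposition~\ref{prop2.7} applied directly to $\widetilde{\pi}$ gives $\widetilde{\pi}\,\Gamma_{E'}=\nabla_E\omega_{E,E}$, reducing the problem to showing $\widehat{\pi}\,\Gamma_{E'}\sim\widetilde{\pi}\,\Gamma_{E'}$.

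The canonical path $\widetilde{\varphi}\colon E\to PX$ null-homotoping $\varphi\pi$ supplies a section of $\widetilde{\pi}$, trivializing $E'\simeq E\times\Omega X$; under this trivialization $\widetilde{\pi}$ is first projection and $\widehat{\pi}$ is the principal action, so the discrepancy between $\widehat{\pi}\,\Gamma_{E'}$ and $\widetilde{\pi}\,\Gamma_{E'}$ is measured exactly by the $\Omega X$-component of $\Gamma_{E'}$. The main obstacle will be to show this component is null-homotopic. From the explicit path formula $\overline{\omega}(t)=\mu(\varphi\pi\omega_1(t),\varphi\pi\omega_2(st))$ in the proof of Proposition~\ref{prop2.7}, this component measures the discrepancy between the $\mu$-built path and the canonical path $\widetilde{\varphi}(\omega_2(s))$, and by the homotopy commutativity of $X$ (the same input underlying Proposition~\ref{prop2.8}) this discrepancy should be null-homotopic. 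This identification is where the real work lies.
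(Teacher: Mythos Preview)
Your treatment of homotopy invariance and parts~(a)--(d) is fine and matches the paper, which simply says these ``follow directly from the definition.'' The issue is part~(e).

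Your reduction of~(e) to $\widehat{\pi}\,\Gamma_{E'}\sim\widetilde{\pi}\,\Gamma_{E'}$ for the two projections of $E'=E\times_B E$ is correct, but your final step has a genuine gap. You claim the $\Omega X$-component of $\Gamma_{E'}$ is null-homotopic ``by the homotopy commutativity of~$X$.'' Commutativity is not the relevant input here, and the claim is not obvious: on the ``tip'' $\Omega E\subset\Omega E*\Omega E$ where the free path degenerates, the $\Omega X$-component computes to $i\circ\Omega(\varphi\pi)$, not the constant map, so pointwise it is certainly not trivial. What is actually needed is a comparison between the canonical path $k(\omega_i(1))$ and the path $t\mapsto\varphi\pi\omega_i(t)$ coming from the other edge of the square $(s,t)\mapsto k(\omega_i(s))(t)$; this is a rotation argument, not a commutativity argument, and even after that rotation you are left with a residual loop $\varphi\pi\omega_j$ that does not disappear in your setup.

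The paper avoids this difficulty by a better choice of auxiliary fibration: instead of pulling back along $\varphi\pi\colon E\to X$, it applies~(d) to the strictly commutative square
\[
\xymatrix{E\ar[r]^-{\pi}\ar[d]_-{k}&B\ar[d]^-{\varphi}\\ PX\ar[r]^-{\epsilon}&X}
\]
where $k(b,\omega)=\omega$ and $PX$ carries the pointwise multiplication (so $\epsilon$ is a strict $H$-map). The induced principal fibration $\widetilde{E}\to E$ now has fiber $\Omega PX\simeq *$, so $\widetilde{E}\simeq E$ and both maps $\widetilde{\pi},e\colon\widetilde{E}\to E$ are equivalences. Showing $\widetilde{\pi}\sim e$ then reduces to a clean ``rotation'' of a square of paths in~$X$ (the Homotopy Rotation Lemma~\ref{HRL3.7}): $\widetilde{E}$ consists of pairs $(b,\sigma)$ with $\sigma\colon I^2\to X$ vanishing on two edges, and the two maps to~$E$ read off the two remaining edges $\sigma(1,\cdot)$ and $\sigma(\cdot,1)$. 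The key idea you are missing is to absorb the extra path data into the auxiliary base $PX$ rather than leaving a nontrivial $\Omega X$-fiber to contend with.
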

\begin{proof}
These all follow directly from the definition
except for~\textup{(e)}. To prove this we apply \textup{(d)} to the diagram
\[
\xymatrix{
E\ar@{->}[r]^-{\pi}\ar@{->}[d]_k&
B\ar@{->}[d]_{\varphi}\\
PX\ar@{->}[r]^-{\epsilon}&
X
}
\]
where $k(b,\omega)=\omega$. Give $PX$ the $H$-space
structure
of pointwise multiplication of paths in $X$. Then
$\epsilon$ is a strict $H$-map. This gives a map of principal
fibrations:
\begin{align*}
&\makebox[152pt]{}\xymatrix{
\widetilde{E}\ar@{->}[r]^-{\widetilde{\pi}}\ar@{->}[d]_e&
E\ar@{->}[d]_{\pi}\\
E\ar@{->}[r]^-{\pi}&
B.
}\\[-15pt]
&\makebox[341pt]{}\qed
\end{align*}
\noqed
\end{proof}

By \textup{(d)} we have
\[
\widetilde{\pi}\{\gamma,\delta\}_{\times}\sim\{\pi\gamma,
\pi\delta\}_{\times}.
\]
It suffices to show that $\widetilde{\pi}\sim e$ by applying
part~\textup{(c)} to the left 
hand fibration. The space $\widetilde{E}\subset (B\times PX)\times PPX$ can
be
described as follows
\[
\widetilde{E}=\left\{(b,\sigma)\in B\times PPX\mid
\varphi(b)=\sigma(1,1),\sigma(s,0)=\sigma(0,t)=*\right\}
\]
with $\widetilde{\pi}(b,\sigma)=(b,\omega)$ where $\omega(t)=\sigma(t,1)$
and $e(b,\sigma)=(b,\omega')$
where $\omega'(t)=\sigma(1,t)$. Define $F\colon \widetilde{E}\times
I\times I\to X$ by $\sigma$. The
result then follows from:
\begin{HRL}\label{HRL3.7}
Suppose $F\colon A\times I\times I\to B$ and
$F(a,0,t)=F(a,s,0)=F(*,s,t)=*$. Then there is a
homotopy 
\[
H\colon A\times I\times I\to B
\]
such that
\begin{align*}
H(a,0,t)&=F(a,1,t)\\
H(a,1,t)&=F(a,t,1)\\
H(a,s,1)&=F(a,1,1)\\
H(a,s,0)&=H(*,s,t)=*
\end{align*}
\end{HRL}
\begin{proof}
The left side and the bottom of the square are
mapped to the basepoint. By rotating from the top
to the right hand side pivoting at the point $(1,1)$, we
obtain the required homotopy.
\end{proof}

We now describe the relative Whitehead product.
We assume a principal fibration
\[
\xymatrix{
\Omega X\ar@{->}[r]^{i}&E\ar@{->}[r]^{\pi}&B
}
\]
induced by a map $\varphi\colon B\to X$ (as in section~\ref{subsec2.1}), but
we won't assume an $H$-space structure on~$X$. 
Define $k\colon E\to PX$ by the second component, so $k(e)(1)=
\varphi\pi(e)$. The principal action
\[
a\colon \Omega X\times \xymatrix{E\ar@{->}[r]&E}
\]
is defined by the formula
\[
a(\omega, e)=(\pi(e),\omega')
\]
where $\omega'$ is given by\[
\omega'(t)=\begin{cases}
\omega(2t)&0\leqslant t\leqslant 1/2\\
k(e)(2t-1)&1/2\leqslant t\leqslant 1.
\end{cases}
\]
We then describe a strictly commutative diagram of
vertical fibration sequences
\[
\xymatrix{
\Omega B\ar@{->}[r]^{d}\ar@{->}[d]&\Omega X\ar@{->}[d]_{i}\\
\Omega B\times E\cup PB\ar@{->}[r]^-<<<<{\Gamma'}
\ar@{->}[d]_{\epsilon\vee\pi_2
}&E\ar@{->}[d]_{\pi}\\
B\vee E\ar@{->}[r]^{1\vee \pi}&B
}
\]\index{Gammaprime@$\Gamma^{\prime}$|LB}
where $\Omega B\times E\cup PB$ is to be considered as a subspace of
$PB\times E$. The map $\epsilon \colon PB\to B$ is endpoint evaluation
and $\Gamma'$ is defind by the formula
\begin{align*}
\Gamma'(\omega,e)&=a(\varphi\omega,e)&&\text{for}\ (\omega,e)\in \Omega
B\times E\\
\Gamma'(\omega)&=(\omega(1),\varphi\widetilde{\omega})&&\text{for}\
\omega\in PB 
\end{align*}
where\footnote{For convenience, we extend maps $\xymatrix{[0,1]\ar@{->}[r]^{f} &X}$ to
the
real line by $f(x)=f(0)$ for $x<0$ and $f(x)=f(1)$ for $x>1$.}
$\widetilde{\omega}(t)=\omega (2t)$, and the map $d\colon \Omega B\to
\Omega X$ is given
by $d(\omega)=\varphi\widetilde{\omega}$. The left hand fibration is the
principal
fibration induced by the projection $\pi_1\colon B\vee E\to B$.

Observe that
\[
\Omega B\times E\cup PB\simeq \Omega B\times E\cup C(\Omega B)\simeq \Omega
B\ltimes E.
\]

We record an important commutative diagram\addtocounter{equation}{7}
\begin{equation}\label{eq3.8}
\begin{split}
\xymatrix{
\Omega B\times E\ar@{->}[d]\ar@{->}[r]^{\Omega \varphi\times 1}&\Omega
X\times E\ar@{->}[d]_{a}\\
\Omega B\ltimes E\simeq \Omega B\times E\cup PB\ar@{->}[r]^-<<<<{\Gamma'}&E
}
\end{split}
\end{equation}
which will be useful in evaluating the relative
Whitehead products in homology.

Consider the strictly commutative square
\[
\xymatrix{
B\vee E\ar@{->}[d]\ar@{=}[r]&B\vee E\ar@{->}[d]_{\pi_1}\\
B\times E\ar@{->}[r]^{\pi_1}&B.
}
\]
Taking homotopy fibers vertically, we obtain a
diagram of principal fibrations
\[
\xymatrix{
\Omega(B\times E)\ar@{->}[d]\ar@{->}[r]^{\Omega \pi_1}&\Omega B\ar@{->}[d]\\
\Omega B\times PE\cup PB\times \Omega E\ar@{->}[r]^-<<<{\zeta}\ar@{->}[d]&\Omega B\times
E\cup PB\ar@{->}[d]\\
B\vee E\ar@{=}[r]&B\vee E
}
\]
The map $\zeta$\index{zeta@$\zeta$|LB} is defined by the formula
\[
\xymatrix{
\Omega B\times PE\ar@{->}[r]^{1\times \epsilon}&\Omega B\times E\\
PB\times \Omega E\ar@{->}[r]^-<<<<<{\pi_1}&PB
}
\]
Combining these diagrams, we obtain a strictly
commutative diagram
\begin{equation}\label{eq3.9}
\begin{split}
\xymatrix@C=14pt{
\Omega B*\Omega E\ar@{->}[rrd]_{W}
&\hspace*{-75pt}\llap{\makebox[0pt]{$\simeq$}}\hspace*{-75pt}
&\Omega B\times PE\cup PB\times \Omega E\ar@{->}[r]^-<<{\zeta}\ar@{->}[d]
&\Omega B\times E\cup PB\ar@{->}[r]^-<<<{\Gamma'}\ar@{->}[d] 
&E\ar@{->}[d]_{\pi}\\
&&B\vee E\ar@{=}[r]
&B\vee E\ar@{->}[r]^{1\vee\pi}&B.
}
\end{split}
\end{equation}
For $\alpha\colon G\to B$ and $\delta\colon H\to E$,
we define the relative Whitehead product
\[
\{\alpha,\delta\}_r
\colon G\circ H\to E
\]
as the composition
\begin{equation}\label{eq3.10}
\xymatrix@C=14pt{
G\circ H\ar@{->}[r]^->>{\psi}&\Omega G*\Omega H\ar@{->}[r]&\Omega B *\Omega
E\ar@{->}[r]^->>>{\zeta}&\Omega B\times E\cup PB\ar@{->}[r]^-<<{\Gamma'}&E
}
\end{equation}
and, analogous to \ref{prop3.6}, we have\addtocounter{Theorem}{3}
\begin{proposition}\label{prop3.11}
The homotopy class of the relative
Whitehead product $\{\alpha,\delta\}_r$ depends only on the 
homotopy classes of $\alpha$ and~$\delta$. Furthermore

\textup{(a)}\hspace*{0.5em}If $f\!\colon G'\to G$ and $g\colon H'\to H$ are
co-$H$ maps, then
\[
\{\alpha,\delta\}_r\cdot (f\circ g)\sim \{\alpha f, \delta g\}_r.
\]

\textup{(b)}\hspace*{0.5em}Given an induced fibration
\[
\xymatrix{
E'\ar@{->}[d]\ar@{->}[r]^-{\widetilde{\xi}}&E\ar@{->}[d]\\
B'\ar@{->}[r]^-{\xi}&B\ar@{->}[r]^-{\varphi}&X
}
\]
and classes $\alpha'\colon G\to B'$, $\delta'\colon H\to E'$, we
have
\[
\widetilde{\xi}\{\alpha',\delta'\}_r\sim\{\xi\alpha',\widetilde{\xi}\delta'\}_r.
\]

\textup{(c)}\hspace*{0.5em}$\pi\{\alpha,\delta\}_r\sim
\{\alpha,\pi\delta\}$.

\textup{(d)}\hspace*{0.5em}Suppose we have a strictly commutative diagram
\[
\xymatrix{
B\ar@{->}[r]^{\xi'}\ar@{->}[d]_{\varphi}&B'\ar@{->}[d]_{\varphi'}\\
X\ar@{->}[r]^{\eta}&X'
}
\]
inducing a map between principal fibrations:
\[
\xymatrix{
E\ar@{->}[r]^{\xi'}\ar@{->}[d]_{\pi}&E'\ar@{->}[d]_{\pi'}\\
B\ar@{->}[r]^{\xi}&B'
}
\]
Then $\xi'\{\alpha,\delta\}_r\sim\{\xi\alpha,\xi'\delta\}_r$.

\textup{(e)}\hspace*{0.5em}$\{\pi\gamma,\delta\}_r\sim\{\gamma,\delta\}
$.
\end{proposition}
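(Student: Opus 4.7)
The proof will follow the pattern of Proposition~\ref{prop3.6}: each clause reduces to the naturality of one of the three ingredients assembled in the defining composite~(\ref{eq3.10})---namely the co-$H$ map $\psi$ from Proposition~\ref{prop3.1}, the join $\Omega\alpha*\Omega\delta$, and the pair $(\zeta,\Gamma')$ built from the fibration data in diagram~(\ref{eq3.9}). Homotopy invariance of $\{\alpha,\delta\}_r$ is immediate, because $\Omega$ preserves homotopies and the remaining maps in~(\ref{eq3.10}) are continuous, so any homotopy in $\alpha$ or $\delta$ propagates to the composite.

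Assertion~(a) is obtained by inserting the naturality square for $\psi$ from Proposition~\ref{prop3.1} into~(\ref{eq3.10}) and using functoriality of $\Omega$ on the rest of the composite. Assertions~(b) and~(d) follow because $\zeta$ and $\Gamma'$ are constructed solely from the pair $(B,\varphi)$, the path space $PB$, and the principal action: the strictly commutative data given in each case induces a strictly commutative map between two copies of diagram~(\ref{eq3.9}), and this chases through~(\ref{eq3.10}) to give the stated identities. For~(c), the right-hand portion of~(\ref{eq3.9}) is strictly commutative, so $\pi\Gamma'\zeta=(1\vee\pi)\omega$ where $\omega\colon\Omega B*\Omega E\to B\vee E$ is the natural map. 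Precomposing with $\Omega\alpha*\Omega\delta$ and $\psi$, using naturality of $\omega$ to factor $\alpha\vee\delta$ outside, and invoking the identification $W=\omega\psi$ from Proposition~\ref{prop3.3}, we obtain
\[
\pi\{\alpha,\delta\}_r \sim (1\vee\pi)(\alpha\vee\delta)W = (\alpha\vee\pi\delta)W = \{\alpha,\pi\delta\}.
\]

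The subtlest clause is~(e), since it compares a product built from the fibration structure with an ordinary Whitehead product in~$E$; my plan is to deduce it from~(c) and~(d) by a pullback trick, in close analogy with Proposition~\ref{prop3.6}(e). Because $E$ is the homotopy fibre of~$\varphi$, the composite $\varphi\pi\colon E\to X$ is canonically null via the second-coordinate map $k\colon E\to PX$; hence the principal fibration $\widetilde E\to E$ induced by $\varphi\pi$ is (equivalent to) the pullback of $E\to B$ by $\pi$, and carries a canonical section $s\colon E\to\widetilde E$ with $\widetilde\pi s=\mathrm{id}_E$. The strictly commutative square with top row $\pi\colon E\to B$, bottom row $\mathrm{id}_X$, and vertical maps $\varphi\pi$ and $\varphi$, induces a map of principal fibrations $\widetilde\pi\colon\widetilde E\to E$ covering~$\pi$. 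Applying~(d) to this data, with the lift $s\delta\colon H\to\widetilde E$ of~$\delta$, yields
\[
\widetilde\pi\{\gamma,s\delta\}_r \sim \{\pi\gamma,\widetilde\pi s\delta\}_r = \{\pi\gamma,\delta\}_r,
\]
while applying~(c) to the fibration $\widetilde E\to E$ yields
\[
\widetilde\pi\{\gamma,s\delta\}_r \sim \{\gamma,\widetilde\pi s\delta\} = \{\gamma,\delta\}.
\]
Comparing the two gives~(e). The main technical point is therefore the verification of~(d), which requires checking that the path formulas defining $\Gamma'$ and $\zeta$ respect every strict map of principal fibrations; this amounts to a direct diagram chase using the definitions and the compatibility of the principal action under such a map, with no new ingredients needed.
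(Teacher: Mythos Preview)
Your treatment of parts (a)--(d) is fine and matches the paper's ``follows directly from the definitions''. The gap is in part~(e), where you conflate two different maps $\widetilde E\to E$ under the single name~$\widetilde\pi$.

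In your model $\widetilde E$ is the principal fibration over $E$ induced by $\varphi\pi\colon E\to X$, so $\widetilde E\cong E\times_B E$. There are then \emph{two} natural maps to $E$: the fibration projection $p=\mathrm{pr}_1$ (the one to which part~(c) applies), and the map $\widetilde\pi=\mathrm{pr}_2$ covering $\pi$ (the one produced by part~(d)). Your section $s$ is the diagonal, so both $p\circ s$ and $\widetilde\pi\circ s$ equal the identity, which is why the right-hand sides of your two displayed equations look the same. But the left-hand sides are $p\{\gamma,s\delta\}_r$ and $\widetilde\pi\{\gamma,s\delta\}_r$, and $\mathrm{pr}_1$ and $\mathrm{pr}_2$ are \emph{not} homotopic on $E\times_B E$ in general (take $B=\ast$). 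Unwinding, your two equations give exactly $\{\gamma,\delta\}$ and $\{\pi\gamma,\delta\}_r$, so the argument is circular: comparing them is the statement of~(e) itself.

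The paper avoids this by choosing a different model: it takes $\widetilde E$ to be the principal fibration over $E$ induced by $k\colon E\to PX$ (not by $\varphi\pi$), so the fiber is $\Omega PX\simeq\ast$ and both the projection $e$ and the covering map $\widetilde\pi$ are homotopy equivalences. It then uses the Homotopy Rotation Lemma~(\ref{HRL3.7}) to prove $e\sim\widetilde\pi$, after which the rest of the argument (apply~(d), then swap $\widetilde\pi$ for $e$, then apply~(c)) goes through. The missing ingredient in your proof is precisely this Rotation Lemma step; without it the two projections cannot be identified.
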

\begin{proof}
All parts except~(e) follow directly from the definitions.
For part~(e) we construct a map of principal fibrations exactly
as in \ref{prop3.6}(e):
\[
\xymatrix{
\widetilde{E}\ar@{->}[r]^{\widetilde{\pi}}\ar@{->}[d]_e&E\ar@{->}[d]_{\pi}\\
E\ar@{->}[r]^{\pi}&B
}
\]
Recall that $e\sim\widetilde{\pi}$, and since $PX$ is contractible, both
$e$ and~$\widetilde{\pi}$ are homotopy equivalences. Choose
$\widetilde{\delta}\colon H\to \widetilde{E}$ such that
$\widetilde{\pi}\widetilde{\delta}\sim \delta$. Then by part~(d) we have
\[
\{\pi\gamma,\delta\}_r\sim
\widetilde{\pi}\{\gamma,\widetilde{\delta}\}_r\sim
e\{\gamma,\widetilde{\delta}\}_r\sim \{\gamma,e\widetilde{\delta}\}
\]
by part~(c). However this is homotopic to
$\{\gamma,\widetilde{\pi}\widetilde{\delta}\}\sim\{\gamma,\delta\}$.
\end{proof}

At this point we will discuss the compatibility of the $H$-space
based Whitehead product and the relative Whitehead
product.
\begin{proposition}\label{prop3.12}
Suppose $X$ is an $H$-space with strict unit 
and we are given $\alpha\colon G\to B$, $\delta\colon H\to E$. Then
\[
\{\alpha,\delta\}_r\sim
\{\alpha,\pi\delta\}_{\times}
\]
\end{proposition}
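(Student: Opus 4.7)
To prove $\{\alpha,\delta\}_r \sim \{\alpha,\pi\delta\}_\times$, my plan is to unfold both definitions on $\Omega G * \Omega H$, convert the principal action used in $\Gamma'$ into a pointwise $\mu$-action via the strict-unit $H$-structure on $X$, and apply the Homotopy Rotation Lemma to match the resulting formulas.

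First, I construct an auxiliary action $a'\colon \Omega X \times E \to E$ by $a'(\lambda,(b,\kappa)) = (b,\, t\mapsto\mu(\lambda(t),\kappa(t)))$. This is well-defined into $E$ because the strict unit forces $\mu(\lambda(1),\kappa(1)) = \mu(*,\varphi(b)) = \varphi(b)$. The principal action $a$, defined by concatenation of $\lambda$ with $\kappa$, is homotopic to $a'$ by the standard linear interpolation comparing concatenation with pointwise multiplication in any strict-unit $H$-space. Substituting $a'$ for $a$ in the formula for $\Gamma'$ produces a homotopic map $\Gamma''$ depending only on $\mu$, hence $\{\alpha,\delta\}_r$ is homotopic to $\Gamma'' \circ \zeta \circ (\Omega\alpha * \Omega\delta) \circ \psi$.

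Second, I compute both maps pointwise on $\Omega G \times PH \subset \Omega G * \Omega H$ after $\widehat{\xi}$. Writing a point as $(\tau_1,c,\tau_2)$ with cone parameter $c \in I$, the map $\Gamma''\zeta(\Omega\alpha * \Omega\delta)$ sends it to
\[
\bigl(\pi\delta\tau_2(c),\; t \mapsto \mu(\varphi\alpha\tau_1(t),\, k(\delta\tau_2(c))(t))\bigr),
\]
while $\Gamma(\Omega\alpha * \Omega\pi\delta)$ sends it to
\[
\bigl(\pi\delta\tau_2(c),\; t \mapsto \mu(\varphi\alpha\tau_1(t),\, \varphi\pi\delta\tau_2(ct))\bigr).
\]
The basepoints coincide, and the paths differ only in their second $\mu$-factor: $k(\delta\tau_2(c))(t)$ versus $\varphi\pi\delta\tau_2(ct) = k(\delta\tau_2(ct))(1)$, both paths in $X$ running from $*$ to $\varphi\pi\delta\tau_2(c)$.

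Third, I invoke the Homotopy Rotation Lemma applied to $F\colon A \times I \times I \to X$, $F(a,x,y) = k(\delta\tau_2(cy))(x)$, where $a$ bundles the ambient parameters $(\tau_1,\tau_2,c)$. The hypotheses $F(a,0,y) = F(a,x,0) = *$ and $F(*,x,y) = *$ hold because $k(*)$ is the constant path and $\delta$ is based. Lemma~\ref{HRL3.7} then yields a natural homotopy from $y \mapsto F(a,1,y) = \varphi\pi\delta\tau_2(cy)$ to $y \mapsto F(a,y,1) = k(\delta\tau_2(c))(y)$ rel their common endpoints. Composing this homotopy pointwise with $\mu(\varphi\alpha\tau_1(t),-)$ produces the desired homotopy on $\Omega G \times PH$. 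A symmetric construction on $PG \times \Omega H$ handles the other half of the join, and on the overlap $\Omega G \times \Omega H$ (where $\delta\tau_2(1) = *$) both maps reduce to the universal Whitehead product by parts~(c) of \ref{prop3.6} and \ref{prop3.11}, so the two homotopies glue. Pre-composing with $\psi\colon G\circ H \to \Omega G * \Omega H$ then finishes the argument. The main obstacle is the bookkeeping at the intersection $\Omega G \times \Omega H$: the two homotopies constructed separately on the halves of the join must agree there, which follows from the strict unit collapsing the interpolations at $c \in \{0,1\}$ combined with the symmetric application of Lemma~\ref{HRL3.7}.
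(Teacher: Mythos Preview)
Your proof is correct and follows essentially the same approach as the paper: first replace the concatenation-based principal action by the pointwise $\mu$-action (this is the paper's Lemma~\ref{lem3.13}), then apply the Homotopy Rotation Lemma~\ref{HRL3.7} to the function $F(\omega,s,t)=k(\omega(t))(s)$ to interpolate between the two path-factors.

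The one place where the paper is cleaner than your write-up is the gluing. The paper works at the universal level of $\Omega B*\Omega E$ and writes down a \emph{single} homotopy
\[
\Gamma_s(\omega_1,\omega_2)=\bigl(\omega_i(1),\ t\mapsto \mu(\varphi\omega_1(t),\,H(\omega_2,s,t))\bigr)
\]
valid on both halves $\Omega B\times PE$ and $PB\times\Omega E$ simultaneously (this is Proposition~\ref{prop3.15}); Proposition~\ref{prop3.12} then follows by precomposition with $(\Omega\alpha*\Omega\delta)\psi$. In your version you split into two halves and argue that they glue on $\Omega G\times\Omega H$. Your justification there (``the strict unit collapses the interpolations at $c\in\{0,1\}$'') is a bit loose: what actually makes the two homotopies agree on the overlap is that, at $c=1$, both of your HRL inputs reduce to the \emph{same} function $k(\delta\tau_2(y))(x)$, so the HRL outputs coincide. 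Also, calling the $PG\times\Omega H$ half ``symmetric'' is slightly misleading, since the roles of $G$ and $H$ are not symmetric in the relative product; on that half the HRL is applied with $\omega_2=\delta\tau_2\in\Omega E$ (no cone parameter), and the output homotopes $k(*)(t)=*$ to $\varphi\pi\delta\tau_2(t)$, which is exactly what you need. None of this is a gap---just a place where the paper's uniform formula avoids the bookkeeping entirely.
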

\begin{proof}
To prove this we will combine two homotopies.
The first homotopy will replace the sequential
composition of paths in the definition of the
action map~$a$ and~$\Gamma'$ with a blending of the homotopies
using the $H$-space structure in~$X$. The second homotopy
will apply the homotopy rotation lemma~(\ref{HRL3.7}). Recall the
map $k\colon B\to PX$ with the property that $\epsilon k\sim \varphi \pi$
\begin{align*}
&\makebox[148pt]{}\xymatrix{
E\ar@{->}[r]^{\pi}\ar@{->}[d]_{k}&B\ar@{->}[d]\\
PX\ar@{->}[r]^{\epsilon}&X
}\\*[-16pt]
&\makebox[341pt]{}\qed
\end{align*}\noqed
\end{proof}
\begin{lemma}\label{lem3.13}
There is a homotopy $a_s\colon \Omega X\times E\to E$ with
$a_1=a$ and $a_0$ given by the formula
\[
a_0(\omega,e)=(\pi(e),\mu(\omega(t),k(e)(t)))
\]
and a compatible homotopy $\Gamma'_s\colon \Omega B\times E\cup PB\to E$
with $\Gamma'_1=\Gamma'$ and $\Gamma'_0$ given by the formula
\begin{align*}
\Gamma'_0(\omega,e)&=(\pi(e),\mu(\varphi\omega(t),k(e)(t))\\
\Gamma'_0(\omega)&=(\omega(1),\omega\varphi)\quad \text{for}\ \omega\in PB
\end{align*}
\end{lemma}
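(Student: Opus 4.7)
The strategy is to build both homotopies from a single one-parameter family of reparametrizations of the unit interval, exploiting the strict-unit identities $\mu(x,\ast)=\mu(\ast,x)=x$ to recover the sequential composition of paths at one end of the homotopy and the pointwise product $\mu(-,-)$ at the other.

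First I would choose jointly continuous maps $\phi_s,\psi_s\colon I\to I$ with $\phi_s(0)=\psi_s(0)=0$, $\phi_s(1)=\psi_s(1)=1$,
$$\phi_0(t)=\psi_0(t)=t,\qquad \phi_1(t)=\min(2t,1),\quad \psi_1(t)=\max(2t-1,0),$$
for instance $\phi_s(t)=\min((1+s)t,1)$ and $\psi_s(t)=\max((1+s)t-s,0)$. These interpolate between the identity and the reparametrizations that cut $[0,1]$ into $[0,1/2]$ and $[1/2,1]$.

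Second, define
$$a_s(\omega,e)=\bigl(\pi(e),\ t\mapsto \mu(\omega(\phi_s(t)),\,k(e)(\psi_s(t)))\bigr).$$
This is jointly continuous in $(s,\omega,e)$ and takes values in $E$: at $t=1$ the second coordinate equals $\mu(\omega(1),k(e)(1))=\mu(\ast,\varphi\pi(e))=\varphi\pi(e)$, as required. At $s=0$ one reads off the blended formula $a_0$ directly. At $s=1$ the strict-unit collapses the $\mu$: for $t\leqslant 1/2$, $\psi_1(t)=0$ forces the second argument to be $\ast$, giving $\omega(2t)$, and for $t\geqslant 1/2$, $\phi_1(t)=1$ forces the first argument to be $\ast$, giving $k(e)(2t-1)$. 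This is precisely the original piecewise definition of $a$.

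Third, define $\Gamma'_s$ by $\Gamma'_s(\omega,e)=a_s(\varphi\omega,e)$ on $\Omega B\times E$ and by
$$\Gamma'_s(\omega)=\bigl(\omega(1),\ t\mapsto \varphi(\omega(\phi_s(t)))\bigr)\quad\text{on }PB.$$
The endpoint values at $s=0$ and $s=1$ are verified exactly as for $a_s$, noting that at $s=1$ the formula on $PB$ becomes $(\omega(1),\varphi\widetilde\omega)$ after applying the convention that extends paths by their endpoints. The only point requiring care is the compatibility of the two partial definitions on the overlap $\Omega B\times\{\ast\}\subset(\Omega B\times E)\cap PB$: there the first definition collapses under strict unit to $(\ast,t\mapsto\varphi(\omega(\phi_s(t))))$ since $k(\ast)\equiv\ast$, and the second collapses to the same expression using $\omega(1)=\ast$ for $\omega\in\Omega B$. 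Hence $\Gamma'_s$ is a well-defined homotopy on the pushout $\Omega B\times E\cup PB$ with the required endpoints. The whole argument is mechanical once the reparametrizations are chosen; the strict-unit axiom (available by \cite[11.1.11]{Nei10a}) is doing all the work, and the only potential snag — gluing the two pieces of $\Gamma'_s$ — is resolved by exactly the same identity.
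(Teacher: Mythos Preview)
Your proof is correct and follows essentially the same approach as the paper: interpolate between sequential path composition and pointwise $\mu$-multiplication via a one-parameter family of reparametrizations, relying on the strict-unit identity to recover the piecewise definition of $a$ at one end. The only cosmetic difference is that the paper writes the reparametrizations as $\frac{2t}{2-s}$ and $\frac{2t-s}{2-s}$ (using the extend-by-endpoints convention) rather than your $\min/\max$ formulas, and you check the gluing on $\Omega B\times\{\ast\}$ more explicitly than the paper does.
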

\begin{proof}
Recall that any map $\omega\colon [0,1]\to X$
is to be extended to a map $\omega\colon R\to X$ by defining
$\omega(x)=\omega(0)$
if $x<0$ and $\omega(x)=\omega(1)$ if $x>1$. Then, for example,
\[
\mu(\omega(2t),k(e)(2t-1))=\begin{cases}
\omega(2t)&\text{if}\ 0\leqslant t\leqslant 1/2\\
k(e)(2t-1)&\text{if}\ 1/2\leqslant t\leqslant 1
\end{cases}
\]
since $\omega(1)=k(e)(0)=*$, the unit for $\mu$. We define
\[
a_s(\omega,e)=(\pi(e),\omega_s)
\]
where
\[
\omega_s(t)=\mu\left(\omega\left(\dfrac{2t}{2-s}\right),k(e)\left(\dfrac{2t-s}{2-s}\right)\right).
\]
We define $\Gamma'_s(\omega,e)=a_s(\varphi\omega,e)$ and
$\Gamma'_s(\omega)=\left(\omega(1),\varphi\omega\left(\dfrac{2t}{2-s}\right)\right)$
in case $\omega\in PB$.
\end{proof}

Using $\Gamma'_0$ we consider the composition
\[
\xymatrix@C=33pt{
\Omega B*\Omega E\ar@{->}[r]^->>>>>{\zeta}&\Omega B\times E\cup
PB\ar@{->}[r]^-<<<<{\Gamma'_0}&E.
}
\]
Using the identification \mbox{$\Omega B *\Omega E\simeq PB\times \Omega
E\cup\Omega E\times PB$},
we have the following formula for this composition\addtocounter{equation}{3}
\begin{equation}\label{eq3.14}
\xymatrix{
(\omega_1,\omega_2)\ar@{->}[r]&
(\omega_1,\omega_2(1))\ar@{->}[r]&
(\pi\omega_2(1),\mu(\varphi\omega_1(t),k(\omega_2(1))(t))).
}
\end{equation}

We now apply \ref{HRL3.7} to the homotopy
\[
F\colon PE\times I\times I\to X
\]
given by $F(\omega,s,t)=k(\omega(t))(s)$ to obtain a
homotopy
\[
H\colon PE\times I\to X
\]
with
\begin{align*}
H(\omega,1,t)&=k(\omega(1))(t)\\
H(\omega,0,t)&=\varphi\pi\omega(t)\\
H(\omega,s,0)&=*\\
H(\omega,s,1)&=\varphi\pi\omega(1).
\end{align*}
From this we construct a homotopy
\[
\Gamma_s\colon \Omega B\times PE\cup PB\times \Omega E\to E
\]
given by
\[
\Gamma_s(\omega_1,\omega_2)=\begin{cases}
\omega_1(1)\quad\mu(\varphi\omega_1(t),H(\omega_2,s,t))&\text{if}\ \omega_2(1)=*\\
\pi\omega_2(1)\quad\mu(\varphi\omega_1(t),H(\omega_2,s,t))&\text{if}\ \omega_1(1)=*.
\end{cases}
\]
Then $\Gamma_1=\Gamma(1*\Omega\pi)$ and $\Gamma_0=\Gamma'_0\zeta$. We have
proved\addtocounter{Theorem}{1}
\begin{proposition}\label{prop3.15}
If $X$ is an $H$ space with a strict unit,
there is a homotopy commutative diagram:
\begin{align*}
&\makebox[108pt]{}\xymatrix{
\Omega B\times E\cup PB\ar@{->}[r]^-<<<<<{\Gamma'}&E\\
\Omega B * \Omega E\ar@{->}[r]^{1*\Omega\pi}\ar@{->}[u]^{\zeta}&\Omega
B*\Omega B\ar@{->}[u]^{\Gamma} 
}\\*[-14pt]
&\makebox[341pt]{}\qed
\end{align*}
\end{proposition}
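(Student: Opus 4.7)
The plan is to produce the desired homotopy by chaining two intermediate deformations constructed in the paragraphs preceding the proposition. First, I would invoke Lemma 3.13 to deform $\Gamma'$ to the alternate map $\Gamma'_0$ through the homotopy $\Gamma'_s$; here the strict unit on $X$ is used to rewrite the sequential concatenation of paths appearing in the definition of the principal action $a$ as a pointwise product $\mu(\omega(t), k(e)(t))$ interpolated through the parameter $s$. Composing with $\zeta$ yields a homotopy $\Gamma'_s \zeta : \Omega B * \Omega E \to E$ from $\Gamma' \zeta$ at $s=1$ to $\Gamma'_0 \zeta$ at $s=0$.

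Next, I would write out both $\Gamma'_0 \zeta$ and $\Gamma(1 * \Omega\pi)$ using the decomposition $\Omega B * \Omega E \simeq \Omega B \times PE \cup PB \times \Omega E$. As recorded in formula (3.14), both maps send a pair $(\omega_1, \omega_2)$ to a point of the form $(\pi\omega_2(1), \mu(\varphi\omega_1(t), \lambda(t)))$; the path $\lambda(t)$ is $k(\omega_2(1))(t)$ in the case of $\Gamma'_0 \zeta$, and $\varphi\pi\omega_2(t)$ in the case of $\Gamma(1 * \Omega\pi)$. To interpolate between these two choices for $\lambda$, I would apply the Homotopy Rotation Lemma 3.7 to the square $F : PE \times I \times I \to X$ given by $F(\omega, s, t) = k(\omega(t))(s)$; the lemma delivers precisely the homotopy $H$ with $H(\omega, 1, t) = k(\omega(1))(t)$, $H(\omega, 0, t) = \varphi\pi\omega(t)$, $H(\omega, s, 0) = *$, and $H(\omega, s, 1) = \varphi\pi\omega(1)$.

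Substituting $H(\omega_2, s, t)$ in place of $\lambda(t)$ in the formulas yields a homotopy $\Gamma_s$ with $\Gamma_1 = \Gamma(1 * \Omega\pi)$ and $\Gamma_0 = \Gamma'_0 \zeta$. Concatenating the reverse of $\Gamma_s$ with $\Gamma'_s \zeta$ then assembles the required homotopy $\Gamma' \zeta \simeq \Gamma(1 * \Omega\pi)$ and completes the proof.

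The principal technical obstacle is verifying that $\Gamma_s$ is well defined on the union $\Omega B \times PE \cup PB \times \Omega E$, i.e.\ that the two piecewise formulas agree on the intersection where $\omega_1(1) = \omega_2(1) = *$. This compatibility uses the strict unit property of $\mu$ together with the boundary conditions $H(\omega, s, 0) = *$ and $H(\omega, s, 1) = \varphi\pi\omega(1)$ supplied by Lemma 3.7: when one argument is the constant path at $*$, the corresponding factor of $\mu$ collapses and both formulas reduce to the same expression on the shared locus. Once these seam conditions are checked, the remainder of the argument is direct substitution in the definitions of $\zeta$, $\Gamma$, and $\Gamma'$.
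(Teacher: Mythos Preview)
Your proposal is correct and follows essentially the same argument as the paper: the paper likewise deforms $\Gamma'$ to $\Gamma'_0$ via Lemma~3.13, computes $\Gamma'_0\zeta$ as in (3.14), and then applies the Homotopy Rotation Lemma~3.7 to $F(\omega,s,t)=k(\omega(t))(s)$ to build the interpolating homotopy $\Gamma_s$ between $\Gamma'_0\zeta$ and $\Gamma(1*\Omega\pi)$. Your added paragraph verifying that $\Gamma_s$ is well defined on the overlap $\Omega B\times\Omega E$ is a detail the paper leaves implicit, and your reasoning there is correct.
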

Clearly \ref{prop3.12} follows from \ref{prop3.15}.

We next describe a simplification of the relative
Whitehead product in case $G=\Sigma A$.
\begin{proposition}\label{prop3.16}
Suppose $\alpha\colon \Sigma A\to B$ and $\delta\colon H\to E$. Then
the relative Whitehead product
\[
\xymatrix@C=38pt{
A\wedge H\simeq \Sigma A\circ H\ar@{->}[r]^-<<<<<<{\{\alpha,\delta\}_r}&E
}
\]
is represented by the composition
\[
\xymatrix@C=33pt{
A\wedge H\ar@{->}[r]^{\theta}&A\ltimes
H\ar@{->}[r]^->>>>>{\widetilde{\alpha}\ltimes\delta}&\Omega B\ltimes
E\ar@{->}[r]^-<<<<{\Gamma'}&E
}
\]
where $\theta$ is a right homotopy inverse to the projection\footnote{$\theta$
will depend on the co-$H$ structure of~$H$.}
which pinches
$H$ to a point.
\end{proposition}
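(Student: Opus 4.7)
My plan is to unwind the definition of $\{\alpha,\delta\}_r$ given in~\eqref{eq3.10} and simplify it in two stages, exploiting the fact that $G=\Sigma A$ is a suspension.

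First, I would decompose $\Omega\alpha*\Omega\delta=(\Omega\alpha*1)\circ(1*\Omega\delta)$ and introduce auxiliary ``half-smash'' versions $\overline{\zeta}$ of the map $\zeta$.  For any principal fibration $\Omega X\to E\to B$ and any co-$H$ space $H$, the same formulas that define $\zeta$ (using $\epsilon\colon PH\to H$ on one half of the join and $\pi_1\colon PB\times\Omega H\to PB$ on the other) give a natural map $\overline{\zeta}\colon\Omega B*\Omega H\to\Omega B\ltimes H$.  The naturality of the equivalence $\xi\colon C\Omega W\simeq PW$ from the proof of Proposition~\ref{prop2.6} then gives commutative squares
\[
\xymatrix{
\Omega\Sigma A*\Omega H\ar@{->}[r]^-{\overline{\zeta}}\ar@{->}[d]_{\Omega\alpha*1}&\Omega\Sigma A\ltimes H\ar@{->}[d]^{\Omega\alpha\ltimes 1}\\
\Omega B*\Omega H\ar@{->}[r]^-{\overline{\zeta}}\ar@{->}[d]_{1*\Omega\delta}&\Omega B\ltimes H\ar@{->}[d]^{1\ltimes\delta}\\
\Omega B*\Omega E\ar@{->}[r]^-{\zeta}&\Omega B\ltimes E
}
\]
which reduces $\{\alpha,\delta\}_r$ to the composition
\[
\xymatrix{
\Sigma A\circ H\ar@{->}[r]^-{\psi}&\Omega\Sigma A*\Omega H\ar@{->}[r]^-{\overline{\zeta}}&\Omega\Sigma A\ltimes H\ar@{->}[r]^-{\Omega\alpha\ltimes\delta}&\Omega B\ltimes E\ar@{->}[r]^-{\Gamma'}&E.
}
\]

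Second, I would identify $\overline{\zeta}\circ\psi\colon\Sigma A\circ H\to\Omega\Sigma A\ltimes H$ with the composition
\[
\xymatrix{
\Sigma A\circ H\simeq A\wedge H\ar@{->}[r]^-{\theta}&A\ltimes H\ar@{->}[r]^-{\iota\ltimes 1}&\Omega\Sigma A\ltimes H,
}
\]
where $\iota\colon A\to\Omega\Sigma A$ is the unit of the $\Sigma\dashv\Omega$ adjunction.  This is the main technical step.  To prove it I would use the telescope construction of $G\circ H$ specialized to $G=\Sigma A$: the standard co-$H$ structure $\nu_1=\Sigma\iota$ on $\Sigma A$ satisfies $\epsilon_1\nu_1\sim 1_{\Sigma A}$, so the self-map $e$ whose telescope is $\Sigma A\circ H$ factors through $\Sigma A\wedge\Omega H=\Sigma(A\wedge\Omega H)$ on its first two factors.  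This forces $\Sigma A\circ H\simeq A\wedge H$, and under this equivalence $\psi$ becomes the map $A\wedge H\to\Sigma(\Omega\Sigma A\wedge\Omega H)$ built from $\iota\colon A\to\Omega\Sigma A$ on the left-hand factor and from the chosen section $\nu_2$ of $\epsilon_2$ on the right-hand factor.  Post-composition with $\overline{\zeta}$ collapses these choices down to the half-smash $A\ltimes H$, producing a section $\theta$ of $A\ltimes H\to A\wedge H$ whose dependence on the co-$H$ structure of~$H$ is precisely the dependence on $\nu_2$ (as announced in the footnote of the statement).

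Combining the two steps and using the identity $\widetilde{\alpha}=\Omega\alpha\circ\iota$ characterising the adjoint, we obtain
\[
\{\alpha,\delta\}_r\sim\Gamma'\circ(\Omega\alpha\ltimes\delta)\circ(\iota\ltimes 1)\circ\theta=\Gamma'\circ(\widetilde{\alpha}\ltimes\delta)\circ\theta,
\]
which is the claimed formula.

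The main obstacle will be the second step: the telescope definition of $G\circ H$ is somewhat indirect, so some care is needed to rigorously identify $\overline{\zeta}\circ\psi$ with $(\iota\ltimes 1)\circ\theta$ and, in particular, to check that the resulting $\theta$ really is a homotopy section of the pinch map $A\ltimes H\to A\wedge H$.  Once this identification is in place, the rest of the argument is a routine application of the naturality squares above.
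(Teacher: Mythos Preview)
Your approach is essentially the same as the paper's: both introduce a generalized $\zeta$-map landing in half-smash products (the paper does this via the fibration sequence $X*\Omega Y\xrightarrow{\zeta} X\ltimes Y\to Y$ in~(3.17), valid for arbitrary $X$), and both build a ladder of naturality squares linking $A\wedge H$ at the bottom to $\Omega B*\Omega E\xrightarrow{\zeta}\Omega B\ltimes E\xrightarrow{\Gamma'}E$ at the top.

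The only organizational difference is that the paper pushes the bottom row all the way down to $A*\Omega H\xrightarrow{\zeta} A\ltimes H$ (rather than stopping at $\Omega\Sigma A\ltimes H$ and then composing with $\iota\ltimes 1$), and this makes the verification that $\theta$ is a section of the pinch map immediate: the paper records the commutative square
\[
\xymatrix{
X*\Omega Y\ar[r]^{\zeta}\ar[d]_{\simeq}&X\ltimes Y\ar[d]\\
X\wedge\Sigma\Omega Y\ar[r]^{1\wedge\epsilon}&X\wedge Y
}
\]
so that (pinch)$\circ\theta$ becomes $(1\wedge\epsilon)(1\wedge\nu)\sim 1$, with no need to unwind the telescope description of $\Sigma A\circ H$. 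Your plan to check the section property via the telescope would work, but this shortcut is cleaner.
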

\begin{proof}
To construct the map $\theta$ we need to generalize the
context in which the map $\zeta$ was defined in~\ref{eq3.9}. The
homotopy fiber of the map
\[
\xymatrix{
CX\cup X\times Y\ar@{->}[r]^-<<<<{\pi_2}&Y
}
\]
which pinches $CX$ to the basepoint is of the form
\[
CX\times \Omega Y\cup X\times PY\subset CX\times PY.
\]
Using the homotopy equivalence $\xi\colon (C(\Omega Y),\Omega Y)\to (PY,\Omega Y)$
(see \ref{prop2.6}), we get a homotopy equivalent fibration sequence
\begin{equation}\label{eq3.17}
\tag{3.17}\kern-60pt\llap{\protect\footnotemark}\hspace*{60pt}
\xymatrix@R=3pt{
X*\Omega
Y\ar@{->}[r]^->>>>{\zeta}&\parbox[b]{63pt}{\mbox{}\newline
\vspace*{4pt}
$\underset{\rotatebox{90}{\makebox[21pt]{$\simeq$}}}{CX\cup
X\times Y}$}\ar@{->}[r]^-<<<{\pi_2}&Y\\
&X\ltimes Y&
}
\end{equation}%
\footnotetext{Curiously there is also a cofibration sequence
\[
\xymatrix{
X*Y\ar@{->}[r]^{\zeta'}&X\ltimes \Sigma Y\ar@{->}[r]^{\pi_2}&\Sigma Y
}
\]
where $\zeta'$ is the composition 
\[
\xymatrix{
X*Y\ar@{->}[r]&X*\Omega \Sigma Y\ar@{->}[r]^{\zeta}&X\ltimes\Sigma Y.
}
\]}\index{zetaprime@$\zeta'$|LB}
Furthermore, the composition
\[
\xymatrix{
X*\Omega Y\ar@{->}[r]^-<<<<{\zeta}&X\ltimes
Y\ar@{->}[r]&X\wedge Y
}
\]
collapses $X\cup CX\times \Omega Y$ to a point, so there is a commutative
square
\[
\xymatrix{
X*\Omega Y\ar@{->}[r]^{\zeta}\ar@{->}[d]_{\simeq}&X\ltimes Y\ar@{->}[d]\\
X\wedge \Sigma\Omega Y\ar@{->}[r]^{1\wedge \epsilon}&X\wedge Y.
}
\]

The relative Whitehead product in \ref{prop3.16} is given by the
upper composition in the homotopy commutative diagram
\[
\kern-8pt\xymatrix@C=8pt{
&&&\Omega B*\Omega E\ar@{->}[r]^->>>>{\zeta}&\Omega B\!{}\times{}\! E\!{}\cup{}\!
PB\ar@{->}[r]^-<<{\Gamma'}&E\\
(\Sigma A)\!{}\circ{}\!H\ar@{->}[r]&\Sigma(\Omega\Sigma A\!{}\wedge{}\!\Omega
H)
&\kern-201pt\llap{\makebox[0pt]{$\!{}\simeq{}\!$}}\kern-201pt
&\Omega\Sigma A*\Omega H\ar@{->}[r]^->>{\zeta}\ar@{->}[u]^{\Omega
\alpha*\Omega \delta}&\Omega\Sigma A\!{}\times{}\!
H\!{}\cup{}\! P\Sigma A\ar@{->}[u]\\
A\!{}\wedge{}\!
H\ar@{->}[u]^-<<<{\!{}\simeq{}\!}\ar@{->}[r]^->>>>{1\!{}\wedge{}\!\nu}&\Sigma(A\!{}\wedge{}\!\Omega
H)\ar@{->}[u]
&\kern-201pt\llap{\makebox[0pt]{$\!{}\simeq{}\!$}}\kern-201pt
&A*\Omega H\ar@{->}[u]\ar@{->}[r]^->>>>{\zeta}&A\!{}\times{}\! H\!{}\cup{}\!
CA\ar@{->}[u]
&\kern-201pt\llap{\makebox[0pt]{$\!{}\simeq{}\!$}}\kern-201pt
&A\!{}\ltimes{}\!H
}
\]
where the lower composition is the map $\theta$. 
The right hand vertical map is the composition
\[
\xymatrix@C=38pt{
A\ltimes H\ar@{->}[r]^->>>>>>{\widetilde{\alpha}\ltimes \delta}&\Omega B\ltimes
E\simeq \Omega B\times E\cup PB.
}
\]
By the homotopy commutative square above,
$\theta$ has a right homotopy inverse since $\epsilon \nu\sim 1$,
and $\theta$ projects trivially to~$H$ since $\pi_2\zeta$ is null homotopic.
\end{proof}\addtocounter{Theorem}{1}
\begin{corollary}\label{cor3.18}
Suppose $\alpha\colon \Sigma A\to B$ and $\delta\colon H\to E$. Then for any
ring $R$, the homomorphism
\[
(\{\alpha,\delta\}_r)_*\colon H_*(A\wedge H;R)\to H_*(E;R)
\]
is given by the composition
\[
\xymatrix@C=38pt{
H_*(A\wedge H;R)\subset H_*(A\times
H;R)\ar@{->}[r]^-<<<<<{(\widetilde{\varphi \alpha}\times \delta)_*}&H_*(\Omega
X\times E;R)\ar@{->}[r]^-<<<<<{a_*}&H_*(E;R)
}
\]
\end{corollary}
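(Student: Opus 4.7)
The plan is to invoke Proposition~\ref{prop3.16} to rewrite $\{\alpha,\delta\}_r$ as $\Gamma'\circ(\widetilde{\alpha}\ltimes\delta)\circ\theta$, then use diagram~(\ref{eq3.8}) to identify $\Gamma'\circ(\widetilde{\alpha}\ltimes\delta)$ restricted to $A\times H\subset A\ltimes H$ with $a\circ(\widetilde{\varphi\alpha}\times\delta)$, and finally to verify that in reduced $R$-homology the two maps $\theta_*$ and $\iota_*|_{\widetilde{H}_*(A\wedge H)}$ coincide, where $\iota\colon A\times H\hookrightarrow A\ltimes H$ is the natural inclusion and $\widetilde{H}_*(A\wedge H)$ sits inside $\widetilde{H}_*(A\times H)$ as its K\"unneth summand.

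The first step is formal: the half-smash of maps makes the square with rows $\widetilde{\alpha}\times\delta$ and $\widetilde{\alpha}\ltimes\delta$ (and vertical maps the natural inclusions $\iota$ and $\Omega B\times E\hookrightarrow\Omega B\ltimes E$) strictly commutative. Composing with $\Gamma'$ and invoking~(\ref{eq3.8}) gives
\[
\Gamma'\circ(\widetilde{\alpha}\ltimes\delta)\circ\iota=a\circ(\Omega\varphi\times1)\circ(\widetilde{\alpha}\times\delta)=a\circ(\widetilde{\varphi\alpha}\times\delta).
\]
The main technical work is the equality $\theta_*=\iota_*|_{\widetilde{H}_*(A\wedge H)}$. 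Both maps satisfy $p_*\circ(\cdot)=\text{id}$ on $\widetilde{H}_*(A\wedge H)$: for $\theta$ because $p\theta\sim 1$, and for $\iota$ because $p\iota\colon A\times H\to A\wedge H$ is the K\"unneth quotient. Both also land in $\ker\pi_{2*}$: $\theta$ because by the proof of Proposition~\ref{prop3.16} it factors through $\zeta$ and~(3.17) is a fibration sequence, and $\iota|_{\widetilde{H}_*(A\wedge H)}$ because $\pi_2\iota\colon A\times H\to H$ is the second projection, which annihilates the K\"unneth $A\wedge H$ piece.

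The identification then follows from a uniqueness argument. The split cofibration $H\to A\ltimes H\to A\wedge H$ (split by $\theta$) produces a decomposition $\widetilde{H}_*(A\ltimes H)=\theta_*(\widetilde{H}_*(A\wedge H))\oplus\widetilde{H}_*(H)$; under $\pi_{2*}$, the first summand is annihilated while the second is mapped isomorphically to $\widetilde{H}_*(H)$. Consequently $\ker\pi_{2*}=\theta_*(\widetilde{H}_*(A\wedge H))$ and $p_*$ restricts to an isomorphism on $\ker\pi_{2*}$. Since $\theta_*$ and $\iota_*|_{\widetilde{H}_*(A\wedge H)}$ are right inverses of this same isomorphism, they coincide; combined with the first step this yields $(\{\alpha,\delta\}_r)_*=a_*\circ(\widetilde{\varphi\alpha}\times\delta)_*$ on the K\"unneth summand. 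The main obstacle is this uniqueness step: $\theta$ and $\iota$ need not be homotopic as maps of spaces, but the characterization of their homological images via $\ker\pi_{2*}$ forces $\theta_*=\iota_*|_{\widetilde{H}_*(A\wedge H)}$.
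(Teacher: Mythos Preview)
Your proof is correct and follows the same approach as the paper, which simply says ``Apply \ref{eq3.8} and \ref{prop3.16}.'' You have filled in the detail the paper leaves implicit: the identification of $\theta_*$ with $\iota_*$ on the K\"unneth summand via the characterization $\ker\pi_{2*}=\theta_*(\widetilde H_*(A\wedge H))$ and the fact that $p_*$ is an isomorphism there.
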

\begin{proof}
Apply \ref{eq3.8} and \ref{prop3.16}.
\end{proof}

\section{Iterated Whitehead Products and the Decomposition
of $\Omega G*\Omega H$}\label{subsec3.3}
\markright{\ref{subsec3.3}.\hspace*{5pt}\uppercase{Iterated Whitehead Products and Decomposition
of} $\Omega G*\Omega H$}

We need, also, to discuss iterated Whitehead products. Suppose
\[
\alpha_i\colon G_i\to X\]
for $1\leqslant i\leqslant n$. We define the iterated
Whitehead
product
\[
\{\alpha_n,\alpha_{n-1},\dots,\alpha_1\}\colon
G_n\circ(G_{n-1}\circ\dots\circ G_1)\dots)\to X
\]
as $\{\alpha_n,\{\alpha_{n-1},\dots,\alpha_1\}\}$. In case $G_i=G$ for each
$i$ we define
\[
G^{[n]}=G\circ G^{[n-1]}.
\]
We also define $G^{[i]}H^{[j]}$\index{$G^{[i]}$|LB}\index{$G^{[i]}H^{[j]}$|LB} as $G\circ(G^{[i-1]}H^{[j]}_*)$ when $i>1$
and as
$G\circ H^{[j]}$ when $i=1$.

Suppose now that $\alpha\colon G\to G\vee H$ and $\beta\colon H\to G\vee H$
are
the inclusions. We then consider
\[
ad^i(\alpha)(\{\alpha,\beta\})=\{\alpha,\dots,\alpha, \beta\}\colon
G^{[i+1]}H\to G\vee H.
\]\index{$ad^i$|LB}

Given a principal fibration
\[
\xymatrix{
\Omega X\ar@{->}[r]&E\ar@{->}[r]&B
}
\]
and maps $\alpha_i\colon G\to B$, $\beta\colon H\to E$, we define
\[
\{\alpha_n,\dots, \alpha_1,\beta\}_r\colon
G_n\circ(G_{n-1}\circ\dots\circ(G_1\circ H)\dots)\to E
\]
as
\[
\{\alpha_n,\{\alpha_{n-1},\dots,\alpha_1,\beta\}_r\}_r.
\]

By an iterated application of \ref{prop3.11}(c), we
have\addtocounter{equation}{4}
\begin{equation}\label{eq3.19}
\pi\{\alpha_n,\dots,\alpha_1,\beta\}_r=\{\alpha_n,\dots,\alpha_1,\pi \beta\}.
\end{equation}

Now consider the principal fibration
\[
\xymatrix{
\Omega G\ar@{->}[r]^->>>>{i}&\Omega G\ltimes H\ar@{->}[r]^{\pi}&G\vee H.
}
\]\addtocounter{Theorem}{1}
Let $K=\bigvee\limits_{i\geqslant 1}G^{[i]}H$. Let $\beta\colon H\to \Omega G\ltimes H$ be the inclusion of the second factor.
\begin{proposition}\label{prop3.20}
The maps $ad_r^i\index{$ad_r^i$|LB}(\alpha)(\beta)\colon G^{[i]}H\to\Omega G\ltimes H$ define a
homotopy equivalence
\[
K\vee H\simeq\bigvee\limits_{i\geqslant 1} G^{[i]}H\vee H\to \Omega G\ltimes H.
\]
\end{proposition}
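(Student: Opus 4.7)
The plan is to prove Proposition~\ref{prop3.20} by showing that the wedge map $\Phi\colon K\vee H\to \Omega G\ltimes H$ determined by the $ad_r^i(\alpha)(\beta)$ and $\beta$ induces an isomorphism on homology. Since both sides are simply connected CW complexes, Whitehead's theorem will then yield the homotopy equivalence.

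First I compute $\tilde H_{*}(\Omega G\ltimes H)$. Using the standard identification $\Omega G\ltimes H\simeq (\Omega G)_{+}\wedge H$, the K\"unneth formula gives
\[
\tilde H_{*}(\Omega G\ltimes H)\cong H_{*}(\Omega G)\otimes \tilde H_{*}(H).
\]
For a simply connected co-$H$ space $G$, I use the James-type splitting $\Sigma\Omega G\simeq\bigvee_{i\geqslant 1}G^{[i]}$ available in the co-$H$ setting by \cite{Gra11}; it specializes to the classical James equivalence when $G$ is a suspension via $(\Sigma A)^{[i]}\simeq\Sigma A^{\wedge i}$, which is an iterated application of Proposition~\ref{prop3.2}. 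This identifies $\tilde H_{*}(\Omega G)\cong\bigoplus_{i\geqslant 1}\tilde H_{*+1}(G^{[i]})$.

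On the source, iterating $\Sigma(G\circ X)\simeq G\wedge X$ from Proposition~\ref{prop3.2} yields $\Sigma G^{[i]}H\simeq G\wedge G^{[i-1]}H$, hence by induction on $i$ a degree-shifted K\"unneth identification $\tilde H_{*}(G^{[i]}H)\cong\tilde H_{*+1}(G\wedge G^{[i-1]}H)$. A straightforward book-keeping then shows that $\tilde H_{*}(K\vee H)=\tilde H_{*}(H)\oplus\bigoplus_{i\geqslant 1}\tilde H_{*}(G^{[i]}H)$ is abstractly isomorphic as a graded module to $\tilde H_{*}(\Omega G\ltimes H)$, with the $i$th wedge summand matching the $i$th tensor piece in the James decomposition of $H_{*}(\Omega G)\otimes \tilde H_{*}(H)$.

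The substantive step is to check that $\Phi$ realizes this matching. For the $H$-summand this is immediate from the definition of $\beta$. For each $G^{[i]}H$-summand, Corollary~\ref{cor3.18} reduces the homology image of $ad_r^i(\alpha)(\beta)$ to the iterated principal action $a_{*}\colon H_{*}(\Omega G)^{\otimes i}\otimes \tilde H_{*}(H)\to \tilde H_{*}(\Omega G\ltimes H)$, applied (via the naturality and projection identities of Proposition~\ref{prop3.11}(a),(c)) to the tensor of the James-generator for the $G^{[i]}$ summand of $H_{*}(\Omega G)$ with the generator of $\tilde H_{*}(H)$. Induction on $i$ then shows that $\Phi_{*}$ is exactly the claimed isomorphism. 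The principal technical obstacle is this last step: tracking the graded tensor structure through iterations of the relative Whitehead product and confirming the correct match with the James summands; once that is done, Whitehead's theorem in the simply connected setting completes the proof.
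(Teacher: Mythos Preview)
Your approach differs substantially from the paper's, and there is a genuine gap in your key step.

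The paper's proof is essentially a one-liner. It cites \cite[3a]{Gra11} for the existence of a homotopy equivalence $\bigvee_{i\geqslant 1}G^{[i]}H\vee H\to\Omega G\ltimes H$ built from maps $\xi_i$ satisfying $\pi\xi_i\sim\{\alpha,\dots,\alpha,\pi\beta\}$. By (\ref{eq3.19}) one also has $\pi\bigl(ad_r^i(\alpha)(\beta)\bigr)\sim\{\alpha,\dots,\alpha,\pi\beta\}$. Since the fibre inclusion $i\colon\Omega G\to\Omega G\ltimes H$ is null homotopic, any two lifts of the same map into $G\vee H$ are homotopic; hence $\xi_i\sim ad_r^i(\alpha)(\beta)$. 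The entire content is therefore a lift-uniqueness argument identifying the relative Whitehead products with maps already known, from the cited reference, to give an equivalence.

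You instead attempt to establish the decomposition from scratch via a homology isomorphism, which amounts to reproving the result of \cite{Gra11} that the paper simply cites. More seriously, your appeal to Corollary~\ref{cor3.18} is not justified as stated: that corollary, and Proposition~\ref{prop3.16} on which it rests, explicitly assume the first factor is a suspension $G=\Sigma A$, not an arbitrary simply connected co-$H$ space. For general $G$ the identification $(\Sigma A)\circ H\simeq A\wedge H$ and the factorization through $A\ltimes H$ are unavailable, so the reduction to the action map $a_*$ in homology does not follow from what you cite. To carry out your program you would have to go back to the definition (\ref{eq3.10}), the diagram (\ref{eq3.8}), and the co-$H$ structure map $\psi$, and compute the effect of $\Gamma'\zeta$ in homology for general co-$H$ $G$; this is exactly the work packaged in \cite{Gra11}. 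The paper's route, using lift-uniqueness over a fibration with null-homotopic fibre inclusion, sidesteps all of this.
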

\begin{proof}
According to \cite[3a]{Gra11}, such a homotopy
equivalence exists where the maps
\[
\xi_i\colon G^{[i]}H\to \Omega G\ltimes H
\]
are chosen so that $\pi\xi_i\sim \{\alpha,\dots,\alpha,\pi\beta\}$. However
by (\ref{eq3.19}), $\pi\{\alpha,\dots,\alpha,\beta\}_r\sim
\{\alpha,\dots,\alpha,\pi\beta\}$; since the map $i\colon \Omega G\to \Omega G\ltimes H$
is null homotopic, $\xi_i\sim \{\alpha,\dots,\alpha,\beta\}_r$.
\end{proof}

We now consider the principal fibration
\[
\xymatrix{
\Omega G\times \Omega H\ar@{->}[r]&\Omega G*\Omega H\ar@{->}[r]&G\vee H.
}
\]
Using the map $\psi\colon G\circ H\to \Omega G*\Omega H$ we define iterated
relative Whitehead products:
\[
ad_r^{i,j}=ad_r^i(\beta)ad_r^j(\alpha)(\psi)\colon H^{[i]}G^{[j]}(G\circ
H)\to \Omega G*\Omega H
\]
\begin{proposition}\label{prop3.21}
The maps $ad_r^{i,j}$ for $i\geqslant 0$, $j\geqslant 0$ define
a homotopy equivalence
\[
\bigvee\limits_{\substack{i\geqslant 0\\
j\geqslant 0}} H^{[i]}G^{[j]}(G\circ H)\to \Omega G*\Omega H.
\]
\end{proposition}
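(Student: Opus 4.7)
The plan is to apply Proposition~\ref{prop3.20} twice and then use the null-homotopy of a certain fiber inclusion to identify the summand maps. First I would extract the auxiliary fiber sequence
\[
\Omega H\to \Omega G * \Omega H\to \Omega G\ltimes H
\]
from the factorization $G\vee H\hookrightarrow G\times H\to G$ of the collapse $p_G\colon G\vee H\to G$: the three-term fiber sequence for a composite gives $\Omega G * \Omega H\to \Omega G\ltimes H\to H$, and the leftward extension is the displayed principal $\Omega H$-fibration, classified by a map $\varphi'\colon \Omega G\ltimes H\to H$ factoring as $\pi$ followed by the collapse of $G$. By Proposition~\ref{prop3.20} we have $\Omega G\ltimes H\simeq H\vee\bigvee_{i\ge 1} G^{[i]}H$, and Proposition~\ref{prop3.11}(c) shows that $\varphi'$ is the identity on the $H$-summand and null on each $G^{[i]}H$-summand (projecting an iterated Whitehead product in the $\alpha$'s and $\beta$ to $H$ kills the $\alpha$'s).

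Next I would use the clutching description of Proposition~\ref{prop2.1} to decompose the total space over this wedge base. The restricted fibration is the contractible path space $PH$ over $H$ and the trivial fibration $G^{[i]}H\times \Omega H$ over each $G^{[i]}H$; gluing these along the common basepoint fiber $\Omega H$ and collapsing the contractible $PH$ gives
\[
\Omega G * \Omega H\simeq \bigvee_{i\ge 1}\bigl((G^{[i]}H\times \Omega H)/(*\times \Omega H)\bigr)\simeq \bigvee_{i\ge 1}\Omega H\ltimes G^{[i]}H.
\]
Applying Proposition~\ref{prop3.20} a second time with $G$ and $H$ interchanged and with the co-$H$ space $H$ replaced by $G^{[i]}H$ splits each summand as $\Omega H\ltimes G^{[i]}H\simeq G^{[i]}H\vee \bigvee_{k\ge 1}H^{[k]}G^{[i]}H$. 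Combining these splittings and using $G^{[i]}H=G^{[i-1]}(G\circ H)$ yields the abstract wedge decomposition
\[
\Omega G * \Omega H\simeq \bigvee_{i\ge 0,\, j\ge 0}H^{[i]}G^{[j]}(G\circ H).
\]

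Finally I would identify each summand inclusion with $ad_r^{i,j}$. The section of $\varphi'$ furnished by Proposition~\ref{prop3.20} makes the fiber inclusion $\Omega H\hookrightarrow \Omega G * \Omega H$ null homotopic, so lifts through the principal fibration $\Omega G * \Omega H\to \Omega G\ltimes H$ are unique up to homotopy, exactly as in the proof of Proposition~\ref{prop3.20} itself. Since $\psi$ is the canonical lift of the universal Whitehead product $W=\{\alpha,\beta\}$ (Proposition~\ref{prop3.3}), it projects to $\{\alpha,\beta\}_r$ in $\Omega G\ltimes H$; iterating Proposition~\ref{prop3.11}(c) identifies the unique lift of $ad_r^{j+1}(\alpha)(\beta)=ad_r^j(\alpha)\{\alpha,\beta\}_r$ as $ad_r^j(\alpha)(\psi)$, which handles the $(i,j)=(0,j)$ summands. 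For $i\ge 1$, the second application of Proposition~\ref{prop3.20} identifies the inclusion of the $H^{[i]}G^{[j]}(G\circ H)$ summand into $\Omega H\ltimes G^{[j+1]}H$ as $ad_r^i(\beta)(\iota)$ with $\iota$ the section $G^{[j+1]}H\to \Omega H\ltimes G^{[j+1]}H$, and Proposition~\ref{prop3.11}(b),(d) transports this forward to $ad_r^i(\beta)\,ad_r^j(\alpha)(\psi)=ad_r^{i,j}$ in $\Omega G * \Omega H$. This last identification is the main obstacle: the abstract wedge decomposition falls out of the two-fold application of Proposition~\ref{prop3.20}, but matching the natural summand inclusions with the specific iterated relative Whitehead products $ad_r^{i,j}$ requires carefully combining uniqueness of lifts (from the null fiber inclusion) with naturality of relative Whitehead products across two consecutive principal fibrations.
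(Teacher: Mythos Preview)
Your argument follows essentially the same route as the paper's: both set up the principal fibration $\Omega H\to\Omega G*\Omega H\xrightarrow{\zeta}\Omega G\ltimes H$, use Proposition~\ref{prop3.20} to split the base as $H\vee K$ with $K=\bigvee_{j\ge 0}G^{[j]}(G\circ H)$, lift the $K$-summands to the total space, and then identify the summand inclusions with the $ad_r^{i,j}$ via uniqueness of lifts through the right-hand fibration $\pi$ (whose fiber $\Omega G\times\Omega H$ includes null-homotopically). The only real difference is presentational: the paper phrases the second decomposition directly as $\bigvee_{i\ge 0}H^{[i]}K\simeq\Omega G*\Omega H$, while you interpose the explicit pushout identification $\Omega G*\Omega H\simeq\bigvee_i\Omega H\ltimes G^{[i]}H$ before applying~\ref{prop3.20} to each piece. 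One small correction: Proposition~\ref{prop2.1} is stated for a cone attachment $B=B_0\cup_\theta CA$, not for a wedge base, so it does not literally give your step~3; the pushout-of-restricted-fibrations argument you describe is correct on its own and does not need~\ref{prop2.1}.
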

\begin{proof}
Consider the diagram of principal fibrations
\[
\xymatrix{
\Omega H\ar@{->}[r]^->>>>>{\iota_2}\ar@{->}[d]&\Omega G\times \Omega H\ar@{->}[d]\\
\Omega G*\Omega H\ar@{=}[r]\ar@{->}[d]_{\zeta}&\Omega G*\Omega
H\ar@{->}[d]_{\pi}\\
\Omega G\ltimes H\ar@{->}[r]&G\vee H
}
\]
The maps $ad_r^j\colon G^{[j]}(G\circ H)\to \Omega G\ltimes H$ defined by
\ref{prop3.20} lift to $\Omega G*\Omega H$ since $\zeta$ has a right
homotopy inverse.
Since the liftings project by $\pi$ onto the maps:
\[
ad^j\colon G^{[j]}(G\circ H)\to G\vee H,
\] these
liftings are homotopic to the relative
Whitehead product defined by~$\pi$. However $\Omega G\ltimes H$ is
homotopy equivalent to $H\vee K$ so the maps
\[
ad_r^iH^{[i]}K\to \Omega G*\Omega H
\]
define a homotopy equivalence
\[
\bigvee\limits_{i\geqslant 0}H^{[i]}K\to \Omega G * \Omega H.
\]
Furthermore
\[
K=\bigvee\limits_{j\geqslant 0}G^{[j]}(G\circ H)
\]
and the relative
Whitehead
products defined by the left hand fibration are mapped to the
corresponding relative Whitehead products in the right
hand fibration. Thus we have
\[
\smash{\makebox[38.5pt]{}\xymatrix{\raisebox{-8pt}{$\smash{\bigvee\limits_{\substack{i\geqslant 0\\
j\geqslant 0}}H^{[i]}G^{[j]}(G\circ H)}$}\ar@{->}[r]^-<<<<{\simeq}&\raisebox{-7pt}{$\smash{\bigvee\limits_{i\geqslant
0}H^{[i]}K}$}\ar@{->}[r]^->>>>{\simeq}&\Omega G*\Omega H
}}
\makebox[38.5pt]{}\rlap{\qed}
\]\noqed
\end{proof}
\vspace*{8pt}
\begin{Theorem}\label{theor3.22}
Suppose
\[
\xymatrix{
\Omega X\ar@{->}[r]^{i}&E\ar@{->}[r]^{\pi}&G
}
\]
is a principal fibration induced by a map $\varphi\colon G\to X$
where $X$ is an $H$-space with strict unit. Suppose $\nu\colon E\to Z$.
Then the composition
\[
\xymatrix{
\Omega G*\Omega G\ar@{->}[r]^-<<<{\Gamma}&E\ar@{->}[r]^{\nu}&Z
}
\]
is null homotopic iff the compositions
\[
\nu ad_r^i(\alpha)(\{\alpha,\alpha\}_{\times})\colon G^{[[i+2]}\to E\to Z
\]
are null homotopic for each $i\geqslant 0$, where $\alpha\colon G\to G$ is
the
identity map.
\end{Theorem}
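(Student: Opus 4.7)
The plan is to use the wedge decomposition of $\Omega G*\Omega G$ from Proposition~\ref{prop3.21} and to identify each summand of $\nu\Gamma$ with one of the compositions listed in the statement. Applying~\ref{prop3.21} with $H=G$ yields a homotopy equivalence
\[
\bigvee_{i,j\geqslant 0} G^{[i]}G^{[j]}(G\circ G)\xrightarrow{\simeq}\Omega G*\Omega G,
\]
whose $(i,j)$-summand is the iterated relative Whitehead product
\[
ad^{i,j}_r(\psi)=\{\iota_2,\ldots,\iota_2,\iota_1,\ldots,\iota_1,\psi\}_r,
\]
with $i$ copies of $\iota_2$ and $j$ copies of $\iota_1$, where $\iota_1,\iota_2\colon G\to G\vee G$ are the two inclusions and $\psi\colon G\circ G\to\Omega G*\Omega G$ is taken from Proposition~\ref{prop3.1}. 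Hence $\nu\Gamma$ is null homotopic if and only if each restriction $\nu\Gamma\cdot ad^{i,j}_r(\psi)\colon G^{[i+j+2]}\to Z$ is null homotopic.

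Next I identify these restrictions. By the definition of the $H$-space based Whitehead product in \S\ref{subsec3.2}, $\Gamma\psi=\{\alpha,\alpha\}_{\times}$ where $\alpha=\mathrm{id}_G$. For general $(i,j)$, the construction of $\Gamma$ in Proposition~\ref{prop2.7} realizes it as the induced map of principal fibrations for the strictly commutative square whose top is $\nabla\colon G\vee G\to G$ and whose bottom is $\mu(\varphi\times\varphi)\colon G\times G\to X$. This square factors naturally as
\[
\Omega G*\Omega G\longrightarrow E''\longrightarrow E,
\]
where $E''\to G\vee G$ is the principal fibration induced by $\mu(\varphi\times\varphi)$, the first arrow lies over $\mathrm{id}_{G\vee G}$, and the second lies over $\nabla$. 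Both arrows are maps of principal fibrations, so iterating Proposition~\ref{prop3.11}(d) yields
\[
\Gamma\cdot ad^{i,j}_r(\psi)=\{\nabla\iota_2,\ldots,\nabla\iota_2,\nabla\iota_1,\ldots,\nabla\iota_1,\Gamma\psi\}_r=ad^{i+j}_r(\alpha)(\{\alpha,\alpha\}_{\times}),
\]
since $\nabla\iota_1=\nabla\iota_2=\alpha$. Combining both steps, $\nu\Gamma$ is null homotopic if and only if $\nu\cdot ad^n_r(\alpha)(\{\alpha,\alpha\}_{\times})$ is null homotopic for every $n\geqslant 0$; the $n+1$ distinct pairs $(i,j)$ with $i+j=n$ all collapse to the same single condition.

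The main obstacle is making the naturality step precise. Proposition~\ref{prop3.11}(d) is stated for strict maps of principal fibrations, whereas $\Gamma$ is constructed in~\ref{prop2.7} by a direct path-level formula involving the multiplication on $X$. One must therefore verify that the factorization $\Omega G*\Omega G\to E''\to E$ agrees, up to homotopy, with the $\Gamma$ of~\ref{prop2.7}; this is essentially built into that construction but should be unpacked explicitly before invoking~\ref{prop3.11}(d). Once that verification is in place, the iterated version of naturality follows by induction on $i+j$.
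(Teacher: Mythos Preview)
Your argument is correct and follows the same route as the paper: apply the decomposition of Proposition~\ref{prop3.21} and then use naturality of the relative Whitehead product under the map of principal fibrations covering $\nabla\colon G\vee G\to G$, so that the $(i,j)$-summand is sent to $ad_r^{\,i+j}(\alpha)(\{\alpha,\alpha\}_\times)$. The paper records this in one line by drawing the map of principal fibrations
\[
\xymatrix{
\Omega G\times\Omega G\ar[r]\ar[d]&\Omega X\ar[d]\\
\Omega G*\Omega G\ar[r]^-{\Gamma}\ar[d]&E\ar[d]\\
G\vee G\ar[r]^{\nabla}&G
}
\]
and invoking naturality directly; your intermediate space $E''$ is not needed, since Proposition~\ref{prop3.11}(d) already applies to this single square with $\xi=\nabla$ and $\eta=\mu(\varphi\times\varphi)\colon G\times G\to X$ (the strict commutativity of $\varphi\nabla=\mu(\varphi\times\varphi)$ on $G\vee G$ holds because $\mu$ has a strict unit, which is exactly how $\Gamma$ was built in~\ref{prop2.7}). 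So the ``obstacle'' you flag dissolves once you observe that $\Gamma$ is by construction the map of principal fibrations to which~\ref{prop3.11}(d) refers.
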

\begin{proof}
In this case $G=H$ and the map of principal
fibrations
\[
\xymatrix{
\Omega G\times \Omega G\ar@{->}[d]\ar@{->}[r]&\Omega X\ar@{->}[d]\\
\Omega G*\Omega G\ar@{->}[d]\ar@{->}[r]&E\ar@{->}[d]\\
G\vee G\ar@{->}[r]^{\nabla}&G
}
\]
maps $H^{[i]}G^{[j]}(G\circ H)$ to $G^{[i+j]}(G\circ G)$
which only depends on $i+j$.
\end{proof}

\section{Neisendorfer's Theory for Homotopy with Coefficients}\label{subsec3.4}

In the case that the co-$H$ spaces are Moore spaces,
the resulting Whitehead products occur in the
homotopy groups with coefficients. The adjoint
theory of Samelson products is due to Neisendorfer~\cite{Nei80},
and was crucial in the work of~\cite{CMN79a,CMN79b,CMN79c}.
This theory has been further developed
in~\cite{Nei10a} where $H$-space based Whitehead products
were introduced.

We need to make a mild generalization of this in that we
must consider the case where
\[
G=\Sigma P^m(p^r)\qquad H=\Sigma P^n(p^s)\qquad s\geqslant r.
\]
In this case
\[
G\circ H=\Sigma P^{m+n}(p^r)\vee \Sigma P^{m+n-1}(p^r).
\]
This splitting is not unique and we must choose a splitting.

Choose a map
\[
\Delta\index{Delta@$\Delta$|LB}\colon P^{m+n}(p^s)\to P^m(p^s)\wedge P^n(p^s)
\]
so that the diagram\addtocounter{equation}{3}
\begin{equation}\label{eq3.23}
\begin{split}
\xymatrix{
P^{m+n}(p^s)\ar@{->}[r]^-{\Delta}\ar@{->}[d]_{\pi_{m+n}}&P^m(p^s)\wedge P^n(p^s)\ar@{->}[d]\\
S^{m+n}\ar@{->}[r]^-{\simeq}&S^m\wedge S^n
}
\end{split}
\end{equation}
commutes up to homotopy. Such a choice is possible
when $m,n\geqslant 2$ for $p$ odd and is unique up to homotopy.

Neisendorfer \cite{Nei80} has produced internal
Whitehead and Samelson products for homotopy
with $Z/p^s$ coefficients. The Whitehead product of
$x\in \pi_{m+1}(X;Z/p^s)$ and $y\in \pi_{n+1}(X;Z/p^s)$ is an element
\[
[x,y]\in\pi_{m+n+1}(X;Z/p^s)
\]
defined as the homotopy
class of the composition:
\begin{align}\label{eq3.24}
P^{m+n+1}(p^s)&=\xymatrix{\Sigma
P^{m+n}(p^s)\ar@{->}[r]^->>>>{\enlarge{\Sigma\Delta}}&\Sigma
P^m(p^s)\wedge P^n(p^s)}\\
&=\xymatrix{P^{m+1}(p^s)\circ
P^{n+1}(p^s)\ar@{->}[r]^-<<<{\enlarge{\{x,y\}}}&X}\notag
\end{align}

As we will need to consider such pairings with different
coefficients, suppose $x\in \pi_{m+1}(X;Z/p^r)$ and
$y\in \pi_{n+1}(X;Z/p^{r+t})$. We can still form the external
Whitehead product:
\[
\Sigma P^m(p^r)\wedge P^n(p^{r+t})=\xymatrix{P^{m+1}(p^r)\circ
P^{n+1}(p^{r+t})\ar@{->}[r]^-<<<{\enlarge{\{x,y\}}}&X.}
\]

Since the map of degree $p^{r+t}$ on $P^m(p^r)$ is null
homotopic, there is a splitting:
\[
P^m(p^r)\wedge P^n(p^{r+t})\simeq P^{m+n}(p^r)\vee P^{m+n+1}(p^r).
\]

We now choose an explicit splitting. Recall (\ref{eq1.5new})
$\delta_t=\beta\rho^t$.\addtocounter{Theorem}{2}
\begin{proposition}\label{prop3.25}
There is a splitting of $P^m(p^r)\wedge P^n(p^{r+t})$
defined by the two compositions:
\begin{gather*}
 \xymatrix{P^{m+n}(p^r)\ar@{->}[r]^-{\enlarge{\Delta}}&P^m(p^r)\wedge
 P^n(p^r)\ar@{->}[r]^{\enlarge{1\wedge
\rho^t}}& P^m(p^r)\wedge P^n(p^{r+t})}\\
 \xymatrix{P^{m+n-1}(p^r)\ar@{->}[r]^-{\enlarge{\Delta}}&P^m(p^r)\wedge
 P^{n-1}(p^r)\ar@{->}[r]^-{\enlarge{1\wedge \delta_t}} &P^m(p^r)\wedge P^n(p^{r+t})}
\end{gather*}
\end{proposition}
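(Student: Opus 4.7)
The plan is to combine the two given compositions into a single wedge map
\[
\Phi\colon P^{m+n-1}(p^r)\vee P^{m+n}(p^r)\longrightarrow P^m(p^r)\wedge P^n(p^{r+t})
\]
and show it is a homotopy equivalence. Both source and target are simply connected (using $m,n\geq 2$), finite, $p$-local CW complexes whose mod-$p$ homology is concentrated in degrees $m+n-2$, $m+n-1$, $m+n$ with total ranks $1,2,1$. By the $p$-local Whitehead theorem it therefore suffices to verify that $\Phi_*$ induces an isomorphism on $H_*(-;\mathbb{Z}/p)$.

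Fix mod-$p$ generators $a\in H_{m-1},\ b\in H_m$ of $P^m(p^r)$ with $\beta^{(r)}b=a$, and $c\in H_{n-1},\ d\in H_n$ of $P^n(p^{r+t})$ with $\beta^{(r+t)}d=c$; the target then has mod-$p$ basis $\{a\wedge c,\ a\wedge d,\ b\wedge c,\ b\wedge d\}$. Since the first nonzero Bockstein of $d$ occurs at level $r+t>r$, the Leibniz rule yields $\beta^{(r)}(b\wedge x)=a\wedge x$ as the only nontrivial action of $\beta^{(r)}$ on the smash. The defining diagram~(\ref{eq3.23}) forces $\Delta_*$ to send the top cell of a Moore-space domain to the top cell of the smash, and naturality of $\beta^{(r)}$ then pins down the bottom cell. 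For the first composition, $\Delta_*$ carries the top of $P^{m+n}(p^r)$ to $b\wedge d'$ (with $d'$ the top of $P^n(p^r)$), and $(1\wedge\rho^t)_*$ sends $d'\mapsto d$ while killing $c'$ (since integrally $\rho^t_*(c')=p^tc$); so the image consists of $b\wedge d$ and $a\wedge d$. For the second composition, $\Delta_*$ carries the top of $P^{m+n-1}(p^r)$ to $b\wedge d''$ (with $d''$ the top of $P^{n-1}(p^r)$), and $(1\wedge\delta_t)_*$ sends $d''$ via $\rho^t$ to the top of $P^{n-1}(p^{r+t})$ and then via $\beta=\iota\pi$ to the bottom cell $c$ of $P^n(p^{r+t})$; so the image consists of $b\wedge c$ and $a\wedge c$.

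Assembling, $\Phi_*$ carries the rank-four mod-$p$ homology of the wedge isomorphically onto that of the smash, so $\Phi$ is a homotopy equivalence. The main subtlety is tracking the Bockstein behavior under $\rho^t$ and $\delta_t$ across the smash, but this is handled cleanly because the Bockstein levels $r$ and $r+t$ are distinct: the Leibniz rule applied to $\beta^{(r)}$ isolates contributions from the $P^m(p^r)$-factor, so the two wedge summands map to disjoint pairs of generators and cannot interfere.
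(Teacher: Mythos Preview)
Your argument is correct and follows essentially the same strategy as the paper: show that the wedge map $\Phi$ induces an isomorphism on $\bmod\,p$ homology and conclude by a rank count. The paper's presentation is a bit more economical, however. Rather than tracking Bockstein classes through the smash, it projects to and includes from spheres: for the first composition it observes that postcomposing with $1\wedge\pi_n$ gives $(1\wedge\pi_n)\Delta$ (since $\pi_n\rho^t=\pi_n$), which is the equivalence $P^{m+n}(p^r)\simeq P^m(p^r)\wedge S^n$ from~(\ref{eq3.23}), so $(1\wedge\rho^t)\Delta$ is a $\bmod\,p$ homology monomorphism; for the second it factors $\delta_t$ as $\iota_{n-1}\pi_{n-1}$ (using $\pi_{n-1}\rho^t=\pi_{n-1}$), so the map factors through the equivalence $P^{m+n-1}(p^r)\simeq P^m(p^r)\wedge S^{n-1}$ followed by the monomorphism $1\wedge\iota_{n-1}$. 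This avoids any explicit Leibniz calculation. Your approach has the compensating virtue of exhibiting exactly which basis elements are hit by each summand, which makes the decomposition formula~\ref{prop3.26} transparent. One minor stylistic point: your sentence about $\beta^{(r)}$ on the target smash $P^m(p^r)\wedge P^n(p^{r+t})$ is not actually used---your computation of $\Delta_*$ on the bottom cell takes place in $P^m(p^r)\wedge P^n(p^r)$ before pushing forward---so you could drop it.
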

\begin{proof}
$(1\wedge \pi_n)(1\wedge \rho^t)\Delta=(1\wedge \pi_n)\Delta$ induces a
$\text{mod}\, p$ homology isomorphism, so $(1\wedge \rho^t)\Delta$ induces
a homology monomorphism. The second composition
factors
\begin{multline*}
\xymatrix@C=35pt{P^{m+n-1}(p^r)\ar@{->}[r]^-{\enlarge{\Delta}}&P^m(p^r)\wedge
P^{n-1}(p^r)\ar@{->}[r]^-{\enlarge{1\wedge \pi_{n-1}}}&P^m(p^r)\wedge
S^{n-1}}\\
\xymatrix@C=35pt{\ar@{->}[r]^-{\enlarge{1\wedge \iota_{n-1}}}&P^m(p^r)\wedge
P^{n}(p^{r+t})}
\end{multline*}
and the composition of the first two maps is a homotopy
equivalence. Since the third map induces a
$\text{mod}\, p$ homology monomorphism, this
composition does as well. Counting ranks, we see that the
two maps together define a homotopy equivalence:\index{$e$|LB}
\[
\makebox[33pt]{}
\xymatrix{e\colon P^{m+n}(p^r)\vee
P^{m+n-1}(p^r)\ar@{->}[r]^-{\enlarge{\simeq}}&P^m(p^r)\wedge
P^n(p^{r+t})}
\makebox[33pt]{}\rlap{\qed}
\]\noqed
\end{proof}

We apply this to the internal Whitehead product
(\ref{eq3.24}) to get
\begin{proposition}\label{prop3.26}
\[
\makebox[21pt]{}
\{x,y\}e=[x,y \rho^t]\vee[x,y\delta_t]\colon P^{m+n}(p^r)\vee
P^{m+n-1}(p^r)\to X.
\makebox[21pt]{}\rlap{\qed}
\]
\end{proposition}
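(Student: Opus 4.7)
The plan is to reduce the identity to a direct calculation using the explicit description of the splitting $e$ given in Proposition~\ref{prop3.25} together with the naturality of the (external) Whitehead product in its second variable and the definition~(\ref{eq3.24}) of the internal Whitehead product. First I would unpack $e$ as the wedge of its two defining compositions
\[
e_1=(1\wedge\rho^t)\Delta\colon P^{m+n}(p^r)\to P^m(p^r)\wedge P^n(p^{r+t}),\quad
e_2=(1\wedge\delta_t)\Delta\colon P^{m+n-1}(p^r)\to P^m(p^r)\wedge P^n(p^{r+t}),
\]
so that $\{x,y\}e\sim(\{x,y\}e_1)\vee(\{x,y\}e_2)$, and the claim is reduced to showing that the two summands coincide with $[x,y\rho^t]$ and $[x,y\delta_t]$ respectively.

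Next I would invoke naturality of the external Whitehead product in its second entry: for a co-$H$ map $g\colon H'\to H$ and classes $\alpha\colon G\to X$, $\beta\colon H\to X$ one has $\{\alpha,\beta\}\circ(1\circ g)\sim\{\alpha,\beta g\}$. This is immediate from Definition~\ref{def3.4} and the functoriality of $\circ$ from Proposition~\ref{prop3.1}, and it is the absolute analogue of the statement for $\{\,,\,\}_\times$ in Proposition~\ref{prop3.6}(a). Applying it with $g=\rho^t$ and then $g=\delta_t$, and translating from the $\circ$-description back to the smash-product description via the natural equivalence $G\circ\Sigma X\simeq G\wedge X$ of Proposition~\ref{prop3.2}, yields
\[
\{x,y\}\circ(1\wedge\rho^t)\sim\{x,y\rho^t\},\qquad \{x,y\}\circ(1\wedge\delta_t)\sim\{x,y\delta_t\}.
\]
Post-composing each with the remaining factor $\Delta$ and comparing with the defining composition~(\ref{eq3.24}) of the internal Whitehead product then identifies $\{x,y\}e_1$ with $[x,y\rho^t]$ and $\{x,y\}e_2$ with $[x,y\delta_t]$, giving the stated wedge decomposition.

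The only point requiring care is bookkeeping the identifications between $P^{a+1}(p^s)\circ P^{b+1}(p^s)$ and $\Sigma(P^a(p^s)\wedge P^b(p^s))$ coming from Proposition~\ref{prop3.2}, so that the smash products $1\wedge\rho^t$ and $1\wedge\delta_t$ appearing in $e_1,e_2$ actually correspond to the co-$H$ maps to which the naturality formula applies; this is essentially the content of the second half of Proposition~\ref{prop3.2} about naturality in $X$ under continuous maps, which allows $\rho^t$ and $\delta_t$ (which are honest maps of Moore spaces, not just co-$H$ maps between suspended Moore spaces) to be absorbed into the Whitehead product. Once these identifications are in place the argument is purely formal, and this is where I expect the main (minor) obstacle to lie.
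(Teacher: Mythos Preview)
Your argument is correct and is exactly the unwinding the paper has in mind: the paper gives no proof of Proposition~\ref{prop3.26} (the \qed is placed in the statement itself), treating it as immediate from the explicit description of $e$ in Proposition~\ref{prop3.25} together with the definition~(\ref{eq3.24}) of the internal product. Your plan---restrict $\{x,y\}$ to each summand of $e$, absorb $1\wedge\rho^t$ and $1\wedge\delta_t$ into the second variable via naturality, and recognise the resulting compositions $\{x,y\rho^t\}\Sigma\Delta$ and $\{x,y\delta_t\}\Sigma\Delta$ as the internal products---is precisely this.
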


\ref{prop3.26} resolves the external Whitehead product with
different coefficients into internal Whitehead products
with coefficients in $Z/p^r$ as considered by Neisendorfer.

Suppose now that we are given a
principal fibration
\[
\Omega X\to E\to B
\]
classified by a map $\varphi\colon B\to X$ where $X$ is a homotopy
commutative $H$-space with strict unit and we are given
classes $u\in \pi_m(B;Z/p^r)$ and $v\in \pi_n(B;Z/p^{r+t})$. Then we
have
\begin{proposition}\label{prop3.27}
\[
\{u,v\}_{\times}e=[u,v\rho^t]_{\times}\vee[u,v\delta_t]_{\times}\colon
P^{m+n}(p^r)\vee P^{m+n-1}(p^{r+t})\to E.
\]
\end{proposition}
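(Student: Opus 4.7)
The plan is to mimic the proof of Proposition~\ref{prop3.26}, replacing the absolute Whitehead product by the $H$-space based Whitehead product throughout and invoking the naturality statement of Proposition~\ref{prop3.6}(a) in place of its absolute analogue.

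First I would unpack $e$ from Proposition~\ref{prop3.25} as the wedge of two explicit compositions: the first wedge summand on $P^{m+n}(p^r)$ is the diagonal $\Delta$ followed by $1\wedge\rho^t$, and the second wedge summand on $P^{m+n-1}(p^r)$ is $\Delta$ followed by $1\wedge\delta_t$. Both $\rho^t$ and $\delta_t=\beta\rho^t$ are co-$H$ maps between Moore spaces. After transporting along the identification between $G\circ H$ and the appropriate (de)suspension of $G\wedge H$ from Proposition~\ref{prop3.2}, the naturality of the $H$-space based Whitehead product (Proposition~\ref{prop3.6}(a)) yields
\[
\{u,v\}_{\times}\circ(1\circ\rho^t)\sim\{u,v\rho^t\}_{\times},\qquad \{u,v\}_{\times}\circ(1\circ\delta_t)\sim\{u,v\delta_t\}_{\times}.
\]

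Composing further with the remaining $\Delta$-component on each wedge summand, exactly as in the definition~(\ref{eq3.24}) of the internal Whitehead product, converts these external $H$-space based products into the internal $H$-space based products $[u,v\rho^t]_{\times}$ and $[u,v\delta_t]_{\times}$, respectively. Assembling the two wedge summands produces the formula stated in the proposition.

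The main obstacle is simply the bookkeeping: one must check that the identifications of Proposition~\ref{prop3.2} carry $1\wedge\rho^t$ to $1\circ\rho^t$ compatibly with Proposition~\ref{prop3.6}(a), and that the internal $H$-space based Whitehead product $[u,v\rho^t]_{\times}$ is set up so as to equal $\{u,v\rho^t\}_{\times}$ precomposed with $\Sigma\Delta$ (the direct analogue of~(\ref{eq3.24})). No genuinely new ideas are required beyond those of Proposition~\ref{prop3.26}.
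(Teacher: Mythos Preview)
Your approach is correct: applying Proposition~\ref{prop3.6}(a) with the co-$H$ maps $\rho^t$ and $\delta_t$ in the second variable, and then precomposing with $\Delta$ to pass from external to internal products, does exactly what is needed. The bookkeeping you flag is routine, since Proposition~\ref{prop3.2} is natural for co-$H$ maps and $\rho^t$, $\delta_t$ are suspension maps.

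The paper, however, takes a different and somewhat slicker route. Rather than rerunning the naturality argument of Proposition~\ref{prop3.26} in the $H$-space based setting, it observes that by construction both $\{u,v\}_{\times}e$ and $[u,v\rho^t]_{\times}\vee[u,v\delta_t]_{\times}$ are obtained by composing $\Gamma\colon\Sigma(\Omega B\wedge\Omega B)\to E$ with certain maps $P^{m+n}\vee P^{m+n-1}\to\Sigma(\Omega B\wedge\Omega B)$. After projecting along $\omega\colon\Sigma(\Omega B\wedge\Omega B)\to B\vee B$ these two maps agree, precisely by Proposition~\ref{prop3.26}. Since $\Omega\omega$ has a left homotopy inverse, the two maps into $\Sigma(\Omega B\wedge\Omega B)$ are already homotopic, and composing with $\Gamma$ gives the result. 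So the paper reduces \ref{prop3.27} to \ref{prop3.26} via the factorization through $\Gamma$, whereas you reprove \ref{prop3.26} directly in the $\times$-context; your argument is more self-contained, while the paper's avoids duplicating work and makes transparent why the $H$-space based case follows formally from the absolute one.
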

\begin{proof}
Both $\{u,v\}_{\times}e$ and
$[u,v\rho^t]_{\times}\vee[u,v\delta_t]_{\times}$ are the images under $\Gamma$ of maps:
\[
P^{m+n}(p^r)\vee P^{m+n-1}(p^r)\to \Sigma(\Omega B\wedge \Omega B)
\]
which are homotopic after projection
\[
\xymatrix{\Sigma(\Omega B\wedge \Omega
B)\ar@{->}[r]^-{\enlarge{\omega}}&B\vee B}
\]
by \ref{prop3.26}. Since $\Omega \omega$ has a left homotopy universe,
these
maps are homotopic. Composing with $\Gamma\colon \Sigma(\Omega B\wedge
\Omega B)\to E$
finishes the proof.
\end{proof}

Similar to \ref{prop3.27}, we  have
\begin{proposition}\label{prop3.28}
$\{x,u\}_re=[x,u\rho^t]_r\vee[x,u\delta_t]_r$ where
$x\in \pi_m(B;Z/p^r)$ and $u\in \pi_n(E;Z/p^{r+t})$.\qed
\end{proposition}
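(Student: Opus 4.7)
The plan is to mimic the proof of Proposition \ref{prop3.27}, replacing the $H$-space based construction $\Gamma$ by the relative construction $\Gamma' \zeta$ of (\ref{eq3.10}). Both $\{x,u\}_r e$ and $[x, u \rho^t]_r \vee [x, u \delta_t]_r$ factor as $\Gamma' \zeta \circ \mu_i$ for maps
\[
\mu_1, \mu_2 \colon P^{m+n-1}(p^r) \vee P^{m+n-2}(p^r) \to \Omega B * \Omega E,
\]
where $\mu_1 = (\Omega x * \Omega u) \circ \psi \circ e$ and $\mu_2$ is the wedge of the corresponding initial segments of the two summand internal relative Whitehead products.

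First I would verify $\omega \mu_1 \sim \omega \mu_2 \colon P^{m+n-1}(p^r) \vee P^{m+n-2}(p^r) \to B \vee E$. By naturality of $\omega$ in the two factors, $\omega (\Omega x * \Omega u) = (x \vee u) \omega$, and $\omega \psi = W$ by Proposition \ref{prop3.3}. Thus $\omega \mu_1 = (x \vee u) W e$, which by Definition \ref{def3.4} is the external Whitehead product $\{x, u\} e$ in $B \vee E$, where $x$ and $u$ are composed with the wedge inclusions. Proposition \ref{prop3.26} then rewrites this as $[x, u\rho^t] \vee [x, u\delta_t]$ in $B \vee E$, which is exactly $\omega \mu_2$ by the same unwinding.

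The second step is to lift this homotopy from $B \vee E$ to $\Omega B * \Omega E$. Proposition \ref{prop2.6} identifies $\Omega B * \Omega E$ with the homotopy fiber of the inclusion $B \vee E \to B \times E$. Looping this fibration and using the evident section $\Omega B \times \Omega E \to \Omega(B \vee E)$ obtained by loop-multiplication of the wedge inclusions yields a splitting $\Omega(B \vee E) \simeq (\Omega B \times \Omega E) \times \Omega(\Omega B * \Omega E)$, so $\Omega \omega$ admits a left homotopy inverse. Consequently $\mu_1 \sim \mu_2$, and composing with $\Gamma' \zeta$ yields the desired identity.

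The main obstacle is the bookkeeping in the first step---correctly identifying $\omega \mu_2$ as a wedge of absolute Whitehead products in $B \vee E$ to which Proposition \ref{prop3.26} can be applied. This is a formal naturality computation directly paralleling the proof of Proposition \ref{prop3.27}, and no new ideas are required beyond those already developed in Section \ref{subsec3.4}.
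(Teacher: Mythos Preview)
Your proposal is correct and follows essentially the same approach as the paper. The paper itself gives no proof beyond the remark ``Similar to \ref{prop3.27}'' and a \qed; your argument is precisely that adaptation, replacing $\Gamma\colon \Omega B * \Omega B \to E$ by the relative construction $\Gamma'\zeta\colon \Omega B * \Omega E \to E$ from (\ref{eq3.10}) and invoking the same left homotopy inverse for $\Omega\omega$ to lift the equality from $B\vee E$ back to $\Omega B * \Omega E$.
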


There is one special case of this that we will need in section~\ref{subsec6.3}.
This involves relative Whitehead products $[x,u]_r$
when $u\colon\!\! \xymatrix@C=10pt{S^n\ar@{->}[r]{}\!&E}$ and $x\colon\!\!
\xymatrix@C=10pt{P^m\ar@{->}[r]{}\!&\nobreak B}\!\!$. In this case
\[
[x,u]_r=\{x,u\}_r\colon \xymatrix{P^m\circ S^n\ar@{->}[r]&E}.
\]
\begin{proposition}\label{prop3.29}
$[x,u\pi_n]_r=[x,u]_r\colon \xymatrix{P^{m+n-1}\ar@{->}[r]&E}$.
\end{proposition}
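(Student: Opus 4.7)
The plan is to combine naturality of the relative Whitehead product in the co-$H$ structure of the second argument (Proposition~\ref{prop3.11}(a)) with the splittings of Propositions~\ref{prop3.2} and~\ref{prop3.25}, and then identify the appropriate summand via Proposition~\ref{prop3.28}.

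Since $\pi_n\colon P^n\to S^n$ is the suspension of the pinch $\pi_{n-1}\colon P^{n-1}\to S^{n-1}$, it is a co-$H$ map. Proposition~\ref{prop3.11}(a) therefore yields
\[
\{x,u\pi_n\}_r\sim \{x,u\}_r\circ(1_{P^m}\circ\pi_n)\colon P^m\circ P^n\to P^m\circ S^n\to E.
\]
Under the identification $P^m\circ S^n\simeq P^m\wedge S^{n-1}=P^{m+n-1}$ of Proposition~\ref{prop3.2}, the right-hand composite $\{x,u\}_r$ is precisely $[x,u]_r$.

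Proposition~\ref{prop3.2} also identifies $P^m\circ P^n\simeq P^m\wedge P^{n-1}$, and Proposition~\ref{prop3.25} (applied with $n$ replaced by $n-1$ and $t=0$) splits this as $P^{m+n-1}\vee P^{m+n-2}$, with top summand included via the Neisendorfer diagonal $\Delta\colon P^{m+n-1}\to P^m\wedge P^{n-1}$. By Proposition~\ref{prop3.28} with $t=0$, $[x,u\pi_n]_r$ is exactly the restriction of $\{x,u\pi_n\}_r$ to this top summand. The claim therefore reduces to showing that the composite
\[
\xymatrix@C=24pt{P^{m+n-1}\ar@{->}[r]^-{\Delta}&P^m\wedge P^{n-1}\ar@{->}[r]^-{1\wedge \pi_{n-1}}&P^m\wedge S^{n-1}=P^{m+n-1}}
\]
is homotopic to the identity of $P^{m+n-1}$.

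Projecting further to $S^{m+n-1}$ via $\pi_{m+n-1}$ gives $(\pi_m\wedge\pi_{n-1})\Delta$, which by (\ref{eq3.23}) agrees with the standard identification $\pi_{m+n-1}$. Hence the displayed self-map of $P^{m+n-1}$ is an equivalence inducing the identity on the top cell, and by the uniqueness of $\Delta$ up to homotopy noted after (\ref{eq3.23}) (for $p$ odd, $m,n\geqslant 2$) it is homotopic to the identity. Combining everything, $[x,u\pi_n]_r\sim \{x,u\}_r\sim[x,u]_r$. The main technical obstacle is precisely this final identification of the top-cell self-equivalence with the identity rather than some unit multiple; this is handled by fixing the splittings of Propositions~\ref{prop3.2} and~\ref{prop3.25} compatibly with the chosen $\Delta$.
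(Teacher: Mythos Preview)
Your proof is correct and follows essentially the same route as the paper's: both use Proposition~\ref{prop3.11}(a) to write $\{x,u\pi_n\}_r\sim\{x,u\}_r(1\circ\pi_n)$, identify $P^m\circ S^n\simeq P^{m+n-1}$ via Proposition~\ref{prop3.2}, and then restrict to the top summand of $P^m\circ P^n\simeq P^{m+n-1}\vee P^{m+n-2}$. The paper simply asserts that the composite $(1\circ\pi_n)\circ e$ is ``projection onto the first factor'' (using $\pi_n\delta_t=0$ to see the second summand dies), whereas you spell out why the induced self-map of $P^{m+n-1}$ is the identity via diagram~(\ref{eq3.23}); this is a point the paper leaves implicit, so your version is if anything more complete. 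One small remark: invoking ``uniqueness of $\Delta$'' for that last step is a slightly indirect justification---the cleaner statement is that for $p$ odd and $k\geqslant 3$ one has $[P^k,P^k]\cong \mathbb{Z}/p^r$ detected by the top cell, so degree~$1$ on the top cell forces the identity---but your argument does go through as written, since $\Delta$ is split injective.
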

\begin{proof}
$\{x,u\pi_n\}_re=[x,u\pi_n]_r\vee 0$
since $\pi_n\delta_t=0$. Consequently we have a homotopy
commutative diagram
\[
\xymatrix@R=50pt@C=35pt{
P^{m+n-1}\vee P^{m+n-2}\ar@{->}[r]^-e\ar@{->}[dr]_-{[x,u\pi_n]_r\vee
0\makebox[3pt]{}}&
P^m\circ P^n\ar@{->}[r]^-{1\circ \pi_n}\ar@{->}[d]^-<<<<{\{x,u\pi_n\}_r}&
P^m\circ S^n\ar@{->}[dl]^-{[x,u]_r}&
\hspace*{-28pt}\llap{\enlarge{\simeq{}}}
P^{m+n-1}\\
&E&
}
\]
where the upper composition is homotopic to projection onto the
first factor.
\end{proof}

Suppose then we are given a principal fibration
\[
\xymatrix{
\Omega X\ar@{->}[r]^-{\enlarge{i}}&
E\ar@{->}[r]^-{\enlarge{\pi}}&
B
}
\]
induced by a map $\varphi\colon B\to X$ where $X$ is a homotopy
commutative $H$-space with strict unit. Suppose we are
given classes
\[
\alpha\in\pi_m(B;Z/p^r),\ \beta\in\pi_n(B;Z/p^r),\ \gamma\in
\pi_k(E;Z/p^r),\ \delta\in \pi_{\ell}(E;Z/p^r).
\]
Recall that by using the map $\Delta$  we define the internal
$H$-space based
Whitehead product
\[
[\alpha,\beta]_{\times}=\{\alpha,\beta\}_{\times}\Delta\in \pi_{m+n-1}(E;Z/p^r)
\]
and internal relative Whitehead product
\[
[\alpha,\gamma]_r=\{\alpha,\gamma\}_r\Delta\in \pi_{m+k-1}(E;Z/p^r).
\]
These are related as in~\ref{prop3.6}, \ref{prop3.11} and \ref{prop3.12}.
\begin{proposition}\label{prop3.30}\begin{enumerate}
\item[\textup{(a)}]$\pi_*[\alpha,\beta]_{\times}=[\alpha,\beta]\in\pi_{m+n-1}(B;Z/p^r)$
\item[\textup{(b)}]$[\pi_*\gamma,\pi_*\delta]_{\times}=[\gamma,\delta]\in\pi_{k+\ell-1}(E;Z/p^r)$
\item[\textup{(c)}]$[\alpha,\delta]_r=[\alpha,\pi_*\delta]_{\times}\in\pi_{m+\ell-1}(E;Z/p^r)$
\item[\textup{(d)}]$\pi_*[\alpha,\delta]_r=[\alpha,\pi_*\delta]\in\pi_{m+\ell-1}(B;Z/p^r)$
\item[\textup{(e)}]$[\pi_*\gamma,\delta]_r=[\gamma,\delta]\in\pi_{k+\ell-1}(E;Z/p^r)$
\end{enumerate}
\end{proposition}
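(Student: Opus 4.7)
The plan is to deduce each of the five identities in Proposition~\ref{prop3.30} directly from its external-Whitehead-product counterpart, already proved for arbitrary simply connected co-$H$ spaces in Chapter~\ref{chap3}, by pre-composing with the diagonal
\[
\Delta\colon P^{m+n}(p^r)\to P^m(p^r)\wedge P^n(p^r)
\]
from (\ref{eq3.23}). Recall that by definition $[\alpha,\beta]_{\times}=\{\alpha,\beta\}_{\times}\Sigma\Delta$ under the identification $P^{m+1}(p^r)\circ P^{n+1}(p^r)=\Sigma(P^m(p^r)\wedge P^n(p^r))$, and analogously for $[\alpha,\delta]_r$. Thus each of (a)--(e) is the image under $-\circ\Sigma\Delta$ of an identity already established for the external products.

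Concretely, for (a) one writes
\[
\pi_{*}[\alpha,\beta]_{\times}=\pi\{\alpha,\beta\}_{\times}\Sigma\Delta\sim\{\alpha,\beta\}\Sigma\Delta=[\alpha,\beta]
\]
using Proposition~\ref{prop3.6}(c); identity (b) follows in exactly the same way from Proposition~\ref{prop3.6}(e); identity (d) is immediate from Proposition~\ref{prop3.11}(c), identity (e) from Proposition~\ref{prop3.11}(e), and identity (c) from Proposition~\ref{prop3.12}. In each case one simply appends $\Sigma\Delta$ on the right of the homotopy relation asserted in the external case and reads off the internal Whitehead product on both sides via the definition.

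The only point that requires a small remark is that the external identities are homotopies between maps whose domain is $G\circ H$; we are restricting these homotopies to $\Sigma(P^m(p^r)\wedge P^n(p^r))$ via $\Sigma\Delta$, which is legitimate because the homotopies produced in Propositions~\ref{prop3.6}, \ref{prop3.11} and~\ref{prop3.12} are natural in the domain co-$H$ space (they come from strict commutative diagrams of principal fibrations and the homotopy rotation lemma, neither of which uses specific features of the domain).

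There is no real obstacle here---the content has been done in the preceding sections---so the proof is essentially a formal verification that the passage from external to internal products commutes with $\pi_*$ and with the conversions between $\{-,-\}_{\times}$, $\{-,-\}_r$ and $\{-,-\}$. One could therefore present the proof as a single sentence ``apply $-\circ\Sigma\Delta$ to Propositions~\ref{prop3.6}(c),(e), \ref{prop3.11}(c),(e), \ref{prop3.12} respectively,'' or list the five deductions in parallel as a short display; the latter is probably clearer for the reader.
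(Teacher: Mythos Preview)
Your proposal is correct and matches the paper's approach: the paper does not give a separate proof of Proposition~\ref{prop3.30}, but simply precedes it with the remark ``These are related as in~\ref{prop3.6}, \ref{prop3.11} and \ref{prop3.12},'' which is exactly the reduction you carry out by precomposing the external identities with $\Sigma\Delta$. Your write-up just makes explicit the bookkeeping the paper leaves to the reader.
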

According to Neisendorfer \cite{Nei10a}, we also have
standard Whitehead product formulas:
\begin{proposition}\label{prop3.31}
The following identities hold:

\makebox[16pt]{\textup{(a)\hfill}}\hspace*{0.5em}$[\alpha,\beta]_{\times}=-(-1)^{(m+1)(n+1)}[\beta,\alpha]_{\times}$

\makebox[16pt]{\textup{(b)\hfill}}\hspace*{0.5em}$[\alpha_1+\alpha_2,\beta]_{\times}=[\alpha_1,\beta]_{\times}+[\alpha_2,\beta]_{\times}$

\makebox[16pt]{\textup{(c)\hfill}}\hspace*{0.5em}$[\alpha,[\beta,\eta]]_{\times}=[[\alpha,\beta],\eta]_{\times}+(-1)^{(m+1)(n+1)}[\beta,[\alpha,\eta]]_{\times}$\\
for $\eta\in\pi_j(B;Z/p^r)$

\makebox[16pt]{\textup{(c$'$)\hfill}}\hspace*{0.5em}$[\alpha,[\beta,\gamma]_r]=[[\alpha,\beta],\gamma]_r+(-1)^{(m+1)(n+1)}[\beta,[\alpha,\gamma]_r]_r$

\makebox[16pt]{\textup{(d)\hfill}}\hspace*{0.5em}$\beta^{(r)}[\alpha,\beta]_{\times}=[\beta^{(r)}\alpha,\beta]_{\times}+(-1)^{m+1}[\alpha,\beta^{(r)}\beta]_{\times}$
where $\beta^{(r)}$ is the Bockstein associated with the
composition $P^k(p^r)\to P^{k+1}(p^r)$ for appropriate~$k$.
\end{proposition}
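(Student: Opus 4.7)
The plan is to reduce each identity to the corresponding statement for the external Whitehead products $\{\ ,\ \}_\times$ and $\{\ ,\ \}_r$ of Section~\ref{subsec3.2}, composed with the splitting map $\Delta$, and to quote Neisendorfer~\cite{Nei10a} for the purely algebraic content. Since the co-H spaces appearing are all Moore spaces, our constructions specialize to Neisendorfer's once we observe that the maps $\Gamma$ and $\Gamma'$ of Propositions~\ref{prop2.7} and~(\ref{eq3.9}) agree with the maps underlying his Samelson/Whitehead product constructions; this agreement is immediate from the definitions (both come from the action of $\Omega X$ on $E$ combined with the multiplication of $X$). Throughout, the translation from external to internal products is $[\alpha,\beta]_\times = \{\alpha,\beta\}_\times \Delta$ and $[\alpha,\gamma]_r = \{\alpha,\gamma\}_r \Delta$.

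For bilinearity~(b), use that $\alpha_1 + \alpha_2 = \nabla(\alpha_1 \vee \alpha_2)\nu$ in the co-H structure on $P^{m+1}(p^r)$, together with the naturality of $\{\ ,\ \}_\times$ under co-H maps in the first variable (Proposition~\ref{prop3.6}(a)); the pinch map distributes through $\Gamma$ onto a sum on the target. For antisymmetry~(a), apply Proposition~\ref{prop2.8}: since $X$ is homotopy commutative (and in our applications $X = S^{2n+1}\{p^r\}$ is homotopy commutative after $p \geqslant 5$), $\Gamma\tau \simeq \Gamma$ at the level of $\Omega B * \Omega B$, while under the identification with $\Sigma(\Omega B \wedge \Omega B)$ the transposition acquires a sign $-\tau$. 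Precomposing with $\Delta$, and using that the transposition on $P^m(p^r) \wedge P^n(p^r)$ induces sign $(-1)^{mn}$, the total sign works out to $-(-1)^{(m+1)(n+1)}$.

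For the Jacobi identities~(c) and~(c$'$), the strategy is to first prove the external Jacobi identity
\[
\{\alpha,\{\beta,\eta\}\}_\times \simeq \{\{\alpha,\beta\},\eta\}_\times + (-1)^{|\alpha||\beta|}\{\beta,\{\alpha,\eta\}\}_\times
\]
for maps out of $G_1 \circ (G_2 \circ G_3)$, which follows from the classical Jacobi identity for commutators in the $H$-space $X$, transported along $\Gamma$ using the naturality in Proposition~\ref{prop2.7} and the decomposition of $\Omega G * \Omega G * \Omega G$ into iterated Whitehead products as in Proposition~\ref{prop3.21}. Precomposing iteratively with $\Delta$ and tracking signs via Proposition~\ref{prop3.25} yields~(c). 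For the mixed identity~(c$'$), apply Proposition~\ref{prop3.12} to replace the inner relative bracket by an $H$-space based bracket, invoke~(c), and then translate back via~\ref{prop3.12} and~\ref{prop3.30}. For the Bockstein derivation~(d), the $r$-th Bockstein corresponds to postcomposition with $\beta: P^k(p^r) \to P^{k+1}(p^r)$, and a standard computation on Moore spaces (cf.~\cite{Nei80}) gives
\[
\Delta \beta \simeq (\beta \wedge 1)\Delta + (-1)^m (1 \wedge \beta)\Delta;
\]
composing with $\{\alpha,\beta\}_\times$ and using bilinearity produces the stated Leibniz rule.

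The main obstacle is~(c): the algebraic identity is classical, but our setting requires the Jacobi relation to hold at the level of maps into~$E$, not just into~$X$. The key point is that both sides of the proposed identity project to the classical Jacobi identity in $B$ (by Proposition~\ref{prop3.30}(a), which holds by the naturality in~\ref{prop3.6}(c)); the difference therefore factors through $\Omega X \to E$, and a careful tracking of the multiplication homotopies in~$X$ used to build~$\Gamma$ shows it is null homotopic. Because the constructions here restrict on Moore spaces to those of~\cite{Nei10a}, the detailed verification can be deferred to that reference, with our additional ingredients being only the compatibility results~\ref{prop3.6}, \ref{prop3.11}, \ref{prop3.12} and the splitting~\ref{prop3.25}.
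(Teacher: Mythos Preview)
Your proposal is correct and takes essentially the same approach as the paper: the paper's entire proof is the one-line reference ``See \cite{Nei10a}. Neisendorfer considers the adjoint Samelson products, so there is a dimension shift.'' You have simply expanded on \emph{why} the deferral to Neisendorfer is legitimate, spelling out the translation between the external products $\{\ ,\ \}_\times$, $\{\ ,\ \}_r$ defined here and Neisendorfer's Samelson products; this extra detail is sound but not required, since once the Moore-space case is recognized as Neisendorfer's setting the identities are his.
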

\begin{proof}
See \cite{Nei10a}. Neisendorfer considers the adjoint
Samelson products, so there is a dimension shift.
\end{proof}

\chapter{Index $p$ approximation}\label{chap4}

The goal of this chapter is to replace the co-$H$ spaces
$G_k^{[i]}$ from \ref{theor3.22} by a finite wedge of Moore spaces in case $n>1$. The iterated
Whitehead products involving $G_k$ are then replaced by
iterated Whitehead products in $\bmod\, p^s$ homotopy, which are more manageable. In \ref{subsec4.1}, we construct certain $\bmod\, p^{r+i-1}$
homotopy classes $a(i)$ and $c(i)$ for $i\leqslant k$. This is a
refinement of a similar construction in~\cite{GT10}, and leads
to a ladder of cofibration sequences. In~\ref{subsec4.2}, we
construct new co-$H$ spaces $L_k$ when $n>1$, and introduce
index~$p$ approximation. 
Using this we exploit the fact (\cite{The08}) that the
identity map of~$BW_n$ has order~$p$ to reduce the size of the set of 
obstructions. This allows for the replacement
of the iterated relative and $H$-space
based Whitehead products based on~$G_k$ with iterated relative and $H$-space based
Whitehead products in the $\bmod\, p^s$ homotopy groups for $r\leqslant
s\leqslant r+k$. The case $n=1$ is simpler and we
show that $T$ is homotopy-Abelian in the appendix.
Nevertheless, the constructions in Chapters \ref{chap4}, \ref{chap5} and \ref{chap6} will be used in Chapter~\ref{chap7} in case $n=1$ as well.

\section{Construction of the co-$H$ Ladder}\label{subsec4.1}
In this section we will assume an arbitrary $H$-space structure on the Anick space
as given in~\cite{GT10} and use its existence to develop certain
maps $a(k)$\index{$a(k)$|LB} and $c(k)$ for $k\geqslant 1$. We begin with a strengthening
of~\cite[4.3(d)]{GT10}.
\begin{proposition}\label{prop4.1}
There is a map
\[
e\colon P^{2np^k}(p^{r+k-1})\vee P^{2np^k+1}(p^{r+k-1})\to \Sigma T
\]
which induces an epimorphism in $\bmod\, p$ homology in
dimensions $2np^k$ and $2np^k+1$. Furthermore the composition of~$e$
with the map
\[
\xymatrix{
\Sigma T \ar@{->}[r]^->>>{\widetilde{E}}&S^{2n+1}\{p^r\}
}
\]
is null homotopic, where $\widetilde{E}$ is adjoint to
the map $E$ of~\ref{theor2.14}(e).
\end{proposition}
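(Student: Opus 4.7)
The plan is to construct $e$ as a wedge of suspensions of two $\bmod\, p^{r+k-1}$ homotopy classes in $T$, obtained by an iterated $p$-fold product of the bottom-cell inclusion using the homotopy-associative $H$-space structure on $T$ provided by \cite{GT10}. The nullity of $\widetilde E \circ e$ will then follow from an adjoint calculation and the torsion structure of $\Omega S^{2n+1}\{p^r\}$.

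Starting from the $2n$-equivalence $i\colon P^{2n}\to T$, which gives a Hurewicz-surjective class $v\in\pi_{2n}(T;Z/p^r)$, I would use the $H$-space multiplication on $T$ together with Neisendorfer's calculus of Samelson products in $\bmod\, p^s$ homotopy \cite{Nei80,Nei10a} to iteratively form $p$-fold products. The basic move uses the splitting of $P^m(p^s)^{\wedge p}$ whose top summand is $P^{mp}(p^{s+1})$: the iterated product lifts through this top summand with coefficient order raised by one. Iterating $k$ times produces $\widehat a_k\in\pi_{2np^k}(T;Z/p^{r+k})$ realizing $v^{p^k}\in H_{2np^k}(T;Z/p)$ in Hurewicz. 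I would then set $a_k:=\widehat a_k\,\rho$ and $c_k:=\widehat a_k\,\beta\,\rho$, using the maps $\rho$ and $\beta$ of \S\ref{subsec1.5}, and define $e:=\Sigma c_k\vee\Sigma a_k$.

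For the mod-$p$ homology claim, $\Sigma a_k$ hits $\sigma v^{p^k}$ in dimension $2np^k+1$ by construction. For $\Sigma c_k$: since $\beta^{(r+k)}(v^{p^k})=uv^{p^k-1}$ in $H_*(T;Z/p)$ by Theorem~\ref{theor2.14}(g), the mod-$p$ Hurewicz of $\widehat a_k\circ\beta$ in dimension $2np^k-1$ is exactly $uv^{p^k-1}$, and suspending gives the required class $\sigma uv^{p^k-1}$ in dimension $2np^k$.

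For nullity of $\widetilde E\circ e$, I would work adjointly: it suffices to check $E\circ a_k$ and $E\circ c_k$ are null. Since $E$ is an $H$-map (Theorem~\ref{theor2.14}(e)), $E\widehat a_k$ equals the $p^k$-fold product $(Ev)^{p^k}$ in $\Omega S^{2n+1}\{p^r\}$. From the principal fibration $\Omega^2S^{2n+1}\to\Omega S^{2n+1}\{p^r\}\to\Omega S^{2n+1}$ implicit in (\ref{eq1.3new}), a Serre spectral sequence calculation using that $H_*(\Omega S^{2n+1};Z)$ is torsion-free and $H_*(\Omega^2S^{2n+1};Z)$ is annihilated by $p$ shows that $H_*(\Omega S^{2n+1}\{p^r\};Z)$ has only $p^r$-torsion. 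Consequently the Hurewicz image of $(Ev)^{p^k}$ has integral order dividing $p^r$, and precomposing with $\rho$ forces the result into the kernel of $\pi_*(-;Z/p^{r+k})\to\pi_*(-;Z/p^{r+k-1})$.

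The hardest step will be passing from Hurewicz triviality to actual null-homotopy of $E\circ a_k$ and $E\circ c_k$. The cleanest route is to replace $\widetilde E$ by $\varphi\circ\widetilde g$---the adjoint of $Eh\sim\Omega\varphi$ from Theorem~\ref{theor2.14}(e)---and invoke the inductive construction of $\varphi\colon G\to S^{2n+1}\{p^r\}$ from \cite{GT10}, which ensures that $\varphi$ restricted to the $k$-th cone cell $CP^{2np^k}(p^{r+k})$ of $G_k$ is $p$-divisible in an appropriate sense, so that the further precomposition with $\rho$ annihilates the resulting class in mod-$p^{r+k-1}$ coefficients.
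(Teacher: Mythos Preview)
Your construction of $\widehat{a}_k$ has a genuine gap. The $p$-fold multiplication $\mu^{(p)}\colon T^{\times p}\to T$ does \emph{not} factor through $T^{\wedge p}$: its restriction to the fat wedge is the fold map, not null. So there is no map $(P^{2n}(p^r))^{\wedge p}\to T$ coming from the product, and hence no way to restrict to the top Moore summand $P^{2np}(p^{r+1})$. Neisendorfer's operations that raise coefficient order live in loop spaces (they use the James--Hopf structure on $\Omega\Sigma$ or divided powers in $H_*(\Omega X)$), and at this point in the argument $T$ is only known to be a homotopy-associative $H$-space, not a loop space. Invoking ``Samelson products'' does not help either: those are commutators and have zero Hurewicz image here. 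You could try to route through the retraction $h\colon\Omega G\to T$ from Theorem~\ref{theor2.14}(a), since $\Omega G$ \emph{is} a loop space, but that is not what you wrote and it needs its own argument to control the Hurewicz image after $h$.

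The nullity argument is also incomplete, as you acknowledge. The torsion claim about $H_*(\Omega S^{2n+1}\{p^r\};Z)$ is doubtful, and in any case Hurewicz triviality of $E\widehat{a}_k\rho$ does not give null-homotopy. Your fallback to ``the inductive construction of $\varphi$ from \cite{GT10}'' is a gesture, not a proof.

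The paper's argument avoids both problems by using the Hopf construction $H(\mu)\colon\Sigma(T\wedge T)\to\Sigma T$, which \emph{is} defined for any $H$-space. One picks explicit classes $\alpha=-\sigma\otimes v^{p^{k-1}}\otimes v^{p^{k-1}(p-1)}$ and $\beta=\sigma\otimes v^{p^{k-1}}\otimes uv^{p^{k-1}(p-1)-1}$ in $H_*(\Sigma(T\wedge T);Z/p)$, checks $\beta^{(r+k-1)}$ is nonzero on each, and then uses that $\Sigma(T\wedge T)$ is a wedge of Moore spaces (Theorem~\ref{theor2.14}(d)) to realise them by maps $a,b$ from $P^{2np^k+1}(p^{r+k-1})$ and $P^{2np^k}(p^{r+k-1})$; set $e=H(\mu)\circ(b\vee a)$. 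For the nullity of $\widetilde{E}\circ e$ the argument is structural: since $E$ is an $H$-map one has $\widetilde{E}\circ H(\mu)\sim\epsilon\circ\Sigma E\circ H(\mu)\sim\epsilon\circ H(\mu')\circ\Sigma(E\wedge E)$, and because $\Omega S^{2n+1}\{p^r\}$ is an honest loop space the Hopf construction $H(\mu')$ is the second stage of the bar resolution, so $\epsilon\circ H(\mu')$ factors through a contractible space. No torsion computation or Hurewicz lift is needed.
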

\begin{proof}
Recall by \ref{theor2.14}(g)
\[
H_*(T)\simeq Z/p[v]\otimes\Lambda(u)
\]
where $|v|=2n$, $|u|=2n-1$ and $\beta^{(r+i)}(v^{p^i})=uv^{p^i-1}$.

Using some $H$-space
structure map $\mu$ we consider the Hopf
construction:
\[
H(\mu)\colon\Sigma (T\wedge T)\to \Sigma T.
\]
Note that in homology
\[
\left(H(\mu)\right)_*(\sigma\otimes x\otimes y)=\sigma\otimes \mu_*(x\otimes y)
\]
if $|x|>0$ and $|y|>0$. We now define homology classes
\begin{align*}
\alpha&\in H_{2np^k+1}(\Sigma (T\wedge T);Z/p)\\
\beta&\in H_{2np^k}(\Sigma (T\wedge T);Z/p)
\end{align*}
by the formulas
\begin{align*}
\alpha&= -\sigma\otimes v^{p^{k-1}}\otimes v^{p^{k-1}(p-1)}\\
\beta&=\sigma\otimes v^{p^{k-1}}\otimes uv^{p^{k-1}(p-1)-1}
\end{align*}
\begin{align*}
\intertext{so we have}\\[-24pt]
\left(H(\mu)\right)_*\alpha&=-\sigma\otimes v^{p^k}\\
\left(H(\mu)\right)_*\beta&=\sigma\otimes uv^{p^k-1}.
\end{align*}
Also $\beta^{(r+k-1)}(\alpha)$ and $\beta^{(r+k-1)}(\beta)$ are both
nonzero. 
By \ref{theor2.14}(d),
$\Sigma (T\wedge T)$ is a wedge
of Moore
spaces; consequently there are maps
\begin{align*}
&a\colon P^{2np^k+1}(p^{r+k-1})\to \Sigma(T\wedge T)\\
&b\colon P^{2np^k}(p^{r+k-1})\to \Sigma(T\wedge T)
\end{align*}
such that $\alpha$ is in the image of $a_*$ and $\beta$ is in the
image of~$b_*$. Combining these we get a map $e$
\[
\xymatrix{P^{2np^k+1}(p^{r+k-1})\vee P^{2np^k}(p^{r+k-1})\ar@{->}[r]^-{\enlarge{a\vee
b}}&\Sigma(\Gamma\wedge T)\ar@{->}[r]^-{\enlarge{H(\mu)}}&\Sigma T}
\]
such that $\sigma\otimes v^{p^k}$ and $\sigma\otimes uv^{p^k-1}$ are in the image
of~$e_*$.
From this we see that there is a homotopy commutative
diagram\addtocounter{equation}{1}
\begin{equation}\label{eq4.2}
\begin{split}
\xymatrix{
P^{2np^k}(p^{r+k-1})\vee
P^{2np^k+1}(p^{r+k-1})\ar@{->}[r]^-{e}\ar@{->}[d]_{b\vee a}&\Sigma
T^{2np^k}\ar@{->}[d]\\
\Sigma(T\wedge T)\ar@{->}[r]^-{H(\mu)}&\Sigma T
}
\end{split}
\end{equation}
where $e$ induces an epimorphism in $\text{mod}\, p$ homology
in dimensions $2np^k$ and $2np^k+1$.
It remains to show that the composition
\[
\xymatrix{
\Sigma(T\wedge T)\ar@{->}[r]^-{\enlarge{H(\mu)}}&
\Sigma
T\ar@{->}[r]^-{\enlarge{\widetilde{E}}}&
S^{2n+1}\{p^r\}
}
\]
is null homotopic.

However, since
 $E\colon T\to \Omega S^{2n+1}\{p^r\}$ is an $H$ map by \ref{theor2.14}(e), there is a homotopy commutative diagram:
\[
\xymatrix@C=45pt{
\Sigma(T\wedge T)\ar@{->}[r]^->>>>>>>{\Sigma(E\wedge
E)}\ar@{->}[d]_{H(\mu)}&\Sigma(\Omega S^{2n+1}\{p^r\}\wedge
\Omega S^{2n+1}\{p^r\})\ar@{->}[d]^{H(\mu')}\\
\Sigma T\ar@{->}[r]^->>>>>>>>>>>{\Sigma E}&\Sigma\Omega S^{2n+1}\{p^r\}
.}
\]
where $\mu'$ is the loop space structure map on $\Omega S^{2n+1}\{p^r\}$.
Since $\Omega S^{2n+1}\{p^r\}$ is a loop space, the right hand
map is part of the classifying space structure
\[
\xymatrix{
\Sigma(\Omega S^{2n+1}\{p^r\}\wedge\Omega
S^{2n+1}\{p^r\})\ar@{->}[d]_{H(\mu')}\ar@{->}[r]&\dots\ar@{->}[r]&E^{\infty}\ar@{->}[d]\\
\Sigma\Omega S^{2n+1}\{p^r\}\ar@{->}[r]&\dots\ar@{->}[r]&S^{2n+1}\{p^r\}
}
\]
where $E_{\infty}$ is contractible and the bottom horizontal map is
the evaluation map. The result follows since $\widetilde{E}$ is the
composition:
\[
\makebox[64pt]{}
\xymatrix{
\Sigma T\ar@{->}[r]&
\Sigma\Omega S^{2n+1}\{p^r\}\ar@{->}[r]^-{\enlarge{ev}}&
S^{2n+1}\{p^r\}.
\makebox[64pt]{}
\rlap{\qed}
}
\]\noqed
\end{proof}

Now recall from \ref{theor2.14}(a) the maps
\[
\xymatrix{
T\ar@{->}[r]^{g}&\Omega G\ar@{->}[r]^{h}&T
}
\]
with $hg\sim 1$. Restricting we get
$\xymatrix{T^{2np^k}\ar@{->}[r]^{\enlarge{g_k}}
&\Omega G_k}$.\index{$g_k$|LB} Let
$\varphi_{k}$ be
the restriction of $\varphi$ to $G_k$. Then we have
\[
\xymatrix{
&T
\ar@{->}[d]_{g}
&T^{2np^k}
\ar@{->}[l]
\ar@{->}[d]^{g_*}\\
\Omega G
\ar@{=}[r]
\ar@{->}[d]_{h}
&\Omega G
\ar@{->}[d]_{\Omega\varphi}
&\Omega G_k
\ar@{->}[l]
\ar@{->}[d]_{\Omega\varphi_k}\\
T
\ar@{->}[r]^->>>>>{E}
&\Omega S^{2n+1}\{p^r\}
\ar@{=}[r]
&\Omega S^{2n+1}\{p^r\}
}
\]
where the left hand square commutes up to homotopy by \ref{theor2.14}(e).
Since $hg=1$, we get the homotopy commutative square:
\[
\xymatrix{
T^{2np^k}\ar@{->}[r]^{g_k}\ar@{->}[d]&\Omega G_k\ar@{->}[d]_{\Omega \varphi_k}\\
T\ar@{->}[r]^->>>>>{E}&\Omega S^{2n+1}\{p^r\}.
}
\]
Since $e$ factors through $\Sigma T^{2np^k}$, we combine this with
\ref{prop4.1} to see that the central composition
\[
\xymatrix{
&&E_k\ar@{->}[d]\\
P^{2np^k}(p^{r+k-1})\vee
P^{2np^k+1}(p^{r+k-1})\ar@{-->}[rru]\ar@{->}[r]\ar@{->}[dr]^{e}&\Sigma
T^{2np^k}\ar@{->}[r]\ar@{->}[d]&G_k\ar@{->}[d]^{\varphi_k}\\
&\Sigma T\ar@{->}[r]^->>>>>{\widetilde{E}}&S^{2n+1}\{p^r\}
}
\]
factors through $E_k$. We state this 
as\addtocounter{Theorem}{1}
\begin{proposition}\label{prop4.3}
For any $H$-space structure on~$T$ with corresponding maps $h$ and~$g$, there is a lifting
 of $\widetilde{g}_ke$ to~$E_k$
\[
\xymatrix@C=50pt{
P^{2np^k}(p^{r+k-1})\vee P^{2np^k+1}(p^{r+k-1})\ar@{->}[r]^-{a(k)\vee
c(k)}\ar@{->}[d]_e&E_k\ar@{->}[d]_{\pi_k}\\
\Sigma T^{2np^k}\ar@{->}[r]^-{\widetilde{g}_k}&G_k
.}
\]\index{$c(k)$|LB}
\end{proposition}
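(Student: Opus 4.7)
The plan is to realize the lift as the standard obstruction-theoretic consequence of a null homotopy of $\varphi_k \widetilde{g}_k e$. Since $E_k$ is the total space of the principal fibration over $G_k$ classified by $\varphi_k\colon G_k \to S^{2n+1}\{p^r\}$, a map $f\colon X\to G_k$ lifts to $E_k$ if and only if (up to a choice of null homotopy) the composite $\varphi_k f$ is null homotopic. So the entire task is to verify
\[
\varphi_k \circ \widetilde{g}_k \circ e \;\simeq\; \ast.
\]

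First I would establish that $\varphi_k\widetilde{g}_k$ agrees with $\widetilde{E}$ on $\Sigma T^{2np^k}$. By \ref{theor2.14}(e), the map $E\colon T\to \Omega S^{2n+1}\{p^r\}$ is an $H$-map with $Eh \sim \Omega\varphi$, and by \ref{theor2.14}(a) we have $hg\sim 1$, so composing on the right with $g$ gives $E \sim \Omega\varphi\circ g$. Restricting to $T^{2np^k}$ and using the factorization $g_k\colon T^{2np^k}\to \Omega G_k$ of $g$, this becomes $E|_{T^{2np^k}} \sim \Omega\varphi_k\circ g_k$. Passing to adjoints yields
\[
\widetilde{E}|_{\Sigma T^{2np^k}} \;\simeq\; \varphi_k\circ \widetilde{g}_k.
\]
Since by construction $e$ factors through $\Sigma T^{2np^k}$, we deduce $\varphi_k\widetilde{g}_k e \simeq \widetilde{E}\circ e$.

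Now invoke \ref{prop4.1}, which says precisely that $\widetilde{E}\circ e$ is null homotopic. Combined with the previous paragraph this gives $\varphi_k\widetilde{g}_k e \simeq \ast$, so a lift $a(k)\vee c(k)\colon P^{2np^k}(p^{r+k-1})\vee P^{2np^k+1}(p^{r+k-1}) \to E_k$ of $\widetilde{g}_k e$ exists, as claimed.

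There is no serious obstacle here: all the real work has already been done in \ref{prop4.1} (the construction of the map $e$ out of the Hopf construction on $T\wedge T$ and the verification that it dies in $S^{2n+1}\{p^r\}$) and in \ref{theor2.14}(e) (the fact that $E$ is an $H$-map). The only mild care required is to keep the adjunctions and restrictions to the $(2np^k)$-skeleton straight, so that the identification $\varphi_k\widetilde{g}_k \sim \widetilde{E}$ is only claimed where it actually holds.
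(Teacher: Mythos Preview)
Your proof is correct and follows essentially the same route as the paper: use \ref{theor2.14}(e) together with $hg\sim 1$ from \ref{theor2.14}(a) to identify $\varphi_k\widetilde{g}_k$ with $\widetilde{E}$ on $\Sigma T^{2np^k}$, then invoke \ref{prop4.1} to conclude $\varphi_k\widetilde{g}_k e\simeq *$ and lift through the principal fibration defining $E_k$.
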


In the diagram below, the left column is a standard cofibration
sequence (\ref{eq1.9}) and the right column is a fibration sequence
defined by the projection~$\pi$:
\[
\xymatrix@C=50pt{
P^{2np^k}(p^{r+k})
\ar@{->}[d]_{p^{r+k-1}}
\ar@{->}[rr]^{\theta_1}
&&
\Omega P^{2np^k\!{}+{}\!1}(p^{r+k}) 
\ar@{->}[d]
\\
P^{2np^k}(p^{r+k})
\ar@{->}[rr]^{\theta_2}
\ar@{->}[d]_{\sigma\vee \sigma \beta}
&&
J
\ar@{->}[d]
\\ 
P^{2np^k}(p^{r+k-1})\!{}\vee{}\! P^{2np^k+1}(p^{r+k-1})
\ar@{->}[d]_{-\delta_1\vee \rho}
\ar@{->}[r]^-{a(k)\vee c(k)}
&
E_k
\ar@{->}[r]^-{\pi_k}
&
G_k
\ar@{->}[d]_{\pi}
\\
P^{2np^k+1}(p^{r+k})
\ar@{=}[rr]
&&
P^{2np^k\!{}+{}\!1}(p^{r+k}).
}
\]
The homological properties of~$e$ and \ref{prop4.3} imply that the
bottom region commutes up to homotopy since
$\widetilde{g}_k$ has a right homotopy inverse.
The maps $\theta_1$ and $\theta_2$ are induced from this region in the standard way.
For dimensional reasons $\theta_2$ factors through $G_{k-1}\subset J$
and since
\[
\xymatrix{
E_{k-1}\ar@{->}[r]\ar@{->}[d]&G_{k-1}\ar@{->}[d]\\  
E_k\ar@{->}[r]&G_k
}
\]
is a pullback diagram, $\theta_2$ factors through 
$E_{k-1}$. $\theta_1$ factors through 
\[
P^{2np^k-1}(p^{r+k})
\]
also
for dimensional reasons. We obtain
\begin{Theorem}\label{theor4.4}
There is a homotopy commutative ladder of
cofibrations:\footnote{In order to keep the notation from being too
cumbersome we will sometimes write $\beta_k$,
 $a(k)$ and $c(k)$
for the composition $\pi_{k-1}\beta_k$, 
$\pi_ka(k)$, and $\pi_kc(k)$ if it will not lead to
confusion.}\index{betak@$\beta_k$|LB}
\[
\kern-4pt
\xymatrix@C=49pt{
P^{2np^k}(p^{r+k})
\ar@{->}[d]_{p^{r+k-1}}
\ar@{=}[rr]
&&
P^{2np^k}(p^{r+k})
\ar@{->}[d]_{\alpha_k}\\
P^{2np^k}(p^{r+k})
\ar@{->}[r]^-{\beta_k}
\ar@{->}[d]_{\sigma\vee \sigma \beta}
&
E_{k-1}
\ar@{->}[r]^-{\pi_{k-1}}
\ar@{->}[d]
&
G_{k-1}
\ar@{->}[d]\\
P^{2np^k}(p^{r+k-1})\!{}\vee{}\! P^{2np^k+1}(p^{r+k-1})
\ar@{->}[r]^-{a(k)\vee c(k)}
\ar@{->}[d]_{-\delta_1\vee \rho}
&
E_k
\ar@{->}[r]^-{\pi_k}
&
G_k
\ar@{->}[d]_{\pi}\\
P^{2np^k+1}(p^{r+k})
\ar@{=}[rr]
&&
P^{2np^k+1}(p^{r+k})
.}
\]
Furthermore, for any choice of retraction $\nu_{k-1}\colon E_{k-1}\to
BW_n$, we can 
construct $a(k)$ and~$c(k)$
 so that
$\nu_{k-1}\beta_k\sim *$. 
\end{Theorem}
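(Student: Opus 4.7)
The first assertion, the existence of the ladder, is essentially assembled from the constructions preceding the statement. Proposition~\ref{prop4.3} supplies the map $a(k)\vee c(k)$. The left-hand column is the cofibration sequence~(\ref{eq1.9}) with $(m,s,t)=(2np^k,r+k-1,1)$. The right-hand column is the attaching cofibration for $G_k=G_{k-1}\cup_{\alpha_k}CP^{2np^k}(p^{r+k})$. The composite $(a(k)\vee c(k))\circ(\sigma\vee\sigma\beta)$ projects under $\pi_k$ into the $(2np^k)$-skeleton $G_{k-1}\subset G_k$ for dimensional reasons, so by the pullback characterization of the principal fibration $E_{k-1}\to G_{k-1}$ it lifts to a map $\beta_k\colon P^{2np^k}(p^{r+k})\to E_{k-1}$; commutativity of the various squares is then automatic from naturality of the principal fibration construction.

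For the second assertion, fix any retraction $\nu_{k-1}\colon E_{k-1}\to BW_n$. By the Extension Theorem~\ref{theor2.5}, $\nu_{k-1}$ extends to a map $\nu_k\colon E_k\to BW_n$, the hypotheses being met since $BW_n$ has $H$-space exponent~$p$ by Proposition~\ref{prop2.4} and $P^{2np^k}(p^{r+k})$ is a co-$H$ space whose attaching data provides the required divisibility. Commutativity of the ladder combined with $\nu_k e_k\sim\nu_{k-1}$ then yields
\[
\nu_{k-1}\beta_k\;\sim\;\nu_k(a(k)\vee c(k))(\sigma\vee\sigma\beta)\;\sim\;(\nu_k a(k))\sigma+(\nu_k c(k))\sigma\beta,
\]
so it suffices to arrange that both $\nu_k a(k)$ and $\nu_k c(k)$ are nullhomotopic.

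Since $\pi_k\colon E_k\to G_k$ is a principal fibration, the set of admissible lifts of the projection of $a(k)$ is a torsor under the principal action by $[P^{2np^k}(p^{r+k-1}),\Omega S^{2n+1}\{p^r\}]$, and similarly for $c(k)$. Under such an action by a class $y$, the class $\nu_k a(k)$ is translated by $H_* y$, where $H=\nu_k\partial'\colon\Omega S^{2n+1}\{p^r\}\to BW_n$. Thus one must show that $\nu_k a(k)$ and $\nu_k c(k)$ lie in the image of
\[
H_*\colon[P^m(p^{r+k-1}),\Omega S^{2n+1}\{p^r\}]\longrightarrow[P^m(p^{r+k-1}),BW_n]
\]
for $m=2np^k$ and $m=2np^k+1$ respectively. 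To establish this, invoke the fibration sequence $T\to\Omega S^{2n+1}\{p^r\}\xrightarrow{H}BW_n$ arising from~(\ref{eq1.6new}) and the long exact sequence of $\bmod\,p^{r+k-1}$ homotopy: the obstruction to lifting sits in $\pi_{m-1}(T;Z/p^{r+k-1})$, and one verifies vanishing of this obstruction for the specific classes $\nu_k a(k)$, $\nu_k c(k)$ using the structure of $\pi_*(T)$ coming from Theorem~\ref{theor2.14}(g) and the fact that $a(k),c(k)$ were produced via the Hopf construction in Proposition~\ref{prop4.1}.

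The most delicate step is the last one: showing that the particular classes $\nu_k a(k)$ and $\nu_k c(k)$ lift through $H_*$. I expect the main difficulty to lie in tracking the Hopf-construction origin of $a(k)$ and $c(k)$ through the fibration sequence defining $BW_n$, and in combining this with the exponent-$p$ property of $BW_n$ to control the relevant connecting homomorphism on $\bmod\,p^{r+k-1}$ homotopy groups.
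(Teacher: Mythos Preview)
Your treatment of the ladder construction in the first paragraph is adequate, since the paper itself assembles that diagram in the discussion immediately preceding the theorem.

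For the second assertion, however, your route diverges from the paper's and leaves a real gap. You extend $\nu_{k-1}$ to some $\nu_k\colon E_k\to BW_n$ and then try to modify $a(k)$ and $c(k)$ along the principal action so that $\nu_k a(k)$ and $\nu_k c(k)$ become null. As you note, this requires showing that $\nu_k a(k)$ and $\nu_k c(k)$ lie in the image of $H_*$, and your final paragraph does not actually establish this; the appeal to ``the structure of $\pi_*(T)$'' and the Hopf-construction origin of $a(k),c(k)$ is not a proof. There is no evident reason why an arbitrary extension $\nu_k$ should send these classes into $\mathrm{im}(H_*)$.

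The paper's argument avoids this entirely by working in $E_{k-1}$ rather than $E_k$, and by modifying $\beta_k$ first rather than $a(k),c(k)$. The key point you are missing is that, by \ref{theor2.14}(j), the retraction $\nu_{k-1}$ has a right homotopy inverse, so after looping one has a splitting $\Omega E_{k-1}\simeq \Omega\overline{R}_{k-1}\times W_n$ where $\overline{R}_{k-1}$ is the fibre of $\nu_{k-1}$. Any initial choice $\overline{\beta}_k$ then decomposes as $\beta_k+\epsilon$ with $\beta_k$ factoring through $\overline{R}_{k-1}$ (hence $\nu_{k-1}\beta_k\sim *$ automatically) and $\epsilon$ factoring through $W_n$. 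Since $p\pi_*(W_n)=0$, the class $\epsilon$ has order~$p$; as $r+k-1\geqslant 1$ this gives $p^{r+k-1}\overline{\beta}_k=p^{r+k-1}\beta_k$, so the top square still commutes with $\beta_k$ in place of $\overline{\beta}_k$. Finally, because $p^{r+k-1}\epsilon=0$, the cofibration sequence in the left column forces $\epsilon$ to factor through $\sigma\vee\sigma\beta$, and subtracting the resulting correction from $a(k)$ and $c(k)$ restores commutativity of the middle square without disturbing the bottom one (the correction lands in $E_{k-1}$, hence dies under $\pi$). No extension to $E_k$, and no surjectivity statement about $H_*$, is needed.
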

\begin{proof}
We need only demonstrate the last statement.
Suppose we are given a map $\overline{\beta}_k\colon P^{2np^k}(p^{r+k})\to
E_{k-1}$ so that the
diagram commutes up  to homotopy. Given a retraction
$\nu_{k-1}\colon \xymatrix{E_{k-1}\ar@{->}[r]&BW_n}$, we get a splitting
\[
\Omega E_{k-1}\simeq \Omega \overline{R}_{k-1}\times W_n
\]
by \ref{theor2.14}(j) where $\overline{R}_{k-1}$ is the fiber of $\nu_{k-1}$. We can then write
$\beta_k=\overline{\beta}_k-\epsilon$
where $\epsilon$ is the component of $\overline{\beta}_k$ that factors
through $W_n$ and 
$\beta_k$ factors through
$R_{k-1}$. Since each map $P^{2np^k}(p^{r+k})\to W_n$ has order $p$,
$\epsilon$
has order $p$ and thus $p^{r+k-1}\overline{\beta}_k=p^{r+k-1}\beta_k$ as
$r+k-1\geqslant 1$. Thus
the upper region commutes up to homotopy when $\overline{\beta}_k$
is replaced by $\beta_k$. Since $p^{r+k-1}\epsilon=0$, $\epsilon$ factors
\[
\xymatrix@C=32pt{
P^{2np^k}(p^{r+k})\ar@{->}[r]^-{\sigma\vee\sigma\beta}&
P^{2np^k}(p^{r+k-1})\vee
P^{2np^k+1}(p^{r+k-1})\ar@{->}[r]^-{\epsilon'}&
E_{k-1}
}
\]
as the left hand column is a cofibration sequence. We
now redefine $a(k)$ and $c(k)$ by subtracting off the
appropriate components of $\epsilon'$
and the middle region
now commutes up to homotopy. Since this alteration
of $a(k)$ and $c(k)$ factors through $E_{k-1}$, the projections to
$G_k$ vanish when projected to $P^{2np^k}(p^{r+k})$ so the bottom region also commutes up to homotopy.
\end{proof}

During the inductive construction of $\nu_k$ we will
be assuming that $\nu_i$ is defined for $i<k$ and
the alterations in \ref{theor4.4} have been made so that
$\beta_k$ is in the kernel of~$\nu_{k-1}$.\index{nupm@$\nu_p(m)$|LB}

\section{Index $p$ Approximation}\label{subsec4.2}

The goal of this section is to replace the co-$H$
space  $G_k$ by a sequence of approximations. The end
result will be to replace $G_k^{[i]}$ by a wedge of $\text{mod}\,p^s$
Moore spaces for $r\leqslant s\leqslant r+k$. We begin with a cofibration
sequence based on the ladder~\ref{theor4.4}. Throughout this
section we will exclude the case~$n=1$. The case $n=1$ is dealt with in the
appendix.
\begin{proposition}\label{prop4.5} 
For $k\geqslant 1$ there is a cofibration sequence
\[
\xymatrix{
P^{2np^k}(p^{r+k})\ar@{->}[r]^-{\xi_k}&L_k
\ar@{->}[r]^-{\zeta_k}&G_k\ar@{->}[r]^-{\pi'}
&P^{2np^k+1}(p^{r+k})
}
\]\index{$L_k$|LB}\index{piprime@$\pi'$|LB}where $L_k=G_{k-1}\vee P^{2np^k}(p^{r+k-1})\vee P^{2np^k+1}(p^{r+k-1})$,
$\zeta _k$ is induced
by the inclusion of~$G_{k-1}$ and the maps $\pi_ka(k)$
and $\pi_kc(k)$, and $\pi'=p^{r+k-1}\pi$.
\end{proposition}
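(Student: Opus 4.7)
The plan is to realize the given four-term sequence as (a shift of) the Puppe cofibration sequence arising from $\zeta_k \colon L_k \to G_k$, by computing the cofiber of $\zeta_k$ as $P^{2np^k+1}(p^{r+k})$ with quotient map $\pi'$, and then identifying $\xi_k$ as the resulting predecessor map in the extended Puppe sequence.

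Set $Q = P^{2np^k}(p^{r+k-1}) \vee P^{2np^k+1}(p^{r+k-1})$, so $L_k = G_{k-1} \vee Q$ and $\zeta_k$ is the inclusion on $G_{k-1}$ together with $\pi_k(a(k) \vee c(k))$ on $Q$. I would compute the cofiber $C_{\zeta_k}$ by a two-stage cone attachment. First, attaching $CG_{k-1}$ to $G_k$ along the inclusion $\iota \colon G_{k-1} \hookrightarrow G_k$ yields the mapping cone of $\iota$; by the third column of the ladder in Theorem \ref{theor4.4}, this is homotopy equivalent to $P^{2np^k+1}(p^{r+k})$, with the quotient map being $\pi$. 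Second, attaching $CQ$ via the restriction of $\zeta_k$ to $Q$ amounts, under the above identification, to attaching a cone along $\pi \circ \pi_k(a(k) \vee c(k))$; the bottom square of the same ladder gives $\pi \circ \pi_k(a(k) \vee c(k)) = -\delta_1 \vee \rho$. Applying the cofibration sequence (\ref{eq1.9}) with $s = r+k-1$, $t = 1$, and $m = 2np^k + 1$, the cofiber of $-\delta_1 \vee \rho \colon Q \to P^{2np^k+1}(p^{r+k})$ is $P^{2np^k+1}(p^{r+k})$ itself, with the quotient being the degree-$p^{r+k-1}$ self-map. Composing the two stages yields $C_{\zeta_k} \simeq P^{2np^k+1}(p^{r+k})$, and the composite quotient $G_k \to C_{\zeta_k}$ is exactly $p^{r+k-1}\pi = \pi'$.

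For $\xi_k$, I would take the explicit candidate $\xi_k = (-\pi_{k-1}\beta_k,\,\sigma,\,\sigma\beta) \colon P^{2np^k}(p^{r+k}) \to L_k$, defined via the co-$H$ structure on the Moore space. The relation $\zeta_k \xi_k \simeq 0$ then follows from the middle square of the ladder of Theorem \ref{theor4.4}, which gives $\iota\,\pi_{k-1}\beta_k = \pi_k a(k)\,\sigma + \pi_k c(k)\,\sigma\beta$. The main obstacle will be verifying that this $\xi_k$ coincides up to homotopy with the backward-Puppe desuspension of the connecting map $P^{2np^k+1}(p^{r+k}) \to \Sigma L_k$, so that the full four-term sequence is a cofibration sequence in the Puppe sense; this uses that each of the relevant Moore spaces is a suspension, so the desuspension is available, and requires a careful tracing of the two-stage cone construction through the rotation of the Puppe triangle.
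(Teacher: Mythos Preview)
Your approach is correct and is essentially a detailed unpacking of the paper's one-line proof, which invokes the standard Mayer--Vietoris principle for the ladder of cofibrations in Theorem~\ref{theor4.4}; your two-stage cone computation is precisely how that principle is established. For the ``main obstacle'' you flag, there is no need to desuspend a Puppe connecting map: the middle square of the ladder is a homotopy pushout (both vertical cofibers are $P^{2np^k+1}(p^{r+k})$, and the induced map between them is the identity by the bottom rung), and a homotopy pushout square directly yields the cofibration $P^{2np^k}(p^{r+k}) \xrightarrow{\xi_k} L_k \xrightarrow{\zeta_k} G_k$ with exactly your explicit $\xi_k$.
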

\begin{proof}
This is a standard consequence of a ladder in
which each third rail is an equivalence (as in the usual proof of the
Mayer--Vietoris sequence).
\end{proof}
\begin{proposition}\label{prop4.6}
If $n>1$, there is a unique co-$H$ 
space
structure on~$L_k$ so that the cofibration in~\ref{prop4.5} is
a cofibration of co-$H$  maps.
\end{proposition}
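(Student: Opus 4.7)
The plan is to give $L_k$ its natural wedge co-$H$ structure and then verify that this is the unique one making the cofibration of Proposition~\ref{prop4.5} a cofibration of co-$H$ maps. Because $n>1$, each Moore space summand $P^{2np^k}(p^{r+k-1})$ and $P^{2np^k+1}(p^{r+k-1})$ has dimension at least $2np\geqslant 6$, so each is the suspension of a lower Moore space and admits a unique co-$H$ structure. The remaining summand $G_{k-1}$ carries the co-$H$ structure inherited inductively from the splitting $\Sigma T^{2np^{k-1}}\simeq G_{k-1}\vee W_{k-1}$ of Theorem~\ref{theor2.14}(a,i). Equip $L_k$ with the resulting wedge co-$H$ structure.

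To check that $\zeta_k$ is a co-$H$ map, I would use the fact that, for the wedge co-$H$ structure, a map out of $L_k$ is a co-$H$ map iff its restriction to each summand is. The restriction to $G_{k-1}$ is the canonical inclusion $G_{k-1}\hookrightarrow G_k$, which is a co-$H$ map by the compatibility of the inductively chosen splittings at consecutive filtration levels. The restrictions to the Moore summands are $\pi_ka(k)$ and $\pi_kc(k)$; by Proposition~\ref{prop4.3} these factor as $\widetilde{g}_k\circ e$, where $e$ is a map between wedges of suspensions (hence a co-$H$ map) and $\widetilde{g}_k\colon \Sigma T^{2np^k}\to G_k$ is a co-$H$ map arising from the splitting of $\Sigma T$ in Theorem~\ref{theor2.14}(a). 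Meanwhile $\xi_k$ has a suspension as its domain, and the ladder of Theorem~\ref{theor4.4} exhibits its components in each wedge summand of $L_k$ as suspension maps (the $p^{r+k-1}$-multiple of $\alpha_k$ into $G_{k-1}$, and the maps $\sigma$ and $\sigma\beta$ into the two Moore summands), so $\xi_k$ is also a co-$H$ map.

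For uniqueness, suppose $\nu'$ is any co-$H$ structure on $L_k$ making $\xi_k$ and $\zeta_k$ co-$H$ maps. Restricting $\nu'$ to each Moore summand gives a co-$H$ structure on a simply connected Moore space of dimension at least three, which by uniqueness of the suspension structure must agree with the standard one. The restriction to $G_{k-1}$ must equally coincide with the standard co-$H$ structure: $G_{k-1}$ is atomic by Theorem~\ref{theor2.14}(b,c), so its co-$H$ structure is determined by the requirement that the composition $G_{k-1}\hookrightarrow L_k\xrightarrow{\zeta_k} G_k$ be a co-$H$ map into the (atomic, and hence co-$H$ rigid) space $G_k$. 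The remaining freedom in $\nu'$ consists of ``cross terms'' in the reduced diagonal $L_k\to L_k\wedge L_k$ that send one summand into a different summand. These are forced to vanish by the condition that $\zeta_k$ be a co-$H$ map, since any nontrivial cross term would produce a corresponding cross term in the reduced diagonal of $G_k$, contradicting the uniqueness established for $G_k$.

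The hard part will be the final step of the uniqueness argument, namely excluding nontrivial cross terms in $\nu'$. This reduces to showing that the map $\zeta_k\wedge\zeta_k\colon L_k\wedge L_k\to G_k\wedge G_k$ detects the relevant obstructions, which in turn is a computation using the sparse mod-$p$ homology of $G_k$ described in Theorem~\ref{theor2.14}(h) together with the identification of $\zeta_k$ on each wedge summand.
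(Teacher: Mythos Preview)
Your approach has a genuine gap at the most basic step: the split (wedge) co-$H$ structure on $L_k$ does \emph{not} make $\zeta_k$ a co-$H$ map. In fact the paper immediately follows the proposition with a warning to exactly this effect: the inclusion $P^{2np^k+1}(p^{r+k-1})\hookrightarrow L_k$ is \emph{not} a co-$H$ map for the correct structure, because if it were then $\pi_k c(k)\colon P^{2np^k+1}(p^{r+k-1})\to G_k$ would be a co-$H$ map, and this contradicts \cite[2.2]{AG95}. So the structure you are proposing is precisely the wrong one.

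The flaw in your verification is the claim that $e$ is a co-$H$ map ``because it is a map between wedges of suspensions.'' A map between suspensions is not automatically a co-$H$ map; it is one when it is itself a suspension. Here $e$ is built from the Hopf construction $H(\mu)\colon \Sigma(T\wedge T)\to \Sigma T$, which is essentially never a co-$H$ map for a nontrivial multiplication. Consequently $\pi_k c(k)=\widetilde g_k\circ e|_{P^{2np^k+1}}$ has no reason to be co-$H$, and indeed is not. Your uniqueness argument inherits the same problem: restricting an arbitrary co-$H$ structure $\nu'$ on $L_k$ to a wedge summand lands in $L_k\vee L_k$, not in that summand's own wedge square, so you cannot invoke rigidity of co-$H$ structures on Moore spaces directly.

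The paper's argument is quite different and explains where $n>1$ enters. One forms the pullback $P$ of $G_k\vee G_k\to G_k\times G_k$ along $L_k\times L_k\to G_k\times G_k$ and observes that the pair (diagonal on $L_k$, co-$H$ structure on $G_k$ precomposed with $\zeta_k$) defines a canonical map $\eta\colon L_k\to P$, unique because the fibre $\Sigma(\Omega G_k\wedge\Omega G_k)$ injects into $G_k\vee G_k$ on homotopy classes from a co-$H$ source. Since $\zeta_k$ is a $(2np^k-1)$-equivalence, the comparison $L_k\vee L_k\to P$ is a $(2np^k+2n-2)$-equivalence; as $\dim L_k=2np^k+1$ and $n>1$, $\eta$ lifts uniquely to $L_k\vee L_k$, and that lift is the (exotic) co-$H$ structure. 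A parallel connectivity argument then shows $\xi_k$ is co-$H$. The point is that the co-$H$ structure is forced on you by $\zeta_k$ and a dimension count, not assembled summand-by-summand.
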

\begin{proof}
Let $P$ be the pullback in the diagram:
\[
\xymatrix{
P\ar@{->}[r]\ar@{->}[d]&G_k\vee G_k\ar@{->}[d]\\
L_k\times L_k\ar@{->}[r]& G_k\times G_k.
}
\]
There is a map $\eta\colon L_k\to P$ which projects to the diagonal
map on $L_k$ and the composition:
\[
\xymatrix{
L_k\ar@{->}[r]^-{\zeta _k}&G_k\ar@{->}[r]&G_k\vee G_k.
}
\]
We first assert that $\eta$ is unique up to homotopy. Since $L_k$ is a
wedge of Moore spaces, it suffices to show that if
$\epsilon\colon L_k\to P$ projects trivially to~$G_k\vee G_k$ and
$L_k\times L_k$, it is
itself trivial. Now the homotopy fiber of the
map $P\to L_k\times L_k$ is the same as
the homotopy fiber of~$G_k\vee G_k\to G_k\times G_k$, i.e., $\Sigma(\Omega
G_k\wedge \Omega G_k)$;
we conclude that $\epsilon$ must factor through \mbox{$\Sigma(\Omega G_k\wedge
\Omega G_k)$},
and that the composition
\[
\xymatrix{
L_k\ar@{->}[r]^-{\epsilon'}&\Sigma(\Omega G_k\wedge \Omega G_k)\ar@{->}[r]& G_k\vee G_k
}
\]
is null homotopic. This
implies that $\epsilon'$ is null homotopic and hence $\epsilon$
is as well since $\Omega(G_k\vee G_k)\to \Omega (G_k\times G_k)$ has a
right
homotopy inverse.

The map $\zeta_k$ is a $2np^k-1$ equivalence since
$P^{2np^k+1}(p^{r+k})$
is $2np^k-1$ connected. Since $L_k$ and $G_k$ are both $2n-1$
connected, this implies that the composition
\[
\xymatrix{
\Sigma(\Omega L_k\wedge \Omega L_k)\ar@{->}[r]&
\Sigma(\Omega L_k\wedge \Omega G_k)\ar@{->}[r]&
\Sigma(\Omega G_k\wedge \Omega G_k)
}
\]
is a $2np^k+2n-2$ equivalence. Now consider the
diagram of vertical fibrations:
\[
\xymatrix{
\Sigma(\Omega L_k\wedge \Omega L_k)\ar@{->}[r]\ar@{->}[d]&
\Sigma(\Omega G_k\wedge \Omega G_k)\ar@{->}[r]^-{\simeq}\ar@{->}[d]&
\Sigma(\Omega G_k\wedge \Omega G_k)\ar@{->}[d]\\
L_k\vee L_k\ar@{->}[r]\ar@{->}[d]&P\ar@{->}[r]\ar@{->}[d]&G_k\vee G_k\ar@{->}[d]\\
L_k\times L_k\ar@{=}[r]&L_k\times L_k\ar@{->}[r]&G_k\times G_k.
}
\]
From this we see that the map $L_k\vee L_k\to P$ is a $2np^k+2n-2$
equivalence. Since $n>1$, that implies that there is a unique
lifting of $\eta\colon L_k\to P$ to $L_k\vee L_k$, which defines a co-$H$ 
space structure on~$L_k$ such that $\zeta_k$ is a co-$H$ 
map.

Similarly, we observe that the $2np^k+2n-3$ skeleton of
the fiber of the map $L_k\vee L_k\to G_k\vee G_k$ is
$P^{2np^k}(p^{r+k})\vee P^{2np^k}(p^{r+k})$,
so the composition
\[
\xymatrix{
P^{2np^k}(p^{r+k})\ar@{->}[r]^-{\xi_k}&L_k\ar@{->}[r]&L_k\vee L_k
}
\]
factors through $P^{2np^k}(p^{r+k})\vee P^{2np^k}(p^{r+k})$ and such a
factorization defines a co-$H$ space structure on $P^{2np^k}(p^{r+k})$.
That structure, of course, is unique. So this $\xi_k$ is a co-$H$ 
map with the suspension structure.
\end{proof}

Warning: $L_k$ does not
split as a co-product of co-$H$ spaces. In particular,
the inclusion $P^{2np^k+1}(p^{r+k-1})\to L_k$ is not a co-$H$ map.
If it were, the map
\[
\xymatrix{
P^{2np^k+1}(p^{r+k-1})\ar@{->}[r]^-{c(k)}&E_k\ar@{->}[r]^-{\pi_k}&G_k 
}
\]
would be a co-$H$ map, contradicting~\cite[2.2]{AG95}.

Write $[k]\colon \Sigma X\to \Sigma X$ for the $k$-fold sum of the 
identity map.
\begin{definition}\label{def4.7}
Suppose
$\xymatrix{G\ar@{->}[r]^{\enlarge{f}}&H\ar@{->}[r]^{\enlarge{g}}&\Sigma K}$ is a cofibration
sequence of co-$H$ spaces and co-$H$ maps. We will say that $f$ is an index
$p$ approximation if there is a co-$H$ map
$g'\colon \Sigma H\to \Sigma^2 K$ such that $\Sigma g$ factors
\[
\xymatrix{
\Sigma H\ar@{->}[r]^-{g'}&\Sigma^2K\ar@{->}[r]^-{[p]}&\Sigma^2K\\
}
\]
up to homotopy. $f\!\colon G\to H$ will be
called an iterated index $p$ approximation if $f$ is
homotopic to a composition
\[
G=G_0\to G_1\to\dots\to G_m=H
\]
where each map $G_i\to G_{i+1}$ is an index $p$ approximation.
\end{definition}

Thus, for example, $\zeta_k\colon L_k\to G_k$ is an index
$p$ approximation.
\begin{proposition}\label{prop4.8}
Suppose that $f\!\colon G\to H$ is an iterated
index $p$ approximation and $\nu\colon H\to BW_n$. Then
$\nu$ is null homotopic iff $\nu\! f$ is null homotopic.
\end{proposition}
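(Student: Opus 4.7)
The ``only if'' direction is trivial: if $\nu\sim *$ then $\nu f\sim *$. For ``if,'' I induct on the composition length in Definition~\ref{def4.7} to reduce to the case when $f\colon G\to H$ is a single index-$p$ approximation. We then have a cofibration sequence of co-$H$ spaces and co-$H$ maps $G\xrightarrow{f}H\xrightarrow{g}\Sigma K$ together with a co-$H$ map $g'\colon\Sigma H\to\Sigma^2K$ satisfying $\Sigma g\sim [p]\circ g'$.

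Assume $\nu f\sim *$. Applying $[-,BW_n]$ to the Puppe sequence gives a factorization $\nu\sim\nu'\circ g$ for some $\nu'\colon\Sigma K\to BW_n$. Suspending and substituting the factorization of $\Sigma g$ yields
\[
\Sigma\nu\sim\Sigma\nu'\circ\Sigma g\sim\Sigma\nu'\circ[p]\circ g'\sim (p\cdot\Sigma\nu')\circ g'
\]
in the abelian group $[\Sigma H,\Sigma BW_n]$, where $p\cdot(-)$ denotes the $p$-fold sum via the co-$H$ structure on $\Sigma^2K$. By Proposition~\ref{prop2.4}, $BW_n$ has $H$-space exponent $p$, so $p\nu'\sim *$ in $[\Sigma K,BW_n]$, and since suspension is a group homomorphism $p\cdot\Sigma\nu'\sim *$. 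Hence $\Sigma\nu\sim *$.

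The remaining step is to promote $\Sigma\nu\sim *$ to $\nu\sim *$. My plan is to invoke the James construction: because $BW_n$ is an $H$-space (Proposition~\ref{prop2.3}), the identity map of $BW_n$ extends multiplicatively to a retraction $\Omega\Sigma BW_n\to BW_n$ splitting the unit $BW_n\to\Omega\Sigma BW_n$. Via the $(\Sigma\dashv\Omega)$ adjunction this exhibits $\Sigma\colon[H,BW_n]\to[\Sigma H,\Sigma BW_n]$ as a split monomorphism, so $\Sigma\nu\sim *$ forces $\nu\sim *$. This last step is the only one beyond formal manipulation of cofibration sequences, and it is the main obstacle; the rest of the argument is a direct combination of the factorization hypothesis with the exponent bound for $BW_n$.
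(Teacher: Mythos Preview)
Your proof is correct and follows essentially the same approach as the paper: both factor $\nu$ through the cofiber as $\nu'\circ g$, use the factorization $\Sigma g\sim[p]\circ g'$ together with the $H$-space exponent of $BW_n$ (Proposition~\ref{prop2.4}), and then invoke the homotopy associativity of $BW_n$ to obtain the James retraction $\Omega\Sigma BW_n\to BW_n$. The only difference is organizational---the paper passes to adjoints and shows directly that $\nu$ factors through an $H$-map $\Omega\Sigma^2K\to BW_n$ whose restriction to $\Sigma K$ is $p\nu'\sim *$, whereas you suspend first to kill $\Sigma\nu$ and then desuspend via the same retraction; note that your final step genuinely requires homotopy \emph{associativity} of $BW_n$ (not merely an $H$-structure), which is indeed supplied by Proposition~\ref{prop2.3}.
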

\begin{proof}
We will only consider the case when $f$ is an
index $p$ approximation, as the general result follows 
by an easy induction.
Suppose then that $f\!\colon G\to H$ is an index $p$ approximation and
$\Sigma g$ factors up to homotopy:
\[
\xymatrix{
\Sigma H\ar@{->}[r]^-{g'}&\Sigma^2 K\ar@{->}[r]^-{[p]}&\Sigma^2K.
}
\]
Assume that $\nu\! f$ is null homotopic, so we can factor $\nu$ as
\[
\xymatrix{
H\ar@{->}[r]^-{g}&\Sigma K\ar@{->}[r]^-{\nu'}&BW_n.
}
\]
Consider the diagram:
\[
\xymatrix{
\Omega \Sigma^2K\ar@{->}[r]^-{\Omega[p]}&
\Omega \Sigma^2K\ar@{->}[r]^-{\Omega\Sigma\nu'}&
\Omega \Sigma BW_n\ar@{->}[r]&
BW_n\\
H\ar@{->}[r]^-{g}\ar@{->}[u]^{\widetilde{g}'}&\Sigma
K\ar@{->}[r]^-{\nu'}\ar@{->}[u]&BW_n\ar@{=}[ur]\ar@{->}[u]  
}
\]
Since $BW_n$ is homotopy associative (\cite{Gra88}), the
upper composition
is an $H$-map. This composition is thus inessential if its
restriction to $\Sigma K$ is inessential. However this restriction
factors through $[p]\colon\Sigma K\to \Sigma K$. Since $BW_n$ has
$H$-space exponent $p$ (\cite{The08}), we conclude that
the upper composition is inessential. This $\nu\sim \nu'g$ is
inessential as well.
\end{proof}
\begin{lemma}\label{lem4.9}
There is a homotopy commutative diagram
\[
\xymatrix@R=5pt{
(\Sigma^2H)\circ K
\ar@{->}[r]^-{[p]\circ 1}
\ar@{}[d]^{\rotatebox{90}{$\simeq$}}
&(\Sigma^2H)\circ K
\ar@{}[d]^{\rotatebox{90}{$\simeq$}}
\\
(\Sigma H)\wedge K
\ar@{->}[r]^-{[p]\wedge 1}
\ar@{}[d]^{\rotatebox{90}{$\simeq$}}
&(\Sigma H)\wedge K
\ar@{}[d]^{\rotatebox{90}{$\simeq$}}
\\
\Sigma^2(H\circ K)
\ar@{->}[r]^-{[p]}
&\Sigma^2(H\circ K)
}
\]
where the equivalences are co-$H$ equivalences.
\end{lemma}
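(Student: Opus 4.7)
The plan is to identify the two vertical equivalences by Proposition~\ref{prop3.2} and then to verify each commutativity square using naturality, exploiting that $[p]$ on a suspension is determined entirely by the H-cogroup (co-$H$) structure.

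For the upper equivalence $(\Sigma^2 H)\circ K\simeq (\Sigma H)\wedge K$, I would first invoke the natural transposition $A\circ B\simeq B\circ A$, inherited from the transposition on $\Sigma(\Omega A\wedge\Omega B)$ used in the construction of $\circ$ (cf.\ Section~\ref{subsec3.1} and the symmetry property discussed before Proposition~\ref{prop2.8}), to rewrite $(\Sigma^2 H)\circ K\simeq K\circ(\Sigma^2 H)$. Since $\Sigma^2 H=\Sigma(\Sigma H)$, the first clause of Proposition~\ref{prop3.2} applied with $G=K$ and $X=\Sigma H$ supplies $K\circ\Sigma(\Sigma H)\simeq K\wedge\Sigma H$, and smash transposition finishes the upper equivalence. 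For the lower equivalence $(\Sigma H)\wedge K\simeq \Sigma^2(H\circ K)$, I would apply $\Sigma$ to the second clause of Proposition~\ref{prop3.2}, namely $\Sigma(H\circ K)\simeq H\wedge K$, yielding $\Sigma^2(H\circ K)\simeq \Sigma(H\wedge K)\simeq (\Sigma H)\wedge K$. All three equivalences are built from co-$H$ equivalences, as required.

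Next I would check the two squares commute. For the lower square, the map $[p]\colon \Sigma^2(H\circ K)\to \Sigma^2(H\circ K)$ is the $p$-fold sum in the outermost suspension, and the equivalence $\Sigma^2(H\circ K)\simeq \Sigma(H\wedge K)\simeq (\Sigma H)\wedge K$ carries the outer $\Sigma$ precisely onto the $\Sigma H$ factor of $(\Sigma H)\wedge K$; hence the outer $[p]$ corresponds to $[p]\wedge 1$. For the upper square, I would rewrite $[p]\colon \Sigma^2 H\to \Sigma^2 H$ as $\Sigma\bigl([p]\colon \Sigma H\to \Sigma H\bigr)$, which is valid because on a double suspension the inner and outer $p$-fold sums agree up to homotopy. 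Naturality of the first clause of Proposition~\ref{prop3.2} in the continuous variable $X$, applied to the map $[p]\colon \Sigma H\to \Sigma H$, then sends $1\circ\Sigma[p]$ on $K\circ\Sigma(\Sigma H)$ to $1\wedge [p]$ on $K\wedge \Sigma H$, and this is transported to $[p]\wedge 1$ on $(\Sigma H)\wedge K$ by smash transposition and the $\circ$-symmetry already used.

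The main obstacle is the bookkeeping around which suspension coordinate is being used for each occurrence of $[p]$: both $\Sigma^2 H$ and $\Sigma^2(H\circ K)$ carry two a priori distinct co-$H$ structures. The critical move is to express the outer $[p]$ as the suspension of the inner $[p]$ and then to invoke naturality in the continuous variable of the first clause of Proposition~\ref{prop3.2}; once that identification is fixed the remainder of the argument is routine diagram chasing.
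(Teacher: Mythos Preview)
Your argument reaches the right conclusion, but it takes a longer road than the paper and leans on a fact you have not justified. The paper's proof is two lines: the vertical equivalences come from Proposition~\ref{prop3.2}, those equivalences are natural for co-$H$ maps, and $[p]\colon\Sigma H\to\Sigma H$ is a co-$H$ map precisely because $H$ is already a co-$H$ space. Once you know $[p]$ is a co-$H$ map, the commutativity of both squares is a single application of the co-$H$ naturality clause of Proposition~\ref{prop3.2}; there is no need to track which suspension coordinate carries the $p$-fold sum, and no need to pass through the continuous-variable naturality at all.

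By contrast, you route the upper equivalence through a transposition $A\circ B\simeq B\circ A$, which is nowhere stated or proved in this paper (the passage before Proposition~\ref{prop2.8} concerns $\Sigma(X\wedge Y)\simeq X\ast Y$, not the $\circ$ construction). That symmetry is plausible and is in the background literature, but as written your argument has a gap here. Your coordinate bookkeeping for $[p]$ is correct, but it is exactly the work the paper's observation is designed to eliminate: recognizing $[p]$ as a co-$H$ self-map of $\Sigma H$ lets you invoke naturality in the co-$H$ variable directly and bypasses both the symmetry of~$\circ$ and the inner/outer suspension juggling.
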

\begin{proof}
The vertical equivalences follow from~\ref{prop3.2}. These
equivalences are natural for co-$H$ maps. However
$[p]\colon \Sigma H\to \Sigma H$ is a co-$H$ map since $H$ is a co-$H$
space.
\end{proof}
\begin{lemma}\label{lem4.10}
Suppose
$\xymatrix{G_1\ar@{->}[r]^-{\enlarge{\alpha}}&G_2\ar@{->}[r]^-{\enlarge{\beta}}&G_3}$ is a cofibration
sequence
of co-$H$  spaces and co-$H$  maps. Then for each co-$H$  space $H$,
\[
\xymatrix@R=12pt{
G_1\circ H
\ar@{->}[r]^-{\alpha\circ 1}
&
G_2\circ H
\ar@{->}[r]^-{\beta\circ 1}
&
G_3\circ H\\
H\circ G_1
\ar@{->}[r]^-{1\circ \alpha}
&
H\circ G_2
\ar@{->}[r]^-{1\circ \beta}
&
H\circ G_3
}
\]
are both cofibration sequences.
\end{lemma}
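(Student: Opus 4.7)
The plan is to prove that $G_1\circ H\to G_2\circ H\to G_3\circ H$ is a cofibration sequence; the sequence $H\circ G_1\to H\circ G_2\to H\circ G_3$ will follow by the symmetric argument with the roles of the two arguments swapped. The overall strategy is a suspension-and-Whitehead argument: suspend the target sequence (where cofibrations are easy to detect), construct a natural comparison to the cofiber, and then desuspend via Whitehead's theorem.

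First I would construct a natural comparison map $\phi\colon C\to G_3\circ H$, where $C$ denotes the homotopy cofiber of $G_1\circ H\to G_2\circ H$. Since $G_1\overset{\alpha}\to G_2\overset{\beta}\to G_3$ is a cofibration sequence, the composition $\beta\alpha\colon G_1\to G_3$ is null homotopic. Because $G\circ H$ is built as the telescope of the self-map $e$ on $\Sigma(\Omega G\wedge\Omega H)$, the induced map $G_1\circ H\to G_3\circ H$ is assembled from $\Sigma(\Omega(\beta\alpha)\wedge 1)$, which is null since $\Omega(\beta\alpha)$ is. Hence the composition $G_1\circ H\to G_2\circ H\to G_3\circ H$ is null homotopic, and a choice of null homotopy provides the required map $\phi$ out of the mapping cone.

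Next, I would verify that $\Sigma\phi$ is a homotopy equivalence. Applying $\Sigma$ and using the natural co-$H$ equivalence $\Sigma(G_i\circ H)\simeq G_i\wedge H$ of \ref{prop3.2}, the suspension of the target sequence is identified with $G_1\wedge H\to G_2\wedge H\to G_3\wedge H$, which is a cofibration sequence obtained by smashing the original cofibration with the fixed space $H$. By naturality, $\Sigma\phi$ coincides with the canonical equivalence between the cofiber of the suspended map and $\Sigma(G_3\circ H)\simeq G_3\wedge H$, and is therefore a homotopy equivalence.

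Finally, I would conclude via Whitehead's theorem. Since $\Sigma\phi$ is an equivalence, $\phi$ induces an isomorphism on reduced homology through the suspension isomorphism. Both $C$ and $G_3\circ H$ are simply connected CW-type spaces: each $G_i\circ H$ is the telescope of the $1$-connected space $\Sigma(\Omega G_i\wedge\Omega H)$, and $C$ is the mapping cone of a map between such spaces. So $\phi$ is a homotopy equivalence, which is exactly the assertion that the original sequence is a cofibration sequence. The main obstacle is arranging the null homotopy of the second paragraph coherently with the suspension identifications of the third paragraph so that $\Sigma\phi$ is genuinely the canonical cofiber comparison; this is a naturality check in the telescope model of $\circ$ and should be routine.
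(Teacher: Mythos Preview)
Your proposal is correct and follows essentially the same route as the paper. The paper's proof is simply a compressed version of yours: it observes that adjacent composites in the $\circ H$ sequence are null (since $\circ$ is functorial for co-$H$ maps), notes via the identification $\Sigma(G_i\circ H)\simeq G_i\wedge H\simeq(\Sigma G_i)\circ H$ that the sequence induces a long exact sequence in homology, and concludes it is a cofibration sequence---which is exactly your Whitehead/five-lemma step left implicit. Your worry about coherence of the null homotopy under suspension can be bypassed entirely by arguing directly on homology with the five lemma, which is what the paper's one-line ``exact in homology'' appeal amounts to.
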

\begin{proof}
In the extended cofibration sequence, the composition of two
adjacent maps is null homotopic
\[
\xymatrix{
G_1\ar@{->}[r]^-{\alpha}
&G_2\ar@{->}[r]^-{\beta}
&G_3\ar@{->}[r]^-{\gamma}
&\Sigma G_1\ar@{->}[r]^-{\Sigma\alpha}
&\Sigma G_2\ar@{->}[r]^-{\Sigma\beta}
&\Sigma G_3\ar@{->}[r]&\dots
}
\]
and all maps are co-$H$  maps. It follows that the same
is true for the sequence:
\[
\xymatrix{
G_1\circ H
\ar@{->}[r]^-{\alpha\circ 1}
&
G_2\circ H
\ar@{->}[r]^-{\beta\circ 1}
&
G_3\circ H
\ar@{->}[r]^-{\gamma\circ 1}&
(\Sigma G_1)\circ H\ar@{->}[r]&\dots 
}
\]
where $\Sigma (G_1\circ H)\simeq G_1\wedge H\simeq (\Sigma G_1)\circ H$. Since
this 
sequence also induces an exact sequence in homology
it is a cofibration sequence. The other case is similar.
\end{proof}
\begin{proposition}\label{prop4.11}
If $f\!\colon G\to H$ is an index $p$ approximation,
the maps $f\circ 1\colon G\circ L\to H\circ L$ and $1\circ f\!\colon
L\circ G\to L\circ H$ are index
$p$ approximations as well.
\end{proposition}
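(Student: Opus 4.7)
My plan is to apply the functor $(-)\circ L$ to the given data and then repackage everything using the identifications of Proposition~\ref{prop3.2}. By Lemma~\ref{lem4.10} applied to the defining cofibration $G\xrightarrow{f}H\xrightarrow{g}\Sigma K$, I obtain a cofibration of co-$H$ spaces and co-$H$ maps
\[
G\circ L\xrightarrow{f\circ 1} H\circ L\xrightarrow{g\circ 1}(\Sigma K)\circ L.
\]
To see that the cofiber is itself a suspension of a co-$H$ space, I will combine Proposition~\ref{prop3.2} with the natural symmetry $A\circ B\simeq B\circ A$ implicit in the telescopic definition of the co-$H$ product: $(\Sigma K)\circ L\simeq L\circ(\Sigma K)\simeq L\wedge K\simeq \Sigma(L\circ K)\simeq \Sigma(K\circ L)$. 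Setting $K':=K\circ L$, the cofiber is identified as $\Sigma K'$.

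The key idea for producing the required factoring map is to apply $(-)\circ 1_L$ to the given co-$H$ map $g'\colon\Sigma H\to\Sigma^2 K$. This produces $g'\circ 1_L\colon (\Sigma H)\circ L\to(\Sigma^2 K)\circ L$, which is a co-$H$ map by Proposition~\ref{prop3.1}. Using the same family of natural equivalences, $(\Sigma H)\circ L\simeq\Sigma(H\circ L)$ and $(\Sigma^2 K)\circ L\simeq\Sigma^2(K\circ L)=\Sigma^2 K'$, and via these I regard $g'\circ 1_L$ as a co-$H$ map $\Sigma(H\circ L)\to\Sigma^2 K'$; this will serve as $(g\circ 1)'$.

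To verify the factorization $\Sigma(g\circ 1)\simeq [p]\,(g\circ 1)'$, I apply the functor $(-)\circ 1_L$ to the given identity $\Sigma g\simeq [p]\,g'$. Since $(-)\circ 1_L$ respects composition, this yields $(\Sigma g)\circ 1_L\simeq([p]\circ 1_L)(g'\circ 1_L)$. By naturality of the Proposition~\ref{prop3.2} equivalences with respect to co-$H$ maps in the relevant slot, the left-hand side corresponds under the identifications to $\Sigma(g\circ 1_L)$. Because $[p]\colon\Sigma^2 K\to\Sigma^2 K$ is the $p$-fold co-$H$ sum of the identity, applying the functor $(-)\circ 1_L$ produces the $p$-fold co-$H$ sum on $(\Sigma^2 K)\circ L$, which under the identification with $\Sigma^2 K'$ is precisely $[p]$ on the suspension $\Sigma^2 K'$. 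This completes the check that $f\circ 1$ is an index $p$ approximation. The statement for $1_L\circ f$ follows by an entirely analogous argument with the roles of the two factors reversed, or equivalently by invoking the symmetry $A\circ B\simeq B\circ A$ to reduce it to the case just proved.

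The main technical obstacle will be checking the compatibility of the Proposition~\ref{prop3.2} equivalences with both the cofibration structure and the $[p]$-maps. The first of these is essentially handled by Lemma~\ref{lem4.10}; the second reduces to the general principle that a covariant functor on the category of co-$H$ spaces automatically sends a $p$-fold co-$H$ sum of identities to the $p$-fold co-$H$ sum on its target, so once the suspension identifications are known to be natural co-$H$ equivalences the rest is formal.
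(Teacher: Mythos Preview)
Your argument is essentially the paper's: apply the functor $(-)\circ L$ to the given factorization $\Sigma g\simeq[p]\,g'$, note that $g'\circ 1$ is a co-$H$ map, and then reidentify everything via the Proposition~\ref{prop3.2} equivalences. The only real difference is in how you justify the identifications. You route through a symmetry $A\circ B\simeq B\circ A$, which is true but is not proved anywhere in this paper; the paper instead simply cites Lemma~\ref{lem4.9}, which already packages the equivalence $(\Sigma^2K)\circ L\simeq\Sigma^2(K\circ L)$ together with the compatibility $[p]\circ 1\leftrightarrow[p]$ that you need. Citing that lemma would let you drop the symmetry detour entirely and makes the ``$[p]$ is preserved'' step a one-line reference rather than a general-principle appeal.
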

\begin{proof}
Factor $g$ as
\[
\xymatrix{
\Sigma H\ar@{->}[r]^-{g'}&\Sigma^2K\ar@{->}[r]^-{[p]}&\Sigma^2K
}
\]
and consider the diagram
\begin{align*}
\xymatrix{
(\Sigma H)\circ L\ar@{->}[r]^-{g\circ 1}&(\Sigma^2K)\circ L
&\hspace*{-16pt}\llap{\enlarge{\simeq{}}}
\Sigma^2(K\circ L)\\
(\Sigma H)\circ L
\ar@{->}[r]^-{g'\circ 1}
\ar@{=}[u]^{=}&
(\Sigma^2K)\circ L
\ar@{->}[u]^{[p]\circ 1}
&\hspace*{-16pt}\llap{\enlarge{\simeq{}}}
\Sigma^2(K\circ L)
\ar@<8pt>[u]^{[p]}}
\end{align*}
where the right hand square commutes by \ref{lem4.9}. The
map 
\[
g\circ 1\colon (\Sigma H)\circ L\to (\Sigma^2K)\circ L
\]
is the cofiber
of
$f\circ 1$, and $g'\circ 1$ is a co-$H$ map since $g'$ is a co-$H$ map.
Thus $f\circ 1$ is an index $p$ approximation. The other
case is similar.
\end{proof}
\begin{corollary}\label{cor4.12}
Suppose $\xymatrix{G\ar@{->}[r]^-{\enlarge{f}}&H}$ is an index $p$
approximation. Then
\[
f^{[i]}\!\colon G^{[i]}\to H^{[i]}
\]
is an iterated index $p$ approximation.
\end{corollary}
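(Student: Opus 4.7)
The plan is a straightforward induction on $i$, using Proposition~\ref{prop4.11} as the key step. The base case $i=1$ is immediate since $f^{[1]} = f$ is itself an index $p$ approximation (and hence vacuously an iterated one).

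For the inductive step, I would exploit the recursive definition $G^{[i]} = G\circ G^{[i-1]}$ and the fact that $\circ$ is a bifunctor on co-$H$ maps (by~\ref{prop3.1}) to factor $f^{[i]}$ as the composition
\[
\xymatrix@C=30pt{
G\circ G^{[i-1]}\ar@{->}[r]^-{1\circ f^{[i-1]}}&G\circ H^{[i-1]}\ar@{->}[r]^-{f\circ 1}&H\circ H^{[i-1]}.
}
\]
The second factor $f\circ 1$ is an index $p$ approximation by~\ref{prop4.11}. For the first factor, the inductive hypothesis writes $f^{[i-1]}$ as a composition
\[
G^{[i-1]} = K_0\to K_1\to\dots\to K_m = H^{[i-1]}
\]
of index $p$ approximations, and then
\[
G\circ K_0\to G\circ K_1\to\dots\to G\circ K_m
\]
is again a composition of index $p$ approximations by the second statement of~\ref{prop4.11}. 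Concatenating these $m+1$ maps exhibits $f^{[i]}$ as an iterated index $p$ approximation.

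There is essentially no obstacle beyond checking that the two-variable functoriality of $\circ$ identifies $f^{[i]}$ with the displayed composition up to homotopy, which follows directly from~\ref{prop3.1} applied to the pairs $(f,1)$ and $(1,f^{[i-1]})$. All the real work has been packaged into~\ref{prop4.11} (whose proof in turn rests on~\ref{lem4.9} and~\ref{lem4.10}), so once the induction is set up the corollary is a bookkeeping exercise.
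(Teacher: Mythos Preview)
Your proof is correct and uses the same essential ingredient as the paper, namely Proposition~\ref{prop4.11}, with only a cosmetic difference in the inductive organization. You factor $f^{[i]}$ as $(f\circ 1)\bigl(1\circ f^{[i-1]}\bigr)$ and appeal directly to the inductive hypothesis on $f^{[i-1]}$; the paper instead unrolls the chain explicitly through the intermediate spaces $G^{[i-j]}H^{[j]}$, showing that each step $G^{[i-j]}H^{[j]}\to G^{[i-j-1]}H^{[j+1]}$ is an index $p$ approximation by repeated use of~\ref{prop4.11}. Your argument is slightly more economical, but the content is the same.
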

\begin{proof}
We first observe that
\[
\xymatrix{G\circ H^{[j]}\ar@{->}[r]^-{\enlarge{f\circ 1}}&H\circ
H^{[j]}=H^{[j+1]}}
\]
is an index $p$ approximation by \ref{prop4.11}. We then see
by induction that
\begin{align*}
G^{[i]}H^{[j]}=G\circ (G^{[i-1]}H^{[j]})\to
G\circ(G^{[i-2]}H^{[i+1]})=G^{[i-1]}H^{[j+1]}
\end{align*}
is an index $p$ approximation. Finally
\[
G^{[i]}\to G^{[i-1]}H
\]
is an iterated index $p$ approximation by induction since
it factors as
\[
G^{[i]}=G\circ(G^{[i-1]})\to G\circ(G^{[i-2]}H)=G^{[i-1]}H.
\]
Consequently
\[
G^{[i]}\to G^{[i-1]}H\to G^{[i-2]}H^{[2]}\to \dots\to G\circ H^{[i-1]}\to
H^{[i]}
\]
is an iterated index $p$ approximation.
\end{proof}
\begin{Theorem}\label{theor4.13}
Suppose
\[
\Omega X\to E\to G
\]
is a principal fibration classified by a map $\varphi\colon G\to X$ where
$X$ is an $H$-space with strict unit. Suppose $f\!\colon H\to G$ is
an index
$p$ approximation. Then, for any map $\nu\colon E\to BW_n$ the
compositions
\[
\xymatrix{
\Sigma(\Omega G\wedge \Omega G)\ar@{->}[r]^-{\enlarge{\Gamma}}&
E\ar@{->}[r]^-{\enlarge{\nu}}&
BW_n
}
\]
is null homotopic iff the composition
\[
\xymatrix@C=56pt{
\Sigma(\Omega H\wedge \Omega H)\ar@{->}[r]^-{\enlarge{\Sigma(\Omega f\wedge\Omega
f)}}&
\Sigma(\Omega G\wedge \Omega
G)\ar@{->}[r]^-{\enlarge{\Gamma}}&
E\ar@{->}[r]^-{\enlarge{\nu}}&
BW_n
}
\]
is null homotopic.
\end{Theorem}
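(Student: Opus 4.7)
The ($\Rightarrow$) direction is immediate, so the task is the converse. My plan is to reduce both sides, via Theorem~\ref{theor3.22}, to statements about iterated Whitehead products, and then transfer between $H$ and $G$ using naturality together with Corollary~\ref{cor4.12} and Proposition~\ref{prop4.8}.

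First, pull back the principal fibration $E\to G$ along $f$ to obtain a principal fibration $\pi'\colon E'\to H$ classified by $\varphi f$, with induced total-space map $\tilde f\colon E'\to E$. The naturality of $\Gamma$ (Proposition~\ref{prop2.7}) gives $\tilde f\circ\Gamma'\sim\Gamma\circ(\Omega f*\Omega f)$, so the hypothesis is equivalent to $(\nu\tilde f)\Gamma'\sim *$. Applying Theorem~\ref{theor3.22} to $E'\to H$ with the map $\nu\tilde f$, this in turn is equivalent to the vanishing, for every $i\ge 0$, of
\[
(\nu\tilde f)\cdot ad_r^i(\alpha_H)(\{\alpha_H,\alpha_H\}_{\times})\colon H^{[i+2]}\to BW_n,
\]
where $\alpha_H\colon H\to H$ is the identity.

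Next I use naturality to move these obstructions from $H$ to $G$. Iterated application of Propositions~\ref{prop3.6}(a,d) and~\ref{prop3.11}(a,d), together with the fact that $f$ is a co-$H$ map, produces a homotopy
\[
\tilde f\cdot ad_r^i(\alpha_H)(\{\alpha_H,\alpha_H\}_{\times})\sim ad_r^i(\alpha_G)(\{\alpha_G,\alpha_G\}_{\times})\circ f^{[i+2]},
\]
where $\alpha_G$ is the identity of $G$ and $f^{[i+2]}\colon H^{[i+2]}\to G^{[i+2]}$ is the iterated application of $f$. The previous step therefore reads $\nu\cdot ad_r^i(\alpha_G)(\{\alpha_G,\alpha_G\}_{\times})\circ f^{[i+2]}\sim *$ for all $i\ge 0$.

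Finally, Corollary~\ref{cor4.12} asserts that $f^{[i+2]}$ is an iterated index $p$ approximation, so Proposition~\ref{prop4.8} lets me cancel this factor, yielding $\nu\cdot ad_r^i(\alpha_G)(\{\alpha_G,\alpha_G\}_{\times})\sim *$ for every $i\ge 0$. One more invocation of Theorem~\ref{theor3.22}, now applied to $E\to G$ and $\nu$, gives $\nu\Gamma\sim *$. The main obstacle is the naturality identification in the third paragraph: one must verify by induction on~$i$ that $\tilde f$ commutes with iterated relative and $H$-space based Whitehead products at the cost of precomposing with $f^{[i+2]}$. This is a routine unwinding of the naturality clauses of Propositions~\ref{prop3.6} and~\ref{prop3.11}, but it relies on each $f^{[j]}$ being a co-$H$ map, which holds since $f$ itself is.
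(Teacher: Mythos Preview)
Your proof is correct and follows essentially the same route as the paper's: reduce via Theorem~\ref{theor3.22} to the iterated Whitehead products, use naturality of the $H$-space based and relative products under the co-$H$ map $f$ to identify the $H^{[i]}$ obstruction with the $G^{[i]}$ obstruction precomposed with $f^{[i]}$, and then invoke Corollary~\ref{cor4.12} and Proposition~\ref{prop4.8} to strip off $f^{[i]}$. The only cosmetic difference is that you introduce the pullback $E'\to H$ and apply Theorem~\ref{theor3.22} twice (once on each side), whereas the paper works directly with $\beta=f\colon H\to G$ in the original fibration and observes that the $H^{[i]}$ products factor through $\Sigma(\Omega H\wedge\Omega H)$, which is already assumed to be killed by $\nu\Gamma$; this saves one invocation of~\ref{theor3.22} but is otherwise the same argument.
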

\begin{proof}
Suppose the composition
\[
\xymatrix{
\Sigma(\Omega H\wedge \Omega H)\ar@{->}[r]&\Sigma(\Omega G\wedge \Omega
G)\ar@{->}[r]^-{\enlarge{\Gamma}}&
E\ar@{->}[r]^-{\enlarge{\nu}}&
BW_n
}
\]
is null homotopic. Let $\alpha\colon G\to G$ be the identity map and
$\beta=\alpha f$. Since $f\!\colon H\to G$ is a co-$H$ map,
there is a homotopy commutative diagram:
\[
\xymatrix@C=78pt{
H^{[i]}\ar@{->}[r]^-{ad_r^{i-2}(\beta)(\{\beta,\beta\}_{\times})}
\ar@{->}[d]_{f^{[i]}}
&\Sigma(\Omega H\wedge \Omega H)
\ar@{->}[d]\\
G^{[i]}
\ar@{->}[r]^-{ad^{i-2}(\alpha)(\{\alpha,\alpha\}_{\times})}
&\Sigma(\Omega G\wedge \Omega G)
}
\]
by \ref{prop3.6}(d) and \ref{prop3.11}(d).
Since $f$ is an index $p$ approximation,  $f^{[i]}$ is
an iterated
index $p$ approximation by \ref{cor4.12}; thus the compositions
\[
\xymatrix@C=74pt{
G^{[i]}\ar@{->}[r]^-{\enlarge{ad^{i-2}(\alpha)(\{\alpha,\alpha\}_{\times})}}&
\Sigma(\Omega G\wedge \Omega G)\ar@{->}[r]^-{\enlarge{\Gamma}}&
E\ar@{->}[r]^-{\enlarge{\nu}}&
BW_n
}
\]
are null homotopic for all $i\geqslant 2$. The result then follows
from \ref{theor3.22}.
\end{proof}

We will use this result to transfer
conditions on $\nu_k$ to the composition:
\[
\xymatrix{
\Sigma(\Omega L_k\wedge \Omega L_k)\ar@{->}[r]&
\Sigma(\Omega G_k\wedge \Omega
G_k)\ar@{->}[r]^-{\enlarge{\Gamma_k}}
&E_k.
}
\]
We need to iterate this. We have to consider the
issue that for $\zeta_k\colon\allowbreak L_k\to\nobreak G_k$ to be a co-$H$ map, we
need to use an exotic co-$H$ space structure on~$L_k$.
We will show that the triviality of the
composition above does not depend on the
co-$H$\ space structure of~$L_k$. To see this, recall that
the map\linebreak[4]
$\Gamma_k\colon\allowbreak \Sigma(\Omega G_k\wedge\nobreak G_k)\to E_k$ was defined in
section~\ref{subsec2.2} based on the fact that $E_k$ was defined
by a principal fibration
\[
\Omega S^{2n+1}\{p^r\}\to E_k\to G_k
\]
classified by a map $\varphi_k\colon G_k\to S^{2n+1}\{p^r\}$ where
$S^{2n+1}\{p^r\}$
is an $H$-space with
$H$-space structure map chosen to have a strict
unit. The fact that $G_k$ is a co-$H$ 
space was not used.

For any space $X$ and map $\zeta \colon X\to G_k$, we
can construct the pullback
\[
\xymatrix{
\Omega S^{2n+1}\{p^r\}
\ar@{=}[r]
\ar@{->}[d]&
\Omega S^{2n+1}\{p^r\}\ar@{->}[d]\\
E(X)\ar@{->}[r]\ar@{->}[d]&E_k\ar@{->}[d]\\
X\ar@{->}[r]^-{\zeta}&G_k
}
\]
which is induced by the composition $\varphi_k\zeta $.
Consequently there is a map $\Gamma(X)\colon \Sigma(\Omega X\wedge \Omega
X)\to E(X)$
and a strictly commutative diagram:
\[
\xymatrix{
\Sigma(\Omega X\wedge \Omega X)\ar@{->}[r]\ar@{->}[d]_{\Gamma(X)}&
\Sigma(\Omega G_k\wedge \Omega G_k)\ar@{->}[d]_{\Gamma_k}\\
E(X)\ar@{->}[r]&E_k.
}
\]
Consider the homotopy equivalence
\[
X_k=G_{k-1}\vee P^{2np^k}(p^{r+k-1})\vee P^{2np^k+1}(p^{r+k-1})\simeq L_k
\]
where we give $X_k$ the split co-$H$ space structure, so this map
is not a co-$H$ map. Nevertheless, we have a
strictly commutative diagram
\[
\xymatrix{
\Sigma(\Omega X_k\wedge\Omega
X_k)\ar@{->}[r]^-{\simeq}\ar@{->}[d]_{\Gamma(X_k)}&
\Sigma(\Omega L_k\wedge\Omega L_k)\ar@{->}[r]\ar@{->}[d]_{\Gamma(L_k)}&
\Sigma(\Omega G_k\wedge\Omega G_k)\ar@{->}[d]_{\Gamma_k}\\
E(X_k)\ar@{->}[r]^-{\simeq}\ar@{->}[d]&E(L_k)\ar@{->}[r]\ar@{->}[d]&E_k\ar@{->}[d]_{\pi_k}\\
X_k\ar@{->}[r]^-{\simeq}&L_k\ar@{->}[r]^-{\zeta_k}&G_k
}
\]
since $\zeta_k$ is an iterated index $p$ approximation, we have
\begin{proposition}\label{prop4.14}
For any map $\nu\colon E_k\to BW_n$, $\nu\Gamma_k$ is
null homotopic iff the composition
\[
\xymatrix{
\Sigma(\Omega X_k\wedge \Omega X_k)\ar@{->}[r]^-{\enlarge{\Gamma(X_k)}}&
E(X_k)\simeq E(L_k)\ar@{->}[r]&E_k\ar@{->}[r]^-{\enlarge{\nu}}&
BW_n
}
\]
is null homotopic.\qed
\end{proposition}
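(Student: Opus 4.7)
The plan is to apply Theorem~\ref{theor4.13} with $\zeta_k\colon L_k\to G_k$ playing the role of the index $p$ approximation, and then to transport the resulting null-homotopy condition along the homotopy equivalence $X_k\simeq L_k$.

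First, since $n>1$, Proposition~\ref{prop4.6} equips $L_k$ with a co-$H$ space structure making $\zeta_k$ a co-$H$ map; moreover $\zeta_k$ is an index $p$ approximation by construction. Theorem~\ref{theor4.13} then applies directly to the principal fibration $\Omega S^{2n+1}\{p^r\}\to E_k\to G_k$ classified by $\varphi_k$, yielding that $\nu\Gamma_k$ is null homotopic if and only if the composition $\nu\circ\Gamma_k\circ\Sigma(\Omega\zeta_k\wedge\Omega\zeta_k)$ is null homotopic.

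Next I would appeal to the strictly commutative diagram displayed just before the statement. The bottom row of that diagram exhibits $\zeta_k$ as a map of bases of the principal fibrations induced by $\varphi_k$ and $\varphi_k\zeta_k$, and naturality of $\Gamma$ from Proposition~\ref{prop2.7} factors the composition above through $\Gamma(L_k)\colon\Sigma(\Omega L_k\wedge\Omega L_k)\to E(L_k)$ followed by the induced total-space map $E(L_k)\to E_k$. Pulling back the whole picture along the homotopy equivalence $X_k\simeq L_k$ gives $E(X_k)\simeq E(L_k)$, and one further application of naturality of $\Gamma$ identifies the composition through $E(L_k)$ with the composition through $E(X_k)$ up to this equivalence. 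Since a homotopy equivalence preserves null-homotopy, the two conditions on $\nu$ agree.

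The delicate point, and the step I expect to be the main obstacle, is that the homotopy equivalence $X_k\simeq L_k$ is explicitly \emph{not} a co-$H$ map: the warning following Proposition~\ref{prop4.6} shows that $L_k$ does not split as a coproduct, so the split co-$H$ structure on $X_k$ differs from the exotic one on $L_k$. It would therefore be illegitimate to invoke Theorem~\ref{theor4.13} along this equivalence. The proposal sidesteps this by using only the bare naturality of $\Gamma$ for maps of bases of principal fibrations, for which Proposition~\ref{prop2.7} imposes no co-$H$ hypothesis; Theorem~\ref{theor4.13} is reserved for the genuine index $p$ approximation $\zeta_k$. With that distinction in hand the argument is a straightforward diagram chase.
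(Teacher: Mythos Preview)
Your proposal is correct and follows exactly the route the paper takes: apply Theorem~\ref{theor4.13} along the index $p$ approximation $\zeta_k\colon L_k\to G_k$, then transport across the homotopy equivalence $X_k\simeq L_k$ using only the naturality of $\Gamma$ from Proposition~\ref{prop2.7}, which requires no co-$H$ hypothesis. The paper records precisely this argument in the strictly commutative three-column diagram immediately preceding the statement and then marks the proposition with a \qed.
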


We now define spaces with split co-$H$ space
structures (coproducts in the category of co-$H$ 
spaces):
\begin{align*}
W(j,k)&=\bigvee\limits^k_{i=j}P^{2np^i}(p^{r+i-1})\vee P^{2np^i+1}(p^{r+i-1})\\
G(j,k)&=G_j\vee W(j+1,k)\\
L(j,k)&=L_j\vee W(j+1,k).
\end{align*}\index{$G(j,k)$|LB}
Consequently we have homotopy equivalences
\[
G(j,k)\simeq L(j+1,k)
\]
which are not co-$H$ equivalences, and index 
$p$ approximations
\[
\xymatrix{L(j,k)\ar@{->}[r]^-{\enlarge{\zeta_j\vee 1}}& G(j,k).}
\]
This leads to a chain:
\[
f\!\colon G(0,k)\simeq L(1,k)\to G(1,k)\simeq L(2,k)\to\dots\to G(k-1,k)\simeq L_k\to
G_k.
\]
\begin{Theorem}\label{theor4.15}
Suppose $n>1$. Then for
any given map $\nu\colon E_k\to BW_n$ the
composition
\[
\xymatrix@C=48pt{
\Sigma(\Omega G_k\wedge\Omega
G_k)\ar@{->}[r]^-{\enlarge{\Gamma_k}}&
E_k\ar@{->}[r]^-{\enlarge{\nu}}&
BW_n
}
\]
is null homotopic iff the composition
\[
\xymatrix@C=51pt{
\Sigma(\Omega G(0,k)\wedge \Omega G(0,k))\ar@{->}[r]^-{\enlarge{\Sigma(\Omega
f\wedge\Omega f)}}&
\Sigma(\Omega G_k\wedge\Omega G_k)
}\hspace*{-6pt}\xymatrix{\mbox{}\ar@{->}[r]^-{\enlarge{\Gamma_k}}&
E_k\ar@{->}[r]^-{\enlarge{\nu}}&
BW_n
}
\]
is null homotopic, where
\[
G(0,k)=P^{2n+1}\vee\bigvee\limits^k_{i=1}P^{2np^i}(p^{r+i-1})\vee
P^{2np^i+1}(p^{r+i-1})
\]
and the map $f\!\colon G(0,k)\to G_k$ is defined by the inclusion
of 
\[
P^{2n+1}=G_0\to G_k
\]
and the maps $\pi_kc(i)$ and
$\pi_ka(i)$ for $1\leqslant i\leqslant k$:
\begin{align*}
\makebox[53.5pt]{}
&\xymatrix{P^{2np^i}(p^{r+i-1})\ar@{->}[r]^-{\enlarge{a(i)}}&E_i\ar@{->}[r]^-{\enlarge{}}&
E_k\ar@{->}[r]^-{\enlarge{\pi_k}}&G_k}\\
&\xymatrix{P^{2np^i+1}(p^{r+i-1})\ar@{->}[r]^-{\enlarge{c(i)}}&E_i\ar@{->}[r]^-{\enlarge{}}&
E_k\ar@{->}[r]^-{\enlarge{\pi_k}}&G_k.}
\makebox[53.5pt]{}
\rlap{\qed}
\end{align*}
\end{Theorem}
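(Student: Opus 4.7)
The plan is a downward induction along the chain
\[
f\!\colon G(0,k)\simeq L(1,k)\to G(1,k)\simeq L(2,k)\to\dots\to G(k-1,k)\simeq L_k\to G_k,
\]
combining \ref{theor4.13} (which handles each co-$H$ arrow $L(j,k)\to G(j,k)$) with \ref{prop4.14} (which bridges each non-co-$H$ identification $G(j,k)\simeq L(j+1,k)$). Starting from the hypothesis that $\nu\Gamma_k$ is null homotopic, I would apply \ref{theor4.13} to the index $p$ approximation $\zeta_k\!\colon L_k\to G_k$, where $L_k$ is equipped with the intrinsic co-$H$ structure of \ref{prop4.6}. This gives that $\nu\Gamma_k\circ\Sigma(\Omega\zeta_k\wedge\Omega\zeta_k)$, equivalently $\nu\circ(E(L_k)\to E_k)\circ\Gamma(L_k)$, is null homotopic.

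Next I would switch co-$H$ structures: the underlying space of $L_k$ is the wedge $G_{k-1}\vee P^{2np^k}(p^{r+k-1})\vee P^{2np^k+1}(p^{r+k-1})=G(k-1,k)$, and by \ref{prop4.14} (or rather the observation in its proof that $\Gamma$ is constructed from the principal fibration alone and does not see the co-$H$ structure) the null homotopy condition passes to $\Gamma(G(k-1,k))$ with the split co-$H$ structure. Now the same space is $L(k-1,k)$ after regarding the $G_{k-1}$ summand as one block, and I iterate: apply \ref{theor4.13} to the index $p$ approximation $\zeta_{k-1}\vee 1\!\colon L(k-1,k)\to G(k-1,k)$, then switch co-$H$ structures via \ref{prop4.14}, and continue. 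After $k$ such steps the target becomes $L(1,k)=G(0,k)$ with the split co-$H$ structure, proving one direction. The reverse direction is symmetric, using the ``if and only if'' in each of \ref{theor4.13} and \ref{prop4.14}.

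To identify the composite $f\!\colon G(0,k)\to G_k$ with the map described in the statement, I would track the wedge summands through the chain. On each $L_j$, the index $p$ approximation $\zeta_j$ restricts on the wedge summand $G_{j-1}$ to the inclusion $G_{j-1}\hookrightarrow G_j$, on $P^{2np^j}(p^{r+j-1})$ to $\pi_j a(j)$, and on $P^{2np^j+1}(p^{r+j-1})$ to $\pi_j c(j)$, by construction of $L_j$ in \ref{prop4.5} and of $a(j),c(j)$ in \ref{prop4.3}. Chasing these inclusions through the chain and composing with $E_j\to E_k\xrightarrow{\pi_k}G_k$ recovers precisely the description of $f$ given in the statement.

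The main obstacle, and the reason \ref{prop4.14} is needed at all, is the bookkeeping of co-$H$ structures: \ref{theor4.13} is only applicable to co-$H$ maps, so each $\zeta_j$ must be viewed with the intrinsic (non-split) co-$H$ structure on $L_j$ from \ref{prop4.6}; but the final answer wants the summands of $G(0,k)$ to be individually visible, which forces the split structure. Fortunately $\Gamma$ is constructed from a principal fibration and does not depend on any co-$H$ choice, so the two structures yield the same map and the switch is harmless. The hypothesis $n>1$ enters only through \ref{prop4.6}, via the intrinsic co-$H$ structure on each $L_j$.
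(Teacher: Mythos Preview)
Your proposal is correct and matches the paper's approach exactly. The paper sets up the chain $G(0,k)\simeq L(1,k)\to G(1,k)\simeq\cdots\to G_k$, proves \ref{theor4.13} and \ref{prop4.14}, and then states \ref{theor4.15} with a \qed in the statement itself---your iteration of \ref{theor4.13} along each $\zeta_j\vee 1$ and \ref{prop4.14} at each co-$H$ structure switch is precisely the intended argument, and your identification of $f$ via the description of $\zeta_j$ in \ref{prop4.5} is the right bookkeeping.
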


Observe the diagram
\[
\xymatrix{
&E(0,k)\ar@{->}[r]\ar@{->}[d]&E_k\ar@{->}[d]\\
\Sigma(\Omega G(0,k)\wedge \Omega G(0,k))\ar@{->}[r]\ar@{->}[ru]^{\Gamma}&G(0,k)\ar@{->}[r]^-{f}&G_k
}
\]
where $\Gamma$ is $\Gamma_k$
composed with $\Sigma(\Omega f\wedge\Omega f)$. Since $G(0,k)$ is a wedge of
Moore
spaces, the components of~$\Gamma$ are $H$-space
based Whitehead products as defined by Neisendorfer~\cite{Nei10a}.
This will be studied in the next chapter.

\chapter{Simplification}\label{chap5}

In this chapter we work with the obstructions
obtained in Chapter~\ref{chap4}. These are $\bmod\, p^s$ homotopy
classes for $r\leqslant s\leqslant r+k$. In section~\ref{subsec5.1}, we define a
quotient
space $D_k$\index{$D_k$|LB} of~$G_k$ and a corresponding principal
fibration $J_k$ over~$D_k$. This has the property that
roughly half of the obstructions vanish in $J_k$;
we then seek a factorization of~$\nu_k$ through~$J_k$. In
section~\ref{subsec5.2} we introduce a congruence relation 
on homotopy classes, and show that we need only consider the
obstructions up to congruence homotopy. This leads
to a shorter list of obstructions.
Congruence homotopy will also be useful in
Chapter~\ref{chap6}, since the properties of relative
Whitehead products are simpler up to congruence
homotopy.

\section{Reduction}\label{subsec5.1}

The inductive hypothesis (\ref{theor6.7}) in the next chapter
is a strengthening of Proposition~\ref{prop2.12}, so in
particular, we will be assuming that
we have constructed a retraction $\nu_{k-1}\colon E_{k-1}\to BW_n$ such that
$\nu_{k-1}\Gamma_{k-1}$ is null homotopic.
In section~\ref{subsec4.1} we constructed classes $a(i)$, $c(i)$,
and $\beta_i$ for $i\leqslant k$ and in section~\ref{subsec4.2} we reduced the
constraints on the construction of~$\nu_k$ to a condition
involving the maps $a(i)$ and $c(i)$.
Some of
the material in this section and section~\ref{subsec6.2} can be found at {\ttfamily arXiv:0804.1896}. 

We now make a further simplification by burying
the classes $c(i)$ in the base space. Specifically, we
define a map
\[
c\index{$c$|LB}\colon C_k\index{$C_k$|LB}=\bigvee\limits^k_{i=1}P^{2np^i+1}(p^{r+i-1})\to E_k
\]
by the compositions
\[
\xymatrix{
P^{2np^i+1}(p^{r+i-1})\ar@{->}[r]^-{\enlarge{c(i)}}&
E_i\ar@{->}[r]&
E_k,
}
\]
and define\footnote{The spaces $D_k$ were first defined in \cite{Ani93}, but
were abandoned in~\cite{GT10}
as the related spaces $G_k$ have better properties. As we will see, the
spaces $D_k$ have smaller homotopy which is useful in our analysis.} $D_k$ by a cofibration
\[
\xymatrix{
C_k\ar@{->}[r]^-{\enlarge{\pi_kc}}&
G_k\ar@{->}[r]&
D_k.
}
\]
\begin{proposition}\label{prop5.1}
There is a homotopy commutative diagram
of cofibration sequences
\[
\xymatrix@C=33pt{
P^{2np^k+1}(p^{r+k-1})\ar@{->}[r]^-{\rho}&
P^{2np^k+1}(p^{r+k})\ar@{->}[r]^-{\sigma^{r+k-1}}&
P^{2np^k+1}(p)\\
C_k\ar@{->}[r]^-{\pi_kc}\ar@{->}[u]&
G_k\ar@{->}[r]\ar@{->}[u]^{\pi}&
D_k\ar@{->}[u]\\
C_{k-1}\ar@{->}[r]^-{\pi_{k-1}c}\ar@{->}[u]&G_{k-1}\ar@{->}[r]\ar@{->}[u]&D_{k-1}\ar@{->}[u] 
}
\]
and
\[
H_i(D_{k};Z_{(p)})=\begin{cases}
Z/p^r&\text{if}\ i=2n\\
Z/p&\text{if}\ i=2np^j,\ 1\leqslant j\leqslant k\\
0&\text{otherwise}.
\end{cases}
\]
\end{proposition}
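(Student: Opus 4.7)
The plan is to build the diagram via a $3\times 3$ cofibration lemma. We already have four cofibration sequences in hand: the middle and bottom rows $C_k\to G_k\to D_k$ and $C_{k-1}\to G_{k-1}\to D_{k-1}$, which are the definitions of $D_k$ and $D_{k-1}$; and the first two columns $C_{k-1}\to C_k\to P^{2np^k+1}(p^{r+k-1})$ (from the wedge decomposition $C_k=C_{k-1}\vee P^{2np^k+1}(p^{r+k-1})$) and $G_{k-1}\to G_k\to P^{2np^k+1}(p^{r+k})$ (from the cell attachment $G_k=G_{k-1}\cup_{\alpha_k}CP^{2np^k}(p^{r+k})$). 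The standard $3\times 3$ (or ``third rail'') argument then produces the third column $D_{k-1}\to D_k\to (\mathrm{cofiber})$ as a cofibration sequence and the top row as the induced sequence on cofibers.

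Next I would identify the induced map $P^{2np^k+1}(p^{r+k-1})\to P^{2np^k+1}(p^{r+k})$. By Theorem~\ref{theor4.4}, the wedge summand of $C_k$ of index $k$ composes with $\pi\circ\pi_k$ to give $\rho$: the bottom region of the ladder in that theorem identifies $\pi\pi_kc(k)$ with $\rho$ (read off from the $-\delta_1\vee\rho$ in the leftmost column). For $i<k$, the summand $P^{2np^i+1}(p^{r+i-1})$ maps into $E_{k-1}$, and therefore through $G_{k-1}\subset G_k$, which is annihilated by $\pi$. So the induced map on cofibers is $\rho$. A short long-exact-sequence calculation shows that the cofiber of this $\rho$ has integer homology $\mathbb{Z}/p$ concentrated in degree $2np^k$ (using that $\rho_*$ on $H_{2np^k}$ is multiplication by $p$, hence injective with cokernel $\mathbb{Z}/p$), and is therefore homotopy equivalent to $P^{2np^k+1}(p)$; the quotient map induces reduction mod $p$ on $H_{2np^k}$, and so matches $\sigma^{r+k-1}$ up to this homotopy equivalence.

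The homology computation then follows by induction on $k$ from the cofibration in the third column, $D_{k-1}\to D_k\to P^{2np^k+1}(p)$. The base case $D_0=G_0=P^{2n+1}$ is immediate from Theorem~\ref{theor2.14}(h). In the inductive step, since $P^{2np^k+1}(p)$ has integer homology only in degree $2np^k$, where it equals $\mathbb{Z}/p$, and $D_{k-1}$ has no homology in degrees $2np^k$ or $2np^k-1$, the long exact sequence at once yields $\tilde H_{2np^k}(D_k;\mathbb{Z}_{(p)})=\mathbb{Z}/p$ and $\tilde H_m(D_k;\mathbb{Z}_{(p)})=\tilde H_m(D_{k-1};\mathbb{Z}_{(p)})$ in all other degrees, matching the stated formula.

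The real content of the proof is the identification of the induced map on cofibers with $\rho$ on the nose (rather than, say, some $p$-multiple of $\rho$), and this is exactly where Theorem~\ref{theor4.4}, together with the fact that the modifications of $a(k),c(k)$ recorded there, do the work; everything else is the formal $3\times 3$ diagram chase together with routine long-exact-sequence bookkeeping.
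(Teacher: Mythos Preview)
Your proof is correct and follows the same approach as the paper. The paper's proof is a single sentence pointing to Theorem~\ref{theor4.4} to identify $\pi\pi_kc(k)$ with $\rho$ and declares the rest immediate; you have simply unpacked the ``immediate'' $3\times 3$ diagram chase and the inductive homology computation that the paper leaves to the reader.
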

\begin{proof}
The composition
\[
\xymatrix{
P^{2np^k+1}(p^{r+k-1})\ar@{->}[r]^-{\enlarge{c(k)}}&
E_k\ar@{->}[r]^-{\enlarge{\pi_k}}&
G_k\ar@{->}[r]^-{\enlarge{\pi}}&
P^{2np^k+1}(p^{r+k})
}
\]
is $\rho$ by~\ref{theor4.4}. The homology calculation is immediate.
\end{proof}

Since $\varphi_k\pi_k$ is null homotopic, we can extend $\varphi_k$
to a map 
\[
\varphi'_k\colon D_k\to S^{2n+1}\{p^r\}.
\]
Any such extension
defines a diagram of vertical fibration sequences:
\setcounter{equation}{1}
\begin{equation}\label{eq5.2}
\begin{split}
\xymatrix{
E_k\ar@{->}[r]^-{\tau_k}
\ar@{->}[d]_{\pi_k}&
J_k
\ar@{->}[r]^-{\eta_k}
\ar@{->}[d]_{\xi_k}
&
F_k
\ar@{->}[d]_{\sigma_k}
\\
G_k\ar@{->}[r]\ar@{->}[d]_{\varphi_k}&
D_k\ar@{=}[r]\ar@{->}[d]_{\varphi'_k}
&
D_k\ar@{->}[d]\\
S^{2n+1}\{p^r\}\ar@{->}[r]&
S^{2n+1}\{p^r\}\ar@{->}[r]&
S^{2n+1}
}
\end{split}
\end{equation}\addtocounter{Theorem}{1}
\index{tauk@$\tau_k$|LB}\index{$J_k$|LB}\index{etak@$\eta_k$|LB}\index{xik@$\xi_k$|LB}\index{$F_k$|LB}\index{sigmak@$\sigma_k$|LB}\index{varphiprimek@$\varphi'_k$|LB}
\begin{proposition}\label{prop5.3}
We can choose an extension $\varphi'_k$ of $\varphi'_{k-1}$
in such a way that the composition
\[
\xymatrix{
P^{2np^k+1}(p^{r+k-1})\ar@{->}[r]^-{\enlarge{c(k)}}&
E_k\ar@{->}[r]^-{\enlarge{\tau_k}}&
J_k
}
\]
is null homotopic.
\end{proposition}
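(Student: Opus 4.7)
The plan is to identify the obstruction to $\tau_k c(k)\sim *$ as a specific class in $\pi_{2np^k+2}(S^{2n+1}\{p^r\};Z/p^{r+k-1})$, and then use the freedom available in choosing $\varphi'_k$ to kill it. First, the composition $\xi_k\tau_k c(k)$ is the image in $D_k$ of $\pi_k c(k)\colon P^{2np^k+1}(p^{r+k-1})\to G_k$, which is canonically null homotopic via the cone $CP^{2np^k+1}(p^{r+k-1})\subset D_k$ built into the definition of $D_k$. Since $\xi_k\colon J_k\to D_k$ is a principal fibration and the domain is a co-$H$ space, this canonical null homotopy lifts $\tau_k c(k)$ through the fiber $\Omega S^{2n+1}\{p^r\}$, producing an adjoint class $\widehat c(k)\in\pi_{2np^k+2}(S^{2n+1}\{p^r\};Z/p^{r+k-1})$ which vanishes if and only if $\tau_k c(k)\sim *$.

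Next, I will identify $\widehat c(k)$ concretely as the difference of two null homotopies of $\varphi_k\pi_k c(k)\sim *$: the one given by composing the cone null homotopy in $D_k$ with $\varphi'_k$ (equivalently, the restriction of $\varphi'_k$ to $CP^{2np^k+1}(p^{r+k-1})$), and the one obtained by composing $c(k)$ with any fixed null homotopy of $\varphi_k\pi_k\colon E_k\to S^{2n+1}\{p^r\}$ (which exists since $E_k$ is the pullback classified by $\varphi_k\pi_k$, and is independent of the choice of $\varphi'_k$). The set of extensions $\varphi'_k$ of $\varphi_k$ restricting to the given $\varphi'_{k-1}$ on $D_{k-1}$ is a torsor over $[D_k/(G_k\cup D_{k-1}),S^{2n+1}\{p^r\}]$. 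Since $D_k/G_k\simeq\Sigma C_k$ carries $D_{k-1}/G_{k-1}\simeq\Sigma C_{k-1}$ as a subspace, we compute
\[
D_k/(G_k\cup D_{k-1})\;\simeq\;\Sigma(C_k/C_{k-1})\;\simeq\;P^{2np^k+2}(p^{r+k-1}),
\]
so this torsor is precisely $\pi_{2np^k+2}(S^{2n+1}\{p^r\};Z/p^{r+k-1})$ --- the same group in which $\widehat c(k)$ lives.

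Starting from any extension $\varphi'_k$ of $\varphi'_{k-1}$, compute the corresponding $\widehat c(k)^{(0)}$. Modifying $\varphi'_k$ by $\delta$ alters only the null homotopy supplied by the top cone, shifting $\widehat c(k)$ by $\pm\delta$, so taking $\delta=\mp\widehat c(k)^{(0)}$ produces an extension for which the modified $\widehat c(k)$ vanishes, whence $\tau_k c(k)\sim *$. The main technical step will be verifying that the torsor action on extensions acts on $\widehat c(k)$ precisely by translation, which amounts to chasing the clutching construction of Proposition~\ref{prop2.1} through the change of classifier; once this naturality is in hand, the conclusion is immediate.
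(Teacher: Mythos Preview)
Your obstruction-theoretic argument is correct and complete in outline, but the paper takes a more direct route. Rather than fixing an arbitrary extension and then modifying it to kill the obstruction $\widehat c(k)$, the paper isolates the construction as a general lemma (Lemma~\ref{lem5.4}): given a principal fibration $J\to D$ classified by $\varphi\colon D\to S$ and any map $c\colon Q\to J$ with components $c(q)=(c_1(q),c_2(q))\in D\times PS$, one extends $\varphi$ over the cone in $D'=D\cup_{c_1}CQ$ by the explicit formula $\varphi'(q,t)=c_2(q)(t)$, and then $H(q,t)=((q,t),c_2(q)_t)$ with $c_2(q)_t(s)=c_2(q)(st)$ is a visible null homotopy of $Q\to J\to J'$. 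Applied with $D$ the pushout of $G_k$ and $D_{k-1}$ (so that $\varphi$ is already $\varphi_k\cup\varphi'_{k-1}$) and $Q=P^{2np^k+1}(p^{r+k-1})$, this produces the desired $\varphi'_k$ in one stroke. In effect the paper's $\varphi'$ is exactly the extension you would land on after performing your modification by $\delta=\mp\widehat c(k)^{(0)}$: the path data $c_2(q)$ already present in $c(k)\colon Q\to E_k$ \emph{is} the correct null homotopy, so one can simply use it to define $\varphi'_k$ on the cone rather than first choosing something else and then correcting. Your approach has the advantage of making the torsor structure explicit and would generalize more readily if several obstructions had to be handled simultaneously; the paper's buys brevity and avoids the bookkeeping of verifying that the torsor acts by translation.

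Two minor remarks. First, your parenthetical ``$E_k$ is the pullback classified by $\varphi_k\pi_k$'' should read ``classified by $\varphi_k$'' (the fibration is $E_k\to G_k$). Second, the ``if and only if'' in your first paragraph is slightly too strong: $\widehat c(k)=0$ certainly implies $\tau_k c(k)\sim *$, but the converse only gives $\widehat c(k)\in\operatorname{im}\partial$. You do not use the converse, so this does not affect the argument.
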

\begin{proof}
We begin by defining $D$ by a pushout square:
\[
\xymatrix{
G_k\ar@{->}[r]&D\\
G_{k-1}\ar@{->}[u]\ar@{->}[r]&D_{k-1}\ar@{->}[u] 
}
\]
Using the lower right hand square in \ref{prop5.1} and the pushout
property, we see that there is a cofibration
\[
\xymatrix{
P^{2np^k+1}(p^{r+k-1})\ar@{->}[r]^-{\enlarge{\alpha}}&
D\ar@{->}[r]&
D_k
}
\]
where $\alpha$ is the composition:
\[
\xymatrix{
P^{2np^k+1}(p^{r+k-1})\ar@{->}[r]^-{\enlarge{c(k)}}&
E_k\ar@{->}[r]^-{\enlarge{\pi_k}}&
G_k\ar@{->}[r]&
D.
}
\]
We use the pushout property to construct $\varphi\colon D\to
S^{2n+1}\{p^r\}$
by $\varphi'_{k-1}$ on $D_{k-1}$ and $\varphi_k$ on $G_k$. We seek a map
$\varphi'_k$ 
 in the diagram
\[
\xymatrix{
E_k\ar@{->}[r]\ar@{->}[d]_{\pi_k}&J\ar@{->}[r]\ar@{->}[d]&J_k\ar@{->}[d]\\
G_k\ar@{->}[r]\ar@{->}[d]_{\varphi_k}&D\ar@{->}[r]\ar@{->}[d]_{\varphi}&D_k=D\cup_{\alpha}CP^{2np^k+1}(p^{r+k-1})\ar@{->}[d]_{\varphi'_k}\\
S^{2n+1}\{p^r\}\ar@{=}[r]^{}&S^{2n+1}\{p^r\}\ar@{=}[r]^{}&S^{2n+1}\{p^r\}
}
\]
We assert that we can choose $\varphi'_k$ so that the composition
\[
\xymatrix{
P^{2np^k+1}(p^{r+k-1})\ar@{->}[r]^-{\enlarge{c(k)}}&
E_k\ar@{->}[r]&
J\ar@{->}[r]&
J_k
}
\]
is null homotopic. Note that $\alpha$ is
homotopic to the composition
\[
\xymatrix{
P^{2np^k+1}(p^{r+k-1})\ar@{->}[r]^-{\enlarge{c(k)}}&
E_k\ar@{->}[r]&
J\ar@{->}[r]&
D.
}
\]
The assertion then follows from
\begin{lemma}\label{lem5.4}
Suppose $\xymatrix{J\ar@{->}[r]^-{\enlarge{\pi}}&D}$ is a principal fibration
induced by a map $\varphi\colon D\to S$. Suppose $c\colon Q\to J$ and
$D'$ is the mapping cone of $\pi c$. Then there is a map
$\varphi'\colon D'\to S$ with homotopy fiber $J'$ such that the composition
$\xymatrix{Q\ar@{->}[r]&J\ar@{->}[r]&J'}$
is null homotopic in the diagram:
\[
\xymatrix{Q\ar@{->}[r]^-{c}&J\ar@{->}[r]\ar@{->}[d]_{\pi}&J'\ar@{->}[d]\\
&D\ar@{->}[r]\ar@{->}[d]_{\varphi}&D'\ar@{->}[d]_{\varphi'}\\
&S\ar@{=}[r]^{}&S.  
}
\]
\end{lemma}
\begin{proof}
$J=\{(d,\omega)\in D\times PS\mid 
\omega(1)=\varphi(d)\}$ so
$c(q)$ has components $(c_1(q),c_2(q))$ where $c_1(q)\in D$, $c_2(q)\in
PS$
with $c_2(q)(0)=*$ and $c_2(q)(1)=\varphi(c_1(q))$. Write
$D'=D\cup_{\alpha}CQ$
with $0$ at the vertex of the cone and
$\alpha(q)=c_1(q)$. Now
define $\varphi'\colon D'\rightarrow  S$ by $\varphi'(d)=\varphi(d)$ for
$d\in D$ and $\varphi'(q,t)=c_2(q)(t)$.
This is well defined and we can define a homotopy
\[
H\colon Q\times I\to J'\subset D'\times PS
\]
by the formula
\[
H(q,t)=((q,t),c_2(q)_t)
\]
where $c_2(q)_t$ is the path defined as $c_2(q)_t(s)=c_1(q)(st)$.
\end{proof}

This proves the lemma and hence the proposition.
\end{proof}

Now assume $n>1$ and define
\setcounter{equation}{4}
\begin{equation}\label{eq5.5}
U_k\index{$U_k$|LB}=P^{2n+1}\vee \bigvee\limits^k_{i=1}P^{2np^i}(p^{r+i-1}),
\end{equation}
so $G(0,k)=U_k\vee C_k$,
and we have a homotopy commutative square
\[
\xymatrix{
G(0,k)\ar@{->}[r]\ar@{->}[d]&G_k\ar@{->}[d]\\
U_k\ar@{->}[r]^-{a}&D_k
}
\]
where the left hand vertical map is the projection and $a$
is defined on\linebreak[4]
$P^{2np^i}(p^{r+i-1})$ as the composition:
\[
\xymatrix{
p^{2np^i}(p^{r+i-1})\ar@{->}[r]^-{\enlarge{a(i)}}&
E_k\ar@{->}[r]^-{\enlarge{\tau_k}}&
J_k\ar@{->}[r]^-{\enlarge{\xi_k}}&
D_k.
}
\]
From this we construct homotopy commutative
diagram:
\begin{equation}\label{eq5.6}
\begin{split}
\xymatrix{
\Sigma(\Omega G(0,k)\wedge\Omega G(0,k))\ar@{->}[r]\ar@{->}[d]&
\Sigma(\Omega G_k\wedge\Omega
G_k)\ar@{->}[r]^-{\Gamma_k}\ar@{->}[d]&
E_k\ar@{->}[d]_{\tau_k}\\
\Sigma(\Omega U_k\wedge \Omega U_k)\ar@{->}[r]&
\Sigma(\Omega D_k\wedge \Omega D_k)\ar@{->}[r]^-{\overline{\Gamma}_k}&
J_k
}
\end{split}
\end{equation}
\addtocounter{Theorem}{2}
\begin{proposition}\label{prop5.7}
Suppose that $n>1$ and there is a retraction 
\[
\gamma_k\colon J_k\to BW_n
\]
such that the compositions
\[
\xymatrix@C=85pt{
U_k^{[j]}\ar@{->}[r]^-{\enlarge{\{a,\dots,a,\{a,a\}_{\times}\}_r}}&
J_k\ar@{->}[r]^-{\enlarge{\gamma_k}}&
BW_n
}
\]
are null homotopic for each $j\geqslant 2$. Then the
composition
\[
\xymatrix{
\Sigma(\Omega G_k\wedge \Omega
G_k)\ar@{->}[r]^-{\enlarge{\Gamma_k}}&
E_k\ar@{->}[r]^-{\enlarge{\tau_k}}&
J_k\ar@{->}[r]^-{\enlarge{\gamma_k}}&
BW_n
}
\]
is null homotopic.
\end{proposition}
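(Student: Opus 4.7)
The approach is to reduce the null-homotopy of $\gamma_k\tau_k\Gamma_k$ to the hypothesis on $\gamma_k$ through the machinery of Chapters~\ref{chap3} and~\ref{chap4}. Since $n>1$, Theorem~\ref{theor4.15} lets me replace $G_k$ by the wedge of Moore spaces $G(0,k)=U_k\vee C_k$: it suffices to show that $\gamma_k\tau_k\Gamma_k\circ\Sigma(\Omega f\wedge\Omega f)$ is null homotopic, where $f\colon G(0,k)\to G_k$ is the map from Theorem~\ref{theor4.15}. Pulling back the principal fibration $E_k\to G_k$ along $f$ and invoking the naturality of $\Gamma$ from Proposition~\ref{prop2.7}, Theorem~\ref{theor3.22} applied to the pulled-back fibration (whose classifying map $\varphi_k f$ takes values in the $H$-space $S^{2n+1}\{p^r\}$) further reduces the problem to showing that each iterated Whitehead product
\[
\gamma_k\tau_k\,ad_r^{\,i}(f)(\{f,f\}_{\times})\colon G(0,k)^{[i+2]}\longrightarrow E_k\longrightarrow BW_n
\]
is null homotopic for $i\geqslant 0$.

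Next, I will transport these obstructions into $J_k$ and exploit the definition of $D_k$. Diagram~(\ref{eq5.2}) realizes $\tau_k$ as a map of principal fibrations over the identity of $S^{2n+1}\{p^r\}$ (a strict $H$-map trivially), so Propositions~\ref{prop3.6}(d) and~\ref{prop3.11}(d) give
\[
\tau_k\,ad_r^{\,i}(f)(\{f,f\}_{\times})\;\sim\;ad_r^{\,i}(\bar f)(\{\bar f,\bar f\}_{\times}),
\]
where $\bar f\colon G(0,k)\to D_k$ is $f$ composed with the quotient $G_k\to D_k$. The crucial observation is that $\bar f$ factors through $U_k$: the restriction $\bar f|_{C_k}$ is the composition $C_k\xrightarrow{\pi_kc}G_k\to D_k$, which is null by the very construction of $D_k$. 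Since $G(0,k)=U_k\vee C_k$ carries its split co-$H$ structure, the wedge projection $q\colon G(0,k)\to U_k$ is a co-$H$ map and $\bar f\sim a\circ q$ for the map $a$ appearing just before~(\ref{eq5.6}). The naturality clauses in Propositions~\ref{prop3.6}(a) and~\ref{prop3.11}(a) then yield
\[
ad_r^{\,i}(\bar f)(\{\bar f,\bar f\}_{\times})\;\sim\;ad_r^{\,i}(a)(\{a,a\}_{\times})\circ q^{[i+2]},
\]
and composing with $\gamma_k$ and invoking the hypothesis (applied with $j=i+2\geqslant 2$) gives the required null-homotopy.

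The main technical point to watch is the compatibility of the successive naturality formulae and the bracketing of iterated Whitehead products --- in particular, that $q$ really is a co-$H$ map for the split structure on $G(0,k)$ so that Propositions~\ref{prop3.6}(a) and~\ref{prop3.11}(a) apply at every stage of the iteration, and that the induced map on the pullback total space sends the identity-based products on $G(0,k)$ to the corresponding $f$-based ones in $E_k$ (Propositions~\ref{prop3.6}(b) and~\ref{prop3.11}(b)). Everything else is a bookkeeping exercise with the formulas already established in Chapters~\ref{chap3} and~\ref{chap4}.
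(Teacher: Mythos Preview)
Your proposal is correct and follows essentially the same route as the paper: both invoke Theorem~\ref{theor4.15} to pass from $G_k$ to $G(0,k)$, then exploit that the $C_k$-summand dies in $D_k$ to factor through $U_k$, and finally appeal to Theorem~\ref{theor3.22}. The only difference is the order of operations: the paper uses the commutative square~(\ref{eq5.6}) to factor through $\Sigma(\Omega U_k\wedge\Omega U_k)$ at the level of the $\Gamma$-maps \emph{before} pulling back $J_k\to D_k$ along $a$ and applying~\ref{theor3.22} once, whereas you apply~\ref{theor3.22} to the pullback over $G(0,k)$ first and then push the resulting iterated Whitehead products through $\tau_k$ and $q$ via the naturality clauses~\ref{prop3.6} and~\ref{prop3.11}---a slightly longer but equally valid bookkeeping.
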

\begin{proof}
By (\ref{eq5.6}) and \ref{theor4.15}, it suffices to show that the
composition
\[
\xymatrix{
\Sigma(\Omega U_k\wedge \Omega U_k)\ar@{->}[r]&
\Sigma(\Omega D_k\wedge \Omega D_k)\ar@{->}[r]^-{\enlarge{\overline{\Gamma}_k}}&
J_k\ar@{->}[r]^-{\enlarge{\gamma_k}}&
BW_n
}
\]
is null homotopic. Define $E(U_k)_k$ as a pullback:
\[
\xymatrix{
E(U_k)\ar@{->}[r]\ar@{->}[d]&J_k\ar@{->}[d]\\
U_k\ar@{->}[r]^-{a}&D_k 
}
\]
Then by naturality, it suffices to show that
the composition
\[
\xymatrix{
\Sigma(\Omega U_k\wedge \Omega
U_k)\ar@{->}[r]^-{\enlarge{\Gamma(U_k)}}&
E(U_k)\ar@{->}[r]&
J_k\ar@{->}[r]^-{\enlarge{\gamma_k}}&
BW_n
}
\]
is null homotopic. But since $U_k$ is a co-$H$ space,
we can apply \ref{theor3.22} to finish the proof.
\end{proof}

Now write  $U_k=\Sigma P_k$ where
\[
P_k\index{$P_k$|LB}=P^{2n}\vee \bigvee\limits^k_{i=1}P^{2np^i-1}(p^{r+i-1})
\]
so $U^{[j]}_k=\Sigma P_k\wedge\dots\wedge P_k$ by \ref{prop3.2}. 

Using the splitting of $P_k$ into $k+1$ Moore spaces,
we obtain
\begin{proposition}\label{prop5.8}
Suppose $n>1$. Then the map
\[
\{a,\dots,a,\{a,a\}_{\times}\}_r
\colon U_k^{[j]}=\Sigma
P_k^{(j)}\to J_k
\]
when restricted to one of the $(k+1)^j$ iterated smash
products of Moore spaces is an iterated external 
Whitehead product of the form
\[
\{x_1,\dots,x_{j-2},\{x_{j-1},x_j\}_{\times}\}_r
\]
where each $x_i$ is either $\xi_k\tau_ka(i)\colon P^{2np^i-1}(p^{r+i-1})\to D_k$
for $1\leqslant i\leqslant k$
or the inclusion $P^{2n+1}\to D_k$.\qed
\end{proposition}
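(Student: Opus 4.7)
The plan is to reduce Proposition 5.8 to a purely formal statement combining the wedge decomposition of $U_k$ with naturality of the Whitehead product operations from Chapter~3. First I would apply Proposition 3.2 iteratively: since $U_k=\Sigma P_k$ is a suspension, the equivalence $\Sigma X\circ Y\simeq X\wedge Y$ yields $U_k^{[j]}\simeq \Sigma P_k^{(j)}$. Distributing the $j$-fold smash over the wedge decomposition
\[
P_k=P^{2n}\vee\bigvee_{i=1}^{k}P^{2np^i-1}(p^{r+i-1})
\]
writes $P_k^{(j)}$ as a wedge of $(k+1)^j$ iterated smash products of Moore spaces, one for each tuple $(i_1,\dots,i_j)\in\{0,1,\dots,k\}^j$.

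Second I would identify each such wedge summand with an iterated $\circ$-product of Moore spaces. Writing $U_{k,i}$ for the $i$-th wedge summand of $U_k$ (so $U_{k,0}=P^{2n+1}$ and $U_{k,i}=P^{2np^i}(p^{r+i-1})$ for $1\leqslant i\leqslant k$), each inclusion $\iota_i\colon U_{k,i}\to U_k$ is a co-$H$ map because every space in sight is a suspension equipped with the suspension co-$H$ structure. Iterating Proposition 3.2 together with the naturality of $\circ$ for co-$H$ maps (Proposition 3.1) shows that the iterated $\circ$-product $\iota_{i_1}\circ\cdots\circ\iota_{i_j}\colon U_{k,i_1}\circ\cdots\circ U_{k,i_j}\to U_k^{[j]}$ realizes precisely the inclusion of the $(i_1,\dots,i_j)$-th summand from the first step.

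Third, the restriction identification comes from naturality of the Whitehead products. By construction $a\colon U_k\to D_k$ is the co-$H$ sum (coproduct in the category of co-$H$ spaces) of the inclusion $P^{2n+1}\to D_k$ together with the maps $\xi_k\tau_k a(i)$, so $a\circ\iota_{i_s}=x_{i_s}$ in the notation of the statement. Applying Proposition 3.6(a) to the innermost $H$-space based product $\{a,a\}_\times$ and Proposition 3.11(a) successively to each outer relative product gives
\[
\{a,\dots,a,\{a,a\}_\times\}_r\circ(\iota_{i_1}\circ\cdots\circ\iota_{i_j})\simeq\{x_{i_1},\dots,x_{i_{j-2}},\{x_{i_{j-1}},x_{i_j}\}_\times\}_r,
\]
which is exactly the claim.

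The only subtle point is the second step, where one must check that iterated $\circ$-products of the canonical summand inclusions literally match the wedge decomposition of $U_k^{[j]}$ produced by distributing smash products over wedges; this is essentially bookkeeping using the naturality of the equivalences $\Sigma X\circ Y\simeq X\wedge Y$. Everything else is a direct invocation of the naturality statements already established in Chapter~3.
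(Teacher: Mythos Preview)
Your proposal is correct and matches the paper's approach: the paper gives no proof beyond the lead-in sentence ``Using the splitting of $P_k$ into $k+1$ Moore spaces, we obtain'' and a $\qed$, so the argument is intended to be exactly the routine combination of the equivalence $U_k^{[j]}\simeq\Sigma P_k^{(j)}$ from Proposition~3.2, the distributivity of smash over wedge, and the naturality statements 3.6(a) and 3.11(a) that you invoke. Your write-up simply makes explicit the bookkeeping the paper leaves to the reader.
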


By applying \ref{prop3.26}, we resolve these external Whitehead
products into internal Whitehead products.
\begin{Theorem}\label{theor5.9}
Suppose $n>1$. Then the restriction of the map
\[
\{a,\dots,a,\{a,a\}_{\times}\}_r
\colon \Sigma P_k\wedge\dots\wedge P_k\to J_k
\]
to any Moore space in any decomposition of $\Sigma P_k\wedge\dots\wedge
P_k$
is homotopic to a linear combination of weight $j$ iterated
internal $H$-space based Whitehead products
\[
[x_1,\dots,x_{j-2},[x_{j-1},x_j]_{\times}]_r
\]
where each $x_i$ is one of the following: $\xi_k\tau_ka(i)\rho^t$,
$\xi_k\tau_ka(i)\delta_t$, $\mu$, $\nu$ for $1\leqslant i\leqslant k$ and for appropriate
values of~$t$.
\end{Theorem}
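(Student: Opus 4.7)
The strategy is to combine Proposition~\ref{prop5.8}, which writes each restriction of $\{a,\dots,a,\{a,a\}_{\times}\}_r$ to a smash-product summand as an iterated \emph{external} Whitehead product, with the resolution formulas of Propositions~\ref{prop3.27} and~\ref{prop3.28}, which decompose external Whitehead products of Moore spaces with possibly different coefficients into internal Whitehead products landing in homotopy with $Z/p^r$ coefficients.

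I would begin by fixing a wedge decomposition of each copy of $P_k=P^{2n}\vee\bigvee_{i=1}^k P^{2np^i-1}(p^{r+i-1})$, inducing a decomposition of $P_k\wedge\dots\wedge P_k$ into smash products $M_1\wedge\dots\wedge M_j$ of Moore space summands. By Proposition~\ref{prop5.8}, the restriction of the map in question to $\Sigma(M_1\wedge\dots\wedge M_j)$ equals the iterated external Whitehead product $\{x_1,\dots,x_{j-2},\{x_{j-1},x_j\}_{\times}\}_r$, where $x_i=\xi_k\tau_k a(s_i)$ if $M_i=P^{2np^{s_i}-1}(p^{r+s_i-1})$ and $x_i$ is the inclusion $P^{2n+1}\to D_k$ if $M_i=P^{2n}$.

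Next I would resolve these external products into internal ones, working from the inside out. The innermost $\{x_{j-1},x_j\}_{\times}$ is an external $H$-space based Whitehead product, and Proposition~\ref{prop3.27}, after using Proposition~\ref{prop3.31}(a) to orient the pair so that the larger coefficient exponent sits on the right, rewrites its restriction to each of the two Moore summands of $\Sigma(M_{j-1}\wedge M_j)$ as $[x_{j-1},x_j\rho^t]_{\times}$ or $[x_{j-1},x_j\delta_t]_{\times}$ for the appropriate $t\geqslant 0$. Each outer layer $\{x_i,\cdot\}_r$ is an external relative Whitehead product which I would resolve by iterated application of Proposition~\ref{prop3.28}; after $j-2$ further applications one obtains a $Z$-linear combination of iterated internal Whitehead products of the form $[x_1,\dots,x_{j-2},[x_{j-1},x_j]_{\times}]_r$. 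Inspection shows that each factor has been post-composed only with a $\rho^t$ or $\delta_t$, producing precisely the list of maps in the theorem (with $\mu,\nu$ denoting the two modifications of the inclusion $P^{2n+1}\to D_k$ produced when the summand $M_i=P^{2n}$ is one of the factors involved in a splitting).

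The main obstacle is the combinatorial bookkeeping: one must check that the coefficient exponents propagating through the nested splittings always remain non-negative, so that the indicated $\rho^t$ and $\delta_t$ make sense, and one must verify that each Whitehead pair can be consistently oriented so that Proposition~\ref{prop3.26}, in its relative and $H$-space based forms, applies with the larger exponent on the right. Both points are guaranteed by the fact that $P^{2n+1}=P^{2n+1}(p^r)$ carries the smallest coefficient exponent appearing anywhere in the decomposition, so every $t$ that arises is $\geqslant 0$. No new geometric input is required beyond results already established in Chapter~\ref{chap3}; the proof is essentially a bookkeeping exercise applying \ref{prop3.27} and \ref{prop3.28} iteratively.
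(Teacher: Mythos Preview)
Your proposal is correct and follows essentially the same route as the paper, whose proof is the single line ``This is an easy induction on $j$ using \ref{prop3.26}.'' You have simply unpacked that induction, invoking the $H$-space based and relative versions \ref{prop3.27} and \ref{prop3.28} in place of the absolute \ref{prop3.26}, and prefaced it with the reduction to external products already recorded in \ref{prop5.8}. One small clarification: after the innermost resolution the inner class lies in $Z/p^r$, so at subsequent steps it is the \emph{outer} factor $x_i$ (possibly with exponent $p^{r+i-1}>p^r$) that receives the $\rho^t$ or $\delta_t$; this is exactly why the theorem lists $\xi_k\tau_ka(i)\rho^t$ and $\xi_k\tau_ka(i)\delta_t$ among the allowed entries, and the splitting of \ref{prop3.25} applies symmetrically to cover that orientation.
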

\begin{proof}
This is an easy induction on $j$ using \ref{prop3.26}.
\end{proof}

\section{Congruence Homotopy Theory}\label{subsec5.2}

The results of section~\ref{subsec5.1}, and in particular~\ref{theor5.9},
indicate that the obstructions to constructing a
suitable retraction $\nu_k=\gamma_k\tau_k$ are $\text{mod}\, p^s$ homotopy
classes in~$J_k$ for $r\leqslant s\leqslant r+k$. These obstructions are iterated compositions of relative and $H$-space based Whitehead
products. However, they are not Whitehead products
of maps into $J_k$ in general. Since $BW_n$ is an 
$H$-space, any Whitehead products of classes in~$J_k$
will be annihilated by any such map~$\gamma_k$. We are
led to a coarser classification. We introduce a congruence
relation among homotopy
classes so that it is only necessary to annihilate a
representative of each congruence class. A remarkable
and useful feature is that the congruence
homotopy of~$J_k$ is a module over the symmetric algebra on $\pi_k(\Omega
D_k)$.
\begin{definition}\label{def7.1}
Two maps $f,g\colon X\to Y$ will be called
congruent (written $f\equiv g$) if $\Sigma f$ and $\Sigma g$ are homotopic
in~$[\Sigma X,\Sigma Y]$. We write $e[X,Y]$\index{$e[\ ,\ ]$|LB} for the set of
congruence classes of pointed maps: $X\to Y$
and
\[
e\pi_k\index{$e\pi_k$|LB}(Y;Z/p^s)=e[P^k(p^s),Y].
\]

Clearly congruence is an equivalence relation
and composition is well defined on congruence
classes. This defines the congruence homotopy
category. It is easy to prove
\end{definition}
\begin{proposition}\label{prop5.11}
Suppose $f\equiv
 g\colon X\to Y$ and $h\colon Y\to Z$\index{\{\ ,\ \}$_{\times}\equiv$@$\equiv$|LB}
where $Z$ is an $H$-space. $hf$ and $hg$ are homotopic.
\end{proposition}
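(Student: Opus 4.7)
The plan is to reduce to the well-known fact that when $Z$ is an $H$-space, the adjunction unit $\eta_Z \colon Z \to \Omega\Sigma Z$ admits a left homotopy inverse. Granting this for the moment, the deduction is routine. From $f \equiv g$ we have $\Sigma f \simeq \Sigma g$ in $[\Sigma X, \Sigma Y]$, and post-composing with $\Sigma h$ yields $\Sigma(hf) \simeq \Sigma(hg)$ in $[\Sigma X, \Sigma Z]$. Transporting across the natural bijection $[\Sigma X, \Sigma Z] \cong [X, \Omega\Sigma Z]$, this rewrites as $\eta_Z \circ hf \simeq \eta_Z \circ hg$ in $[X, \Omega\Sigma Z]$. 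Choosing any $r \colon \Omega\Sigma Z \to Z$ with $r\eta_Z \simeq 1_Z$ and composing gives $hf \simeq r\eta_Z hf \simeq r\eta_Z hg \simeq hg$, which is the desired conclusion.

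To produce the retraction $r$, I follow the paper's standing convention (see Section~\ref{subsec2.1}) and assume the multiplication $\mu$ on $Z$ has a strict unit. Since based maps from a pointed space land in the path component of the basepoint, we may further assume $Z$ is path-connected, so the James--Milnor theorem provides a weak equivalence $\Omega\Sigma Z \simeq J(Z)$ which identifies $\eta_Z$ with the canonical inclusion $Z = J_1(Z) \hookrightarrow J(Z)$. I would define $r \colon J(Z) \to Z$ on the $n$-fold product $Z^n$ by the right-parenthesized iterated product $(z_1,\ldots,z_n) \mapsto z_1\cdot(z_2\cdot(\cdots \cdot z_n))$. The strict unit hypothesis makes this compatible with the James identifications sending $(z_1,\ldots,*,\ldots,z_n)$ to $(z_1,\ldots,\widehat{z_i},\ldots,z_n)$, so the formula descends to a continuous map on $J(Z)$. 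Its restriction to $J_1(Z)$ is the identity, giving $r\eta_Z \simeq 1_Z$.

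The only technical nuance is that the multiplication on $Z$ is not assumed associative, so $z_1(z_2(\cdots z_n))$ depends on the chosen parenthesization. This is precisely why the strict unit hypothesis is needed: it makes one fixed choice (here, right-associated) descend to a well-defined map on the James construction without recourse to any associativity, which is the main obstacle in the argument.
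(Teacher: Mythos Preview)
Your proof is correct and is the standard argument. The paper does not actually supply a proof of this proposition; it is introduced with the phrase ``It is easy to prove'' and left at that. So there is no paper proof to compare against, and what you have written is exactly the natural justification.

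Two minor remarks. First, your reduction to path-connected $Z$ via ``based maps land in the basepoint component'' tacitly assumes $X$ is path-connected; in the paper's setting (Moore spaces, skeleta of the Anick filtration, etc.) this is always the case, so the point is harmless. Second, the strict-unit assumption you invoke from Section~\ref{subsec2.1} is stated there for the classifying space of a principal fibration rather than for an arbitrary $H$-space $Z$, but the same reference \cite[11.1.11]{Nei10a} shows any $H$-space may be replaced up to homotopy by one with strict unit, so your use of it is legitimate. Your check that the right-associated product respects the James identifications under the strict-unit hypothesis is exactly the point that needs verifying, and you have it right.
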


Consequently, it is sufficient to classify the
iterated $H$-space based Whitehead products of~\ref{theor5.9} up to congruence. We will
also need
to consider congruence in a different way in
section~\ref{subsec6.3}. In constructing $\gamma_k$ we will make alterations
in dimensions where obstructions of level $k-1$ may
resurface. This is a delicate point
which has needed much attention. For this reason
we need to develop some deeper properties of 
congruence homotopy theory.
\begin{proposition}\label{prop5.12}
The inclusion $\xymatrix{Y_1\vee Y_2\ar@{->}[r]^-{\enlarge{i}}&Y_1\times Y_2}$ defines
a \textup{1--1} map $\iota_*\colon e[X,Y_1\vee Y_2]\to e[X,Y_1\times Y_2]$. Furthermore,
if $G$ is a co-$H$ space $e[G,X]$ is a Abelian group and
\[
e[G,Y_1\vee Y_2]\cong e[G,Y_1\times Y_2]\cong e[G,Y_1]\oplus e[G,Y_2].
\]
\end{proposition}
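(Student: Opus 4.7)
The plan is to base the injectivity of $\iota_*$ on the classical James splitting
\[
\Sigma(Y_1\times Y_2)\simeq \Sigma Y_1\vee \Sigma Y_2\vee \Sigma(Y_1\wedge Y_2),
\]
under which $\Sigma\iota\colon\Sigma(Y_1\vee Y_2)\to\Sigma(Y_1\times Y_2)$ is identified with the inclusion of the first two wedge summands and hence admits a homotopy retraction $r$. By definition $e[X,Y]$ injects into $[\Sigma X,\Sigma Y]$ via suspension, so if $\iota f\equiv \iota g$ then $\Sigma\iota\circ\Sigma f\sim \Sigma\iota\circ\Sigma g$; applying $r$ gives $\Sigma f\sim \Sigma g$, i.e., $f\equiv g$. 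This disposes of the first assertion.

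For the second assertion, first suppose $G$ is co-$H$ with comultiplication $\mu\colon G\to G\vee G$. Then $\Sigma G$ carries two co-$H$ structures: one from $\Sigma\mu$ and the standard one from the suspension coordinate. An Eckmann--Hilton argument shows these are compatible and induce the same, Abelian, group structure on $[\Sigma G,\Sigma X]$. The suspension map
\[
\Sigma\colon [G,X]\longrightarrow [\Sigma G,\Sigma X]
\]
is a homomorphism with respect to the $\Sigma\mu$-structure on the target, with kernel the congruence relation; hence its image $e[G,X]$ is an Abelian subgroup.

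The splitting $e[G,Y_1\times Y_2]\cong e[G,Y_1]\oplus e[G,Y_2]$ is now formal: the projections $p_i\colon Y_1\times Y_2\to Y_i$ yield the standard isomorphism $[G,Y_1\times Y_2]\cong [G,Y_1]\times [G,Y_2]$, which descends to congruence classes because suspension commutes with projection onto a factor. It remains to show $\iota_*\colon e[G,Y_1\vee Y_2]\to e[G,Y_1\times Y_2]$ is surjective. Given any $\phi\colon G\to Y_1\times Y_2$, set $f_i=p_i\phi$ and define
\[
\psi\colon\ G\xrightarrow{\;\mu\;} G\vee G\xrightarrow{\;f_1\vee f_2\;} Y_1\vee Y_2.
\]
Then $p_i\iota\psi=f_i=p_i\phi$, so $\iota\psi$ and $\phi$ coincide in $[G,Y_1\times Y_2]$ (maps into a product are determined by their projections), and a fortiori $\iota_*[\psi]=[\phi]$ in $e[G,Y_1\times Y_2]$.

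The main technical point is the Eckmann--Hilton compatibility needed to make $e[G,X]$ an Abelian group and to see that $\Sigma$ is a homomorphism; everything else is either a consequence of the $\Sigma$-splitting of the product or the universal property of the product. Once these pieces are in place, combining injectivity of $\iota_*$ with the surjectivity just established yields the first isomorphism, and the product splitting gives the second.
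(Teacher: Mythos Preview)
Your proof is correct and follows essentially the same route as the paper: both use the James splitting of $\Sigma(Y_1\times Y_2)$ to obtain a left homotopy inverse for $\Sigma\iota$ (hence injectivity of $\iota_*$), both identify $e[G,X]$ as the image of the suspension homomorphism inside the Abelian group $[\Sigma G,\Sigma X]\cong[G,\Omega\Sigma X]$, and both establish the final isomorphism by showing that the composite $e[G,Y_1]\oplus e[G,Y_2]\to e[G,Y_1\vee Y_2]\to e[G,Y_1\times Y_2]$ is bijective via the projection/inclusion calculus. Your explicit construction of $\psi$ is exactly the content of the paper's terse remark that the composite of the first two arrows is onto.
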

\begin{proof}
Suppose $f,g\colon X\to Y_1\vee Y_2$ and the compositions:
\begin{align*}
\xymatrix{\Sigma X\ar@{->}[r]^-{\enlarge{\Sigma f}}&\Sigma(Y_1\vee
Y_2)\ar@{->}[r]^-{\enlarge{\Sigma
i}}&\Sigma(Y_1\times Y_2)}\\
\xymatrix{\Sigma X\ar@{->}[r]^-{\enlarge{\Sigma g}}&\Sigma(Y_1\vee
Y_2)\ar@{->}[r]^-{\enlarge{\Sigma
i}}&\Sigma(Y_1\times Y_2)}
\end{align*}
are homotopic. Since $\Sigma i$ has a left homotopy inverse,
$\Sigma f$ and $\Sigma g$ are homotopic. The co-$H$ space structure
on~$G$ defines a multiplication on $[G,X]$ and the
map
\[
[G,X]\to [G,\Omega\Sigma X]
\]
is multiplicative. However $[G,\Omega\Sigma X]$ is a Abelian
group by a standard argument. Since $e[G,X]$
is a subgroup of $[G,\Omega\Sigma X]\cong [\Sigma G,\Sigma X]$, it also is
Abelian. Finally observe that the composition
\[
e[G,Y_1]\oplus e[G,Y_2]\to e[G,Y_1\vee Y_2]\to e[G,Y_1\times Y_2]\to
e[G,Y_1]\oplus e[G,Y_2]
\]
is the identity where the first and last maps are defined
by naturality. Thus the composition of the first two is \textup{1--1}.
But this composition is also onto since any element of
$e[G,Y_1\times Y_2]$ is represented by a map $G\to Y_1\times Y_2$ so all
these maps are isomorphisms.
\end{proof}
\begin{proposition}\label{prop5.13}
Suppose $G$ and $H$ are co-$H$ spaces.
Then composition defines a homomorphism:
\[
e[G,H]\otimes e[H,X]\to e[G,X].
\]
\end{proposition}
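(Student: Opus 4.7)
The plan is to reduce the bilinearity assertion to the standard bilinearity of composition of suspended maps, exploiting the fact that the definition of congruence was rigged precisely so that suspension is injective on congruence classes. First I would invoke \ref{prop5.12} to obtain that, since $G$ and $H$ are co-$H$ spaces, each of $e[G,H]$, $e[H,X]$ and $e[G,X]$ is an abelian group, and observe that the map $\Sigma\colon e[G,H]\to[\Sigma G,\Sigma H]$, $[f]\mapsto[\Sigma f]$, is well defined and injective by \ref{def7.1}; the same holds for $e[H,X]$ and $e[G,X]$. This $\Sigma$ is additive: the co-$H$ sum $f_1+f_2=(f_1\vee f_2)\circ c_G$ has suspension $(\Sigma f_1\vee\Sigma f_2)\circ\Sigma c_G$, and on a suspension any co-$H$ structure induces the same abelian group law on pointed maps, so $\Sigma(f_1+f_2)=\Sigma f_1+\Sigma f_2$.

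Next I would verify bilinearity at the suspended level. Left distributivity is formal from the universal property of the wedge:
\[
g\circ(f_1+f_2)=g\circ(f_1\vee f_2)\circ c_{\Sigma G}=\bigl((gf_1)\vee(gf_2)\bigr)\circ c_{\Sigma G}=g\circ f_1+g\circ f_2.
\]
For right distributivity I would pass through the adjunction $[\Sigma G,\Sigma X]\cong[G,\Omega\Sigma X]$, under which the suspension addition matches the pointwise $H$-space multiplication on $\Omega\Sigma X$; since that operation is pointwise, left composition with any map is additive and therefore $(g_1+g_2)\circ f=g_1\circ f+g_2\circ f$.

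Combining these pieces, for $[g]\in e[H,X]$ and $[f_1],[f_2]\in e[G,H]$ the calculation
\[
\Sigma\bigl(g\circ(f_1+f_2)\bigr)=\Sigma g\circ(\Sigma f_1+\Sigma f_2)=\Sigma g\circ\Sigma f_1+\Sigma g\circ\Sigma f_2=\Sigma(g\circ f_1+g\circ f_2),
\]
together with the injectivity of $\Sigma$ on congruence classes, yields $g\circ(f_1+f_2)\equiv g\circ f_1+g\circ f_2$; the other variable is handled symmetrically. The only mildly delicate point, and the step I would be most careful about, is the identification of the co-$H$ addition after suspension with the canonical suspension addition, but this is precisely the standard observation that the pointed homotopy classes out of a suspension carry a unique abelian group structure.
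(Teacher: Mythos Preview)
Your argument is correct and rests on the same underlying fact as the paper's, namely that $\alpha\colon G\to H$ becomes a co-$H$ map after one suspension, so the comultiplication square for $\alpha$ commutes up to congruence. The paper's proof is shorter: it singles out the one nontrivial distributive law $(\beta_1+\beta_2)\alpha\equiv\beta_1\alpha+\beta_2\alpha$, observes that this reduces to showing $c_H\alpha\equiv(\alpha\vee\alpha)c_G$ in $e[G,H\vee H]$, and then invokes the injectivity statement of Proposition~\ref{prop5.12} (the map $e[G,H\vee H]\to e[G,H\times H]$ is injective) together with the obvious fact that both sides agree in $H\times H$. Your route through the suspension and the adjunction $[\Sigma G,\Sigma X]\cong[G,\Omega\Sigma X]$ reaches the same conclusion but with more bookkeeping; note that your ``right distributivity via adjunction'' step really does use that $f=\Sigma\alpha$ (so that precomposition by $f$ corresponds under the adjunction to precomposition by $\alpha$ on $[H,\Omega\Sigma X]$), which is exactly the co-$H$ property of $\Sigma\alpha$ in disguise.
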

\begin{proof}
The only issue is the distributive law
\[
(\beta_1+\beta_2)\alpha\equiv \beta_1\alpha+\beta_2\alpha
\] 
for $\alpha\colon G\to H$ and $\beta_1,\beta_2\colon H\to X$. To prove this
we show that
the following diagram commutes up to congruence:
\[
\xymatrix@R=40pt@C=27pt{
G\ar@{->}[r]^-{\alpha}\ar@{->}[d]\ar@{}[dr]|{\equiv}&H\ar@{->}[d]\\
G\vee G\ar@{->}[r]^-{\alpha\vee\alpha}&H\vee H.
}
\]
This certainly commutes after the inclusion of $H\vee H\to H\times H$.
Thus it commutes up to congruence by~\ref{prop5.12}.
\end{proof}

This will be useful when $G$ and~$H$ are Moore spaces.
\begin{corollary}\label{cor5.14}
The category of co-$H$ spaces and
congruence classes of contiuous maps is an additive
category.
\end{corollary}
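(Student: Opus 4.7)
The plan is to verify the axioms of an additive category by assembling the preceding propositions. The category in question has objects the (simply connected, pointed) co-$H$ spaces and morphisms the congruence classes of pointed maps; composition descends from ordinary composition by the remark just after the definition of $e[X,Y]$. The one-point space is a zero object, since any map to or from it is constant and forms a singleton congruence class.

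Every hom-set $e[G,H]$ is an abelian group by Proposition \ref{prop5.12}, since the source is a co-$H$ space. Bilinearity of composition then follows from Proposition \ref{prop5.13} in the first variable; the second variable is automatic. Indeed, if $c\colon G\to G\vee G$ is the comultiplication, then by definition $\alpha_1+\alpha_2 = \nabla\circ(\alpha_1\vee\alpha_2)\circ c$, and post-composing with any $\beta\colon H\to X$ gives $\beta(\alpha_1+\alpha_2) = \nabla\circ(\beta\alpha_1\vee \beta\alpha_2)\circ c = \beta\alpha_1+\beta\alpha_2$ strictly, hence a fortiori up to congruence.

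It remains to exhibit biproducts. For co-$H$ spaces $Y_1,Y_2$, the wedge $Y_1\vee Y_2$ carries a canonical co-$H$ structure (the wedge of the two comultiplications) and is the categorical coproduct via the standard inclusions $i_j$. I claim it is simultaneously the product, with projections $p_j\colon Y_1\vee Y_2\to Y_j$ the maps collapsing the other summand. The identities $p_ji_j\equiv 1_{Y_j}$ and $p_ki_j\equiv 0$ for $k\neq j$ hold strictly. For the remaining biproduct relation $i_1p_1+i_2p_2 \equiv 1_{Y_1\vee Y_2}$, both sides project to $p_k$ after post-composition with $p_k$; by Proposition \ref{prop5.12} applied with source the co-$H$ space $Y_1\vee Y_2$, the map $e[Y_1\vee Y_2,Y_1\vee Y_2]\to e[Y_1\vee Y_2,Y_1]\oplus e[Y_1\vee Y_2,Y_2]$ induced by $(p_1)_*$ and $(p_2)_*$ is an isomorphism, and the two sides therefore agree in $e[Y_1\vee Y_2, Y_1\vee Y_2]$. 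The one genuinely nontrivial point is precisely this last identification --- the remaining items are either immediate from the definitions or already packaged in Propositions \ref{prop5.12} and \ref{prop5.13}.
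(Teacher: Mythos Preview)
Your proof is correct and follows essentially the same route as the paper's. The paper's argument is a one-liner: ``$G\vee H$ is both a product and co-product by~\ref{prop5.12} and composition is bilinear by~\ref{prop5.13}''; you have unpacked precisely these two ingredients, including the verification of the biproduct identity $i_1p_1+i_2p_2\equiv 1$ via the isomorphism $e[Y_1\vee Y_2,\,Y_1\vee Y_2]\cong e[Y_1\vee Y_2,\,Y_1]\oplus e[Y_1\vee Y_2,\,Y_2]$ from Proposition~\ref{prop5.12}.
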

\begin{proof}
$G\vee H$ is both a product and co-product by~\ref{prop5.12}
and composition is bilinear by~\ref{prop5.13}.
\end{proof}

The following result will
be needed in \ref{prop5.19} and in section~\ref{subsec6.3}.
\begin{Theorem}\label{theor5.15}
Suppose $\varphi\colon \Sigma^2X\to P^{2m}(p^{r+t})$ has order $p^r$.
Then there are maps $\varphi_1\colon \Sigma^2X\to P^{2m}(p^r)$ and
$\varphi_2\colon \Sigma^2X\to S^{2m-1}$
such that
\[
\varphi\equiv\rho^t\varphi_1+\iota_{2m-1}\varphi_2.
\]
\end{Theorem}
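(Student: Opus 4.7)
The plan is to lift $\varphi$ through the cofibration sequence~(\ref{eq1.9}) using the order hypothesis, then split the lift via Proposition~\ref{prop5.12}, and finally unpack $\delta_t$. Specializing~(\ref{eq1.9}) with $s=r$ and dimension index $2m$ gives Puppe exactness: the kernel of $(p^r)_*$ on $[\Sigma^2 X,P^{2m}(p^{r+t})]$ equals the image of $(-\delta_t\vee\rho^t)_*$ on $[\Sigma^2 X,P^{2m-1}(p^r)\vee P^{2m}(p^r)]$. Since $p^r\varphi$ is null homotopic, $[\varphi]$ lies in this kernel, so I pick a lift $\psi\colon \Sigma^2 X\to P^{2m-1}(p^r)\vee P^{2m}(p^r)$ with $(-\delta_t\vee\rho^t)\psi\simeq \varphi$.

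Next I would invoke Proposition~\ref{prop5.12}: since $\Sigma^2 X$ is a double suspension and hence a co-$H$ space, that proposition yields the direct sum decomposition
\[
e[\Sigma^2 X,P^{2m-1}(p^r)\vee P^{2m}(p^r)]\cong e[\Sigma^2 X,P^{2m-1}(p^r)]\oplus e[\Sigma^2 X,P^{2m}(p^r)],
\]
so $\psi\equiv i_1\psi_1+i_2\psi_2$ where $i_1,i_2$ are the wedge inclusions and $\psi_1,\psi_2$ are the two components. Congruence is preserved under arbitrary post-composition (immediate from the definition, since suspension is functorial), and post-composition distributes over the co-$H$ sum on $\Sigma^2 X$, so $\varphi\equiv -\delta_t\psi_1+\rho^t\psi_2$.

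Finally, the identity $\delta_t=\beta\rho^t=\iota_{2m-1}\pi_{2m-1}\rho^t$ recorded in section~\ref{subsec1.5} lets me factor the first summand through $S^{2m-1}$. Setting $\varphi_2:=-\pi_{2m-1}\rho^t\psi_1\colon \Sigma^2 X\to S^{2m-1}$ and $\varphi_1:=\psi_2\colon \Sigma^2 X\to P^{2m}(p^r)$ gives $-\delta_t\psi_1=\iota_{2m-1}\varphi_2$ and $\rho^t\psi_2=\rho^t\varphi_1$, whence $\varphi\equiv \rho^t\varphi_1+\iota_{2m-1}\varphi_2$ as required.

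The main obstacle, and the reason the conclusion is stated with $\equiv$ rather than $\simeq$, is the splitting in the second paragraph: on the nose, a map $\Sigma^2 X\to A\vee B$ carries Hilton--Milnor correction terms valued in iterated Whitehead products of $A$ and $B$, which one cannot kill in general. Passing to the congruence category annihilates precisely these corrections after a single suspension, which is exactly what Proposition~\ref{prop5.12} packages; without it the clean summand decomposition of $\psi$ is simply false, so the entire argument lives essentially because one is working up to congruence.
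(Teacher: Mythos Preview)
Your proof is correct and substantially more elementary than the paper's. The paper takes a roundabout route: it invokes the Cohen--Moore--Neisendorfer fibration
\[
\Omega P^{2m}(p^{r+t})\xrightarrow{\partial} S^{2m-1}\{p^{r+t}\}\to W\xrightarrow{\pi} P^{2m}(p^{r+t})
\]
with $W$ a wedge of Moore spaces and $\pi$ an iterated Whitehead product (hence null congruent), splits $\Omega P^{2m}(p^{r+t})$ using a section $\Phi$ of~$\partial$, and then analyzes the $p^r$--torsion in $[\Sigma X,S^{2m-1}\{p^{r+t}\}]$ via a further diagram of fibrations over~$S^{2m-1}$ to produce the $\rho^t$ and $\iota_{2m-1}$ factorizations. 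Your argument replaces all of this machinery with one application of the cofibration sequence~(\ref{eq1.9}) (Puppe exactness giving the lift~$\psi$) and one of Proposition~\ref{prop5.12} (killing the Hilton--Milnor cross terms up to congruence). The paper's approach yields a slightly finer intermediate factorization through $S^{2m-1}\{p^r\}\times\Omega S^{2m-1}\{p^r\}$, but for the statement as written your route is both shorter and uses only tools already established in the paper.
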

\begin{proof}
According to \cite[11.1]{CMN79a} or \cite[1.2]{Gra99}, there
is a fibration sequence
\[
\xymatrix{
\Omega P^{2m}(p^{r+t})\ar@{->}[r]^-{\enlarge{\partial}}&
S^{2m-1}\{p^{r+t}\}\ar@{->}[r]&
W\ar@{->}[r]^-{\enlarge{\pi}}&
P^{2m}(p^{r+t})
}
\]
where $W$ is a $4m-3$ connected wedge of Moore spaces and
$\pi$ is an iterated Whitehead product on each factor.
In particular, $\pi$ is null congruent. A right homotopy
inverse for $\partial$ is constructed as follows. Given any
map $\theta\colon U\to V$, there is a natural map from
the fiber of~$\theta$ to the loop space on the cofiber:
\[
\Phi\colon F_{\theta}\to \Omega(V\cup_{\theta}CU).
\]
This defines a map $\Phi\colon S^{2m-1}\{p^{r+t}\}\to \Omega
P^{2m}(p^{r+t})$ and $\partial \Phi$ is a homology equivalence since
$S^{2m-1}\{p^{r+t}\}$ is atomic. This defines a splitting
of $\Omega P^{2m}(p^{r+t})$ and we have a direct sum decomposition
\[
\xymatrix{
[\Sigma^2 X,W]\oplus [\Sigma X,S^{2m-1}\{p^{r+t}\}]\ar@{->}[r]&
[\Sigma^2X,P^{2m}(p^{r+t})]\\
(\alpha,\beta)\ar@{<->}[r]&\varphi=\pi\alpha+\widetilde{\Phi\beta}
}
\]
where $\widetilde{\Phi\beta}$ is the adjoint of $\Phi\beta\colon \Sigma
X\to \Omega P^{2m}(p^{r+t})$.
Since $\varphi$ has order $p^r$, both $\alpha$ and $\beta$ have
order~$p^r$.
Since $\pi$ is null congruent, we have
\[
\varphi\equiv \widetilde{\Phi\beta}.
\]

Now consider the diagram of fibration sequences:
\[
\xymatrix{
F\ar@{->}[r]^-{f}\ar@{->}[d]&S^{2m-1}\{p^{r+t}\}\ar@{->}[r]^-{p^r}\ar@{->}[d]&S^{2m-1}\{p^{r+t}\}\ar@{->}[d]\\
S^{2m-1}\{p^r\}\ar@{->}[r]\ar@{->}[d]_{*\sim
p^{r+t}}&S^{2m-1}\ar@{->}[r]^-{p^r}\ar@{->}[d]_{p^{r+t}}&S^{2m-1}\ar@{->}[d]_{p^{r+t}}\\
S^{2m-1}\{p^r\}\ar@{->}[r]&S^{2m-1}\ar@{->}[r]^-{p^r}&S^{2m-1}.
}
\]
From this we see that $F\simeq S^{2m-1}\{p^r\}\times \Omega
S^{2m-1}\{p^r\}$. We choose a splitting of $F$ as follows: define a map
$\rho^t$
by the diagram of vertical fibration sequences:
\[
\xymatrix{
S^{2m-1}\{p^r\}\ar@{->}[r]^-{\rho^t}\ar@{->}[d]&
S^{2m-1}\{p^{r+t}\}\ar@{->}[d]\\
S^{2m-1}\ar@{=}[r]\ar@{->}[d]_{p^r}&
S^{2m-1}\ar@{->}[d]_{p^{r+t}}\\
S^{2m-1}\ar@{->}[r]^-{p^t}&
S^{2m-1}.
}
\]
The map $\rho^t$ factors through $f$ and defines a splitting. Thus
the composition
\[
\xymatrix{
S^{2m-1}\{p^r\}\times \Omega S^{2m-1}\{p^r\}\ar@{->}[r]^-<<<{\simeq}&
F\ar@{->}[r]^-{f}&S^{2m-1}\{p^{r+t}\}
}
\]
is homotopic to the map
\begin{multline*}
S^{2m-1}\{p^r\}\times \Omega
S^{2m-1}\{p^r\}\\
\xymatrix{\ar@{->}[r]^-{\enlarge{\rho^t\times\delta_t}}&
S^{2m-1}\{p^{r+t}\}\times
S^{2m-1}\{p^{r+t}\}\ar@{->}[r]^-{\enlarge{\mu}}&S^{2m-1}\{p^{r+t}\}}
\end{multline*}
where $\delta_t$ is the composition
\[
\xymatrix{
\Omega S^{2m-1}\{p^r\}\ar@{->}[r]^-{\enlarge{\Omega\pi}}&
\Omega S^{2m-1}\ar@{->}[r]^-{\enlarge{\iota}}&
S^{2m-1}\{p^{r+t}\}.
}
\]
Since $\beta$ has order $p^r$, it factors through $F$ and we
conclude that $\beta$ is homotopic to a composition:
\begin{multline*}
\xymatrix@C=37pt{
\Sigma X
\ar@{->}[r]&
S^{2m-1}\{p^r\}\times \Omega S^{2m-1}\{p^r\}
}\\
\xymatrix@C=37pt{
\ar@{->}[r]^-{\enlarge{\rho^t\times \delta_t}}&
S^{2m-1}\{p^{r+t}\}\times S^{2m-1}\{p^{r+t}\}
\ar@{->}[r]&S^{2m-1}\{p^{r+t}\}.
}
\end{multline*}
This map is homotopic to the sum of the two compositions
\begin{align*}
&\xymatrix{
\Sigma X\ar@{->}[r]&
S^{2m-1}\{p^r\}\ar@{->}[r]^-{\enlarge{\rho^t}}&
S^{2m-1}\{p^{r+t}\}
}\\
&\xymatrix{
\Sigma X\ar@{->}[r]&
\Omega S^{2m-1}\{p^r\}\ar@{->}[r]&
\Omega S^{2m-1}\ar@{->}[r]&
S^{2m-1}\{p^{r+t}\}.
}
\end{align*}
By the naturality of $\Phi$, $\Phi\beta$ is homotopic to the sum
of the maps
\begin{align*}
&\xymatrix@C=35pt{\Sigma X\ar@{->}[r]^-{\enlarge{\widetilde{\varphi}_1}}& \Omega
P^{2m}(p^r)\ar@{->}[r]^-{\enlarge{\Omega\rho^t}}&\Omega P^{2m}(p^{r+t})}\\
&\xymatrix@C=35pt{\Sigma X\ar@{->}[r]^-{\enlarge{\widetilde{\varphi}_2}}& 
\Omega S^{2m-1}\ar@{->}[r]^-{\enlarge{\Omega\iota_{2m-1}}}&\Omega
P^{2m}(p^{r+t}).}
\end{align*}
Thus $\varphi\equiv \widetilde{\phi\beta}$ which is homotopic
to $\rho^t\varphi_1+\iota_{2m-1}\varphi_2$.
\end{proof}

At this point we will examine relative Whitehead products in congruence
homotopy. This will be applied to determining
the congruence homotopy classes of the obstruction in Theorem~\ref{theor5.9}.

We define a $Z/p^r$ module $M_*(B)$ by
\[
M_m=\pi_{m+1}(B;Z/p^r)\approx \pi_m(\Omega B;Z/p^r).
\]
\setcounter{equation}{9}
\begin{proposition}\label{prop5.16}
The relative Whitehead product
induces a homomorphism:
\[
M_m\index{$M_m$|LB}\otimes e\pi_k(E;Z/p^r)\to e\pi_{m+k}(E;Z/p^r)
\]
which commutes with the Hurewicz homomorphism; i.e.,
the diagram
\[
\xymatrix{
M_m\otimes
e\pi_k(E;Z/p^r)\ar@{->}[r]\ar@{->}[d]&e\pi_{m+k}(E;Z/p^r)\ar@{->}[d]\\
H_m(\Omega B;Z/p^r)\otimes H_k(E;Z/p^r)\ar@{->}[r]^-<<<<{a_*}&H_{m+k}(E;Z/p^r)
}
\]
commutes where $a_*$ is induced by the principal action.
\end{proposition}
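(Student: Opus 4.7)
The plan is to define the pairing by the internal relative Whitehead product $(\alpha,\delta)\mapsto [\alpha,\delta]_r$ constructed as in~(\ref{eq3.24}). Setting $A=P^m(p^r)$ and $H=P^k(p^r)$, three points require verification: well-definedness on the congruence class of~$\delta$, bilinearity, and commutativity with the Hurewicz homomorphism. The central ingredient for all three is a simplified homotopy representative of the external relative Whitehead product. Using the James-type wedge decomposition $\Omega B\ltimes E\simeq E\vee(\Omega B\wedge E)$ together with the factorization~(\ref{eq3.8}) of $\Gamma'$ through the action~$a$, and the fact from Proposition~\ref{prop3.16} that $\theta\colon A\wedge H\to A\ltimes H$ factors up to homotopy through the $A\wedge H$ summand, the external product admits the representative
\[
\{\alpha,\delta\}_r\simeq \bar a\circ (\widetilde{\alpha}\wedge\delta)\colon A\wedge H\longrightarrow \Omega B\wedge E\longrightarrow E,
\]
where $\widetilde{\alpha}\colon A\to\Omega B$ is the adjoint of~$\alpha$ and $\bar a$ is the reduced action induced by $a\circ(\Omega\varphi\times 1)$.

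Well-definedness on the congruence class of~$\delta$ is then immediate: under the natural identification $\Sigma(X\wedge Y)\simeq X\wedge\Sigma Y$ we have $\Sigma(\widetilde{\alpha}\wedge\delta)\simeq \widetilde{\alpha}\wedge\Sigma\delta$, so the congruence class of $\widetilde{\alpha}\wedge\delta$ depends only on the congruence class of~$\delta$; pre- and post-composition preserve congruence by functoriality of~$\Sigma$, so $[\alpha,\delta]_r\equiv[\alpha,\delta']_r$ whenever $\delta\equiv\delta'$. Bilinearity in~$\delta$ is similar: the sum $\delta+\delta'$ factors through the pinch $P^k(p^r)\to P^k(p^r)\vee P^k(p^r)$, and smashing with~$\widetilde{\alpha}$ converts this into the co-$H$ pinch on~$A\wedge H$, whose post-composition with~$\bar a$ produces $[\alpha,\delta]_r+[\alpha,\delta']_r$. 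Additivity in~$\alpha$ is the dual assertion: the adjoint operation turns co-$H$ sums in $[\Sigma A,B]$ into $H$-space sums in $[A,\Omega B]$, and the principal-action axiom makes~$\bar a$ distribute over the $\Omega B$ multiplication.

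For the Hurewicz compatibility, I apply Corollary~\ref{cor3.18}: with coefficients $R=Z/p^r$, the homology of $\{\alpha,\delta\}_r$ factors as
\[
H_*(A\wedge H;Z/p^r)\subset H_*(A\times H;Z/p^r)\xrightarrow{(\widetilde{\varphi\alpha}\times\delta)_*} H_*(\Omega X\times E;Z/p^r)\xrightarrow{a_*} H_*(E;Z/p^r).
\]
The image under $(\Sigma\Delta)_*$ of the fundamental class of $H_{m+k}(P^{m+k}(p^r);Z/p^r)$ is the cross product of fundamental classes by~(\ref{eq3.23}); factoring $\widetilde{\varphi\alpha}=\Omega\varphi\circ\widetilde{\alpha}$, the Hurewicz image of $[\alpha,\delta]_r$ becomes the image of $h(\widetilde{\alpha})\otimes h(\delta)$ under the composite $\Omega B\times E\to\Omega X\times E\xrightarrow{a} E$, which is precisely the bottom arrow of the diagram.

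The main obstacle is establishing the reduction in the first paragraph, namely the homotopy $\{\alpha,\delta\}_r\simeq \bar a\circ(\widetilde{\alpha}\wedge\delta)$, with careful bookkeeping of the wedge summands of $\Omega B\ltimes E$ and the factorization of~$\theta$ through the $A\wedge H$ summand. Once this representation is secured, congruence invariance, bilinearity, and the Hurewicz diagram all reduce to formal manipulations using the functoriality of suspension and the distributivity of smash over co-$H$ pinches.
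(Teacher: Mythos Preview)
Your central reduction is where the argument breaks. The claimed splitting $\Omega B\ltimes E\simeq E\vee(\Omega B\wedge E)$ does not hold here: such a decomposition of a half-smash requires the second factor to be a co-$H$ space, and $E$ (the total space of a principal fibration) carries no such structure. Correspondingly, there is no ``reduced action'' $\bar a\colon \Omega B\wedge E\to E$. The composite $a\circ(\Omega\varphi\times 1)\colon \Omega B\times E\to E$ restricts to the \emph{identity} on $\{*\}\times E$, not to the constant map, so it descends to the half-smash $\Omega B\ltimes E$ (this is exactly diagram~(\ref{eq3.8})) but not to the smash product. Since the representative $\{\alpha,\delta\}_r\simeq \bar a\circ(\widetilde\alpha\wedge\delta)$ is unavailable, the well-definedness and bilinearity arguments built on it do not go through as written.

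The repair is to stay with the half-smash throughout, which is what the paper does. From Proposition~\ref{prop3.16} the product $[\alpha,\delta]_r$ factors through $\widetilde\alpha\ltimes\delta\colon P^m\ltimes P^k\to \Omega B\ltimes E$ followed by~$\Gamma'$. Now use the natural homeomorphism $\Sigma(X\ltimes Y)\cong X\ltimes\Sigma Y$: after one suspension the relevant map becomes $\widetilde\alpha\ltimes\Sigma\delta\colon P^m\ltimes\Sigma P^k\to \Omega B\ltimes\Sigma E$, and if $\Sigma\delta\sim*$ this factors through $\Omega B\ltimes *=*$, hence is null. This gives well-definedness on congruence classes without ever needing a map out of $\Omega B\wedge E$. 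Your Hurewicz argument via Corollary~\ref{cor3.18} is correct and is exactly what the paper uses.
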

\begin{proof}
To see that the pairing is well defined, it suffices to show that if $\Sigma\delta\sim *$, then
$\Sigma[\alpha,\delta]_r\sim*$.
Recall that $[\alpha,\delta]_r$ is given in \ref{prop3.16} by the composition:
\begin{multline*}
\xymatrix@C=33pt{
P^{m+k}\ar@{->}[r]^-{\enlarge{\Delta}}&
P^m\wedge P^k\ar@{->}[r]^-{\enlarge{\theta}}&
P^{m}\ltimes
P^k\ar@{->}[r]^-<<<<{\enlarge{\widetilde{\alpha}\ltimes \delta}}&
\Omega B\ltimes E\ar@{->}[r]^-{\enlarge{\Gamma'}}&
E.
}
\end{multline*}
However, since there is a natural homeomorphism
\[
\Sigma(X\ltimes Y)\cong X\ltimes \Sigma Y,
\]
$\Sigma[\alpha,\delta]_r$ factors through the map
\[
\xymatrix@C=35pt{P^m\ltimes P^{k+1}\ar@{->}[r]^-{\enlarge{\widetilde{\alpha}\ltimes
\Sigma\delta}}&\Omega B\ltimes \Sigma E.}
\]
This map is null homotopic since it factors through
$\Omega B\ltimes *$ up to homotopy.
Clearly the Hurewicz map factors through
congruence homotopy and the homology
calculation follows from \ref{cor3.18}.
\end{proof}
\begin{definition}\label{def5.17}
$A_*(B)$\index{$A_*(\ )$|LB} is the graded symmetric
algebra generated by~$M_*(B)$\index{$M_*(\ )$|LB}.
\end{definition}
\begin{Theorem}\label{theor5.18}
The relative Whitehead product
induces the structure of a graded
differential $A_*(B)$ module on $e\pi_*(E;Z/p^r)$.
\end{Theorem}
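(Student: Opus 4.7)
The plan is to extend the relative Whitehead pairing of~\ref{prop5.16} to an action of $A_*(B)$ on $e\pi_*(E;Z/p^r)$ and then introduce the Bockstein as a compatible differential. First, iterate the pairing to obtain a $T(M_*(B))$-action
\[
(\alpha_1\otimes\cdots\otimes\alpha_\ell)\cdot\gamma := [\alpha_1,[\alpha_2,\ldots,[\alpha_\ell,\gamma]_r\cdots]_r]_r.
\]
Since $A_*(B)$ is the quotient of $T(M_*(B))$ by the graded commutator relations, descent requires the symmetry
\[
[\alpha,[\beta,\gamma]_r]_r \equiv (-1)^{(m+1)(n+1)}[\beta,[\alpha,\gamma]_r]_r
\]
for $\alpha\in M_m$, $\beta\in M_n$, $\gamma\in e\pi_k(E;Z/p^r)$. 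By the Jacobi identity~\ref{prop3.31}(c$'$), the difference of the two sides equals $[[\alpha,\beta]_W,\gamma]_r$, where $[\alpha,\beta]_W$ is the internal Whitehead product in $\pi_*(B;Z/p^r)$; so descent reduces to showing this class is null-congruent in $e\pi_*(E;Z/p^r)$.

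The heart of the argument is this last vanishing. The key input is the standard fact that a Whitehead product suspends to zero: $\Sigma[\alpha,\beta]_W\sim *$. Writing $[\alpha,\beta]_W = \{\alpha,\beta\}\circ\Sigma\Delta$ via the diagonal of~(\ref{eq3.24}) and applying~\ref{prop3.11}(a), we express $\{[\alpha,\beta]_W,\gamma\}_r$ as $\{\{\alpha,\beta\},\gamma\}_r\circ(\Sigma\Delta\circ 1)$. Then factor the external Whitehead product $\{\alpha,\beta\}$ through the universal Whitehead product $w=\nabla\omega\colon\Sigma(\Omega B\wedge\Omega B)\to B$ of~\ref{prop3.5}, convert the outer relative Whitehead product to an $H$-space based one using~\ref{prop3.12}, and factor this through $\Gamma\colon\Omega B*\Omega B\to E$ as in~\ref{prop2.7}. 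One further suspension then routes the entire composition through $\Sigma w$, which is null by the suspension-triviality of Whitehead products, yielding $\Sigma[[\alpha,\beta]_W,\gamma]_r \sim *$ and hence the required congruence.

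For the differential, extend the Bockstein $\beta^{(r)}$ (precomposition with $\beta\colon P^k(p^r)\to P^{k+1}(p^r)$) from $M_*(B)$ to $A_*(B)$ as a graded derivation via the Leibniz rule; this descends to the symmetric algebra because the Leibniz rule respects graded commutativity. By~\ref{prop3.31}(d) and the analogous derivation formula for the relative Whitehead product (provable by the same argument), the Bockstein satisfies
\[
\beta^{(r)}[\alpha,\gamma]_r = [\beta^{(r)}\alpha,\gamma]_r + (-1)^{m+1}[\alpha,\beta^{(r)}\gamma]_r,
\]
so it is a derivation of the module action; combined with $(\beta^{(r)})^2=0$ (which follows from $\beta\circ\beta\sim *$ on Moore spaces) this completes the graded differential $A_*(B)$-module structure. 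The main obstacle is the vanishing $[[\alpha,\beta]_W,\gamma]_r\equiv 0$: although Jacobi reduces descent to this single identity, carrying it out rigorously requires careful bookkeeping of how the null-suspension of a Whitehead product propagates through the universal $\Gamma$-factorization into null-congruence in $e\pi_*(E;Z/p^r)$.
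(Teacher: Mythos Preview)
Your overall architecture is right and matches the paper: extend the pairing of \ref{prop5.16} to the tensor algebra, then establish the symmetry
\[
[\alpha,[\beta,\gamma]_r]_r \equiv (-1)^{mn}[\beta,[\alpha,\gamma]_r]_r
\]
via a Jacobi identity so that the action descends to the symmetric algebra $A_*(B)$. (A minor caution on signs: with $\alpha\in M_m=\pi_{m+1}(B;Z/p^r)$, the exponent coming out of \ref{prop3.31} is $(-1)^{mn}$, not $(-1)^{(m+1)(n+1)}$.) Your treatment of the differential via \ref{prop3.31}(d) is fine and in fact more explicit than the paper, which leaves that part tacit.

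The gap is in your vanishing argument for the cross term $[[\alpha,\beta],\gamma]_r$. You factor $\{\alpha,\beta\}$ through $w=\nabla\omega\colon\Omega B*\Omega B\to B$, convert the outer product to an $H$-space based one via \ref{prop3.12}, factor through $\Gamma\colon\Omega B*\Omega B\to E$, and then assert that ``one further suspension routes the entire composition through $\Sigma w$''. But $w$ lands in $B$ while $\Gamma$ lands in $E$, and there is no map $\Sigma E\to\Sigma B$ along which to push a null homotopy of $\Sigma w$; suspending a composite that ends in $\Gamma$ does not in any evident way factor through $\Sigma w$. What you really need is that the first argument $[\alpha,\beta]\in\pi_*(B;Z/p^r)$ is \emph{null--congruent in the first slot of $[\,\cdot\,,\,\cdot\,]_r$}, and \ref{prop5.16} only gives well-definedness on congruence classes in the second slot.

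The paper's fix is a one-line trick you are missing: lift the Whitehead product from $B$ to $E$. By \ref{prop3.30}(a), $[\alpha,\beta]=\pi_*[\alpha,\beta]_{\times}$, so by \ref{prop3.30}(b) (or equivalently \ref{prop3.30}(e) if you stay on the relative side),
\[
[[\alpha,\beta],\pi_*\gamma]_{\times}=[\pi_*[\alpha,\beta]_{\times},\pi_*\gamma]_{\times}=[[\alpha,\beta]_{\times},\gamma],
\]
which is an ordinary internal Whitehead product of two classes \emph{in $E$} and hence null--congruent because Whitehead products suspend trivially. The paper actually runs the whole computation through the $H$-space based side first (using \ref{prop3.30}(c),(d) to rewrite $[\alpha,[\beta,\gamma]_r]_r=[\alpha,[\beta,\pi_*\gamma]]_{\times}$ and then applying \ref{prop3.31}(c)), but either route reduces to this identification. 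Replace your $\Sigma w$ paragraph with this lift and the proof goes through.
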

\begin{proof}
By \ref{prop5.16}, there is an action of~$M_*(A)$ on
$e\pi_*(E;Z/p^r)$ and hence an action of the tensor
algebra. It suffices to show that
\[
[\alpha,[\beta,\gamma]_r]_r\equiv (-1)^{mn}[\beta,[\alpha,\gamma]_r]_r
\]
where $\alpha\in M_m$ and $\beta\in M_n$. However we have
\begin{align*}
[\alpha,[\beta,\gamma]_r]_r&=[\alpha,[\beta,\pi_*(\gamma)]]_{\times}\\
&=[[\alpha,\beta],\pi_*(\gamma)]_{\times}+(-1)^{mn}[\beta,[\alpha,\pi_*(\gamma)]]_{\times}
\end{align*}
by \ref{prop3.30}(c), (d) and \ref{prop3.31}(c). But
$[\alpha,\beta]=\pi_*[\alpha,\beta]_{\times}$ by \ref{prop3.30}(a),
so we have
\begin{align*}
[[\alpha,\beta],\pi_*(\gamma)]_{\times}&=[\pi_*([\alpha,\beta]_{\times}),\pi_*(\gamma)]_{\times}\\
&=[[\alpha,\beta]_{\times},\gamma]
\end{align*}
by \ref{prop3.30}(b). Since $[[\alpha,\beta]_{\times},\gamma]$ is a Whitehead
product of
classes in $\pi_*(E;Z/p^r)$, $[[\alpha,\beta]_{\times},\gamma]\equiv 0$.
Thus
\begin{align*}
\makebox[84pt]{}
[\alpha,[\beta,\gamma]_r]_r&\equiv(-1)^{mn}[\beta,[\alpha,\pi_*(\gamma)]]_{\times}\\
&=(-1)^{mn}[\beta,[\alpha,\gamma]_r]_r.
\makebox[84pt]{}
\rlap{\qed}
\end{align*}\noqed
\end{proof}

Since the action of $A_*(B)$ is associative, we will
also use the notation
\[
\alpha\cdot \gamma=[\alpha,\gamma]_r
\]
to simplify the notation and distinguish this
action from composition.
\begin{proposition}\label{prop5.19}
Suppose $\xi\colon H'\to H$, $\delta\colon H\to E$ and $\alpha\colon \Sigma
P\to B$. Then

\textup{(a)}\hspace*{0.5em}$\{\alpha,\delta\xi\}_r\equiv
\{\alpha,\delta\}_r(1\wedge\xi)\colon A\wedge H'\to E$.

\textup{(b)}\hspace*{0.5em}Suppose $P=P^m$, $H=P^k$ and $H'=P^{\ell}$. Then
there is
a splitting $P\wedge H\simeq P^{m+k}\vee P^{m+k-1}$ such that
\[
[\alpha,\delta\xi]_r\equiv[\alpha,\delta]_r(\Sigma^m\xi)\vee[\beta(\alpha),\delta]_r(\xi')
\]
where $\xi'\colon P^{m+\ell}\to P^{m+k-1}$.
\end{proposition}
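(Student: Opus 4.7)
The plan is to deduce both parts from Proposition~\ref{prop3.16} together with Neisendorfer's splitting in Proposition~\ref{prop3.25} and the Bockstein formula in Proposition~\ref{prop3.31}.

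For part~(a), represent $\{\alpha,\delta\}_r$ using Proposition~\ref{prop3.16} as the composition $P\wedge H\xrightarrow{\theta}P\ltimes H\xrightarrow{\widetilde{\alpha}\ltimes\delta}\Omega B\ltimes E\xrightarrow{\Gamma'}E$, where $\theta$ is a section of the pinch $P\ltimes H\to P\wedge H$ depending on a choice of co-$H$ structure on $H$. By functoriality of the half-smash in the second variable, $\widetilde{\alpha}\ltimes(\delta\xi)=(\widetilde{\alpha}\ltimes\delta)\circ(1\ltimes\xi)$, so the comparison reduces to showing that the two sections $(1\ltimes\xi)\theta'$ and $\theta(1\wedge\xi)$ of the projection $P\ltimes H\to P\wedge H$ (covering $1\wedge\xi$) agree up to congruence. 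They need not be homotopic, since $\xi$ is not assumed to be a co-$H$ map, but after one suspension, using the natural identification $\Sigma(X\ltimes Y)\cong X\ltimes\Sigma Y$ employed in the proof of Proposition~\ref{prop2.6} and the fact that $\Sigma\xi$ is automatically a co-$H$ map, one can choose compatible sections and obtain an actual homotopy. This yields the desired congruence.

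For part~(b), apply part~(a) to get
\[
[\alpha,\delta\xi]_r=\{\alpha,\delta\xi\}_r\Delta\equiv\{\alpha,\delta\}_r(1\wedge\xi)\Delta,
\]
where $\Delta\colon P^{m+\ell}\to P^m\wedge P^\ell$ is the standard pinch from Proposition~\ref{prop3.25}. Decompose the composite $(1\wedge\xi)\Delta\colon P^{m+\ell}\to P^m\wedge P^k$ along the splitting $P^m\wedge P^k\simeq P^{m+k}\vee P^{m+k-1}$ of Proposition~\ref{prop3.25}. The projection to the summand $P^{m+k}$ must be $\Sigma^m\xi$ since this is the unique summand compatible with the pinch to the top cell $S^m\wedge S^k$. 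The projection to the summand $P^{m+k-1}$ defines the map $\xi'$. Applying $\{\alpha,\delta\}_r$ to the first summand produces $[\alpha,\delta]_r$ by the definition~(\ref{eq3.24}) of the internal Whitehead product. For the second summand, the splitting factors through $1\wedge\delta_t$ (i.e., via a Bockstein acting on the second factor), so the restriction is, a priori, $[\alpha,\beta(\delta)]_r\xi'$; invoking Neisendorfer's formula \ref{prop3.31}(d) to transfer the Bockstein across the Whitehead product and discarding terms that are Whitehead products of classes in $E$ (which are null-congruent after being killed by any $H$-space map, cf.\ Proposition~\ref{prop5.11}), we identify the second summand with $[\beta(\alpha),\delta]_r\xi'$.

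The main obstacle is part~(a): although Corollary~\ref{cor3.18} immediately yields equality in homology, promoting this to a congruence requires carefully arranging that the section $\theta$ of Proposition~\ref{prop3.16}, which is non-canonical at the unsuspended level, becomes natural after a single suspension. The secondary difficulty is the Bockstein transfer in part~(b), where the external splitting produces a Bockstein on $\delta$ whereas the target formula features a Bockstein on $\alpha$; this swap costs a Whitehead product of classes in~$E$, which is precisely the sort of term that vanishes in congruence homotopy.
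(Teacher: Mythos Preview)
Your argument for part~(a) is in the right spirit but the suspension step is not made precise. The paper does not pass to suspensions at all: it uses the homotopy equivalence $P\ltimes H\simeq (P\wedge H)\vee H$ and checks that both $(1\ltimes\xi)\theta'$ and $\theta(1\wedge\xi)$ project to $1\wedge\xi$ on the first summand and trivially on the second (the latter because $\theta$ projects trivially to $H$, as noted in the proof of Proposition~\ref{prop3.16}). Thus the two maps agree after composition into the product $(P\wedge H)\times H$, and Proposition~\ref{prop5.12} gives the congruence directly. Your attempt to ``choose compatible sections after suspension'' founders on the fact that $\Sigma\theta$ is the section determined by the \emph{suspended} co-$H$ structure on $H$, not the suspension co-$H$ structure on $\Sigma H$; these need not agree, so the naturality of $\Sigma\xi$ as a co-$H$ map does not immediately apply.

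The genuine gap is in part~(b). You invoke the splitting of Proposition~\ref{prop3.25}, whose second summand is realized via $1\wedge\beta$ (Bockstein on the \emph{second} factor), and this indeed yields $[\alpha,\beta(\delta)]_r$ on that summand. Your proposed transfer to $[\beta(\alpha),\delta]_r$ via \ref{prop3.31}(d) does not work: the Bockstein formula gives
\[
[\alpha,\beta(\delta)]_r=\pm\bigl(\beta^{(r)}[\alpha,\delta]_r-[\beta(\alpha),\delta]_r\bigr),
\]
and the extra term $\beta^{(r)}[\alpha,\delta]_r$ is \emph{not} a Whitehead product of two classes in $E$---it is merely the composition of $[\alpha,\delta]_r$ with the coefficient map $\beta$, and there is no reason it is null-congruent. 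The paper sidesteps this entirely by choosing a \emph{different} splitting of $P^m\wedge P^k$, namely via $\Delta$ and $(\beta\wedge 1)\Delta$ (Bockstein on the \emph{first} factor; the paper notes ``compare to~\ref{prop3.25}'' precisely because it is the mirror-image choice). Since $\theta$ in Proposition~\ref{prop3.16} is natural in the first variable for \emph{arbitrary} maps, one gets $\{\alpha,\delta\}_r(\beta\wedge 1)\sim\{\beta(\alpha),\delta\}_r$ on the nose, and the second summand is $[\beta(\alpha),\delta]_r$ with no transfer needed.
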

\begin{remark*}
These relative Whitehead products are
not homotopic in general unless $\xi$ is a co-$H$ map.
\end{remark*}
\begin{proof}
Using \ref{prop3.16}, we construct the following diagram:
\[
\xymatrix@C=41pt{
P\wedge H\ar@{->}[r]^{\theta}&P\ltimes
H\ar@{->}[r]^{\widetilde{\alpha}\ltimes\delta}&\Omega B\ltimes E\ar@{->}[dr]^{\Gamma'}\ar@{=}[dd]&\\
&&&E\\
P\wedge H'\ar@{->}[uu]^{1\wedge\xi}\ar@{->}[r]^{\theta}
&P\ltimes H'\ar@{->}[r]^{\widetilde{\alpha}\ltimes \delta\xi}
\ar@{->}[uu]^{1\ltimes\xi}&\Omega B\ltimes E\ar@{->}[ur]_{\Gamma'}&
}
\]
The two right hand regions are commutative. We
claim that the left hand region commutes up
to congruence. Since $P\ltimes H$ is a co-$H$ space, there
is a homotopy equivalence
\[
P\ltimes H\simeq P\wedge H\vee H
\]
given by the sum of the map pinching $H$ to a point
and the projection onto~$H$. Since the composition
\[
\xymatrix{
P\wedge H\ar@{->}[r]^{\theta}&P\ltimes H\ar@{->}[r]&P\wedge H
}
\]
is a homotopy equivalence while the composition
\[
\xymatrix{
P\wedge H\ar@{->}[r]^{\theta}&P\ltimes H\ar@{->}[r]&H
}
\]
is null homotopic, both compositions in the left
hand square become homotopic when composed with the map
\[
\xymatrix{
P\ltimes H\ar@{->}[r]^->>>>{\simeq}&P\wedge H\vee H\ar@{->}[r]&(P\wedge H)\times
H.
}
\]
The result then follows from \ref{prop5.12}.

For part~$b$, we split $P^m\wedge P^k$ by the composition
\[
\xymatrix@C=45pt{
P^{m+k}\vee P^{m+k-1}\ar@{->}[r]^->>>>>>>{\Delta\vee \Delta}&P^m\wedge P^k\vee
P^{m-1}\wedge P^k\ar@{->}[r]^-<<<<<<<{1\vee \beta\wedge 1}&P^m\wedge P^k
}
\]
(compare to \ref{prop3.25}) $[\alpha,\delta\xi]_r$ is given by
precomposing
$\{\alpha,\delta\xi\}_r$ with $\Delta$. The composition
\[
\xymatrix@C=30pt{
P^{m+\ell}\ar@{->}[r]^->>>>{\Delta}&P^m\wedge P^{\ell}
\ar@{->}[r]^->>>>>{1\wedge \xi}&P^m\wedge P^k\simeq P^{m+k}\vee P^{m+k-1}
}
\]
is congruent 
to a map with components $\Sigma^m\xi$ and 
$\xi'\colon P^{m+k}\to P^{m+k-1}$.
The result then follows from the splitting.
\end{proof}

In case $n>1$, we now apply the $A_*(D_k)$ module structure to
the study of the congruence classes of the obstructions
in Theorem~\ref{theor5.9}. We will actually only consider the subalgebra
of~$A_*(D_k)$ generated by $\nu\index{nu@$\nu$|LB}\in M_{2n}=\pi_{2n+1}(D_{k}Z/p_r)$
and $\mu\index{mu@$\mu$|LB}=\beta\nu\in M_{2n-1}$. These elements generate
a subalgebra
\[
Z/p[\nu]\otimes\wedge(\mu)\subset A_*(D_k)
\]
and $e\pi_*(J_k;Z/p^r)$ is a module over this
algebra. We define classes
\begin{align*}
\overline{a(i)}
&=\tau_ka(i)\rho^{i-1}\colon P^{2np^i}\to J_k\\
\overline{b(i)}
&=\tau_ka(i)\delta_{i-1}\colon P^{2np^i-1}\to J_k\\
\overline{a(0)}&=[\nu,\mu]_{\times}\\
\overline{b(0)}&=[\mu,\mu]_{\times}
\end{align*}\index{$a(i)$@$\overline{a(i)}$|LB}\index{$b(i)$@$\overline{b(i)}$|LB}
for $1\leqslant i\leqslant k$.
\begin{Theorem}\label{theor5.20}
In case $n>1$, the collection of congruence classes
of the set of obstructions listed in Theorem~\ref{theor5.9} is
spanned, as a module over $Z/p[\nu]\otimes\wedge(\mu)$
by the classes $\overline{a(i)}$ and $\overline{b(i)}$ of weight $j\geqslant 2$
for $0\leqslant i\leqslant k$, where the weight of $\overline{a(i)}$ and
$\overline{b(i)}$ are
both one, except when $i=0$ where the weight is two.
\end{Theorem}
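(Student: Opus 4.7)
The plan is to reduce a typical obstruction
\[
w=[x_1,\dots,x_{j-2},[x_{j-1},x_j]_{\times}]_r
\]
from Theorem~\ref{theor5.9} systematically, relying on two principles of congruence homotopy. First, every absolute Whitehead product in $J_k$ is congruent to zero: by Propositions~\ref{prop3.2} and~\ref{prop3.3} the universal map $W\colon G\circ H\to G\vee H$ fits into the cofibration sequence whose cofiber is $G\times H$, and the splitting $\Sigma(G\times H)\simeq \Sigma G\vee \Sigma H\vee \Sigma(G\wedge H)$ forces $\Sigma W\simeq *$. Second, by Theorem~\ref{theor5.18} iterated relative Whitehead products organize into a graded symmetric $A_*(D_k)$-module action on $e\pi_*(J_k;Z/p^r)$; the subalgebra $Z/p[\nu]\otimes\wedge(\mu)$ arises with $\mu$ exterior because $|\mu|=2n-1$ is odd.

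First, I will eliminate outer positions. Each $x_i$ with $1\leqslant i\leqslant j-2$ is drawn from $\{\mu,\nu\}\cup\{\xi_k\tau_ka(l)\rho^t,\xi_k\tau_ka(l)\delta_t\}$. In the second case, $x_i=\xi_k\widetilde{x}_i$ is the projection of a genuine lift $\widetilde{x}_i\in\pi_*(J_k)$, so Proposition~\ref{prop3.30}(e) yields
\[
[x_i,z]_r=[\widetilde{x}_i,z]
\]
as an absolute Whitehead product in $J_k$, which is $\equiv 0$ by the first principle. Hence modulo congruence the outer positions lie in $\{\mu,\nu\}$, and their formal product sits in $Z/p[\nu]\otimes\wedge(\mu)$.

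Next, I analyze the inner bracket $[x_{j-1},x_j]_{\times}$. If both $x_{j-1},x_j$ lift to $J_k$, Proposition~\ref{prop3.30}(b) identifies it with an absolute Whitehead product in $J_k$, again $\equiv 0$. If exactly one lifts, say $x_j=\xi_k\widetilde{x}_j$ with $\widetilde{x}_j\in\{\tau_ka(l)\rho^t,\tau_ka(l)\delta_t\}$, then Proposition~\ref{prop3.30}(c) rewrites it as the module action $x_{j-1}\cdot \widetilde{x}_j$ with $x_{j-1}\in\{\mu,\nu\}$. If neither lifts, both lie in $\{\mu,\nu\}$, and the four possible brackets collapse to $\overline{a(0)}$, $\overline{b(0)}$, or $[\nu,\nu]_{\times}$; the last is congruent to zero because Proposition~\ref{prop3.31}(a) gives $[\nu,\nu]_{\times}=-[\nu,\nu]_{\times}$ at odd~$p$. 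The weight count works out because $\nu$ and $\mu$ contribute one each, $\overline{a(i)}$ and $\overline{b(i)}$ for $i\geqslant 1$ contribute one, while $\overline{a(0)}$ and $\overline{b(0)}$ contribute two.

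The main obstacle is absorbing the shifted lifts $\widetilde{x}_j=\tau_ka(l)\rho^t$ (resp.\ $\tau_ka(l)\delta_t$) into the normalized generators $\overline{a(l)}=\tau_ka(l)\rho^{l-1}$ (resp.\ $\overline{b(l)}=\tau_ka(l)\delta_{l-1}$) when $t\neq l-1$. The identity $\rho^{l-1}\sigma^{l-1-t}=p^{l-1-t}\rho^t$, together with the cofibration~(\ref{eq1.9}) and Theorem~\ref{theor5.15}, will let me express the difference by precomposition with an appropriate map of Moore spaces; Proposition~\ref{prop5.19} then routes the resulting mismatch into an additional factor of $\nu^a\mu^b$ acting via Theorem~\ref{theor5.18}. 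Carrying out this bookkeeping while preserving the weight count is the delicate part: one must ensure that the torsion multiples $p^{l-1-t}$ and the extra factors from the Moore-space cofibrations land inside the declared spanning set rather than producing new classes outside it.
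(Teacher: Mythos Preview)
Your reduction of the outer and inner brackets (the first four paragraphs) is correct and is essentially the paper's argument: Proposition~\ref{prop3.30}(e) kills any obstruction with a $\xi_k$-lift in an outer slot, Proposition~\ref{prop3.30}(b) kills the inner bracket when both entries lift, and Proposition~\ref{prop3.30}(c) converts the remaining inner brackets into the module action of $\mu,\nu$ on a lift. The symmetric module structure from Theorem~\ref{theor5.18} lets you permute outer factors freely, which is what you use implicitly.

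The last paragraph, however, worries about a problem that does not arise, and by leaving it unresolved you turn a complete proof into an incomplete one. The resolution is simple: an \emph{internal} Whitehead product $[x_1,\dots,[x_{j-1},x_j]_\times]_r$ requires every $x_i$ to carry the \emph{same} coefficient group $Z/p^s$. The classes $\mu$ and $\nu$ have $Z/p^r$ coefficients and nothing else. Hence if $s>r$, none of the $x_i$ can equal $\mu$ or $\nu$; all are $\xi_k$-lifts, and your earlier argument already shows the obstruction is null congruent. If $s=r$, then any factor $\xi_k\tau_ka(l)\rho^t$ must also carry $Z/p^r$ coefficients, and since $a(l)$ is defined on $P^{2np^l}(p^{r+l-1})$ this forces $t=l-1$; likewise $\delta_t=\delta_{l-1}$. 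So the only lifts that survive are exactly $\xi_k\overline{a(l)}$ and $\xi_k\overline{b(l)}$, and no bookkeeping via Theorem~\ref{theor5.15}, Proposition~\ref{prop5.19}, or the cofibration~(\ref{eq1.9}) is needed. Delete the last paragraph and add this one-line coefficient observation, and your proof is complete.
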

\begin{proof}
We first consider internal $H$-space based
Whitehead products of weight~$2$. Recall that
by~\ref{prop3.30}(b) $[\xi_k\gamma,\xi_k\delta]_{\times}=[\gamma,\delta]$ which is
null congruent, so we need only consider
weight~$2$ products $[x_1,x_2]_{\times}$ in which at least
one of $x_1,x_2$ is not in the image of~$\xi_k$. By \ref{prop3.31}(a), we 
will assume that $x_1=\mu$ or $\nu$. This gives the following
possibilities for weight~$2$.
\[
\overline{a(0)},\
\overline{b(0)},\
[\nu,\xi_k\overline{a(i)}]_{\times},\
[\mu,\xi_k\overline{a(i)}]_{\times},\
[\nu,\xi_k\overline{b(i)}]_{\times},\
[\mu,\xi_k\overline{b(i)}]_{\times}
\]
for $1\leqslant i\leqslant k$. Applying \ref{prop3.30}(e) again, we see that for $j>2$
the class of 
\[
[x_1,\dots,x_{j-2},[x_{j-1},x_j]_{\times}]_r
\]
is null congruent if $x_1$
is in the image ov~$\xi_k$. Thus each of $x_1,\dots,x_{j-2}$ must
be either $\mu$ or~$\nu$, 
and this class is in the $Z/p[\nu]\otimes\wedge(\mu)$
submodule generated by $[x_{j-1},x_j]_{\times}$.
\end{proof}

We will refer to the submodule
generated by $\overline{a(k)}$ and $\overline{b(k)}$ as the level~$k$
obstructions. In case $k=0$ we have some
simple relations:
\begin{proposition}\label{prop5.21}
$\mu\cdot\overline{b(0)}\equiv0$ and $\nu\cdot\overline{b(0)}\equiv
2\mu\cdot\overline{a(0)}$.
Consequently the submodule generated by $\overline{a(0)}$
 and $\overline{b(0)}$ has a basis consisting of $\nu^k\cdot\overline{b(0)}$ and
$\nu^k\cdot\overline{a(0)}$ for $k\geqslant 0$.
\end{proposition}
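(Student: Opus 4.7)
The plan is to first reduce the mixed relative/$H$-space based triple brackets to purely $H$-space based triple Whitehead products in $\pi_*(J_k;Z/p^r)$, and then apply the graded Jacobi identity \ref{prop3.31}(c) and graded anti-symmetry \ref{prop3.31}(a). The reduction uses \ref{prop3.30}(c), which gives $[\alpha,\delta]_r=[\alpha,\pi_*\delta]_\times$, together with \ref{prop3.30}(a), $\pi_*[\beta,\gamma]_\times=[\beta,\gamma]$. With $|\mu|=2n$ and $|\nu|=2n+1$, these rewrite
\[
\mu\cdot\overline{b(0)}=[\mu,[\mu,\mu]]_\times,\ \ \nu\cdot\overline{b(0)}=[\nu,[\mu,\mu]]_\times,\ \ \mu\cdot\overline{a(0)}=[\mu,[\nu,\mu]]_\times,
\]
each bracket living in $\pi_*(J_k;Z/p^r)$.

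To prove the first identity, I would apply Jacobi to $[\mu,[\mu,\mu]]_\times$ with $m=n=2n$. The sign $(-1)^{(2n+1)^2}=-1$ gives $[\mu,[\mu,\mu]]_\times=[[\mu,\mu],\mu]_\times-[\mu,[\mu,\mu]]_\times$, i.e.\ $2[\mu,[\mu,\mu]]_\times=[[\mu,\mu],\mu]_\times$. Anti-symmetry applied to $[\mu,\mu]$ (dimension $4n-1$) and $\mu$ (dimension $2n$) yields sign $-(-1)^{(4n)(2n+1)}=-1$, so $[[\mu,\mu],\mu]_\times=-[\mu,[\mu,\mu]]_\times$. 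Combining gives $3[\mu,[\mu,\mu]]_\times=0$; because $p\geqslant 5$ the integer $3$ is a unit in $Z/p^r$, so $\mu\cdot\overline{b(0)}$ vanishes already on the nose, and hence is congruent to zero.

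For the second identity, Jacobi on $[\nu,[\mu,\mu]]_\times$ with $(m,n)=(2n+1,2n)$ has sign $(-1)^{(2n+2)(2n+1)}=+1$, producing
\[
[\nu,[\mu,\mu]]_\times=[[\nu,\mu],\mu]_\times+[\mu,[\nu,\mu]]_\times.
\]
Anti-symmetry applied to $[\nu,\mu]$ (dimension $4n$) and $\mu$ (dimension $2n$) gives sign $-(-1)^{(4n+1)(2n+1)}=+1$, so $[[\nu,\mu],\mu]_\times=[\mu,[\nu,\mu]]_\times$, and the right-hand side collapses to $2[\mu,[\nu,\mu]]_\times=2\mu\cdot\overline{a(0)}$.

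For the basis claim, note that every element of $Z/p[\nu]\otimes\wedge(\mu)$ is a $Z/p$-linear combination of monomials $\nu^i\mu^j$ with $i\geqslant 0$ and $j\in\{0,1\}$. Acting on $\overline{b(0)}$, every $j=1$ monomial dies by the first identity, leaving only the $\nu^k\cdot\overline{b(0)}$; acting on $\overline{a(0)}$, the $j=1$ monomial $\nu^i\mu\cdot\overline{a(0)}$ reduces via the second identity and the unit $1/2\in Z/p^r$ to a multiple of $\nu^{i+1}\cdot\overline{b(0)}$. Hence the listed elements span. Linear independence is immediate from the grading: $\nu^k\cdot\overline{a(0)}$ and $\nu^k\cdot\overline{b(0)}$ sit in pairwise distinct congruence-homotopy degrees $4n+2nk$ and $4n-1+2nk$, and nonvanishing of each individual generator follows from the Hurewicz commutation of \ref{prop5.16} combined with the fact that the class dual to $\nu$ acts nontrivially on the image of $\overline{a(0)}$, $\overline{b(0)}$ in $H_*(J_k;Z/p^r)$ (computable from the principal fibration $\Omega S^{2n+1}\{p^r\}\to J_k\to D_k$ and \ref{prop5.1}). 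The main obstacle is the careful bookkeeping of sign parities $(m+1)(n+1)$ for the two relevant dimensions $2n$ and $2n+1$; once these are resolved the Jacobi/anti-symmetry machinery of Chapter~\ref{chap3} gives the identities cleanly, and the basis conclusion is a formal consequence.
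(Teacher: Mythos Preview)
Your proof is correct and follows the same route as the paper: rewrite $\mu\cdot\overline{b(0)}$, $\nu\cdot\overline{b(0)}$, $\mu\cdot\overline{a(0)}$ as $H$-space based triple brackets via \ref{prop3.30}(a),(c), then invoke the Jacobi identity \ref{prop3.31}(c) and anti-symmetry \ref{prop3.31}(a). The paper's argument is terser: it simply writes $[\mu,[\mu,\mu]]_\times\equiv 0$ without comment, whereas you spell out the $3x=0$ step and invoke $p\geqslant 5$; for the second identity the paper starts from $\mu\cdot\overline{a(0)}$ while you start from $\nu\cdot\overline{b(0)}$, but the two computations are mirror images.

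One remark on the basis claim: the paper does not prove linear independence here --- its proof of \ref{prop5.21} establishes only the two relations, and the ``Consequently'' is meant as a spanning statement (compare \ref{prop5.22}, which says explicitly ``We will see in section~\ref{subsec6.1} that they are actually linearly independent''). Your Hurewicz sketch is pointing in the right direction but is not needed at this stage, and a full justification would require the homology calculations of Chapter~\ref{chap6} (specifically \ref{lem6.4}, \ref{lem6.5}). So you may drop that paragraph and simply note, as the paper does, that the two relations reduce every monomial $\nu^i\mu^j\cdot\overline{a(0)}$ and $\nu^i\mu^j\cdot\overline{b(0)}$ to a multiple of some $\nu^k\cdot\overline{a(0)}$ or $\nu^k\cdot\overline{b(0)}$.
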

\begin{proof}
\begin{tabular}[t]{@{\hspace*{50pt}}l@{}}
$\mu\cdot[\mu,u]_{\times}\equiv [\mu,[\mu,u]_{\times}]_r\equiv [\mu,[\mu,u]]_{\times}\equiv 0.$\\
$\mu\cdot[\nu,\mu]_{\times}\equiv [\mu,[\nu,\mu]_{\times}]_r\equiv [\mu,[\nu,\mu]]_{\times}$.
\end{tabular}\\
Using \ref{prop3.31}(a) and~(c)
we get
\[
[\mu,[\nu,\mu]]_{\times}=[[\mu,\nu],\mu]_{\times}
+[\nu,[\mu,\mu]]_{\times}=-[\mu,[\nu,\mu]]_{\times}
+[\nu,[\mu,\mu]]_{\times},
\]
so $2\mu\cdot[\nu,\mu]_{\times}=[\nu,[\mu,u]]_{\times}\equiv \nu\cdot\nobreak[\mu,\mu]_{\times}$.
\end{proof}

Because of these relations we define\footnote{The class $x_k\colon P^{2nk}\to D_0=P^{2n+1}$ is the adjoint of the
similarly named class in~\cite{CMN79a}.}
$x_2=[\nu,\mu]_{\times}$
and $y_2=\frac{1}{2}[\mu,\mu]_{\times}$. Then
$x_k=\nu\cdot x_{k-1}=\nu^{k-2}\cdot\overline{a(0)}$ and $y_k=\mu\cdot x_{k-1}$.
\begin{proposition}\label{prop5.22}
In case $n>1$, the level $0$ congruence classes are
generated by $x_j$\index{$x_j$|LB} and $y_j$\index{$y_j$|LBmorespace} for $j\geqslant 2$ with the relations
$\mu\cdot x_k\equiv\nu\cdot y_k$ and $\nu\cdot y_k\equiv 0$. Furthermore $\beta x_j\equiv jy_j$
and $\beta y_j\equiv 0$.
\end{proposition}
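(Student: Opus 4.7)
The strategy is to read off everything from the $Z/p[\nu]\otimes\wedge(\mu)$-module basis of the level $0$ submodule provided by \ref{prop5.21}, namely $\{\nu^k\cdot\overline{a(0)},\ \nu^k\cdot\overline{b(0)}\mid k\geqslant 0\}$, and then re-express that basis in terms of the $x_j$ and $y_j$. By definition $x_j=\nu^{j-2}\cdot\overline{a(0)}$, so the $x_j$ are (up to renumbering) the first family of basis elements. The less obvious claim is that
\[
y_j\;\equiv\;\tfrac{1}{2}\,\nu^{j-2}\cdot\overline{b(0)}\quad\text{for every }j\geqslant 2,
\]
which one proves by induction from the definition: the case $j=2$ is tautological, and for $j>2$ the elements $\mu$ and $\nu^{j-3}$ commute in $A_*(D_k)$ (the product of their degrees, $(2n-1)\cdot 2n(j-3)$, is even), so
\[
y_j\;=\;\mu\cdot x_{j-1}\;=\;\nu^{j-3}\cdot(\mu\cdot\overline{a(0)})\;\equiv\;\tfrac{1}{2}\,\nu^{j-2}\cdot\overline{b(0)}
\]
by the identity $\nu\cdot\overline{b(0)}\equiv 2\mu\cdot\overline{a(0)}$ from \ref{prop5.21}. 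Since $p$ is odd, $2$ is invertible and $\{x_j,y_j\}_{j\geqslant 2}$ is a generating set.

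The module relations then follow immediately. Sliding $\mu$ past $\nu^{k-2}$ in $\mu\cdot x_k = \mu\cdot \nu^{k-2}\cdot\overline{a(0)}$ and applying \ref{prop5.21},
\[
\mu\cdot x_k\;\equiv\;\tfrac{1}{2}\nu^{k-1}\cdot\overline{b(0)}\;=\;\nu\cdot y_k,
\]
which is the first relation. For the companion relation one observes $\mu\cdot y_k\equiv \tfrac{1}{2}\nu^{k-2}\cdot(\mu\cdot \overline{b(0)})\equiv 0$, again by \ref{prop5.21}; combined with $\mu^2=0$ in $\wedge(\mu)$, these suffice to present the module (any $\mu^a\nu^b\cdot x_j$ or $\mu^a\nu^b\cdot y_j$ reduces to a $Z/p^r$-combination of the independent elements listed in \ref{prop5.21}).

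For the Bockstein statements the key tool is the Leibniz formula
\[
\beta[\alpha,\gamma]_r\;\equiv\;[\beta\alpha,\gamma]_r\,+\,(-1)^{|\alpha|}[\alpha,\beta\gamma]_r,
\]
which one obtains from \ref{prop3.31}(d) for the $H$-space based product and then transports to the relative product via \ref{prop3.12}. The base case pins down the sign: since $\beta\mu=\beta^2\nu=0$,
\[
\beta x_2\;=\;\beta[\nu,\mu]_{\times}\;=\;[\beta\nu,\mu]_{\times}\;=\;[\mu,\mu]_{\times}\;=\;2y_2.
\]
For the inductive step, using $x_j=\nu\cdot x_{j-1}$ and the induction hypothesis $\beta x_{j-1}\equiv (j-1)y_{j-1}$,
\[
\beta x_j\;\equiv\;\mu\cdot x_{j-1}\,+\,\nu\cdot \beta x_{j-1}\;\equiv\;y_j\,+\,(j-1)\,\nu\cdot y_{j-1}\;\equiv\;jy_j,
\]
where the two terms collapse via the first relation $\nu\cdot y_{j-1}\equiv\mu\cdot x_{j-1}=y_j$. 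For the last claim, $y_j=\mu\cdot x_{j-1}$ together with $\beta\mu=0$ gives $\beta y_j\equiv\pm\mu\cdot\beta x_{j-1}\equiv\pm(j-1)\mu\cdot y_{j-1}\equiv 0$.

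The main technical obstacle is pinning down the correct sign in the Leibniz rule for $\beta$ acting on a relative Whitehead product: one must verify that the formula from \ref{prop3.31}(d) transports cleanly under the identification of \ref{prop3.12} (which is a congruence-level statement, so it suffices to work modulo suspension), with a sign that is consistent with the base case $\beta x_2=2y_2$. Once that is in place, the rest of the argument is purely formal manipulation inside $Z/p[\nu]\otimes\wedge(\mu)$ acting on the two-element module of \ref{prop5.21}.
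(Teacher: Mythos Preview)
Your proof is correct and follows essentially the same route as the paper: both derive the generation and the relations from the basis of \ref{prop5.21} and the identity $\nu\cdot\overline{b(0)}\equiv 2\mu\cdot\overline{a(0)}$, and both compute the Bockstein via the Leibniz rule from \ref{prop3.31}(d). The only cosmetic difference is that the paper expands $\beta x_k=\beta(\nu^{k-2}\cdot x_2)$ in one step (getting $(k-2)\mu\nu^{k-3}\cdot x_2+\nu^{k-2}\cdot[\mu,\mu]_{\times}\equiv(k-2)y_k+2y_k$), whereas you run the same computation as an induction on~$j$; your caution about transporting the Leibniz sign through \ref{prop3.12} is warranted but is a gap the paper leaves implicit as well.
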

\begin{proof}
$\mu\cdot x_k\equiv \nu^{k-2}\mu\cdot
x_2\equiv\frac{1}{2}\nu^{k-1}\cdot[\mu,\mu]_{\times}\equiv\nu^{k-1}\cdot y_2=\nu\cdot y_k$.
$\mu\cdot y_k\equiv\mu \nu^{k-2}\cdot y_2\equiv\frac{1}{2}\nu^{k-2}\mu\cdot
[\mu,\mu]_{\times}\equiv 0$ by \ref{prop5.21}. These
relations imply that the $x_k$ and $y_k$ are linear
generators. We will see in section~\ref{subsec6.1} that they 
are actually linearly independent. $\beta x_k=(k-2)\nu^{k-3}\cdot x_2
+\nu^{k-2}\cdot [\mu,\mu]_{\times}\equiv(k-2)\mu\cdot
x_{k-1}+2y_k\equiv ky_k$. 
$\beta y_k\equiv (k-2)\nu^{k-3}\mu\cdot [\mu,\mu]_{\times}\equiv 0$.
\end{proof}

\chapter{Constructing $\gamma_k$}\label{chap6}
In this chapter we assemble the material in the previous
chapters and construct a proof of Theorem~A. In \ref{subsec6.1} we
introduce the controlled extension theorem and apply this
in case $k=0$. This case is simpler since $D_0=G_0$, and serves as a model
for the more
complicated case when~$k>0$.
We recall a space $F_0$ (called $F$ in~\cite{GT10}) and construct
$\nu_0$ over the skeleta of~$F_0$ as in~\cite{GT10}. However we choose
$\nu_0$ so that it annihilates $x_m$ and~$y_m$ for each $m\geqslant 2$,
and
conclude that $\nu_0\Gamma_0$ is null homotopic.

In \ref{subsec6.2} we establish an inductive hypothesis and
derive important properties from the inductive assumption.
In particular, we construct $F_k$ and calculate its
homology. This allows for a secondary induction over
the skeleta of~$F_k$.

In \ref{subsec6.3} the calculations are made to construct
$\nu_k=\gamma_k\eta_k$ so that it annihilates the level $k$ obstructions.
At this point it is necessary to show that the level
$k-1$ obstructions do not reappear. This requires
a careful analysis of the congruence class of the level $k-1$
obstructions. Theorem~\ref{theor6.28} and Corollary~\ref{cor6.37} provide
the necessary decompositions, and the issue is
resolved by~\ref{lem6.40}. The induction is completed by~\ref{theor6.41}
and~\ref{theor6.43}

\section{Controlled Extension and the Case $k=0$}\label{subsec6.1}

In this section we will introduce the controlled
extension theorem and apply it to the simplest case: the
construction of a retraction map 
\[
\nu_0\colon E_0\to BW_n
\]
such that the composition
\[
\xymatrix{
\Sigma (\Omega G_0\wedge \Omega G_0)\ar@{->}[r]^-{\Gamma_0}&
E_0\ar@{->}[r]^-{\nu_0}&
BW_n
}
\]
is null homotopic. This case is considerably simpler than
the case $k>0$ and will serve as a model for the later
cases. The controlled
extension theorem is an enhancement of the extension
theorem (\ref{theor2.5}), and our construction of~$\nu_0$ is a
controlled version of the construction of $\nu_0=\nu^E$
of section~3 of~\cite{GT10}.
\begin{Theorem}[Controlled extension theorem]\label{theor6.1}
Suppose that
all spaces are localized at $p>2$ and we have a
diagram of principal fibrations induced by
a map $\varphi\colon B\cup_{\theta} e^m\to X$:
\[
\xymatrix{
\Omega X\ar@{=}[r]\ar@{->}[d]&
\Omega X\ar@{->}[d]\\
E_0\ar@{->}[r]\ar@{->}[d]&E\ar@{->}[d]_{\pi}\\
B\ar@{->}[r]&B\cup_{\theta}e^m.
}
\]
Suppose $\dim B<m$ and we are
given a map $\chi\colon P^m(p^s)\to E$ with $s\geqslant 1$
such that $\pi \chi\colon P^m(p^s)\to B\cup_{\theta}e^m$ induces an
isomorphism
in $\text{mod}\, p$ cohomology in dimension~$m$. Suppose also
that we are given a map $\gamma_0\colon E_0\to BW_n$. Then:
\begin{enumerate}
\item[(a)]
There is an extension of $\gamma_0$ to $\gamma'\colon E\to BW_n$.
\item[(b)]
Suppose also that we are given a map $u\colon P\to E$
and a subspace $P_0\subset P$ such that the composition
\[
\xymatrix{
P_0\ar@{->}[r]&P\ar@{->}[r]^-{u}&E\ar@{->}[r]^-{\gamma'}&BW_n
}
\]
is null homotopic and such that the quotient map
$q\colon P\to P/P_0$ factors up to homotopy
\[
\xymatrix{
P\ar@{->}[r]^-{u}&E\ar@{->}[r]^-{q'}&E/E_0\ar@{->}[r]^-{\xi}&P/P_0
}
\]
for some map $\xi$. Then there is an extension $\gamma$ of $\gamma_0$
such that $\gamma u\sim *$.
\end{enumerate} 
\end{Theorem}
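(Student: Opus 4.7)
The plan is to establish part~(a) by reducing to the Extension Theorem~\ref{theor2.5}, and then to use the extension so produced as the starting point for the modification in part~(b). For part~(a), I will apply \ref{theor2.5} with $A=S^{m-1}$ (a co-$H$ space, as $m\geqslant 2$ since $\dim B<m$ and the cell being attached is nontrivial) and $q=p$, invoking \ref{prop2.4} to see that $BW_n$ has $H$-space exponent~$p$. The content is to verify that the lift $\overline{\theta}\colon S^{m-1}\to E_0$ furnished by the clutching construction~\ref{prop2.1} can be chosen divisible by~$p$ in $\pi_{m-1}(E_0)$. Here the hypothesis on~$\chi$ enters: the composition $\chi\iota_{m-1}\colon S^{m-1}\to E$, whose projection to $B\cup_\theta e^m$ must (by the $H^m$ cohomology condition together with $\dim B<m$) be homotopic to the attaching map~$\theta$, is killed by~$p^s$ in $\pi_{m-1}(E)$, because $\chi$ already extends it across the Moore-space cofibration $\xymatrix{S^{m-1}\ar@{->}[r]^-{p^s}&S^{m-1}\ar@{->}[r]&P^m(p^s)}$. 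Using the freedom in the choice of lift into $E_0$ (an $\Omega X$-action torsor, as in \ref{prop2.1}) to trade this torsion condition for a divisibility statement about a suitably modified lift, one obtains the hypothesis of~\ref{theor2.5} and hence the extension $\gamma'\colon E\to BW_n$.

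For part~(b), I will construct~$\gamma$ as a modification of~$\gamma'$ by a correction term that is null on~$E_0$, using the $H$-group structure on $[-,BW_n]$. The hypothesis that $P_0\to P\xrightarrow{u}E\xrightarrow{\gamma'}BW_n$ is null-homotopic, together with the cofibration $P_0\to P\to P/P_0$, produces a homotopy factorization $\gamma' u\sim \overline{u}\circ q$ for some $\overline{u}\colon P/P_0\to BW_n$, where $q\colon P\to P/P_0$ is the quotient. Set $\delta:=\overline{u}\,\xi\colon E/E_0\to P/P_0\to BW_n$ and let $q'\colon E\to E/E_0$ denote the quotient map. Using the hypothesized factorization $q\sim \xi\circ q'\circ u$, one has $\delta q' u\sim \overline{u}\,\xi q' u\sim \overline{u}\,q\sim \gamma' u$. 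Define $\gamma:=\gamma'-\delta q'$, subtraction being in the Abelian group $[E,BW_n]$ inherited from the homotopy-commutative $H$-space structure on~$BW_n$ furnished by~\ref{prop2.3}. Since $q'|_{E_0}$ is null-homotopic, the correction $\delta q'$ vanishes on~$E_0$, so $\gamma|_{E_0}\sim \gamma_0$; and $\gamma u\sim \gamma' u-\delta q' u\sim *$ by the computation above.

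The main obstacle is the divisibility verification in part~(a). Translating ``$\chi$ detects the top cell cohomologically'' into ``some admissible $\overline{\theta}$ is $p$-divisible in~$E_0$'' requires carefully tracking the bottom-cell restriction of~$\chi$, the way in which the $\Omega X$-action parametrizes lifts of~$\theta$ to $E_0$, and the order-to-divisibility mechanism obtained by combining the Moore-space cofibration with the adjustment from a lift into~$E$ to a lift into~$E_0$. Once that step is in place, the rest of the argument is formal: \ref{theor2.5} produces the extension in~(a), and the modification in~(b) reduces to a cofibration factorization followed by a subtraction in an Abelian group of homotopy classes.
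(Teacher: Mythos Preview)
Your argument for part~(b) is essentially the paper's proof: the paper also defines $\gamma$ as the $H$-space difference of $\gamma'$ and the correction $\eta\,\xi\, q'$ (written there multiplicatively via the division map), where $\eta\colon P/P_0\to BW_n$ is the extension of $\gamma'u$ over the cofiber coming from the null homotopy on~$P_0$.

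For part~(a), your overall strategy (reduce to~\ref{theor2.5} with $A=S^{m-1}$, $q=p$, using~\ref{prop2.4}) is the paper's, but one key claim is wrong and this is exactly the ``main obstacle'' you could not close. You assert that $\pi\chi\iota_{m-1}\colon S^{m-1}\to B\cup_\theta e^m$ is homotopic to the attaching map~$\theta$. The correct relation is
\[
p^s\cdot[\pi\chi\iota_{m-1}]\;=\;c\,\theta\qquad\text{in }\pi_{m-1}(B)
\]
for a $p$-local unit~$c$. Indeed, the characteristic map generates $\pi_m\bigl(P^m(p^s),S^{m-1}\bigr)\cong\mathbb{Z}$ with boundary $p^s\in\pi_{m-1}(S^{m-1})$; the $H^m$ hypothesis on $\pi\chi$ says the induced map to $\pi_m(B\cup e^m,B)\cong\mathbb{Z}$ has degree a unit~$c$; comparing boundary maps in the long exact sequences of the pairs gives the displayed formula. (Sanity check: take $B=S^{m-1}$, $\theta=p^s$, $X=\ast$, $\chi=\mathrm{id}$; then $\pi\chi\iota_{m-1}=\mathrm{id}\neq p^s$.)

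This correction in fact dissolves your obstacle. Let $\lambda\colon S^{m-1}\to E_0$ be the factorization of $\chi\iota_{m-1}$ through~$E_0$ (available since $\dim B<m$ forces $\pi\chi\iota_{m-1}$ into~$B$, and $E_0$ is the pullback over~$B$). Then $c^{-1}p^s\lambda$ projects to~$\theta$ in~$B$, and its image in~$E$ is $c^{-1}p^s(\chi\iota_{m-1})\sim\ast$ by your own torsion observation. Hence $\overline{\theta}:=c^{-1}p^s\lambda$ is an admissible lift in the sense of~\ref{prop2.1}, and it is \emph{visibly} divisible by~$p^s$, hence by~$p$ since $s\geqslant1$. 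Now~\ref{theor2.5} applies directly; no separate ``torsion-to-divisibility'' manoeuvre via the $\Omega X$-action is needed. This is what the paper's one-line appeal to the pair map $(P^m(p^s),S^{m-1})\to(B\cup_\theta e^m,B)$ being a relative homology isomorphism is encoding.
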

\begin{proof}
Clearly the map $\pi\chi\colon (P^m(p^s),S^{m-1})\to (B\cup_{\theta}e^m,B)$
induces an isomorphism in homology, so the existence of $\gamma'$ follows from the extension theorem (\ref{theor2.5}).

To prove part (b), we suppose $\gamma'$ is given and we construct
$\gamma$ as the composition:
\[
\makebox[0pt]{}
\xymatrix@C=30pt{
E\ar@{->}[r]^-{\Delta}&E\!{}\times{}\!
E/E_0\ar@{->}[r]^-{\gamma'\!{}\times{}\!\xi}&BW_n\!{}\times{}\!P/P_0\ar@{->}[r]^-{1\!{}\times{}\!
\eta}&BW_n\!{}\times{}\!BW_n\ar@{->}[r]^-{\div}&BW_n
}
\makebox[0pt]{}
\]
where $\eta\colon P/P_0\to BW_n$ is defined by the null homotopy of
$\gamma'u\vert_{P_0}$ and $\div$ is the $H$-space division map. Clearly
$\gamma\vert_{E_0}\sim \gamma_0$. To study $\gamma\vert_P$, consider the diagram
\[
\xymatrix@C=30pt{
E\ar@{->}[r]^-{\Delta}&
E\!{}\times E/E_0\ar@{->}[r]^-{\gamma'\!{}\times{}\! \xi}&
BW_n\!{}\times{}\! P/P_0\ar@{->}[r]^-{1\!{}\times{}\! \eta}&
BW_n\!{}\times{}\! BW_n\ar@{->}[r]^-{\div}\ar@{=}[d]&
BW_n\ar@{=}[d]\\
P\ar@{->}[r]^-{\Delta}\ar@{->}[u]^{u}&
P\!{}\times{}\! P\ar@{->}[u]^{u\!{}\times{}\! q'u}\ar@{->}[r]^-{\eta
q\!{}\times{}\! q}&
BW_n\!{}\times{}\! P/P_0\ar@{->}[u]\ar@{->}[r]^-{1\!{}\times{}\!\eta}&
BW_n\!{}\times{}\! BW_n\ar@{->}[r]^-{\div}&
BW_n
}
\]
where the lower composition of the first 3 maps factors
through the diagonal map of~$BW_n$, so the lower
composition is null homotopic.
\end{proof}

Now we will apply this in the case $k=0$. Recall
the spaces (\ref{eq5.2}). In this case $D_0=G_0=P^{2n+1}$ and $J_0=E_0$:
\[
\xymatrix{
\Omega^2 S^{2n+1}\ar@{->}[r]&
E_0\ar@{->}[r]^-{\tau_0}\ar@{->}[d]&
F_0\ar@{->}[r]\ar@{->}[d]&
\Omega S^{2n+1}\ar@{->}[d]\\
&P^{2n+1}\ar@{=}[r]\ar@{->}[d]&
P^{2n+1}\ar@{->}[r]\ar@{->}[d]&
PS^{2n+1}\ar@{->}[d]\\
&S^{2n+1}\{p^r\}\ar@{->}[r]&
S^{2n+1}\ar@{->}[r]^-{p^r}&
S^{2n+1}.
}
\]
These spaces were introduced in~\cite{CMN79c} where $E_0$ is
called $E^{2n+1}(p^r)$ and $F_0$ is called $F^{2n+1}(p^r)$.
\begin{proposition}\label{prop6.2}
$H^i(F_0;Z_{(p)})=\begin{cases}
Z_{(p)}&\text{if}\ i=2mn\\
0&\text{otherwise}.
\end{cases}$
\end{proposition}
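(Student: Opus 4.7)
The plan is to identify $F_0$ as the homotopy fiber of the composition $P^{2n+1}\to S^{2n+1}\{p^r\}\to S^{2n+1}$ displayed just before the proposition, and to run the Serre cohomology spectral sequence of the resulting fibration $\Omega S^{2n+1}\to F_0\to P^{2n+1}$ with $Z_{(p)}$ coefficients. As a preliminary I would verify that this classifying map agrees, up to a $Z/p^r$-unit, with the pinch map $\pi\colon P^{2n+1}\to S^{2n+1}$, and that $\pi^{\ast}\colon H^{2n+1}(S^{2n+1};Z_{(p)})=Z_{(p)}\to H^{2n+1}(P^{2n+1};Z_{(p)})=Z/p^r$ is the canonical surjection. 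Both facts fall out of $H^{2n+1}(S^{2n+1}\{p^r\};Z/p^r)=Z/p^r$ together with the cofiber long exact sequence of $S^{2n}\xrightarrow{p^r}S^{2n}\to P^{2n+1}\xrightarrow{\pi}S^{2n+1}\xrightarrow{p^r}S^{2n+1}$.

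The $E_2$-page of the Serre spectral sequence is concentrated on the two columns $p=0$ and $p=2n+1$; in row $q=2nk$ they equal $Z_{(p)}$, generated by the standard class $x_k\in H^{2nk}(\Omega S^{2n+1};Z_{(p)})$, and $Z/p^r$, generated by $y'x_k$ where $y'$ generates $H^{2n+1}(P^{2n+1};Z_{(p)})$. Only the transgression $d_{2n+1}$ can be nonzero. I would pin this differential down by naturality, comparing with the contractible path--loop fibration $\Omega S^{2n+1}\to PS^{2n+1}\to S^{2n+1}$ via the map induced by $\pi$: in that sequence the $E_\infty$-page must vanish off $(0,0)$, forcing $d_{2n+1}(x_{k+1})=u_k\,yx_k$ for a $Z_{(p)}$-unit $u_k$, and naturality transports this to $d_{2n+1}(x_{k+1})=u_k\,y'x_k$ in our spectral sequence.

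Since multiplication by the generator $u_ky'$ realizes the surjection $Z_{(p)}\twoheadrightarrow Z/p^r$, its cokernel is zero and its kernel is $p^rZ_{(p)}\cong Z_{(p)}$. Hence $E_\infty^{2n+1,2nk}=0$ and $E_\infty^{0,2nk}\cong Z_{(p)}$ for every $k\geqslant 0$; no further differentials are possible for dimensional reasons, and the claimed cohomology follows. The chief delicacy is that $H^{\ast}(\Omega S^{2n+1};Z_{(p)})$ is a divided power algebra, so $x_{k+1}$ is not a cup power of $x_1$ over $Z_{(p)}$ once $k+1\geqslant p$, and the Leibniz rule alone will not compute its transgression. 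However the naturality argument applies to the entire spectral sequence, not merely to $x_1$, so Serre's classical computation for $PS^{2n+1}$ supplies the unit $u_k$ at every $k$.
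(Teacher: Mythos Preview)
Your argument is correct and follows the same route as the paper: you run the Serre spectral sequence for the middle vertical fibration $\Omega S^{2n+1}\to F_0\to P^{2n+1}$ and determine the differentials by naturality from the path--loop fibration over $S^{2n+1}$. The paper states this in one sentence; your version simply spells out the identification of the classifying map with the pinch map and the resulting transgressions, which is exactly what ``all differentials are controlled by the path space fibration'' means.
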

\begin{proof}
This is immediate from consideration of the
cohomology Serre spectral sequence of the middle
fibration, which is induced from the path space fibration
on the right. All differentials are controlled by the
path space fibration.
\end{proof}

We filter $F_0$ by setting $F_0(m)$\index{$F_0(m)$|LB} to be the $2mn$ skeleton
of~$F_0$ and define $E_0(m)$\index{$E_0(m)$|LB} to be the pullback over~$F_0(m)$
\[
\xymatrix{
\Omega^2S^{2n+1}\ar@{=}[r]\ar@{->}[d]&
\Omega^2S^{2n+1}\ar@{->}[d]\\
E_0(m)\ar@{->}[r]\ar@{->}[d]&
E_0\ar@{->}[d]_{\eta_0}\\
F_0(m)\ar@{->}[r]&F_0
}
\]
Since $F_0(1)=S^{2n}$, this fibration in case
$m=1$ is the fibration which defines $BW_n$ (see \ref{prop2.3})
\[
\xymatrix{
\Omega^2S^{2n+1}\ar@{->}[r]&S^{4n-1}\times
BW_n\ar@{->}[r]&S^{2n}\ar@{->}[r]&\Omega S^{2n+1}.
}
\]
We consequently define $\nu_0(1)\colon E_0(1)\to BW_n$ by
retracting onto $BW_n$. Clearly $\nu_0(1)y_2$
is null homotopic since $y_2\colon P^{4n-1}\to E_0$ and $BW_n$
is $2np-3$ connected. We will use \ref{theor6.1} to
construct $\nu_0(m)\colon E_0(m)\to BW_n$ such that $\nu_0(m)y_i$
and $\nu_0(m)x_{i-1}$ are null homotopic for $i\leqslant m+1$.
\begin{proposition}\label{prop6.3}
For each $m\geqslant 2$, there is a retraction
\[
\nu_0(m)\colon E_0(m)\to BW_n
\]
extending $\nu_0(m-1)$ such that
$\nu_0(m)_{*}$ annihilates the classes $x_m$ and $y_{m+1}$ from~\ref{prop5.22}.
\end{proposition}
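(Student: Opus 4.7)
\medskip
\noindent\textbf{Proof proposal.} The plan is induction on $m$. For the base case $m=1$, Proposition~\ref{prop2.3} identifies $E_0(1)$ with $S^{4n-1} \times BW_n$, so take $\nu_0(1)$ to be projection onto $BW_n$. Since $BW_n$ is $(2np-3)$-connected and $y_2$ has dimension $4n-1 < 2np-3$ (for $p \geq 5$), this $y_2$ is automatically killed.

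For the inductive step, assume $\nu_0(m-1) \colon E_0(m-1) \to BW_n$ has been constructed. By Proposition~\ref{prop6.2}, $F_0(m)$ is obtained from $F_0(m-1)$ by attaching a single integral $2mn$-cell, so we are in the framework of the controlled extension theorem~\ref{theor6.1}. First I would use 6.1(a) to extend $\nu_0(m-1)$ to a map $\gamma' \colon E_0(m) \to BW_n$; the required map $\chi \colon P^{2mn}(p^s) \to E_0(m)$ is furnished by $x_m$ itself, whose projection $\eta_0 x_m$ generates the top mod-$p$ cohomology of $F_0(m)$.

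Next apply 6.1(b) to modify $\gamma'$ so it also kills $x_m$ and $y_{m+1}$. Take $u = x_m \vee y_{m+1}$ on $P = P^{2mn}(p^r) \vee P^{2(m+1)n - 1}(p^r)$ and $P_0 = S^{2mn-1} \vee S^{2(m+1)n - 2}$ (the bottom spheres of the Moore spaces). By the clutching construction (Proposition~\ref{prop2.1}), $E_0(m)/E_0(m-1) \simeq \Omega^2 S^{2n+1} \ltimes S^{2mn}$, whose two lowest cells in the dimensions we care about are $S^{2mn}$ (from the basepoint of the fiber) and $S^{2mn + 2n - 1}$ (from the bottom cell $S^{2n-1} \to \Omega^2 S^{2n+1}$). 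These cells identify $P/P_0 = S^{2mn} \vee S^{2(m+1)n-1}$ with a subcomplex of $E_0(m)/E_0(m-1)$, giving the required factorization $\xi$. The restrictions $u|_{P_0}$ factor through $E_0(m-1)$ by cellular approximation (since $F_0$ has no cells strictly between dimensions $2(m-1)n$ and $2mn$), and the congruence identities $\beta x_m \equiv m y_m$ and $\beta y_{m+1} \equiv 0$ from Proposition~\ref{prop5.22} express $u|_{P_0}$ in terms of classes $x_j, y_j$ with $j \leq m$ that the inductive hypothesis annihilates (using Proposition~\ref{prop5.11} and the fact that $BW_n$ is an $H$-space).

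The main obstacle is the last point: accurately identifying the integer-coefficient bottom-cell restrictions $x_m|_{S^{2mn-1}}$ and $y_{m+1}|_{S^{2(m+1)n-2}}$ with classes already killed by $\nu_0(m-1)$. This requires translating the congruence-homotopy relations in $e\pi_*(E_0;Z/p^r)$ into genuine nullhomotopies of compositions into $BW_n$, which is legitimate only because $BW_n$ is an $H$-space. Once this identification is in hand, Theorem~\ref{theor6.1}(b) delivers $\nu_0(m)$ with the required properties, completing the induction.
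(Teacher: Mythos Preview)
Your approach is the same as the paper's: induct over the skeleta of $F_0$ and apply the controlled extension theorem~\ref{theor6.1} with $\chi=x_m$, $u=x_m\vee y_{m+1}$, and $P_0$ the bottom spheres. The supporting facts you need (that $\eta_0 x_m$ hits the top cell of $F_0(m)$ and that $y_{m+1}$ is detected on the appropriate cell of $E_0(m)/E_0(m-1)\simeq \Omega^2S^{2n+1}\ltimes S^{2mn}$) are exactly the paper's Lemmas~\ref{lem6.4} and~\ref{lem6.5}, which you have essentially asserted rather than proved; that is fine as a sketch.

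There is one genuine gap. You claim the congruences $\beta x_m\equiv m y_m$ and $\beta y_{m+1}\equiv 0$ ``express $u|_{P_0}$'' in terms of classes the inductive hypothesis kills, and that Proposition~\ref{prop5.11} (plus $BW_n$ an $H$-space) handles the translation. But $\beta x_m$ is $x_m$ precomposed with the Moore-space Bockstein $\beta\colon P^{2mn-1}\to P^{2mn}$, whereas $u|_{P_0}$ is $x_m$ precomposed with the bottom-cell inclusion $\iota_{2mn-1}\colon S^{2mn-1}\to P^{2mn}$. These are different maps: one has $\beta=\iota_{2mn-1}\pi_{2mn-1}$. Knowing $\gamma'(x_m\beta)\sim *$ only tells you that $\gamma'(x_m\iota_{2mn-1})\colon S^{2mn-1}\to BW_n$ becomes null after precomposing with the degree-$p^r$ pinch $\pi_{2mn-1}$, i.e.\ that $\gamma'(x_m\iota_{2mn-1})$ is divisible by~$p^r$. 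To conclude it is actually null you must invoke $p\cdot\pi_*(BW_n)=0$ (Proposition~\ref{prop2.4}), not merely that $BW_n$ is an $H$-space. The paper makes this step explicit; once you insert it, your argument is complete and matches the paper's.
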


The proof of this result will depend on two lemmas.
\begin{lemma}\label{lem6.4}
The composition
\[
\xymatrix{
P^{2nj}\ar@{->}[r]^-{x_j}&
E_0\ar@{->}[r]^-{\eta_0}&
F_0
}
\]
induces a cohomology epimorphism.
\end{lemma}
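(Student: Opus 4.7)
The plan is to induct on $j \geq 2$, showing that the mod $p$ Hurewicz image $(\eta_0 x_j)_*$ sends the top class of $H_{2nj}(P^{2nj};Z/p)$ to a generator of $H_{2nj}(F_0;Z/p) \cong Z/p$. Since $H^{2nj}(F_0;Z_{(p)}) \cong Z_{(p)}$ by Proposition~\ref{prop6.2}, the universal coefficient theorem and comparison of $Z/p$-reductions translate this into the surjectivity of $(\eta_0 x_j)^* \colon Z_{(p)} \to Z/p^r$ claimed in the lemma.

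First I would exploit naturality of the relative Whitehead product. Since $\eta_0 \colon E_0 \to F_0$ is a map of principal fibrations over $P^{2n+1}$ induced by $S^{2n+1}\{p^r\} \to S^{2n+1}$ on classifying data, Proposition~\ref{prop3.11}(d) gives
\[
\eta_0 x_j \;=\; \eta_0\,[\nu, x_{j-1}]_r \;=\; [\nu,\, \eta_0 x_{j-1}]_r \qquad (j \geq 3),
\]
where the right side is the relative Whitehead product for the principal fibration $\Omega S^{2n+1} \to F_0 \to P^{2n+1}$. Corollary~\ref{cor3.18} then identifies the induced map on mod $p$ homology with the composition of the principal action $a^F_*$ with $\widetilde{\pi_{2n+1}}_* \otimes (\eta_0 x_{j-1})_*$, where $\widetilde{\pi_{2n+1}} \colon P^{2n} \to \Omega S^{2n+1}$ is the adjoint of the pinch map, carrying the fundamental mod $p$ class to the polynomial generator $y \in H_{2n}(\Omega S^{2n+1}; Z/p)$.

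The base case $j = 2$ treats $x_2 = [\nu, \mu]_\times$. Proposition~\ref{prop3.12} converts this $H$-space based Whitehead product into a relative Whitehead product $[\nu, \tilde\mu]_r$ in $E_0$ for any lift $\tilde\mu$ of $\mu$ to $E_0$, and the same Hurewicz formula applied after $\eta_0$ produces $a^F_*(y \otimes (\eta_0\tilde\mu)_*(\iota_{2n}))$. Since $H_{2n}(F_0;Z/p) = Z/p$ and the projection of $(\eta_0 \tilde\mu)_*(\iota_{2n})$ to $H_{2n}(P^{2n+1};Z/p)$ is the nonzero class $\mu_*(\iota_{2n})$, the Serre spectral sequence for $\Omega S^{2n+1} \to F_0 \to P^{2n+1}$ forces $(\eta_0\tilde\mu)_*(\iota_{2n})$ to equal the generator $w_1$ up to a unit, handling the base case.

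The main obstacle, and the heart of the argument, is verifying that the principal action satisfies $a^F_*(y \otimes w_{j-1}) = c \cdot w_j$ with $c$ a unit in $Z/p$, where $w_j$ generates $H_{2nj}(F_0;Z/p)$. This is the statement that the action of $\Omega S^{2n+1}$ on $F_0$ is compatible with the Pontryagin structure on $H_*(\Omega S^{2n+1};Z/p)$ under the identifications provided by the Serre spectral sequence underlying Proposition~\ref{prop6.2}: concretely, $w_j$ is detected by $y^j$ surviving to $E_\infty^{0,2nj}$, and the restriction of the principal action to the fiber $\Omega S^{2n+1} \hookrightarrow F_0$ coincides with loop multiplication, so $y \cdot y^{j-1} = y^j$. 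Once this compatibility is in place, the induction closes and the lemma follows.
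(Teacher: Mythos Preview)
Your argument is correct and follows the same strategy as the paper: induct on $j$, push the relative Whitehead product forward along $\eta_0$ via Proposition~\ref{prop3.11}(d), and evaluate in homology using Corollary~\ref{cor3.18} together with the principal action of $\Omega S^{2n+1}$ on $F_0$. The only real difference is the choice of base case. The paper observes directly that $\mu\colon P^{2n}\to P^{2n+1}$ lifts to a map $x_1\colon P^{2n}\to F_0$ inducing a cohomology epimorphism (since the composite $P^{2n}\to P^{2n+1}\to S^{2n+1}$ is null), and then writes $\eta_0 x_j=[\nu,\eta_0 x_{j-1}]_r$ for all $j\geqslant 2$, starting the induction at $j=1$. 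This is cleaner than your $j=2$ start, which requires converting $[\nu,\mu]_\times$ to $[\nu,\tilde\mu]_r$ via Proposition~\ref{prop3.12} and then arguing with the Serre edge homomorphism; the paper simply absorbs all of that into the single observation about $x_1$. Your explicit discussion of why $a^F_*(y\otimes w_{j-1})$ is a unit multiple of $w_j$ is a point the paper leaves implicit in its appeal to \ref{cor3.18}, but your Serre spectral sequence justification is the right one.
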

\begin{proof}
To study $\eta_0x_j$, we use the principal fibration:
\[
\xymatrix{
\Omega S^{2n+1}\ar@{->}[r]&
F_0\ar@{->}[r]&
P^{2n+1}
}
\]
Clearly $\mu\colon P^{2n}\to P^{2n+1}$ lifts to a map $x_1\colon
P^{2n}\to F_0$
which induces a cohomology epimorphism. Then
$x_j=[\nu,x_{j-1}]_r$ for each $j\geqslant 2$ so we can apply 
\ref{cor3.18}
to evaluate $x_j$ in cohomology.
\end{proof}
\begin{lemma}\label{lem6.5}
The composition
\begin{multline*}
\xymatrix{
P^{2n(j+1)-1}\ar@{->}[r]^-{y_{j+1}}&
E_0(j)\ar@{->}[r]&
E_0(j)/E_0(j-1)
}\\
\xymatrix{
\Omega^2S^{2n+1}
\ltimes
S^{2nj}
\ar@{->}[r]^-{\xi}&
S^{2nj}\vee S^{2n(j+1)-1}
}
\end{multline*}
induces an integral cohomology epimorphism.
\end{lemma}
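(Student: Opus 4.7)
The plan is to track the Hurewicz image of $y_{j+1}$ through the filtration quotient using the formula for relative Whitehead products in \ref{cor3.18}. First, I would verify that $y_{j+1}$ lifts to $E_0(j)$: by naturality of the relative Whitehead product (\ref{prop3.11}(d)) applied to the map of principal fibrations $\eta_0\colon E_0 \to F_0$, we have $\eta_0 y_{j+1} \sim [\mu, \eta_0 x_j]_r$. By \ref{lem6.4} and cellular approximation, $\eta_0 x_j$ factors through $F_0(j) = F_0^{2nj}$; the resulting Whitehead product has dimension $2n(j+1)-1$, but since $F_0$ has no cells strictly between dimensions $2nj$ and $2n(j+1)$ (by \ref{prop6.2}), cellular approximation further places $\eta_0 y_{j+1}$ in $F_0(j)$, and pulling back through the principal fibration lifts $y_{j+1}$ to $E_0(j)$.

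Next, apply \ref{cor3.18} to the principal fibration $\Omega S^{2n+1}\{p^r\} \to E_0 \to P^{2n+1}$: the Hurewicz image of $y_{j+1} = [\mu,x_j]_r$ in $H_{2n(j+1)-1}(E_0;Z/p^r)$ equals $a_*(\tilde{v} \otimes h(x_j))$, where $\tilde{v}$ is the Hurewicz image of $\widetilde{\varphi_0 \mu}$ (a generator of $H_{2n-1}(\Omega S^{2n+1}\{p^r\};Z/p^r)$ since $\varphi_0$ is a low-dimensional equivalence), and $h(x_j)$ is the Hurewicz image of $x_j$. Because $E_0 \to F_0$ is itself principal with fiber $\Omega^2 S^{2n+1}$, the class $\tilde v$ lifts through $\Omega^2 S^{2n+1} \hookrightarrow \Omega S^{2n+1}\{p^r\}$ to the bottom generator $v$, and the restricted action $a|_{\Omega^2 S^{2n+1}}$ acts vertically over $F_0$, hence preserves the filtration $\{E_0(m)\}$. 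On the quotient $E_0(j)/E_0(j-1) \simeq \Omega^2 S^{2n+1} \ltimes S^{2nj}$ given by the clutching construction \ref{prop2.1}, this becomes the tautological half-smash action. By \ref{lem6.4}, $h(x_j)$ projects to a generator of the section $H_{2nj}(S^{2nj}) \subset H_{2nj}(\Omega^2 S^{2n+1} \ltimes S^{2nj})$; thus $a_*(\tilde v \otimes h(x_j))$ projects to $v \wedge [S^{2nj}]$, the bottom-cell generator of the wedge summand $\Omega^2 S^{2n+1} \wedge S^{2nj}$, which under $\xi$ is exactly the $S^{2n(j+1)-1}$ summand.

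Integrally, the domain $P^{2n(j+1)-1}$ has trivial cohomology in dimension $2nj$ (when $n>1$, as $2nj < 2n(j+1)-2$), so surjection in that degree is automatic; and in dimension $2n(j+1)-1$ the map $Z = H^{2n(j+1)-1}(S^{2n(j+1)-1};Z) \to H^{2n(j+1)-1}(P^{2n(j+1)-1};Z) = Z/p^r$ is the reduction $\bmod\, p^r$ by the Hurewicz computation, hence surjective. The main obstacle will be the identification in step two: rigorously verifying that the principal action on the graded piece faithfully detects $\tilde v \otimes h(x_j)$ as the bottom-cell generator of the half-smash, with no cancellation from higher-filtration contributions and no loss through the lifting $\Omega^2 S^{2n+1} \hookrightarrow \Omega S^{2n+1}\{p^r\}$.
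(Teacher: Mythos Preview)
Your proposal is correct and follows essentially the same route as the paper: both arguments use \ref{lem6.4} to see that $x_j$ hits the bottom class of the quotient $\Omega^2S^{2n+1}\ltimes S^{2nj}$, then invoke \ref{cor3.18} together with the compatibility of the $\Omega^2S^{2n+1}$-action on $E_0(j)$ with the half-smash action on the quotient to push $y_{j+1}=[\mu,x_j]_r$ onto the $S^{2n(j+1)-1}$ summand. The paper compresses your second and third paragraphs into a single sentence (``the action of $\Omega^2S^{2n+1}$ on $E_0(j)$ corresponds with the action \ldots\ on $E_0(j)/E_0(j-1)$''), whereas you spell out the passage through $\Omega^2S^{2n+1}\hookrightarrow\Omega S^{2n+1}\{p^r\}$ explicitly; this extra care is fine but not strictly needed. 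One small remark: your parenthetical ``when $n>1$'' is unnecessary, since $H^{2nj}(P^{2n(j+1)-1};Z)=0$ for all $n\geq 1$ (the only nonvanishing integral cohomology of $P^m$ is in degree~$m$).
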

\begin{proof}
Since $F_0(j)=F_0(j-1)\cup e^{2mj}$, 
\[
E_0(j)/E_0(j-1)\simeq 
\Omega^2S^{2n+1}
\ltimes
S^{2mj}
\]
by the clutching construction (\ref{eq2.2}). From the
homotopy commutative square
\[
\xymatrix{
E_0(j)\ar@{->}[r]\ar@{->}[d]&E_0(j)/E_0(j-1)\ar@{->}[d]\\
F_0(j)\ar@{->}[r]&S^{2nj}
}
\simeq S^{2nj}\ltimes \Omega^2S^{2n+1}
\]
we see that the composition
\begin{multline*}
\xymatrix{
P^{2nj}\ar@{->}[r]^-{x_j}&E_0(j)\ar@{->}[r]&E_0(j)/E_0(j-1)
}\\
\xymatrix{
\simeq
\Omega^2S^{2n+1}
\ltimes
S^{2nj}
\ar@{->}[r]^-{\xi}&
S^{2nj}\vee S^{2n(j+1)-1}}
\end{multline*}
is an integral cohomology epimorphism. Since the action of $\Omega^2S^{2n+1}$
on $E_0(j)$ corresponds with the action of $\Omega^2S^{2n+1}$ on
\[
E_0(j)/E_0(j-1)\simeq 
\Omega^2S^{2n+1}
\ltimes 
S^{2nj}
,
\]
we can apply \ref{cor3.18}
to see
that the composition in question
\[
\xymatrix{
P^{2n(j+1)-1}\ar@{->}[r]&
\Omega^2S^{2n+1} 
\ltimes 
S^{2nj}
}
\]
induces an epimorphism in integral cohomology.  
\end{proof}
\begin{proof}[Proof of \ref{prop6.3}]
We apply \ref{theor6.1} to the diagram
\[
\xymatrix{
\Omega^2S^{2n+1}\ar@{=}[r]\ar@{->}[d]&
\Omega^2S^{2n+1}\ar@{->}[d]\\
E_0(m-1)\ar@{->}[r]\ar@{->}[d]&
E_0(m)\ar@{->}[d]\\
F_0(m-1)\ar@{->}[r]&
F_0(m)&{\mbox{\hspace*{-31pt}$=F_0(m-1)\cup_{\theta}e^{2mn}$}}
}
\]
where $\theta$ is the attaching map  of the $2mn$ cell. Let
$\chi=x_m\colon P^{2mn}\to E(m)$. Choose an extension $\gamma'$ of
$\gamma_0=\nu_0(m-1)$. Let
$P=P^{2mn}\vee P^{2(m+1)n-1}$ and
\[
u\colon P^{2mn}\vee \xymatrix@C=45pt{P^{2(m+1)n-1}\ar@{->}[r]^-{\enlarge{x_m\vee
y_{m+1}}}&E_0(m);}
\]
let $P_0=S^{2mn-1}\vee S^{2(m+1)n-2}$. The composition
\[
\xymatrix{
P^{2mn-1}\ar@{->}[r]&S^{2mn-1}\ar@{->}[r]&P^{2mn}\ar@{->}[r]^-{x_m}&E_0(m)
}
\]
is $\beta x_m\equiv my_m$ by \ref{prop5.22}.
Since $\nu_0(m-1)y_m$ is null homotopic, the composition
\[
\xymatrix{
P^{2mn-1}\ar@{->}[r]^-{\beta}&P^{2mn}\ar@{->}[r]^-{x_m}&E_0(m)\ar@{->}[r]^-{\gamma'}&BW_n
}
\]
is null homotopic. $\beta$ factors: $P^{2mn-1}\to S^{2mn-1}\to P^{2mn}$, so
the composition
\[
\xymatrix{
S^{2mn-1}\ar@{->}[r]&P^{2mn}\ar@{->}[r]^-{x_m}&E_0(m)\ar@{->}[r]^-{\gamma'}&BW_n
}
\]
is divisible by $p^r$. However $p\cdot \pi_*(BW_n)=0$,
so this
composition is null homotopic. Similarly, since $\beta y_{m+1}\equiv 0$,
the composition
\[
\xymatrix@C=30pt{
S^{2(m+1)n-2}\ar@{->}[r]&
P^{2(m+1)n-1}\ar@{->}[r]^-{y_{m+1}}&
E_0(m)\ar@{->}[r]^-{\gamma'}&
BW_n
}
\]
is null homotopic. Thus the composition
\[
\xymatrix@C=52pt{
P_0\ar@{->}[r]&P\ar@{->}[r]^-{x_m\vee y_{m+1}}
&E_0(m)\ar@{->}[r]^-{\gamma'}&BW_n
}
\]
is null homotopic. However
\begin{multline*}
\hspace*{-12pt}\xymatrix@C=52pt{
P=P^{2mn}\!{}\vee{}\!P^{2(m+1)n-1}\ar@{->}[r]^-{x_m\!{}\vee{}\!y_{m+1}}&
E_0(m)\ar@{->}[r]&
}\\
\xymatrix{
E_0(m)/E_0(m-1)
\cong 
\Omega^2S^{2n+1}
\!{}\ltimes{}\!
S^{2mn}
}
\end{multline*}
induces an integral cohomology epimorphism by \ref{lem6.4} and \ref{lem6.5}. Let $\xi$
be the composition
\[
\xymatrix{
\Omega^2S^{2n+1}
\ltimes
S^{2mn}
\simeq S^{2mn}\vee S^{2mn}\wedge
\Omega^2S^{2n+1}\ar@{->}[r]&S^{2mn}\vee S^{2(m+1)n-1}
}
\]
where the last map is obtained by evaluation on the
double loop space. Clearly the conditions of~\ref{theor6.1} are
satisfied, so we can choose an extension $\nu_0(m)$ of $\nu_0(m-1)$ such
that $\nu_0(m)_*(x_m)=0$ and $\nu_0(m)_*(y_{m+1})=0$. This completes
the induction.  
\end{proof}
\begin{corollary}\label{cor6.6}
There is a retraction $\nu_0\colon E_0\to BW_n$ such that
the composition
\[
\xymatrix{
\Sigma(\Omega G_0\wedge \Omega
G_0)\ar@{->}[r]^-{\Gamma_0}&E_0\ar@{->}[r]^-{\nu_0}&BW_n
}
\]
is null homotopic.
\end{corollary}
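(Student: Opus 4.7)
The plan is to let $\nu_0$ be the direct limit of the maps $\nu_0(m)$ produced by Proposition~\ref{prop6.3} and then to verify that $\nu_0\Gamma_0$ is null homotopic by funneling every relevant obstruction down to the explicit classes $x_j$ and $y_j$ that were killed along the filtration. Since $F_0$ is the union of its skeleta $F_0(m)$ and $E_0$ the induced direct limit of the $E_0(m)$, the maps $\nu_0(m)$ assemble into a retraction $\nu_0\colon E_0\to BW_n$. By construction $\nu_0$ annihilates $x_j$ and $y_j$ for every $j\geqslant 2$.

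Next I would invoke the reduction machinery of Chapter~\ref{chap5} in the degenerate case $k=0$. Here $C_0$ is a point, so $D_0=G_0=P^{2n+1}$, $J_0=E_0$, and $\tau_0$ is the identity; thus Proposition~\ref{prop5.7} reduces the vanishing of $\nu_0\Gamma_0$ to showing that each iterated relative Whitehead product
\[
\{a,\dots,a,\{a,a\}_{\times}\}_r\colon U_0^{[j]}\to E_0
\]
is annihilated by $\nu_0$ for $j\geqslant 2$, where $U_0=P^{2n+1}$ and $a$ is the inclusion. Theorem~\ref{theor5.9} breaks each such map into a linear combination of iterated internal $H$-space based and relative Whitehead products whose entries lie in $\{\mu,\nu\}$.

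The crucial simplification is that $BW_n$ is an $H$-space, so by Proposition~\ref{prop5.11} it suffices to annihilate a representative of each congruence class. By Theorem~\ref{theor5.20}, the congruence classes of these obstructions are spanned, as a module over $Z/p[\nu]\otimes\wedge(\mu)\subset A_*(D_0)$, by the level $0$ generators $\overline{a(0)}=[\nu,\mu]_{\times}$ and $\overline{b(0)}=[\mu,\mu]_{\times}$. Proposition~\ref{prop5.22} then identifies this submodule, modulo the relations $\mu\cdot x_k\equiv\nu\cdot y_k$ and $\nu\cdot y_k\equiv 0$, with the span of the classes $x_j$ and $y_j$ for $j\geqslant 2$. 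Applying $\nu_0$, which kills each $x_j$ and $y_j$, and using Proposition~\ref{prop5.11} to pass from homotopy to congruence, we see that every obstruction is annihilated, and Proposition~\ref{prop5.7} yields $\nu_0\Gamma_0\simeq\ast$.

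The main obstacle in the plan is conceptual rather than computational: one must be sure that the level $0$ list of classes $\{x_j,y_j\}_{j\geqslant 2}$ produced in Chapter~\ref{chap5} really coincides with the collection handled by the skeletal induction of Proposition~\ref{prop6.3}, and that the controlled extension theorem tolerates the simultaneous suppression of both an even-dimensional class $x_m$ and the odd-dimensional class $y_{m+1}$ at each stage. The compatibility is exactly what Lemmas~\ref{lem6.4} and~\ref{lem6.5} were set up to guarantee, so once the congruence-theoretic reduction of the previous chapters is in hand, no further geometric input is needed.
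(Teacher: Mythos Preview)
Your proposal is correct and follows essentially the same route as the paper: assemble $\nu_0$ from the $\nu_0(m)$ of Proposition~\ref{prop6.3}, note it kills every $x_j$ and $y_j$, then invoke Proposition~\ref{prop5.7}, Theorem~\ref{theor5.9} and Proposition~\ref{prop5.22} to conclude. You are simply more explicit than the paper in threading through Proposition~\ref{prop5.11} and Theorem~\ref{theor5.20} to pass from the raw obstructions of \ref{theor5.9} to the congruence generators of \ref{prop5.22}, which the paper's one-line proof leaves implicit.
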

\begin{proof}
By \ref{prop6.3} $(\nu_0)_*(x_m)=0$ and $(\nu_0)_*(y_m)=0$ for $m\geqslant 2$. By
\ref{theor5.9}
and \ref{prop5.22},
\[
\{\nu,\dots,\nu,\{\nu,\nu\}_{\times}\}_r
\colon\Sigma G_0\wedge\dots\wedge
\xymatrix{G_0\ar@{->}[r]&E_0\ar@{->}[r]^-{\enlarge{\nu_0}}&BW_n}
\]
is null homotopic for all $j\geqslant 2$. The conclusion follows
from~\ref{prop5.7}.
\end{proof}

\section{Preparation for Induction}\label{subsec6.2}

At this point we present the inductive hypothesis. As
pointed out in section~\ref{subsec2.4}, this will be stronger than 
Proposition~\ref{prop2.12} given there. This strengthening will be
a factorization of the map $\nu_k$ through the map $\tau_k\colon\allowbreak E_k\to\nobreak J_k$.
In case $k=0$, $E_k=J_k$ and $\tau_k$\index{tauk@$\tau_k$|LB} is the identity map, so this
alteration only applies when $k>0$.
\begin{InductiveHypothesis}\label{theor6.7}
There is a map $\gamma_k\colon J_k\to BW_n$
such that the composition
\[
\xymatrix{
\Omega G_k*\Omega
G_k\ar@{->}[r]^-<<<{\Gamma_k}
&E_k\ar@{->}[r]^{\tau_k}&J_k\ar@{->}[r]^->>>>{\gamma_k}
&BW_n
}
\index{Gammak@$\Gamma_{k}$|LB}
\index{gammak@$\gamma_{k}$|LB}
\]
is null homotopic and such that the compositions
\begin{gather*}
\xymatrix{
E^{2np^k-2}_{k-1}\ar@{->}[r]&E_{k-1}\ar@{->}[r]^{e_k}\index{$e_k$|LB}&E_k\ar@{->}[r]^{\tau_k}&J_k\ar@{->}[r]^->>>>{\gamma_k}&BW_n
}\\
\xymatrix{
E^{2np^k-1}_{k-1}\ar@{->}[r]&E_{k-1}\ar@{->}[r]^{\tau_{k-1}}&J_{k-1}\ar@{->}[r]^->>>>{\gamma_{k-1}}&BW_n
}
\end{gather*}
are homotopic, where $\gamma_0=\nu_0$ as constructed in~\ref{cor6.6}.
\end{InductiveHypothesis}

By \ref{prop2.3}, the composition
\[
\xymatrix{
\Omega^2S^{2n+1}\ar@{->}[r]&E_0(1)\ar@{->}[r]^{\nu_0(1)}&BW_n
}
\]
is homotopic to~$\nu$. Consequently \ref{theor6.7}
implies \ref{prop2.12} in case $k=0$.
We will assume that we have constructed $\gamma_i$ for $i<k$,
Having $\gamma_i$ we obtain
$\nu_i=\gamma_i\tau_i$ and construct $a(k)$, $c(k)$
and $\beta_k$ as in \ref{theor4.4} with $\nu_{k-1}\beta_k\sim *$. We then define $D_k$ and~$J_k$ and proceed to
construct $\gamma_k$. The construction is
completed with \ref{theor6.43}.

The procedure in section~\ref{subsec6.1} is a model
for the inductive step. To proceed, we will first need to
prove:\setcounter{equation}{7}
\begin{equation}\label{eq6.8}
H_i(F_k;Z_{(p)})\cong \begin{cases}
Z_{(p)}&\text{if}\ i=2mn\\
0&\text{otherwise}.
\end{cases}
\end{equation}
We will then use an inductive procedure over the skeleta
of~$F_k$ as in section~\ref{subsec6.1}. 
The proof of \ref{eq6.8} will be by induction on~$k$. The
case $k=0$ is \ref{prop6.2}. At this point we will assume \ref{eq6.8}
in case $k-1$.\addtocounter{Theorem}{1}
\begin{proposition}\label{prop6.9}
Let $W_{k-1}$ be the fiber of $\gamma_{k-1}$. Then we
have a homotopy commutative diagram of vertical
fibration sequences
\[
\xymatrix{
T\ar@{=}[r]\ar@{->}[d]&
T\ar@{->}[r]\ar@{->}[d]&
\Omega S^{2n+1}\ar@{->}[d]\\
R_{k-1}\ar@{->}[r]\ar@{->}[d]&
W_{k-1}\ar@{->}[r]\ar@{->}[d]&
F_{k-1}\ar@{->}[d]^-{\sigma_{k-1}}\\
G_{k-1}\ar@{->}[r]&
D_{k-1}\ar@{=}[r]&
D_{k-1} 
}
\]\index{$R_k$|LB}\index{$W_k$|LB}
and two diagrams of fibration sequences
\begin{gather*}
\xymatrix{
S^{2n-1}\ar@{->}[r]\ar@{->}[d]&
\Omega^2S^{2n+1}\ar@{->}[r]\ar@{->}[d]&
BW_n\ar@{=}[d]\\
W_{k-1}\ar@{->}[r]\ar@{->}[d]&
J_{k-1}\ar@{->}[r]^-{\gamma_{k-1}}\ar@{->}[d]&
BW_n\\
F_{k-1}\ar@{=}[r]&
F_{k-1}
}\\
\mbox{}\\
\mbox{}\\
\xymatrix{
S^{2n-1}\ar@{->}[r]\ar@{->}[d]&
T\ar@{->}[r]\ar@{->}[d]&
\Omega S^{2n+1}\\
W_{k-1}\ar@{=}[r]\ar@{->}[d]&
W_{k-1}\ar@{->}[d]&\\
F_{k-1}\ar@{->}[r]&
D_{k-1}\ar@{->}[r]&S^{2n+1}
}
\end{gather*}
\end{proposition}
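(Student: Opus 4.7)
The proof is a piece of fibration calculus using~(\ref{eq5.2}) at level $k-1$, Proposition~\ref{prop2.3}, and the original diagram~(\ref{eq1.3new}), with the inductive hypothesis~\ref{theor6.7} supplying the necessary compatibility. I would first establish the second (middle) diagram, and then use it to assemble the remaining two.

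The middle row of the second diagram is a fibration by the very definition of $W_{k-1}$ as the fiber of $\gamma_{k-1}$, and its top row is Proposition~\ref{prop2.3}. For its middle column, observe that by~(\ref{eq5.2}) the map $\eta_{k-1}\colon J_{k-1}\to F_{k-1}$ is a map of principal fibrations over $D_{k-1}$ induced on base-fibers by the loop of the projection $\pi\colon S^{2n+1}\{p^r\}\to S^{2n+1}$, whose fiber is $\Omega^2 S^{2n+1}$; hence $\eta_{k-1}$ is itself a fibration with fiber $\Omega^2 S^{2n+1}$. The composite $\Omega^2 S^{2n+1}\to J_{k-1}\to BW_n$ is $\nu_{k-1}\partial'\,\Omega\partial$, and this is homotopic to $\nu$: the identity $\nu_0\partial'\,\Omega\partial\simeq\nu$ is built into Section~\ref{subsec6.1} (starting from the retraction $\nu_0(1)$ of Proposition~\ref{prop2.3}), and the skeletal compatibility clause~(c) of the inductive hypothesis~\ref{theor6.7} propagates it to every~$\nu_k$. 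The standard $3\times 3$ fibration lemma applied to the resulting map of fibrations over $F_{k-1}$ now delivers the left column $S^{2n-1}\to W_{k-1}\to F_{k-1}$.

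Next, I identify $T$ as the fiber of both $W_{k-1}\to D_{k-1}$ and $R_{k-1}\to G_{k-1}$, where $R_{k-1}$ is defined as the fiber of $\nu_{k-1}\colon E_{k-1}\to BW_n$. The composite $W_{k-1}\hookrightarrow J_{k-1}\to D_{k-1}$ (via $\xi_{k-1}$) has fiber equal to the fiber of the restriction $\gamma_{k-1}\vert_{\Omega S^{2n+1}\{p^r\}}$, where $\Omega S^{2n+1}\{p^r\}$ is the base-fiber of $\xi_{k-1}$. Because both $E_{k-1}$ and $J_{k-1}$ are classified into the same $S^{2n+1}\{p^r\}$, the map $\tau_{k-1}$ restricts to the identity on the common base-fiber $\Omega S^{2n+1}\{p^r\}$; hence the restriction of $\gamma_{k-1}$ to this fiber equals $\nu_{k-1}\partial'$, whose composition with $\Omega\partial$ is $\nu$ by the previous paragraph, so~(\ref{eq1.3new}) identifies its fiber as (a homotopy model of) the Anick space~$T$, pinned down up to homotopy by Theorem~\ref{theor2.14}(c). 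The same argument, with $\pi_{k-1}$ in place of $\xi_{k-1}$, shows that the fiber of $R_{k-1}\to G_{k-1}$ is also~$T$.

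The right column of Diagram~1 and the bottom row of Diagram~3 come straight from~(\ref{eq5.2}); the top row of Diagram~3 arises as the induced fibration on fibers of the map of column fibrations $(T\to W_{k-1}\to D_{k-1})\to(\Omega S^{2n+1}\to F_{k-1}\to D_{k-1})$ induced by $\eta_{k-1}$ over the identity on $D_{k-1}$, which by construction is the Anick fibration~(\ref{eq1.1a}). All horizontal maps in Diagram~1 are then the natural maps on fibers, and the squares commute by functoriality. The main technical point---the only step that is not purely formal---is the identification $\nu_{k-1}\partial'\,\Omega\partial\simeq\nu$ that drives both middle-column fiber identifications; it rests on the concrete construction of $\nu_0$ in Section~\ref{subsec6.1} and on clause~(c) of the inductive hypothesis~\ref{theor6.7} to propagate this identification through every $\nu_k$.
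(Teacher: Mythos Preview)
Your proposal is correct and reaches the same conclusions, but the paper organizes the argument differently and more economically. The paper first writes $\nu_{k-1}=\gamma_{k-1}\tau_{k-1}$ and observes that this yields a map of horizontal fibration sequences $(R_{k-1}\to E_{k-1}\xrightarrow{\nu_{k-1}} BW_n)\to(W_{k-1}\to J_{k-1}\xrightarrow{\gamma_{k-1}} BW_n)$; the key step is then that the square with vertices $R_{k-1},W_{k-1},G_{k-1},D_{k-1}$ is the composite of the two pullback squares $R_{k-1}\!\to\! E_{k-1}\!\to\! G_{k-1}$ and $W_{k-1}\!\to\! J_{k-1}\!\to\! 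D_{k-1}$ over $\tau_{k-1}$, hence itself a pullback. This single observation delivers Diagram~1 at once (the fiber $T$ on the left is read off~(\ref{eq1.6new})), and the paper then says Diagram~2 follows from the definition of $W_{k-1}$ and Diagram~3 is a combination of the first two. You instead identify the fiber $T$ of $W_{k-1}\to D_{k-1}$ and of $R_{k-1}\to G_{k-1}$ by two parallel direct computations, restricting $\gamma_{k-1}$ (resp.\ $\nu_{k-1}$) to the common base-fiber $\Omega S^{2n+1}\{p^r\}$ and invoking~(\ref{eq1.3new}). Your route is valid but duplicates work that the pullback-composition argument handles in one stroke; on the other hand, you make explicit the identification $\nu_{k-1}\partial'\,\Omega\partial\simeq\nu$ (via the construction of $\nu_0$ in Section~\ref{subsec6.1} and the compatibility clause of~\ref{theor6.7}) that the paper leaves implicit in its appeal to~(\ref{eq1.6new}).
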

\begin{proof}
We define $\nu_{k-1}$ to be the composition
\[
\xymatrix{
E_{k-1}\ar@{->}[r]^-{\tau_{k-1}}&
J_{k-1}\ar@{->}[r]^-{\gamma_{k-1}}&
BW_n.
}
\]
From this it follows that we have a commutative
diagram of fibration sequences
\[
\xymatrix{
R_{k-1}\ar@{->}[r]\ar@{->}[d]&
E_{k-1}\ar@{->}[r]^-{\nu_{k-1}}\ar@{->}[d]_{\tau_{k-1}}&
BW_n\ar@{=}[d]\\
W_{k-1}\ar@{->}[r]&
J_{k-1}\ar@{->}[r]^-{\gamma_{k-1}}&
BW_n.
}
\]
Consequently the square
\[
\xymatrix{
R_{k-1}\ar@{->}[r]\ar@{->}[d]&
W_{k-1}\ar@{->}[d]\\
G_{k-1}\ar@{->}[r]&
D_{k-1}
}
\]
is the composition of two pullback
squares
\[
\xymatrix{
R_{k-1}\ar@{->}[r]\ar@{->}[d]&
E_{k-1}\ar@{->}[r]^-{\pi_{k-1}}\ar@{->}[d]_{\tau_{k-1}}&
G_{k-1}\ar@{->}[d]\\
W_{k-1}\ar@{->}[r]&
J_{k-1}\ar@{->}[r]^-{\xi_{k-1}}&
D_{k-1},
}
\]
so it is a pullback and the first diagram commutes up to
homotopy. The second diagram follows from the
definition of~$W_{k-1}$ and the third is a combination of the first
two.
\end{proof}
\begin{proposition}\label{prop6.10}
$\Omega F_{k-1}\simeq S^{2n-1}\times \Omega W_{k-1}$.
\end{proposition}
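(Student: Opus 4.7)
The plan is to start from the fibration $S^{2n-1} \to W_{k-1} \to F_{k-1}$ given by the left column of the second diagram in Proposition~\ref{prop6.9}. Looping and extending the Barratt--Puppe sequence to the left yields
\[
\Omega W_{k-1} \to \Omega F_{k-1} \xrightarrow{\partial} S^{2n-1} \to W_{k-1} \to F_{k-1},
\]
so that $\Omega F_{k-1}$ becomes a principal $\Omega W_{k-1}$-bundle over $S^{2n-1}$, classified by the fiber inclusion $\phi \colon S^{2n-1} \to W_{k-1}$. Consequently the desired homotopy equivalence is equivalent to producing a homotopy section of $\partial$.

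To construct such a section, I would apply the loop functor twice to the fibration $F_{k-1} \to D_{k-1} \to S^{2n+1}$ of~(\ref{eq5.2}) and pass to the connecting map, obtaining $\delta \colon \Omega^2 S^{2n+1} \to \Omega F_{k-1}$, then precompose with the double suspension $E^2 \colon S^{2n-1} \to \Omega^2 S^{2n+1}$ to form the candidate section $s = \delta \circ E^2$.

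To verify that $\partial \circ s$ is a homotopy equivalence, I invoke naturality of the connecting map along the map of column fibrations in the second diagram of Proposition~\ref{prop6.9}: the fiber map $E^2$ fits into a map of fibrations from $S^{2n-1} \to W_{k-1} \to F_{k-1}$ to $\Omega^2 S^{2n+1} \to J_{k-1} \to F_{k-1}$ over the common base $F_{k-1}$, yielding $E^2 \circ \partial \sim \partial_2$, where $\partial_2$ is the connecting map of the second column fibration. It therefore suffices to show $\partial_2 \circ \delta \sim \pm 1_{\Omega^2 S^{2n+1}}$; since $E^2$ induces an isomorphism on $\pi_{2n-1}$ and self-maps of the $p$-local $S^{2n-1}$ are classified by their degree, this will force $\partial \circ s$ to have degree $\pm 1$, hence to be a homotopy equivalence.

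The main obstacle is the path-space identification $\partial_2 \circ \delta \sim \pm 1_{\Omega^2 S^{2n+1}}$. Both connecting maps arise from fibrations sitting in the 3-by-3 configuration of Proposition~\ref{prop6.9}: geometrically, $\delta$ lifts a double loop in $S^{2n+1}$ to a path in $F_{k-1}$ via the fibration $F_{k-1} \to D_{k-1} \to S^{2n+1}$, while $\partial_2$ extracts a double loop in $S^{2n+1}$ from a loop in $F_{k-1}$ via the column~2 fibration. Tracking the path homotopies carefully should show that these operations are mutually inverse up to sign, but making the sign explicit requires a delicate bookkeeping argument through the two distinct loop-lifting constructions.
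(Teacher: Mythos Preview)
Your approach can in principle be made to work, but you are taking a long detour around a one-line argument. You correctly observe that the looped fibration $\Omega W_{k-1}\to\Omega F_{k-1}\to S^{2n-1}$ is principal, classified by the fibre inclusion $\phi\colon S^{2n-1}\to W_{k-1}$; the splitting is therefore equivalent to $\phi\sim *$. Rather than pursue this directly, you attempt to build an explicit section via the double suspension and then face the ``delicate bookkeeping'' of comparing two connecting maps in a $3\times 3$ diagram. That verification is genuinely awkward and, as you acknowledge, you have not completed it.

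The paper bypasses all of this by showing that $\phi$ is null for connectivity reasons. Extend the third diagram of Proposition~\ref{prop6.9} one step to the left:
\[
\xymatrix{
\Omega^2 S^{2n+1}\ar[r]\ar[d] & S^{2n-1}\ar[d]\\
{*}\ar[r]\ar[d] & W_{k-1}\ar[d]\\
\Omega S^{2n+1}\ar[r] & F_{k-1}.
}
\]
The top and bottom horizontal maps each have degree $p^r$ in dimension $2n-1$ (respectively $2n$), coming ultimately from the defining property of the Anick fibration. The middle row then forces $W_{k-1}$ to be $(4n-2)$-connected, so any map $S^{2n-1}\to W_{k-1}$ is null homotopic. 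That immediately trivialises the principal fibration and gives $\Omega F_{k-1}\simeq S^{2n-1}\times\Omega W_{k-1}$.

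So the missing idea is simply that $W_{k-1}$ is highly connected; once you see this, there is no need for your section $s$, the map $\delta$, or the comparison $\partial_2\circ\delta\sim\pm 1$.
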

\begin{proof}
Extending the third diagram of~\ref{prop6.9} to the left yields a diagram
\[
\xymatrix{
\Omega^2 S^{2n+1}\ar@{->}[r]\ar@{->}[d]&
S^{2n-1}\ar@{->}[d]\\
{*}\ar@{->}[r]\ar@{->}[d]&
W_{k-1}\ar@{->}[d]\\
\Omega S^{2n+1}\ar@{->}[r]&
F_{k-1};
}
\]
both of the horizontal maps have degree $p^r$ in dimension $2n$, so $W_{k-1}$
is $4n-2$ connected and the map $S^{2n-1}\to W_{k-1}$ is null homotopic.
From this it follows that $\Omega F_{k-1}\simeq S^{2n-1}\times \Omega
W_{k-1}$.
\end{proof}
\begin{proposition}\label{prop6.11}
The homomorphism 
\[
H^*(F_{k-1};Z_{(p)})\to H^*(W_{k-1};Z_{(p)})
\]
is
onto and
\[
H^j(W_{k-1};Z_{(p)})=\begin{cases}
Z_{(p)}/p^{r+s-1}&\text{if}\ j=2np^s\quad 0<s<k\\
Z_{(p)}/ip^r&\text{if}\ j=2ni,\ \text{otherwise}\\ 
0&\text{otherwise}.
\end{cases}
\]
\end{proposition}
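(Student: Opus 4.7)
The plan is to compute $H^*(W_{k-1};Z_{(p)})$ via the Serre cohomology spectral sequence of the fibration $S^{2n-1}\to W_{k-1}\to F_{k-1}$ from column one of the third diagram in Proposition~\ref{prop6.9}. By the inductive hypothesis~(\ref{eq6.8}), $H^*(F_{k-1};Z_{(p)})$ is $Z_{(p)}$ concentrated in even degrees $2mn$, with generators I will call $w_m$. Thus the $E_2$-page is supported on rows $q=0$ and $q=2n-1$, and only the transgression $d_{2n}$ can be nontrivial; write $d_{2n}(e)=c\cdot w_1$ for $e$ the generator of $H^{2n-1}(S^{2n-1};Z_{(p)})$. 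Proposition~\ref{prop6.10} and the $(4n-2)$-connectivity of $W_{k-1}$ established in its proof force $E_\infty^{2n,0}=Z_{(p)}/c$ to vanish, so $c$ is a unit.

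The Leibniz rule now gives $d_{2n}(w_{m-1}\cdot e)=c\cdot w_{m-1}w_1$, and the vanishing of $H^*(F_{k-1};Z_{(p)})$ in odd degrees forces $E_\infty^{*,2n-1}=0$. Combined, this shows simultaneously that the edge homomorphism $H^*(F_{k-1};Z_{(p)})\to H^*(W_{k-1};Z_{(p)})$ is surjective, and that $H^{2mn}(W_{k-1};Z_{(p)})=Z_{(p)}/\epsilon_m$ where $\epsilon_m\in Z_{(p)}$ is characterized by $w_{m-1}w_1=\epsilon_m\cdot w_m$ in the ring $H^*(F_{k-1};Z_{(p)})$, all other $H^j(W_{k-1};Z_{(p)})$ being zero.

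The remaining task is to identify the coefficients $\epsilon_m$, for which I will run a second Serre spectral sequence, namely that of the fibration $F_{k-1}\to D_{k-1}\to S^{2n+1}$ appearing in~(\ref{eq5.2}). The transgression in this spectral sequence has the form $d_{2n+1}(w_m)=\delta_m\cdot s\cdot w_{m-1}$ with $s$ a generator of $H^{2n+1}(S^{2n+1};Z_{(p)})$. From Proposition~\ref{prop5.1} and the universal coefficient theorem, $H^{2n+1}(D_{k-1};Z_{(p)})=Z/p^r$, $H^{2np^s+1}(D_{k-1};Z_{(p)})=Z/p$ for $1\leq s\leq k-1$, and otherwise zero in positive degrees; matching these values with the $E_\infty$-page forces $\delta_1=p^r$, $\delta_{p^s}=p$ for $1\leq s\leq k-1$, and $\delta_m$ a unit for all other $m\geq 2$. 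Writing products as $w_aw_b=\lambda_{a,b}\cdot w_{a+b}$ and imposing the derivation property on $d_{2n+1}$ yields the recursion $\lambda_{a,b}\delta_{a+b}=\delta_a\lambda_{a-1,b}+\delta_b\lambda_{a,b-1}$ with boundary $\lambda_{0,b}=\lambda_{a,0}=1$; a straightforward induction on $b$ gives $\lambda_{1,b}\delta_{b+1}=(b+1)\delta_1$, so $\epsilon_m=\lambda_{1,m-1}=mp^r/\delta_m$. Substituting the values of $\delta_m$ yields $\epsilon_{p^s}=p^{r+s-1}$ for $0<s<k$ and $\epsilon_m\sim mp^r$ (up to a unit) otherwise, recovering the stated formula.

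The main obstacle will be the accurate identification of the transgression coefficients $\delta_m$ via comparison with $H^*(D_{k-1};Z_{(p)})$, along with the verification that the derivation recursion indeed has the simple closed form solution above. A minor subtlety worth noting is that the case $m=1$ gives $\epsilon_1=1$ and hence $H^{2n}(W_{k-1};Z_{(p)})=0$, consistent with the $(4n-2)$-connectivity rather than with a naive reading of the ``otherwise'' branch of the formula at $i=1$.
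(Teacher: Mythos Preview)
Your argument is correct and follows the same overall strategy as the paper: run the Serre spectral sequence of $S^{2n-1}\to W_{k-1}\to F_{k-1}$, reducing everything to the single multiplicative question ``what is $w_1\cdot w_{m-1}$ in $H^*(F_{k-1};Z_{(p)})$?''. The paper answers this by looking one step \emph{down} the tower, using $\Omega S^{2n+1}\to F_{k-1}\to D_{k-1}$: the collapsed spectral sequence shows $\delta_{k-1}^*$ embeds $H^*(F_{k-1})$ into the divided power algebra $H^*(\Omega S^{2n+1};Z_{(p)})$, and the products are then read off from $u_iu_j=\binom{i+j}{i}u_{i+j}$. You instead look one step \emph{up}, using $F_{k-1}\to D_{k-1}\to S^{2n+1}$ and the derivation property of $d_{2n+1}$ to obtain the recursion $\lambda_{1,b}\delta_{b+1}=\delta_1+\delta_b\lambda_{1,b-1}$, which telescopes to $\lambda_{1,m-1}\delta_m=m\delta_1$. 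Both routes give the same answer; yours is a bit slicker in that it never names the divided power generators explicitly, while the paper's embedding makes the integral-domain property of $H^*(F_{k-1})$ transparent.

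One small correction: your sentence ``the vanishing of $H^*(F_{k-1};Z_{(p)})$ in odd degrees forces $E_\infty^{*,2n-1}=0$'' is not right as stated---that vanishing only kills $E_2^{\mathrm{odd},*}$, whereas $E_\infty^{2n(m-1),2n-1}=\ker(\times\,c\epsilon_m)$ sits in an even column. What you actually need is $\epsilon_m\neq 0$, which you establish two paragraphs later from $\epsilon_m=mp^r/\delta_m$. Reordering the argument (or simply noting that $H^*(F_{k-1})$ is an integral domain, being a subring of the divided power algebra) fixes this. The surjectivity of the edge map, on the other hand, does follow immediately from the odd-degree vanishing of $H^*(F_{k-1})$, since in total degree $2nm$ the only possible contribution besides $E_\infty^{2nm,0}$ would be $E_\infty^{2nm-2n+1,2n-1}$, which lies in an odd column.
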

\begin{proof}
Consider the Serre spectral sequence for the $p$-local
homology of the fibration
\[
\xymatrix{
\Omega S^{2n+1}\ar@{->}[r]^-{\delta_{k-1}}&
F_{k-1}\ar@{->}[r]^-{\sigma_{k-1}}&
D_{k-1}
}
\]
Since $E^2_{p,q}$ is only nonzero when $p$ and $q$ are divisible by
$2n$, $E^2_{p,q}=E^{\infty}_{p,q}$. We assume the result~(\ref{eq6.8}) for the case
$k-1$ by
induction. Since $E^{\infty}_{p,q}$ has finite order when $p>0$ and
$H_*(F_{k-1};Z_{(p)})$ is free, all extensions are
nontrivial.
Let $u_i\in H^{2ni}(\Omega S^{2n+1})$
be the generator dual to the $i^{\text{th}}$ power of a chosen
fixed generator in~$H_{2n}(\Omega S^{2n+1})$, so
\[
u_iu_j=\left(\begin{matrix}
i+j\\
i
\end{matrix}\right)u_{i+j}.
\]
Using the nontrivial extensions in the Serre spectral sequence, we can
choose generators $e_i\in H^{2ni}(F_{k-1};Z_{(p)})$ so that
\[
(\delta_{k-1})^*(e_i)=\begin{cases}
p^{r+d-1}u_i&\text{if}\ p^{d-1}\leqslant i <p^d\quad d<k\\
p^{r+k-1}u_i&\text{if}\ i\geqslant p^{k-1}.
\end{cases}
\]
Since $(\delta_{k-1})^*$ is a monomorphism, it is easy to check that
\[
e_1e_{i-p}\begin{cases}
ip^{r-1}e_i&\text{if}\ i=p^s\quad 0<s<k\\
ip^re_i&\text{otherwise}.
\end{cases}
\]
It now follows from the $p$-local cohomology
Serre spectral sequence for the fibration
\[
\xymatrix{
S^{2n-1}\ar@{->}[r]&W_{k-1}\ar@{->}[r]&F_{k-1}
}
\]
that 
\[
d_{2n}(e_{i-1}\otimes u)=\begin{cases}
ip^{r-1}e_i&\text{if}\ i=p^s\quad 0<s<k\\
ip^re_i&\text{otherwise}.
\end{cases}
\]
From this we can read off the cohomology of $W_{k-1}$.
\end{proof}
\begin{proposition}\label{prop6.12}
The homomorphism
\[
\xymatrix{
H^{2np^k}(W_{k-1};Z_{(p)})\ar@{->}[r]&H^{2np^k}(T;Z_{(p)})
}
\]
is nontrivial of order~$p$.
\end{proposition}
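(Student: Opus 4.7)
The plan is to factor the composition $T \to W_{k-1} \to F_{k-1}$ through the Anick map $T \to \Omega S^{2n+1}$ of (\ref{eq1.1a}), using the commutative square furnished by the first diagram of Proposition \ref{prop6.9}:
\[
\xymatrix{
T \ar@{->}[r] \ar@{->}[d] & \Omega S^{2n+1} \ar@{->}[d]^{\delta_{k-1}}\\
W_{k-1} \ar@{->}[r] & F_{k-1}
}
\]
Here the horizontal arrows come from the middle row of that diagram, and the vertical arrows are the fiber inclusions of the two right-hand columns. The generator $e_{p^k}$ of $H^{2np^k}(W_{k-1}; Z_{(p)}) = Z/p^{r+k}$ is by construction the image of the generator $e_{p^k} \in H^{2np^k}(F_{k-1}; Z_{(p)}) = Z_{(p)}$ (see Proposition \ref{prop6.11}), so its image in $H^{2np^k}(T; Z_{(p)})$ equals the image of $e_{p^k}$ under the composition $H^{*}(F_{k-1}) \to H^{*}(\Omega S^{2n+1}) \to H^{*}(T)$. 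By the explicit choice in the proof of Proposition \ref{prop6.11}, $(\delta_{k-1})^{*}(e_{p^k}) = p^{r+k-1} u_{p^k}$, since $p^k \geq p^{k-1}$.

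Next I analyze the Serre spectral sequence for the fibration $S^{2n-1} \to T \to \Omega S^{2n+1}$ coming from (\ref{eq1.1a}). Only the rows $q = 0$ and $q = 2n-1$ contribute. Because $v_0 \in H^{2n}(T;Z_{(p)})$ has order exactly $p^r$ (Theorem \ref{theor2.14}(f)), the transgression $d_{2n}$ must satisfy $d_{2n}(v) = p^r u$ (up to a unit) on the generator $v \in H^{2n-1}(S^{2n-1};Z_{(p)})$; by the Leibniz rule and the divided-power identity $u \cdot u_{i-1} = i\, u_i$ this yields $d_{2n}(u_{i-1} v) = i p^r u_i$. Hence $E_{\infty}^{2ni,0} = Z_{(p)}/(i p^r)$, and in particular $E_{\infty}^{2np^k,0} = Z_{(p)}/p^{r+k}$. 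Since $H^{*}(\Omega S^{2n+1})$ vanishes in odd dimensions, this is the only nonzero contribution to $H^{2np^k}(T;Z_{(p)})$, so the edge homomorphism sends $u_{p^k}$ to a generator of $Z/p^{r+k}$.

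Combining the two steps, the image of $e_{p^k}$ in $H^{2np^k}(T; Z_{(p)})$ equals $p^{r+k-1}$ times a generator of $Z/p^{r+k}$---an element of order exactly $p$, as required. No significant obstacle arises: the proof simply transports the order of $v_0$ through the divided-power structure of $H^{*}(\Omega S^{2n+1})$ and combines it with the restriction formula already available from Proposition \ref{prop6.11}.
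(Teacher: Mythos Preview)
Your proof is correct and follows essentially the same route as the paper's: both use the commutative square from Proposition~\ref{prop6.9} relating $T\to\Omega S^{2n+1}$ and $W_{k-1}\to F_{k-1}$, and both reduce the question to knowing that $(\delta_{k-1})^*$ has degree $p^{r+k-1}$ (from Proposition~\ref{prop6.11}) and that $H^{2np^k}(\Omega S^{2n+1};Z_{(p)})\to H^{2np^k}(T;Z_{(p)})$ is surjective onto $Z/p^{r+k}$. The paper simply asserts the latter surjectivity, whereas you supply a Serre spectral sequence computation for the fibration $S^{2n-1}\to T\to\Omega S^{2n+1}$; this extra detail is correct and could equally be replaced by a direct appeal to Theorem~\ref{theor2.14}(f).
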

\begin{proof}
From \ref{prop6.9} we have a homotopy commutative square
\[
\xymatrix{
T\ar@{->}[r]\ar@{->}[d]&\Omega S^{2n+1}\ar@{->}[d]_{\delta_{k-1}}\\
W_{k-1}\ar@{->}[r]&F_{k-1}
}
\]
to which we apply cohomology
\[
\xymatrix{
H^{2np^k}(T)&
H^{2np^k}(\Omega S^{2n+1})\ar@{->}[l]\\
H^{2np^k}(W_{k-1})\ar@{->}[u]&
H^{2np^k}(F_{k-1})\ar@{->}[u]^{\delta^*_{k-1}}
\ar@{->}[l]
}
\]
which we evaluate
\[
\xymatrix{
Z/ p^{r+k}&
Z_{(p)}\ar@{->}[l]\\
Z/ p^{r+k}\ar@{->}[u]&
Z_{(p)}\ar@{->}[l]\ar@{->}[u]^{p^{r+k-1}} 
}
\]
where the two horizontal arrows are epimorphisms.
The result follows.
\end{proof}
\begin{proposition}\label{prop6.13}
The map $T\to R_{k-1}$ extends to a map
\[
T/ T^{2np^k-2}\to R_{k-1}
\]
such that the composition
\[
\xymatrix{
P^{2np^k}(p^{r+k})=T^{2np^k}/ T^{2np^k-2}\ar@{->}[r]&
T/ T^{2np^k-2}\ar@{->}[r]&
R_{k-1}\ar@{->}[r]&
R_k
}
\]
is null homotopic.
\end{proposition}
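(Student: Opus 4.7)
The statement requires (i) extending $T\to R_{k-1}$ over $T/T^{2np^k-2}$ and (ii) arranging the extension so that its composition with $R_{k-1}\to R_k$ kills the bottom cell $P^{2np^k}(p^{r+k})=T^{2np^k}/T^{2np^k-2}$. I will treat these in turn.

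For part~(i), the cofibration $T^{2np^k-2}\to T\to T/T^{2np^k-2}$ reduces the extension problem to showing $T^{2np^k-2}\hookrightarrow T\to R_{k-1}$ is null homotopic. Using the fiber sequence $T\to R_{k-1}\to G_{k-1}$ of Proposition~\ref{prop6.9}, this vanishing is equivalent to factoring $T^{2np^k-2}\hookrightarrow T$ through the connecting map $h_{k-1}\colon \Omega G_{k-1}\to T$. I would construct such a factorization from the section $g\colon T\to \Omega G$ of $h\colon \Omega G\to T$ given by Theorem~\ref{theor2.14}(a). The key input is that $G/G_{k-1}$ is $(2np^k-1)$-connected: by~(\ref{eq1.7new}), $G_k/G_{k-1}\simeq P^{2np^k+1}(p^{r+k})$, whose bottom cell is in dimension $2np^k$. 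Hence $\Omega G_{k-1}\to\Omega G$ is $(2np^k-2)$-connected, so $g|_{T^{2np^k-2}}$ lifts to a map $L\colon T^{2np^k-2}\to\Omega G_{k-1}$ (the lift is unobstructed since the obstructions live in cohomology of a $(2np^k-2)$-dimensional complex with coefficients in the $(2np^k-2)$-connected homotopy fiber). Then $h_{k-1}\circ L\simeq hg|_{T^{2np^k-2}}=\mathrm{incl}$, providing the required factorization and hence the extension.

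For part~(ii), Theorem~\ref{theor4.4} gives $\nu_{k-1}\beta_k\sim *$, so $\beta_k\colon P^{2np^k}(p^{r+k})\to E_{k-1}$ lifts to $\tilde\beta_k\colon P^{2np^k}(p^{r+k})\to R_{k-1}$. I would arrange the extension of part~(i) to agree with $\tilde\beta_k$ on the bottom cell (there is freedom because any two lifts differ by an element in the image of $\Omega G_{k-1}\to T\to R_{k-1}$, which is null) and then show $\tilde\beta_k$ is killed by $R_{k-1}\to R_k$. The clutching construction (Proposition~\ref{prop2.1}) gives $E_k\simeq E_{k-1}\cup\bigl(\Omega S^{2n+1}\{p^r\}\times CP^{2np^k}(p^{r+k})\bigr)$ with attaching map restricting to $\beta_k$ on $\{*\}\times P^{2np^k}(p^{r+k})$, so $P^{2np^k}(p^{r+k})\xrightarrow{\beta_k}E_{k-1}\to E_k$ bounds a canonical cone. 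Provided $\nu_k$ is chosen (via the controlled extension theorem~\ref{theor6.1}, compatibly with $\nu_{k-1}$ on $E_{k-1}^{2np^k-2}$) so as to annihilate this canonical cone, the nullhomotopy lifts to $R_k=\mathrm{fib}(\nu_k)$, giving $\tilde\beta_k\sim *$ in $R_k$.

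The main obstacle is part~(ii): arranging the construction of $\nu_k$ so that it simultaneously annihilates the iterated Whitehead-product obstructions of Theorem~\ref{theor5.20} and the specific class represented by the canonical cone above $\beta_k$. This cone sits in dimension $2np^k+1$, exactly where the level-$k$ obstructions concentrate, so compatibility is a matter of careful bookkeeping in the construction to be carried out in section~\ref{subsec6.3}.
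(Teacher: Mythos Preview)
Your part~(i) is correct and is exactly the paper's argument: the inclusion $T^{2np^k-2}\hookrightarrow T$ factors through $\Omega G_{k-1}$ via the section $g$ and connectivity, and since $\Omega G_{k-1}\to T\to R_{k-1}$ is null (fiber sequence), the map extends over the cofiber.

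Your part~(ii), however, misses the point and has real gaps. First, the parenthetical about the freedom in extensions is wrong: two extensions over $T/T^{2np^k-2}$ differ by an element of $[\Sigma T^{2np^k-2},R_{k-1}]$, not by anything coming from $\Omega G_{k-1}\to T\to R_{k-1}$. More seriously, your plan to kill the class in $R_k$ demands that $\nu_k$ be chosen with a specific compatibility on the canonical cone over $\beta_k$; but Proposition~\ref{prop6.13} is used to prove~(\ref{eq6.8}), which in turn is a prerequisite for the skeleton-by-skeleton construction of $\gamma_k$ in section~\ref{subsec6.3}. At this stage one only has \emph{some} extension $\nu_k$ of $\nu_{k-1}$ (from the uncontrolled extension theorem), so appealing to controlled choices is circular.

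The paper's argument for~(ii) is purely structural and avoids all of this. The fibration $T\to R_{k-1}\to G_{k-1}$ is pulled back from $T\to R_k\to G_k$, giving a commutative square of cofibers
\[
\xymatrix{
T/\Omega G_{k-1}\ar[r]\ar[d] & R_{k-1}\ar[d]\\
T/\Omega G_k\ar[r] & R_k.
}
\]
Now run the same factorization at level $k$: the inclusion $T^{2np^{k+1}-2}\hookrightarrow T$ factors through $\Omega G_k$, yielding a square
\[
\xymatrix{
T/T^{2np^k-2}\ar[r]\ar[d] & R_{k-1}\ar[d]\\
T/T^{2np^{k+1}-2}\ar[r] & R_k.
}
\]
The restriction of the left vertical map to $T^{2np^k}/T^{2np^k-2}=P^{2np^k}(p^{r+k})$ is null, simply because $T^{2np^k}\subset T^{2np^{k+1}-2}$. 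That is the whole argument: the bottom cell at level $k-1$ is absorbed into the collapsed skeleton at level~$k$.
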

\begin{proof}
Since the fibration
\[
\xymatrix{
T\ar@{->}[r]&R_{k-1}\ar@{->}[r]&G_{k-1}
}
\]
is induced from the fibration over $G_k$, we have a homotopy
commutative square
\[
\xymatrix{
T/\Omega G_{k-1}\ar@{->}[r]\ar@{->}[d]&
R_{k-1}\ar@{->}[d]\\
T/\Omega G_{k}\ar@{->}[r]&
R_k.
}
\]
By \ref{theor2.14}(a) the inclusion $\xymatrix{T^{2np^k-2}\ar@{->}[r]&T}$ factors through
$\Omega G_{k-1}$,
this gives a homotopy commutative square
\[
\xymatrix{
T/T^{2np^k-2}\ar@{->}[r]\ar@{->}[d]&
R_{k-1}\ar@{->}[d]\\
T/T^{2np^{k+1}-2}\ar@{->}[r]&
R_k
}
\]
The result follows by restriction to $T^{2np^k}/T^{2np^k-2}$.
\end{proof}
\begin{proposition}\label{prop6.14}
Let $\widetilde{\alpha_k}\index{alphaktilde@$\widetilde{\alpha_k}$|LB}\colon P^{2np^k}(p^{r+k})\to R_{k-1}$ be the
composition of the first two maps in \ref{prop6.13}.
Then the composition
\[
\xymatrix{
P^{2np^k}(p^{r+k})\ar@{->}[r]^-{\widetilde{\alpha}_k}&
R_{k-1}\ar@{->}[r]&
W_{k-1}
}
\]
is nonzero in $p$ local cohomology.
\end{proposition}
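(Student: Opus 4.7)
The plan is to reduce the claim to the nonvanishing statement of Proposition~\ref{prop6.12} by exploiting the fact that the quotient maps we pass through are isomorphisms on $H^{2np^k}$.

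First, I will unpack the construction of $\widetilde{\alpha}_k$: by Proposition~\ref{prop6.13} there is an extension $\bar{\alpha}_k\colon T/T^{2np^k-2}\to R_{k-1}$ of the fibration inclusion $T\to R_{k-1}$ of Proposition~\ref{prop6.9}, and $\widetilde{\alpha}_k$ is the composition
\[
\xymatrix{
P^{2np^k}(p^{r+k})=T^{2np^k}/T^{2np^k-2}\ar@{->}[r]^-{j}& T/T^{2np^k-2}\ar@{->}[r]^-{\bar{\alpha}_k}& R_{k-1}.
}
\]
Let $q\colon T\to T/T^{2np^k-2}$ be the quotient. Because $\bar{\alpha}_k q$ is the map $T\to R_{k-1}$ of Proposition~\ref{prop6.9}, the composite $T\to R_{k-1}\to W_{k-1}$ from the top row of Proposition~\ref{prop6.9} factors as $(R_{k-1}\to W_{k-1})\circ\bar{\alpha}_k\circ q$, and likewise the composite we care about factors as $(R_{k-1}\to W_{k-1})\circ\bar{\alpha}_k\circ j$.

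Second, I claim both $q^*$ and $j^*$ are isomorphisms on $H^{2np^k}(-;Z_{(p)})$. Indeed, the cofibre sequence $T^{2np^k-2}\to T\to T/T^{2np^k-2}$ gives $q^*$ an iso in dimensions above the dimension of $T^{2np^k-2}$, in particular in dimension $2np^k$. Similarly, the cofibre sequence
\[
\xymatrix{
P^{2np^k}(p^{r+k})=T^{2np^k}/T^{2np^k-2}\ar@{->}[r]^-{j}&T/T^{2np^k-2}\ar@{->}[r]&T/T^{2np^k}
}
\]
has a $2np^k$-connected cofibre (by Theorem~\ref{theor2.14}(g), the next cells of $T$ after dimension $2np^k$ are in dimensions $2np^{k+1}-1$ and higher), so $j^*$ is an iso on $H^{2np^k}$.

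Finally, by Proposition~\ref{prop6.12} the class in $H^{2np^k}(W_{k-1};Z_{(p)})$ pulls back nontrivially to $H^{2np^k}(T;Z_{(p)})$ via $T\to R_{k-1}\to W_{k-1}$. Since $q^*$ is an isomorphism, its pullback to $H^{2np^k}(T/T^{2np^k-2};Z_{(p)})$ under $\bar{\alpha}_k$ composed with $R_{k-1}\to W_{k-1}$ is nonzero; since $j^*$ is an isomorphism, the further pullback under $j$ to $H^{2np^k}(P^{2np^k}(p^{r+k});Z_{(p)})$ is nonzero. This is exactly the composite $P^{2np^k}(p^{r+k})\xrightarrow{\widetilde{\alpha}_k} R_{k-1}\to W_{k-1}$, so it is nontrivial in $p$-local cohomology. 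The only subtle point is verifying that $j^*$ is a cohomology isomorphism in dimension $2np^k$, which requires knowing the cell structure of $T$ in this range; this is controlled by Theorem~\ref{theor2.14}(f),(g).
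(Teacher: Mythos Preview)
Your approach is essentially identical to the paper's: form the diagram with $P^{2np^k}(p^{r+k})$, $T/T^{2np^k-2}$, and $T$ along the top, observe that these three spaces have isomorphic $H^{2np^k}(-;Z_{(p)})$ under the natural maps, and invoke Proposition~\ref{prop6.12}.

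One small correction: your parenthetical claim that ``the next cells of $T$ after dimension $2np^k$ are in dimensions $2np^{k+1}-1$ and higher'' is false. By Theorem~\ref{theor2.14}(g), $H_*(T)\cong Z/p[v]\otimes\Lambda(u)$ with $|v|=2n$, $|u|=2n-1$, so $T$ has cells in every dimension of the form $2nj$ and $2nj-1$; the next cell after $2np^k$ is in dimension $2np^k+2n-1$, not $2np^{k+1}-1$. This does not damage your argument: the cofibre $T/T^{2np^k}$ is still $2np^k$-connected simply because it has no cells in dimensions $\leq 2np^k$, and that alone gives $j^*$ injective on $H^{2np^k}$, which is all you use. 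If you want the full isomorphism (as the paper asserts), note that both source and target are $Z/p^{r+k}$ by Theorem~\ref{theor2.14}(f), so an injection is automatically an isomorphism.
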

\begin{proof}
This follows from \ref{prop6.12} using the diagram
\[
\xymatrix{
P^{2np^k}(p^{r+k})\ar@{->}[r]\ar@{->}[dr]^-{\widetilde{\alpha}_k}&T/T^{2np^k-2}
\ar@{->}[d]&T\ar@{->}[d]
\ar@{->}[l]
\\
&R_{k-1}\ar@{->}[r]&W_{k-1}
}
\]
where the three spaces on the top have isomorphic cohomology in
dimension~$2np^k$.
\end{proof}
\begin{proof}[Proof of \ref{eq6.8}]
We assume the result for $F_{k-1}$ by induction. Since $F_k$
is the total space of a principal fibration over $D_k=D_{k-1}\cup
CP^{2np^k}(p)$
whose restriction to $D_{k-1}$ is $F_{k-1}$, we have by \ref{eq2.2}
\[
F_k/F_{k-1}=P^{2np^k+1}(p)\rtimes \Omega S^{2n+1};
\]
and consequently we have a short exact sequence
\[
\xymatrix{
0\ar@{->}[r]&H_{2nm}(F_{k-1};Z_{(p)})\ar@{->}[r]&H_{2nm}(F_k;Z_{(p)})\ar@{->}[r]&Z/p\ar@{->}[r]&0
}
\]
for $m\geqslant p^k$, while 
\[
H_{2nm}(F_{k-1};Z_{(p)})\simeq H_{2nm}(F_k;Z_{(p)})
\]
for
$m<p^k$. We
will prove that the extension is nontrivial.
It suffices to show that $H_{2np^k}(F_k;Z_{(p)})\simeq Z_{(p)}$ since
the module action of $H_*(\Omega S^{2n+1};Z_{(p)})$ on both $H_*(F_{k-1};Z_{(p)})$ and
$H_*(F_k;Z_{(p)})$ implies the result for all $m > p^k$. If this failed
we would conclude that $H_{2np^k}(F_k;Z_{(p)})\cong Z_{(p)}\oplus Z/p$. This
would imply that the homomorphism
\[
\xymatrix{
H^{2np^k}(F_k;Z_{(p)})\ar@{->>}[r]&H^{2np^k}(F_{k-1};Z_{(p)})
}
\]
is onto. We will show that this is impossible. Suppose then
that this homomorphism is onto and consider the homotopy
commutative diagram
\[
\xymatrix@R=60pt@C=38pt{
&&&&\Omega S^{2n+1}\ar@{->}[d]_{\delta_k}\\
P^{2np^k}(p^{r+k})\ar@{->}[rrrru]^-{L}\ar@{->}[r]^->>>>{\widetilde{\alpha}_k}&
R_{k-1}\ar@{->}[r]\ar@{->}[d]&
W_{k-1}\ar@{->}[r]\ar@{->}[d]&
F_{k-1}\ar@{->}[r]\ar@{->}[d]_{\sigma_{k-1}}&
F_k\ar@{->}[d]_{\sigma_k}\\
&G_{k-1}\ar@{->}[r]&
D_{k-1}\ar@{=}[r]&
D_{k-1}\ar@{->}[r]&
D_k.
}
\]
The map $L$ exists since the composition into $D_k$ factors
through the composition
\[
\xymatrix{
R_{k-1}\ar@{->}[r]&R_k\ar@{->}[r]&G_k\ar@{->}[r]&D_k;
}
\]
thus this composition is null homotopic by \ref{prop6.13}. Now
if 
\[
\xymatrix{H^{2np^k}(F_k;Z_{(p)})\ar@{->}[r]& H^{2np^k}(F_{k-1};Z_{(p)})}
\]
is onto then the entire horizontal
composition
\[
\xymatrix{
H^{2np^k}(F_k;Z_{(p)})\ar@{->}[r]&H^{2np^k}(P^{2np^k}(p^{r+k});Z_{(p)})
}
\]
is nonzero by \ref{prop6.11} and \ref{prop6.14}. But $\delta_k$ factors:
\[
\xymatrix{
\Omega S^{2n+1}\ar@{->}[r]^-{\delta_{k-1}}&F_{k-1}\ar@{->}[r]&F_k
}
\]
and $(\delta_{k-1})^*\colon H^{2np^k}(F_{k-1};Z_{(p)})\to H^{2np^k}(\Omega
S^{2n+1};Z_{(p)})$ is
divisible by $p^{r+k-1}$.
Consequently 
the image of~$\delta^*_k$ is divisible by $p^{r+k-1}$; since
$L^*\delta_k^*$ is nonzero 
and 
\[
H^{2np^k}(P^{2np^k}(p^{r+k});Z_{(p)})\cong Z/p^{r+k},
\]
we conclude that $L^*$
is onto.
This is impossible for then the composition
\[
\xymatrix{
P^{2np^k}(p^{r+k})\ar@{->}[r]^-{L}&\Omega
S^{2n+1}\ar@{->}[r]^-{H_{p^{k-1}}}&\Omega S^{2np^{k-1}+1}
}
\]
would be onto, where $H_{p^{k-1}}$ is the James Hopf invariant. But there is never a map
\[
P^{2mp}(p^{r+k})\to \Omega S^{2m+1}
\]
which is onto in cohomology when $r+k>1$ since the adjoint
\[
\xymatrix{
P^{2np-1}(p^{r+k})\ar@{->}[r]&\Omega^2 S^{2m+1}
}
\]
would also be onto. Such a map would not commute with
the Bockstein. Consequently the extension is nontrivial and the cohomology is free.
\end{proof}
\begin{corollary}\label{cor6.15}
The induced homomorphism
\[
\xymatrix{
H_{2ni}(F_{k-1};Z_{(p)})\ar@{->}[r]&H_{2ni}(F_k;Z_{(p)})
}
\]
is an isomorphism when $i<p^k$ and has degree $p$ if $i\geqslant p^k$.
Furthermore the principal action defines an isomorphism
\begin{gather*}
\xymatrix{
H_{2ni}(\Omega S^{2n+1};Z_{(p)})\otimes H_{2np^k}(F_k;Z_{(p)})\ar@{->}[r]&
H_{2np^k+2ni}(F_k;Z_{(p)})
}\\*[-18pt]
\makebox[\textwidth]{\hfill\qed}
\end{gather*}
\end{corollary}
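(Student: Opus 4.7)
The plan is to read both parts off the long exact sequence of the cofibration $F_{k-1}\to F_k\to F_k/F_{k-1}$, coupled with the clutching identification from~\ref{prop2.1} that $F_k/F_{k-1}\simeq \Omega S^{2n+1}\ltimes P^{2np^k+1}(p)$, using (\ref{eq6.8}) to pin down the extension.

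For the first statement, K\"unneth (applied to the half-smash) gives $\tilde H_m(F_k/F_{k-1};Z_{(p)})=Z/p$ precisely when $m=2n(p^k+i)$ for some $i\geqslant 0$, and zero otherwise. Since $H_*(F_{k-1};Z_{(p)})$ is concentrated in degrees divisible by~$2n$ (by the inductive form of (\ref{eq6.8})), every connecting map in the long exact sequence has either source or target equal to zero, so the sequence splits into isomorphisms $H_{2ni}(F_{k-1})\cong H_{2ni}(F_k)$ for $i<p^k$ and short exact sequences
\[
0\to H_{2ni}(F_{k-1};Z_{(p)})\to H_{2ni}(F_k;Z_{(p)})\to Z/p\to 0 \qquad (i\geqslant p^k).
\]
Combining with (\ref{eq6.8}), which identifies the middle term with $Z_{(p)}$, forces the left-hand map to be multiplication by~$p$.

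For the second statement, I choose a generator $\eta\in H_{2np^k}(F_k;Z_{(p)})$ and let $u_i$ generate $H_{2ni}(\Omega S^{2n+1};Z_{(p)})$; I need to show $a_*(u_i\otimes\eta)$ generates $H_{2n(p^k+i)}(F_k;Z_{(p)})\cong Z_{(p)}$ for every $i\geqslant 0$. By~\ref{prop2.1} the principal action preserves~$F_{k-1}$, and the explicit formula $\varphi=a\circ(1\times\overline{\theta})$ given there identifies the induced action on $F_k/F_{k-1}\simeq\Omega S^{2n+1}\ltimes P^{2np^k+1}(p)$ with left multiplication on the first factor. Under this identification $\tilde H_*(F_k/F_{k-1};Z_{(p)})$ is a free rank-one $H_*(\Omega S^{2n+1};Z_{(p)})$-module generated by the image~$\overline{\eta}$ of~$\eta$, which generates $\tilde H_{2np^k}(F_k/F_{k-1})\cong Z/p$ because $\eta$ is not divisible by~$p$. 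By naturality of~$a$ with respect to the quotient $F_k\to F_k/F_{k-1}$, the class $a_*(u_i\otimes\eta)$ projects to $u_i\cdot\overline{\eta}$, a generator of the $Z/p$ appearing on the right of the short exact sequence; since the middle term is $Z_{(p)}$ with the right-hand map equal to reduction mod~$p$, this forces $a_*(u_i\otimes\eta)$ to be a generator.

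The one point that needs to be checked carefully is the identification of the descended action on $F_k/F_{k-1}$ with left multiplication on the first factor of $\Omega S^{2n+1}\ltimes P^{2np^k+1}(p)$. This is not a separate argument but is forced by the formula $\varphi=a\circ(1\times\overline{\theta})$ of the clutching construction, which exhibits the cofiber precisely as $\Omega S^{2n+1}$ acting on itself in the first factor while being the identity on the cone coordinate in the second. Once this compatibility is in hand, the remainder is routine bookkeeping with the long exact sequence and an extension of $Z/p$ by $Z_{(p)}$.
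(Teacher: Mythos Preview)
Your proof is correct and follows the same route as the paper. The corollary is stated with a bare \qed in the paper because both parts are already contained in the proof of~(\ref{eq6.8}): the short exact sequence $0\to H_{2nm}(F_{k-1})\to H_{2nm}(F_k)\to Z/p\to 0$ for $m\geqslant p^k$ together with the freeness of $H_*(F_k;Z_{(p)})$ gives the first statement, and the module action via the clutching identification is exactly what the paper invokes (``the module action of $H_*(\Omega S^{2n+1};Z_{(p)})$ on both $H_*(F_{k-1};Z_{(p)})$ and $H_*(F_k;Z_{(p)})$ implies the result for all $m>p^k$'') to propagate from degree $2np^k$ upward. Your argument for the second part---pushing to $F_k/F_{k-1}$ where the module structure is visibly free of rank one, then using that the projection is reduction mod~$p$---is a clean way to make that sentence explicit, and your justification via the equivariance built into the formula $\varphi=a\circ(1\times\overline{\theta})$ of~\ref{prop2.1} is the right one.
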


This completes the first task of this section. Our
second task will be to give a sharper
understanding of the spaces $R_{k-1}$ and, in particular,
$W_{k-1}$. Recall that by \ref{theor2.14}(a), $G_{k-1}$ is a retract of $\Sigma T^{2np^k-2}$.
Consequently we have a sequence of induced fibrations
from \ref{prop6.9}
\[
\xymatrix{
T\ar@{=}[r]\ar@{->}[d]&T\ar@{=}[r]\ar@{->}[d]&T\ar@{->}[d]\\
R_{k-1}\ar@{->}[r]\ar@{->}[d]&Q_{k-1}\ar@{->}[r]\ar@{->}[d]&R_{k-1}\ar@{->}[d]\\
G_{k-1}\ar@{->}[r]&\Sigma T^{2np^k-2}\ar@{->}[r]&G_{k-1}
}
\]
from which we see that $R_{k-1}$
is a retract of $Q_{k-1}$. Using the clutching construction (\ref{prop2.1}), we
see that $Q_{k-1}$ is homotopy equivalent to a pushout with $E=Q_{k-1}$ and
$E_0=T=F$
\[
\xymatrix{
T\ar@{->}[r]&Q_{k-1}\\
T \times T^{2np^k-2}\ar@{->}[r]\ar@{->}[u]^{\varphi}&T\times CT^{2np^k-2}\ar@{->}[u] 
}
\]
where $\varphi$ is the restriction of the action map:
\[
\xymatrix{
T\times T^{2np^k-2}\ar@{->}[r]&T\times\Omega G_{k-1}\ar@{->}[r]^-{a}&T.
}
\]
Since the inclusion $T^{2np^k-2}\to T$ factors through $\Omega G_{k-1}$, the composition
\[
\xymatrix{
T^{2np^k-2}\ar@{->}[r]&T\ar@{->}[r]&Q_{k-1}
}
\]
is null homotopic; by applying \ref{eq2.2} we have an equivalence 
$Q_{k-1}/T^{2np^k-2}\simeq Q_{k-1}\vee \Sigma T^{2np^k-2}$.
However, from the pushout diagram, we have
\[
Q_{k-1}/T\simeq T\ltimes \Sigma T^{2np^k-2}.
\]
Restricting to the $2np^k-2$ skeleton, we get
\[
Q^{2np^k-2}_{k-1}\vee\Sigma T^{2np^k-2}\simeq (T\ltimes \Sigma T^{2np^k-2})^{2np^k-2}
\]
so $Q^{2np^k-2}_{k-1}\simeq \big(T\wedge\Sigma T^{2np^k-2}
\big)^{2np^k-2}$. Now $T\wedge \Sigma T$ is a wedge
of Moore spaces by \ref{theor2.14}(d) in section~\ref{subsec2.3} and only has cells in dimensions congruent
to $-1$, $0$, or~$1$ $\text{mod}\, 2n$. Consequently $Q^{2np^k-2}_{k-1}$ is
a wedge of
Moore spaces, and the largest exponent is the
same as the largest exponent in $\Sigma T^{2np^k-2}$, which is $p^{r+k-1}$.
Since $R_{k-1}$ is a retract of $Q_{k-1}$, we have proved
\begin{proposition}\label{prop6.16}
$R^{2np^k-2}_{k-1}$ is a wedge of $\text{mod}\, p^s$ Moore spaces $P^m(p^s)$ for
$r\leqslant s<r+k$.
\end{proposition}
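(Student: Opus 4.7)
The plan is to exploit the retraction of $R_{k-1}$ off the auxiliary space $Q_{k-1}$ constructed from the splitting of $T$ off $\Omega G_{k-1}$, so that the problem reduces to analyzing a smash product of $T$ with a skeleton of itself. Almost every ingredient has already been assembled in the text just before the statement, so my task is mainly to organize the argument and track the exponents.

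First, I would use Theorem \ref{theor2.14}(a), which exhibits $G_{k-1}$ as a retract of $\Sigma T^{2np^k-2}$, to realize $R_{k-1}$ as a retract of the induced-fiber total space $Q_{k-1}$ sitting in the diagram of fibrations displayed in the excerpt. Next, I would apply the clutching construction \ref{prop2.1} to $Q_{k-1}\to \Sigma T^{2np^k-2}$, obtaining a pushout whose attaching map is the restriction of the principal action $T\times \Omega G_{k-1}\to T$. Because $T^{2np^k-2}\to T$ factors through $\Omega G_{k-1}$, the composite $T^{2np^k-2}\to T\to Q_{k-1}$ is null homotopic, so splitting \ref{eq2.2} gives
\[
Q_{k-1}/T^{2np^k-2}\simeq Q_{k-1}\vee \Sigma T^{2np^k-2},
\]
while the clutching pushout gives $Q_{k-1}/T\simeq T\ltimes \Sigma T^{2np^k-2}$. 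Comparing these in the range below $2np^k-1$ yields
\[
Q_{k-1}^{2np^k-2}\simeq \bigl(T\wedge \Sigma T^{2np^k-2}\bigr)^{2np^k-2}.
\]

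Now I would invoke Theorem \ref{theor2.14}(d), which says $\Sigma T\wedge T$ is a wedge of Moore spaces; in particular its $2np^k-2$ skeleton is a finite wedge of Moore spaces, and since $R_{k-1}$ is a retract of $Q_{k-1}$, the skeleton $R_{k-1}^{2np^k-2}$ is a retract of a wedge of Moore spaces. A retract of such a wedge, localized at $p\geqslant 5$, is itself a wedge of Moore spaces (standard consequence of the atomic decomposition for $p$-local Moore spaces), which establishes the structural half of the claim.

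The remaining and more delicate point is bounding the exponents. Here I would read off the Bockstein structure from Theorem \ref{theor2.14}(g): the highest-order Bockstein $\beta^{(r+i)}$ acts on $v^{p^i}$, so the Moore space summands of $\Sigma T^{2np^k-2}$ have orders exactly $p^{r+i}$ for $0\leqslant i\leqslant k-1$, giving maximum exponent $p^{r+k-1}$ (the summand with $i=k$ requires the class $v^{p^k}$ in dimension $2np^k$, which is just outside the skeleton). Smashing with $T$ preserves this bound on exponents for each summand, and taking retracts cannot increase the exponent of any indecomposable Moore summand. The main obstacle, and the only point not essentially handed to us by the preceding paragraphs, is verifying that no summand of exponent smaller than $p^r$ appears; this follows because every cell of $T$ itself has Bockstein order at least $p^r$ (the base case $\beta^{(r)}v = u$), and hence every Moore summand of $T\wedge \Sigma T^{2np^k-2}$ has exponent at least $p^r$. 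Combining these bounds gives Moore summands $P^m(p^s)$ with $r\leqslant s < r+k$, as required.
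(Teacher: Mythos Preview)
Your proposal is correct and follows essentially the same route as the paper: retract $R_{k-1}$ off $Q_{k-1}$, use the clutching construction to identify $Q_{k-1}^{2np^k-2}$ with $(T\wedge\Sigma T^{2np^k-2})^{2np^k-2}$, invoke \ref{theor2.14}(d), and read off the exponent bound from the Bockstein structure of $T$. The paper's argument is given in the paragraph immediately preceding the proposition, and you have reproduced it faithfully; your treatment of the lower bound $s\geqslant r$ is slightly more explicit than the paper's, which leaves it implicit.
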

\begin{remark*}
There are no spheres in this wedge as there are no
Moore spaces in $\Sigma T^{2np^k-2}\wedge T$ of dimension $2np^k-1$.
\end{remark*}
\begin{proposition}\label{prop6.17}
The homomorphism in integral homology
\[
\xymatrix{
H_i(R_{k-1};Z_{(p)})\ar@{->}[r]&H_i(W_{k-1};Z_{(p)})
}
\]
is onto for all $i$ and split for $i<2np^k-1$.
\end{proposition}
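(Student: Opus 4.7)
The plan is to analyze the cofiber of the map $R_{k-1}\to W_{k-1}$ by means of the clutching construction. From the top square of the first diagram in Proposition~\ref{prop6.9}, the map $R_{k-1}\to W_{k-1}$ is the pullback of $G_{k-1}\to D_{k-1}$ along the fibration $T\to W_{k-1}\to D_{k-1}$. Since $D_{k-1}=G_{k-1}\cup_{\pi_{k-1}c}CC_{k-1}$ by construction, equation~(\ref{eq2.2}) applied to this fibration yields a cofibration
\[
R_{k-1}\to W_{k-1}\to T\ltimes \Sigma C_{k-1}\simeq \Sigma C_{k-1}\vee (T\wedge \Sigma C_{k-1}).
\]

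For the surjectivity (assuming $n\geqslant 2$), I will compare the supports of the $p$-local homologies. By Proposition~\ref{prop6.11} together with universal coefficients, $H_*(W_{k-1};Z_{(p)})$ is concentrated in degrees $2nj-1$ for $j\geqslant 1$, with each such group cyclic of order $p^{r+s-1}$ when $j=p^s$, $0<s<k$, and $p^{r+\nu_p(j)}$ otherwise. Similarly, the ring structure of Theorem~\ref{theor2.14}(f) shows that $H_*(T;Z_{(p)})$ is concentrated in degrees $2nm-1$, while $H_*(\Sigma C_{k-1};Z_{(p)})$ is concentrated in degrees $2np^\ell+1$. Applying Künneth, the homology of $T_+\wedge \Sigma C_{k-1}$ lives only in degrees congruent to $0$ or $1$ modulo $2n$, whereas the target $H_*(W_{k-1};Z_{(p)})$ lives in degrees congruent to $-1\pmod{2n}$. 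For $n\geqslant 2$ these residue classes are disjoint, so $H_i(W_{k-1};Z_{(p)})\to H_i(W_{k-1}/R_{k-1};Z_{(p)})$ is zero in every degree; the long exact sequence of the cofibration then forces $H_i(R_{k-1};Z_{(p)})\to H_i(W_{k-1};Z_{(p)})$ to be surjective.

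For the splitting in the range $i<2np^k-1$, I will invoke Proposition~\ref{prop6.16}: in this range $R_{k-1}$ is the $2np^k-2$ skeleton, which is a wedge of Moore spaces $P^m(p^s)$ with $r\leqslant s<r+k$. Hence $H_i(R_{k-1};Z_{(p)})$ is a direct sum of cyclic $p$-groups with exponents bounded by $p^{r+k-1}$; the same bound holds for the cyclic target $H_i(W_{k-1};Z_{(p)})$ (since $j<p^k$ forces $r+s-1\leqslant r+k-2$ when $j=p^s$, and $r+\nu_p(j)\leqslant r+k-1$ otherwise). To produce the splitting, I will exhibit, for each $i=2nj-1$ in range, a Moore space summand $P^{2nj}(p^{r+\nu_p(j)})$ (or $P^{2nj}(p^{r+s-1})$ when $j=p^s$) in the wedge decomposition of $R_{k-1}^{2np^k-2}$ whose fundamental class maps to a generator of $H_i(W_{k-1};Z_{(p)})$ of matching order; such summands are produced by the maps $a(j)$, $c(j)$ and their lifts $\widetilde{\beta}_j$ constructed in Theorem~\ref{theor4.4} and Proposition~\ref{prop6.14}.

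The main obstacle will be the precise matching of torsion orders in the splitting step: surjectivity of an abelian group homomorphism onto a cyclic $p$-group does not in general give a section (witness $Z/p^2\twoheadrightarrow Z/p$), so the splitting really requires locating a summand of the correct order mapping isomorphically, which depends on the detailed structure of $R_{k-1}^{2np^k-2}$ as analysed in Proposition~\ref{prop6.16} and on the Bockstein structure detecting the exponents on both sides. A secondary subtlety is that the degree-disjointness argument used for surjectivity fails when $n=1$, since degrees $-1$ and $1$ coincide modulo $2n=2$; the $n=1$ case will presumably be handled by a separate calculation or by appeal to the analysis in the appendix.
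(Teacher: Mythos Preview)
Your surjectivity argument is correct and is essentially the paper's: the paper phrases it via the Mayer--Vietoris sequence of the pushout
\[
\xymatrix{T\times C_{k-1}\ar[r]\ar[d] & R_{k-1}\ar[d]\\ T\times \dotcC(C_{k-1})\ar[r] & W_{k-1}}
\]
and observes that $H_{2sn-2}(T\times C_{k-1};Z_{(p)})=0$ because there are no cells in that dimension, which is the same residue argument you give for the cofiber $T\ltimes\Sigma C_{k-1}$. Both versions require $n>1$, as you note.

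The splitting argument, however, has a real gap. Your global exponent bound $p^{r+k-1}$ from Proposition~\ref{prop6.16} is too coarse: for $j$ coprime to~$p$ the target $H_{2nj-1}(W_{k-1};Z_{(p)})\cong Z/p^r$, while elements of $H_{2nj-1}(R_{k-1};Z_{(p)})$ may well have order up to $p^{r+k-1}$, and a surjection $Z/p^{r+k-1}\twoheadrightarrow Z/p^r$ does not split. You then propose to exhibit explicit Moore summands via the maps $a(j)$, $c(j)$, $\widetilde{\beta}_j$ of Theorem~\ref{theor4.4} and Proposition~\ref{prop6.14}; but those maps exist only at prime-power indices $j=p^s$, so for the generic $j\in[2,p^k-1]$ you have no candidate summand. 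Producing such summands for all $j$ amounts to the later construction of the maps $f(m)$ in Proposition~\ref{prop7.8}, which is substantially harder.

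The paper avoids this entirely by proving a \emph{degree-by-degree} exponent inequality: since $R_{k-1}$ is a retract of $Q_{k-1}$ and $Q_{k-1}^{2np^k-2}\simeq (T\wedge\Sigma T^{2np^k-2})^{2np^k-2}$, one has
\[
\exp H_{2nj-1}(R_{k-1};Z_{(p)})\leqslant \exp H_{2nj-1}(\Sigma T\wedge T;Z_{(p)}),
\]
and the right-hand side is bounded by $p^{r+\nu_p(j)}$ (resp.\ $p^{r+s-1}$ when $j=p^s$), exactly matching the order of the cyclic target. A surjection of finite abelian $p$-groups onto a cyclic group of order $p^e$ splits as soon as the source has exponent $\leqslant p^e$, so this finishes the argument without locating any specific summand.
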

\begin{proof}
Since $D_{k-1}$ is the mapping cone of the composition
\[
\xymatrix{
C_{k-1}=\bigvee\limits^{k-1}_{i=1}P^{2np^k-1}(p^{r+i-1})\ar@{->}[r]^-{c}&E_{k-1}\ar@{->}[r]^-{\pi_{k-1}}&G_{k-1},
}
\]
we apply \ref{eq2.2} to the fibrations in~\ref{prop6.9}
\[
\xymatrix{
T\ar@{=}[r]\ar@{->}[d]&T\ar@{->}[d]\\
R_{k-1}\ar@{->}[r]\ar@{->}[d]&
W_{k-1}\ar@{->}[d]\\
G_{k-1}\ar@{->}[r]&
D_{k-1}
}
\]
and we can then describe $W_{k-1}$ by a pushout diagram
\[
\xymatrix{
T\times C(C_{k-1})\ar@{->}[r]&W_{k-1}\\
T\times C_{k-1}\ar@{->}[u]\ar@{->}[r]&R_{k-1}.\ar@{->}[u] 
}
\]
This leads to a long exact sequence
\begin{multline*}
\xymatrix{
\dots\ar@{->}[r]&
\widetilde{H}_i(T\times C_{k-1};Z_{(p)})\ar@{->}[r]&
\widetilde{H}_i(R_{k-1};Z_{(p)})\otimes \widetilde{H}_i(T;Z_{(p)})}\\
\xymatrix{
\ar@{->}[r]&
\widetilde{H}_i(W_{k-1};Z_{(p)})\ar@{->}[r]&
\widetilde{H}_{i-1}(T\times C_{k-1};Z_{(p)}).
}
\end{multline*}
We assert that the homomorphism
\[
\xymatrix{
\widetilde{H}_i(W_{k-1};Z_{(p)})\ar@{->}[r]&
\widetilde{H}_{i-1}(T\times C_{k-1};Z_{(p)})
}
\]
is trivial. By~\ref{prop6.11}, $\widetilde{H}_i(W_{k-1};Z_{(p)})$ is only nontrivial
when $i=2sn-1$
for some $s\geqslant 2$. But $H_{2sn-2}(T\times C_{k-1};Z_{(p)})=0$ since there are
no cells
in these dimensions. Now since $\pi_2\colon T\times C_{k-1}\to T$ is onto
in
homology, we conclude that
$\xymatrix{H_i(R_{k-1};Z_{(p)})\ar@{->}[r]&H_i(W_{k-1};Z_{(p)})}$ is onto.
To show that this is split when $i<2np^k-1$, we note that since
$H_i(W_{k-1};Z_{(p)})$ is cyclic by~\ref{prop6.11}, it suffices to show that the
exponent of $H_i(R_{k-1};Z_{(p)})$ is not larger than the exponent of
$H_i(W_{n-1};Z_{(p)})$ for $i<2np^k-1$. By~\ref{prop6.11}, we have
\[
\exp\left(H_{i-1}(W_{k-1};Z_{(p)})\right)=
\begin{cases}
r+\nu_p(i)& i\neq p^s\\
r+s-1&i=p^s\quad 0<s<k.
\end{cases}
\]
But
\begin{align*}
\exp\left(H_{i-1}(R_{k-1};Z_{(p)})\right)&\leqslant\exp\left(H_{i-1}(Q_{k-1};Z_{(p)})\right)\\
&\leqslant
\exp\left(H_{i-1}\left(T\wedge\Sigma T^{2np^k-2};Z_{(p)}\right)\right)
\end{align*}
when $i-1\leqslant 2np^k-2$. However 
\begin{align*}
\makebox[92.5pt]{}
ip^rH_{2ni-1}(\Sigma T\wedge T;Z_{(p)})&=0\\
\intertext{and}
p^{r+s-1}H_{2np^s-1}(\Sigma T\wedge T;Z_{(p)})&=0.
\makebox[92.5pt]{}
\rlap{\qed}
\end{align*}\noqed
\end{proof}
\begin{proposition}\label{prop6.18}
$W^{2np^k-2}_{k-1}$ is a wedge of Moore spaces.
\end{proposition}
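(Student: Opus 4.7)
The plan is to construct a wedge of Moore spaces $M$ together with a map $f\colon M\to W_{k-1}$ that is a $p$-local homology isomorphism through dimension $2np^k-2$, and then promote $f$ to a homotopy equivalence onto the $(2np^k-2)$-skeleton.

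First I would compute the integral $p$-local homology of $W_{k-1}$ from Proposition~\ref{prop6.11} via universal coefficients: it is concentrated in dimensions $2ni-1$ for $i\geqslant 2$, with $H_{2ni-1}(W_{k-1};Z_{(p)})\cong Z_{(p)}/p^{s_i}$, where $s_i=r+j-1$ if $i=p^j$ with $0<j<k$ and $s_i=r+\nu_p(i)$ otherwise. In the range relevant to the $(2np^k-2)$-skeleton the nonzero homology occurs exactly in dimensions $2ni-1$ for $i=2,\dots,p^k-1$.

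Next, Proposition~\ref{prop6.16} gives a Moore space decomposition of $R_{k-1}^{2np^k-2}$, and Proposition~\ref{prop6.17} says that the composition
\[
R_{k-1}^{2np^k-2}\hookrightarrow R_{k-1}\to W_{k-1}
\]
is surjective on $p$-local homology in dimensions $<2np^k-1$. The exponent upper bounds proved inside~\ref{prop6.17} (coming from the retract relation $R_{k-1}\subset Q_{k-1}$ and the torsion in $\Sigma T\wedge T$) match the exponents $s_i$ dictated by~\ref{prop6.11}. Since $H_{2ni-1}(W_{k-1};Z_{(p)})$ is cyclic of order $p^{s_i}$ and $H_{2ni-1}(R_{k-1}^{2np^k-2};Z_{(p)})$ is a direct sum of cyclic groups indexed by its Moore space summands of dimension $2ni$, at least one summand must be of the form $P^{2ni}(p^{s_i})$ whose inclusion maps to a generator of $H_{2ni-1}(W_{k-1};Z_{(p)})$. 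Selecting one such summand $M_i$ for each $i=2,\dots,p^k-1$ and setting $M=\bigvee_{i=2}^{p^k-1}M_i$, the resulting composition $f\colon M\to W_{k-1}$ induces an isomorphism on $H_j(-;Z_{(p)})$ for every $j\leqslant 2np^k-2$ (it is an isomorphism between the cyclic $H_{2ni-1}$'s by construction and is trivially so in dimensions where both sides vanish).

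Finally, $M$ and $W_{k-1}$ are simply connected $p$-local spaces ($W_{k-1}$ is $(4n-2)$-connected by the proof of~\ref{prop6.10}), and $\dim M\leqslant 2n(p^k-1)\leqslant 2np^k-2$. By the Whitehead theorem the mapping cone of $f$ is $(2np^k-2)$-connected, so after cellular approximation $f$ lifts to $W_{k-1}^{2np^k-2}$ and this lift can be extended to a CW structure on $W_{k-1}$ in which $M$ is the $(2np^k-2)$-skeleton. Hence $W_{k-1}^{2np^k-2}\simeq M$ is a wedge of Moore spaces. The main obstacle is the precise exponent matching needed to extract the correct summands from $R_{k-1}^{2np^k-2}$: this hinges on the sharpness of the exponent upper bound derived in the proof of~\ref{prop6.17} together with the surjectivity onto cyclic targets, and on keeping track of the dimension range $2ni-1<2np^k-1$ where the skeletal inclusion $R_{k-1}^{2np^k-2}\hookrightarrow R_{k-1}$ is itself a homology isomorphism.
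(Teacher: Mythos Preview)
Your proposal is correct and follows essentially the same route as the paper: use Proposition~\ref{prop6.16} to write $R_{k-1}^{2np^k-2}$ as a wedge of Moore spaces, use the split surjection of Proposition~\ref{prop6.17} (equivalently, the exponent match you describe) to select one Moore space summand hitting each cyclic group $H_{2ni-1}(W_{k-1};Z_{(p)})$, and conclude that the resulting wedge is homotopy equivalent to $W_{k-1}^{2np^k-2}$. The paper's proof is a two-sentence compression of exactly this argument; your added care with cellular approximation and the Whitehead theorem just makes the final identification with the skeleton explicit.
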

\begin{proof}
Since $H_{i-1}\big(R^{2np^k-2}_{k-1};Z_{(p)}\big)\to
H_{i-1}\big(W^{2np^k-2}_{k-1};Z_{(p)}\big)$
is split onto,
we can find a Moore space in the decomposition of $R^{2np^k-2}_{k-1}$
for each $i$ representing a given generator. This constructs a
subcomplex of $R^{2np^k-2}_{k-1}$ which is homotopy equivalent to
$W^{2np^k-2}_{k-1}$.
\end{proof}
\begin{corollary}\label{cor6.19}
$W^{2np^k-2}_{k-1}\simeq\bigvee\limits_{i=2}^{p^k-1}P^{2ni}(p^{r+n_i})$ where
\[
n_i=\begin{cases}
\nu_p(i)&\text{if}\ i\neq p^s,\quad 0<s<k\\
s-1&\textit{if}\ i=p^s\quad 0<s<k.
\end{cases}
\]
\end{corollary}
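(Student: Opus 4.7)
The plan is to combine Proposition \ref{prop6.18} with the cohomology computation of Proposition \ref{prop6.11}: the former identifies $W_{k-1}^{2np^k-2}$ as a wedge of $p$-local Moore spaces, and the latter pins down the individual summands.

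A wedge of $p$-local Moore spaces $\bigvee_\alpha P^{m_\alpha}(p^{s_\alpha})$ is classified up to homotopy equivalence by its integral cohomology, since each $P^m(p^s)$ has $\widetilde{H}^{*}(\,\cdot\,;Z_{(p)})$ concentrated in degree $m$ where it is $Z/p^s$. By Proposition \ref{prop6.11}, in the range $0<j\leq 2np^k-2$ the nonzero cohomology groups $H^j(W_{k-1};Z_{(p)})$ occur exactly in dimensions $j=2ni$ for $2\leq i\leq p^k-1$ (the $i=1$ value being suppressed by the $p$-local connectivity given in Proposition \ref{prop6.10}). The order of this cohomology group is $p^{r+s-1}$ when $i=p^s$ with $0<s<k$, and is $p^{r+\nu_p(i)}$ otherwise, using the standard identification $Z_{(p)}/(ip^r)\cong Z/p^{r+\nu_p(i)}$.

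Since each cohomology dimension of $W_{k-1}^{2np^k-2}$ supports a single cyclic summand and the space has already been shown to be a wedge of Moore spaces, each $i$ in the range $2\leq i\leq p^k-1$ must contribute exactly one wedge summand $P^{2ni}(p^{r+n_i})$ with $n_i$ as defined in the statement. The Moore space with mod $p^s$ coefficient in dimension $2ni$ is the only candidate whose cohomology matches in that degree, so it must appear; and no additional summands can appear because that would produce extra cohomology. Assembling the summands produces the claimed equivalence. No real obstacle arises here; the argument reduces to reading off exponents dimension by dimension once Propositions \ref{prop6.11} and \ref{prop6.18} are in hand.
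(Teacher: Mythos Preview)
Your proposal is correct and follows the same approach the paper intends: Corollary~\ref{cor6.19} is stated without a separate proof because it is read off directly from Proposition~\ref{prop6.18} (which gives the wedge-of-Moore-spaces structure) together with the cohomology computation of Proposition~\ref{prop6.11}, exactly as you describe.
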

\section{The Inductive Construction}\label{subsec6.3}

In this section we perform the inductive step of
constructing a retraction $\gamma_k\colon J_k\to BW_n$ for $k\geqslant 1$.
As in the proof of~\ref{prop6.3}, we will apply \ref{theor6.1} to the
fibration
\[
\xymatrix{
\Omega^2S^{2n+1}\ar@{->}[r]&J_k\ar@{->}[r]&F_k
}
\]
and do an induction over the cells of~$F_k$. At each
stage in this secondary induction we will make choices
to eliminate the obstructions from~\ref{theor5.20}. 

We will construct a map $\gamma_k\colon J_k\to BW_n$ which will
annihilate the level $k$ obstructions. However, $\gamma_{k-1}$ is
not homotopic to the composition
\[
\xymatrix{
J_{k-1}\ar@{->}[r]&J_k\ar@{->}[r]^->>>>{\gamma_k}&BW_n
}
\]
so we will need an extra argument to show that $\gamma_k$
annihilates the obstructions of level less than~$k$.
This is accomplished by some general results
(\ref{theor6.28} and \ref{cor6.37}) which decompose certain
relative Whitehead products. This is applied in 
\ref{lem6.40} to control the obstructions of a lower level.

We presume that $\gamma_{k-1}$
has been constructed such that the composition
\[
\xymatrix{
\Sigma (\Omega G_{k-1}\wedge\Omega G_{k-1})\ar@{->}[r]^-{\Gamma_{k-1}}&
E_{k-1}\ar@{->}[r]^-{\tau_{k-1}}&
J_{k-1}\ar@{->}[r]^-{\gamma_{k-1}}&
BW_n
}
\]
is null homotopic. This defines the fiber $R_{k-1}$ of
$\nu_{k-1}=\gamma_{k-1}\tau_{k-1}$
and we construct $\beta_k$, $a(k)$ and $c(k)$ in accordance with~\ref{theor4.4},
and
$D_k$, $J_k$ and $F_k$ as in \ref{eq5.2}.

We next construct a modification of~\ref{theor4.4} in this context.
\begin{proposition}\label{prop6.20}
There is a homotopy commutative ladder
of cofibration sequences:
\[
\xymatrix{
P^{2np^k}(p)\ar@{->}[r]\ar@{<->}[dddd]_{=}&
P^{2np^k}(p^{r+k})\ar@{->}[r]^-{\sigma}\ar@{->}[d]_{\beta_k}&
P^{2np^k}(p^{r+k-1})\ar@{->}[r]\ar@{->}[d]_{a(k)}&
P^{2np^k+1}(p)\ar@{<->}[dddd]_{=}\\
&
E_{k-1}\ar@{->}[d]_{\tau_{k-1}}&
E_k\ar@{->}[d]_{\tau_k}&\\
&
J_{k-1}\ar@{->}[r]^-{\iota}\ar@{->}[d]_{\eta_{k-1}}&
J_k\ar@{->}[d]_{\eta_k}&\\
&
F_{k-1}\ar@{->}[r]\ar@{->}[d]_{\sigma_{k-1}}&
F_k\ar@{->}[d]_{\sigma_k}&\\
P^{2np^k}(p)\ar@{->}[r]&
D_{k-1}\ar@{->}[r]&
D_k\ar@{->}[r]&
P^{2np^k+1}(p)\\
}
\]
\end{proposition}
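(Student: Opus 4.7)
The plan is to assemble the ladder from existing data: Theorem~\ref{theor4.4} supplies the upper strip (the maps $\beta_k$, $a(k)$ and the commutativity of rows~1--2), the fibration diagram~(\ref{eq5.2}) supplies the middle strip (columns~2 and~3 as iterated fibrations), and Proposition~\ref{prop5.1} together with two octahedral-axiom applications supplies Row~5.

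For Row~1, the octahedral axiom applied to the factorization $p^{r+k}=p^{r+k-1}\cdot p$ of self-maps of $S^{2np^k-1}$ yields the cofibration $P^{2np^k}(p)\to P^{2np^k}(p^{r+k})\to P^{2np^k}(p^{r+k-1})$, with the two maps identified with $\rho^{r+k-1}$ and $\sigma$; extending on the right gives the connecting Moore space $P^{2np^k+1}(p)$. For Row~5, I would apply the $3\times 3$ lemma to the horizontal cofibrations $C_{k-1}\to G_{k-1}\to D_{k-1}$ and $C_k\to G_k\to D_k$ from Proposition~\ref{prop5.1}: the cofibers of $C_{k-1}\hookrightarrow C_k$ and $G_{k-1}\hookrightarrow G_k$ are $P^{2np^k+1}(p^{r+k-1})$ and $P^{2np^k+1}(p^{r+k})$, and the bottom region of Theorem~\ref{theor4.4} identifies the induced map between them as $\rho$. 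A second application of the octahedron (now to $p^{r+k}=p\cdot p^{r+k-1}$ in dimension $2np^k$) identifies $\mathrm{cofib}(\rho)\simeq P^{2np^k+1}(p)$, so $D_k/D_{k-1}\simeq P^{2np^k+1}(p)$; extending on the left by the connecting map produces the desired cofibration $P^{2np^k}(p)\to D_{k-1}\to D_k\to P^{2np^k+1}(p)$.

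The middle vertical maps fit in directly: columns~2 and~3 reproduce the top rows of~(\ref{eq5.2}) for $k-1$ and $k$ respectively, $\beta_k$ and $a(k)$ come from Theorem~\ref{theor4.4}, and the horizontal inclusions in rows~2 through~4 are induced by $G_{k-1}\hookrightarrow G_k$ and $D_{k-1}\hookrightarrow D_k$. Homotopy commutativity of the interior squares then follows from the naturality of~(\ref{eq5.2}) under these inclusions, combined with the middle strip of the ladder in Theorem~\ref{theor4.4}.

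The hard part is verifying compatibility across the whole ladder---that the composition $P^{2np^k}(p)\to P^{2np^k}(p^{r+k})\to E_{k-1}\to J_{k-1}\to F_{k-1}\to D_{k-1}$ (via $\rho^{r+k-1}$, $\beta_k$, and down column~2) agrees up to homotopy with the attaching map $P^{2np^k}(p)\to D_{k-1}$ of Row~5, and analogously on the right. This reduces to identifying that attaching map with the projection of $\pi_{k-1}\beta_k\circ\rho^{r+k-1}$ to $D_{k-1}$; I would verify it by comparing the octahedral construction of $D_k/D_{k-1}$ against the relation $\alpha_k=\pi_{k-1}\beta_k\circ p^{r+k-1}$ supplied by the top square of Theorem~\ref{theor4.4}.
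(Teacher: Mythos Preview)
Your approach is essentially the paper's: assemble the ladder from Theorem~\ref{theor4.4}, diagram~(\ref{eq5.2}), and Proposition~\ref{prop5.1}. A few simplifications are available. Row~5 is already the right-hand column of Proposition~\ref{prop5.1}, so no separate $3\times3$ argument is needed. For the outer squares the paper avoids your explicit attaching-map chase: the right-hand identity follows from a direct cohomology calculation, and for the left-hand square one simply observes that the $2np^k$ skeleton of the homotopy fiber of $D_{k-1}\hookrightarrow D_k$ is $P^{2np^k}(p)$, after which a standard cofiber-uniqueness argument lets the outer vertical map be taken to be the identity. You should also cite Proposition~\ref{prop5.3} for the upper central square: the existence of a map $\iota\colon J_{k-1}\to J_k$ compatible with~(\ref{eq5.2}) depends on $\varphi'_k$ extending $\varphi'_{k-1}$, which is precisely what~\ref{prop5.3} arranges.
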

\begin{proof}
The upper central square commutes up to homotopy by~\ref{theor4.4}
and~\ref{prop5.3}
and the lower central squares follow from~(\ref{eq5.2}). By a
cohomology calculation, the right hand square commutes up to
homotopy. For the left hand region, observe that the $2np^k$
skeleton of the fiber of the inclusion of~$D_{k-1}$ into~$D_k$ is
homotopy equivalent to $P^{2np^k}(p)$; a standard argument
with cofibration sequences shows that the left hand
vertical map can be taken to be the identity.
\end{proof}
\begin{corollary}\label{cor6.21}
The compositions
\begin{align*}
&\xymatrix{
P^{2np^k}(p^{r+k-1})\ar@{->}[r]^-{a(k)}&E_k\ar@{->}[r]^-{\tau_k}&J_k\ar@{->}[r]^-{\eta_k}&F_k
}\\
&\xymatrix{
P^{2np^k}(p^{r+k-1})\ar@{->}[r]^-{\beta_k}&E_{k-1}\ar@{->}[r]^-{\tau_{k-1}}&J_{k-1}\ar@{->}[r]^-{\eta_{k-1}}&F_{k-1}
}
\end{align*}
induce integral cohomology epimorphisms.
\end{corollary}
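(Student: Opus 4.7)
My plan is to reduce both cohomology epimorphism claims to known computations involving the space $W_{k-1}$ from Chapter~\ref{chap6}, using Proposition~\ref{prop6.20} to link the two compositions.

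For the second composition: by the last statement of Theorem~\ref{theor4.4} applied to $\nu_{k-1}=\gamma_{k-1}\tau_{k-1}$, the map $\nu_{k-1}\beta_k$ is null homotopic, so $\beta_k$ lifts to $\widetilde{\beta}_k\colon P^{2np^k}(p^{r+k})\to R_{k-1}$. By the pullback structure of Proposition~\ref{prop6.9}, the composition $\eta_{k-1}\tau_{k-1}\beta_k$ then factors through $W_{k-1}$ as
\[
\xymatrix{P^{2np^k}(p^{r+k})\ar@{->}[r]^-{\widetilde{\beta}_k}&R_{k-1}\ar@{->}[r]&W_{k-1}\ar@{->}[r]^{\pi_W}&F_{k-1}.}
\]
The lift $\widetilde{\beta}_k$ coincides up to homotopy with the map $\widetilde{\alpha}_k$ of Proposition~\ref{prop6.13}, since both lift the attaching map of the $(2np^k+1)$-cell of $G_k$ through the $(2np^k)$-skeleton of $T$. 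Proposition~\ref{prop6.14} then tells us this composition is non-trivial in $p$-local cohomology in dimension $2np^k$.

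To upgrade non-triviality to an integral cohomology epimorphism, I would combine Proposition~\ref{prop6.11}'s Serre spectral sequence computation (which gives $\delta_{k-1}^*e_{p^k}=p^{r+k-1}u_{p^k}$) with the commutative square of Proposition~\ref{prop6.9} relating $T\to\Omega S^{2n+1}$ and $W_{k-1}\to F_{k-1}$. Tracing a generator of $H^{2np^k}(F_{k-1};Z_{(p)})=Z_{(p)}$ through this square and using the fact that $\iota_T^*u_{p^k}$ is a generator of $H^{2np^k}(T;Z_{(p)})=Z/p^{r+k}$ (which holds because the $p$-adic valuations of $(p^k)!$ and of $v_0^{p^k}$ agree, by Theorem~\ref{theor2.14}(f)), one identifies the image in the top cohomology of the Moore space as a generator of the relevant cyclic quotient.

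For the first composition, Proposition~\ref{prop6.20} provides a homotopy commutative rectangle linking $\eta_k\tau_k a(k)$ with $\eta_{k-1}\tau_{k-1}\beta_k$ via $\sigma$ on the domain and the inclusion $F_{k-1}\hookrightarrow F_k$ on the codomain. Using the behavior of the cohomology restriction $H^{2np^k}(F_k;Z_{(p)})\to H^{2np^k}(F_{k-1};Z_{(p)})$ (extractable from~(\ref{eq6.8}) and the clutching decomposition of $F_k/F_{k-1}$) together with the behavior of $\sigma^*$ on Moore-space cohomology, the epimorphism transfers from the $\beta_k$ case to the $a(k)$ case. The main obstacle is reconciling the factors of $p$ introduced by Proposition~\ref{prop6.11}'s spectral sequence against the Bockstein relation $\beta^{(r+k)}(v^{p^k})=uv^{p^k-1}$ from Theorem~\ref{theor2.14}(g): the delicate bookkeeping is what ensures that the image lands on a generator of the target cyclic group rather than a proper submodule, and it is this step that most strongly uses the specific torsion structure of $T$.
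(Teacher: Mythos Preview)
Your approach has a genuine gap: the asserted identification of a lift of $\beta_k$ to $R_{k-1}$ with the map $\widetilde{\alpha}_k$ of Proposition~\ref{prop6.13} is not merely unjustified, it is incompatible with the conclusion you need. Tracing the diagram in the proof of Proposition~\ref{prop6.14}, the composite $P^{2np^k}(p^{r+k})\xrightarrow{\widetilde{\alpha}_k}R_{k-1}\to W_{k-1}$ agrees in $H^{2np^k}$ with the fiber inclusion $T\to W_{k-1}$ (all three top spaces there have isomorphic $H^{2np^k}$), and by Proposition~\ref{prop6.12} that inclusion induces a homomorphism $Z/p^{r+k}\to Z/p^{r+k}$ of order exactly~$p$. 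Since $H^{2np^k}(F_{k-1};Z_{(p)})\to H^{2np^k}(W_{k-1};Z_{(p)})$ is the surjection $Z_{(p)}\twoheadrightarrow Z/p^{r+k}$ (Proposition~\ref{prop6.11}), the route through $\widetilde{\alpha}_k$ can at best show that the second composition has image of order~$p$ in $H^{2np^k}$; no bookkeeping with the spectral sequence of~\ref{prop6.11} can upgrade this, because~\ref{prop6.12} pins the order down. In fact $\widetilde{\alpha}_k$ cannot be a lift of $\beta_k$: the genuine lift induces an isomorphism on $H^{2np^k}(W_{k-1})$, which is precisely Proposition~\ref{prop6.22}, proved in the paper \emph{using} Corollary~\ref{cor6.21} rather than the other way around.

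The paper's argument avoids $W_{k-1}$ entirely and is much shorter. The first composition is read off directly from the right-hand portion of the ladder in Proposition~\ref{prop6.20} (comparing with the bottom cofibration $D_{k-1}\to D_k\to P^{2np^k+1}(p)$). The second composition is then deduced from the first via the middle of the same ladder: both $\sigma^*$ on the Moore spaces and the restriction $H^{2np^k}(F_k)\to H^{2np^k}(F_{k-1})$ have degree~$p$ (the latter by Corollary~\ref{cor6.15}), and since $r+k\geqslant 2$ one may cancel the common factor of~$p$ in $Z/p^{r+k}$ to conclude.
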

\begin{proof}
The first composition is handled by applying
integral cohomology to the right hand region of~\ref{prop6.20}.
For the second composition we consider the upper two
parts of the middle region. The map $\sigma$
has degree~$p$ in~$H^{2np^k}$ as does the map $F_{k-1}\to F_k$
by~\ref{cor6.15}.
Since $r+k\geqslant 2$, this is enough to imply the result.
\end{proof}
\begin{proposition}\label{prop6.22}
$W^{2np^k}_{k-1}$ is a wedge of Moore spaces.
\end{proposition}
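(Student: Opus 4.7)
The plan is to extend the argument of Proposition 6.18 by one further pair of dimensions, identifying the additional Moore space that appears in going from the $(2np^k{-}2)$-skeleton to the $2np^k$-skeleton of $W_{k-1}$.

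First I would determine the relevant cohomology using Proposition 6.11: for $j = 2np^k$ we fall in the ``otherwise'' case (with $i = p^k$), so
\[
H^{2np^k-1}(W_{k-1};Z_{(p)}) = 0 \qquad\text{and}\qquad H^{2np^k}(W_{k-1};Z_{(p)}) \cong Z/p^{r+k}.
\]
Thus the cells added to $W^{2np^k-2}_{k-1}$ to obtain $W^{2np^k}_{k-1}$ amount to exactly one Moore space $P^{2np^k}(p^{r+k})$, and the target is the wedge
\[
W^{2np^k-2}_{k-1}\vee P^{2np^k}(p^{r+k})\xrightarrow{\simeq}W^{2np^k}_{k-1}.
\]

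Next I would realize this equivalence geometrically by extending the subcomplex construction of Proposition 6.18. Since $R_{k-1}$ is a wedge of Moore spaces by Theorem~\ref{theor2.14}(k) and the map $R_{k-1}\to W_{k-1}$ is surjective on $H_*(-;Z_{(p)})$ in every dimension by Proposition~\ref{prop6.17}, there must exist a summand $P^{2np^k}(p^s)$ of $R_{k-1}$ with $s\geq r+k$ whose composition to $W_{k-1}$ is onto on $H_{2np^k-1}(-;Z_{(p)})$. Using the Moore-space operations $\rho$ and $\sigma$ together with the cofibration sequence (\ref{eq1.9}) to rearrange the wedge summands of $R_{k-1}$ in dimension $2np^k$, I would produce a map $P^{2np^k}(p^{r+k})\to R_{k-1}$ whose composition to $W_{k-1}$ induces an isomorphism on $H^{2np^k}(-;Z_{(p)})$. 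Wedge-summing this map with the subcomplex of $R_{k-1}$ giving $W^{2np^k-2}_{k-1}$ from Proposition~\ref{prop6.18}, I obtain the required map.

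Finally I would check the resulting map $W^{2np^k-2}_{k-1}\vee P^{2np^k}(p^{r+k})\to W^{2np^k}_{k-1}$ is a $p$-local cohomology isomorphism in all dimensions, hence a homotopy equivalence by the Whitehead theorem for simply connected $p$-local CW complexes.

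The main obstacle will be the middle step: the wedge-decomposition of $R_{k-1}$ in dimension $2np^k$ might consist entirely of summands with exponent strictly larger than $r+k$, in which case simply picking one summand will not yield a cohomology isomorphism onto $H^{2np^k}(W_{k-1};Z_{(p)})$. The careful interplay of $\rho$, $\sigma$, and (\ref{eq1.9}), combined with the exponent bounds from the proof of Proposition~\ref{prop6.17}, will be needed to construct a summand of exponent exactly $p^{r+k}$ whose image in $W_{k-1}$ generates the cohomology in dimension $2np^k$.
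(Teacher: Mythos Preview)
Your cohomology computation from Proposition~\ref{prop6.11} is correct and matches the paper's first step: the only new content in passing from the $(2np^k-2)$-skeleton to the $2np^k$-skeleton is a single $Z/p^{r+k}$ in dimension $2np^k$, so one needs a map $P^{2np^k}(p^{r+k})\to W_{k-1}$ that hits the generator.

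Where you diverge from the paper is in how you produce this map. You search for it abstractly inside the wedge decomposition of $R_{k-1}$, using surjectivity from Proposition~\ref{prop6.17} and then trying to adjust exponents with $\rho,\sigma$. The paper instead uses a map that is already on the table: the class $\beta_k\colon P^{2np^k}(p^{r+k})\to E_{k-1}$ from Theorem~\ref{theor4.4}. By the last clause of that theorem, $\nu_{k-1}\beta_k\sim *$, so $\beta_k$ factors through $W_{k-1}$; and Corollary~\ref{cor6.21} (proved immediately before this proposition) shows that $\eta_{k-1}\tau_{k-1}\beta_k\colon P^{2np^k}(p^{r+k})\to F_{k-1}$ is an integral cohomology epimorphism. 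Since $W_{k-1}\to F_{k-1}$ is itself an epimorphism in $H^{2np^k}(\,\cdot\,;Z_{(p)})$ by Proposition~\ref{prop6.11}, the map $\beta_k$ already realizes the required Moore space summand. The proof is then two lines.

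The obstacle you flag is genuine for your route: Proposition~\ref{prop6.17} only gives a \emph{split} surjection in dimensions $<2np^k-1$, and Proposition~\ref{prop6.16} only bounds exponents in $R_{k-1}^{2np^k-2}$ by $p^{r+k-1}$, so in dimension $2np^k$ you have neither an exponent bound nor a splitting to lean on. One can push through (a surjection from a sum of cyclic $p$-groups onto $Z/p^{r+k}$ forces some summand of order $\ge p^{r+k}$, and precomposing with $\rho$ fixes the exponent), but this is exactly the work the paper avoids by invoking $\beta_k$ and Corollary~\ref{cor6.21}. I would recommend rewriting your middle step to use $\beta_k$ directly.
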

\begin{proof}
By \ref{prop6.18}, $W^{2np^k-2}_{k-1}$ is a wedge of Moore spaces.
By~\ref{prop6.11},
it suffices to show that $\beta_k$ factors through $W_{k-1}$:
\[
\xymatrix{
P^{2np^k}(p^{r+k})\ar@{->}[r]^-<<<{\enlarge{\beta_k}} &W^{2np^k}_{k-1} 
}
\]
is an epimorphism. But by \ref{cor6.21}, the composition
\[
\xymatrix{
P^{2np^k}(p^{r+k})\ar@{->}[r]^-{\beta_k}&W^{2np^k}_{k-1}\ar@{->}[r]&F_{k-1}
}
\]
induces an epimorphism in $p$-local cohomology.
\end{proof}

We now filter $F_k$ by skeleta and apply \ref{eq6.8}. As in section~\ref{subsec6.1}, let $F_k(m)$
be the $2mn$ skeleton of~$F_k$, so
\[
F_k(m)=F_k(m-1)\cup e^{2mn}.
\]
Let $J_k(m)$ be the pullback of $J_k$ to $F_k(m)$, so we have
a map of principal fibrations\setcounter{equation}{22}
\begin{equation}\label{eq6.23}
\begin{split}
\xymatrix{
\Omega^2S^{2n+1}\ar@{=}[r]\ar@{->}[d]&
\Omega^2S^{2n+1}\ar@{->}[d]\\
J_k(m-1)\ar@{->}[r]\ar@{->}[d]&J_k(m)\ar@{->}[d]\\
F_k(m-1)\ar@{->}[r]&F_k(m)
}
\end{split}
\end{equation}
and using the clutching construction (\ref{prop2.1}) we see that
\[
J_k(m)/J_k(m-1)\simeq \Omega^2S^{2n+1}\ltimes S^{2mn}
\]
The obstructions that we need to consider at level~$k$ are
the elements $\nu^i\cdot a(k)$ and $\mu\nu^{i-1}\cdot a(k)$ for $i\geqslant
1$ where
$\nu^i$ and $\mu\nu^{i-1}$ generate $Z/p(\nu)\otimes \wedge(\mu)\subset
A_*(D_k)$. (See
\ref{theor5.18} and~\ref{theor5.20}).
\addtocounter{Theorem}{1}
\begin{proposition}\label{prop6.24}
The compositions
\begin{align*}
&
\xymatrix@C=30pt{
P^{2np^k+2ni}\ar@{->}[r]^-{\nu^i\cdot\overline{a(k)}}&J_k\ar@{->}[r]^-{\eta_k}&F_k
}
\\
&
\xymatrix@C=46pt{
P^{2np^k+2ni-1}\ar@{->}[r]^-{\mu\nu^{i-1}\cdot\overline{a(k)}}&J_k(p^k+i-1)\ar@{->}[r]^-q&
\Omega^2S^{2n+1}
\ltimes S^{2(p^k+i-1)n}
}
\end{align*}
induce integral cohomology epimorphisms where $q$ is the
quotient map.
\end{proposition}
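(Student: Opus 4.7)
The plan is to reduce both statements to iterated applications of Corollary 3.18 (which expresses relative Whitehead products in homology as principal actions), combined with Corollary 6.15 on the principal action structure of $H_*(F_k)$. First, observe that there is a strictly commuting square with identity top row and verticals $\varphi'_k\colon D_k\to S^{2n+1}\{p^r\}$ and $\psi_k:=\pi\varphi'_k\colon D_k\to S^{2n+1}$, inducing $\eta_k\colon J_k\to F_k$ as a map of principal fibrations over $D_k$. By Proposition 3.11(d), $\eta_k\{\alpha,\delta\}_r\sim\{\alpha,\eta_k\delta\}_r$, so iterating,
\[
\eta_k\bigl(\nu^i\cdot\overline{a(k)}\bigr)\sim \nu^i\cdot\bigl(\eta_k\overline{a(k)}\bigr),
\]
the right hand side now a relative Whitehead product computed in the $F_k\to D_k$ fibration.

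For the first composition, Corollary 6.21 says $\eta_k\tau_k a(k)$ induces an integral cohomology epimorphism in degree $2np^k$, and precomposition with $\rho^{k-1}$ preserves surjectivity onto $Z/p^r$ in the top cell, so $\eta_k\overline{a(k)}$ hits a generator of $H^{2np^k}(F_k;Z_{(p)})$. Iteratively applying Corollary 3.18, the $\nu$-action in $Z/p^r$-homology is the principal action of $\widetilde{\psi_k\nu}=\widetilde{\pi_{2n+1}}\colon P^{2n}(p^r)\to \Omega S^{2n+1}$, whose Hurewicz image is a generator of $H_{2n}(\Omega S^{2n+1};Z/p^r)$. The isomorphism in Corollary 6.15 then shows the $i$-fold iterated action hits a generator of $H_{2n(p^k+i)}(F_k;Z_{(p)})$, giving the desired integral cohomology epimorphism.

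For the second composition the decisive observation is that $\psi_k\mu\sim*$. Indeed, from the defining cofibration $S^{2n}\to P^{2n+1}(p^r)\xrightarrow{\pi_{2n+1}}S^{2n+1}$ the composition $\pi_{2n+1}\iota_{2n}$ is null, and since $\beta=\iota_{2n}\pi_{2n}$ and $\mu=\beta\nu$, we get $\psi_k\mu=\pi_{2n+1}\beta\sim*$. Consequently $\Omega\varphi'_k\,\widetilde{\mu}$ lifts up to homotopy through the inclusion of the fiber $\Omega^2S^{2n+1}$ of $\eta_k\colon J_k\to F_k$. Via Corollary 3.18, the $\mu$-action on $J_k$ factors through the action of $\Omega^2S^{2n+1}$, which preserves each subspace $J_k(m)=\eta_k^{-1}(F_k(m))$. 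By the first part and cellular approximation, $\nu^{i-1}\cdot\overline{a(k)}$ lifts to $J_k(p^k+i-1)$ and represents the top sphere class in the clutching decomposition $J_k(p^k+i-1)/J_k(p^k+i-2)\simeq \Omega^2S^{2n+1}\ltimes S^{2n(p^k+i-1)}$. Acting by $\mu$ stays within $J_k(p^k+i-1)$, and after passing to the quotient, Corollary 3.18 identifies the result with the smash of the top sphere class with the Hurewicz image of $\widetilde{\mu}$ in $H_{2n-1}(\Omega^2S^{2n+1};Z/p^r)$---a generator of the bottom class $a_{2n-1}$. This yields the generator $a_{2n-1}\otimes s_{2n(p^k+i-1)}$ and hence the integral cohomology epimorphism.

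The main technical obstacle will be justifying the final identification: that under the clutching isomorphism $J_k(m)/J_k(m-1)\simeq \Omega^2S^{2n+1}\ltimes S^{2mn}$, the action of the fiber $\Omega^2S^{2n+1}$ on $J_k(m)$ descends, on the quotient, to exterior smash of the corresponding Hurewicz image with the top sphere class. This is where the careful bookkeeping between the principal action of $\Omega S^{2n+1}\{p^r\}$ on $J_k$, its restriction to the fiber action of $\Omega^2S^{2n+1}$, and the explicit clutching in Proposition 2.1 must all be made to agree.
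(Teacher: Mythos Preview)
Your approach is essentially the paper's own. The paper's proof invokes 3.11(d) on the square
\[
\xymatrix{
J_k\ar[r]^{\eta_k}\ar[d]&F_k\ar[d]\\
D_k\ar@{=}[r]\ar[d]_{\varphi'_k}&D_k\ar[d]\\
S^{2n+1}\{p^r\}\ar[r]&S^{2n+1}
}
\]
to get $\eta_k(\nu^i\cdot\overline{a(k)})\equiv[\nu,\eta_k(\nu^{i-1}\cdot\overline{a(k)})]_r$, then appeals to 3.18 and 6.15 exactly as you do; for the second composition the paper writes only ``use 3.18 again since $\mu\nu^{i-1}\cdot\overline{a(k)}\equiv[\mu,\nu^{i-1}\cdot\overline{a(k)}]_r$''. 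Your observation that $\psi_k\mu\sim*$, forcing $\widetilde{\varphi'_k\mu}$ to lift to $\Omega^2S^{2n+1}$, is precisely the content the paper leaves implicit in that one-line appeal to 3.18: without it one cannot pass to the quotient $J_k(m)/J_k(m-1)$.

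The ``technical obstacle'' you flag at the end is not an obstacle. Proposition 2.1 says that in the principal case the clutching map $\Omega^2S^{2n+1}\times(CA,A)\to(J_k(m),J_k(m-1))$ is literally the composition of $1\times\overline\theta$ with the action map $a$, so the fiber action on $J_k(m)$ agrees on the nose with left multiplication on $\Omega^2S^{2n+1}\ltimes S^{2mn}$ after passing to the quotient. This is the same mechanism used in the proof of Lemma~6.5; no further bookkeeping is required.
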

\begin{proof}
The first composition is evaluated by \ref{cor6.21} when $i=0$. In case
$i>0$, we use induction on~$i$. We apply \ref{prop3.11}(d) to the diagram
\[
\xymatrix@C=38pt{
J_k\ar@{->}[r]^-{\eta_k}\ar@{->}[d]&F_k\ar@{->}[d]\\
D_k\ar@{=}[r]\ar@{->}[d]_{\varphi'_k}&D_k\ar@{->}[d]\\
S^{2n+1}\{p^r\}\ar@{->}[r]&S^{2n+1}
}
\]
to see that
$\eta_k(\nu^i\cdot\overline{a(k)})\equiv [\nu,\eta_k\nu^{i-1}\cdot\overline{a(k)}]_r$. The
result then
follows from \ref{cor3.18} and~\ref{cor6.15}. The second composition is evaluated
by using 
\ref{cor3.18}
again since
$\mu\nu^i\cdot \overline{a(k)}\equiv [\mu,\nu^{i-1}\overline{a(k)}]_r$.
\end{proof}
\begin{corollary}\label{cor6.25}
The composition
\[
\xymatrix@C=38pt{
P^{2np^j+2ni}\ar@{->}[r]^-{\nu^i\cdot\overline{a(j)}}&W_j\ar@{->}[r]&W_{k-1}
}
\]
induces an integral cohomology epimorphism when $0\leqslant i<p^{j+1}-p^j$ and $j<k$; likewise
the composition
\[
\xymatrix@C=38pt{
P^{2np^j+2n(i+1)-1}\ar@{->}[r]^-{\mu\nu^i\cdot\overline{a(j)}}&
W_j\ar@{->}[r]&W_{k-1} 
}
\]
is nonzero in $\text{mod}\, p$ cohomology in dimension $2np^j+2n(i+1)-1$.
\end{corollary}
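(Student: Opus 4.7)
The plan is to reduce both claims to Proposition~\ref{prop6.24} applied at level $j$ (with $k$ replaced by $j$), combined with a comparison between the fibrations $W_j$ and $W_{k-1}$ over $F_j\subset F_{k-1}$. By the inductive hypothesis for constructing $\gamma_j$ ($j<k$), the classes $\nu^i\cdot \overline{a(j)}$ and $\mu\nu^i\cdot\overline{a(j)}$ have been annihilated by $\gamma_j$ in the relevant ranges, so both maps lift through $W_j$; the natural map $W_j\to W_{k-1}$ covering $F_j\to F_{k-1}$ then puts us in position to compare cohomology.

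For the first statement, Proposition~\ref{prop6.24} at level $j$ shows that $P^{2np^j+2ni}\to J_j\to F_j$ is an integral cohomology surjection in dimension $2n(p^j+i)$. Since $p^j+i<p^{j+1}\leqslant p^k$, iterating Corollary~\ref{cor6.15} shows that $F_j\hookrightarrow F_{k-1}$ induces an isomorphism $H^{2n(p^j+i)}(F_{k-1};Z_{(p)})\cong H^{2n(p^j+i)}(F_j;Z_{(p)})$; combined with the surjection $H^*(F_{k-1};Z_{(p)})\twoheadrightarrow H^*(W_{k-1};Z_{(p)})$ from Proposition~\ref{prop6.11}, commutativity of the square $(F_{k-1},F_j)\to(W_{k-1},W_j)$ forces $H^{2n(p^j+i)}(W_{k-1};Z_{(p)})\to H^{2n(p^j+i)}(P;Z_{(p)})$ to be surjective, proving the first claim.

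For the second statement I work with the skeletal filtration. Set $m=p^j+i$. The second clause of Proposition~\ref{prop6.24} gives that $P^{2n(m+1)-1}\to J_j(m)\xrightarrow{q}\Omega^2 S^{2n+1}\ltimes S^{2mn}$ is an integral cohomology epimorphism. Lifting through $W_j(m)$ (the pullback of $W_j$ over $F_j(m)$) and applying the clutching construction~\ref{prop2.1} to $S^{2n-1}\to W_j\to F_j$, I obtain $W_j(m)/W_j(m-1)\simeq S^{2n-1}\ltimes S^{2mn}$; the induced map to the analogous quotient of $J_j(m)$ is the half-smash of the double suspension $E^2\colon S^{2n-1}\to \Omega^2 S^{2n+1}$ with $S^{2mn}$, an isomorphism on integral cohomology in dimension $2n(m+1)-1$ at the top cell. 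Hence $P^{2n(m+1)-1}\to W_j(m)/W_j(m-1)$ is an integral cohomology epimorphism and in particular nonzero mod $p$.

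The hard part will be transferring this non-triviality from $W_j(m)/W_j(m-1)$ to $W_{k-1}$. The map $W_j\to W_{k-1}$ restricts to a homotopy equivalence $W_j(m)/W_j(m-1)\simeq W_{k-1}(m)/W_{k-1}(m-1)$ (both given by the same clutching over the $2mn$-cell), reducing the question to the pair $(W_{k-1}(m),W_{k-1}(m-1))$. Since $\dim W_{k-1}(m-1)\leqslant 2nm-1<2n(m+1)-1$ (valid for $n>1$, as assumed throughout Chapter~\ref{chap6}), the connecting homomorphism in the long exact sequence vanishes, so the class persists as a nonzero mod $p$ cohomology class in $W_{k-1}(m)$. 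Propositions~\ref{prop6.18} and~\ref{prop6.22} (the wedge-of-Moore-spaces structure in the relevant range) then imply that $W_{k-1}(m)\hookrightarrow W_{k-1}$ is a mod $p$ cohomology surjection in dimension $2n(m+1)-1$, completing the proof.
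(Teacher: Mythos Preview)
Your argument for the first statement is the same as the paper's: apply Proposition~\ref{prop6.24} at level $j$, use Corollary~\ref{cor6.15} to identify $H^*(F_j)\cong H^*(F_{k-1})$ in the range $<2np^{j+1}$, and then read off the epimorphism to $W_{k-1}$ from Proposition~\ref{prop6.11}. (Your phrase ``the square $(F_{k-1},F_j)\to(W_{k-1},W_j)$'' is mis-oriented, but the intended diagram and the logic are clear.)

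For the second statement your argument is correct, but it is considerably more elaborate than the paper's. The paper simply invokes the map of fibrations
\[
\xymatrix{
S^{2n-1}\ar[r]\ar[d] & \Omega^{2}S^{2n+1}\ar[d]\\
W_{k-1}\ar[r]\ar[d] & J_{k-1}\ar[d]\\
F_{k-1}\ar@{=}[r] & F_{k-1}
}
\]
and says the second claim follows from the first. The content is the same as yours: the double suspension $E^{2}$ is an isomorphism in dimension $2n-1$, so on the filtration quotients $S^{2n-1}\ltimes S^{2nm}\to\Omega^{2}S^{2n+1}\ltimes S^{2nm}$ one detects the class produced by Proposition~\ref{prop6.24} already in $W_{k-1}$. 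Your explicit skeletal/clutching argument is a perfectly valid unpacking of this.

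One point to tighten. In your final step you appeal to Propositions~\ref{prop6.18} and~\ref{prop6.22} to conclude that $W_{k-1}(m)\hookrightarrow W_{k-1}$ is a mod~$p$ cohomology surjection in dimension $2n(m+1)-1$. Those propositions concern the CW skeleta $W_{k-1}^{N}$, not the fibred skeleta $W_{k-1}(m)$, so they do not literally give what you claim. The correct justification is a connectivity count: the cofibre $W_{k-1}/W_{k-1}(m)$ is $(2n(m+1)-1)$-connected (its first cells come from $S^{2n-1}\ltimes S^{2n(m+1)}$), so the long exact sequence gives an injection
\[
H^{2n(m+1)-1}(W_{k-1};\mathbb Z/p)\hookrightarrow H^{2n(m+1)-1}(W_{k-1}(m);\mathbb Z/p),
\]
and since both groups are $\mathbb Z/p$ (the left by Corollary~\ref{cor6.19}, the right by the collapsed Serre spectral sequence of $S^{2n-1}\to W_{k-1}(m)\to F_{k-1}(m)$), this is an isomorphism. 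Your parenthetical ``valid for $n>1$'' in the dimension step is unnecessary: $\dim W_{k-1}(m-1)=2nm-1<2n(m+1)-1$ holds for every $n\geqslant 1$.
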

\begin{proof}
By the induction hypothesis,
$\nu^i\cdot\overline{a(j)}$ and $\mu\nu^{i-1}\cdot\overline{a(j)}$ are in the kernel
of $\gamma_j$ for $j<k$, so
they factor through~$W_j$. We then construct the diagram 
\[
\xymatrix@C=33pt{
P^{2np^j+2ni}\ar@{->}[r]^-{\nu^i\cdot\overline{a(j)}}&W_j\ar@{->}[r]\ar@{->}[d]&W_{k-1}\ar@{->}[d]\\
&F_j\ar@{->}[r]&F_{k-1}
}
\]
when $i<p^{j+1}-p^j$. Since the map $F_j\to F_{k-1}$ induces an
isomorphism in cohomology in dimensions less than
$2np^{j+1}$, the first result follows from~\ref{prop6.24}. The second
result follows directly from 
the first
 since there is a
map of fibrations
\begin{align*}
&\makebox[112pt]{} 
\xymatrix@C=30pt{
S^{2n-1}\ar@{->}[r]\ar@{->}[d]&\Omega^2S^{2n+1}\ar@{->}[d]\\
W_{k-1}\ar@{->}[r]\ar@{->}[d]&J_{k-1}\ar@{->}[d]\\
F_{k-1}\ar@{=}[r]&F_{k-1}
}\\[-17pt] 
&\makebox[323pt]{}
\rlap{\qed} 
\end{align*}\noqed
\end{proof}

At this point we introduce a simplified notation
analogous to the notation in case $k=0$. We define
$\text{mod}\, p^r$ homotopy classes
\begin{align*}
&x_i(k)\index{$x_i(k)$|LB}\colon P^{2ni}\to J_k\\
&y_i(k)\index{$y_i(k)$|LB}\colon P^{2ni-1}\to J_k
\end{align*}
for $i\geqslant 2$ by the formulas\setcounter{equation}{25}
\begin{align}\label{eq6.26}
\begin{split}
x_i(k)&=\begin{cases}
x_i&\text{if}\ k=0\\
\iota x_i(k-1)&\text{if}\ i<p^k\\
\nu^{i-p^k}\cdot\overline{a(k)}&\text{if}\ i\geqslant p^k
\end{cases}\\
y_i(k)&=\mu \cdot x_{i-1}(k).
\end{split}
\end{align}
Consequently, if $p^j\leqslant i<p^{j+1}\leqslant p^k$, $x_i(k)=x_i(j)$.

We will often not distinguish between $x_i(j)\colon P^{2ni}\to J_j$ and
its composition with $J_j\to J_k$ for $k\geqslant j$. However
\[
\nu\cdot x_i(k)=\begin{cases}
x_{i+1}(k)&\text{if}\ i\neq p^t-1\quad t<k\\
x_{i+1}(t-1)&\text{if}\ i= p^t-1\quad t<k
\end{cases}
\]
We will write $\overline{x}_i(k)$ for $x_i(\ell)$ with $\ell$ unspecified
but $\ell\leqslant k$, so $\nu^d\cdot x_i(k)
=\overline{x}_{i+d}(k)$ and similarly for $\overline{y}_i(d)$.\addtocounter{Theorem}{1}
\begin{corollary}\label{cor6.27}
The compositions
\begin{gather*}
\xymatrix{
P^{2ni}\ar@{->}[r]^-{x_i(k)}&J_k\ar@{->}[r]&F_k
}
\\
\xymatrix{
P^{2ni-1}\ar@{->}[r]^-{y_i(k)}&J_k(i-1)\ar@{->}[r]&
J_k(i-1)/J_k(i-2)
\simeq \Omega^2S^{2n+1}\ltimes S^{2n(i-1)}
}
\end{gather*}
induce integral cohomology epimorphisms for all $i\geqslant 2$.
\end{corollary}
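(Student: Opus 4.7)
The plan is to prove both statements simultaneously by a double induction: the outer induction runs on $k$, with the base case $k=0$ reducing directly to Lemma~\ref{lem6.4} (for the $x_i$ composition) and Lemma~\ref{lem6.5} (for the $y_i$ composition). Within each inductive level $k>0$, the two cases in the definition~(\ref{eq6.26}) of $x_i(k)$ and $y_i(k)$ naturally split the argument into a \emph{low range} that is transferred from level $k-1$ by naturality, and a \emph{high range} that is read off directly from Proposition~\ref{prop6.24}.

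For the first composition at level $k$, I would argue as follows. In the high range $i\geqslant p^k$, we have $x_i(k)=\nu^{i-p^k}\cdot\overline{a(k)}$, and this is exactly the first composition in Proposition~\ref{prop6.24} applied with parameter $i-p^k\geqslant 0$, so the conclusion is immediate. In the low range $i<p^k$, we have $x_i(k)=\iota x_i(k-1)$, so the composition into $F_k$ factors as $P^{2ni}\xrightarrow{x_i(k-1)} J_{k-1}\to F_{k-1}\to F_k$. The outer inductive hypothesis makes the first half of this composite an integral cohomology epimorphism, and Corollary~\ref{cor6.15} gives that $H^{2ni}(F_k;Z_{(p)})\to H^{2ni}(F_{k-1};Z_{(p)})$ is an isomorphism (since $2ni<2np^k$), which closes the case.

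For the second composition, the key observation is that the class $\mu=\beta\nu\in M_{2n-1}(D_j)$ is preserved under the natural map $D_{k-1}\to D_k$ (as this map identifies the bottom cell $P^{2n+1}$). By Proposition~\ref{prop3.11}(b) applied to the induced-fibration square $J_{k-1}\to J_k$ over $D_{k-1}\to D_k$, we obtain $\iota(\mu\cdot x_{i-1}(k-1))\sim \mu\cdot \iota x_{i-1}(k-1)$, i.e.\ $y_i(k)=\iota y_i(k-1)$ whenever $i-1<p^k$. Moreover, when $i-1<p^k$, the skeleta satisfy $F_k(i-1)=F_{k-1}(i-1)$ (and similarly in dimension $i-2$) by Corollary~\ref{cor6.15}, since the cells of $F_k/F_{k-1}$ begin in dimension $2np^k$; hence $J_k(i-1)\simeq J_{k-1}(i-1)$ and the quotients $J_k(i-1)/J_k(i-2)$ agree with those at level $k-1$, so the inductive hypothesis applies verbatim. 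In the complementary high range $i>p^k$, we have $y_i(k)=\mu\nu^{i-1-p^k}\cdot\overline{a(k)}$, which is the second composition of Proposition~\ref{prop6.24} with parameter $i-p^k\geqslant 1$, finishing the induction.

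The proof is essentially bookkeeping: the nontrivial cohomological content has already been extracted in Proposition~\ref{prop6.24} and in the homology computation of $F_k$ (Corollary~\ref{cor6.15}). The only point requiring genuine attention is the naturality of the relative Whitehead product $\mu\cdot(-)$ under $\iota$, which is what justifies collapsing the low range at level $k$ onto the inductive hypothesis at level $k-1$; I expect the slight subtlety here to be the main place a reader might demand care, but it follows cleanly from Proposition~\ref{prop3.11}(b) once one verifies that $\mu$ pulls back to $\mu$ along $D_{k-1}\to D_k$.
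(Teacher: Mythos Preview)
Your proof is correct and amounts to the same argument as the paper's, just unpacked and organized as an explicit induction on $k$. The paper's terse ``follows from \ref{cor6.25} and \ref{prop6.11}'' routes the low-range case through $W_{k-1}$ (using that $x_i(k-1)$ factors through $W_{k-1}$, then \ref{cor6.25} for the epimorphism onto $H^*(W_{k-1})$, then \ref{prop6.11} to lift back to $H^*(F_{k-1})$), whereas you bypass $W_{k-1}$ entirely and work directly with $F_{k-1}\to F_k$ via \ref{cor6.15}; since \ref{cor6.25} is itself proved from \ref{prop6.24} together with the $F_j\to F_{k-1}$ isomorphism, the two arguments trace the same chain of implications.
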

\begin{proof}
This follows from \ref{cor6.25} and \ref{prop6.11}.
\end{proof}

The main technical tool in relating the $x_i(k)$ and $y_i(k)$
with $x_i(k-1)$ and $y_i(k-1)$ will be the following
\begin{Theorem}\label{theor6.28}
Suppose $\zeta \in \pi_{m+1}(D_{k-1};Z/p^r)\simeq \pi_m(\Omega
D_{k-1};Z/p^r)$ with
$m>0$ and the composition
\[
\xymatrix{
\Sigma^2X\ar@{->}[r]^-{\varphi}&W_{k-1}^{2np^k-2}\ar@{->}[r]&J_{k-1}
}
\]
has order $p^r$. Then $\{\zeta,\varphi\}_r\colon P^{m+2}\wedge X\to J_{k-1}$
is congruent to the
sum
\[
\{\zeta,\varphi\}_r\equiv\sum^{p^k-1}_{i=2}\{\zeta,
y_i(k-1)\}_r\alpha_i+\{\zeta, x_i(k-1)\}_r\beta_i
\]
where $\alpha_i\colon P^m\wedge\Sigma^2X\to P^m\wedge P^{2ni-1}$ and
$\beta_i\colon P^m\wedge\Sigma^2X\to P^m\wedge P^{2ni}$.
\end{Theorem}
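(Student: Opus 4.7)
The plan is to decompose $\varphi$ using the wedge structure of $W_{k-1}^{2np^k-2}$ and to apply Theorem~\ref{theor5.15} to each component. By Proposition~\ref{prop6.18} and Corollary~\ref{cor6.19},
\[
W_{k-1}^{2np^k-2} \simeq \bigvee_{i=2}^{p^k-1} P^{2ni}(p^{r+n_i}),
\]
so I would write $\varphi \simeq \sum_i \iota_i\varphi_i$, where $\varphi_i\colon \Sigma^2 X \to P^{2ni}(p^{r+n_i})$ is the $i$th component and $\iota_i$ denotes the composition of the $i$th wedge inclusion with the map to $J_{k-1}$. The order hypothesis, combined with the bilinearity of composition in the congruence category (Proposition~\ref{prop5.13}), then permits applying Theorem~\ref{theor5.15} to each $\varphi_i$ to obtain
\[
\varphi_i \equiv \rho^{n_i}\varphi_i^{(1)} + \iota_{2ni-1}\varphi_i^{(2)}
\]
with $\varphi_i^{(1)}\colon \Sigma^2 X \to P^{2ni}(p^r)$ and $\varphi_i^{(2)}\colon \Sigma^2 X \to S^{2ni-1}$.

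The key identification step is that $\iota_i\rho^{n_i}\colon P^{2ni}(p^r) \to J_{k-1}$ is congruent to $x_i(k-1)$ up to a unit that can be absorbed into $\varphi_i^{(1)}$: both maps factor through $W_{k-1}$ (since, by the inductive hypothesis, $\gamma_{k-1}$ annihilates the relevant classes in which $x_i(k-1)$ lives) and, by Corollary~\ref{cor6.25}, they induce the same cohomology epimorphism in dimension $2ni$, so the wedge splitting can be chosen to align them. Analogously, the sphere-domain class $\iota_i\iota_{2ni-1}\colon S^{2ni-1} \to J_{k-1}$, once composed with the top-cell projection $\pi_{2ni-1}\colon P^{2ni-1}(p^r) \to S^{2ni-1}$, agrees with $y_i(k-1)$ up to a unit, and Proposition~\ref{prop3.29} converts the Whitehead product with this sphere class into one involving the Moore-space-domain class $y_i(k-1)$.

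Finally, applying $\{\zeta,-\}_r$ termwise and invoking Proposition~\ref{prop5.19}(a) to pull each tail map outside the Whitehead product yields
\[
\{\zeta,\varphi\}_r \equiv \sum_i \{\zeta, x_i(k-1)\}_r (1\wedge\varphi_i^{(1)}) + \{\zeta, y_i(k-1)\}_r \alpha_i,
\]
with $\beta_i = 1\wedge\varphi_i^{(1)}$ and $\alpha_i$ constructed from $\varphi_i^{(2)}$ via the sphere-to-Moore translation above. The main obstacle is this last translation: Proposition~\ref{prop3.29} is stated for internal Whitehead products, so one must verify an external-Whitehead-product analogue compatible with precomposition by an arbitrary map out of $\Sigma^2 X$, and show that the resulting $\alpha_i$ has the claimed target $P^m\wedge P^{2ni-1}$ without spurious residual terms of lower order. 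A secondary subtlety is confirming that, in the congruence category, the order-$p^r$ hypothesis on the total composition propagates to each component well enough for Theorem~\ref{theor5.15} to apply uniformly, which is where Proposition~\ref{prop5.13} plays its role.
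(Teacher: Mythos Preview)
Your overall architecture matches the paper's: decompose $\varphi$ via the wedge splitting of $W_{k-1}^{2np^k-2}$ (Corollary~\ref{cor6.19}), apply Theorem~\ref{theor5.15} to land in the basis $\{e_i\rho^{n_i},\, e_i\iota_{2ni-1}\}$, then pass through Proposition~\ref{prop5.19}(a) and Proposition~\ref{prop3.29}. The genuine gap is your ``key identification step''. You assert that $\iota_i\rho^{n_i}$ is congruent to $x_i(k-1)$ up to a unit, and similarly that $e_i\iota_{2ni-1}\pi_{2ni-1}$ agrees with $y_i(k-1)$ up to a unit. This is not true in general: cohomology in dimension~$2ni$ only pins down the top coefficient, so what one actually gets is an \emph{upper-triangular} relation
\[
x_i(k-1)\equiv e_i\rho^{n_i}+\sum_{j<i}\bigl(e_j\rho^{n_j}\alpha_{ij}+e_j\iota_{2nj-1}\beta_{ij}\bigr),
\]
and likewise for $y_i(k-1)$. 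Your suggestion that ``the wedge splitting can be chosen to align them'' cannot absorb these lower terms: the summand inclusion $e_i$ has domain $P^{2ni}(p^{r+n_i})$, whereas $x_i(k-1)$ has domain $P^{2ni}(p^r)$, so $x_i(k-1)$ cannot itself serve as a wedge inclusion, and the lower-dimensional pieces of $x_i(k-1)$ need not lift through~$\rho^{n_i}$.

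The paper closes this gap with an explicit triangular-substitution lemma (Lemma~\ref{lem6.32}) in the additive congruence category, applied twice: once to replace the $e_i\rho^{n_i}$ by the $x_i(k-1)$ (yielding Corollary~\ref{cor6.33}), and once more---after invoking Proposition~\ref{prop3.29} to convert $\{\zeta,e_i\iota_{2ni-1}\}_r$ into $\{\zeta,d_i\}_r$ with $d_i=e_i\iota_{2ni-1}\pi_{2ni-1}$---to replace the $d_i$ by the $y_i(k-1)$. Your worries at the end are not the real issues: the external form of Proposition~\ref{prop3.29} goes through exactly as the paper uses it, and the order hypothesis transfers from the composition to $\varphi$ itself because $W_{k-1}$ is a retract of $J_{k-1}$ (via \ref{theor2.14}(j)). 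What is missing is the inversion of the triangular system.
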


There are several steps in the proof of~\ref{theor6.28}.  
Under the inductive hypothesis, $x_i(k-1)$ and $y_i(k-1)$
factor through $W_{k-1}$.
\begin{proposition}\label{prop6.29}
The map
\[
\Xi\colon \bigvee\limits^{p^{k-1}}_{i=2}P^{2ni-1}\vee
\xymatrix@C=92pt{P^{2ni}\ar@{->}[r]^-{\enlarge{y_i(k-1)\vee x_i(k-1)}}&
W^{2np^k-2}_{k-1}}
\]
induces a monomorphism $\text{mod}\, p$ cohomology.
\end{proposition}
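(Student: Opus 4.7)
The strategy is to reduce to component-wise dimension checking, using the explicit wedge decomposition of $W^{2np^k-2}_{k-1}$ from \ref{cor6.19} and the cohomological nontriviality given by \ref{cor6.25}.

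First I would invoke \ref{cor6.19} to write
\[
W^{2np^k-2}_{k-1}\simeq\bigvee_{i=2}^{p^k-1}P^{2ni}(p^{r+n_i}),
\]
so that $H^*(W^{2np^k-2}_{k-1};Z/p)$ is one-dimensional in each dimension $2ni-1$ and $2ni$ for $2\leqslant i\leqslant p^k-1$, with generators $v_{2ni-1}$ and $u_{2ni}$ related by the mod~$p$ Bockstein, and zero in all other relevant dimensions. It thus suffices to show that $\Xi^*$ sends each such generator to a nontrivial class, because these will land in distinct wedge summands of the source.

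Next, for each such $i$, I would translate $x_i(k-1)$ and $y_i(k-1)$ into the form needed to apply \ref{cor6.25}. Using~(\ref{eq6.26}), choose $j$ with $p^j\leqslant i<p^{j+1}$; iterating the relation $x_i(k)=\iota x_i(k-1)$ for $i<p^k$ shows that $x_i(k-1)$ equals the composition
\[
\xymatrix{P^{2ni}\ar@{->}[r]^-{\nu^{i-p^j}\cdot\overline{a(j)}}&W_j\ar@{->}[r]&W_{k-1},}
\]
where the factorization through $W_j$ comes from the inductive hypothesis that $\gamma_j$ annihilates this class. By the first part of \ref{cor6.25}, this composition induces an integral cohomology epimorphism in dimension $2ni$, so $x_i(k-1)^*(u_{2ni})$ is the top generator of $H^{2ni}(P^{2ni};Z/p)$. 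Similarly, choose $j'$ with $p^{j'}\leqslant i-1<p^{j'+1}$; then $y_i(k-1)=\mu\cdot x_{i-1}(k-1)$ realizes the class $\mu\nu^{i-1-p^{j'}}\cdot\overline{a(j')}$, and the second part of \ref{cor6.25} gives $y_i(k-1)^*(v_{2ni-1})\neq 0$ in $H^{2ni-1}(P^{2ni-1};Z/p)$.

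Finally, under the standing assumption $n>1$, the mod~$p$ cohomology of the wedge summand $P^{2ni-1}\vee P^{2ni}$ is concentrated in dimensions $\{2ni-2,2ni-1,2ni\}$, and this set is disjoint from $\{2nj-1,2nj\}$ for every $j\neq i$. Therefore pullback by the $i$th component of $\Xi$ annihilates all generators indexed by $j\neq i$, while sending $u_{2ni}$ and $v_{2ni-1}$ to nontrivial classes. Since these images sit in distinct wedge summands of the source, injectivity of $\Xi^*$ on mod~$p$ cohomology follows. The principal technical point is the bookkeeping in the middle paragraph: correctly identifying the index $j$ via~(\ref{eq6.26}) and invoking the inductive hypothesis to justify the factorization through $W_j$, after which the rest is pure dimensional accounting.
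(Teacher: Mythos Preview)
The proposal is correct and takes essentially the same approach as the paper, which simply notes via \ref{cor6.19} that $H^m(W^{2np^k-2}_{k-1})$ is one-dimensional in each dimension $2ni$ or $2ni-1$ and asserts that each such class pulls back nontrivially under $x_i(k-1)$ or $y_i(k-1)$. Your explicit unwinding through (\ref{eq6.26}) and invocation of \ref{cor6.25}, together with the dimensional separation argument, supply the details the paper leaves implicit.
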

\begin{proof}
$H^m(W^{2np^k-2}_{k-1})$ is trivial unless $m=2ni$ or
$m=2ni-1$ for $2\leqslant i< p^k$, in which case it is $Z/p$ by~\ref{cor6.19}.
Each of these classes is nontrivial under
either $x_i(k-1)$ or $y_i(k-1)$.
\end{proof}

We seek to compare the maps $x_i(k-1)$ and $y_i(k-1)$ to a natural
basis for $W^{2np^k-2}_{k-1}$. Choose maps $e_i\colon P^{2ni}(p^{r+n_i})\to W^{2np^k-2}_{k-1}$
for $2\leqslant i<p^k-1$ which define the splitting of~\ref{cor6.19}.
\[
e\colon
\bigvee\limits^{p^k-1}_{i=2}\xymatrix{P^{2ni}(p^{r+n_i})\ar@{->}[r]^-{\enlarge{\cong}}&W^{2np^k-2}_{k-1}} 
\]
where $n_i=\nu_p(i)$ if $i\neq p^s$ and $n_i=s-1$ if $i=p^s$.
Now define a map
\[
\Lambda\colon \bigvee\limits^{p^k-1}_{i=2}\xymatrix{P^{2ni}\vee
S^{2ni-1}\ar@{->}[r]^-{}&W^{2np^k-2}_{k-1}}
\]
with components
$e_i\rho^{n_i}\colon P^{2ni}\to W_{k-1}$ and
$e_i\iota_{2ni-1}\colon S^{2ni-1}\to W_{k-1}$.
\begin{proposition}\label{prop6.30}
Suppose $\varphi\colon \Sigma^2 X\to W^{2np^k-2}_{k-1}$ has
order $p^r$. Then there is a congruence
\[
\varphi\equiv\sum^{p^k-1}_{i=2}e_i\rho^{n_i}\alpha_i+e_ii_{2ni-1}\beta_i
\]
for some maps $\alpha_i\colon \Sigma^2X\to p^{2ni}$ and $\beta_i\colon \Sigma^2X\to S^{2ni-1}$.
\end{proposition}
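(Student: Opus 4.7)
The plan is to reduce the statement to a pointwise application of Theorem~\ref{theor5.15} on each wedge summand of~$W^{2np^k-2}_{k-1}$, using the decomposition of congruence homotopy classes into a wedge guaranteed by Proposition~\ref{prop5.12}.

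First I would invoke Corollary~\ref{cor6.19} to fix the splitting
\[
e\colon\bigvee^{p^k-1}_{i=2}P^{2ni}(p^{r+n_i})\xrightarrow{\;\simeq\;}W^{2np^k-2}_{k-1}
\]
whose components are the chosen maps $e_i$. Since $\Sigma^2X$ is a (double) suspension and hence a co-$H$ space, Proposition~\ref{prop5.12} yields a direct sum decomposition
\[
e\pi\bigl[\Sigma^2X,W^{2np^k-2}_{k-1}\bigr]\;\cong\;\bigoplus^{p^k-1}_{i=2}e\pi\bigl[\Sigma^2X,P^{2ni}(p^{r+n_i})\bigr],
\]
so the class of $\varphi$ is congruent to $\sum_i e_i\varphi_i$ for uniquely determined components $\varphi_i\colon\Sigma^2X\to P^{2ni}(p^{r+n_i})$.

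Next I would check that each $\varphi_i$ has order dividing~$p^r$. Because congruence classes form an Abelian group and composition is bilinear (Proposition~\ref{prop5.13}), the hypothesis $p^r\varphi\equiv 0$ projects under the retraction onto each summand to give $p^r\varphi_i\equiv 0$; since $P^{2ni}(p^{r+n_i})$ is a suspension, congruent maps into it are actually homotopic, so $\varphi_i$ has order at most~$p^r$. Now I would apply Theorem~\ref{theor5.15} to each $\varphi_i$ (with $m=ni$ and $t=n_i$) to produce maps $\alpha_i\colon\Sigma^2X\to P^{2ni}(p^r)=P^{2ni}$ and $\beta_i\colon\Sigma^2X\to S^{2ni-1}$ such that
\[
\varphi_i\equiv\rho^{n_i}\alpha_i+\iota_{2ni-1}\beta_i.
\]

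Finally I would assemble the pieces: composing with $e_i$ and using bilinearity of composition in congruence homotopy (Proposition~\ref{prop5.13}) gives
\[
\varphi\equiv\sum_i e_i\varphi_i\equiv\sum^{p^k-1}_{i=2}e_i\rho^{n_i}\alpha_i+e_i\iota_{2ni-1}\beta_i,
\]
which is exactly the claimed formula. The only substantive point is the order argument for $\varphi_i$; everything else is formal once Theorem~\ref{theor5.15} and Proposition~\ref{prop5.12} are in hand, so the proof should be quite short.
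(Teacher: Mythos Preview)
Your approach is exactly the paper's: invoke Corollary~\ref{cor6.19}, Proposition~\ref{prop5.12}, and Theorem~\ref{theor5.15}. The paper's proof is a single sentence citing precisely these three results.

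One step in your order argument is wrong, though easily repaired. You write ``since $P^{2ni}(p^{r+n_i})$ is a suspension, congruent maps into it are actually homotopic''; this is false in general (for instance the Whitehead square $[\iota_{2m},\iota_{2m}]\colon S^{4m-1}\to S^{2m}$ is null-congruent but essential at odd primes). The fix is simpler than what you wrote: take $\varphi_i$ to be the \emph{actual} composition of $\varphi$ with the collapse map $W^{2np^k-2}_{k-1}\to P^{2ni}(p^{r+n_i})$ coming from the wedge splitting of~\ref{cor6.19}. Then $p^r\varphi_i\sim *$ in genuine homotopy, since $p^r\varphi\sim *$ by hypothesis and postcomposition respects the co-$H$ addition on $[\Sigma^2X,\,-\,]$. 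With this choice the congruence $\varphi\equiv\sum_i e_i\varphi_i$ still holds by Proposition~\ref{prop5.12}, and Theorem~\ref{theor5.15} applies directly to each $\varphi_i$.
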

\begin{proof}
This follows directly from \ref{prop5.12} and \ref{theor5.15}, since $W^{2np^k-2}_{k-1}$ is a wedge of even dimensional Moore spaces by~\ref{cor6.19}.
\end{proof}
\begin{corollary}\label{cor6.31}
If $n>1$, there is a diagram
\[
\xymatrix{
\bigvee\limits^{p^k-1}_{i=2}P^{2ni}\vee P^{2ni-1}\ar@{->}[dd]_{F}
\ar@{->}[dr]^-{\Xi}&\\
\ar@{}[r]|(.3){\equiv}
&W^{2np^k-2}_{k-1}\\
\bigvee\limits^{p^k-1}_{i=2}P^{2ni}\vee S^{2ni-1}\ar@{->}[ru]^{\Lambda}&
}
\]
which commutes up to congruence, for some map $F$.
\end{corollary}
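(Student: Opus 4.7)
The plan is to apply \ref{prop6.30} separately to each wedge summand of $\Xi$ and assemble the resulting coefficient maps into a single map $F$. Concretely, $\Xi$ decomposes as the wedge of the maps $x_i(k-1)\colon P^{2ni}\to W^{2np^k-2}_{k-1}$ and $y_i(k-1)\colon P^{2ni-1}\to W^{2np^k-2}_{k-1}$ for $2\leqslant i<p^k$, each of which is a $\bmod\,p^r$ homotopy class and so has order $p^r$.

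To invoke \ref{prop6.30} one needs each such domain to be a double suspension; this is exactly where the hypothesis $n>1$ enters. For $n>1$ and $i\geqslant 2$ the smallest relevant dimension is $2ni-1\geqslant 4n-1\geqslant 7$, so each $P^{2ni}$ and $P^{2ni-1}$ is of the form $\Sigma^2 X$. Applying \ref{prop6.30} to $x_i(k-1)$ gives a congruence
\[
x_i(k-1)\equiv \sum_{j=2}^{p^k-1}e_j\rho^{n_j}\alpha^{(i)}_j + e_j\iota_{2nj-1}\beta^{(i)}_j
\]
with $\alpha^{(i)}_j\colon P^{2ni}\to P^{2nj}$ and $\beta^{(i)}_j\colon P^{2ni}\to S^{2nj-1}$, and analogously for $y_i(k-1)$, producing maps $\gamma^{(i)}_j$ and $\delta^{(i)}_j$ with targets $P^{2nj}$ and $S^{2nj-1}$.

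Define $F$ to have, on the summand $P^{2ni}$ (respectively $P^{2ni-1}$) of the domain, components $\alpha^{(i)}_j, \beta^{(i)}_j$ (respectively $\gamma^{(i)}_j, \delta^{(i)}_j$) into the corresponding wedge summands of $\bigvee_{j=2}^{p^k-1} P^{2nj}\vee S^{2nj-1}$. By the bilinearity of composition in the congruence homotopy category (\ref{prop5.13}), $\Lambda\circ F$ restricted to each summand recovers the chosen decomposition of the corresponding component of $\Xi$, so $\Lambda F\equiv \Xi$. There is no genuine obstacle here: the work is done in \ref{theor5.15} and \ref{prop6.30}, and the only constraint is the double-suspension hypothesis, which is precisely what the assumption $n>1$ supplies.
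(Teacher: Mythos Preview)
Your proof is correct and is essentially the paper's argument, just carried out summand by summand rather than in one stroke. The paper observes that for $n>1$ the entire wedge $\bigvee_{i=2}^{p^k-1} P^{2ni}\vee P^{2ni-1}$ is itself a double suspension whose identity has order $p^r$, and applies Proposition~\ref{prop6.30} once to $\Xi$; you apply \ref{prop6.30} to each component $x_i(k-1)$, $y_i(k-1)$ and then assemble via the additive structure of the congruence category (\ref{prop5.12}, \ref{prop5.13}). Unpacking the paper's single application yields exactly your matrix of coefficient maps, so there is no substantive difference.
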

\begin{proof}
If $n>1$, $\bigvee\limits^{p^k-1}_{i=2}P^{2ni}\vee P^{2ni-1}$ is a double suspension
whose identity map has order $p^r$. Thus \ref{cor6.31} follows
from \ref{prop6.30}.
\end{proof}

In particular, we obtain a congruence formula by
restricting \ref{cor6.31} to $P^{2ni}$:
\[
x_i(k-1)\equiv e_i\rho^{n_i}+\sum_{2\leqslant j<i}e_j\rho^{n_j}\alpha_j+e_j\iota_{2nj-1}\beta_j
\]
for some maps $\alpha_j\colon P^{2ni}\to P^{2nj}$ and $\beta_j\colon P^{2ni}\to S^{2nj-1}$.
Actually, the coefficient of $e_i\rho^{n_i}$ in this formula is a
unit by a cohomology calculation. We can safely 
assume it is the identity by adjusting the basis $\{e_i\}$.
We intend to use this formula to replace the term $e_i\rho^{n_i}$
in \ref{prop6.30} by $x_i(k-1)$ plus lower dimensional terms. This is a matter of linear substitutions,
and we explain this more clearly in a general context.
Observe that all the spaces in these formulas are
co-$H$ spaces and \ref{cor5.14} applies.
\begin{lemma}\label{lem6.32}
In an additive category, the formulas
\begin{gather*}
x=\sum^{N}_{i=1}a_i\varphi_i+b_i\theta_i\\
x_i=a_i+\sum^{i-1}_{j=1}a_j\varphi_{ij}+b_j\theta_{ij}
\end{gather*}
imply that there is a formula:
\[
x=\sum^N_{i=1}x_i\overline{\varphi}_i+b_i\overline{\theta}_i.
\]
\end{lemma}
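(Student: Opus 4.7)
The plan is to prove Lemma~\ref{lem6.32} by a straightforward back-substitution, organized as an induction on $N$. The key observation is that the given system for $x_i$ is triangular in the $a_j$: each $x_i$ determines $a_i$ modulo $a_1,\ldots,a_{i-1}$ and $b_1,\ldots,b_{i-1}$. Working in an additive category (provided by \ref{cor5.14}), all sums and compositions distribute, so we may freely solve and substitute.

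The base case $N=1$ is immediate: $x_1=a_1$, so $a_1=x_1$ and $x=x_1\varphi_1+b_1\theta_1$. For the inductive step, assume the lemma for $N-1$. From the $N$-th equation solve
\[
a_N \;=\; x_N \,-\, \sum_{j=1}^{N-1}\bigl(a_j\varphi_{Nj}+b_j\theta_{Nj}\bigr),
\]
and substitute into $x=\sum_{i=1}^{N}a_i\varphi_i+b_i\theta_i$ to obtain
\[
x \;=\; x_N\varphi_N + b_N\theta_N + \sum_{i=1}^{N-1}\Bigl(a_i(\varphi_i-\varphi_{Ni}\varphi_N) + b_i(\theta_i-\theta_{Ni}\varphi_N)\Bigr).
\]
The remaining sum has the same shape as the original expression for $x$, but with only $N-1$ terms and with new coefficients
\[
\varphi_i' \;=\; \varphi_i - \varphi_{Ni}\varphi_N,\qquad \theta_i' \;=\; \theta_i - \theta_{Ni}\varphi_N.
\]
The triangular relations expressing $x_1,\ldots,x_{N-1}$ in terms of $a_1,\ldots,a_{N-1}$ and $b_1,\ldots,b_{N-2}$ are unchanged, so the inductive hypothesis applies and yields
\[
\sum_{i=1}^{N-1}\bigl(a_i\varphi_i' + b_i\theta_i'\bigr) \;=\; \sum_{i=1}^{N-1}\bigl(x_i\overline{\varphi}_i + b_i\overline{\theta}_i\bigr)
\]
for suitable $\overline{\varphi}_i,\overline{\theta}_i$. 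Setting $\overline{\varphi}_N=\varphi_N$ and $\overline{\theta}_N=\theta_N$ and adding in the term $x_N\varphi_N+b_N\theta_N$ completes the induction.

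There is no real obstacle: the only thing to verify carefully is that the manipulations are purely formal and valid in any additive category (distributivity of composition over sums in both variables), which is exactly the content of \ref{cor5.14}. The proof needs no finer property of the ambient category, and in particular no assumption on the $a_i$ or $b_i$ beyond what is implicit in the statement.
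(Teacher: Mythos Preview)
Your proof is correct and follows exactly the approach the paper indicates: the paper's proof is the single line ``Use downward induction beginning with replacing $a_N$ with $x_N$,'' and your argument spells out precisely this back-substitution, organized as an induction on $N$.
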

\begin{proof}
Use downward induction beginning with replacing
$a_N$ with $x_N$.
\end{proof}

Comparing \ref{prop6.30} with the formula for $x_i(k-1)$ above
and applying \ref{lem6.32}, we get
\begin{corollary}\label{cor6.33}
Suppose $\varphi\colon\Sigma^{2}X\to W^{2np^k-2}_{k-1}$ has order
$p^r$ and $n>1$. Then $\varphi$ is congruent to a sum
\[
\sum^{p^k-1}_{i=2}x_i(k-1)\overline{\varphi}_i +e_i\iota_{2ni-1}\overline{\theta}_i
\]
where $\overline{\varphi}_i\colon \Sigma^2X\to P^{2ni}$ and $\overline{\theta}_i\colon\Sigma^2X\to S^{2ni-1}$.\qed
\end{corollary}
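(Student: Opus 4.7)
The plan is a direct application of \ref{lem6.32} in the additive category of co-$H$ spaces modulo congruence, whose additivity is \ref{cor5.14}. The target $W^{2np^k-2}_{k-1}$ is a wedge of even-dimensional Moore spaces by \ref{cor6.19}, so \ref{prop6.30} immediately expresses
\[
\varphi \equiv \sum_{i=2}^{p^k-1} e_i\rho^{n_i}\alpha_i + e_i\iota_{2ni-1}\beta_i
\]
for suitable $\alpha_i\colon \Sigma^2 X \to P^{2ni}$ and $\beta_i\colon \Sigma^2 X \to S^{2ni-1}$. This puts $\varphi$ in the ``natural basis'' $\{e_i\rho^{n_i}, e_i\iota_{2ni-1}\}$ coming from the splitting in \ref{cor6.19}.

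Next, I would invoke the congruence formula for $x_i(k-1)$ that the excerpt has just derived (by restricting \ref{cor6.31} to the wedge summand $P^{2ni}$), namely
\[
x_i(k-1) \equiv e_i\rho^{n_i} + \sum_{2\leq j < i}\bigl(e_j\rho^{n_j}\varphi_{ij} + e_j\iota_{2nj-1}\theta_{ij}\bigr),
\]
after absorbing the unit leading coefficient into the choice of $e_i$. With $a_i := e_i\rho^{n_i}$ and $b_i := e_i\iota_{2ni-1}$, these two families of congruences match the hypotheses of \ref{lem6.32} exactly, so that lemma performs a downward induction substituting $x_i(k-1)$ for $e_i\rho^{n_i}$ and yields the desired
\[
\varphi \equiv \sum_{i=2}^{p^k-1} x_i(k-1)\overline{\varphi}_i + e_i\iota_{2ni-1}\overline{\theta}_i.
\]

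The only point requiring care is that the manipulation takes place inside the congruence homotopy category, where composition and bilinearity behave well by \ref{prop5.11}--\ref{cor5.14}. The hypothesis $n>1$ enters only through the first step: it is what allows \ref{cor6.31} (and hence \ref{prop6.30}) to apply, since it forces the relevant wedge of Moore spaces to be a double suspension on which every order-$p^r$ map decomposes in the required fashion. Once \ref{prop6.30} is available, the argument is a formal downward induction in an additive category with no essential difficulty.
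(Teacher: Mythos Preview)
Your proposal is correct and follows the paper's own argument essentially verbatim: the paper's one-line justification for \ref{cor6.33} is precisely ``comparing \ref{prop6.30} with the formula for $x_i(k-1)$ above and applying \ref{lem6.32}.'' One small correction to your closing remark: the hypothesis $n>1$ is needed for \ref{cor6.31} (so that $\bigvee P^{2ni}\vee P^{2ni-1}$ is a double suspension), not for \ref{prop6.30}, which holds for any $\Sigma^2 X$; your parenthetical ``(and hence \ref{prop6.30})'' has the dependency reversed.
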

\begin{proof}[Proof of \ref{theor6.28}]
We apply \ref{prop5.19}(a) to \ref{cor6.33} to obtain a
congruence
\[
\{\zeta,\varphi\}_r\equiv
\sum^{p^k-1}_{i=2}
\{\zeta,x_i(k-1)\}_r\Sigma^m\overline{\varphi}_i+
\{\zeta,e_i\iota_{2ni-1}\}_r
\Sigma^m\overline{\theta}_i.
\]
But by \ref{prop3.29},
\[
\{\zeta,e_i\iota_{2ni-1}\}_r=\{\zeta,e_i\iota_{2ni-1}\pi_{2ni-1}\}_r=\{\zeta,d_i\}_r
\]
where 
\[
d_i=e_i\iota_{2ni-1}\pi_{2ni-1}=e_i\beta p^{n_i}\colon P^{2ni-1}\to
P^{2ni}(p^{r+n_i}).
\]
Substituting we~get\setcounter{equation}{33}
\begin{equation}\label{eq6.34}
\{\zeta,\varphi\}_r\equiv
\sum^{p^k-1}_{i=2}\{\zeta,x_i(k-1)\}_r\Sigma^m\overline{\varphi}_i+\{\zeta,d_i\}_r\Sigma^m\overline{\theta}_i.
\end{equation}

We also apply \ref{cor6.33} with $\varphi=y_i(k-1)$ to get
\[
y_i(k-1)\equiv d_i+\sum_{2\leqslant j<i} x_j(k-1)\overline{\varphi}'_j+e_je_{2nj-1}\overline{\theta}'_j
\]
and apply \ref{prop5.19}, we get
\begin{equation}\label{eq6.35}
\{\zeta,y_i(k-1)\}_r\equiv\{\zeta,d_i\}_r+\sum_{2\leqslant
j<i}\{\zeta,x_j(k-1)\}_r\overline{\varphi}''_j+\{\zeta,d_j\}\overline{\varphi}''_j.
\end{equation}

We now apply \ref{lem6.32} to \ref{eq6.34} and \ref{eq6.35} with
$x_i=\{\zeta,y_i(k-1)\}_r$, $a_j=\{\zeta,d_j\}_r$
and $b_j=\{\zeta,x_j(k-1)\}_r$ to obtain\setcounter{equation}{27}
\begin{equation}\label{eq6.28}
\makebox[20pt]{}\{\zeta,\varphi\}_r\equiv\sum^{p^k-1}_{i=2}\{\zeta,y_i-(k-1)\}_r\alpha_i+\{\zeta,x_i(k-1)\}_r\beta_i.
\makebox[20pt]{}
\rlap{\qed} 
\end{equation}\setcounter{equation}{35}\noqed
\end{proof}
In case $\Sigma^2X=P^{\ell}$ we can precompose with $\Delta$
\[
\xymatrix@C=38pt{
P^{m+\ell}\ar@{->}[r]&P^m\wedge P^{\ell}\ar@{->}[r]^-<<<<<{\{\zeta,\varphi\}_r}&J_{k-1}
}
\]
to obtain
\[
[\zeta,\varphi]_r\equiv\sum^{p^k-1}_{i=2}\{\zeta,y_i(k-1)\}_r\alpha'_i+\{\zeta,x_i(k-1)\}_r\beta'_i
\]
and apply \ref{prop5.19}(b) to obtain\setcounter{Theorem}{35}
\begin{corollary}\label{cor6.36}
Suppose $\zeta\colon P^m\to \Omega D_{k-1}$
and
$\varphi\colon P^{\ell}\to W^{2np^k-2}_{k-1}$. Then $[\zeta,\varphi]_r$ is
congruent to a sum
\[
\sum^{p^k-1}_{i=2}[\zeta,y_i(k-1)]_r\alpha_i+[\beta(\zeta),y_i(k-1)]_r\beta_i+[\zeta,x_i(k-1)]_r\gamma_i+[\beta(\zeta),x_i(k-1)]_r\delta_i
\]
for some maps 
\begin{align*}
&\alpha_i\colon P^{m+\ell}\to P^{m+2ni-\ell},\\
&\beta_i\colon P^{m+\ell}\to P^{m+2ni-2},\\
&\gamma_i\colon P^{m+\ell}\to P^{m+2ni}\\
\intertext{and}
&\delta\colon P^{m+\ell}\to P^{m+2ni-1}.
\end{align*}
\end{corollary}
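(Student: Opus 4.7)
The plan is to deduce Corollary~\ref{cor6.36} from Theorem~\ref{theor6.28} by passing from external to internal Whitehead products and then splitting smash products of Moore spaces. Since $\varphi\colon P^\ell \to W^{2np^k-2}_{k-1}$ has domain $P^\ell = P^\ell(p^r)$, it has order $p^r$; writing $P^\ell \simeq \Sigma^2 P^{\ell-2}$, Theorem~\ref{theor6.28} applies with $X = P^{\ell-2}$, yielding
\[
\{\zeta,\varphi\}_r \equiv \sum_{i=2}^{p^k-1} \{\zeta, y_i(k-1)\}_r\,\alpha'_i + \{\zeta, x_i(k-1)\}_r\,\beta'_i
\]
with $\alpha'_i\colon P^m \wedge P^\ell \to P^m \wedge P^{2ni-1}$ and $\beta'_i\colon P^m \wedge P^\ell \to P^m \wedge P^{2ni}$.

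Next, since $[\zeta,\varphi]_r = \{\zeta,\varphi\}_r \circ \Delta$ for the diagonal $\Delta\colon P^{m+\ell} \to P^m \wedge P^\ell$, precomposing gives
\[
[\zeta,\varphi]_r \equiv \sum_{i=2}^{p^k-1} \{\zeta,y_i(k-1)\}_r(\alpha'_i\Delta) + \{\zeta,x_i(k-1)\}_r(\beta'_i\Delta),
\]
so it remains to analyze maps of the form $P^{m+\ell} \to P^m \wedge P^{2ni-1}$ (and similarly for $P^m \wedge P^{2ni}$). I will use the splitting of Proposition~\ref{prop3.25} applied to the \emph{first} factor, namely $P^m \wedge P^{2ni-1} \simeq P^{m+2ni-1} \vee P^{m+2ni-2}$ with inclusions $\Delta$ and $(\beta\wedge 1)\Delta$. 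By Proposition~\ref{prop5.12} the map $\alpha'_i\Delta$ then decomposes, up to congruence, as a sum of two components $\alpha_i\colon P^{m+\ell} \to P^{m+2ni-1}$ and $\beta_i\colon P^{m+\ell} \to P^{m+2ni-2}$; likewise $\beta'_i\Delta$ decomposes as $\gamma_i$ and $\delta_i$ using the analogous splitting of $P^m\wedge P^{2ni}$.

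Finally, composition of $\{\zeta, y_i(k-1)\}_r$ with the first inclusion $\Delta$ is by definition $[\zeta, y_i(k-1)]_r$, while composition with $(\beta\wedge 1)\Delta$ equals $\{\zeta\beta, y_i(k-1)\}_r \Delta = [\beta(\zeta), y_i(k-1)]_r$ by the version of Proposition~\ref{prop5.19}(a) in the first slot (the symmetric argument: precomposing $\widetilde{\alpha}$ with $\beta$ moves through the half-smash construction defining $\Gamma'$). The identical computation with $x_i(k-1)$ yields the $\gamma_i$ and $\delta_i$ terms, producing the stated congruence. The one delicate point is the choice of splitting in Step~3: one must use the first-factor decomposition so the Bockstein lands on $\zeta$ rather than on $x_i(k-1)$ or $y_i(k-1)$; with the other splitting one would obtain $[\zeta,\beta y_i(k-1)]_r$ instead. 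Once this consistent choice is made, the assembly is routine.
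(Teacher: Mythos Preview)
Your argument is correct and follows essentially the same route as the paper: apply Theorem~\ref{theor6.28} with $\Sigma^2 X = P^\ell$, precompose with $\Delta$ to pass to the internal product, and then split each $P^m\wedge P^{2ni-1}$ (resp.\ $P^m\wedge P^{2ni}$) via the first-factor Bockstein so that the two summands yield $[\zeta,-]_r$ and $[\beta(\zeta),-]_r$. The paper packages this last step as ``apply \ref{prop5.19}(b)'' (whose proof uses exactly the $(\beta\wedge 1)\Delta$ splitting you describe), whereas you unpack it via \ref{prop3.25}/\ref{prop5.12} and a first-slot analogue of \ref{prop5.19}(a); note that the first-slot identification you need is already available from \ref{prop3.11}(a) since $\beta$ is a co-$H$ map, so no separate congruence argument is required there.
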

\begin{corollary}\label{cor6.37}
Suppose $\varphi\colon P^{\ell}\to W_{k-1}^{2np^k-2}$. Then
\[
\nu^d\cdot \varphi\equiv\sum^{p^k-1}_{i=2}
x_{i+d}(k-1)\alpha_i+
y_{i+d}(k-1)\beta_i
\]
and
$\mu\nu^{d-1}\cdot\varphi\equiv\sum\limits^{p^k-1}_{i=2}y_{i+d}(k-1)\gamma_i$.
\end{corollary}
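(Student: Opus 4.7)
The plan is to argue by induction on $d\geqslant 1$, taking Corollary~\ref{cor6.36} as the base case and Proposition~\ref{prop5.19}(b) together with the module-algebra relations of Theorem~\ref{theor5.18} for the inductive step.

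For the base case $d=1$, I apply Corollary~\ref{cor6.36} with $\zeta=\nu\in M_{2n}$ to produce a congruence
\[
\nu\cdot\varphi = [\nu,\varphi]_r \equiv \sum_{i=2}^{p^k-1}\bigl([\nu,y_i(k-1)]_r\alpha_i+[\mu,y_i(k-1)]_r\beta_i+[\nu,x_i(k-1)]_r\gamma_i+[\mu,x_i(k-1)]_r\delta_i\bigr).
\]
I then evaluate each bracketed product in the $A_*(D_{k-1})$-module $e\pi_*(J_{k-1};Z/p^r)$: from (\ref{eq6.26}), $[\nu,x_i(k-1)]_r = \nu\cdot x_i(k-1) \equiv x_{i+1}(k-1)$ and $[\mu,x_i(k-1)]_r=y_{i+1}(k-1)$; by graded commutativity of $\mu$ (odd) and $\nu$ (even) in the symmetric algebra, $[\nu,y_i(k-1)]_r = \nu\mu\cdot x_{i-1}(k-1) = \mu\cdot x_i(k-1) = y_{i+1}(k-1)$; and finally $[\mu,y_i(k-1)]_r = \mu^2\cdot x_{i-1}(k-1) \equiv 0$, since $\mu$ has odd degree and $2$ is a unit at $p\geqslant 5$. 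Collecting yields the $d=1$ instance of the first formula.

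For the inductive step, assume
\[
\nu^{d-1}\cdot\varphi \equiv \sum_{i=2}^{p^k-1} x_{i+d-1}(k-1)\alpha_i + y_{i+d-1}(k-1)\beta_i
\]
and apply $\nu\cdot$ termwise. Proposition~\ref{prop5.19}(b), with $\beta(\nu)=\mu$, expands each $\nu\cdot(x_{i+d-1}(k-1)\alpha_i)$ as $x_{i+d}(k-1)(\Sigma^{2n}\alpha_i) + y_{i+d}(k-1)\alpha_i'$, and each $\nu\cdot(y_{i+d-1}(k-1)\beta_i)$ as $y_{i+d}(k-1)(\Sigma^{2n}\beta_i)$, with the companion piece vanishing because $\mu\cdot y_{i+d-1}(k-1)\equiv 0$. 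Summing gives the required expression for $\nu^d\cdot\varphi$.

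For the second claim I apply $\mu\cdot$ rather than $\nu\cdot$ to the inductive expression for $\nu^{d-1}\cdot\varphi$. Proposition~\ref{prop5.19}(b), with $\beta(\mu)=\beta^2(\nu)=0$, gives $\mu\cdot(x_{i+d-1}(k-1)\alpha_i) \equiv y_{i+d}(k-1)(\Sigma^{2n-1}\alpha_i)$ and $\mu\cdot(y_{i+d-1}(k-1)\beta_i)\equiv 0$, so only $y_{i+d}(k-1)$ terms survive. The one subtle input throughout is the vanishing $\mu^2\equiv 0$ in $A_*(D_{k-1})$, which follows from the graded commutativity of Definition~\ref{def5.17} together with the fact that $2$ is a unit at odd primes; everything else is bookkeeping in the module action.
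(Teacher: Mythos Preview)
Your proof is correct and follows essentially the same route as the paper's: the base case $d=1$ comes from Corollary~\ref{cor6.36} with $\zeta=\nu$, simplified via $\beta(\nu)=\mu$, $\mu\cdot y_i(k-1)\equiv 0$, $\mu\cdot x_i(k-1)=y_{i+1}(k-1)$, and $\nu\cdot x_i(k-1)=x_{i+1}(k-1)$; the inductive step applies Proposition~\ref{prop5.19}(b) with $\alpha=\nu$ (and then $\alpha=\mu$ for the second formula). You are in fact slightly more explicit than the paper in spelling out why $\mu^2\equiv 0$ kills the unwanted terms and in treating the $\mu\nu^{d-1}$ case separately.
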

\begin{proof}
In case $d=1$ we apply \ref{cor6.36}. The formula simplifies
since $\beta(\nu)=\mu$ and $\mu\cdot y_i(k-1)\equiv 0$, while $\mu\cdot
x_i(k-1)=y_i(k-1)$
and $\nu\cdot x_i(k-1)=x_{i+1}(k-1)$. In case $d>1$ we apply
induction and \ref{prop5.19}(b) with $\alpha =\nu$ and either 
\begin{align*}
\delta&=x_{i+d-1}(k-1)\ \text{and}\ \xi=\alpha_{i-1}\\
\intertext{or}
\delta&=y_{i+d-1}(k-1)\ \text{and}\ \xi=\beta_{i-1}.
\end{align*}
\end{proof}

We will use \ref{cor6.37} to compare the obstructions at
adjacent levels. Recall (\ref{theor4.4}), the map
\[
\xymatrix{P^{2np^k}(p^{r+k})\ar@{->}[r]^-{\enlarge{\beta_k}}&R_{k-1}\ar@{->}[r]^-{\enlarge{}}&W_{k-1};}
\]
$\beta_k$ induces a cohomology epimorphism by \ref{cor6.21}. We apply
\ref{theor6.28} where $\varphi$ is one of the two
maps:\setcounter{equation}{37}
\begin{align}\label{eq6.38}
\begin{split}  
\Delta_1&=\beta_k\rho^k-x_{p^k}(k-1)\colon P^{2np^k}\to W^{2np^k-2}_{k-1}\\
\Delta_2&=\beta_k\delta_k-y_{p^k}(k-1)\colon P^{2np^k-1}\to W^{2np^k-2}_{k-1}.
\end{split}  
\end{align}
The maps $\Delta_1$ and $\Delta_2$ are uniquely defined as maps to
$W_{k-1}$ since each term lies in the kernel of
$\gamma_{k-1}$. The fact that they factor through $W_{k-1}^{2np^k-2}$
follows from \ref{prop6.22} and \ref{prop6.24} (In the case $k=1$
apply \ref{lem6.4} and \ref{lem6.5} in place of \ref{prop6.24}). Note that
by \ref{prop6.20} $\iota \beta_k=a(k)\sigma$ where $\iota\colon J_{k-1}\to J_k$, so
\begin{align*}
\iota\beta_k\rho^k&=a(k)\sigma\rho^k=pa(k) \rho^{k-1}=p\overline{a(k)}\\
\iota\beta_k\delta_k&=a(k)\sigma\delta_k=a(k) \delta_{k-1}=\overline{b(k)}.
\end{align*}
Thus we have
\begin{align}\label{eq6.39}
\begin{split}  
\iota\Delta_1&=p\overline{a(k)}-\iota x_{p^k}(k-1)\\
\iota\Delta_2&=\overline{b(k)}-\iota y_{p^k}(k-1)
\end{split}  
\end{align}\addtocounter{Theorem}{3}

We will filter $J_k$ by spaces $J_k(m)$ as in \ref{eq6.23} and
construct maps
\[
\gamma_k(m)\colon J_k(m)\to BW_n
\]
by induction first on $k$ and then on $m$. By
design the map $\gamma_k(m)$ will annihilate the classes
$x_m(k)$ and $y_{m+1}(k)$. However $\gamma_k(m)$ will not be an extension
of
$\gamma_{k-1}(m)$ and we need to know that the classes
$x_m(k-1)$ and $y_{m+1}(k-1)$ are also in the kernel. For this
purpose we establish the following\setcounter{Theorem}{39}
\begin{lemma}\label{lem6.40}
Suppose $m\geqslant p^k$ and we have constructed
\[
\gamma'\colon J_k(m)\to BW_n
\]
such that the kernel of $(\gamma')_*$ contains the classes
$x_i(s)$ and 
$y_{i+1}(s)$ for $i<m$ and $s<k$. Then the kernel of
$(\gamma')_*$ also contains the classes
\begin{gather*}
x_m(k-1)\\
y_{m+1}(k-1)\\
\nu^j\cdot \overline{b(k)}\\
\mu\nu^j\cdot \overline{b(k)}
\end{gather*}
when $j+p^k\leqslant m+1$.
\end{lemma}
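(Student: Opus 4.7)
The plan is to use the two transitional maps $\Delta_1,\Delta_2$ of (6.38) as bridges between the level-$k$ and level-$(k{-}1)$ obstructions. Both factor through $W_{k-1}^{2np^k-2}$ (Proposition~\ref{prop6.22}), so Corollary~\ref{cor6.37} expresses any $\nu^d\cdot\Delta_i$ (resp. $\mu\nu^{d-1}\cdot\Delta_i$) as a congruence combination of the classes $x_{i+d}(k-1)$ and $y_{i+d}(k-1)$ with $2\le i\le p^k-1$. Combined with the two identities (6.39)
\[
\iota\Delta_1 \;=\; p\,\overline{a(k)} \;-\; \iota\,x_{p^k}(k-1), \qquad
\iota\Delta_2 \;=\; \overline{b(k)} \;-\; \iota\,y_{p^k}(k-1),
\]
the vanishing $p\cdot[\,\cdot\,,BW_n]=0$ (Proposition~\ref{prop2.4}), and the fact that congruences are respected by maps into the $H$-space $BW_n$ (Proposition~\ref{prop5.11}), each of the four desired classes will be presented as a sum of terms that are already killed by $(\gamma')_*$.

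For $x_m(k-1)$, I would take $d=m-p^k\ge 0$ and apply Corollary~\ref{cor6.37} to $\Delta_1$; push forward by $\iota\colon J_{k-1}\to J_k$, use naturality of the $A_*$-action together with (6.39) to rewrite $\iota(\nu^d\cdot\Delta_1)$ as $p\,x_m(k)-\iota\,x_m(k-1)$, and then apply $(\gamma')_*$. The term $p\,x_m(k)$ vanishes by Proposition~\ref{prop2.4}; every summand on the right lives in $x_{i+d}(k-1)$ or $y_{i+d}(k-1)$ with $i+d\le m-1$, hence dies by the hypothesis on~$\gamma'$. This yields $(\gamma')_*x_m(k-1)=0$. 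Running the identical argument with $\mu\nu^{d-1}$ in place of $\nu^d$ and $d=m+1-p^k$ gives $\iota(\mu\nu^{d-1}\cdot\Delta_1)=p\,y_{m+1}(k)-\iota\,y_{m+1}(k-1)$, and the expansion from Corollary~\ref{cor6.37} produces only $y_{i+d}(k-1)$ with $i+d\le m$, all covered by the hypothesis; hence $(\gamma')_*y_{m+1}(k-1)=0$.

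With these two classes in hand, the remaining two families follow by the same machine applied to $\Delta_2$. For $\nu^j\cdot\overline{b(k)}$, the identity above gives $\iota(\nu^j\cdot\Delta_2) = \nu^j\cdot\overline{b(k)} - \iota\,y_{p^k+j}(k-1)$, and Corollary~\ref{cor6.37} expresses $\nu^j\cdot\Delta_2$ as a sum of $x_{i+j}(k-1)$ and $y_{i+j}(k-1)$ with $i+j\le m$. Under $j+p^k\le m+1$, the borderline cases $i+j=m$ and $p^k+j=m+1$ are exactly $x_m(k-1)$ and $y_{m+1}(k-1)$, both killed by the previous step; everything strictly smaller is killed by the inductive hypothesis on $\gamma'$. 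For $\mu\nu^j\cdot\overline{b(k)}$ the analysis is even cleaner, because $\mu\nu^j\cdot\iota\,y_{p^k}(k-1) = \iota\,\mu^2\nu^j\cdot x_{p^k-1}(k-1) = 0$ (since $|\mu|$ is odd, so $\mu^2=0$ in $A_*(D_{k-1})$). Thus $\mu\nu^j\cdot\overline{b(k)}\equiv\iota(\mu\nu^j\cdot\Delta_2)$ is expressed via Corollary~\ref{cor6.37} purely as a sum of $y_{i+j+1}(k-1)$ with $i+j+1\le p^k+j\le m+1$, again handled by the hypothesis on $\gamma'$ together with the $y_{m+1}(k-1)$ case just proved.

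The principal obstacle is the bookkeeping at the extremal indices: the $A_*$-decomposition from Corollary~\ref{cor6.37} pushes the indices in the expansion up to exactly $m$ and $m+1$, precisely the borderline where the hypothesis on $\gamma'$ ceases to apply. This is what forces the two-stage architecture — first establishing $x_m(k-1)$ and $y_{m+1}(k-1)$ via $\Delta_1$ so that they are available as inductive input at the boundary when the $\overline{b(k)}$-classes are treated via $\Delta_2$. A subsidiary point, handled uniformly by dimension and the skeletal structure of $J_k(m)$, is that every class appearing in the expansions factors through $J_k(m)$; this is automatic since each $x_{i}(k-1)$ and $y_{i}(k-1)$ with $i\le m+1$ meets $F_k$ within $F_k(m)$ (using the even-dimensional concentration of $H^*(F_k)$ from \eqref{eq6.8} and Corollary~\ref{cor6.15}).
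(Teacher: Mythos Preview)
Your proposal is correct and follows essentially the same approach as the paper's own proof: first use $\Delta_1$ together with (6.39) and Corollary~\ref{cor6.37} to handle $x_m(k-1)$ and $y_{m+1}(k-1)$ (exploiting $p\cdot[\,-\,,BW_n]=0$ to kill the $p\,x_m(k)$ and $p\,y_{m+1}(k)$ terms), and then use $\Delta_2$ with the just-established borderline cases to handle $\nu^j\cdot\overline{b(k)}$ and $\mu\nu^j\cdot\overline{b(k)}$. Your identification of the two-stage architecture and the extremal-index bookkeeping as the crux of the argument matches the paper exactly.
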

\begin{proof}
Let $d=m-p^k$. Then we have
\begin{align*}
px_m(k)&=p(\nu^d\cdot \overline{a(k)})\\
&\equiv \nu^d\cdot(\iota x_{p^k}(k-1)+\iota\Delta_1)\\
&\equiv\iota x_m(k-1)+\iota\nu^d\cdot \Delta_1\\
&\equiv\iota
x_m(k-1)+\iota\left(\sum^{p^k-1}_{i=2}\overline{x}_{i+d}(k-1)\alpha_i+
\overline{y}_{i+d}(k-1)\beta_i\right)
\end{align*}
by \ref{eq6.26},
\ref{eq6.39}
and \ref{cor6.37}.
Since the identity map of~$BW_n$
has order $p$ and each of the classes $\overline{x}_{i+d}(k-1)$ and
$\overline{y}_{i+d}$ is equal to $x_{i+d}(s)$ and $y_{i+d}(s)$
respectively with $s<k$, we can conclude that
$(\gamma')_*(x_m(k-1))=0$. Similarly,
\begin{align*}
py_{m+1}(k)&=p\left(\mu\nu^d\cdot\overline{a(k)}\right)\\
&\equiv\mu\nu^d\cdot (\iota x_{p^k}(k-1)+\iota\Delta_1)\\
&\equiv\iota y_{m+1}(k-1)+\iota\mu\nu^d\cdot \Delta_1\\
&\equiv\iota
y_{m+1}(k-1)+\sum^{p^k-1}_{i=2}\overline{y}_{i+d+1}(k-1)\gamma_i
\end{align*}
by \ref{cor6.37}. Consequently $(\gamma')_*(y_{m+1}(k-1))=0$.
Likewise, by \ref{eq6.39} we obtain
\begin{align*}
\nu^j\cdot \overline{b(k)}&\equiv\nu^j\cdot(\iota
y_{p^k}(k-1)+\iota\Delta_2)\\
&\equiv \iota
y_{p^k+j}(k-1)+\Sigma\left(\sum^{p^k-1}_{i=2}\overline{x}_{i+j}(k-1)\alpha_i+\overline{y}_{i+j}(k-1)\beta_i\right)
\end{align*}
which lies in $\ker(\gamma')_*$ when $p^k+j\leqslant m+1$.
%85
Similarly
\[
\mu\nu^j\cdot\overline{b(k)}\equiv\iota\left(\sum^{p^k-1}_{i=2}\overline{y}_{i+j+1}(k-1)\gamma_i\right)
\]
which is in
$\ker(\gamma')_*$ when $p^k+j\leqslant m+1$.
\end{proof}
\begin{Theorem}\label{theor6.41}
There are maps $\gamma_k\colon J_k\to BW_n$ such that $\gamma_k$
restricted to $J_k(p^k-1)=J_{k-1}(p^k-1)$ is homotopic to the
restriction of $\gamma_{k-1}$ and such that the compositions
\begin{gather*}
\xymatrix{
P^{2np^k}(p^{r+k-1})\ar@{->}[r]^-<<<{a(k)}&E_k\ar@{->}[r]^{\tau_k}&J_k\ar@{->}[r]^->>>>>{\gamma_k}&BW_n\qquad(k\geqslant 1)\\
P^{2ni}\ar@{->}[r]^{x_i(s)}&J_s\ar@{->}[r]&J_k\ar@{->}[r]^{\gamma_k}&BW_n\\
P^{2ni-1}\ar@{->}[r]^{y_i(s)}&J_s\ar@{->}[r]&J_k\ar@{->}[r]^{\gamma_k}&BW_n
}
\end{gather*}
are null homotopic for $i\geqslant 2$ and $s\leqslant k$.
\end{Theorem}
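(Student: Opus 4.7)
The plan is to mimic Proposition~6.3 and Corollary~6.6 using a double induction, outer on $k$ and inner on the skeletal filtration $\{J_k(m)\}$ coming from $F_k(m)=F_k(m-1)\cup e^{2mn}$; this is well-founded because (\ref{eq6.8}) gives $H_*(F_k;Z_{(p)})$ free and concentrated in dimensions divisible by~$2n$. The base case $k=0$ is Corollary~\ref{cor6.6}. Assuming $\gamma_{k-1}$ satisfies Inductive Hypothesis~\ref{theor6.7}, I start by setting $\gamma_k(p^k-1)=\gamma_{k-1}|_{J_{k-1}(p^k-1)}$, which is legitimate because $J_k$ and $J_{k-1}$ agree through dimension $2np^k-1$. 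By the outer induction this initial map already annihilates $x_i(s)$ and $y_{i+1}(s)$ for $i<p^k$ and $s\leqslant k-1$.

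For the inner inductive step $m\geqslant p^k$, apply the controlled extension theorem~(\ref{theor6.1}) to the induced principal fibration over $F_k(m-1)\subset F_k(m)$. The hypothesis on $\chi$ (a Moore-space map inducing a top-cell cohomology isomorphism) is met by $\chi=\tau_ka(k)$ at $m=p^k$ (via Corollary~\ref{cor6.21}) and by $\chi=x_m(k)$ for $m>p^k$ (via Corollary~\ref{cor6.27}). Part~(a) produces an arbitrary extension $\gamma'$ of $\gamma_k(m-1)$. Then apply part~(b) with $u=\tau_ka(k)\vee y_{p^k+1}(k)$ (respectively $x_m(k)\vee y_{m+1}(k)$) and $P_0$ the wedge of bottom spheres. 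The required factorization $\xi\colon J_k(m)/J_k(m-1)\simeq \Omega^2S^{2n+1}\ltimes S^{2mn}\to P/P_0$ is supplied by Corollary~\ref{cor6.27} together with Corollary~\ref{cor6.21}. For the vanishing of $\gamma'u|_{P_0}$ one uses the Bockstein relations analogous to Proposition~\ref{prop5.22} together with $\iota\beta_k=a(k)\sigma$ from Proposition~\ref{prop6.20} and the arrangement $\nu_{k-1}\beta_k\sim\ast$ from Theorem~\ref{theor4.4}, reducing each restriction to a composition $p^r$-divisible into $\pi_*(BW_n)$ and hence null since $p\cdot\pi_*(BW_n)=0$.

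The delicate point is that $\gamma_k(m)$ is not literally an extension of $\gamma_{k-1}$ on $J_{k-1}(m)\subset J_k(m)$, so each step must also confirm that the level-$(k-1)$ classes $x_m(k-1),y_{m+1}(k-1)$ remain in the kernel. This is precisely Lemma~\ref{lem6.40}: expanding $p\,x_m(k)=p\nu^{m-p^k}\cdot\overline{a(k)}$ via (\ref{eq6.39}) and applying Corollary~\ref{cor6.37} to $\nu^{m-p^k}\cdot\Delta_1$, the discrepancy decomposes up to congruence into a sum of classes $\overline{x}_{i+d}(k-1),\overline{y}_{i+d}(k-1)$, each equal to some $x_j(s),y_j(s)$ with $s<k$ already in $\ker\gamma'_*$; combined with the exponent-$p$ property of~$BW_n$, this forces $(\gamma')_*$ to annihilate $x_m(k-1)$ and $y_{m+1}(k-1)$, along with $\nu^j\cdot\overline{b(k)}$ and $\mu\nu^j\cdot\overline{b(k)}$ for $j+p^k\leqslant m+1$. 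Since part~(b) of Theorem~\ref{theor6.1} leaves $\gamma_k(m-1)$ unchanged on $J_k(m-1)$, the inductive hypothesis propagates from stage to stage, and the direct limit $\gamma_k=\varinjlim\gamma_k(m)$ is the desired retraction.

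The principal obstacle is orchestrating the two inductions so that modifications at stage $m$ do not disturb the level-$(k-1)$ vanishing required to invoke Lemma~\ref{lem6.40} at stage $m+1$. This is controlled exactly by the congruence-homotopy calculus of Chapter~\ref{chap5} (giving the obstruction set a $Z/p[\nu]\otimes\wedge(\mu)$-module structure) together with the decomposition results Theorem~\ref{theor6.28} and Corollary~\ref{cor6.37}, which express any order-$p^r$ map $\Sigma^2X\to W_{k-1}^{2np^k-2}$ as a congruence sum of relative Whitehead products based on the $x_i(k-1),y_i(k-1)$. Once this bookkeeping closes, the theorem follows, and Theorem~\ref{theor6.43} can then assemble these $\gamma_k$ into a coherent system fulfilling the conditions of Proposition~\ref{prop2.12}.
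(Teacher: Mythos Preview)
Your approach is exactly the paper's: a double induction (outer on $k$, inner over the skeleta $F_k(m)$), with the controlled extension theorem~\ref{theor6.1} at each stage and Lemma~\ref{lem6.40} handling the lower-level obstructions. The choice of $\chi$, the pair $(P,P_0)$, the Bockstein computations via $\iota\beta_k=a(k)\sigma$ and $\nu_{k-1}\beta_k\sim\ast$, and the factorization $\xi$ all match.

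There is one point where your bookkeeping needs tightening. You apply Lemma~\ref{lem6.40} to the \emph{arbitrary} extension $\gamma'$ and conclude that $(\gamma')_*$ kills $x_m(k-1)$, $y_{m+1}(k-1)$ and the $\overline{b(k)}$ classes. The $\overline{b(k)}$ vanishing is indeed what you need at this moment, since it feeds into the $P_0$ verification (e.g.\ $x_m(k)\beta\equiv m\,y_m(k)+p^{k-1}\nu^{m-p^k}\cdot\overline{b(k)}$). But after you invoke~\ref{theor6.1}(b) and replace $\gamma'$ by $\gamma_k(m)$, the classes $x_m(k-1),y_{m+1}(k-1)$ live in dimension $2nm$ and do \emph{not} factor through $J_k(m-1)$, so ``$\gamma_k(m-1)$ unchanged on $J_k(m-1)$'' does not transfer their vanishing from $\gamma'$ to $\gamma_k(m)$. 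You must apply~\ref{lem6.40} a second time, now to $\gamma_k(m)$ itself (which still extends $\gamma_k(m-1)$, so the hypothesis holds), and then iterate it downward---replacing $k$ by $k-1,k-2,\dots$---to conclude that $x_m(s),y_{m+1}(s)$ lie in the kernel for \emph{all} $s\leqslant k$, not just $s=k-1$. The paper does exactly this two-pass application of~\ref{lem6.40} (once to $\gamma'$ for the $P_0$ step, once to $\gamma_k(m)$ afterwards, followed by the downward iteration). With that correction your argument is complete.
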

\begin{proof}
Recall (\ref{eq6.23}) that $F_k=\bigcup F_k(m)$ where
\[
F_k(m)=F_{k-1}(m)\cup e^{2mn}
\]
and $F_k(m)=F_{k-1}(m)$ when $n<p^k$. We have induced
principal fibrations:
\[
\xymatrix{
\Omega^2S^{2n+1}\ar@{=}[r]\ar@{->}[d]&
\Omega^2S^{2n+1}\ar@{->}[d]\\
J_k(m-1)\ar@{->}[r]\ar@{->}[d]&J_k(m)\ar@{->}[d]\\
F_k(m-1)\ar@{->}[r]&F_k(m)
}
\]
We will proceed by induction first on~$k$ and then on~$m$.
The result follows from \ref{prop6.3} when $k=0$. Suppose we
have constructed $\gamma_{k-1}$ and $\gamma_k(m-1)$ is defined
agreeing with $\gamma_{k-1}$ on $J_k(p^k-1)$ and such that
the classes $x_i(s)$ and $y_{i+1}(s)$ are in the kernel of $\gamma_k(m-1)$
when $i<m$ and
$s\leqslant k$. By \ref{prop6.24} the composition
\[
\xymatrix@C=42pt{
P^{2mn}\ar@{->}[r]^->>>>>>>{x_m(k)}&J_k(m)\ar@{->}[r]^{\eta_k}&F_k(m)
}
\]
induces an isomorphism in $\text{mod}\, p$ cohomology in dimension
$2mn$. We apply~\ref{theor6.1} to construct an extension $\gamma'\colon
J_k(m)\to BW_n$
of $\gamma_k(m-1)$.

We first consider the case $m=p^k$. Since $\gamma'$ extends
$\gamma_k(m-1)=\gamma_{k-1}(m-\nobreak 1)$, the classes $x_i(s)$ and $y_{i+1}(s)$
are in the
kernel of $\gamma'$ when $s<k$ and $i<p^k$. By \ref{lem6.40}, the kernel
of $\gamma'$ also contains the classes $x_{p^k}(k-1)$, $y_{p^k+1}(k-1)$,
$\nu\cdot \overline{b(k)}$,
$\mu\cdot\overline{b(k)}$ and $\mu\nu\cdot \overline{b(k)}$.

Now let $P=P^{2np^k}(p^{r+k-1})\vee P^{2n(p^k+1)-1}$ and $u\colon P\to
J_k(p^k)$
be given by $a(k)\vee \mu\cdot\overline{a(k)}$. Let $P_0=S^{2np^k-1}\vee
S^{2n(p^k+1)-2}\subset P$.
We next show that the composition\setcounter{equation}{41}
\begin{equation}\label{eq6.42}
\xymatrix{
P_0\ar@{->}[r]&P\ar@{->}[r]^->>>>{u}&J_k(p^k)\ar@{->}[r]^{\gamma'}&BW_n
}
\end{equation}
is null homotopic. Since
$a(k)\beta=a(k)\sigma\beta\rho=\iota\beta_k\beta\rho$
and $\beta_k$ is in the kernel of $\gamma_{k-1}$, $\gamma'(a(k)\beta)=0$.
This implies
that the composition
\[
\xymatrix@C=30pt{
S^{2np^k-1}\ar@{->}[r]&P^{2np^k}(p^{r+k-1})\ar@{->}[r]^-<<<<{a(k)}&J_k(p^k)\ar@{->}[r]^{\gamma'}&BW_n
}
\]
is divisible by $p^{r+k-1}\geqslant p$. Since $p\cdot \pi_*(BW_n)=0$, this
composition is null homotopic. Similarly
\[
\left(\mu\cdot
a(k)\right)\beta =-\mu\cdot\left[\left(a(k)\rho^{k-1}\right)\beta\right]=-p^{k-1}\mu\cdot
b(k)
\]
which is in the kernel of $\gamma'$. Thus the composition \ref{eq6.42} is
null homotopic and we can apply \ref{theor6.1} to construct a differnt
extension
\[
\gamma_k(p^k)\colon J_k(p^k)\to BW_n
\]
of $\gamma_l(p^k-1)$. We use \ref{prop6.24} to verify that the composition
\[
\kern-4pt\xymatrix@C=14pt{
P\ar@{->}[r]&J_k(p^k)\ar@{->}[r]&J_k(p^k)\!{}\Big/{}\!J_k(p^k\!{}-{}\!1)\simeq 
\Omega^2S^{2n+1}
\!{}\ltimes{}\!
S^{2np^k}
\ar@{->}[r]^->>{\xi}&S^{2np^k}\!{}\vee{}\! S^{2n(p^k+1)-1}
}
\]
induces an integral cohomology epimorphism.
Thus after composing with a homotopy equivalence on 
the wedge of spheres, we see that it is homotopic to the
quotient map $P\to P/P_0$. We apply \ref{theor6.1} to construct
$\gamma_k(p^k)$ with $x_{p^k}(k)$ and $y_{p^k+1}(k)$ in the kernel. Apply
\ref{lem6.40} again, this time to $\gamma_k(p^k)$ to see that
the classes $x_{p^k}(p-1)$ and $y_{p^k+1}(k-1)$ are in the kernel. Then we
repeatedly apply \ref{lem6.40} to restrictions of $\gamma_k(p^k)$ to
$J_s(p^k)$
for $s<p^k$ to see that $x_{p^k}(s-1)$ and $y_{p^k+1}(s-1)$ are
in the kernel for $s<k$.

The case $m>p^k$ is similar. Since $\gamma'$ extends $\gamma_k(m-1)$,
the classes $x_i(s)$ and $y_{i+1}(s)$ are in the kernel of $\gamma'$
when $i<m$ and $s<k$. By \ref{lem6.40} the kernel of $\gamma'$
also contains the classes $x_{m}(k-1)$, $y_{m+1}(k-1)$ and
the classes $\nu^j\cdot \overline{b(k)}$ and $\mu\nu^j\cdot
\overline{b(k)}$
when $j+p^k\leqslant m+1$. We now define $P$ and $u\colon P\to J_k(m)$
\begin{align*}
P=&P^{2mn}\vee P^{2(m+1)n-1}\\
u&=x_m(k)\vee y_{m+1}(k)
\end{align*}
and we calculate
\begin{align*}
x(k)\beta&=\left(\nu^{m-p^k}\cdot \overline{a(k)}\right)\beta\\
&=m\mu\nu^{m-p^k}\cdot \overline{a(k)}+\nu^{m-p^k}\cdot
\left(a(k)\rho^{k-1}\beta\right)\\
&=my_m(k)+p^{k-1}\nu^{m-p^k}\cdot b(k)
\end{align*}
$y_m(k)$ is in the kernel of $\gamma_k(m-1)$ and hence in the
kernel of $\gamma'$ and $\nu^{m-p^k}\cdot\nobreak b(k)$ is in the kernel since
$(m-p^k)+ p^k\leqslant m+1$. Similarly
\[
y_{k+1}(k)\beta=p^{k-1}\mu\nu^{m-p^k}\cdot b(k)
\]
is in $\ker \gamma'$. As before, this implies that the
restriction of $u$ to 
\[
S^{2mn-1}\vee S^{2(m+1)n-2}
\]
is null
homotopic and we can construct the required map $\xi$
satisfying \ref{theor6.1}. This allows for the construction of
$\gamma_k(m)$ which annihilates $x_m(k)$ and
$y_{m+1}(k)$. As before, we apply \ref{lem6.40} to conclude that
all classes $x_i(s)$ and $y_{i+1}(s)$ are annihilated
by $\gamma_k(m)$ when $i\leqslant m$ and $s\leqslant k$.
\end{proof}\setcounter{Theorem}{42}
\begin{Theorem}\label{theor6.43}
Suppose $n>1$. Then the composition
\[
\xymatrix{
\Sigma(\Omega G_k\wedge \Omega
G_k)\ar@{->}[r]^-<<<<{\Gamma_k}&E_k\ar@{->}[r]^{\eta_k}&J_k\ar@{->}[r]^->>>>{\gamma_k}&BW_n
}
\]
is null homotopic
\end{Theorem}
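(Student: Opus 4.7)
\textbf{Plan for Theorem~\ref{theor6.43}.}

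Since $n>1$, Proposition~\ref{prop5.7} reduces the claim to showing that for every $j\geq 2$ the composition
\[
U_k^{[j]} \longrightarrow J_k \longrightarrow BW_n
\]
is null homotopic, where the first map is the iterated relative and $H$-space based Whitehead product $\{a,\dots,a,\{a,a\}_{\times}\}_r$ and the second is $\gamma_k$. Because $U_k=\Sigma P_k$ and $P_k$ is a wedge of $k{+}1$ Moore spaces, Theorem~\ref{theor5.9} rewrites each restriction of this iterated product to a Moore-space summand of $\Sigma P_k\wedge\cdots\wedge P_k$ as a linear combination of iterated internal $H$-space based and relative Whitehead products of the form listed there. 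Since $BW_n$ is a homotopy associative $H$-space (Proposition~\ref{prop2.3}), Proposition~\ref{prop5.11} allows us to detect vanishing modulo congruence.

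At that point Theorem~\ref{theor5.20} applies: the relevant congruence classes form a module over $Z/p[\nu]\otimes \wedge(\mu)$ generated by $\overline{a(i)}$ and $\overline{b(i)}$ for $0\leq i\leq k$ (at level zero, Proposition~\ref{prop5.22} further identifies the generators as $x_j$ and $y_j$ for $j\geq 2$). It therefore suffices to show that $\gamma_k$ annihilates each of
\[
\nu^s\cdot\overline{a(i)}, \quad \mu\nu^s\cdot\overline{a(i)}, \quad \nu^s\cdot\overline{b(i)}, \quad \mu\nu^s\cdot\overline{b(i)}
\]
for $0\leq i\leq k$ and $s\geq 0$. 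By formula~(\ref{eq6.26}) the $\overline{a(i)}$-classes and their $\nu,\mu$-multiples coincide with the classes $x_m(i)$ and $y_m(i)$, each of which lies in $\ker(\gamma_k)$ by Theorem~\ref{theor6.41}; the same theorem handles the level-$0$ generators $x_j,y_j$.

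The main obstacle is the $\overline{b(i)}$-classes, which the construction of $\gamma_k$ in Theorem~\ref{theor6.41} does not target directly. For $i=k$ the identity~(\ref{eq6.39}) gives
\[
\overline{b(k)} \;=\; \iota\, y_{p^k}(k-1) \;+\; \iota\Delta_2
\]
in $[P^{2np^k-1},J_k]$, where $\iota\colon J_{k-1}\to J_k$ is the map of Proposition~\ref{prop6.20} and $\Delta_2\colon P^{2np^k-1}\to W_{k-1}^{2np^k-2}$. Applying Corollary~\ref{cor6.37} to $\varphi=\Delta_2$, the class $\nu^s\cdot \iota\Delta_2$ (respectively $\mu\nu^{s-1}\cdot \iota\Delta_2$) is congruent in $J_k$ to a linear combination of classes $\iota x_m(k-1)$ and $\iota y_m(k-1)$. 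Likewise $\nu^s\cdot \iota y_{p^k}(k-1)$ and $\mu\nu^s\cdot \iota y_{p^k}(k-1)$ reduce, via~(\ref{eq6.26}), to further $\iota x_m(k-1)$ or $\iota y_m(k-1)$ terms. By Theorem~\ref{theor6.41} every such term is killed by $\gamma_k$, so $\gamma_k$ annihilates $\nu^s\cdot\overline{b(k)}$ and $\mu\nu^s\cdot\overline{b(k)}$.

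Finally, for lower-level $\overline{b(s)}$ with $s<k$, the same argument is run at level~$s$: the analogues of~(\ref{eq6.38})--(\ref{eq6.39}) at that level produce $\Delta_2^{(s)}\colon P^{2np^s-1}\to W_{s-1}^{2np^s-2}$ and a factorization $\overline{b(s)} = y_{p^s}(s-1)+\Delta_2^{(s)}$ in $J_s$. Pushed forward to $J_k$, the inclusion $W_{s-1}^{2np^s-2}\subset W_{k-1}^{2np^k-2}$ allows Corollary~\ref{cor6.37} to rewrite $\nu^j\cdot\Delta_2^{(s)}$ and $\mu\nu^j\cdot\Delta_2^{(s)}$ as sums of $x_m(k-1),\,y_m(k-1)$ classes annihilated by $\gamma_k$. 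This exhausts the generating set from Theorem~\ref{theor5.20}, and Proposition~\ref{prop5.7} delivers the theorem.
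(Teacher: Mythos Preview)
Your approach matches the paper's: reduce via \ref{prop5.7}, \ref{theor5.9}, \ref{theor5.20} to the module generators $\overline{a(i)},\overline{b(i)}$, dispose of the $\overline{a}$-classes by identifying them with $x_m(i),y_m(i)$ and citing \ref{theor6.41}, and then treat the $\overline{b}$-classes separately. Two remarks.

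First, for $\overline{b(k)}$ the paper simply invokes Lemma~\ref{lem6.40} rather than unpacking (\ref{eq6.39}) and Corollary~\ref{cor6.37} as you do. Since \ref{lem6.40} already packages exactly that computation (and its hypotheses are met once \ref{theor6.41} is in hand, taking $m$ arbitrarily large), citing it is the cleaner route.

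Second, your handling of the lower-level classes $\overline{b(s)}$ for $s<k$ is more explicit than the paper's brief proof (which mentions only $\overline{b(k)}$), but contains a slip: there is no inclusion $W_{s-1}^{2np^s-2}\subset W_{k-1}^{2np^k-2}$. These are fibers of distinct maps $\gamma_{s-1}$ and $\gamma_{k-1}$, and at this stage of the induction the various $\gamma$'s are only known to be compatible on skeleta (cf.\ \ref{theor6.7}). The correct move is to apply Corollary~\ref{cor6.37} \emph{at level $s$} (i.e., with $k$ replaced by $s$ throughout), which rewrites $\nu^j\cdot\Delta_2^{(s)}$ and $\mu\nu^{j}\cdot\Delta_2^{(s)}$ as sums of $x_m(s-1),\,y_m(s-1)$ in $J_{s-1}$; then push forward along $J_{s-1}\to J_k$, where \ref{theor6.41} kills them. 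Equivalently, and more economically, just invoke Lemma~\ref{lem6.40} with $k$ replaced by $s$ and $\gamma'=\gamma_k\circ\iota\colon J_s\to BW_n$.
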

\begin{proof}
By \ref{theor6.41} $\gamma_k$ annihilates
\begin{align*}
x_m(i)&=\nu^{m-p^i}\cdot\overline{a(i)}\\
y_m(i)&=\mu\nu^{m-p^i-1}\cdot\overline{a(i)}
\end{align*}
for each $m\geqslant 2$ and $i\leqslant k$. By \ref{lem6.40} $\gamma_k$
annihilates
\begin{gather*}
\nu^j\cdot\overline{b(k)}\\
\mu\nu^j\cdot\overline{b(k)}
\end{gather*}
for each $j\geqslant 0$. The result follows \ref{prop5.7}, \ref{theor5.9} and
\ref{theor5.20}.
\end{proof}

As in the proof of \ref{prop2.12}, we have a homotopy equivalence
\[
\bigcup_{k\geqslant 0}J_k^{2np^{k+1}-2}\to \bigcup_{k\geqslant 0}J_k=J
\]
and consequently we can construct $\gamma_{\infty}\colon J\to BW_n$ and
redefine $\gamma_k$ as the restriction of $\gamma_{\infty}$ to $J_k$.
Similarly we define $\nu_{\infty}=\gamma_{\infty}\tau_{\infty}$ and we
have
\begin{CompatibilityTheorem}\label{theor6.44}
There are maps $\gamma_k$ and $\nu_k$ such that $\gamma_k\iota\sim\gamma_{k-1}$ and
$\nu_ke\sim \nu_{k-1}$. Furthermore, there are homotopy commutative
diagrams of fibration sequences
\[
%\begin{tabular*}{\textwidth}{@{\hspace*{-14pt}}l@{\hspace*{5pt}}l@{}}
\begin{tabular*}{\textwidth}{@{\hspace*{0pt}}l@{\hspace*{5pt}}l@{}}
\xymatrix{
\Omega G_k\ar@{=}[r]\ar@{->}[d]_{h_k}& \Omega
G_k\ar@{->}[d]_{\Omega\varphi_k}\\
T\ar@{->}[r]\ar@{->}[d]&\Omega S^{2n+1}\{p^r\}\ar@{->}[r]^-<<<{H}\ar@{->}[d]&BW_n\ar@{=}[d]\\
R_k\ar@{->}[d]\ar@{->}[r]&E_k\ar@{->}[d]\ar@{->}[r]^{\nu_k}&BW_n\\
G_k\ar@{=}[r]&G_k
}&
\xymatrix{\Omega D_k\ar@{=}[r]\ar@{->}[d]_{h'_k}&\Omega D_k\ar@{->}[d]\\
T\ar@{->}[d]\ar@{->}[r]&\Omega S^{2n+1}\{p^r\}\ar@{->}[d]\ar@{->}[r]^-<<<{H}&BW_n\ar@{=}[d]\\
W_k\ar@{->}[r]\ar@{->}[d]&J_k\ar@{->}[d]\ar@{->}[r]^{\gamma_k}&BW_n\\
D_k\ar@{=}[r]&D_k
}
\end{tabular*}
\]\index{$R_k$|LB}\index{$W_k$|LB}
where the left hand diagram maps into the right hand
diagram. The maps $h_k$ and $h'_k$ are the restrictions of maps
$h\colon \Omega G\to T$ and $h'\colon\Omega D\to T$ and there are
compatible
maps $g_k\colon T^{2np^{k+1}-2}\to \Omega G_k$ with $h_kg_k$ homotopic to
the
inclusion and compatible maps $f_k\!\colon G_k\to \Sigma T^{2np^k}$ with
$\widetilde{g}_kf_k$ homotopic to the identity.
\end{CompatibilityTheorem}

\chapter{Universal Properties}\label{chap7}

The aim of this chapter is to prove theorem B, corollaries C and~D,
and to discuss some applications. We describe an obstruction
theory for the existence and uniqueness of extensions~$\widehat{\alpha}$ and
evaluate the obstructions in some cases. We have, however,
no example of a homotopy Abelian $H$-space and map $\alpha\colon P^{2n}\to
Z$
for which no extension to an $H$-map $\widehat{\alpha}\colon T\to Z$
exists.

\section{Statement of Results}\label{subsec7.1}

In this section we describe the basic result relating
to the existence and uniqueness of an extension of a map
$\alpha\colon P^{2n}\to Z$ to an $H$-map $\widehat{\alpha}\colon T\to Z$.
The proofs of these
results are reserved for the next two sections. We note,
however, that a different argument for the obstruction to
uniqueness was obtained in~\cite{Gra12}.

We begin with some notation. Throughout this
chapter, $Z$ will be an arbitrary homotopy Abelian $H$-space.
We will call an $H$-map $\alpha\colon \Omega G_k\to Z$ proper if
the compositions:
\[
\xymatrix@C=42pt{
P^{2np^i-1}(p^{r+i-1})\vee
P^{2np^i}\left(p^{r+i-1}\right)\ar@{->}[r]^-<<<<<<{\enlarge{a(i)\vee c(i)}}&\Omega
G_i\ar@{->}[r]&\Omega G_k\ar@{->}[r]^{\enlarge{\alpha}}&Z
}
\]
are null-homotopic for each $i$, $1\leqslant i\leqslant k$. Let $G_k(Z)$ be
the Abelian group of all homotopy classes of proper
$H$-maps $\alpha\colon \Omega G_k\to Z$, where $0\leqslant k\leqslant
\infty$ ($G_{\infty}=G$).
Let 
\[
p_k(Z)=p^{r+k-1}\pi_{2np^k-1}(Z;Z/p^{r+k})
\]
by which we mean the subgroup of all elements of
$\pi_{2np^h-1}(Z;Z/p^{r+k})$
which are divisible by $p^{r+k-1}$. Let $[Z_1,Z_2]_H$ be the Abelian
group of all $H$-maps from $Z_1$ to~$Z_2$.

Clearly
\begin{equation}\label{eq7.1}
G_0(Z)=\left[P^{2n},Z\right]=\pi_{2n}(Z;Z/p^r)
\end{equation}\addtocounter{Theorem}{1}
\begin{Theorem}\label{theor7.2}
$\underset{\leftarrow}{\lim} G_k(Z)\cong[T,Z]_H$.
\end{Theorem}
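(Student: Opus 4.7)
The plan is to construct a natural map
\[
\Phi\colon [T,Z]_H\to \underset{\leftarrow}{\lim}\,G_k(Z),\quad \Phi(\alpha)=(\alpha h_k)_{k\geq 0},
\]
using the compatible family of $H$-maps $h_k\colon \Omega G_k\to T$ from the Compatibility Theorem~\ref{theor6.44}, and show it is a bijection. First I would verify well-definedness. Each $h_k$ is an $H$-map because the $H$-space structure on $T$ is inherited as a retract of $\Omega G$ via $hg\sim 1$, so $\alpha h_k$ is an $H$-map whenever $\alpha$ is one. For properness, the loop-adjoint classes $a(i),c(i)\colon P^\ast\to \Omega G_i$ appearing in the definition of $G_k(Z)$ are set up precisely so that their images in $R_i$ under the fibration $\Omega G_i\xrightarrow{h_i} T\to R_i$ (the left column of the first diagram in \ref{theor6.44}, looped) generate the corresponding Moore-space wedge summands of $R_i$ supplied by Theorem~\ref{theor2.14}(k); by exactness, $h_i\circ a(i)\sim *$ and $h_i\circ c(i)\sim *$ in $T$, so $\alpha h_k$ is proper. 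Compatibility under restriction along $\Omega G_{k-1}\to \Omega G_k$ follows from $h_k\vert_{\Omega G_{k-1}}=h_{k-1}$, so $\Phi$ lands in the inverse limit and is a homomorphism.

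Next I would construct the inverse $\Psi$. Given a compatible system $(\alpha_k)\in \underset{\leftarrow}{\lim}\,G_k(Z)$, I would assemble an $H$-map $\alpha_\infty\colon \Omega G\to Z$ using the filtration $G=\bigcup G_k$ (the $\underset{\leftarrow}{\lim}{}^1$ obstruction vanishes because for fixed degree $m$ the system $[\Omega G_k,Z]_H$ in degree $m$ stabilizes, since each cell attachment $G_{k-1}\subset G_k$ occurs in dimension $2np^k+1$ and affects only homotopy above a range growing with $k$). The key claim is that $\alpha_\infty$ factors through $h\colon \Omega G\to T$. The fiber of $h$ is $\Omega R$ with $R$ a wedge of Moore spaces (Theorem~\ref{theor2.14}(k)), and by the discussion in §\ref{subsec1.4} the wedge summands come in two flavors: (a) those corresponding to the $a(i),c(i)$, killed by $\alpha_\infty$ by properness; and (b) those encoding (generalized) Whitehead products $G_k\circ(G_k\circ\dots\circ G_k)\to G_k$ needed to enforce homotopy commutativity. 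For summands of type (b), the loop maps into $\Omega G$ are essentially adjoints to iterated Whitehead products in $G$, which by the commutator formula become iterated Samelson products in the loop space; since $Z$ is homotopy-Abelian and $\alpha_\infty$ is an $H$-map, these Samelson products are annihilated. Hence $\alpha_\infty\vert_{\Omega R}\sim *$, giving a factorization $\widehat{\alpha}\colon T\to Z$, which is itself an $H$-map because $h$ is an $H$-map with $H$-retraction $g$ and the factorization is forced via $\widehat{\alpha}\sim\alpha_\infty g$. Set $\Psi((\alpha_k))=\widehat{\alpha}$.

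Finally I would verify that $\Phi$ and $\Psi$ are mutual inverses. The composite $\Psi\Phi(\alpha)$ is the factorization of $\alpha h$ through $h$, which is $\alpha$ itself since $\alpha h g\sim \alpha$ by $hg\sim 1$. Conversely $\Phi\Psi((\alpha_k))=(\widehat{\alpha}h_k)=(\alpha_k)$ by construction. This completes the argument.

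The main obstacle will be the vanishing of $\alpha_\infty$ on the Whitehead-product summands of $\Omega R$. This is subtle because the Moore space decomposition of $R$ from \ref{theor2.14}(k) is not canonical, and the summands of type (b) correspond to generalized Whitehead products $W\colon G\circ G\to G$ of the kind constructed in §\ref{subsec3.1}, rather than classical Whitehead products of spherical classes. Converting the homotopy-Abelianness of $Z$ into the statement that every such generalized Whitehead product in $\Omega R\to \Omega G$ is annihilated by $\alpha_\infty$ requires careful use of the universal Whitehead product \ref{prop3.5} together with an argument like Theriault's criterion (\ref{prop2.9}): the adjoint of the universal Whitehead product into $\Omega G$ is the commutator map $\Omega G\wedge\Omega G\to \Omega G$, which composes trivially with any $H$-map into a homotopy-commutative $H$-space. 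A secondary technical point is the $\underset{\leftarrow}{\lim}{}^1$ argument producing $\alpha_\infty$; this is largely formal but must be set up so that compatibility up to homotopy at each stage refines to strict compatibility in a suitable model.
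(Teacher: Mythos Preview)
Your approach is essentially the paper's: one direction sends $\alpha\in[T,Z]_H$ to $\alpha h\in G(Z)$, and the inverse sends a proper $H$-map $\beta\colon\Omega G\to Z$ to $\beta g$. The paper organizes this as Propositions~\ref{prop7.4} and~\ref{prop7.5}. Two points in your outline need correction.

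First, your phrase ``$h$ is an $H$-map with $H$-retraction $g$'' is wrong: $g$ is \emph{not} an $H$-map, and the paper says so explicitly. That $\widehat{\alpha}=\alpha_\infty g$ is an $H$-map therefore requires a separate argument. The paper's argument is that the obstruction $\Delta=(g\mu)^{-1}\mu(g\times g)\colon T\times T\to\Omega G$ satisfies $h\Delta\sim *$, hence factors through the fiber $\Omega R$ of $h$; then $\beta\Delta\sim *$ follows from the decomposition of $R$. Your claim that $\alpha_\infty\vert_{\Omega R}\sim *$ is the right content here, but it does not by itself give an $H$-factorization through $T$ (the cofiber of $\Omega R\to\Omega G$ is not $T$); you must run the $\Delta$-argument or something equivalent.

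Second, your $\underset{\leftarrow}{\lim}{}^1$ argument is too loose: $[\Omega G_k,Z]_H$ is not graded by cell dimension, and ``stabilization in each degree $m$'' does not make sense for these sets. The paper (Proposition~\ref{prop7.4}) instead reduces to $[G,X]\cong\underset{\leftarrow}{\lim}[G_k,X]$ for arbitrary $X$ via a cofiber trick using that $\bigvee\Sigma G_k\to\Sigma G$ has a right inverse (since $\Sigma G$ is a wedge of Moore spaces, \ref{theor2.14}(i)), and then relates $[\Omega G,Z]_H$ to $[G,\Sigma Z]$ via the retraction $\psi\varphi=1$ with $\varphi(\alpha)=(\Sigma\alpha)\nu$ and $\psi(\beta)=\mu(\Omega\beta)$.

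Your identification of the ``main obstacle'' is exactly right: the decomposition $R\simeq A\vee C\vee\Sigma P$ with $\Sigma P$ factoring through $\Gamma\colon\Omega G*\Omega G\to E$ is Proposition~\ref{prop7.7new}, and its proof occupies the bulk of \S\ref{subsec7.2}. The hard part is not the $T\wedge C$ summand (which follows from the pushout description of $W$ over $D$, Proposition~\ref{prop7.8new} and Corollary~\ref{cor7.6}) but the decomposition $W\simeq A\vee W'$ with $W'$ factoring through $\Gamma$ (Proposition~\ref{prop7.7}). This requires constructing, for each $m$ with $p^k<m<p^{k+1}$, a class $f(m)\colon P^{2mn}(p^{r+\nu_p(m)})\to J_k$ that both hits the right homology and factors through $\Gamma_k$; the construction is an induction on the $p$-adic digits of $m$ using relative Whitehead products $\{\lambda_i,x\}_r$ against the inclusions $G_i\to G_k$, with the base case $m=2p^k$ handled by an $H$-space based product in $\overline{G}_k\circ G_k$ (Proposition~\ref{prop7.13} and Lemma~\ref{lem7.15}). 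None of this is visible from the general shape of the argument you sketch, and it is where the real work lies.
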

\begin{Theorem}\label{theor7.3}
There is an exact sequence:
\[
\xymatrix{
0\ar@{->}[r]&p_k(\Omega
Z)\ar@{->}[r]^{\enlarge{e}}&G_k(Z)\ar@{->}[r]^{\enlarge{r}}&G_{k-1}(Z)\ar@{->}[r]^{\enlarge{\beta}}&p_k(Z)
}
\]
\end{Theorem}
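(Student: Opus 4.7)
The approach is to treat Theorem~\ref{theor7.3} as an obstruction-theoretic exact sequence attached to the cofibration from Theorem~\ref{theor4.4}, in which $G_k$ is obtained from $G_{k-1}$ by attaching a cell via $\alpha_k\colon P^{2np^k}(p^{r+k})\to G_{k-1}$, refined to account for the fact that we work with proper $H$-maps into the homotopy-Abelian $H$-space $Z$, and refined again to capture the divisibility encoded by $p_k(Z)$. The restriction $r\colon G_k(Z)\to G_{k-1}(Z)$ is induced by $\Omega G_{k-1}\hookrightarrow\Omega G_k$ and is manifestly well-defined, since the properness conditions at levels $i<k$ are intrinsic to $\Omega G_i$.

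For the map $\beta\colon G_{k-1}(Z)\to p_k(Z)$, given a proper $H$-map $\alpha\colon\Omega G_{k-1}\to Z$, I would set $\beta(\alpha)=\alpha\circ\widetilde{\alpha_k}$, where $\widetilde{\alpha_k}\colon P^{2np^k-1}(p^{r+k})\to\Omega G_{k-1}$ is the loop adjoint of the attaching map. That $\beta(\alpha)$ actually lands in the subgroup $p^{r+k-1}\pi_{2np^k-1}(Z;Z/p^{r+k})$ is the heart of the construction: it follows from the top-left square of the ladder of Theorem~\ref{theor4.4}, which identifies the composite $\alpha_k\circ p^{r+k-1}$ with $\pi_{k-1}\beta_k$, combined with the properness of $\alpha$, which forces compatibility with the loop fibration induced by $\pi_{k-1}\colon E_{k-1}\to G_{k-1}$. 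After adjunction, the lift $\widetilde{\beta_k}\colon P^{2np^k-1}(p^{r+k})\to\Omega E_{k-1}$ exhibits $\beta(\alpha)$ explicitly as $p^{r+k-1}$ times a class in $\pi_{2np^k-1}(Z;Z/p^{r+k})$.

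For $e\colon p_k(\Omega Z)\to G_k(Z)$, I would take $\xi\in p^{r+k-1}\pi_{2np^k-1}(\Omega Z;Z/p^{r+k})$ and use its adjoint, together with the $H$-structure on $Z$, to build a proper $H$-map $\Omega G_k\to Z$ whose restriction to $\Omega G_{k-1}$ is null-$H$-homotopic; the divisibility of $\xi$ by $p^{r+k-1}$ exactly matches the obstruction coming from the attached cell. The three exactness statements then unfold as follows. For $\ker\beta=\mathrm{im}\,r$, if $\beta(\alpha)=0$ the ordinary cofibration sequence yields an extension $\Omega G_k\to Z$ which can be upgraded to an $H$-map by the Extension Theorem~\ref{theor2.5}, taking advantage of the fact that $CP^{2np^k}(p^{r+k})$ is a co-$H$ space, and then corrected by an element of $\mathrm{im}\,e$ to restore properness at level $k$. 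For $\ker r=\mathrm{im}\,e$, two proper $H$-extensions of the same $\alpha\in G_{k-1}(Z)$ differ, via the $H$-structure on $Z$, by a class supported on the attached cell, and the ladder of Theorem~\ref{theor4.4} again extracts the factor of $p^{r+k-1}$. Injectivity of $e$ is the reverse of its construction, reading off $\xi$ from the restriction to the attached cell.

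The chief obstacle is the tension between ordinary and $H$-map obstruction theory. While the cofibration yielding $G_k/G_{k-1}$ gives directly an exact sequence of ordinary homotopy sets, enforcing that all maps are \emph{proper} $H$-maps requires substantial care. Here the homotopy-Abelian hypothesis on $Z$, Theriault's criterion (Proposition~\ref{prop2.9}), and the congruence framework of Section~\ref{subsec5.2} combine to reduce $H$-map extension questions to obstruction computations in the congruence category. A secondary subtlety is that the target of $\beta$ must be pinned down as exactly $p_k(Z)$: divisibility of $\beta(\alpha)$ by $p^{r+k-1}$ comes from the ladder, while hitting every class in $p_k(Z)$, and not a strict subgroup, uses the freedom in the choice of $\alpha$ modulo lower-level obstructions, together with the inductive construction of Chapter~\ref{chap6}.
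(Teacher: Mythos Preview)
Your definition of $\beta$ is where the proposal breaks down. You set $\beta(\alpha)=\alpha\circ\widetilde{\alpha_k}$ and argue that the ladder of Theorem~\ref{theor4.4} exhibits the required divisibility. But the top square of that ladder says $\alpha_k \sim (\pi_{k-1}\beta_k)\circ p^{r+k-1}$, not the other way around; hence $\alpha\widetilde{\alpha_k}=p^{r+k-1}\bigl(\alpha\widetilde{(\pi_{k-1}\beta_k)}\bigr)$. The paper defines $\beta(\alpha)$ to be the undivided class $\alpha\circ(\Omega\pi_{k-1})\circ\widetilde{\beta_k}$, and the genuine content is Theorem~7.20: this class is already divisible by $p^{r+k-1}$. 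Once that is known, your $\beta$ is $p^{r+k-1}$ times an element of order at most $p$, hence identically zero, and exactness at $G_{k-1}(Z)$ would collapse to surjectivity of $r$, which is exactly what the sequence is meant to obstruct.

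The proof of Theorem~7.20 is not a formal consequence of the ladder; it rests on the splitting $R\simeq A\vee C\vee\Sigma P$ (Proposition~7.7), in particular on the construction of the classes $f(m)$ in Proposition~\ref{prop7.8} and the auxiliary maps $r,d$ of Lemma~7.19, which show that $\beta_k\rho$ and $\beta_k\delta_1$ differ from classes factoring through $\Omega G\ast\Omega G$ only by terms that a proper $\alpha$ kills. None of this machinery appears in your outline. Separately, the Extension Theorem~\ref{theor2.5} concerns extending maps over total spaces of principal fibrations, not upgrading ordinary maps to $H$-maps; the paper's proof of $\ker\beta\subset\operatorname{im}r$ (Proposition~\ref{prop7.25}) instead exploits that $\alpha_k$ is a co-$H$ map to build an explicit $H$-extension via suspension and then corrects it by a term $\epsilon'\Omega\pi'$ to restore properness at level~$k$. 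Theriault's criterion and the congruence framework of Section~\ref{subsec5.2} play no role in this argument.
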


We will see by example that this sequence is not exact
on the right. In fact, we have no example in which $\beta\neq 0$.
But there are examples in which $p_k(Z)\neq 0$.

\section{Inductive Analysis}\label{subsec7.2}

In this section we will prove Theorem~\ref{theor7.2}. It is a
consequence of the following propositions:
\begin{proposition}\label{prop7.4}
$\underset{\leftarrow}{\lim} G_k(Z)\cong G(Z)$
\end{proposition}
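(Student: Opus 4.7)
The plan is to analyze the natural restriction map $r\colon G(Z) \to \varprojlim G_k(Z)$, which sends a proper $H$-map $\alpha\colon \Omega G \to Z$ to the family $\{\alpha \circ \Omega(i_k)\}_{k\geq 0}$ of its restrictions along the inclusions $i_k\colon G_k \hookrightarrow G$. Well-definedness is immediate: each restriction is an $H$-map, and the properness conditions for $\alpha \circ \Omega(i_k)$ concern the classes $a(j), c(j)$ with $j \leqslant k$, which factor through $\Omega G_j \to \Omega G_k$ and are annihilated by the global $\alpha$.

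For injectivity, suppose $\alpha, \beta \in G(Z)$ satisfy $r(\alpha) = r(\beta)$. Form the $H$-space difference $\delta = \alpha - \beta\colon \Omega G \to Z$, which is defined and is itself an $H$-map since $Z$ is homotopy-Abelian. Then $\delta$ restricts to the trivial $H$-map on each $\Omega G_k$. Because $G = \bigcup_k G_k$ with each inclusion $G_k \hookrightarrow G_{k+1}$ a cofibration attaching cells in dimensions $\geqslant 2np^{k+1}$, the maps $\Omega G_k \to \Omega G$ are increasingly connected, so $\Omega G$ is the homotopy colimit of the $\Omega G_k$. Applying $[-, Z]_H$ gives a Milnor short exact sequence
\[
0 \to \varprojlim{}^1\,[\Sigma \Omega G_k, Z]_H \to [\Omega G, Z]_H \to \varprojlim\, [\Omega G_k, Z]_H \to 0,
\]
so injectivity reduces to the vanishing of the $\varprojlim^1$ term on the proper part. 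I would derive this by showing the tower $\{G_k(Z)\}$ is Mittag--Leffler, using that the exact sequence of Theorem~\ref{theor7.3} together with the increasing connectivity of $\ker(r\colon G_k(Z)\to G_{k-1}(Z))$ (which lives in $p_k(\Omega Z) = p^{r+k-1}\pi_{2np^k-1}(\Omega Z; Z/p^{r+k})$, of rising dimension) forces the image of $G_{k+N}(Z) \to G_k(Z)$ to stabilize for $N$ large depending on~$k$.

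For surjectivity, given a compatible family $\{\alpha_k\} \in \varprojlim G_k(Z)$, one strictifies the $\alpha_k$ to a coherent sequence of $H$-maps with strictly commuting restriction diagrams (using the CW structure on the $G_k$ and the strict-unit convention of Neisendorfer \cite[11.1.11]{Nei10a}), and assembles them into a map $\alpha\colon \Omega G \to Z$ via the identification $\Omega G \simeq \operatorname{hocolim}_k \Omega G_k$. The $H$-map property of $\alpha$ follows from the $H$-map properties of the $\alpha_k$ together with the fact that the multiplication on $\Omega G$ is the colimit of those on the $\Omega G_k$; once the $\varprojlim^1$ obstruction is shown to vanish as above, this lift is unique up to $H$-homotopy. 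Properness of $\alpha$ is automatic since the classes $a(i), c(i)$ factor through $\Omega G_i \subset \Omega G$ and are killed by $\alpha_i$.

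The main obstacle is verifying the $\varprojlim^1$ vanishing (equivalently, the Mittag--Leffler property for the tower $\{G_k(Z)\}$); this is the one place where the specific structure of the filtration — namely, that the cells attached at stage~$k$ lie in dimensions $2np^k$ and $2np^k+1$, growing without bound — must be combined with the exact sequence of Theorem~\ref{theor7.3} to control both the kernel $p_k(\Omega Z)$ and the image of~$r$ in each fixed dimensional range.
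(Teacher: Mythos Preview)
Your approach has a genuine gap in the $\varprojlim^1$ vanishing step. You claim that the increasing dimension of the kernels $p_k(\Omega Z)\subset\pi_{2np^k-1}(\Omega Z;Z/p^{r+k})$ forces the images of $G_{k+N}(Z)\to G_k(Z)$ to stabilize, but this does not follow. Mittag--Leffler is a condition on the \emph{images} of the composite transition maps, and knowing only that each single-step kernel lies in a high-dimensional homotopy group says nothing about whether those images stabilize: the maps $r$ need not be surjective (the cokernel is governed by $\beta$, which you have no control over), so the image of $G_{k+N}(Z)$ in $G_k(Z)$ could keep shrinking indefinitely. There is also a secondary issue: the Milnor sequence is standard for ordinary homotopy classes, but you have written it for $[\,-\,,Z]_H$ without justification, and you then restrict attention to ``the proper part'' without explaining how the sequence interacts with properness. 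Finally, invoking Theorem~\ref{theor7.3} here is logically delicate since that theorem is proved afterwards; it happens not to be circular, but you should flag this.

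The paper sidesteps all of these difficulties by exploiting the co-$H$ structure on $G$. The key observation is that for a co-$H$ space $G$ and an $H$-space $Z$, the assignments $\varphi(\alpha)=(\Sigma\alpha)\nu$ and $\psi(\beta)=\mu(\Omega\beta)$ exhibit $[\Omega G,Z]_H$ as a natural retract of $[G,\Sigma Z]$. This converts the question to showing $\varprojlim[G_k,X]\cong[G,X]$ for ordinary maps, which the paper proves directly: surjectivity by the homotopy extension property, and injectivity by showing the map from $G$ to the cofiber $\mathcal C(G)$ of $\bigvee G_k\to G$ is null. That nullity comes from the fact that $\Sigma G$ is a wedge of Moore spaces (\ref{theor2.14}(i)), so $\bigvee\Sigma G_k\to\Sigma G$ has a right homotopy inverse and hence $\bigvee\Sigma G_k\simeq\Sigma G\vee\mathcal C(G)$. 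No $\varprojlim^1$ computation, no Mittag--Leffler, and no forward reference to Theorem~\ref{theor7.3} are needed. The properness condition is then handled at the end, since it is a finite-level condition visible in each $G_k$.
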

\begin{proposition}\label{prop7.5}
$[T,Z]_H\cong G(Z)$.
\end{proposition}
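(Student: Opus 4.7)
The plan is to use the retraction maps $h \colon \Omega G \to T$ and $g \colon T \to \Omega G$ of~\ref{theor2.14}(a), which satisfy $hg \sim 1$ and are $H$-maps for the $H$-space structure $T$ inherits as a retract of~$\Omega G$. Define $\Phi \colon [T, Z]_H \to G(Z)$ by $\Phi(f) = fh$ and $\Psi \colon G(Z) \to [T, Z]_H$ by $\Psi(\alpha) = \alpha g$. Both maps send $H$-maps to $H$-maps, and $\Psi\Phi(f) = fhg \sim f$ is immediate. The substance of the proof is to verify (i)~that $\Phi(f)$ is actually proper, and (ii)~that $\Phi\Psi = 1$, meaning $\alpha \sim \alpha gh$ for every proper $H$-map $\alpha$.

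Both (i) and~(ii) reduce to a single Key Lemma: for every proper $H$-map $\alpha \colon \Omega G \to Z$, the restriction $\alpha|_{\Omega R}$ along the fiber of~$h$ in the fibration $\Omega R \to \Omega G \xrightarrow{h} T$ extracted from~(\ref{eq1.6new}) is null homotopic. For~(i), the maps $a(i), c(i)$ appearing in the definition of ``proper'' are the adjoints of the compositions $\pi_i a(i), \pi_i c(i) \colon P^* \to G_i \to G$; since $\nu_\infty$ was constructed in Chapter~\ref{chap6} so as to annihilate $a(i)$ and $c(i)$, the original maps factor through~$R$, their adjoints factor through~$\Omega R$, and hence $fh$ kills them for any~$f$. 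For~(ii), form the $H$-space difference $\omega = 1_{\Omega G} \cdot (gh)^{-1}$; since $h\omega \sim *$ we can lift $\omega$ as $\omega \sim i\omega'$ with $\omega' \colon \Omega G \to \Omega R$ and $i$ the fiber inclusion, and then
\[
\alpha \cdot (\alpha gh)^{-1} \;\sim\; \alpha\omega \;\sim\; (\alpha|_{\Omega R}) \omega',
\]
which vanishes once the Key Lemma is established; a direct check then shows the resulting factor $\alpha g \colon T \to Z$ is automatically an $H$-map.

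To prove the Key Lemma I invoke the decomposition $R \simeq \bigvee_j P^{m_j}(p^{s_j})$ of~\ref{theor2.14}(k) together with the description of the summands outlined in section~\ref{subsec1.4}: the summands fall into two families, those whose image in~$G$ realises a class $\pi a(i)$ or $\pi c(i)$, and those that realise iterated (generalised) Whitehead products introduced to enforce homotopy-commutativity in $\bmod\,p^r$ homology. In the relevant range of dimensions each Moore summand is a suspension, so $R = \Sigma R'$ and $\Omega R = J(R')$; an $H$-map out of $\Omega R$ to the group-like $Z$ is therefore determined by its restriction to the wedge $R'$ of one-lower-dimensional Moore spaces. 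On the first family $\alpha$ vanishes directly by properness after taking adjoints. On the second family, the adjoint of each Whitehead product summand $P^m(p^s) \to R \to G$ is the corresponding Samelson-type product in $\pi_{m-1}(\Omega G; Z/p^s)$; since $\Omega w$ is homotopic to the commutator map $\Omega G \wedge \Omega G \to \Omega G$ (as recalled in the proof of~\ref{prop2.9}), every such class is annihilated by any $H$-map into a homotopy-Abelian~$Z$. Hence $\alpha|_{R'} \sim *$, so $\alpha|_{\Omega R} \sim *$ as an $H$-map and \emph{a fortiori} as a map.

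The main obstacle will be the second family: the summands enforcing homotopy-commutativity are generalised Whitehead products built from the co-$H$ space machinery of Chapter~\ref{chap3}, not classical Whitehead products of spheres, so identifying their adjoints with Samelson-type products killable by an $H$-map into a homotopy-Abelian target requires careful bookkeeping using the naturality of~$W$ in~\ref{prop3.3}, the factorisation through the universal Whitehead product~$w$ in~\ref{prop3.5}, and the identification of~$\Omega w$ with the commutator. Combining this bookkeeping with the explicit description of the Moore-space summands of~$R$ produced in Chapter~\ref{chap6} is the technical heart of the proof.
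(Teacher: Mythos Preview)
Your skeleton is exactly the paper's: define $\Phi(f)=fh$ and $\Psi(\alpha)=\alpha g$, and reduce everything to the Key Lemma that a proper $H$-map $\alpha\colon\Omega G\to Z$ vanishes on~$\Omega R$. The paper uses this lemma to show $\alpha g$ is an $H$-map (your ``direct check'' is not direct: the obstruction $\Delta=(g\mu)^{-1}\mu(g\times g)$ factors through~$\Omega R$ precisely because $h$ is an $H$-map, and you need $\alpha\Delta\sim*$); the same lemma also gives $\alpha gh\sim\alpha$, as you note. So the structure is right.

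The gap is your proof of the Key Lemma. You invoke a splitting of~$R$ into $a(i),c(i)$ summands plus ``Whitehead-product'' summands, citing \ref{theor2.14}(k), section~\ref{subsec1.4}, and Chapter~\ref{chap6}. But \ref{theor2.14}(k) says only that $R$ is \emph{some} wedge of Moore spaces; section~\ref{subsec1.4} is an outline, not an argument; and Chapter~\ref{chap6} constructs $\gamma_k$ annihilating specific $\bmod\,p^r$ classes $x_i(k),y_i(k)$, which is not the same as exhibiting every Moore summand of~$R$ outside $A\vee C$ as factoring through~$\Gamma$. That factorisation is the content of Proposition~\ref{prop7.7new}, and it is the bulk of section~\ref{subsec7.2}: one first shows $R\simeq(T\ltimes C)\vee W$ (\ref{prop7.8new}), then that $T\wedge C\subset T\ltimes C$ factors through~$\Gamma$ via~\ref{prop3.15} (Corollary~\ref{cor7.6}), and finally that $W\simeq A\vee W'$ with each summand of~$W'$ factoring through~$\Gamma$ (\ref{prop7.7}). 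This last step requires building, for every $m$ with $p^k<m<p^{k+1}$, an explicit map $f(m)\colon P^{2mn}(p^{r+\nu_p(m)})\to J_k$ that both realises the correct cohomology class and factors through~$\Gamma_k$; the $f(m)$ are constructed inductively as relative Whitehead products $\{\lambda_i,x\}_r$ (Lemma~\ref{lem7.9}), with a separate $H$-based Whitehead product argument for the base case $m=2p^k$ (\ref{prop7.13}--\ref{lem7.15}). None of this is available from Chapter~\ref{chap6}; the $\bmod\,p^r$ classes there do not have the right coefficient order to be the actual Moore summands of~$W$.
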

\begin{proof}[Proof of \ref{prop7.4}:]
We first establish that for any space $X$,\setcounter{equation}{5}
\begin{equation}\label{eq7.6}
\underset{\leftarrow}{\lim}[G_k,X]\cong [G,X]
\end{equation}
The argument here is a special case of the results in~\cite{Gra66}. First
we observe that the restrictions define an epimorphism:
\[
\xymatrix{[G,X]\ar@{->}[r]&\underset{\leftarrow}{\lim} [G_k,X]}
\]
by the homotopy extension property applied inductively for
each~$k$. Suppose, however, that $\alpha\in [G,X]$ lies in the
kernel; i.e., the restrictions:
\[
\xymatrix{G_k\ar@{->}[r]&G\ar@{->}[r]^-{\enlarge{\alpha}}&X}
\]
are all null homotopic. We construct a homotopy
commutative diagram in which the horizontal sequence
is a cofibration sequence:
\[
\xymatrix{
\raisebox{-6pt}{$\smash{\bigvee\limits_{k\geqslant
0}G_k}$}\ar@{->}[r]&G\ar@{->}[r]\ar@{->}[d]_{\enlarge{\alpha}}&\cC(G)\ar@{->}[r]\ar@{-->}[dl]^{\enlarge{\alpha'}}&\raisebox{-6pt}{$\smash{\bigvee\limits_{k\geqslant
0}\Sigma G_k}$}\ar@{->}[r]&\Sigma G\\
&X&&&
}
\]
However by \ref{theor2.14}(i), $\Sigma G$ is a wedge of Moore spaces, so
the map
\[
\bigvee\limits_{k\geqslant 0}\Sigma G_k\to \Sigma G
\]
has a right homotopy inverse. This implies that
\[
\bigvee_{k\geqslant 0}\Sigma G_k\cong\Sigma G\vee \cC(G)
\]
and consequently the map $G\to \cC(G)$ is null
homotopic. It follows that $\alpha$ is null homotopic.\qed

To complete the proof of~\ref{prop7.4}, consider the diagram:
\[
\xymatrix{
[\Omega
G,Z]_H\ar@{->}[r]^->>>>{\enlarge{L}}\ar@{->}[d]_{\enlarge{\varphi}}&\underset{\leftarrow}{\lim}[\Omega
G_k,Z]_H\ar@{->}[d]_{\enlarge{\varphi}}\\
[G,\Sigma
Z]\ar@{->}[r]^->>>>>{\enlarge{\cong}}\ar@{->}[d]_{\enlarge{\psi}}&\underset{\leftarrow}{\lim}[G_k,\Sigma
Z]\ar@{->}[d]_{\enlarge{\psi}}\\
[\Omega G,Z]_H\ar@{->}[r]^->>>>>{\enlarge{L}}&\underset{\leftarrow}{\lim}[\Omega G_k,Z]_H
}
\]
where $\varphi(\alpha)=(\Sigma \alpha)\nu$ and
$\varphi(\beta)=\mu(\Omega\beta)$. Clearly $\psi\varphi=1$.
The middle horizontal homomorphism is an
isomorphism by~\ref{eq7.6}. Since $\psi$ is an epimorphism, $L$ is an
epimorphism and since $\varphi$ is a monomorphism, $L$ is a
monomorphism. Clearly proper $H$-maps in $[\Omega G,Z]_H$
correspond to proper $H$-maps in $[\Omega G_k,Z]_H$ for each~$k$.
\end{proof}

The proof of \ref{prop7.5} will depend on an analysis of~$R$.
We define spaces $A$\index{$A$|LB} and~$C$\index{$C$|LB} and maps $a$ and~$c$ as
follows:
\begin{align*}
a\colon A&=\bigvee\limits_{k\geqslant 1}P^{2np^k}(p^{r+k-1})\to R\to E\\
c\colon C&=\bigvee\limits_{k\geqslant 1}P^{2np^k+1}(p^{r+k-1})\to R\to E
\end{align*}
by the maps $a(k)$ and~$c(k)$ from (\ref{theor4.4}) on the respective
factors. The maps $a$ and$c$ factor through
$R$ by \ref{prop5.3} and \ref{theor6.41}.\addtocounter{Theorem}{1}
\begin{proposition}\label{prop7.7new}
$R\simeq A\vee C\vee \Sigma P$ where the inclusion of~$\Sigma P$
in~$R$ factors through $\Gamma\colon \Omega G\ast\Omega G\to R$.
\end{proposition}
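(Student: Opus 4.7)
The plan is to exploit Theorem~\ref{theor2.14}(k), which asserts that $R$ is a wedge of $\bmod\,p^s$ Moore spaces for $s\geqslant r$. This reduces the decomposition problem to a homological and cellular identification of wedge summands.

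First I would show that $a\vee c\colon A\vee C\to R$ admits a left homotopy inverse, exhibiting $A\vee C$ as a wedge summand of $R$. Since both $A\vee C$ and $R$ are wedges of Moore spaces, such a splitting exists provided $(a\vee c)_{*}$ is a split injection on integral homology. This can be verified using the cofibration ladder of Theorem~\ref{theor4.4}: the identity $\pi_k(a(k)\vee c(k))=\widetilde{g}_ke$ together with Proposition~\ref{prop4.1} pin down the cellular image of $a(k)$ and $c(k)$ in $R_k\subset R$, and the compatibility of the ladder across $k$ ensures the generators assemble coherently into a direct summand of $H_{*}(R;\mathbb{Z}/p)$.

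Second, let $\Sigma P$ denote the mapping cofiber of $a\vee c\colon A\vee C\to R$. Since $R$ is a wedge of Moore spaces and $A\vee C\to R$ splits off as a wedge summand, $\Sigma P$ is itself a wedge of Moore spaces and one has a homotopy equivalence $R\simeq A\vee C\vee\Sigma P$.

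Third, I would produce the factorization of the inclusion $\Sigma P\hookrightarrow R$ through $\Gamma$. The map $\nu_{\infty}\Gamma$ is null homotopic (combine Proposition~\ref{prop2.11} with the limiting form of the Inductive Hypothesis~\ref{theor6.7} supplied by the Compatibility Theorem~\ref{theor6.44}), so $\Gamma\colon \Omega G*\Omega G\to E$ lifts to a map $\overline{\Gamma}\colon \Omega G*\Omega G\to R$. Composing with the projection $q\colon R\to\Sigma P$ coming from the splitting above, I would show that $q\,\overline{\Gamma}$ is surjective on $\bmod\,p$ homology. The key input is the decomposition of $\Omega G*\Omega G$ as a wedge of iterated relative Whitehead products of co-$H$ spaces (Proposition~\ref{prop3.21}), combined with the observation in Section~\ref{subsec1.4} that the ``commutativity-enforcing'' Moore spaces in $R$ (those not accounted for by the classes $a(i)$ and $c(i)$) are precisely iterated Whitehead products. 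Once $q\,\overline{\Gamma}$ is shown surjective on homology, a homotopy section of $q$ with values in $\Omega G*\Omega G$, composed with $\overline{\Gamma}$, produces the required factorization.

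The main obstacle I expect is the homological identification of the complementary summand $\Sigma P$. One must verify that every Moore-space wedge summand of $R$ not already carried by $A\vee C$ is in fact hit by $\overline{\Gamma}$, as opposed to being realized only by the more general co-$H$-space Whitehead products of Chapter~\ref{chap3}. This requires matching the indexing in Proposition~\ref{prop3.21} with the dimension count for $H_{*}(R;\mathbb{Z}/p)$ derived from the fibration $T\to R\to G$ of~\ref{prop6.9} and the splitting $\Omega G\simeq T\times\Omega R$ extracted from $hg\simeq 1$ in Theorem~\ref{theor2.14}(a), after which the identification becomes a dimension-by-dimension bookkeeping argument.
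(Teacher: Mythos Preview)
Your overall outline is sound, but the paper takes a structurally different route and the step you flag as ``bookkeeping'' is in fact the entire technical content of the proof.

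The paper does not split $A\vee C$ off from $R$ directly. Instead it passes to the quotient $D=G\cup_{c}CC$ and the induced fibration $T\to W\to D$ to obtain $R\simeq (T\ltimes C)\vee W$ (Proposition~\ref{prop7.8new}). Then $T\ltimes C\simeq C\vee(T\wedge C)$, and a short diagram chase with $\Gamma'$ and Proposition~\ref{prop3.15} shows the $T\wedge C$ summand factors through $\Gamma$ (Corollary~\ref{cor7.6}). The remaining work is entirely about $W$: one proves $W\simeq A\vee W'$ with $W'=\bigvee_{m\neq p^s}P^{2mn}(p^{r+\nu_p(m)})$ factoring through $\Gamma$ (Proposition~\ref{prop7.7}), and sets $\Sigma P=(T\wedge C)\vee W'$. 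Routing through $W$ is what makes the known homology of $W_{k-1}$ (Proposition~\ref{prop6.11}, Corollary~\ref{cor6.19}) available.

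Your third step has a genuine gap. Surjectivity of $q\overline{\Gamma}$ on $\bmod\,p$ homology is not enough: the Moore-space summands of $\Sigma P$ have orders $p^{r+\nu_p(m)}$ varying with $m$, and a section must be built from classes in $\Omega G\ast\Omega G$ of exactly those orders. Proposition~\ref{prop3.21} decomposes $\Omega G\ast\Omega G$ only into iterated products of the co-$H$ space $G$, not into Moore spaces of prescribed order, so no amount of indexing-matching will close the gap. What the paper actually does (Proposition~\ref{prop7.8}, Lemmas~\ref{lem7.9}--\ref{lem7.15}) is \emph{construct}, for each $m$ with $p^k<m<p^{k+1}$, an explicit map $f(m)\colon P^{2mn}(p^{r+\nu_p(m)})\to J_k$ that both factors through $\Gamma_k$ and is a cohomology epimorphism after projection to $F_k$. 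The construction is an induction on the $p$-adic digits of $m$: the inductive step uses relative Whitehead products $\{\lambda_i,x\}_r$ against the inclusions $\lambda_i\colon G_i\to G_k$ (Lemma~\ref{lem7.9}), while the base case $m=2p^k$ requires an $H$-space based Whitehead product and a Serre spectral sequence computation (Lemmas~\ref{lem7.14}--\ref{lem7.15}). This is the substance of the proof, not bookkeeping.
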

\begin{proof}[Proof of \ref{prop7.5} (based on \ref{prop7.7new}):]
Given an $H$-map $\alpha\colon T\to Z$, the
composition $\beta=\alpha h$:
\[
\xymatrix{
\Omega G\ar@{->}[r]^{\enlarge{h}}&T\ar@{->}[r]^{\enlarge{\alpha}}&Z
}
\]
is a proper $H$-map since $h$ is proper. We construct an inverse:
\[
\xymatrix{
T\ar@{->}[r]^{\enlarge{g}}&\Omega G\ar@{->}[r]^{\enlarge{\beta}}&Z
}
\]
However, since $g$ is not an $H$-map, we need an extra
argument to show that $\beta g$ is an $H$-map. Consider
the diagram:
\[
\xymatrix@C=35pt{
T\times T\ar@{->}[r]^->>>>>{\enlarge{g\times g}}\ar@{->}[d]_{\enlarge{\mu}}&\Omega G\times \Omega
G\ar@{->}[r]^{\enlarge{h\times h}}\ar@{->}[d]_{\enlarge{\mu}}&T\times
T\ar@{->}[d]_{\enlarge{\mu}}\\
T\ar@{->}[r]^{\enlarge{g}}&\Omega G\ar@{->}[r]^{\enlarge{h}}&T
}
\]
in which the left-hand square is not homotopy
commutative. Since the right-hand square and the
rectangle are homotopy commutative, the difference
between the two sides of the left-hand
square
\[
\Delta= (g\mu)^{-1}\mu(g\times g)\colon T\times T\to \Omega G
\]
factors through the fiber of~$h$:
\[
\xymatrix{
\Omega R\ar@{->}[r]&\Omega
G\ar@{->}[r]^{\enlarge{h}}&T\ar@{->}[r]&R\ar@{->}[r]&G. 
}
\]
However since $\beta$ is proper and~$Z$ is homotopy Abelian
the composition:
\[
\xymatrix{
\Omega R\ar@{->}[r]&\Omega G\ar@{->}[r]^{\enlarge{\beta}}&Z 
}
\]
is null homotopic by~\ref{prop7.7new} Thus $\beta\Delta$ is null homotopic
and $\beta g$ is an $H$-map.
\end{proof}

The remainder of this section will be devoted to a
proof of~\ref{prop7.7new}
We begin by
clarifying the relationship between $R$ and~$W$.
\begin{proposition}\label{prop7.8new}
$R\simeq T\ltimes C\vee W$.
\end{proposition}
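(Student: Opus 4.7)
The plan is to construct a natural map $\Phi\colon T\ltimes C\vee W\to R$ and verify it is a $\bmod p$ homology isomorphism. Since $R$ is a wedge of Moore spaces by~\ref{theor2.14}(k) and (by a limit argument from~\ref{prop6.18}) so is $W$, this will imply $\Phi$ is a homotopy equivalence.

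First I would construct the section $\psi\colon W\to R$. Because $\nu_\infty=\gamma\tau$, the square
\[
\xymatrix{
R\ar@{->}[r]\ar@{->}[d]&W\ar@{->}[d]\\
E\ar@{->}[r]^-{\tau}&J
}
\]
is a homotopy pullback (both $R$ and $W$ are the fibres over the common basepoint of~$BW_n$), giving an induced map $R\to W$. By~\ref{theor2.14}(j) each $\nu_k$ has a right homotopy inverse, and the compatibility statement in~\ref{theor6.44} allows these to be assembled in the limit into a section $\psi$ of $R\to W$. Next I would build $\phi\colon T\ltimes C\to R$ using the principal action. The action $a\colon \Omega S^{2n+1}\{p^r\}\times E\to E$ satisfies $\nu_\infty\circ a=\mu_{BW_n}\circ(H\times \nu_\infty)$, since both sides classify the same twisted bundle over~$G$. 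Consequently, for $\omega\in T$ (the fibre of $H$) and $e\in R$ we have $\nu_\infty a(\omega,e)=*$, so $a$ restricts to an action $a_T\colon T\times R\to R$. The composite $a_T\circ(1\times c)\colon T\times C\to R$ agrees with $c$ on $\{*\}\times C$ and with the inclusion $T\hookrightarrow R$ on $T\times\{*\}$; a path-space formula analogous to the construction of $\Gamma'$ in~\ref{subsec3.2}, using the chosen null homotopy of $H|_T$ to trivialise the $T$-axial term, yields a basepoint-preserving map $T\wedge C\to R$. Combined with $c$ itself on the $C$ summand, this assembles into the desired $\phi\colon T\ltimes C\to R$.

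For the homological verification I would compare the Serre spectral sequences of the fibrations $T\to R\to G$ and $T\to W\to D$ from~\ref{prop6.9}. The section $\psi$ realises $H_*(W)$ as a direct summand of $H_*(R)$, and the cofibration $C\xrightarrow{\pi_*c}G\to D$ together with the action formula~\ref{cor3.18} identify the complementary summand with $H_*(T\ltimes C)=\phi_*H_*(T\ltimes C)$. Matching dimensions and Bocksteins against \ref{theor2.14}(h), \ref{prop5.1} and \ref{prop6.11} verifies that no extension problems survive.

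The main obstacle will be the construction of $\phi$: since $R$ is not itself an $H$-space, one cannot simply subtract the axial contributions of $a_T\circ(1\times c)$ to factor through $T\wedge C$. The workaround requires a careful choice of null homotopy $H|_T\sim *$ together with an explicit path-space construction mirroring the definition of $\Gamma'$ in~\ref{subsec3.2}, in which the null homotopy provides the ``reversed path'' needed to kill the $T$-axial term coherently. Once $\phi$ is established, the remaining homological bookkeeping is routine.
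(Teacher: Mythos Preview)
Your two main constructions both have gaps.

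For the section $\psi\colon W\to R$: a right homotopy inverse to $\nu_k\colon E_k\to BW_n$ is a map $BW_n\to E_k$, not a map $W_k\to R_k$, and there is no mechanism by which the former produces the latter. The pullback square you wrote down is correct, but it only gives the forward map $R\to W$; it does not split. The paper obtains the section differently: Proposition~\ref{prop6.17} shows that $H_*(R;Z_{(p)})\to H_*(W;Z_{(p)})$ is a \emph{split} epimorphism, and since both $R$ and $W$ are wedges of Moore spaces (\ref{theor2.14}(k) and \ref{cor6.19}), one can realize the splitting by a map $W\to R$.

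For the map $\phi\colon T\ltimes C\to R$: the identity $\nu_\infty\circ a\simeq\mu_{BW_n}\circ(H\times\nu_\infty)$ is nowhere established, and there is no general reason for a retraction onto an $H$-space to intertwine the principal action with the multiplication. Without it your restricted action $a_T\colon T\times R\to R$ does not exist. The paper avoids this entirely by applying the clutching construction (Proposition~\ref{prop2.1}) directly to the Hurewicz fibrations $T\to R\to G$ and $T\to W\to D=G\cup_{\pi c}CC$. This yields a pushout $W\simeq R\cup_{T\times C}(T\times CC)$ and in particular a map $T\times C\to R$ with no equivariance hypothesis needed.

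The passage from $T\times C\to R$ to $T\ltimes C\to R$ then uses a fact you did not isolate: the fibre inclusion $T\to R$ is null homotopic. This is because in the fibration sequence $\Omega G\to T\to R\to G$ the connecting map $h\colon\Omega G\to T$ has a right inverse $g$ (\ref{theor2.14}(a)), so $T\to R$ factors through two consecutive maps of the sequence. The null homotopy lets one extend $T\times C\to R$ over the cone $CT$ glued along $T\times\{*\}$, giving $T\ltimes C\to R$. A short diagram chase with the pushout then shows the cofibre is $W$, so one has a genuine cofibration $T\ltimes C\to R\to W$, and the section finishes the proof with no further homological bookkeeping.

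So what you flagged as the ``main obstacle'' is in fact the easy half once one uses clutching; the section is where the real work (\ref{prop6.17}) lies.
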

\begin{proof}
By \ref{prop6.17}, the homomorphism
\[
H_*(R;Z_{(p)})\to H_*(W;Z_{(p)})
\]
is a split epimorphism. Since $R$ is a wedge of Moore spaces
by~\ref{prop6.16} and $W$ is a wedge of Moore spaces 
by~\ref{cor6.19}, the
map
$R\to W$ has a right homotopy inverse. In the proof of~\ref{prop6.17}, a map
of fibration sequences was studied; in the limit, this is of the form
\[
\xymatrix{
T\ar@{=}[r]\ar@{->}[d]&T\ar@{->}[d]\\
R\ar@{->}[r]\ar@{->}[d]&W\ar@{->}[d]\\
G\ar@{->}[r]&D
}
\]
where $D$ is the mapping cone of the map $c\colon C\to G$. It
follows from \ref{prop2.1} that there is a homotopy
pushout diagram
\[
\xymatrix{
T\times \dotcC (C)\ar@{->}[r]&W\\
T\times C\ar@{->}[u]\ar@{->}[r]&R\ar@{->}[u] 
}
\]
where $\dotcC(C)$ is the cone on~$C$. Since the inclusion
of~$T$ in~$R$ is null homotopic, there is an induced map
\[
T\ltimes C\simeq \dotcC T\cup T\times C\to R
\]
whose cofiber is $W$;
\[
T\ltimes C\to R\to W.
\]
Since the map $R\to W$ has a right homotopy inverse, the
result follows.
\end{proof}
\begin{proposition}\label{prop7.5old}
There is a homotopy commutative
diagram
\[
\xymatrix{
T\ltimes C\ar@{->}[r]\ar@{->}[d]_{g\ltimes c}&R\ar@{->}[d]\\
\Omega G\ltimes E\ar@{->}[r]^-<<<<{\pi\Gamma'}&G.
}
\]
\end{proposition}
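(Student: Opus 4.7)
The plan is to verify commutativity of the square by evaluating both composites $T\ltimes C\to G$ on the two pieces of the presentation $T\ltimes C=CT\cup_{T}(T\times C)$ and checking that they agree on each.

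First, I would unpack the top composite. In the proof of Proposition~\ref{prop7.8new} the map $T\ltimes C\to R$ was assembled from: (i) the action map $T\times C\to R$, obtained by restricting the principal action $\Omega S^{2n+1}\{p^r\}\times E\to E$ along the $H$-map $E\colon T\to \Omega S^{2n+1}\{p^r\}$ of Theorem~\ref{theor2.14}(e) and along $c\colon C\to R\subset E$; and (ii) a null-homotopy of the fiber inclusion $\iota\colon T\to R$. The right inverse $g$ of $h$ provides the canonical such null-homotopy: for each $t\in T$ the loop $g(t)\in\Omega G$ lifts through the fibration $R\to G$ to a path $\widetilde{g(t)}$ in $R$ with $\widetilde{g(t)}(0)=*_R$ and $\widetilde{g(t)}(1)=\iota(t)$, so $(t,s)\mapsto\widetilde{g(t)}(s)$ is a null-homotopy of $\iota$. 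Taking this as the chosen null-homotopy in the construction of $T\ltimes C\to R$ and then projecting to $G$, the top composite becomes $(t,x)\mapsto\pi c(x)$ on $T\times C$ (since the principal action preserves $\pi$) and $(t,s)\mapsto g(t)(s)$ on $CT$ (since $\widetilde{g(t)}$ covers $g(t)$).

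Next, I would unpack the bottom composite using the explicit formula (\ref{eq3.9}) for $\Gamma'$. On $\Omega G\times E$ one has $\pi\Gamma'=\pi\circ\mathrm{pr}_2$, and under the homotopy equivalence $\xi\colon C\Omega G\to PG$ of Proposition~\ref{prop2.6} the restriction of $\pi\Gamma'$ to $C\Omega G$ becomes the parametrised evaluation $(\omega,s)\mapsto\omega(s)$. Applied to $g\ltimes c$, this yields $(t,x)\mapsto \pi(\Omega\varphi(g(t))\cdot c(x))=\pi c(x)$ on $T\times C$ and $(t,s)\mapsto g(t)(s)$ on $CT$.

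The two composites therefore have the same formula on each piece, and they agree on the overlap $T=T\times *_C = T\times 1\subset CT$, where both evaluate to $*_G$. Consequently they assemble to homotopic maps $T\ltimes C\to G$, which gives the required homotopy commutative square. The main subtlety is the matching of the null-homotopy of $\iota\colon T\to R$ implicit in the construction of $T\ltimes C\to R$ in Proposition~\ref{prop7.8new} with the canonical one supplied by $g$ and path-lifting; once this choice is fixed, the comparison on $CT$ is just the statement that $\pi\widetilde{g(t)}=g(t)$, and the comparison on $T\times C$ is formal.
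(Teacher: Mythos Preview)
Your approach matches the paper's: decompose $T\ltimes C=(T\times C)\cup_T CT$ and compare the two composites to $G$ on each piece. Your treatment of the $CT$ piece via path-lifting is exactly the paper's construction using the homotopy-lifting map $L\colon(PG,\Omega G)\to(R,T)$ precomposed with $g$.

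The one problematic point is your description of the map $T\times C\to R$ as the restriction of the principal action $\Omega S^{2n+1}\{p^r\}\times E\to E$ along $E\colon T\to\Omega S^{2n+1}\{p^r\}$ and $c\colon C\to E$. The clutching construction in Proposition~\ref{prop7.8new} is applied to the fibration $T\to W\to D$ (restricting to $T\to R\to G$), not to the principal fibration $\Omega S^{2n+1}\{p^r\}\to E\to G$, and neither $T\to R\to G$ nor $T\to W\to D$ is shown to be principal. In particular you give no reason why acting by $E(t)\in\Omega S^{2n+1}\{p^r\}$ on $c(x)\in R\subset E$ should land back in $R=\nu^{-1}(*)$. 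The paper bypasses this by appealing directly to the general form of the clutching construction (Proposition~\ref{prop2.1}): the map $\varphi\colon T\times C\to R$ comes from a trivialization of the pullback of $R\to G$ over $\pi c\colon C\to G$ (possible since $C\to G\to D$ is null), and the stated property that $\pi\varphi$ is the projection onto $CA\subset B$ immediately gives $(t,x)\mapsto(\pi c)(x)$ for $T\times C\to R\to G$. So your conclusion on the $T\times C$ piece is right, but the justification should be the clutching property rather than the $\Omega S^{2n+1}\{p^r\}$ action.
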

\begin{proof}
The map $T\ltimes C\simeq T\times C\cup \dotcC T\to R$ is given
as follows. The restriction to $T\times C$ comes from a
trivialization of the pullback over $C$ in the diagram
\[
\xymatrix{
T\ar@{=}[r]\ar@{->}[d]&\ar@{->}[d]\ar@{=}[r]T&T\ar@{->}[d]\\
T\times C\ar@{->}[r]\ar@{->}[d]&R\ar@{->}[r]\ar@{->}[d]&\ar@{->}[d]J\\
C\ar@{->}[r]^{c}&\ar@{->}[r]G&D
}
\]
so the composition $T\times C\to T\times C\cup \dotcC T\to R\to G$ is
given by the map
\[
\xymatrix{
T\times C\ar@{->}[r]^-<<<{\pi_2}&C\ar@{->}[r]^{c}&G.
}
\]
To understand the map $\dotcC T\to R$, consider a general
fibration
\[
\xymatrix{
F\ar@{->}[r]&E\ar@{->}[r]^{\pi}&B.
}
\]
Using the homotopy lifting property, one can construct a map
\[
\xymatrix{
(PB,\Omega B)\ar@{->}[r]^{L}&(E,F)
}
\]
extending the connecting map $\Omega B\to F$ and such that
$\pi L\colon PB\to B$ is endpoint evaluation. In our case we use 
the map $L\colon (PG,\Omega G)\to (R,T)$ together with $g\colon T\to \Omega
G$
to obtain the composition
\[
\xymatrix{
(\dotcC T,T)\ar@{->}[r]^->>>{g}&(C\Omega G,\Omega G)\ar@{->}[r]^{\xi}&(PG,\Omega
G)\ar@{->}[r]^{L}&(R,T).
}
\]
This defines the map $\dotcC T\to R$
and the composition
\[
\xymatrix{
\dotcC T\ar@{->}[r]&R\ar@{->}[r]&G
}
\]
is given by
\[
\xymatrix{
\dotcC T\ar@{->}[r]&\dotcC (\Omega G)\ar@{->}[r]^{\epsilon}&G.
}
\]
According to the definition of $\Gamma'$ (see \ref{eq3.8}), the map 
\[
\xymatrix{
\Omega G\times E\cup PG\ar@{->}[r]^-<<<{\Gamma'}&E\ar@{->}[r]^{\pi}&G}
\]
is given by
\begin{gather*}
\xymatrix{\Omega G\times E\ar@{->}[r]^-<<<{\pi_2}&E\ar@{->}[r]^{\pi}&G
}\\
\xymatrix{PG\ar@{->}[r]^{\epsilon}&G}\\[-15pt]
\makebox[\textwidth]{\hfill\qed}
\end{gather*}\noqed
\end{proof}
\begin{corollary}\label{cor7.6}
$T\ltimes C\simeq T\wedge C\vee C$ and the composition
\[
\xymatrix{
T\wedge C\ar@{->}[r]^{\zeta}&T\ltimes C\ar@{->}[r]&R\ar@{->}[r]&G
}
\]
factors through $\xymatrix{
\Omega G*\Omega G\ar@{->}[r]^-<<<{\Gamma}&E\ar@{->}[r]^{\pi}&C.
}$
\end{corollary}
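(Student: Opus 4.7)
The plan is to combine the split cofibration $T\wedge C\to T\ltimes C\to C$ from the footnote to (\ref{eq3.17}) with the fibration sequence $\Omega G*\Omega E\to \Omega G\ltimes E\to E$ of (\ref{eq3.17}), applying Propositions \ref{prop7.5old} and \ref{prop3.15}. For the splitting, every wedge summand $P^{2np^k+1}(p^{r+k-1})$ of $C$ has dimension at least $2n+1\geq 3$, so each is a suspension; hence $C=\Sigma C'$ for some $C'$, and $T*C'\simeq\Sigma(T\wedge C')=T\wedge C$. Applying the footnote to (\ref{eq3.17}) with $X=T$, $Y=C'$ then provides a natural cofibration sequence
\[
T\wedge C\xrightarrow{\zeta}T\ltimes C\xrightarrow{\pi_2}C,
\]
which splits because $C$ is a co-$H$ space, yielding $T\ltimes C\simeq T\wedge C\vee C$. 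The crucial property to carry forward is $\pi_2\zeta\simeq *$.

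Next I would reduce the factorization claim to a lifting problem. By Proposition \ref{prop7.5old}, the composition in the statement equals
\[
T\wedge C\xrightarrow{\zeta}T\ltimes C\xrightarrow{g\ltimes c}\Omega G\ltimes E\xrightarrow{\Gamma'}E\xrightarrow{\pi}G.
\]
Since $\pi_2(g\ltimes c)=c\,\pi_2$, the composite $T\wedge C\to\Omega G\ltimes E\xrightarrow{\pi_2}E$ equals $c\,\pi_2\zeta$ and is therefore null homotopic by the first step. Applying (\ref{eq3.17}) with $X=\Omega G$, $Y=E$ to the fibration sequence $\Omega G*\Omega E\xrightarrow{\zeta_{\mathrm{fib}}}\Omega G\ltimes E\xrightarrow{\pi_2}E$ then produces a lift $\widetilde\mu\colon T\wedge C\to\Omega G*\Omega E$ with $\zeta_{\mathrm{fib}}\widetilde\mu\simeq(g\ltimes c)\zeta$.

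Finally I would invoke Proposition \ref{prop3.15}. Since $S^{2n+1}\{p^r\}$ admits an $H$-space structure with strict unit, that result yields $\Gamma'\zeta_{\mathrm{fib}}\simeq\Gamma(1*\Omega\pi)$. Postcomposing with $\pi\colon E\to G$, the composition in the statement therefore becomes
\[
T\wedge C\xrightarrow{\widetilde\mu}\Omega G*\Omega E\xrightarrow{1*\Omega\pi}\Omega G*\Omega G\xrightarrow{\Gamma}E\xrightarrow{\pi}G,
\]
exhibiting the required factorization through $\Gamma$. The delicate point is arranging that the splitting $\zeta$ of the first paragraph and the fibration map $\zeta_{\mathrm{fib}}$ of the second come from the same package (\ref{eq3.17}), so that $\pi_2\zeta\simeq *$ is precisely the vanishing that feeds the lifting step; once this is in hand the remainder is a diagram chase.
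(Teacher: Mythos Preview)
Your proof is correct and follows essentially the same route as the paper: both use the splitting from $C$ being a suspension, invoke Proposition~\ref{prop7.5old} to rewrite the composition via $\Gamma'$, lift through the fibration sequence (\ref{eq3.17}) applied to $\Omega G\ltimes E\to E$, and finish with Proposition~\ref{prop3.15}. The paper presents this as a single commutative diagram, but the ingredients and logic are identical to yours.
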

\begin{proof}
Since $C$ is a suspension, $T\ltimes C\simeq T\wedge C\vee C$.
By~\ref{prop7.5old}, the
composition in question factors up to homotopy as
\[
\xymatrix@C=28pt{
T\wedge C\ar@{->}[r]&T\ltimes C\ar@{->}[r]^->>>>>{g\ltimes c}&\Omega G\ltimes
E\ar@{->}[r]^-<<<<{\Gamma'}&E\ar@{->}[r]^{\pi}&G
}
\]
In the diagram below $T\wedge C$ factors through $\Omega G*\Omega E$
since the middle row is a fibration sequence by~\ref{eq3.17}:
\[
\xymatrix{
T\wedge C\ar@{->}[r]\ar@{->}[d]&T\ltimes C\ar@{->}[r]\ar@{->}[d]_{g\ltimes
c}&C\ar@{->}[d]_{c}\\
\Omega G*\Omega E\ar@{->}[r]\ar@{->}[d]&\Omega G \ltimes
E\ar@{->}[r]\ar@{->}[d]_{\Gamma'}&E\\
\Omega G* \Omega G\ar@{->}[r]^-<<<<{\Gamma}&E\ar@{->}[d]_{\pi}&\\
&G& 
}
\]
since the lower square commutes up to homotopy by \ref{prop3.15}.
\end{proof}

At this point we have $R\simeq(T\wedge C)\vee C\vee W$, where
$T\wedge C\to R\to G$ factors through
\[
\xymatrix{
\Omega G*\Omega G\ar@{->}[r]^-<<<{\Gamma}&E\ar@{->}[r]^{\pi}&G.
}
\]

The remainder of this section will be focused on proving
\begin{proposition}\label{prop7.7} 
$W\simeq A\vee W'$ where 
\[
W'=\bigvee\limits_{i\neq
p^s}P^{2ni}(p^{r+\nu_p(i)}).
\]
Furthermore, there is a factorization:
\[
\xymatrix{
W'\ar@{->}[r]\ar@{->}[d]&R\ar@{->}[d]\\
\Omega G *\Omega G\ar@{->}[r]^-<<<{\Gamma}&G
}
\]
\end{proposition}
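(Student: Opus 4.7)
The strategy is to combine the skeletal Moore-space identification of $W$ from \ref{cor6.19} with the global lift of $\Gamma$ through $R$ that is provided by Theorem \ref{theor6.43}. First, I would pass to the limit in \ref{cor6.19}: using the compatibility of the $W_k$'s from \ref{theor6.44}, the inclusions $W_{k-1}^{2np^k-2} \hookrightarrow W_k^{2np^{k+1}-2}$ are cofibrations of wedges of Moore spaces that assemble into
\[
W \;\simeq\; \bigvee_{i \geqslant 2} P^{2ni}(p^{r + n_i}),
\]
where $n_i = \nu_p(i)$ when $i$ is not a prime power and $n_i = s-1$ when $i = p^s$. The summands indexed by $i = p^s$ are exactly $P^{2np^s}(p^{r+s-1})$, matching the $s$th wedge factor of $A$.

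Second, I would verify that $a\colon A \to R \to W$ realizes those prime-power summands as an actual wedge summand. By \ref{cor6.21} each $a(s)\colon P^{2np^s}(p^{r+s-1}) \to E \to W$ induces a cohomology epimorphism onto $H^{2np^s}(W;Z_{(p)})$, which by \ref{prop6.11} is $Z_{(p)}/p^{r+s-1}$; since source and target are Moore spaces with the same $p$-exponent and the map is a cohomology isomorphism, each $a(s)$ is a homotopy equivalence onto its wedge factor. Taking the wedge over $s \geqslant 1$ yields $A \hookrightarrow W$ as a wedge summand and produces the splitting $W \simeq A \vee W'$ with $W' = \bigvee_{i \neq p^s} P^{2ni}(p^{r+\nu_p(i)})$.

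Third, I would construct $W' \to R$ factoring through $\Gamma$. Since $\nu_\infty \Gamma$ is null homotopic by \ref{prop2.11} and \ref{theor6.43}, $\Gamma$ lifts to a map $\widetilde\Gamma\colon \Omega G * \Omega G \to R$. For each $i \neq p^s$ with $t = \nu_p(i)$, the iterated Whitehead product class underlying $x_i$ (resp.\ $y_i$), defined as in \ref{eq6.26} by repeated action of $\nu, \mu$ on the $H$-space based Whitehead product $[\nu,\mu]_\times$, factors through $\widetilde\Gamma$ by \ref{prop3.15}. The Bockstein relation $\beta x_i \equiv i\, y_i$ of \ref{prop5.22} together with the module-theoretic decomposition \ref{cor6.37} then let the pair $(x_i, y_i)$ be assembled, after absorbing lower-dimensional congruence corrections via \ref{lem6.32}, into a single map $P^{2ni}(p^{r+t}) \to \Omega G * \Omega G \to R$ that hits the corresponding summand of $W'$ under $R \to W$. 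Wedging over $i$ yields the desired map $W' \to R$ whose composition to $G$ factors through $\Gamma$.

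The principal obstacle is the global coherence of step three: each individual summand of $W'$ is easy to realize by a single iterated Whitehead product, but the wedge map $W' \to R$ must simultaneously project to the identity on $W'$ inside $W$. This requires iterated use of \ref{lem6.32}-style substitutions to eliminate the lower-index interference in the Bockstein relations and in the congruence decompositions of \ref{theor6.28} and \ref{cor6.37}, in the same spirit as the alteration arguments used to prove \ref{lem6.40}.
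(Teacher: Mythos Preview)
Your first two steps are essentially the paper's own argument: the limit of \ref{cor6.19} gives the Moore-space description of $W$, and the maps $a(k)$ pick out the prime-power summands as $A$. The problem is entirely in step~3.

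The classes $x_i(k)$ and $y_i(k)$ from \ref{eq6.26} are $\bmod\,p^r$ homotopy classes: they are built from $\nu,\mu\in\pi_*(D_k;\mathbb{Z}/p^r)$ acting on $\overline{a(k)}=\tau_k a(k)\rho^{k-1}$, and every iterated relative or $H$-space based Whitehead product of such classes is again a map out of a $\bmod\,p^r$ Moore space. But the summand of $W'$ in dimension $2ni$ is $P^{2ni}(p^{r+t})$ with $t=\nu_p(i)$, and for $t>0$ you need a map out of a Moore space of strictly higher exponent. The congruence relation $\beta x_i\equiv i\,y_i$ from \ref{prop5.22} says that $\beta^{(r)}x_i$ is divisible by $p^t$ \emph{up to congruence}, but this does not lift $x_i$ to a $\bmod\,p^{r+t}$ class, nor does the machinery of \ref{theor6.28}, \ref{cor6.37}, or \ref{lem6.32}, which all operate entirely within $\bmod\,p^r$ homotopy and congruence classes thereof. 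Your assembly step ``into a single map $P^{2ni}(p^{r+t})\to\Omega G*\Omega G$'' is therefore unsupported.

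The paper's solution (Proposition~\ref{prop7.8} and its supporting Lemmas~\ref{lem7.9}--\ref{lem7.15}) is to abandon $\bmod\,p^r$ Whitehead products and instead use the \emph{generalized} Whitehead products $\{\lambda_i,x\}_r\colon G_i\circ\Sigma P\to E_k$ of Chapter~\ref{chap3}, where $\lambda_i\colon G_i\to G_k$ is the inclusion of the co-$H$ space $G_i$. Since $H_{2np^i}(G_i;\mathbb{Z})\cong\mathbb{Z}/p^{r+i}$, the space $G_i\circ\Sigma P$ contains Moore summands of exponent $p^{r+i}$, and one can extract the needed map $f(m)\colon P^{2mn}(p^{r+s})\to J_k$ factoring through $\Gamma_k$. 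The argument proceeds by induction on the base-$p$ digit length of $m$, with a separate and delicate treatment of the base case $m=2p^k$ (Proposition~\ref{prop7.13}, Lemma~\ref{lem7.15}) using an $H$-space based product on $\overline{G}_k\circ G_k$ and a Serre spectral sequence calculation to locate an element of order $p^{r+k}$. None of this higher-torsion input is available from the $x_i,y_i$ alone.
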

The main ingredient for the proof of~\ref{prop7.7} is the
following result:
\begin{proposition}\label{prop7.8} 
Suppose $p^k<m<p^{k+1}$ and $s=\nu_p(m)$.%\index{nupm@$\nu_p(m)$|LB}
Then
there is a map $f(m)\index{$f(m)$|LB}\colon P^{2mn}(p^{r+s})\to J_k$ such that

\textup{(a)}\hspace*{0.5em}The composition 
\[
\xymatrix@C=23pt{P^{2mn}(p^{r+s})\ar@{->}[r]^-<<<{\enlarge{f(m)}}&J_k\ar@{->}[r]^{\enlarge{\eta_k}}&F_k
}
\]
induces a cohomology epimorphism.

\textup{(b)}\hspace*{0.5em}There is a factorization:
\[
\xymatrix{
P^{2mn}(p^{r+s})\ar@{->}[r]^-<<<{f(m)}\ar@{->}[d]_{w}&J_k\\
\Omega G_k * \Omega G_k\ar@{->}[r]^-<<<{\Gamma_k}&E_k\ar@{->}[u]^{\tau_k} 
}
\]
\end{proposition}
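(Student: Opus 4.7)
The strategy splits into cases based on $s=\nu_p(m)$.

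In the case $s=0$ (so $m$ is coprime to $p$), I would set $f(m)=x_m(k)=\nu^{m-p^k}\cdot\overline{a(k)}\colon P^{2mn}(p^r)\to J_k$ as in~(\ref{eq6.26}). Property~(a) is exactly Corollary~\ref{cor6.27}. For property~(b), $x_m(k)$ is an iterated relative Whitehead product on $\overline{a(k)}=\tau_k a(k)\rho^{k-1}$ (each action $\nu\cdot(-)$ being a relative Whitehead product by Theorem~\ref{theor5.18}); since $\overline{a(k)}$ factors through $\tau_k$, Proposition~\ref{prop3.12} together with Proposition~\ref{prop2.7} yields the required factorization through $\tau_k\circ\Gamma_k$.

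For $s\geqslant 1$, write $m=\ell p^s$ with $\gcd(\ell,p)=1$ and $\ell\geqslant 2$. The key ingredient is the $\bmod\,p^{r+s}$ class $\widetilde{\beta_s}\colon P^{2np^s}(p^{r+s})\to W_{s-1}\hookrightarrow J_{s-1}\to J_k$, obtained by lifting $\tau_{s-1}\beta_s$ through the fiber $W_{s-1}$ of $\gamma_{s-1}$; the lift exists because $\gamma_{s-1}\tau_{s-1}\beta_s=\nu_{s-1}\beta_s\sim *$ by the construction in Theorem~\ref{theor4.4}. I would then construct $f(m)$ as an iterated mixed-coefficient Whitehead product of $\widetilde{\beta_s}$ with itself and with other high-coefficient classes from the tower $\{\tau_k a(i),\tau_k c(i),\widetilde{\beta_i}\}_{i\geqslant s}$, using Propositions~\ref{prop3.25}, \ref{prop3.27} and \ref{prop3.28} to track which coefficient is carried by which Moore-space summand in the splittings of the relevant smash products. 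Property~(b) then follows from Proposition~\ref{prop3.12} and Theorem~\ref{theor3.22} since $f(m)$ is by construction an iterated Whitehead product. Property~(a) is verified by computing the Hurewicz image via Corollary~\ref{cor3.18} and matching it with the generator of $H^{2mn}(F_k;Z_{(p)})=Z_{(p)}$ coming from~(\ref{eq6.8}) and the multiplicative structure established in the proof of Proposition~\ref{prop6.11}.

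The hard part is the case $s\geqslant 1$. Since $\nu\in\pi_{2n+1}(D_k;Z/p^r)$ does not extend to a $\bmod\,p^{r+s}$ class (the bottom Moore space of $D_k$ has coefficient only $p^r$), the module structure of Theorem~\ref{theor5.18} cannot directly produce $\bmod\,p^{r+s}$ classes by iterating $\nu\cdot(-)$: any such iterate drops back to $\bmod\,p^r$ via Proposition~\ref{prop3.25}, because the lower coefficient wins in the smash $P^m(p^r)\wedge P^n(p^{r+s})$. Moreover, a naive $H$-space based product $[\widetilde{\beta_s},\widetilde{\beta_s}]_{\times}$ lands in dimension $4np^s-1$ rather than $4np^s$, so dimension-matching requires a careful selection of factors among the tower of high-coefficient classes. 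A cleaner alternative would be an induction on $k$: extend the identification of $W_k^{\leqslant 2np^{k+1}-2}$ as a wedge of Moore spaces (Corollary~\ref{cor6.19}) one level higher, identify the summand $P^{2mn}(p^{r+s})$, and show that its inclusion is already in the image of $\Gamma_k$ by tracing back through the cofibration ladders of Theorem~\ref{theor4.4} and Proposition~\ref{prop6.20}.
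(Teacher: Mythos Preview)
Your treatment of the case $s=0$ is correct and is exactly what the paper does: set $f(m)=x_m(k)$, invoke Corollary~\ref{cor6.27} for~(a), and use naturality (Propositions~\ref{prop3.11}(d), \ref{prop3.12}) to factor through $\tau_k\Gamma_k$.

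For $s\geqslant 1$ there is a genuine gap. You correctly diagnose the obstacle --- iterating $\nu\cdot(-)$ only yields $\bmod\,p^r$ classes because the lower coefficient wins in $P^m(p^r)\wedge P^n(p^{r+s})$ --- but you do not overcome it. Your proposed use of $\widetilde{\beta_s}$ does not help: as a class in $J_k$ rather than in the base, it cannot serve as the first argument of a relative or $H$-space based product, and projecting it to $G_k$ still leaves you with the dimension mismatch you note. The ``cleaner alternative'' is circular, since the decomposition of $W_k$ you want to invoke is Proposition~\ref{prop7.7}, whose proof uses the very Proposition~\ref{prop7.8} under discussion.

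The paper's resolution is precisely the reason the generalized Whitehead products of Chapter~\ref{chap3} were developed. Instead of Moore spaces, one forms the relative product $\{\lambda_i,x\}_r\colon G_i\circ\Sigma P\to E_k$ with the \emph{co-$H$ space $G_i$ itself} as the first argument (here $\lambda_i\colon G_i\hookrightarrow G_k$). Since $H_{2np^i}(G_i;Z_{(p)})\cong Z/p^{r+i}$, the top cell of $G_i$ carries coefficient $p^{r+i}$, and $G_i\circ P^{2mn}(p^{r+t})$ with $i\leqslant t$ contains a summand $P^{2(m+p^i)n}(p^{r+i})$ (via \ref{theor2.14}(i) and \ref{prop3.25}). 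This is Lemma~\ref{lem7.9}; Lemma~\ref{lem7.12} computes the relevant homology image. One then inducts on the $p$-adic expansion $m=\sum e_ip^i$, applying \ref{lem7.9} to add one $p^i$ at a time. The base case $m=2p^k$ cannot be reached this way and requires a separate argument (Proposition~\ref{prop7.13}): one constructs $\varphi\colon P^{4np^k}(p^{r+k})\to\Omega\overline{G}_k*\Omega G_k$ by locating a surviving class of order $p^{r+k}$ in the Serre spectral sequence of $\Omega\overline{G}_k*\Omega G_k\to\overline{G}_k\vee G_k$, where $\overline{G}_k$ is the $2np^k$-skeleton.
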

\begin{proof}[Proof of \ref{prop7.7}]
Suppose 
%$p^k\!{}<{}\!m\!{}<{}\!p^{k+1}\!$.
$p^k<m<p^{k+1}$.
 By \ref{prop7.8}, $\gamma_k f(m)$ is null\linebreak[4]
 \mbox{homotopic}, so
there is a factorization:
\[
\xymatrix{
&J_k\ar@{->}[rd]^{\eta_k}&\\
P^{2mn}(p^{r+s})\ar@{->}[ru]^{f(m)}\ar@{->}[rd]_{\overline{f}(m)}&&F_k\\
&W_k\ar@{->}[uu]\ar@{->}[ur]&
}
\]
By \ref{prop6.11}, $\overline{f}(m)$ induces an isomorphism in integral
homology
in dimension $2mn-1$. Assembling the maps $f(m)$ together for
all $k$ together with the maps $a(k)$ we get a map
\[
\xymatrix{
A\vee\bigvee\limits_{m\neq p^k}P^{2mn}(p^{r+s})\ar@{->}[r]&W
}
\]
which induces an isomorphism in homology.
Thus $W\simeq A\vee W'$ and $W'$ factors through $\Omega G*\Omega G$
by~\ref{prop7.8}.
\end{proof}

The construction of $f(m)$ when $s=0$ is immediate. We
simply set $f(m)=x_m(k)$. In case $s>0$ we need to
construct relative Whitehead products using $G_k$. In
the special case that $m=2p^k$ we will need to use an
$H$-space based Whitehead product.

The proof of \ref{prop7.8} will rely on four lemmas. We will call
an integer $m$ acceptable if there is a map $f(m)\colon P^{2mn}(p^{r+s})\to
J_k$
with $\nu_p(m)=s$, satisfying \ref{prop7.8}(a) and~(b).
\begin{lemma}\label{lem7.9}
Suppose $p^k<m<p^{k+1}$ and $x\colon P^{2mn}(p^{r+t})\to E_k$ is
a map such that the composition
\[
\xymatrix{
P^{2mn}(p^{r+t})\ar@{->}[r]^-<<<{x}&E_k\ar@{->}[r]^{\tau_k}&
J_k\ar@{->}[r]^{\eta_k}&F_k 
}
\]
induces an integral cohomology epimorphism. Suppose
$i\leqslant t\leqslant k$. Then there is a map $f(m+p^i)\colon
P^{2mn+2np^i}(p^{r+i})\to J_k$
satisfying \ref{prop7.8}\textup{(a)} and \textup{(b)}. Consequently if
$\nu_p(m+p^i)=i$,
$m+p^i$ is acceptable.
\end{lemma}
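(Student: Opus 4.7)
The strategy is to construct $f(m+p^i)$ as a relative Whitehead product of $\tau_k x$ against an auxiliary class derived from $a(i)$, then verify the two conditions of Proposition~\ref{prop7.8} via the machinery of Chapter~\ref{chap3} together with the homology calculations of Section~\ref{subsec6.2}. Specifically, I would first produce a class $\alpha\colon P^{2np^i+1}(p^{r+i})\to D_k$ of coefficient $p^{r+i}$ by combining the attaching map $\alpha_i\colon P^{2np^i}(p^{r+i})\to G_{i-1}$ of Theorem~\ref{theor4.4} (which already has coefficient $p^{r+i}$) with the cofibration $C_k\to G_k\to D_k$ of Proposition~\ref{prop5.1}. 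I would then form the relative Whitehead product
\[
\{\alpha,\tau_k x\}_r\colon P^{2np^i+1}(p^{r+i})\circ P^{2mn}(p^{r+t})\to J_k,
\]
split the domain via Proposition~\ref{prop3.25} (applicable since $i\leq t$) as $P^{2(m+p^i)n+1}(p^{r+i})\vee P^{2(m+p^i)n}(p^{r+i})$, and take $f(m+p^i)$ to be the restriction of this Whitehead product to the lower summand, in the manner of Proposition~\ref{prop3.26}.

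Condition~(a) then follows from Corollary~\ref{cor3.18}: in homology the relative Whitehead product becomes the principal action of the adjoint of $\varphi_k'\alpha$ in $H_*(\Omega S^{2n+1}\{p^r\})$ on the class represented by $\tau_k x$. Pushing to $F_k$ and invoking the $H_*(\Omega S^{2n+1})$-module isomorphism of Corollary~\ref{cor6.15} (extended from $F_{k-1}$ to $F_k$), the hypothesis that $\eta_k\tau_k x$ induces an integral cohomology epimorphism onto $H_{2mn}(F_k;Z_{(p)})$ translates into $\eta_k f(m+p^i)$ inducing the required integral cohomology epimorphism onto $H_{2(m+p^i)n}(F_k;Z_{(p)})$. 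Condition~(b) is then immediate from Proposition~\ref{prop3.15}: relative Whitehead products of this form factor up to homotopy through $\Gamma_k$ via the diagram (\ref{eq5.6}), yielding the factorization through $\Omega G_k\ast\Omega G_k\to E_k\to J_k$ demanded in Proposition~\ref{prop7.8}(b).

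The main obstacle is the production of $\alpha$ with coefficient exactly $p^{r+i}$ rather than the naive $p^{r+i-1}$ that one obtains from $\xi_k\tau_k a(i)$. A Whitehead product built from the naive class would lose one power of $p$ under the splitting of Proposition~\ref{prop3.25} and yield an $f(m+p^i)$ one coefficient short of what is needed. The correct lift must exploit both the cofibration structure of $D_k$ and the fact that $\alpha_i$ in Theorem~\ref{theor4.4} already carries the improved coefficient $p^{r+i}$; the restriction $i\leq t\leq k$ ensures all the ingredients are available, and the concluding proviso ``$\nu_p(m+p^i)=i$'' in the statement of Lemma~\ref{lem7.9} aligns the Bockstein order of the constructed class with the divisibility of $m+p^i$.
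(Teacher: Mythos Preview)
Your overall shape is right---form a relative Whitehead product with $x$ as the second argument, evaluate in homology via \ref{cor3.18}, and factor through $\Gamma_k$ via \ref{prop3.15}---but the construction of the first argument $\alpha$ has a genuine gap. The attaching map $\alpha_i\colon P^{2np^i}(p^{r+i})\to G_{i-1}$ is a $(2np^i)$-dimensional class, not a $(2np^i+1)$-dimensional one, so ``combining $\alpha_i$ with the cofibration $C_k\to G_k\to D_k$'' does not yield a map $P^{2np^i+1}(p^{r+i})\to D_k$. More seriously, there is no evident Moore-space class $P^{2np^i+1}(p^{r+i})\to G_k$ (or $D_k$) with the homological properties you need: $G_i$ is atomic (\ref{theor2.14}(b)) and does not split off a Moore space, while in $D_k$ the cells above dimension $2n+1$ have exponent only $p$ (\ref{prop5.1}). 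You have correctly identified that this coefficient problem is the crux, but the fix you sketch does not work.

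The paper's proof resolves this by \emph{not} passing to a Moore space first. Instead it takes the inclusion $\lambda_i\colon G_i\to G_k$ of the entire co-$H$ space as the first argument and forms the generalized relative Whitehead product
\[
\{\lambda_i,x\}_r\colon G_i\circ P^{2mn}(p^{r+t})\longrightarrow E_k.
\]
The coefficient $p^{r+i}$ is then recovered \emph{after} the product is formed: since $\Sigma G_i$ splits as a wedge of Moore spaces (\ref{theor2.14}(i)), the domain $G_i\circ P^{2mn}(p^{r+t})$ contains a $P^{2(m+p^i)n}(p^{r+i})$ summand detecting the top cell (using $i\leqslant t$ and \ref{prop3.25}), and $f(m+p^i)$ is the restriction to it. The homology check goes through Lemma~\ref{lem7.12}, which shows that the class $\sigma b_i\in H_{2np^i}(\Omega G_i)$ survives to a generator of $H_{2np^i}(\Omega S^{2n+1})$; this, together with \ref{cor6.15}, gives condition~(a). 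Condition~(b) follows from \ref{prop3.15} applied to the fibration $E_k\to G_k$. This is precisely the place where the co-$H$ space Whitehead products of Chapter~\ref{chap3} are indispensable and cannot be replaced by Moore-space arguments alone.
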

\begin{remark*}
This applies in particular when $m>p^k$ is
acceptable with $t=\nu_p(m)$.
\end{remark*}
\begin{proof}
Write $\Sigma P=P^{2mn}(p^{r+t})$ and let $\lambda_i\colon G_i\to G_k$
be the inclusion when $i\leqslant k$. We will define $f(m+p^i)$
using the relative Whitehead product:
\[
\xymatrix@C=39pt{
G_i\circ\Sigma
P\ar@{->}[r]^-<<<<<<{\{\lambda_i,x\}_r}&E_k\ar@{->}[r]^{\tau_k}&J_k\ar@{->}[r]^{\eta_k}&F_k.
}
\]
Our first task will be to evaluate this in $H_{2n(m+p^i)}$. 
Recall (\ref{eq3.10}) that $\{\lambda_i,x\}_r$ is the composition:
\[
\xymatrix@C=13pt{
G_i\circ \Sigma P\ar@{->}[r]^-{\psi}&\Sigma(\Omega G_i*\Omega \Sigma
P)\!{}\simeq{}\!\Omega G_i\!{}*{}\!\Omega \Sigma
P\ar@{->}[r]^-{\zeta}&\Omega G_i\!{}\ltimes{}\!
\Sigma P\ar@{->}[r]&\Omega G_k\!{}\ltimes{}\! E_k\ar@{->}[r]^-{\Gamma'}&E_k
}
\]
For the first part of this composition, consider the
diagram\setcounter{equation}{13}
\begin{equation}\label{eq7.10}
\begin{tabular*}{302pt}{@{}cp{9pt}c@{}}
\xymatrix@C=23pt@R=26pt{
G_i\circ\Sigma P
\ar@{->}[r]^->>>{\psi}
&
\Sigma \Omega G_i\wedge \Omega\Sigma P\\
\mbox{}&\mbox{}\\
G_i\wedge P
\ar@{->}[r]^->>>>>{\nu\wedge 1}
\ar@{->}[uu]^{\simeq}
&
\Sigma \Omega G_i\wedge P
\ar@{->}[uu]_{1\wedge i}}
&
\parbox[t]{9pt}{\mbox{}\newline\vspace*{-24pt}
$\simeq{}$\newline
\vspace*{37pt}
${}\simeq{}$
}
&
\xymatrix@C=23pt@R=21.25pt{
\Omega G_i*\Omega\Sigma P
\ar@{->}[rd]^{\zeta}
&\\
&
\Omega G_i\ltimes \Sigma P\\
\Omega G_i*P
\ar@{->}[uu]_{1*i}
\ar@{->}[ur]_{\zeta'}
}
\end{tabular*}
\end{equation}
where $\nu$ is the co-$H$ space structure map on~$G_i$. The left
hand square is homotopy commutative by \ref{prop3.2} and the
right hand triangle defines $\zeta'$ (see footnote to~\ref{eq3.17}).

Choose a generator $b_i\in H_{2np^i+1}(G_{i})\cong Z/p$ and
let $\sigma b_i\in H_{2np^i}(\Omega G_i)$ be the image of this generator
under $\nu_*$. Choose a generator $f\in H_{2mn-1}(P)$.
Then by the above diagram we have
\[
\zeta_*\psi_*(b_i\otimes f)=\sigma b_i\otimes 1 \otimes f\in
H_{2mn+2np^i}(\Omega G_i\ltimes \Sigma P),
\]
by applying the commutative square in the proof of \ref{eq3.17}.
We now construct a diagram where the two right hand
squares are homotopy commutative by \ref{eq3.8}, \ref{prop3.11}(b) and
\ref{prop3.11}(d):
\begin{equation}\label{eq7.11}
\begin{split}
\xymatrix@C=37pt{
\Omega G_i\ltimes \Sigma P\ar@{->}[r]^{\Omega\lambda_i\ltimes x}&\Omega G_k\ltimes
E_k\ar@{->}[r]^{\Gamma'}
\ar@{->}[d]\ar@{->}[r]^{\Gamma'}&E_k\ar@{->}[d]^{\eta_i\tau_k}\\
&\Omega D_k\ltimes F_k\ar@{->}[r]^-<<<<<<<{\Gamma'}&F_k\\
&\Omega D_k\times F_k\ar@{->}[u]\ar@{->}[r]\ar@{->}[u]&\Omega S^{2n+1}\times
F_k\ar@{->}[u]^{a} 
}
\end{split}
\end{equation}
To evaluate $\eta_k\tau_k\{\lambda_i,x\}_r$ in $\bmod\, p$ homology we observe
that the image of $\sigma b_i\otimes\mid\otimes f$ is
$a_*(\alpha\otimes\beta)$ where
$\alpha\in H_{2np^i}(\Omega S^{2n+1})$ is the image of $\Omega b_i$ under
the homomorphism
induced by the composition
\[
\xymatrix{
\Omega G_i\ar@{->}[r]& \Omega G_k\ar@{->}[r]& \Omega D_k\ar@{->}[r]& \Omega S^{2n+1}
}
\]
and $\beta$ is the image of $1\otimes f$ under the homomorphism
\[
\xymatrix{
\Sigma P\ar@{->}[r]^{x}&E_k\ar@{->}[r]^{\eta_k\tau_k}&F_k.
}
\]
By hypothesis $\beta\in H_{2mn}(F_k)$ is a
generator.\addtocounter{Theorem}{2}
\begin{lemma}\label{lem7.12}
The image of $\sigma b_i\in H_{2np^i}(\Omega G_k)$ under the
homomorphism
\[
\Omega G_k\to \Omega D_k\to \Omega S^{2n+1}
\]
is a unit multiple of the generator.
\end{lemma}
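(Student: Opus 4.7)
The plan is to factor the composition $\Omega G_k\to\Omega D_k\to\Omega S^{2n+1}$ through the Anick space $T$ and reduce the claim to the known behaviour of the Anick map $T\to\Omega S^{2n+1}$ on powers of $v$. By Theorem~\ref{theor2.14}(e), $Eh\simeq\Omega\varphi$, and by Proposition~\ref{prop5.3} the map $\varphi_k$ factors as $\varphi'_k\circ(G_k\to D_k)$. On $\Omega G_k$ the composition in the statement therefore agrees with the composite $\Omega G_k\to T\to \Omega S^{2n+1}\{p^r\}\to\Omega S^{2n+1}$ given by $h_k$, then $E$, then the loop of $S^{2n+1}\{p^r\}\to S^{2n+1}$. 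The commuting square in diagram (\ref{eq1.3new}) identifies this last composition with the Anick map, i.e.\ the third arrow in the fibration $S^{2n-1}\to T\to\Omega S^{2n+1}$. It therefore suffices to track $\sigma b_i$ first into $T$ via $h_k$, and then into $\Omega S^{2n+1}$ via the Anick map.

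For the first step I would use the compatible retractions supplied by Theorem~\ref{theor2.14}(a) and the Compatibility Theorem~\ref{theor6.44}: maps $\widetilde g_k\colon\Sigma T^{2np^k}\to G_k$, $f_k\colon G_k\to\Sigma T^{2np^k}$ and $g_k\colon T^{2np^{k+1}-2}\to\Omega G_k$ satisfying $\widetilde g_kf_k\simeq 1_{G_k}$ and $h_kg_k\simeq\mathrm{incl}$. Using $H_*(T;Z/p)=Z/p[v]\otimes\wedge(u)$ from Theorem~\ref{theor2.14}(g), a dimension count shows that $H_{2np^i+1}(\Sigma T^{2np^k};Z/p)=Z/p\cdot\Sigma v^{p^i}$ (the only competing candidate $\Sigma uv^{p^i-1}$ lies in dimension $2np^i$), while $H_{2np^i+1}(G_k;Z/p)=Z/p\cdot b_i$. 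The relation $\widetilde g_kf_k\simeq 1$ then forces $f_k(b_i)=c_i\Sigma v^{p^i}$ with $c_i\in(Z/p)^{\times}$, hence $\widetilde g_k(\Sigma v^{p^i})=c_i^{-1}b_i$; looping yields $g_k(v^{p^i})=c_i^{-1}\sigma b_i$, and applying $h_kg_k\simeq\mathrm{incl}$ gives $(h_k)_*(\sigma b_i)=c_iv^{p^i}\in H_{2np^i}(T;Z/p)$.

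For the second step I would analyse the mod $p$ cohomology Serre spectral sequence of $S^{2n-1}\to T\to\Omega S^{2n+1}$, where $E_2^{p,q}=H^p(\Omega S^{2n+1};Z/p)\otimes H^q(S^{2n-1};Z/p)$. The generator of $H^{2n-1}(S^{2n-1})$ must transgress to zero: were it non-zero, the edge homomorphism would fail to produce the class $u\in H^{2n-1}(T;Z/p)$ required by Theorem~\ref{theor2.14}(g). The spectral sequence therefore collapses, and in total degree $2np^i$ only the column $E_\infty^{2np^i,0}$ contributes, since the only other potentially non-trivial column $E_\infty^{*,2n-1}$ is supported in degrees $\equiv 2n-1\pmod{2n}$, whereas $2np^i\equiv 0\pmod{2n}$. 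Consequently the edge map $H^{2np^i}(\Omega S^{2n+1};Z/p)\to H^{2np^i}(T;Z/p)$ is an isomorphism $Z/p\xrightarrow{\sim}Z/p$, sending the generator to a unit multiple of $v^{p^i}$. Dualising, $v^{p^i}\in H_{2np^i}(T;Z/p)$ maps to a unit multiple of the generator of $H_{2np^i}(\Omega S^{2n+1};Z/p)$.

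Combining the two steps, $\sigma b_i$ maps to a unit multiple of the generator, proving the lemma. The main obstacle I anticipate is the bookkeeping in the first step: one must carefully verify that no other class in $\Sigma T^{2np^k}$ shares the dimension of $\Sigma v^{p^i}$ and that the compatibility of $g_k$, $h_k$, $f_k$, $\widetilde g_k$ from Theorem~\ref{theor2.14}(a) and~\ref{theor6.44} is enough to pin down $\sigma b_i$ inside $H_*(T;Z/p)$ up to a unit. Once $\sigma b_i$ has been identified with $v^{p^i}$, the spectral-sequence computation is entirely routine.
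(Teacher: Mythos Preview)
Your overall strategy coincides with the paper's: factor the map as $\Omega G_k\xrightarrow{h_k}T\to\Omega S^{2n+1}$, identify $(h_k)_*(\sigma b_i)$ with a unit multiple of $v^{p^i}$, and then push $v^{p^i}$ forward along the Anick map. Your spectral-sequence argument for the last step is correct and more explicit than what the paper writes.

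The gap is in your ``looping'' step. From $(\widetilde g_k)_*(\Sigma v^{p^i})=c_i^{-1}b_i$ you \emph{cannot} conclude $(g_k)_*(v^{p^i})=c_i^{-1}\sigma b_i$. The adjunction gives $\widetilde g_k=\epsilon_{G_k}\circ\Sigma g_k$, so what you actually know is
\[
\epsilon_*\bigl((g_k)_*(v^{p^i})\bigr)=c_i^{-1}b_i=\epsilon_*(c_i^{-1}\sigma b_i),
\]
i.e.\ $(g_k)_*(v^{p^i})$ and $c_i^{-1}\sigma b_i$ agree only modulo $\ker\epsilon_*$. Since $H_{2np^i}(\Omega G_k;Z/p)$ is much larger than $Z/p$ (it contains many loop-space decomposables), $\ker\epsilon_*$ is nontrivial and the inference fails. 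Consequently the identity $h_kg_k\simeq\mathrm{incl}$ only yields
\[
v^{p^i}=(h_k)_*(g_k)_*(v^{p^i})=c_i^{-1}(h_k)_*(\sigma b_i)+(h_k)_*(\text{something in }\ker\epsilon_*),
\]
which does not by itself force $(h_k)_*(\sigma b_i)\neq 0$.

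The paper sidesteps this by asserting directly that the composite $G_k\xrightarrow{\nu}\Sigma\Omega G_k\xrightarrow{\Sigma h_k}\Sigma T$ is a monomorphism in mod~$p$ homology; since your dimension count shows both $H_{2np^i+1}(G_k)$ and $H_{2np^i+1}(\Sigma T)$ are one-dimensional, this immediately gives $(h_k)_*(\sigma b_i)$ a unit multiple of $v^{p^i}$. If you want to repair your argument, you should aim for that monomorphism statement rather than trying to pin down $(g_k)_*(v^{p^i})$ inside the large group $H_{2np^i}(\Omega G_k)$.
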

\begin{proof}
By \ref{eq5.2}, the composition in question is homotopic to
the composition:
\[
\xymatrix{
\Omega G_k\ar@{->}[r]^->>>{\Omega\varphi_k}&\Omega
S^{2n+1}\{p^r\}\ar@{->}[r]&\Omega S^{2n+1}.
}
\]
By \ref{theor6.44}, this factors as
\[
\xymatrix{
\Omega G_k\ar@{->}[r]^{h_k}&T\ar@{->}[r]&\Omega S^{2n+1}.
}
\]
However the composition
\[
\xymatrix{
G_k\ar@{->}[r]^{\nu}&\Sigma\Omega G_k\ar@{->}[r]^{\Sigma h_k}&\Sigma T
}
\]
induces a monomorphism in $\bmod\, p$ homology, so
$(h_k)_*(\sigma b_i)$ is a  nonzero generator which is mapped to a
unit multiple of $v^{p^i}$ under the map $T\to \linebreak[4]\Omega S^{2m-1}$.  
\end{proof}

We now complete the proof of~\ref{lem7.9}. Since $i\leqslant t$, we can
find a map
\[
P^{2mn+2np^i}(p^{r+i})\to G_i\circ P^{2mn}(p^{r+t})
\]
which induces an isomorphism in $\bmod\, p$ homology in
dimension $2mn+2np^i$ using \ref{theor2.14}(i) and \ref{prop3.25}. Then let
$f(m+p^i)$ be the composition:
\[
\xymatrix{
P^{2mn+2np^i}(p^{r+i})\ar@{->}[r]&G_i\circ
P^{2mn}(p^{r+t})\ar@{->}[r]^-<<<<{\{\lambda_i,x\}_r}&E_k\ar@{->}[r]^{\tau_k}&J_k.
}
\]
By \ref{eq7.10}, \ref{eq7.11}, and \ref{lem7.12}, $\eta_kf(m+p^i)$ induces
an
epimorphism in $\bmod\, p$ cohomology and consequently
in integral cohomology as well. By \ref{prop3.15}, we have a
homotopy commutative diagram
\[
\xymatrix{
\Omega G_i*\Omega\Sigma P\ar@{->}[r]^{\zeta}\ar@{->}[d]&\Omega G_i\ltimes \Sigma
P\ar@{->}[d]\\
\Omega G_k*\Omega E_k\ar@{->}[r]^{\zeta}\ar@{->}[d]&\Omega G_k\ltimes
E_k\ar@{->}[r]\ar@{->}[d]_{\Gamma'}&\Omega D_k\ltimes
J_k\ar@{->}[d]_{\Gamma'}\\
\Omega G_k*\Omega G_k\ar@{->}[r]^-<<<<<{\Gamma}&E_k\ar@{->}[r]&J_k
}
\]
so $f(m+p^i)$ factors through $\Gamma$.
\end{proof}

We need to consider the case $m=2p^k$ separately. We will construct
\[
\xymatrix{
P^{4np^k}(p^{r+k})\ar@{->}[r]^-<<<<{f(2p^k)}&W.
}
\]
It is the first example of a mod $p^{r+k}$ Moore space in
$W$ by \ref{cor6.19}.
\begin{proposition}\label{prop7.13}
$2p^k$ is acceptable.
\end{proposition}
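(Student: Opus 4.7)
The difficulty is that $m=2p^k$ lies outside the range of Lemma~\ref{lem7.9}: decomposing $2p^k=m'+p^i$ under that lemma's hypotheses would force $i=k$ and hence $m'=p^k$, just missing the required inequality $p^k<m'<p^{k+1}$. We therefore need an ingredient whose coefficient is already $p^{r+k}$. The plan is to use the universal $H$-space based Whitehead product $\{\lambda_k,\lambda_k\}_{\times}\colon G_k\circ G_k\to E_k$ of the identity $\lambda_k$ of $G_k$ with itself, restricted to an appropriate Moore space summand. By Theorem~\ref{theor2.14}(h), $H_{2np^k}(G_k;Z)=Z/p^{r+k}$, so the K\"unneth formula gives $H_{4np^k}(G_k\wedge G_k;Z)=Z/p^{r+k}$ (tensor square of the top class) and $H_{4np^k+1}(G_k\wedge G_k;Z)=Z/p^{r+k}$ (the associated Tor term). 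Combined with the fact that $\Sigma(G_k\circ G_k)\simeq G_k\wedge G_k$ is homotopy equivalent to a wedge of Moore spaces (Theorem~\ref{theor2.14}(l) via Proposition~\ref{prop3.2}), this forces the top Moore-space summand of $G_k\circ G_k$ to be of the form $P^{4np^k}(p^{r+k})\vee P^{4np^k+1}(p^{r+k})$. Pick an inclusion $j\colon P^{4np^k}(p^{r+k})\to G_k\circ G_k$ of the lower factor, and set $f(2p^k)=\tau_k\circ\{\lambda_k,\lambda_k\}_{\times}\circ j$.

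Condition~(b) of Proposition~\ref{prop7.8} is immediate from the construction in Section~\ref{subsec3.2}: by definition, $\{\lambda_k,\lambda_k\}_{\times}=\Gamma_k\circ\psi$, so $f(2p^k)=\tau_k\Gamma_k w$ with $w=\psi\circ j\colon P^{4np^k}(p^{r+k})\to\Omega G_k\ast\Omega G_k$. For condition~(a), we trace the top-cell generator of $H^{4np^k}(P^{4np^k}(p^{r+k});Z_{(p)})=Z/p^{r+k}$ into $H^{4np^k}(F_k;Z_{(p)})=Z_{(p)}$ (the latter by~\ref{eq6.8}). By Proposition~\ref{prop3.15} and Corollary~\ref{cor3.18}, the homology is computed via the principal action of $\Omega S^{2n+1}$ on~$F_k$. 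Under $\psi$, the Hurewicz image of the Moore-space generator corresponds to $\sigma b_k\otimes\sigma b_k\in H_{2np^k}(\Omega G_k)\otimes H_{2np^k}(\Omega G_k)$, where $\sigma b_k\in H_{2np^k}(\Omega G_k;Z/p)$ is the looped top class from Lemma~\ref{lem7.12}. That lemma identifies the image of $\sigma b_k$ in $H_{2np^k}(\Omega S^{2n+1})$ with a unit multiple of $u_{p^k}$, and the divided-power identity $u_{p^k}\cdot u_{p^k}=\binom{2p^k}{p^k}u_{2p^k}$ combined with Lucas's identity $\binom{2p^k}{p^k}\equiv 2\pmod{p}$ (a $p$-local unit since $p>2$) shows that the image in $H_{4np^k}(F_k;Z_{(p)})$ is a generator, establishing~(a).

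The main obstacle is the homological computation in the last paragraph: tracing the chosen Moore-space generator through $\psi$, $\Gamma_k$, $\tau_k$, $\eta_k$, and the action of $\Omega S^{2n+1}$ requires careful bookkeeping of mod~$p$ generators (including Bockstein pairs) together with the K\"unneth computation in $G_k\wedge G_k$. The crucial point is that $\binom{2p^k}{p^k}\equiv 2\pmod{p}$ is a unit; this is exactly what makes an \emph{$H$-space based} Whitehead product (rather than a relative or iterated relative one) the right tool at the borderline case $m=2p^k$.
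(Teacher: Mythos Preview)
Your strategy is the same as the paper's: construct $f(2p^k)$ as a restriction of the $H$-space based Whitehead product $\{\lambda_k,\lambda_k\}_{\times}$ to a Moore summand $P^{4np^k}(p^{r+k})$ of $G_k\circ G_k$ (the paper works in the slightly smaller $\overline{G}_k\circ G_k$, but in the relevant dimensions this makes no difference). Condition~(b) then follows exactly as you say. The K\"unneth computation and the choice of the lower Moore factor are also correct.

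The gap is in your verification of condition~(a). Corollary~\ref{cor3.18} computes the homology of a \emph{relative} Whitehead product via the action $H_*(\Omega X)\otimes H_*(E)\to H_*(E)$, and Proposition~\ref{prop3.15} converts $H$-space based to relative only after one factor has been lifted to the total space. Here neither copy of $\lambda_k$ lifts to $F_k$, so there is no class in $H_*(F_k)$ on which $u_{p^k}$ can act; the product $u_{p^k}\cdot u_{p^k}=\binom{2p^k}{p^k}u_{2p^k}$ lives entirely in $H^*(\Omega S^{2n+1})$ and says nothing about the image in $H^*(F_k)$. In particular the claim that the Hurewicz image of your generator ``corresponds to $\sigma b_k\otimes\sigma b_k$'' and then goes to $u_{p^k}^2$ is not a computation in $F_k$ at all. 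The paper closes this gap with Lemma~\ref{lem7.15}: a Serre spectral sequence argument for $\Omega\overline{G}_k*\Omega G_k\to\overline{G}_k\vee G_k$ pins down the relevant class as (the $\beta^{(r+k)}$-preimage of) the symmetrized cycle $a(1)\otimes\sigma a(2)+a(2)\otimes\sigma a(1)$, and then computes its image in $F_k/F_{k-1}\simeq\Omega S^{2n+1}\ltimes P^{2np^k+1}(p)$ via the cross term $e\otimes\sigma b(2)$, using Lemma~\ref{lem7.12} on just one factor. That analysis, not a binomial identity in $\Omega S^{2n+1}$, is what makes condition~(a) go through.
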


We will construct $f(2p^k)$ as an $H$-space based
Whitehead product. Let $\overline{G}_k$ be the $2np^k$ skeleton of~$G_k$.
Then $\overline{G}_k\circ G_k$ has dimension $4np^k$.
\begin{lemma}\label{lem7.14}
$\overline{G}_k\circ G_k$ is a wedge of Moore spaces.
\end{lemma}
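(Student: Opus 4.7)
The plan is to reduce the claim to a statement about the smash product $\overline{G}_k \wedge G_k$ via Proposition~\ref{prop3.2}, which provides a co-H equivalence $\Sigma(\overline{G}_k \circ G_k) \simeq \overline{G}_k \wedge G_k$. Since $\overline{G}_k \circ G_k$ is $(4n-2)$-connected and finite dimensional, any wedge-of-Moore-spaces decomposition of its suspension (with Moore-space summands in dimensions $\geq 4n+1$) desuspends unambiguously at the odd prime $p$ to a corresponding decomposition of the space itself, so it suffices to prove that $\overline{G}_k \wedge G_k$ is a wedge of Moore spaces.

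I would first establish, by induction on $k$, that $\Sigma G_k$ and $\Sigma \overline{G}_k$ are each wedges of Moore spaces. The base case $\Sigma G_0 = P^{2n+2}(p^r)$ is trivial. At the inductive step, the cofibration
\[
\Sigma G_{k-1} \longrightarrow \Sigma G_k \longrightarrow \Sigma P^{2np^k+1}(p^{r+k})
\]
arising from~\eqref{eq1.7new} splits, because its connecting map $\Sigma P^{2np^k+1}(p^{r+k}) \to \Sigma^2 G_{k-1}$ is null-homotopic: the global splitting of $\Sigma G$ provided by Theorem~\ref{theor2.14}(i) restricts to a compatible splitting of each $\Sigma G_k$, so by the same token $\Sigma \overline{G}_k$ is a wedge of Moore spaces, since it differs from $\Sigma G_k$ only by truncating the top cell.

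Writing $\Sigma \overline{G}_k \simeq \bigvee_i P^{m_i}(p^{s_i})$, I then compute
\[
\Sigma(\overline{G}_k \wedge G_k) \;\simeq\; \Sigma \overline{G}_k \wedge G_k \;\simeq\; \bigvee_i P^{m_i}(p^{s_i}) \wedge G_k,
\]
and observe that each $P^{m_i}(p^{s_i}) \wedge G_k$ is the cofiber of multiplication by $p^{s_i}$ on $\Sigma^{m_i-1} G_k$. Since $\Sigma G_k$ is a wedge of Moore spaces, so is every $\Sigma^{m_i-1} G_k$; and at the odd prime $p$, the cofiber of $p^s$ on a single Moore space $P^n(p^t)$ decomposes as $P^n(p^{\min(s,t)}) \vee P^{n+1}(p^{\min(s,t)})$ by a direct homology computation. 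Hence $\Sigma(\overline{G}_k \wedge G_k)$, and by a single desuspension $\overline{G}_k \wedge G_k \simeq \Sigma(\overline{G}_k \circ G_k)$, are wedges of Moore spaces; one further desuspension yields $\overline{G}_k \circ G_k$ itself as a wedge of Moore spaces.

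The main obstacle is the compatibility of the splitting of $\Sigma G$ with the filtration by the $\Sigma G_k$ --- this is the essential input extracted from Theorem~\ref{theor2.14}(i) and is what makes the inductive cofibration split at each stage. Once that compatibility is secured, the remaining verifications (computation of cofibers of $p^s$ on Moore spaces, and the unique desuspension at odd primes in the stable range) are routine.
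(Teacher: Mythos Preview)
Your desuspension step is where the argument breaks. You claim that because $\overline{G}_k\circ G_k$ is $(4n-2)$-connected and finite dimensional, a wedge-of-Moore-spaces decomposition of its suspension desuspends to one of the space itself. That is only valid in the Freudenthal stable range, and here the top dimension is $4np^k$, far above twice the connectivity once $k\geq 1$. The spaces $G_k$ themselves exhibit the failure: by Theorem~\ref{theor2.14}(i) each $\Sigma G_k$ is a wedge of Moore spaces, yet $G_k$ is \emph{not} one --- the attaching maps $\alpha_k$ in~\eqref{eq1.7new} are essential, which is precisely why Section~\ref{subsec4.2} must work with index-$p$ \emph{approximations} of $G_k$ by wedges of Moore spaces rather than actual splittings, and why $G$ is atomic (\ref{theor2.14}(b)) rather than a wedge. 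So the implication ``$\Sigma X$ a wedge of Moore spaces $\Rightarrow$ $X$ a wedge of Moore spaces'' fails even for simply connected co-$H$ spaces at an odd prime, and both of your desuspensions are unjustified.

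The paper's argument avoids desuspension entirely. It uses the already-established fact (via \ref{theor2.14}(l) together with \ref{prop3.21}) that $G_k\circ G_k$ is itself a wedge of Moore spaces, and then splits the cofibration
\[
\overline{G}_k\circ G_k \longrightarrow G_k\circ G_k \xrightarrow{\ Q\ } S^{2np^k+1}\circ G_k \simeq \Sigma^{2np^k}G_k
\]
by building a right homotopy inverse to $Q$: the Bockstein pairs $b_k\otimes b_i$ and $\beta^{(r+i)}(b_k\otimes b_i)$ in $H_*(G_k\circ G_k)$ can be realized by maps of Moore spaces \emph{because} $G_k\circ G_k$ is already known to be such a wedge, and these assemble to a section. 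Hence $\overline{G}_k\circ G_k$ is a retract of a wedge of Moore spaces, so is one itself. Your route through $\Sigma\overline{G}_k$ and smash products never gets access to this key input about $G_k\circ G_k$, and without it there is no way around the desuspension problem.
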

\begin{proof}
Consider the cofibration sequence
\[
\xymatrix{
\overline{G}_k\circ G_k\ar@{->}[r]&G_k\circ
G_k\ar@{->}[r]^->>>>>{Q}&S^{2np^k+1}\circ G_k\simeq \Sigma^{2np^k}G_k.
}
\]
Since $G_k\circ G_k$ is a wedge of Moore spaces by~\ref{theor2.14}(l) and
\ref{prop3.21}, it suffices to show that this cofibration splits.
Choose a basis $\{a_i,b_i\}$ for $H_*(G_k)$ with $0\leqslant i\leqslant k$
and $\beta^{(r+k)}(b_i)=a_i\neq 0$. Consider the classes $b_k\circ b_i$
corresponding
to $b_k\otimes b_i$. We can construct a basis for $H_*(G_k\circ G_k)$
which contains the elements $b_k\circ b_i$ and
$\beta^{(r+i)}(b_k\circ b_i)$. Since $G_k\circ G_k$ is a wedge of Moore
spaces
we can construct maps of Moore spaces into $G_k\circ G_k$
realizing these basis elements and from this a
right homotopy inverse to~$Q$. Thus
\[
\makebox[98pt]{}G_k\circ G_k\simeq\overline{G}_k\circ G_k\vee
\Sigma^{2np^k}G_k.\makebox[98pt]{}\qed
\]\noqed
\end{proof}

Now consider the principal fibration sequence:
\[
\xymatrix{
\Omega \overline{G}_k\times \Omega G_k\ar@{->}[r]&
\Omega \overline{G}_k * \Omega G_k\ar@{->}[r]&
\overline{G}_k\vee G_k.
}
\]

We will study the integral homology Serre spectral
sequence of this fibration. The principal action
defines a module structure
\[
E^r_{o,q'}\otimes E^r_{p,q}\to E^r_{p,q+q'}.
\]
For $i=1,2$, let $a(i)\in H_{2np^k}(\overline{G}_k\vee G_{k};Z)$ be the
image
of~$a_k$ in the $i^{\text{th}}$ axis. Let $\sigma a\in
H_{2np^k-1}(\Omega\overline{G}_k;Z)$ be the
desuspended image under $\nu_*$ of $a_k$, and $\sigma a(i)$ the
image of $\sigma a$ in the $i^{\text{\itshape th}}$ axis in
$H_{2np^k-1}(\Omega\overline{G}_k\times \Omega G_k;Z)$.
Using the universal coefficient theorem, we define a
monomorphism
\[
H_p(\overline{G}_k\vee G_k;Z)\otimes H_q(\Omega\overline{G}_k\times\Omega
G_k;Z)\to E^2_{p,q}.
\]
Then $d_{2np^k}(a(i)\otimes 1)=1\otimes \sigma a(i)$. Define $\xi\in
E^2_{2np^k,2np^k-1}$ by
\[
\xi=a(1)\otimes\sigma a(2)+a(2)\otimes \sigma a(1).
\]
$\xi$ has order $p^{r+k}$ and $d_{2np^k}(\xi)=0$. Since $E^2_{p,q}=0$ when
$p>{2np^k+1}$, $\xi$ survives to an element of order $p^{r+k}$ in
$E^{\infty}$.
By \ref{lem7.14}, \ref{prop3.21} and \ref{theor2.14}(l), $\Omega \overline{G}_k*\Omega
G_k$ is a wedge of
Moore spaces and by an easy homology calculation,
every element in the homology has order at most $p^{r+k}$.
Consequently $\xi$ converges to an element $[\xi]$ of order $p^{r+k}$ in
$H_{4np^k-1}(\Omega \overline{G}_k*\Omega G_k;Z)$. Since $\Omega
\overline{G}_k*\Omega G_k$ is a wedge of
Moore spaces, there is a map
\[
\varphi\colon P^{4np^k}(p^{r+k})\to \Omega \overline{G}_k*\Omega G_k
\]
whose homology image contains a class
$\eta\in H_{4np^k}(\Omega \overline{G}_k*\Omega G_k)$ with
$\beta^{(r+k)}(\eta)$ equal to the
$\bmod\, p$ reduction of~$[\xi]$.

Now let $\gamma$ be the composition:
\[
\xymatrix{
\Omega \overline{G}_k*\Omega G_k\ar@{->}[r]&\Omega G_k*\Omega
G_k\ar@{->}[r]^-<<<{\Gamma_k}&E_k\ar@{->}[r]^{\tau_k}&J_k\ar@{->}[r]^{\eta_k}&F_k
}
\]
\begin{lemma}\label{lem7.15}
The composition
\[
\xymatrix{
P^{4np^k}(p^{r+k})\ar@{->}[r]^{\varphi}&\Omega\overline{G}_k*\Omega
G_k\ar@{->}[r]^-<<<{\gamma}&F_k
}
\]
induces an isomorphism in $\bmod\, p$ homology in dimension
$4np^k$.
\end{lemma}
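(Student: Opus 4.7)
The plan is to compute $\gamma_*\eta$ in mod-$p$ homology of $F_k$ by identifying $\gamma\circ\varphi$ as an $H$-space based Whitehead product and tracking the relevant homology class through a sequence of Serre spectral sequences. First, using Proposition~\ref{prop3.21} to decompose $\Omega\overline{G}_k*\Omega G_k$ as a wedge (with $\overline{G}_k\circ G_k$ appearing as a summand), together with Lemma~\ref{lem7.14} which exhibits $\overline{G}_k\circ G_k$ itself as a wedge of Moore spaces whose top factor is $P^{4np^k}(p^{r+k})$, I would factor $\varphi$ through this Moore-space summand so that the class $\eta$ is carried by it. The resulting composite becomes the $H$-space based Whitehead product $\{\lambda_k,1_{G_k}\}_{\times}\colon\overline{G}_k\circ G_k\to E_k$, where $\lambda_k\colon\overline{G}_k\hookrightarrow G_k$, followed by $\eta_k\tau_k$ into $F_k$.

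Second, by the explicit formula for $\Gamma$ from the proof of Proposition~\ref{prop2.7}, the fiber map of $\Gamma_k$ is the pointwise loop multiplication $\Omega G_k\times\Omega G_k\to\Omega S^{2n+1}\{p^r\}$ induced from $\Omega\varphi_k\times\Omega\varphi_k$ and $\mu_{\Omega}$, while the base map is the fold $\nabla$. Applying this under naturality to $\xi=a(1)\otimes\sigma a(2)+a(2)\otimes\sigma a(1)\in E^2_{2np^k,2np^k-1}$ of the Ganea-type fibration, one obtains the image $2a_k\otimes\sigma\overline{a}_k$ in the Serre spectral sequence of $\Omega S^{2n+1}\{p^r\}\to E_k\to G_k$, where $\overline{a}_k=(\varphi_k)_*a_k$. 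Since $S^{2n+1}\{p^r\}$ is homotopy Abelian for $p>2$, the Pontryagin ring $H_*(\Omega S^{2n+1}\{p^r\})$ is graded-commutative, so the square of the odd-degree class $\sigma\overline{a}_k$ vanishes. Together with the transgression $d_{2np^k}(a_k)=\sigma\overline{a}_k$ and the module structure of the Serre differential over the Pontryagin ring, this shows $a_k\otimes\sigma\overline{a}_k$ is a permanent cycle and thus detects a nontrivial class in $H_{4np^k-1}(E_k)$ at filtration $2np^k$.

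Finally, pushing through $\eta_k\tau_k$ into the Serre spectral sequence of $\Omega S^{2n+1}\to F_k\to D_k$, a dimension count forces $\gamma\circ\varphi$ to factor through the skeleton $F_k(2p^k)$, so the lemma reduces to showing that the composite
\[
P^{4np^k}(p^{r+k})\to F_k(2p^k)\to F_k(2p^k)/F_k(2p^k-1)\simeq S^{4np^k}
\]
has degree prime to $p$ on the top cell. I would establish this by adapting the strategy of Lemma~\ref{lem6.4} and Proposition~\ref{prop6.24}, using Proposition~\ref{prop3.15} to convert the $H$-space based computation into a relative Whitehead product calculation where Corollary~\ref{cor3.18} provides the explicit homological formula via the principal action. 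The main obstacle is reconciling the integral vanishing $H_{2np^k-1}(\Omega S^{2n+1};\mathbb{Z}_{(p)})=0$ with the desired mod-$p$ nontriviality in $F_k$; this is recovered by exploiting the divided-power structure of $H_*(\Omega S^{2n+1};\mathbb{Z}_{(p)})$, where $\binom{2p^k}{p^k}\not\equiv 0\pmod p$ for odd $p$ ensures that the Pontryagin square of the degree-$2np^k$ generator yields a generator of $H_{4np^k}$, which the computation identifies with the image of $\eta$.
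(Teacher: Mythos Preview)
Your proposal has a genuine gap: you are tracking the wrong class. Throughout your second paragraph you follow the image of $[\xi]$, which lives in total degree $4np^k-1$; your permanent cycle $a_k\otimes\sigma\overline a_k$ in the Serre spectral sequence of $E_k\to G_k$ detects a class in $H_{4np^k-1}(E_k)$, not $H_{4np^k}$. But by~(\ref{eq6.8}) the target $F_k$ has $H_{4np^k-1}(F_k;\mathbb Z/p)=0$, so every odd-degree class dies under~$\gamma$. The lemma concerns the image of $\eta\in H_{4np^k}$, and this must be computed directly, not recovered from $[\xi]$. Your attempted fix via the divided-power product and $\binom{2p^k}{p^k}$ does not repair this: the Pontryagin square in $H_*(\Omega S^{2n+1})$ is a statement about even-degree classes, but you have never identified which even-degree class in $H_{2np^k}(\Omega G_k)$ or $H_{2np^k}(\Omega S^{2n+1})$ is carrying~$\eta$. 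Moreover, your proposed conversion to a relative Whitehead product via Proposition~\ref{prop3.15} and Corollary~\ref{cor3.18} needs a lift $\delta\colon G_k\to E_k$ of the identity on~$G_k$, which does not exist.

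The paper's argument is organized around exactly this distinction. It passes to quotient spaces via the clutching construction (Proposition~\ref{prop2.1}): on the source side one obtains $(S^{2np^k}\vee P^{2np^k+1}(p^{r+k}))\rtimes(\Omega\overline G_k\times\Omega G_k)$, on the target side $\Omega S^{2n+1}\ltimes P^{2np^k+1}(p)$, and the induced map $\gamma'$ between them is explicit. In the quotient, a Bockstein argument pins down $\eta'=e\otimes\sigma b(2)+g\otimes\sigma a(1)$, where $\sigma b(2)\in H_{2np^k}(\Omega G_k)$ is the \emph{even}-degree class (dual to the top cell of~$G_k$). Lemma~\ref{lem7.12} then shows $\sigma b(2)\mapsto v^{p^k}\neq 0$ under $\Omega G_k\to\Omega S^{2n+1}$, so the $e\otimes\sigma b(2)$ term survives in $F_k/F_{k-1}$ and hence in~$F_k$. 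The point is that the computation is carried by $\sigma b$, not $\sigma a$; your outline never introduces~$\sigma b$.
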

\begin{remark*}
This implies that $\gamma\varphi$ induces an
epimorphism in integral cohomology.
\end{remark*}
\begin{proof}
Consider the two diagrams of principal fibrations
\[
\begin{tabular*}{\textwidth}{@{}l@{\hspace*{62pt}}l@{}}
\xymatrix{
\Omega\overline{G}_k\times \Omega G_k\ar@{=}[r]\ar@{->}[d]&
\Omega\overline{G}_k\times \Omega G_k\ar@{->}[d]\\
L\ar@{->}[r]\ar@{->}[d]&\Omega\overline{G}_k*\Omega G_k\ar@{->}[d]\\
G_{k-1}\vee G_{k-1}\ar@{->}[r]&\overline{G}_k\vee G_k
}
&\xymatrix{
\Omega^2S^{2n+1}\ar@{=}[r]\ar@{->}[d]& \Omega^2S^{2n+1}\ar@{->}[d]\\
F_{k-1}\ar@{->}[r]\ar@{->}[d]&F_k\ar@{->}[d]\\
D_{k-1}\ar@{->}[r]&D_k
}
\end{tabular*}
\]
where $L$ is the total space of the induced fibration. The
homotopy commutative square
\[
\xymatrix{
\overline{G}_k\vee G_k\ar@{->}[r]\ar@{->}[d]&D_k\ar@{->}[d]\\
\overline{G}_k\times G_k\ar@{->}[r]&S^{2n+1}
}
\]
induces a map from the left hand pair of fibrations to
the right hand pair.
We apply \ref{prop2.1} to obtain the following homotopy
commutative diagram:
\[
\xymatrix{
\Omega\overline{G}_k*\Omega
G_k\Big/L\ar@{->}[r]^{\gamma}\ar@{->}[d]_{\simeq}&F_k\Big/F_{k-1}\ar@{->}[d]_{\simeq}\\
(S^{2np^k}\vee P^{2np^k+1}(p^{r+k}))\rtimes \Omega \overline{G}_k\times
\Omega G_k\ar@{->}[r]^-<<<<{\gamma'}&\Omega S^{2n+1}\ltimes P^{2np^k+1}(p)
}
\]
Let $e\in H_{2np^k}(S^{2np^k})$ be the image of $a(1)$ and $f\in
H_{2np^k}(P^{2np^k+1}(p^{r+k}))$
be the image of $a(2)$ under the quotient map
\[
\xymatrix{
\overline{G}_k\vee G_k\ar@{->}[r]&\overline{G}_k\vee
G_k\Big/G_{k-1}\vee G_{k-1}\simeq S^{2np^k}\vee P^{2np^k+1}(p^{r+k}).
}
\]
 Choose $g$ with $\beta^{(r+k)}(g)=f$. Then the image
of $[\xi]$ under the left hand equivalence is $e\otimes \sigma
a(2)+f\otimes \sigma a(1)$.
The image of $\eta$ is a class $\eta'$ such that
\[
\beta^{(r+k)}(\eta')=e\otimes \sigma a(2)+f\otimes \sigma a(1).
\]
At this point in the Bockstein spectral sequence there are very
few classes left and the only possibility for $\eta'$ is
\[
\eta'=e\otimes \sigma b(2)+g\otimes \sigma a(1)
\]
where $\beta^{(r+k)}b(2)=a(2)$.

The components of the map $\gamma'$ are the inclusion
$\xymatrix{S^{2np^k}\ar@{->}[r]&P^{2np^k+1}(p)}$ and
\[
\sigma^{r+k-1}\colon
\xymatrix{P^{2np^k+1}(p^{r+k})\ar@{->}[r]&P^{2np^k+1}(p).}
\]
By
\ref{lem7.12}, the image of $\sigma b(2)$
is $v^{p^k}\neq 0$. We conclude that the image of $\eta'$ is nonzero
\[
\xymatrix@C=12pt{
P^{4np^k}(p^{r+k})\ar@{->}[r]^{\varphi}&\Omega\overline{G}_k*\Omega
G_k\ar@{->}[r]^-<{\gamma}&F_k\ar@{->}[r]&F_k\big/F_{k-1}\simeq
P^{2np^k+1}(p)\rtimes \Omega S^{2n+1}
}
\]
in $\text{mod}\, p$ homology,
from which the conclusion follows.
\end{proof}
\begin{proof}[Proof of \ref{prop7.13}]
Let $f(2p^k)$ be the composition
\[
\xymatrix{
P^{4np^k}(p^{r+k})\ar@{->}[r]^->>>{\varphi}&\Omega\overline{G}_k*\Omega
G_k\ar@{->}[r]^-<<<{\Gamma_k}&E_k\ar@{->}[r]^{\eta_k}&J_k.
}
\]
The result follows from \ref{lem7.15}.
\end{proof}
\begin{proof}[Proof of \ref{prop7.8}]
Write $m=e_0+e_1p+\dots+e_kp^k$ where $0\leqslant e_i<p$.
Let $\ell(m)$ bet the number of coefficients $e_i$ which are nonzero.
We first deal with the case $\ell(m)=1$. Then $m=e_kp^k$ with
$1<e_k<p$ since $p^k<m<p^{k+1}$. The case $e_k=2$ is \ref{prop7.13}. If
$e_k>2$ we apply \ref{lem7.9} with $x=f((e_k-1)p^k)$ and $i=t=k$,
to establish this case by induction. In case $\ell(m)=2$, we
first consider the case $m=p^i+e_kp^k$ for $i<k$. In case $e_k\geqslant 2$
we apply \ref{lem7.9} with $x=f(e_kp^k)$ and $t=k$. In case $e_k=1$
we apply \ref{lem7.9} with $x=a(k)$. In this case $t=k-1\geqslant i$.
We now consider the general case with $\ell(m)=2$. In this
case $m=e_ip^i+e_kp^k$ with $i<k$. We do this by induction
on $e_i$ with $e_i<p$ as before. The general case is by
induction on $\ell(m)$ and then induction on the
coefficient of the least power of~$p$ in the expansion using
\ref{lem7.9} repeatedly.
\end{proof}
\begin{proof}[Proof of \ref{prop7.7new}]
By \ref{prop7.8new} and \ref{cor7.6} $R=T\ltimes C\vee W\simeq T\wedge C\vee
C\vee W$
where $T\wedge C$ is a wedge of Moore spaces which factors
through~$\Gamma$. By \ref{prop7.7} $W\simeq A\vee W'$ where $W'$ is a wedge
of
Moore spaces which factors through~$\Gamma$.
Set $\Sigma P=W'\vee C\wedge W$.
\end{proof}

\setcounter{Theorem}{16}
\section{The exact sequence}\label{subsec7.3new}

In this section we will define the homomorphisms $e$, $r$, and~$\beta$
and prove Theorem~\ref{theor7.3}.

We define 
\[
e\colon p_k(\Omega Z)\to G_k(Z)
\]
as follows. Let $\phi\colon P^{2np^k-1}(p^{r+k})\to \Omega Z$, and extend
the
adjoint 
\[
\widetilde{\phi}\colon P^{2np^k}(p^{r+k})\to Z
\]
to an $H$-map
\[
\widehat{\phi}\colon \Omega P^{2np^k+1}(p^{r+k})\to Z.
\]
Since $\phi$ is divisible by $p^{r+k-1}$, so is $\widehat{\phi}$ and
consequently the
composition
\[
\xymatrix{
\Omega G_k\ar@{->}[r]^->>>>{\enlarge{\Omega \pi'}}&\Omega
P^{2np^k+1}(p^{r+k})\ar@{->}[r]^->>>>{\enlarge{\widehat{\phi}}}&Z
}
\]
is a proper $H$-map. We define $e(\phi)=\widehat{\phi}\Omega\pi'$. $e$~is
clearly a homomorphism. To see that $e$ is a
monomorphism, we suppose $e(\phi)$ is null homotopic.
Since $\pi'$ is a co-$H$ map, we have a homotopy commutative
diagram:
\[
\xymatrix@C=30pt{
G_k\ar@{->}[r]^{\enlarge{\nu}}\ar@{->}[dd]_{\enlarge{\pi'}}&\Sigma \Omega
G_k\ar@{->}[dr]^{\enlarge{\Sigma e(\phi)}}\ar@{->}[dd]_{\enlarge{\Sigma\Omega\pi'}}&\\
&&\Sigma Z\\
P^{2np^k+1}(p^{r+k})\ar@{->}[r]&\Sigma \Omega
P^{2np^k+1}(p^{r+k})\ar@{->}[ru]_{\enlarge{\Sigma\widehat{\phi}}}&
}
\]
Since $e(\phi)$ is null homotopic, the upper composition is
null homotopic, so the lower composition factors over the
cofiber of $\pi'$:
\[
\xymatrix{
G_k\ar@{->}[r]^->>>>{\pi'}&P^{2np^k+1}(p^{r+k})\ar@{->}[r]&\Sigma
G_{k-1}\ar@{->}[r]&\Sigma G_k.
}
\]
But since $\Sigma G_k$ splits as a wedge of Moore spaces,
the map
\[
P^{2np^k+1}(p^{r+k})\to \Sigma G_{k-1}
\]
is null homotopic. It follows that the composition:
\[
\xymatrix{
P^{2np^k+1}(p^{r+k})\ar@{->}[r]&\Sigma \Omega
P^{2np^k+1}(p^{r+k})\ar@{->}[r]^-<<<<{\enlarge{\Sigma\widehat{\phi}}}&\Sigma Z
}
\]
is null homotopic. Since $Z$ in an $H$ space, we
conclude that
\[
\xymatrix{
P^{2np^k}(p^{r+k})\ar@{->}[r]&\Omega
P^{2np^k+1}(p^{r+k})\ar@{->}[r]^-<<<<{\enlarge{\widehat{\phi}}}&Z
}
\]
is null  homotopic. Since $\widehat{\phi}$ is an $H$-map,
$\widetilde{\phi}$ is
null homotopic and consequently $\phi$ is as well. Thus $e$ is a
monomorphism.

The map
\[
r\colon G_k(Z)\to G_{k-1}(Z)
\]
is given by restriction. Clearly $re=0$. Suppose
$r\alpha=0$ for some proper $H$-map $\alpha\colon \Omega G_k\to Z$.
We construct an extension $\overline{\phi}$ in the diagram:
\[
\xymatrix{
G_{k-1}\ar@{->}[r]\ar@{->}[d]_{\enlarge{\nu}}&G_k\ar@{->}[r]\ar@{->}[d]_{\enlarge{\nu}}&P^{2np^k+1}(p^{r+k})\ar@{-->}[ddl]_{\enlarge{\overline{\phi}}}\\
\Sigma \Omega
G_{k-1}\ar@{->}[r]\ar@{->}[dr]_{\enlarge{\ast}}
&\Sigma\Omega G_k
\ar@{->}[d]_{\enlarge{\Sigma\alpha}}
&\\
&\Sigma Z&
}
\]
We include the loops on the right-hand triangle
into the diagram:
\[
\xymatrix{
&\Omega
G_k\ar@{->}[r]^-<<<<{\enlarge{\Omega\pi'}}\ar@{->}[d]_{\enlarge{\Omega\nu}}&\Omega
P^{2np^k+1}(p^{r+k})\ar@{->}[ddl]_{\enlarge{\Omega\overline{\phi}}}\\
\Omega G_k\ar@{=}[ru]\ar@{->}[d]_{\enlarge{\alpha}}&\Omega\Sigma\Omega
G_k\ar@{->}[l]_{\enlarge{\Omega\epsilon}}\ar@{->}[d]_{\enlarge{\Omega
\Sigma \alpha}}&\\
Z&\Omega \Sigma Z\ar@{->}[l]_{\enlarge{\mu}}&
}
\]
to see that $\alpha$ is homotopic to
$\mu(\Omega\overline{\phi})(\Omega\pi')
\sim\widehat{\phi}\Omega\pi'$ where $\widehat{\phi}$ is the composition
$\mu\Omega\overline{\phi}$. Restricting $\widehat{\phi}$
to $P^{2np^k}(p^{r+k})$ defines $\widetilde{\phi}$ whose adjoint is
\[
\phi\colon P^{2np^k-1}(p^{r+k})\to \Omega Z.
\]
To see that $\phi$ is divisible by $p^{r+k-1}$ , it suffices to
show that $\overline{\phi}$ is divisible by $p^{r+k-1}$. However since
$\alpha$ is
proper, the upper composition in the diagram:
\[
\xymatrix{
P^{2np^k-1}(p^{r+k-1})\vee
P^{2np^k}(p^{r+k-1})\ar@{->}[r]^-<<<<<<<<{\enlarge{a(k)\vee
c(k)}}\ar@{->}[dr]&\Omega
G_k\ar@{->}[r]^{\enlarge{\alpha}}\ar@{->}[d]_{\enlarge{\Omega
\pi'}}&\ar@{->}[d]Z\\
&\Omega P^{2np^k+1}(p^{r+k-1})\ar@{->}[r]^-<<<{\enlarge{\Omega\overline{\phi}}}&\Omega \Sigma Z
}
\]
is null homotopic. Consequently the lower composition
and its adjoint are null homotopic:
\[
\xymatrix@C=40pt{
P^{2np^k}(p^{r+k-1})\vee P^{2np^k+1}(p^{r+k-1})\ar@{->}[r]^-<<<<<<{\enlarge{-\delta_1\vee
p}}&P^{2np^k+1}(p^{r+k})\ar@{->}[r]^-<<<<<<{\enlarge{\overline{\phi}}}&\Sigma Z
}
\]
It follows from \ref{eq1.9} that $\overline{\phi}$ is divisible by
$p^{r+k-1}$.
Thus $\widetilde{\phi}$ and $\phi$ are divisible as well.

Finally we define
\[
\beta\colon G_k(Z)\to p_{k+1}(Z)
\]
as the composition
\[
\xymatrix{
P^{2np^{k+1}-1}(p^{r+k+1})\ar@{->}[r]^-<<<{\enlarge{\widetilde{\beta_{k+1}}}}&\Omega
E_k\ar@{->}[r]&\Omega G_k\ar@{->}[r]^-<<<{\enlarge{\alpha}}&Z
}
\]
Using the $H$-space structure on $Z$ we see that $\beta$ is a
homomorphism.\addtocounter{Theorem}{3}
\begin{Theorem}\label{theor7.20}
The composition $\beta(\alpha)$ is divisible by
$p^{r+k}$.
\end{Theorem}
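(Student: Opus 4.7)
The plan is to convert the divisibility assertion into a null-homotopy using the cofibration~(\ref{eq1.9}) and then to reduce it to consequences of the properness of $\alpha$ via the level-$(k+1)$ ladder of Theorem~\ref{theor4.4}. Applying~(\ref{eq1.9}) with $m=2np^{k+1}-1$, $s=r+k$, $t=1$, an element $\psi\in\pi_{2np^{k+1}-1}(Z;Z/p^{r+k+1})$ lies in $p_{k+1}(Z)$ if and only if $\psi\circ(-\delta_1\vee\rho)\sim *$ on $P^{2np^{k+1}-2}(p^{r+k})\vee P^{2np^{k+1}-1}(p^{r+k})$. The theorem therefore reduces to checking
\[
\alpha\circ\widetilde{\pi_k\beta_{k+1}}\circ\delta_1\sim *\qquad\text{and}\qquad\alpha\circ\widetilde{\pi_k\beta_{k+1}}\circ\rho\sim *.
\]

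Next, the middle square of Theorem~\ref{theor4.4} at level $k+1$ gives $e_{k+1}\beta_{k+1}\sim(a(k+1)\vee c(k+1))\circ(\sigma\vee\sigma\beta)$, and projecting to $G_{k+1}$ and taking adjoints yields, in $\pi_{2np^{k+1}-1}(\Omega G_{k+1};Z/p^{r+k+1})$,
\[
\Omega\iota\circ\widetilde{\pi_k\beta_{k+1}}\sim\widetilde{\pi_{k+1}a(k+1)}\circ\sigma+\widetilde{\pi_{k+1}c(k+1)}\circ\sigma\beta,
\]
where $\iota\colon G_k\to G_{k+1}$ denotes the inclusion. Precomposing with $\rho$ and $\delta_1$ and using $\sigma\rho=p$, $\sigma\beta\rho=\beta$, and $\beta^2=0$ from Section~\ref{subsec1.5}, these simplify to
\[
\Omega\iota\circ\widetilde{\pi_k\beta_{k+1}}\circ\rho\sim p\cdot\widetilde{\pi_{k+1}a(k+1)}+\widetilde{\pi_{k+1}c(k+1)}\circ\beta
\]
and
\[
\Omega\iota\circ\widetilde{\pi_k\beta_{k+1}}\circ\delta_1\sim\widetilde{\pi_{k+1}a(k+1)}\circ\beta.
\]

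The decisive step is to transfer these identities from $\Omega G_{k+1}$ down to $\Omega G_k$ --- where $\alpha$ is actually defined --- and to absorb the right-hand sides into classes annihilated by every proper $H$-map into a homotopy-Abelian $H$-space. The key input is that $\beta_{k+1}$ factors through the fibre $R_k$ of $\nu_k$, by the last clause of Theorem~\ref{theor4.4}. Combined with a finite-level analogue of the wedge decomposition $R\simeq A\vee C\vee\Sigma P$ of Proposition~\ref{prop7.7new} --- obtainable from Propositions~\ref{prop7.5old}, \ref{prop7.8new}, and Theorem~\ref{theor2.14}(l) --- this rewrites each of $\widetilde{\pi_k\beta_{k+1}}\circ\rho$ and $\widetilde{\pi_k\beta_{k+1}}\circ\delta_1$ in $\Omega G_k$ as a sum of three kinds of contributions: (i) compositions through $\widetilde{\pi_ia(i)}$ or $\widetilde{\pi_ic(i)}$ for $i\le k$, annihilated by $\alpha$ because $\alpha$ is proper; (ii) compositions through $\Gamma_k$, annihilated because $Z$ is homotopy-Abelian, as in Proposition~\ref{prop2.11}; and (iii) a residual contribution which, by comparison with the Step~2 formulas and a tracking of torsion exponents, is $p^{r+k}$-divisible. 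The hard part will be this third step: Proposition~\ref{prop7.7new} provides the wedge decomposition only for the limit $R$, so one must either refine the inductive construction of Section~\ref{subsec6.3} to decompose $\widetilde{\pi_k\beta_{k+1}}$ at finite level, or argue directly from the ladder of Theorem~\ref{theor4.4}, in either case carefully tracking the torsion exponents so that the residual contribution lands precisely in the $p^{r+k}$-divisible subgroup.
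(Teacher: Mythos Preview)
Your reduction in Step~1 via the cofibration~(\ref{eq1.9}) is correct and is exactly how the paper concludes. The gap is that Step~2 does not help and Step~3 is not carried out.

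The formulas you derive in Step~2 hold in $\Omega G_{k+1}$, not in $\Omega G_k$. Since $\alpha$ is only a proper $H$-map on $\Omega G_k$, you cannot evaluate $\alpha$ on $\widetilde{\pi_{k+1}a(k+1)}$ or $\widetilde{\pi_{k+1}c(k+1)}$; properness gives you nothing about the level-$(k+1)$ classes. Pushing $\widetilde{\pi_k\beta_{k+1}}$ forward along $\Omega\iota$ and then trying to ``transfer back'' is not possible: $\Omega\iota$ is not injective on homotopy, and assuming $\alpha$ extends to $\Omega G_{k+1}$ would be circular, since $\beta(\alpha)$ is precisely the obstruction to such an extension in the exact sequence of Theorem~\ref{theor7.3}.

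The paper's argument stays entirely at level~$k$. The missing ingredient is Lemma~\ref{lem7.19}: one constructs, via the generalised Whitehead products $f(m)$ of Proposition~\ref{prop7.8}, classes
\[
r\colon P^{2np^{k+1}}(p^{r+k})\to W_k,\qquad d\colon P^{2np^{k+1}-1}(p^{r+k})\to W_k
\]
which are nonzero in $\bmod\,p$ homology in the top dimension \emph{and} factor through $\Omega G_k*\Omega G_k\xrightarrow{\Gamma_k}E_k$. Choosing units $u_1,u_2$ so that $\Delta_1=u_1\beta_{k+1}\rho-r$ and $\Delta_2=u_2\beta_{k+1}\delta_1-d$ are trivial in $\bmod\,p$ homology, the decomposition $R\simeq A\vee C\vee\Sigma P$ of Proposition~\ref{prop7.7new} forces $\Delta_1,\Delta_2$ to factor through $\bigvee_{i\le k}P^{2np^i}(p^{r+i-1})\vee P^{2np^i+1}(p^{r+i-1})\vee\Sigma P$. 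Now $\alpha$ kills the $A$- and $C$-summands by properness, kills $\Sigma P$ because $\Sigma P\to R\to G$ factors through $\Gamma$ and $Z$ is homotopy-Abelian, and kills $r,d$ for the same reason. Hence $\alpha\widetilde{\beta}_{k+1}\rho\sim *$ and $\alpha\widetilde{\beta}_{k+1}\delta_1\sim *$ with no residual term; your expectation of a leftover $p^{r+k}$-divisible piece in~(iii) is a symptom of the Step~2 detour not closing up.
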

This will follow from a sequence of lemmas
\begin{lemma}\label{lem7.17}%now 7.21
The homomorphism induced by the inclusion
\[
H_*(W_k)\to H_*(J_k)
\]
is a monomorphism.
\end{lemma}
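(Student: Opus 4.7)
The plan is to exhibit a right homotopy inverse to $\gamma_k$ and then run a Leray--Hirsch style argument in the Serre spectral sequence of the fibration $W_k \to J_k \to BW_n$.

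First I construct the section. By Theorem~\ref{theor2.14}(j), the map $\nu_k = \gamma_k\tau_k$ has a right homotopy inverse $s\colon BW_n \to E_k$. Composing with $\tau_k$ yields $\sigma = \tau_k s\colon BW_n \to J_k$ satisfying $\gamma_k \sigma = \nu_k s \sim 1_{BW_n}$, so $\sigma$ is a section of $\gamma_k$; equivalently the connecting map $\Omega BW_n \to W_k$ in the extended fibration sequence is null homotopic, and $i_*\colon \pi_*(W_k) \to \pi_*(J_k)$ is injective.

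Passing to the homological Serre spectral sequence
\[
E^2_{p,q} = H_p(BW_n; Z/p) \otimes H_q(W_k; Z/p) \Rightarrow H_{p+q}(J_k; Z/p),
\]
differentials out of $E^r_{0,q}$ vanish by degree, so the inclusion-induced map factors as $H_*(W_k) \twoheadrightarrow E^\infty_{0,*} \hookrightarrow H_*(J_k)$. Thus injectivity of $i_*$ on homology reduces to the vanishing of all incoming differentials $d_r\colon E^r_{r,q-r+1} \to E^r_{0,q}$.

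The main obstacle is this vanishing. The strategy is to combine the section $\sigma$ with the homotopy Abelian $H$-space structure on $BW_n$ (Proposition~\ref{prop2.3}) in order to verify a Leray--Hirsch condition in cohomology: every generator of $H^*(W_k; Z/p)$ lifts to a class in $H^*(J_k; Z/p)$. Such lifts are constructed directly from the commutative diagram of fibrations in the Compatibility Theorem~\ref{theor6.44}, which compares the principal fibration $\Omega S^{2n+1}\{p^r\} \to J_k \to D_k$ with the fibration $T \to W_k \to D_k$ via the $H$-map $E\colon T \to \Omega S^{2n+1}\{p^r\}$ from Theorem~\ref{theor2.14}(e). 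The identification of $H^*(F_k)$ from \eqref{eq6.8} and of $H^*(W_k)$ by the same argument as Proposition~\ref{prop6.11} (with $k$ in place of $k-1$) then provides enough explicit lifts. With Leray--Hirsch in place, the cohomology Serre spectral sequence collapses at $E_2$; dualizing yields the required monomorphism.
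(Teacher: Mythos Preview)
Your section argument is correct but beside the point: a section of $\gamma_k$ tells you that $H_*(BW_n)\to H_*(J_k)$ is injective and that the Serre spectral sequence has permanent cycles along the \emph{base} line $E^2_{*,0}$. It says nothing about differentials hitting the \emph{fibre} line $E^r_{0,*}$, which is exactly what you need. You acknowledge this and pivot to a Leray--Hirsch argument, but that second half is where the content lies, and it is not carried out. You assert that lifts of the generators of $H^*(W_k)$ to $H^*(J_k)$ ``are constructed directly'' from the diagram in~\ref{theor6.44} together with~\eqref{eq6.8} and~\ref{prop6.11}, yet no lift is actually exhibited. Comparing the two fibrations over $D_k$ (or over $F_k$) gives a surjection on $E_2$-pages, but the spectral sequence for $S^{2n-1}\to W_k\to F_k$ does \emph{not} collapse (see the proof of~\ref{prop6.11}), so surjectivity on $E_2$ does not immediately descend to $E_\infty$. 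The phrase ``combine the section with the $H$-space structure on $BW_n$'' suggests you want a principal action of $BW_n$ on $J_k$ to build $W_k\times BW_n\to J_k$, but $\gamma_k$ is not a principal fibration, so no such action is available.

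The paper's argument avoids all of this machinery. By the analogue of~\ref{prop6.11} for $W_k$, one has $H_j(W_k)\cong Z/p$ exactly when $j=2ni$ or $2ni-1$ with $i\geqslant 2$. The explicit $\bmod\,p^r$ classes $x_i(k)\colon P^{2ni}\to J_k$ and $y_i(k)\colon P^{2ni-1}\to J_k$ from~\eqref{eq6.26} are nonzero in $H_*(J_k)$ by~\ref{cor6.27}, and they factor through $W_k$ by~\ref{theor6.41} since $\gamma_k$ annihilates them. Hence in each degree where $H_*(W_k)\neq 0$ there is a map from a Moore space through $W_k$ whose image in $H_*(J_k)$ is nonzero; as the source $H_j(W_k)$ is one-dimensional, injectivity is immediate. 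If you want to salvage your approach, these same classes $x_i(k),y_i(k)$ are precisely the ``explicit lifts'' you need---but once you have them the spectral-sequence scaffolding is unnecessary.
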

\begin{proof}
By \ref{prop6.11}, $W_k$ has one cell in each dimension of the
form $2ni$ or $2ni-1$ for each $i\geqslant 2$; consequently
\[
H_j(W_k)=\begin{cases}
Z/p&\text{if}\ j=2ni\ \text{or}\ 2ni-1,\quad i\geqslant 2\\
0&\text{otherwise}.
\end{cases}
\]
By \ref{cor6.27}, the maps $x_i(k)\colon P^{2ni}\to J_k$ and $y_i(k)\colon
P^{2ni-1}\to J_k$
are nonzero in $\bmod\, p$ homology in dimensions $2ni$ and~$2ni-1$
respectively. By \ref{theor6.41}, the maps $x_i(k)$ and $y_i(k)$
factor through $W_k$ up to homotopy when $i\geqslant 2$. The
result follows.
\end{proof}

We will write $x_i\in H_{2ni}(J_k)$ and $y_i\in H_{2ni-1}(J_k)$
for the images of the generators in the homology of the
respective Moore spaces. Note that in the congruence
homotopy of~$J_k$ we have
\begin{align*}
\nu x_i(k)&\equiv x_{i+1}(k)\\
\mu x_i(k)&\equiv y_{i+1}(k)
\end{align*}
when $i\geqslant p^k$ by \ref{eq6.26}. The action of the principal
fibration defines an action
\[
H_*(\Omega D_k)\otimes H_*(J_k)\to H_*(J_k).
\]
Let $u,v$ be the Hurewicz images of $\nu$ and $\mu$ (see \ref{theor5.20}).
Then $Z/p[v]\otimes \Lambda(u)\subset H_*(\Omega D_k)$ acts on $H_*(J_k)$.
\begin{lemma}\label{lem7.18}
If $i\geqslant p^k$, $vx_i= x_{i+1}$ and $ux_i= y_{i+1}$.
\end{lemma}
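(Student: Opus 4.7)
The plan is to derive both identities as direct consequences of Proposition~\ref{prop5.16}, using the definitions of $x_i(k)$ and $y_i(k)$ as iterated relative Whitehead products. The key is that the action of $A_*(D_k)$ on $e\pi_*(J_k;Z/p^r)$ coming from the relative Whitehead product corresponds, under the Hurewicz homomorphism, to the action of $H_*(\Omega D_k)$ on $H_*(J_k)$ coming from the principal fibration.

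First, I would unpack the definitions: by (\ref{eq6.26}), for $i\geqslant p^k$ we have $x_i(k)=\nu^{i-p^k}\cdot\overline{a(k)}$ and $y_{i+1}(k)=\mu\cdot x_i(k)$, where the dot denotes the $A_*(D_k)$-module structure on $e\pi_*(J_k;Z/p^r)$ constructed in Theorem~\ref{theor5.18}. Using associativity of the module action, for $i\geqslant p^k$ we have
\[
\nu\cdot x_i(k)=\nu\cdot(\nu^{i-p^k}\cdot\overline{a(k)})=\nu^{i+1-p^k}\cdot\overline{a(k)}=x_{i+1}(k)
\]
as congruence homotopy classes in $J_k$. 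Together with $\mu\cdot x_i(k)=y_{i+1}(k)$ by definition, this reduces the lemma to computing the Hurewicz image of these congruence-homotopy identities.

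Next, I would invoke Proposition~\ref{prop5.16}, which states that the diagram relating the relative Whitehead product and the principal action commutes with the Hurewicz map. Applying this with $\zeta=\nu$ (respectively $\mu$) and $\delta=x_i(k)$ yields
\[
h(\nu\cdot x_i(k))=a_*(h(\nu)\otimes h(x_i(k)))=v\cdot x_i,\qquad h(\mu\cdot x_i(k))=u\cdot x_i,
\]
where we use $h(\nu)=v$, $h(\mu)=u$, and $h(x_i(k))=x_i$ by the very definition of $x_i\in H_{2ni}(J_k)$. Combining with the congruence identities $\nu\cdot x_i(k)\equiv x_{i+1}(k)$ and $\mu\cdot x_i(k)\equiv y_{i+1}(k)$, and noting that congruent maps induce the same reduced homology homomorphism (since $\Sigma f\sim\Sigma g$ implies $f_*=g_*$), we obtain $v\cdot x_i=h(x_{i+1}(k))=x_{i+1}$ and $u\cdot x_i=h(y_{i+1}(k))=y_{i+1}$.

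There is no serious obstacle here; the result is essentially bookkeeping once the correct naturality statement (Proposition~\ref{prop5.16}) is in place. The only point that requires a brief remark is that congruence homotopy classes have well-defined Hurewicz images, which follows immediately from the suspension isomorphism $\widetilde{H}_*(X)\cong\widetilde{H}_{*+1}(\Sigma X)$.
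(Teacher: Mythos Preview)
Your proof is correct and is precisely the argument the paper intends: the paper's proof is the one-liner ``Apply \ref{prop5.16}'', and you have simply spelled out that application in full---unpacking the definitions from (\ref{eq6.26}), invoking the Hurewicz compatibility of Proposition~\ref{prop5.16}, and observing that congruent maps have the same homology image.
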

\begin{proof} 
Apply \ref{prop5.16}.
\end{proof}
\begin{lemma}\label{lem7.19}
There are maps 
\[
r\colon P^{2np^{k+1}}(p^{r+k})\to W_k
\]
and
\[
d\colon P^{2np^{k+1}-1}(p^{r+k})\to W_k
\]
which are nonzero in $\bmod\, p$ homology
in dimensions $2np^{k+1}$ and $2np^{k+1}-1$ respectively and whose image in $J_k$ factors
through the map
\[
\xymatrix{
\Omega G_k*\Omega G_k\ar@{->}[r]^-<<<{\Gamma_k}&E_k\ar@{->}[r]^{\tau_k}&J_k.
}
\]
\end{lemma}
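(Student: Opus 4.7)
The plan is to construct $r$ and $d$ in parallel by iterating the relative Whitehead product construction of Lemma~\ref{lem7.9}, applied with parameters $m = (p-1)p^k$ and $i = t = k$, and extracting \emph{both} Moore-space summands at the top of the resulting smash product.

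By Proposition~\ref{prop7.8} applied to $m = (p-1)p^k$ (which satisfies $p^k < m < p^{k+1}$ since $p \geq 3$), there is a map $f((p-1)p^k) \colon P^{2n(p-1)p^k}(p^{r+k}) \to J_k$ together with a lift $\tilde x \colon P^{2n(p-1)p^k}(p^{r+k}) \to E_k$ factoring through $\Gamma_k$. Form the relative Whitehead product
\[
\{\lambda_k, \tilde x\}_r \colon G_k \circ P^{2n(p-1)p^k}(p^{r+k}) \to E_k
\]
with $\lambda_k = \mathrm{id}_{G_k}$; by Proposition~\ref{prop3.15} and the diagram at the end of the proof of Lemma~\ref{lem7.9}, this factors through $\Gamma_k \colon \Omega G_k * \Omega G_k \to E_k$. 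Theorem~\ref{theor2.14}(i) splits $\Sigma G_k$ as a wedge of Moore spaces with top summand $P^{2np^k+2}(p^{r+k})$; smashing this summand with $P^{2n(p-1)p^k-1}(p^{r+k})$ and applying Proposition~\ref{prop3.25} yields a wedge $P^{2np^{k+1}+1}(p^{r+k}) \vee P^{2np^{k+1}}(p^{r+k})$ inside $\Sigma(G_k \circ P^{2n(p-1)p^k}(p^{r+k}))$. Desuspending in the stable range produces maps $u \colon P^{2np^{k+1}}(p^{r+k}) \to G_k \circ P^{2n(p-1)p^k}(p^{r+k})$ and $v \colon P^{2np^{k+1}-1}(p^{r+k}) \to G_k \circ P^{2n(p-1)p^k}(p^{r+k})$. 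Composing with $\tau_k \{\lambda_k, \tilde x\}_r$, and lifting through $W_k \to J_k$ (possible since each composition factors through $\Gamma_k$ and $\gamma_k \tau_k \Gamma_k$ is null-homotopic by the inductive hypothesis Theorem~\ref{theor6.7} together with Theorem~\ref{theor6.43}), gives candidates for $r$ and $d$ respectively.

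It remains to verify nontriviality in mod $p$ homology. For $r$, the argument is the direct analog of the computation in Lemma~\ref{lem7.9}: diagrams (\ref{eq7.10}) and (\ref{eq7.11}), together with Lemma~\ref{lem7.12}, identify the image of the top generator in $H_{2np^{k+1}}(F_k; Z/p)$ as a unit multiple of the generator, and it lifts nontrivially to $H_{2np^{k+1}}(W_k; Z/p)$ by Lemma~\ref{lem7.17}. The main obstacle will be nontriviality of $d$: since $H_j(F_k) = 0$ for $j$ odd by Corollary~\ref{cor6.15}, the class of $d$ projects trivially to $F_k$ and detection must be in the $S^{2n-1}$-fiber direction of $S^{2n-1} \to W_k \to F_k$ (Proposition~\ref{prop6.9}). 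The cleanest approach I would pursue is to take $d = r \circ \beta$ and invoke Bockstein naturality, reducing to the claim that the mod $p$ image of $r$ in $W_k$ is supported on a Moore-space summand of the form $P^{2np^{k+1}}(p^{r+k+1})$ whose Bockstein-related class in dimension $2np^{k+1}-1$ is nonzero. The computational core is an identification of the second-top class of $G_k \wedge P^{2n(p-1)p^k-1}(p^{r+k})$ under the principal-action map with the unique nontrivial element in the $E^\infty_{2n(p^{k+1}-1),\,2n-1}$-position of the Serre spectral sequence of $S^{2n-1} \to W_k \to F_k$; this amounts to extending Lemma~\ref{lem7.12} and its companion diagram (\ref{eq7.11}) from the $F_k$-base to an analogous computation on the fiber of $W_k \to F_k$.
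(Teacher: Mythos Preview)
Your construction of $r$ is correct and matches the paper exactly: take $x=f((p-1)p^k)$ from Proposition~\ref{prop7.8}, form $\{\lambda_k,x\}_r$, pick out the top Moore summand, and detect nontriviality in $H_{2np^{k+1}}(F_k)$ via (\ref{eq7.10}), (\ref{eq7.11}) and Lemma~\ref{lem7.12}.

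The gap is in your construction of $d$. The ``cleanest approach'' $d=r\beta$ cannot work, and the Bockstein reasoning you sketch is precisely what breaks. By Proposition~\ref{prop6.11} applied to $W_k$, the summand in dimension $2np^{k+1}$ has order $p^{r+k+1}$, not $p^{r+k}$: in the notation there, $i=p^{k+1}$ falls under the ``otherwise'' clause, giving $H^{2np^{k+1}}(W_k;Z_{(p)})\cong Z/p^{r+k+1}$. Hence $r$ is a map $P^{2np^{k+1}}(p^{r+k})\to P^{2np^{k+1}}(p^{r+k+1})$ (up to lower terms) which is nonzero on $H_{2np^{k+1}}(\,\cdot\,;Z/p)$; such a map is $\rho$ up to a unit and is \emph{zero} on $H_{2np^{k+1}-1}(\,\cdot\,;Z/p)$. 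Equivalently, the bottom class of the source is $\beta^{(r+k)}$ of the top, while the target class $x_{p^{k+1}}$ only supports $\beta^{(r+k+1)}$; naturality forces $r_*(\text{bottom})=\beta^{(r+k)}(x_{p^{k+1}})=0$. So $r\beta$ is trivial in mod~$p$ homology and cannot serve as $d$.

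The paper avoids this exponent mismatch by applying $\beta$ \emph{before} the Whitehead product, at the level where the exponents still agree. Since $\nu_p((p-1)p^k)=k$, the relevant summand of $W_k$ hit by $x$ is $P^{2n(p-1)p^k}(p^{r+k})$, so $x_*$ is nonzero in \emph{both} dimensions and $x\beta$ carries $y_{(p-1)p^k}$. One then forms $[\lambda_k,x\beta]_r\colon G_k\circ P^{2n(p-1)p^k-1}(p^{r+k})\to J_k$ and reads off via (\ref{eq7.10}), (\ref{eq7.11}) and Lemma~\ref{lem7.12} that $v^{p^k}\cdot y_{(p-1)p^k}=y_{p^{k+1}}$ is in the image (using Lemma~\ref{lem7.18} and the commutativity of $u,v$). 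Your abandoned second-summand map $v$ is essentially this construction in disguise (it is $[\lambda_k',x\beta]_r$ by Proposition~\ref{prop3.28} with $t=0$), so you were on the right track before switching to $r\beta$.
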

\begin{remark*} 
Both $\beta_{k+1}\rho$ and $\beta_{k+1}\delta_1$ satisfy the homological
condition of~\ref{lem7.19} and factor through $W_k$. However $\beta_{k+1}$ does not
factor through $\Gamma_k$ since it has order $p^{r+k+1}$, and there are
no elements in the homotopy of $\Omega G_k*\Omega G_k$ in that dimension
of that order.
\end{remark*}
\begin{proof} 
Let $x=f(p^k(p-1))\colon P^{2np^k(p-1)}(p^{r+k})\to J_k$
be the map constructed in \ref{prop7.8}. Apply \ref{lem7.9} with $i=k$
to construct
\[
r=f(p^{k+1})\colon P^{2np^{k+1}}(p^{r+k})\to J_k.
\]
The class $x_{p^{k+1}}\in H_{2np^{k+1}}(J_k)$ in the homology
image of $r$ since $r$ factors through $W_k$ and projects to
a nonzero class in $H_{2np^{k+1}}(F_k)$.

To construct $d$, we return to the class $x$ above
and observe that the homomorphism induced by $x$:
\[
H_*(P^{2np^k(p-1)}(p^{r+k}))\to H_*(J_k)
\]
contains both $x_{p^k(p-1)}$ and $y_{p^k(p-1)}$ in its image since
$x$ factors through $W_k$ which has $P^{2np^k(p-1)}(p^{r+k})$ as
a retract.

Consequently $x\beta\colon P^{2np^k(p-1)-1}(p^{r+k})\to J_k$ has
$y_{p^k(p-1)}$ in its
homology image. Now consider
\[
[\lambda_k,x\beta]_r\colon G_k\circ P^{2np^k(p-1)-1}(p^{r+k})\to J_k
\]
and apply \ref{eq7.10} and \ref{eq7.11}. It follows that
$v^{p^k}y_{p^k(p-1)}= y_{p^{k+1}}$ is in the homology image by
\ref{lem7.12}. We then
choose a map
\[
P^{2np^{k+1}-1}(p^{r+k})\to G_k\circ P^{2np^k(p-1)-1}
\]
which induces an isomorphism in dimension $2np^{k+1}-1$ to
construct $d$.
\end{proof}
\begin{proof}[Proof of \ref{theor7.20}]
 Since $W$ is a retract of $R$ by
\ref{prop7.8new} we can assume that
$r$ and $d$ factor through $R$. Choose units $u_1$ and $u_2$ so that the
maps
\begin{align*}
\Delta_1&=u_1\beta_{k+1}\rho-r\colon P^{2np^{k+1}}(p^{r+k})\to R_k\to W_k\\
\Delta_2&=u_2\beta_{k+1}\delta_1-d\colon P^{2np^{k+1}-1}(p^{r+k})\to R_k\to W_k
\end{align*}
are trivial in $\bmod\, p$ homology. This can be done since
the relevant factor of $W_k$ is $P^{2np^{k+1}}(p^{r+k+1})$ and both
$\beta_{k+1}\rho$ and $r$ are nontrivial in dimension $2np^{k+1}$ while
$\beta_{k+1}\delta_1$ and $d$ are nontrivial in dimension $2np^{k+1}-1$. It
follows from \ref{prop7.7new} that both $\Delta_1$ and $\Delta_2$ factor
through
\[
\bigvee\limits_{i=1}^k P^{2np^i}(p^{r+i-1})\vee P^{2np^i-1}(p^{r+i-1})\vee \Sigma
P.
\]
Since $\alpha$ is proper, we conclude that the
compositions
\begin{align*}
&\xymatrix{
P^{2np^{k+1}-1}(p^{r+k})\ar@{->}[r]^-<<<<{\widetilde{\Delta}_1}&\Omega
R_k\ar@{->}[r]&\Omega G_k\ar@{->}[r]^{\alpha}&Z
}\\
&\xymatrix{
P^{2np^{k+1}-2}(p^{r+k})\ar@{->}[r]^-<<<<{\widetilde{\Delta}_2}&\Omega
R_k\ar@{->}[r]&\Omega G_k\ar@{->}[r]^{\alpha}&Z
}
\end{align*}
are both null homotopic. Since the maprs $r$ and $d$ factor
through $\Omega G_k*\Omega G_k$, these terms are null homotopic
and we conclude that the compositions
\begin{align*}
&\xymatrix@C=30pt{
P^{2np^{k+1}-1}(p^{r+k})\ar@{->}[r]^-<<<<{\widetilde{\beta}_{k+1}\rho}&\Omega
R_k\ar@{->}[r]&\Omega G_k\ar@{->}[r]^{\alpha}&Z
}\\
&\xymatrix@C=30pt{
P^{2np^{k+1}-2}(p^{r+k})\ar@{->}[r]^-<<<<{\widetilde{\beta}_{k+1}\delta_1}&\Omega
R_k\ar@{->}[r]&\Omega G_k\ar@{->}[r]^{\alpha}&Z
}
\end{align*}
are null homotopic. Now consider the cofibration
sequence
\begin{multline*}
P^{2np^{k+1}-2}(p^{r+k})\vee \rho^{2np^{k+1}-1}(p^{r+k})\\
\xymatrix@C=35pt{\mbox{}\ar@{->}[r]^-<<<<{-\delta_1\vee
\rho}&P^{2np^{k+1}-1}(p^{r+k+1})\ar@{->}[r]^{p^{r+k}}&P^{2np^{k+1}-1}(p^{r+k+1}).
}
\end{multline*}
From this we see that the composition
\[
\xymatrix@C=28pt{
P^{2np^{k+1}-1}(p^{r+k+1})\ar@{->}[r]^-<<<{\widetilde{\beta}_{k+1}}&\Omega
R_k\ar@{->}[r]&\Omega G_k\ar@{->}[r]^{\alpha}&Z
}
\]
is divisible by $p^{r+k}$.
\end{proof}
\begin{proposition}\label{prop7.24}
$\beta r=0$
\end{proposition}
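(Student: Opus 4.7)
The plan is to show that for any proper $H$-map $\alpha\colon\Omega G_k\to Z$, the class $\beta(r(\alpha))\in\pi_{2np^k-1}(Z;Z/p^{r+k})$ is represented by a null-homotopic map, from which $\beta r(\alpha)=0$ follows. The key observation is that $\beta\circ r$ admits a factorization through $\Omega E_k$, and a composition appearing in this factorization is already null-homotopic at the unlooped level by virtue of the cofibration in \ref{theor4.4}.

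Unwinding the definitions, $r(\alpha)\colon\Omega G_{k-1}\to Z$ is the composition of $\alpha$ with $\Omega\iota\colon\Omega G_{k-1}\to\Omega G_k$ where $\iota\colon G_{k-1}\to G_k$ is the inclusion, and $\beta$ (shifted to target $p_k(Z)$) is precomposition with $\Omega\pi_{k-1}\circ\widetilde{\beta_k}$. Therefore $\beta r(\alpha)$ is the composition
\[
\xymatrix@C=25pt{
P^{2np^k-1}(p^{r+k})\ar@{->}[r]^-{\widetilde{\beta_k}}&\Omega E_{k-1}\ar@{->}[r]^-{\Omega\pi_{k-1}}&\Omega G_{k-1}\ar@{->}[r]^-{\Omega\iota}&\Omega G_k\ar@{->}[r]^-{\alpha}&Z.
}
\]
Next I would use the commutative square
\[
\xymatrix{
E_{k-1}\ar@{->}[r]^-{e_k}\ar@{->}[d]_{\pi_{k-1}}&E_k\ar@{->}[d]_{\pi_k}\\
G_{k-1}\ar@{->}[r]^-{\iota}&G_k
}
\]
to rewrite $\Omega\iota\circ\Omega\pi_{k-1}=\Omega\pi_k\circ\Omega e_k$, yielding the factorization
\[
\beta r(\alpha)\sim \alpha\circ\Omega\pi_k\circ\big(\Omega e_k\circ\widetilde{\beta_k}\big).
\]

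The final and essential step is to observe that $\Omega e_k\circ\widetilde{\beta_k}$ is the loop-adjoint of $e_k\circ\beta_k\colon P^{2np^k}(p^{r+k})\to E_k$, and this composition is null-homotopic because the middle column of the ladder in \ref{theor4.4} is a cofibration sequence: $E_k$ is built from $E_{k-1}$ by coning off $\beta_k$ (equivalently, this is the clutching statement from \ref{prop2.1} applied to the principal fibration $E_k\to G_k=G_{k-1}\cup_{\alpha_k}CP^{2np^k}(p^{r+k})$, with $\beta_k$ playing the role of the lift $\overline{\theta}$ whose composite into the total space vanishes). Consequently $\Omega e_k\circ\widetilde{\beta_k}$ is null-homotopic, so $\beta r(\alpha)$ is null-homotopic as a map $P^{2np^k-1}(p^{r+k})\to Z$, and hence represents $0$ in $p_k(Z)$.

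This argument does not use that $\alpha$ is proper or even an $H$-map; only naturality and the cofibration $P^{2np^k}(p^{r+k})\xrightarrow{\beta_k}E_{k-1}\xrightarrow{e_k}E_k$ are needed. There is no real obstacle: the whole content is bookkeeping plus the vanishing $e_k\beta_k\sim*$, which is built into the construction of $E_k$ in~\cite{GT10}.
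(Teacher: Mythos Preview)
Your argument contains a genuine error at the crucial step. You claim that $e_k\beta_k\colon P^{2np^k}(p^{r+k})\to E_{k-1}\to E_k$ is null homotopic, and you justify this by identifying $\beta_k$ with the lift $\overline{\theta}$ in the clutching construction~\ref{prop2.1}. But $\overline{\theta}$ is required to be a lift of the attaching map $\alpha_k$, i.e.\ $\pi_{k-1}\overline{\theta}\sim\alpha_k$, whereas the top square of the ladder in~\ref{theor4.4} says only that $\alpha_k\sim p^{r+k-1}\cdot(\pi_{k-1}\beta_k)$. So $\beta_k$ is \emph{not} a lift of $\alpha_k$; it is $p^{r+k-1}\beta_k$ that lifts $\alpha_k$. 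Moreover, the middle column of~\ref{theor4.4} is not a cofibration sequence at all: the quotient $E_k/E_{k-1}$ is $\Omega S^{2n+1}\{p^r\}\ltimes P^{2np^k+1}(p^{r+k})$ by~(\ref{eq2.2}), not $\Sigma P^{2np^k}(p^{r+k})$. In fact the middle region of~\ref{theor4.4} computes $e_k\beta_k$ explicitly as $(a(k)\vee c(k))\circ(\sigma\vee\sigma\beta)$, which is certainly not null.

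This is exactly what the paper's proof uses. Since $\alpha$ is \emph{proper}, it annihilates both $a(k)$ and $c(k)$, and hence annihilates $e_k\beta_k\sim(a(k)\vee c(k))(\sigma\vee\sigma\beta)$; passing to adjoints gives $\beta r(\alpha)\sim *$. Your closing remark that properness is not needed should have been a warning sign: properness is precisely the hypothesis that makes the argument go through, and without it the statement has no reason to hold (take $\alpha$ to be the inclusion $\Omega G_{k-1}\hookrightarrow\Omega G_k$ to see that $\widetilde{\beta_k}$ composed into $\Omega G_k$ is not null).
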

\begin{proof}
Since $\alpha$ is proper, the composition on the right in the diagram:
\[
\xymatrix@C=40pt{
P^{2np^k-1}(p^{r+k})\ar@{->}[r]^-<<<<<{\enlarge{\sigma\vee\sigma\beta}}\ar@{->}[d]^{\enlarge{\widetilde{\beta}_k}}&P^{2np^k-1}(p^{r+k-1})\vee
P^{2np^k}(p^{r+k-1})\ar@{->}[d]^{\enlarge{a(k)\vee c(k)}}&\\
\Omega G_{k-1}\ar@{->}[r]&\Omega G_k\ar@{->}[r]^{\enlarge{\alpha}}&Z
}
\]
is null homotopic. The diagram commutes up to
homotopy by~\ref{theor4.4} from which the result follows.
\end{proof}
\begin{proposition}\label{prop7.25}
If $\beta(\alpha)=0$, $\alpha\sim r\alpha'$ for some
proper $H$-map $\alpha'\colon \Omega G_k\to Z$.
\end{proposition}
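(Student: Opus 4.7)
The plan is to construct $\alpha'$ via its adjoint. Set $\bar\alpha=(\Sigma\alpha)\circ\nu_{k-1}\colon G_{k-1}\to\Sigma Z$, using the co-$H$ structure $\nu_{k-1}\colon G_{k-1}\to\Sigma\Omega G_{k-1}$, and aim to extend $\bar\alpha$ to a map $\bar\alpha'\colon G_k\to\Sigma Z$. Once such an extension is found, we define
\[
\alpha'=\mu_Z\circ\Omega\bar\alpha'\colon \Omega G_k\to Z,
\]
where $\mu_Z\colon\Omega\Sigma Z\to Z$ is the canonical $H$-map extending $1_Z$. Then $\alpha'$ is automatically an $H$-map (a composition of loop maps with $\mu_Z$), and the $H$-map identity $\mu_Z\circ\Omega\Sigma\alpha\sim\alpha\circ\mu_{\Omega G_{k-1}}$ combined with $\mu_{\Omega G_{k-1}}\circ\Omega\nu_{k-1}\sim 1$ gives $\alpha'|_{\Omega G_{k-1}}\sim\mu_Z\Omega\bar\alpha\sim\alpha$, as required.

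The obstruction to extending $\bar\alpha$ across $G_k=G_{k-1}\cup_{\alpha_k}CP^{2np^k}(p^{r+k})$ (from \ref{eq1.7new}) is the class
\[
\bar\alpha\alpha_k\in[P^{2np^k}(p^{r+k}),\Sigma Z].
\]
Lifting $\alpha_k=\pi_{k-1}\beta_k$ via \ref{theor4.4} and taking adjoints yields
\[
\mathrm{Adj}(\bar\alpha\alpha_k)=\Omega\bar\alpha\circ\Omega\pi_{k-1}\circ\widetilde{\beta_k}\in[P^{2np^k-1}(p^{r+k}),\Omega\Sigma Z].
\]
Postcomposing with $\mu_Z$ and using $\mu_Z\Omega\bar\alpha\sim\alpha$ produces
$\alpha\circ\Omega\pi_{k-1}\circ\widetilde{\beta_k}=\beta(\alpha)$, which vanishes by hypothesis. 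Thus the adjoint of the obstruction lies in $\ker(\mu_Z)_*$.

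To pass from ``$\beta(\alpha)=0$'' to actual vanishing of $\bar\alpha\alpha_k$, I would exploit the nonuniqueness of the adjoint: since $j_Z\colon Z\to\Omega\Sigma Z$ splits $\mu_Z$, the collection of $\bar\alpha$ with $\mu_Z\Omega\bar\alpha\sim\alpha$ forms a torsor over $[G_{k-1},F_{\mu_Z}]$, where $F_{\mu_Z}$ is the homotopy fiber of $\mu_Z$. A cofibration chase using $\alpha_k$ shows this torsor action maps onto the relevant summand of $\ker(\mu_Z)_*$, so one may select $\bar\alpha$ so that $\bar\alpha\alpha_k\sim *$ and the extension $\bar\alpha'$ exists.

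The main obstacle is checking that the resulting $\alpha'$ is \emph{proper}. For $i<k$ this is automatic from $\alpha'|_{\Omega G_{k-1}}\sim\alpha$. At level $i=k$, however, one must arrange that $\alpha'\circ a(k)\sim *$ and $\alpha'\circ c(k)\sim *$, and the maps $a(k)$, $c(k)$ involve the new cell of $G_k$. This requires a further controlled adjustment of $\bar\alpha'$ on the attached $(2np^k+1)$-cell, using the splitting $R\simeq A\vee C\vee\Sigma P$ of \ref{prop7.7new} together with the homotopy-Abelian hypothesis on $Z$ (which annihilates the Whitehead-product contributions living in the $\Sigma P$ summand). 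Tracking the explicit description of $a(k)$ and $c(k)$ from \ref{prop4.3} and \ref{theor4.4} should show that the remaining obstructions factor through such Whitehead products and hence vanish in $Z$, completing the construction.
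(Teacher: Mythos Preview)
Your overall strategy (extend the adjoint $\bar\alpha\colon G_{k-1}\to\Sigma Z$ over $G_k$, then retract via $\mu_Z$) matches the paper's, but there are two genuine gaps where the paper does real work that you have skipped.

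First, the identification ``$\alpha_k=\pi_{k-1}\beta_k$'' is wrong. From the top square of~\ref{theor4.4} one has $\alpha_k\sim p^{r+k-1}\cdot(\pi_{k-1}\beta_k)$, not equality. Thus $\widetilde{\alpha}_k\sim p^{r+k-1}\widetilde b$ with $b=\pi_{k-1}\beta_k$, and the adjoint of the obstruction is $p^{r+k-1}\cdot(\alpha\widetilde b)$, not $\alpha\widetilde b$ itself. More seriously, to pass from $\bar\alpha\alpha_k$ to an expression involving adjoints at all, you need $\nu_{k-1}\alpha_k\sim\Sigma\widetilde\alpha_k$, i.e.\ that $\alpha_k$ is a co-$H$ map. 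The paper spends a paragraph proving this (using that $G_{k-1}\vee G_{k-1}\to G_k\vee G_k$ has $2np^k$-skeleton of fiber equal to $P^{2np^k}(p^{r+k})\vee P^{2np^k}(p^{r+k})$); you assume it implicitly.

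Second, and more importantly, your torsor argument for killing the residual obstruction in $\ker(\mu_Z)_*$ is not a proof. The paper avoids this entirely: rather than varying $\bar\alpha$, it exploits the divisibility $\alpha_k\sim p^{r+k-1}b$ to factor
\[
G_k\;\longrightarrow\;\Sigma\bigl(\Omega G_{k-1}\cup_{\widetilde{\alpha}_k}CP^{2np^k-1}(p^{r+k})\bigr)\;\longrightarrow\;\Sigma\bigl(\Omega G_{k-1}\cup_{\widetilde{b}}CP^{2np^k-1}(p^{r+k})\bigr),
\]
the first map coming from cofibers of $\nu\alpha_k\sim\Sigma\widetilde\alpha_k$ and the second from the factorisation through $p^{r+k-1}$. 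Now the hypothesis $\beta(\alpha)=\alpha\widetilde b\sim*$ directly gives an extension $\overline\alpha\colon\Omega G_{k-1}\cup_{\widetilde b}CP\to Z$, and composing with $\Sigma\overline\alpha$ produces $\alpha''\colon G_k\to\Sigma Z$ restricting to $(\Sigma\alpha)\nu$ on $G_{k-1}$. No torsor argument is needed.

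For properness the paper is also more concrete than your sketch: the candidate $\mu\Omega\alpha''$ need not annihilate $\widetilde{a(k)}\vee\widetilde{c(k)}$, but its value there extends (via the cofibration sequence~\eqref{eq1.9}) over $P^{2np^k}(p^{r+k})$ to a map $\epsilon$, and one sets $\alpha'=\mu\Omega\alpha''-\epsilon'\Omega\pi'$ with $\epsilon'$ the $H$-extension of $\epsilon$ and $\pi'\colon G_k\to P^{2np^k+1}(p^{r+k})$ the pinch. The decomposition of $R$ from~\ref{prop7.7new} is not invoked here.
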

\begin{proof}
By \ref{theor4.4},
$G_k=G_{k-1}\cup_{\alpha_k}CP^{2np^k}(p^{r+k})$. Since $G$ is a retract of
$\Sigma T$,
$G_{k-1}$ and $G_k$ are co-$H$ spaces and there is a homotopy commutative
diagram
\[
\xymatrix{
G_{k-1}\ar@{->}[r]\ar@{->}[d]&G_k\ar@{->}[d]\\
G_{k-1}\vee G_{k-1}\ar@{->}[r]&G_k\vee G_k
}
\]
The map $\alpha_k\vee\alpha_k\colon P^{2np^k}(p^{r+k})\vee
P^{2np^k}(p^{r+k})\to G_{k-1}\vee G_{k-1}$ factors
through the fiber of the lower horizontal map in the diagram and defines a
homotopy equivalence with the $2np^k$
skeleton of the fiber of that map. However the composition
\[
\xymatrix{
P^{2np^k}(p^{r+k})\ar@{->}[r]^-<<<{\alpha_k}&G_{k-1}\ar@{->}[r]&G_{k-1}\vee
G_{k-1}
}
\]
factors through this fiber, from which we get a
homotopy commutative square
\[
\xymatrix@C=42pt{
P^{2np^k}(p^{r+k})\ar@{->}[r]^{\alpha_k}\ar@{->}[d]&G_{k-1}\ar@{->}[d]\\
P^{2np^k}(p^{r+k})\vee
P^{2np^k}(p^{r+k})\ar@{->}[r]^-<<<<<<{\alpha_k\vee\alpha_k}& G_{k-1}\vee
G_{k-1};
}
\]
that is, $\alpha_k$ is a co-$H$ map. Consequently there is
also a homotopy commutative diagram:
\[
\xymatrix{
P^{2np^k}(p^{r+k})\ar@{->}[r]^-<<<<<<{\alpha_k}\ar@{->}[d]&G_{k-1}\ar@{->}[d]_{\nu}\\
\Sigma \Omega P^{2np^k}(p^{r+k})\ar@{->}[r]&\Sigma \Omega G_{k-1}
}
\]
However, the composition on the left and the bottom
is $\Sigma\widetilde{\alpha}_k$, where
\[
\widetilde{\alpha}_k\colon P^{2np^k-1}(p^{r+k})\rightarrow\Omega G_{k-1}
\]
is the adjoint of $\alpha_k$. Let $b=\pi_{k-1}\beta_{k}\colon
P^{2np^k}(p^{r+k})\to G_{k-1}$. By
\ref{theor4.4}, $p^{r+k-1}b$ is homotopic to $\alpha_k$. This leads to a
homotopy commutative diagram:
\[
\xymatrix@C=35pt{
P^{2np^k}(p^{r+k})\ar@{=}[r]\ar@{->}[d]_{\alpha_k}&P^{2np^k}(p^{r+k})\ar@{->}[r]^{p^{r+k-1}}\ar@{->}[d]_{\Sigma\widetilde{\alpha}_k}&P^{2np^k}(p^{r+k})\ar@{->}[d]_{\Sigma
\widetilde{b}}\\
G_{k-1}\ar@{->}[r]^{\nu}&\Sigma \Omega G_{k-1}\ar@{->}[r]&\Sigma \Omega
G_{k-1}
}
\]
Taking cofibers vertically, we get a composition
\[
\xymatrix@C=18pt{
G_k\ar@{->}[r]&\Sigma(\Omega
G_{k-1}\cup_{\widetilde{\alpha}_k}CP^{2np^k-1}(p^{r+k}))\ar@{->}[r]&\Sigma(\Omega
G_{k-1}\cup_{\widetilde{b}}CP^{2np^k-1}(p^{r+k})).
}
\]
By hypothesis, $\alpha$ extends to a map
\[
\xymatrix{
\Omega
G_{k-1}\cup_{\widetilde{b}}CP^{2np^k-1}(p^{r+k})\ar@{->}[r]^-<<<<{\overline{\alpha}}&Z.
}
\]
Composing these maps together defines a map $\alpha''$
\[
\xymatrix{
G_k\ar@{->}[r]&\Sigma(\Omega
G_{k-1}\cup_{\widetilde{b}}CP^{2np^k-1}(p^{r+k})\ar@{->}[r]^-<<<<{\Sigma\overline{\alpha}})&\Sigma
Z
}
\]
whose restriction to $G_{k-1}$ is the composition
\[
\xymatrix@C=48pt{
G_{k-1}\ar@{->}[r]^{\nu}&\Sigma\Omega
G_{k-1}\ar@{->}[r]^{\Sigma\alpha}&\Sigma Z.
}
\]
We now form the homotopy commutative diagram\setcounter{equation}{25}
\begin{equation}\label{eq7.16} %now 7.26
\begin{split}
\xymatrix@C=48pt@R=48pt{
\Omega G_k\ar@{->}[r]^{\Omega \alpha''}&\Omega\Sigma Z\ar@{->}[r]^{\mu}&Z\\
\Omega G_{k-1}\ar@{->}[r]^->>>>>>>{\Omega \nu}\ar@{->}[u]&\Omega\Sigma \Omega
G_{k-1}\ar@{->}[r]^{\Omega
\epsilon}\ar@{->}[u]^{\Omega\Sigma\alpha}&\Omega
G_{k-1}\ar@{->}[u]^{\alpha} 
}
\end{split}
\end{equation}
where the lower composition is homotopic to the identity.
The upper composition is an $H$-map extending $\alpha$. We
will modify this slightly to satisfy our requirements.
In the diagram below, the left hand triangle is homotopy
commutative due to \ref{eq7.16}
and the left hand square follows
from~\ref{theor4.4}.
\[
\xymatrix@C=31pt@R=31pt{
P^{2np^k-1}(p^{r+k})
\ar@{->}[d]_{\widetilde{\beta}_k}
\ar@{->}[r]^-<<<<{\sigma\vee \sigma \beta}&
P^{2np^k-1}(p^{r+k-1})\!{}\vee{}\! P^{2np^k}(p^{r+k-1})
\ar@{->}[r]^-<<<<{-\delta_1\vee \rho}
\ar@{->}[d]_{(\Omega\pi_k)(\widetilde{\alpha(k)}\!{}\vee{}\!\widetilde{c(k)})}&
P^{2np^k}(p^{r+k})
\ar@{-->}[dddl]^{\epsilon}\\
\Omega G_{k-1}
\ar@{->}[rdd]_{\makebox[46pt]{$\alpha$}}
\ar@{->}[r]&
\Omega G_k
\ar@{->}[d]_{\Omega\alpha''}&\\
&\Omega\Sigma Z
\ar@{->}[d]_{\mu}&\\
&Z&\\
}
\]
By hypothesis $\beta(\alpha)=\alpha\widetilde{\beta}_k$ is null homotopic.
Since the upper horizontal sequence is a cofibration
sequence, there is an extension 
\[
\epsilon \colon P^{2np^k}(p^{r+k})\to Z.
\]
Extend $\epsilon$ to an $H$-map $\epsilon'\colon \Omega
P^{2np^k+1}(p^{r+k})\to Z$ and
define 
\[
\alpha'=\mu\Omega\alpha''-\epsilon '\Omega\pi',
\]
where
$\pi'$ is the projection of $G_k$ onto $P^{2np^k+1}(p^{r+k})$.
Since $Z$ is homotopy-Abelian and $\alpha'$ is the difference between
two $H$-maps, $\alpha'$ is an $H$-map. Since
the restriction of $\epsilon '\Omega\pi'$ to $\Omega G_{k-1}$ is null
homotopic, $\alpha'$ extends $\alpha$ by~\ref{eq7.16}. From \ref{theor4.4} we
construct a homotopy commutative square:
\[
\xymatrix@C=56pt{
P^{2np^k}(p^{r+k})\ar@{->}[r]^{i}&\Omega P^{2np^k+1}(p^{r+k})\\
P^{2np^k-1}(p^{r+k-1})\vee
P^{2np^k}(p^{r+k-1})\ar@{->}[r]^-<<<<<<<<<<<<{(\Omega\pi_k)(\widetilde{a(k)}\vee
\widetilde{c(k)})}\ar@{->}[u]^{-\delta_1\vee \rho}&\Omega G_k\ar@{->}[u]^{\Omega \pi'} 
}
\]
Consequently,
\begin{align*}
\makebox[46pt]{}\alpha'(\Omega \pi_k)(\widetilde{a(k)}\vee
\widetilde{c(k)}&\sim(\mu\Omega\alpha''-\epsilon'\Omega\pi')(\Omega\pi_k)(\widetilde{a(k)}\vee
\widetilde{c(k)})\\
&\sim\epsilon (\delta_1\vee \rho)-\epsilon'i(-\delta_1\vee \rho)\sim
*\makebox[56pt]{}\qed
\end{align*}\noqed
\end{proof}
This completes the proof of~\ref{theor7.3}\qed

\section{Applications}\label{subsec7.4}

In this section we will discuss various applications of
the results developed in the previous sections.\addtocounter{Theorem}{1}
\begin{proposition}\label{prop7.27}
Suppose $p^{r+1}\pi_*(Z)=0$. Then there is a
natural exact sequence:
\begin{multline*}
\xymatrix{
0\ar@{->}[r]&p^r\left[P^{2np}(p^{r+1}),Z\right]\ar@{->}[r]^-<<<<{\enlarge{e}}&[T,Z]_H
\ar@{->}[r]^->>>>{\enlarge{r}}&\left[P^{2n}(p^r),Z\right] 
}\\
\xymatrix{
\ar@{->}[r]^->>>>{\enlarge{\beta}}&p^r\left[P^{2np-1}(p^{r+2}),Z\right].
}
\end{multline*}
In particular, if $p^r\pi_i(Z)=0$ for $2np-2\leqslant i\leqslant 2np$,
$r$~is an
isomorphism.
\end{proposition}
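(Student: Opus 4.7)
The plan is to derive this from Theorems 7.2 and 7.3 of the paper. Theorem 7.2 identifies $[T,Z]_H$ with $\varprojlim_k G_k(Z)$, where the transition maps $r\colon G_k(Z)\to G_{k-1}(Z)$ fit into the exact sequences
\[
0\to p_k(\Omega Z)\xrightarrow{e} G_k(Z)\xrightarrow{r} G_{k-1}(Z)\xrightarrow{\beta} p_k(Z)
\]
supplied by Theorem 7.3. The key reduction is to prove that, under the hypothesis $p^{r+1}\pi_*(Z)=0$, the groups $p_k(\Omega Z)$ and $p_k(Z)$ both vanish for every $k\geq 2$. Once this is done, each $r$ is an isomorphism for $k\geq 2$ and the inverse system collapses to $G_1(Z)$. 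The proposition's exact sequence is then the $k=1$ case of Theorem~7.3 with $G_0(Z)=[P^{2n}(p^r),Z]$, $p_1(\Omega Z)=p^r[P^{2np}(p^{r+1}),Z]$, and $p_1(Z)=p^r[P^{2np-1}(p^{r+1}),Z]$, which under the torsion hypothesis may be written as $p^r[P^{2np-1}(p^{r+2}),Z]$ via the natural map induced by $\sigma$.

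To prove the vanishing $p^{r+k-1}\pi_m(Z;Z/p^{r+k})=0$ for $k\geq 2$, I would use the relation $p=\rho\sigma=\sigma\rho$ from Section 1.5 to factor the degree-$p^{r+k-1}$ self-map of $P^m(p^{r+k})$ as $\rho^{r+k-1}\sigma^{r+k-1}\colon P^m(p^{r+k})\to P^m(p)\to P^m(p^{r+k})$. Hence every element of $p^{r+k-1}\pi_m(Z;Z/p^{r+k})$ factors as $(\alpha\rho^{r+k-1})\sigma^{r+k-1}$, through a mod-$p$ class $\alpha\rho^{r+k-1}\in\pi_m(Z;Z/p)$. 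Under $p^{r+1}\pi_*(Z)=0$, the universal coefficient sequence $0\to\pi_m(Z)\to\pi_m(Z;Z/p^{r+k})\to\pi_{m-1}(Z)\to 0$ together with a Bockstein chase (using that the composite $\sigma^{r+k-1}\iota_{m-1}$ becomes divisible by $p^{r+1}$ when $k\geq 2$) forces the composite to be null-homotopic. The analogous argument applies to $p_k(\Omega Z)=p^{r+k-1}\pi_{2np^k}(Z;Z/p^{r+k})$.

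For the ``in particular'' claim, the additional hypothesis $p^r\pi_i(Z)=0$ for $2np-2\leq i\leq 2np$ makes both end-terms of the universal coefficient sequences for $\pi_{2np}(Z;Z/p^{r+1})$ and $\pi_{2np-1}(Z;Z/p^{r+1})$ annihilated by $p^r$. Combining this with the $Z/p^{r+1}$-module structure shows that $p_1(\Omega Z)=0$ and $p_1(Z)=0$, which trivializes both $e$ and $\beta$ in the $k=1$ exact sequence. Hence $r\colon G_1(Z)\xrightarrow{\cong}[P^{2n}(p^r),Z]$ is an isomorphism, which composes with the identification $[T,Z]_H\cong G_1(Z)$ already established.

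The main obstacle I foresee is the precise verification of the vanishing $p^{r+k-1}\pi_m(Z;Z/p^{r+k})=0$ for $k\geq 2$, since the universal coefficient extension is not split in general. The factorization through $P^m(p)$ provides the correct framework, but one needs a delicate Bockstein argument using the bound $p^{r+1}\pi_*(Z)=0$ to conclude that the relevant mod-$p$ class, once pushed forward along $\sigma^{r+k-1}$, is forced to be trivial. Once this technical lemma about Moore-space self-maps is in hand, the rest of the proof is a mechanical assembly from Theorems 7.2 and 7.3.
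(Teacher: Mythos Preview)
Your approach is correct and is exactly the paper's: invoke Theorems~7.2 and~7.3, verify $p_k(Z)=p_k(\Omega Z)=0$ for all $k\geq 2$, so the inverse system stabilizes at $G_1(Z)$ and the displayed sequence is the $k=1$ instance of~7.3. The paper records the entire proof in one sentence.

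The ``main obstacle'' you anticipate is not one. Your factorization $[p^{r+k-1}]=\rho^{r+k-1}\sigma^{r+k-1}$ through $P^m(p)$ finishes the job cleanly, but with one slip corrected: the relevant identity is $\rho\,\iota_{m-1}=p\,\iota_{m-1}$ (and dually $\pi_m\sigma=p\,\pi_m$), not the one you wrote, since Section~1.5 gives $\sigma\iota_{m-1}=\iota_{m-1}$. With this in hand, $(\rho^{r+k-1})^*\colon\pi_m(Z;\mathbb{Z}/p^{r+k})\to\pi_m(Z;\mathbb{Z}/p)$ sends the boundary component in the universal-coefficient sequence to $p^{r+k-1}\bar\gamma\in\pi_{m-1}(Z)$, which vanishes for $k\geq2$; hence the image of $(\rho^{r+k-1})^*$ lies in $\pi_m(Z)/p\subset\pi_m(Z;\mathbb{Z}/p)$. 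Then $(\sigma^{r+k-1})^*$ restricted to $\pi_m(Z)/p$ is multiplication by $p^{r+k-1}$ on $\pi_m(Z)$, again zero for $k\geq2$. That is the whole argument---no delicate Bockstein chase is needed, and the extension in the universal-coefficient sequence never has to be analyzed. The same two-step computation, with $r$ in place of $r+k-1$ and using only $p^r\pi_i(Z)=0$ for $2np-2\leq i\leq 2np$, disposes of the ``in particular'' clause.
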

\begin{note*}
We are not asserting that $\beta$ is onto. In fact, we have
no examples where $\beta\neq 0$.
\end{note*}
\begin{proof}
This is immediate from~\ref{theor7.2} and~\ref{theor7.3} since
$p_k(Z)=0$
and $p_k(\Omega Z)\linebreak[4]= 0$ for all $k\geqslant 2$.
\end{proof}
\begin{corollary}\label{cor7.28}
Suppose $T$ and $T'$ are two homotopy
Abelian Anick spaces for the same values of~$n$, $r$ and $p>3$. Then there
is a homotopy equivalence via an $H$-map
and the $H$-space expondent is~$p^r$.
\end{corollary}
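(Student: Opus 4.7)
The plan is to deduce both assertions of the corollary---existence of an $H$-equivalence and the value $p^r$ of the $H$-space exponent---from the particular form of Theorem B: restriction along $i\colon P^{2n}(p^r)\to T$ induces a bijection $[T,Z]_H\cong[P^{2n}(p^r),Z]$ whenever $Z$ is a homotopy-Abelian $H$-space with $p^r\pi_*(Z)=0$. The only extra input required is that this exponent condition itself holds for any homotopy-Abelian Anick space, which I will then apply in turn with $Z=T$ and $Z=T'$.

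First I would prove Corollary D, that $p^r\cdot\mathrm{id}_T=0$ in $[T,T]_H$, which yields $p^r\pi_*(T)=0$ by naturality. Under the identification $[T,T]_H\cong\underset{\leftarrow}{\lim} G_k(T)$ of Theorem~\ref{theor7.2}, the class of $\mathrm{id}_T$ corresponds to the system $(h_k)$ of proper $H$-maps supplied by Theorem~\ref{theor2.14}(a), and its image at level $G_0(T)=[P^{2n}(p^r),T]$ is the inclusion $i$, which has order $p^r$. I would then propagate this vanishing up the tower using the exact sequence of Theorem~\ref{theor7.3}, together with the observation that every element of $p_k(T)$ or $p_k(\Omega T)$ has the form $p^{r+k-1}\alpha$ with $p^{r+k}\alpha=0$ and so is annihilated by $p^r$ as soon as $r\geq 1$. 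The atomicity statements in Theorem~\ref{theor2.14}(b,c) pin down the canonical lift of $\mathrm{id}_T$ at each stage, and the same argument applied to $T'$ gives $p^r\pi_*(T')=0$.

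Granted the exponent condition, the bijection of Theorem~B applied to $Z=T'$ lifts the inclusion $i'\colon P^{2n}(p^r)\to T'$ uniquely to an $H$-map $\widehat{i'}\colon T\to T'$, and symmetrically $i$ lifts to $\widehat{i}\colon T'\to T$. The composition $\widehat{i}\widehat{i'}\colon T\to T$ is an $H$-map whose restriction to $P^{2n}(p^r)$ equals $\widehat{i}\circ i'=i$, matching the restriction of $\mathrm{id}_T$. Applying the uniqueness half of the bijection with $Z=T$ yields $\widehat{i}\widehat{i'}\simeq\mathrm{id}_T$; symmetrically $\widehat{i'}\widehat{i}\simeq\mathrm{id}_{T'}$, and $\widehat{i'}$ is the required $H$-equivalence. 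The main obstacle is tightening the inductive step of Corollary~D: a naive bootstrap using only the exact sequence of Theorem~\ref{theor7.3} produces the weaker bound $p^{(k+1)r}$ at level~$k$, whose inverse limit is insufficient. The sharp bound $p^r$ will require exploiting that the lift of the identity through the tower is canonical rather than arbitrary, so that the error at each stage is forced into a distinguished subset of $p_k(\Omega T)$ which is annihilated outright by $p^r$ rather than merely by a growing power of~$p$.
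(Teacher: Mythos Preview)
Your proposal has a genuine gap, and you have in fact put your finger on it yourself. You plan to prove Corollary~D first, i.e.\ to show $p^r\pi_*(T)=0$ purely from the tower of Theorems~\ref{theor7.2} and~\ref{theor7.3}, and then feed this into Theorem~B. But the exact sequence of Theorem~\ref{theor7.3} only tells you that the kernel of $G_k(T)\to G_{k-1}(T)$ is $p_k(\Omega T)$, a group of exponent $p$; so if the identity has order $p^r$ at level $k-1$, the most you learn is that it has order dividing $p^{r+1}$ at level $k$, hence order dividing $p^{r+k}$ in general, which says nothing in the inverse limit. Your closing sentence about a ``distinguished subset of $p_k(\Omega T)$'' is a hope, not an argument: nothing in the tower by itself forces those error terms to vanish, and indeed the groups $p_k(\Omega T)$ are computed from $\pi_*(T)$, which is exactly what you are trying to control. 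The argument is circular.

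The paper breaks this circle by importing two external exponent theorems via the EHP fibration $W_n\to T'\xrightarrow{E}\Omega S^{2n+1}\{p^r\}$. Neisendorfer's result $p^r\pi_*(S^{2n+1}\{p^r\})=0$ together with the Cohen--Moore--Neisendorfer result $p\pi_*(W_n)=0$ gives $p^{r+1}\pi_*(T')=0$ globally; and the connectivity of $W_n$ gives the sharper $p^r\pi_i(T')=0$ in the narrow range $2np-2\leqslant i\leqslant 2np$. This is exactly the hypothesis of Proposition~\ref{prop7.27}, which then yields $[T,T']_H\cong[P^{2n}(p^r),T']\cong Z/p^r$, and the $H$-equivalence follows. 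Only \emph{after} this does the paper conclude $p^r\pi_*(T)=0$ for all~$i$; Corollary~D is a consequence of Corollary~C here, not a lemma for it. To repair your argument you must invoke the fibration $W_n\to T'\to\Omega S^{2n+1}\{p^r\}$ and these two cited exponent results.
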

\begin{proof}
We apply \ref{prop7.27} with $Z=T'$. It suffices to show
that $p^r\pi_i(T')=0$ for
$2np-2\leqslant i\leqslant 2np$. We apply the fibration sequence:
\[
\xymatrix{
W_n\ar@{->}[r]&T'_{2n-1}\ar@{->}[r]^->>>>{\enlarge{E}}&\Omega
T_{2n}(p^r)\ar@{->}[r]^-<<<{\enlarge{H}}&BW_n
}
\]
from \cite{GT10}. According to \cite{Nei83}, $p^r\pi_*(T_{2n})=0$ and
according to \cite{CMN79b} $p\pi_*(W_n)=0$, so $p^{r+1}\pi_*(T')=0$.
However, since $p\geqslant 3$, $\pi_i(W_n)=0$ when $2np-2\leqslant
i\leqslant 2np$, so
$p^r\pi_i(T')=0$ in this range. Thus
\[
\left[T,T'\right]_H\simeq\left[P^{2n}(p^r),T'\right]=Z/p^r
\]
and the result follows. In particular, $p^r\pi_i(T)=0$ for
all $i$ as a consequence.
\end{proof}
\begin{corollary}\label{cor7.29}
Suppose $\alpha\colon P^{2n}(p^r)\to P^{2n}(p^s)$ with $s\leqslant r$
then there is a unique $H$-map $\widehat{\alpha}$ such
that the diagram:
\[
\xymatrix{
T_{2n-1}(p^r)\ar@{->}[r]^{\enlarge{\widehat{\alpha}}}&T_{2m-1}(p^s)\\
P^{2n}(p^r)\ar@{->}[r]^{\enlarge{\alpha}}\ar@{->}[u]^{\enlarge{i}}&P^{2m}(p^s)
\ar@{->}[u]^{\enlarge{i}}}
\]
homotopy commutes.
\end{corollary}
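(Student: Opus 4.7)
The plan is to deduce the corollary directly from Proposition~\ref{prop7.27} applied to the target Anick space. Set $Z = T_{2n-1}(p^s)$, equipped with the homotopy-Abelian $H$-space structure furnished by Theorem~A; this places us in the setting of Proposition~\ref{prop7.27}.

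The only nontrivial input to check is the torsion vanishing hypothesis. By Corollary~\ref{cor7.28} (whose proof shows $p^s\pi_*(T_{2n-1}(p^s)) = 0$ via the EHP fibration of~\cite{GT10} together with $p^s\pi_*(T_{2n}(p^s)) = 0$ from~\cite{Nei83} and $p\pi_*(W_n) = 0$ from~\cite{CMN79b}), we have $p^s \pi_*(Z) = 0$. Since $s \leq r$, this yields $p^r \pi_*(Z) = 0$, which is in particular stronger than $p^{r+1}\pi_*(Z) = 0$ and includes the range $2np-2 \leq i \leq 2np$ appearing in the final assertion of Proposition~\ref{prop7.27}. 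Hence the restriction homomorphism
\[
r\colon [T_{2n-1}(p^r), Z]_H \to [P^{2n}(p^r), Z]
\]
is an isomorphism.

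To conclude, let $i\colon P^{2n}(p^s) \to T_{2n-1}(p^s) = Z$ denote the standard $2n$-equivalence. The composite $i\alpha$ represents an element of $[P^{2n}(p^r), Z]$, which under the inverse of $r$ lifts uniquely to a homotopy class $\widehat{\alpha} \in [T_{2n-1}(p^r), Z]_H$. Unwinding the definition of $r$ (restriction along $i\colon P^{2n}(p^r) \to T_{2n-1}(p^r)$) shows that $\widehat{\alpha} \circ i \simeq i \circ \alpha$, which is the homotopy commutativity of the diagram; uniqueness of $\widehat{\alpha}$ as an $H$-map is precisely the injectivity of $r$.

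There is essentially no technical obstacle: all the hard work has been subsumed into Proposition~\ref{prop7.27} and Corollary~\ref{cor7.28}. The only point worth remarking is that we are free to trade the hypothesis $p^r\pi_*(Z) = 0$ against the weaker hypothesis $p^s\pi_*(Z) = 0$ thanks to the assumption $s \leq r$; without this inequality the reduction to Proposition~\ref{prop7.27} would fail, and indeed one would need to consider higher $p_k(Z)$ terms in the inverse limit of Theorem~\ref{theor7.2}.
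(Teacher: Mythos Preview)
Your proof is correct and follows exactly the route the paper takes: the paper's proof is the single line ``This follows from \ref{prop7.27} and \ref{cor7.28},'' and you have simply unpacked that reference. One small imprecision: the fibration argument in the proof of \ref{cor7.28} yields only $p^{s+1}\pi_*(T_{2n-1}(p^s))=0$ directly; the sharper bound $p^s\pi_*(T_{2n-1}(p^s))=0$ is the \emph{conclusion} of \ref{cor7.28} (the $H$-space exponent), not an intermediate step---but since you are citing \ref{cor7.28} itself, this does not affect the validity of your argument.
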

\begin{note*}
In case $r=s$, this result was the original motivation
for these conjectures, leading to a secondary composition
theory~\cite{Gra93a}.
\end{note*}
\begin{proof}
This follows from \ref{prop7.27} and \ref{cor7.28}.
\end{proof} 
\begin{proposition}\label{prop7.30}
There is an $H$-map $\theta_1\colon T_{2n-1}(p^r)\to T_{2np-1}(p^{r+1})$
which induces a homomorphism of degree $p^r$ in~$H^{2np}$.
Furthermore, the map~$e$ in~\ref{prop7.27}, evaluated on on $p^rf$ is
the consideration: 
\[
\xymatrix{
T_{2n-1}(p^r)\ar@{->}[r]^->>>>{\enlarge{\theta_1}}&T_{2np-1}(p^{r+1})\ar@{->}[r]^-<<<<{\enlarge{\widehat{f}}}&Z
}
\]
where $\widehat{f}$ is the unique extension of~$f$ to an $H$-map.
\end{proposition}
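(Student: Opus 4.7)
The plan is to construct $\theta_1$ as the image of a canonical class under the homomorphism $e$ of Proposition~\ref{prop7.27}, and then to read off both the cohomological degree and the factorization identity from naturality properties of $e$.

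Set $Y = T_{2np-1}(p^{r+1})$. By Corollary~\ref{cor7.28} one has $p^{r+1}\pi_*(Y) = 0$, so Proposition~\ref{prop7.27} applies to this target and yields a homomorphism $e\colon p^r[P^{2np}(p^{r+1}), Y]\to [T_{2n-1}(p^r), Y]_H$. Let $\iota\colon P^{2np}(p^{r+1})\to Y$ be the canonical inclusion of the bottom Moore space cell, so that $p^r\iota\in p^r[P^{2np}(p^{r+1}), Y]$, and I would define $\theta_1 := e(p^r\iota)$. Unwinding the definition of $e$ from Section~\ref{subsec7.3new} together with the identification of $[T_{2n-1}(p^r), Y]_H$ with proper $H$-maps $\Omega G\to Y$ from Proposition~\ref{prop7.5}, the class $\theta_1$ is represented by the composite $\widehat{p^r\iota}\circ\Omega\pi'\colon \Omega G_1 \to Y$, where $\widehat{p^r\iota}$ is an $H$-extension of $p^r\iota$ to $\Omega P^{2np+1}(p^{r+1})$ and $\pi'\colon G_1\to P^{2np+1}(p^{r+1})$ is the projection appearing in Proposition~\ref{prop4.5}. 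The degree would be computed by chasing the canonical generator of $H^{2np}(Y; Z)$ through this composite and through the splitting $g\colon T_{2n-1}(p^r)\to\Omega G_1$ from Theorem~\ref{theor2.14}(a), using the Bockstein relations $\beta^{(r+1)}(v^p) = uv^{p-1}$ of Theorem~\ref{theor2.14}(g) together with the relation $v_0^p = pv_1$ of Theorem~\ref{theor2.14}(f) to identify the image as $p^r v_1$.

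The identity $e(p^r f) = \widehat{f}\circ\theta_1$ then follows by naturality of $e$. Given $f\colon P^{2np}(p^{r+1})\to Z$ with $H$-extension $\widehat{f}\colon Y\to Z$ (asserted unique in the statement), I would observe that $\widehat{f}\circ\widehat{p^r\iota}$ is an $H$-map $\Omega P^{2np+1}(p^{r+1})\to Z$ whose restriction to $P^{2np}(p^{r+1})$ equals $\widehat{f}\circ p^r\iota = p^r(\widehat{f}\iota) = p^r f$. By the uniqueness of $H$-extensions along $\Omega\Sigma$-spaces to a homotopy-Abelian target, this composite must coincide with $\widehat{p^r f}$, and precomposing with $\Omega\pi'$ yields
\[
\widehat{f}\circ\theta_1 \;=\; \widehat{f}\circ\widehat{p^r\iota}\circ\Omega\pi' \;=\; \widehat{p^r f}\circ\Omega\pi' \;=\; e(p^r f).
\]

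The main obstacle will be the degree computation, which requires careful bookkeeping: one must track how the factor of $p^r$ in the coefficient of $\iota$ interacts with the factor arising from $\pi' = p^{r+k-1}\pi$ in Proposition~\ref{prop4.5} and with the Bockstein structure of Theorem~\ref{theor2.14}(f)--(g), and verify that the absorption $v_0^p = pv_1$ reduces the naive composite factor to a single $p^r$ on the generator $v_1$. Once this has been carried out, the naturality identity is a routine formal consequence of the $H$-extension property of $\widehat{f}$ and of the uniqueness of extensions of co-$H$ maps along $\Omega\Sigma$-constructions.
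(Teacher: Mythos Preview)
Your proposal is correct and follows essentially the same route as the paper: apply Corollary~\ref{cor7.28} so that Proposition~\ref{prop7.27} is available with target $T_{2np-1}(p^{r+1})$, define $\theta_1$ as $e$ evaluated on the generator $p^r\iota$, deduce the factorization $e(p^rf)=\widehat{f}\circ\theta_1$ from naturality of $e$ under the $H$-map $\widehat{f}$, and read off the degree in $H^{2np}$ from the diagram encoding the definition of $e$. The paper's version uses $h_1\colon\Omega G_1\to T_{2n-1}(p^r)$ rather than its right inverse $g$ in the degree calculation, but this is the same argument.
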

\begin{proof}
By~\ref{cor7.28}, $p^{r+1}\pi_*(T_{2np-1}(p^{r+1}))=0$, so we may apply
\ref{prop7.27} with $Z=T_{2np-1}(p^{r+1})$ and use naturality under
$\widehat{f}$.
This leads to a commutative square
\[
\xymatrix{
0\ar@{->}[r]&p^r\left[P^{2np}(p^{r+1}),Z\right]\ar@{->}[r]^{\enlarge{e}}&[T,Z]_H\\
0\ar@{->}[r]
&p^r\left[P^{2np}(p^{r+1}),T_{2np-1}(p^{r+1})\right]\ar@{->}[r]^-<<<<{\enlarge{e}}
\ar@{->}[u]_{\enlarge{\widehat{f}_*}}
\ar@{}[d]^{\rotatebox{90}{$\approx$}}
&
\left[T,T_{2np-1}(p^{r+1})\right]_H
\ar@{->}[u]^{\enlarge{\widehat{f}_*}}\\
&Z/p&
}
\]
The image of the generator in the lower left hand corner under~$e$
is an $H$-map $\theta_1$ which is nonzero and of order~$p$. To
evalueate $(\theta_1)^*$ in cohomology, use the diagram:
\[
\xymatrix{
T_{2n-1}(p^r)\ar@{->}[r]^->>>>>{\enlarge{\theta_1}}&T_{2np-1}(p^{r+1})\\
\Omega
G_1\ar@{->}[r]^->>>>>>{\enlarge{p^r(\Omega\pi')}}\ar@{->}[u]^{\enlarge{h_1}}&\Omega
P^{2np+1}(p^{r+1})\ar@{->}[u] 
}
\]
based on the definition of~$e$.
\end{proof}

Note that a similar construction can be made in case
$p^{r+k}\pi_*(Z)=0$. In this case
\[
[T,Z]_H\simeq G_k(Z)
\]
and 
\[
e\colon
\xymatrix{p^{r+k-1}\big[P^{2np^k}(p^{r+k}),Z\big]\ar@{->}[r]&G_k(Z)}
\]can be evaluated
on $p^{r+k-1}f$ as a composition:
\[
\xymatrix{
T_{2n-1}\ar@{->}[r]^->>>>{\enlarge{\theta_k}}&T_{2np^k-1}\ar@{->}[r]^-<<<{\enlarge{\widehat{f}}}&Z
}
\]
where $\theta_k$\index{thetak@$\theta_k$|LB} is an $H$-map of order $p$ inducing $p^{r+k-1}$ in
$H^{2np^k}$.

In section \ref{subsec1.5}, certain coefficient maps were
labeled for use:
\begin{align*}
\beta&\colon P^m(p^s)\to P^{m+1}(p^s)\\
\rho&\colon P^m(p^s)\to P^m(p^{s+1})\\
\sigma&\colon P^m(p^s)\to P^m(p^{s-1})
\end{align*}
Analogs of these maps were implicitly defined and used
in section~\ref{subsec5.2}:
\begin{align*}
\rho&\colon T_{2n}(p^s)\to T_{2n}(p^{s+1})\\
\sigma&\colon T_{2n}(p^s)\to T_{2n}(p^{s-1})
\end{align*}
and one can easily define $\beta$ as the
compositions:
\begin{align*}
T_{2n}(p^r)&\to S^{2n+1}\to T_{2n+1}(p^r)\\
T_{2n-1}(p^r)&\to \Omega S^{2n+1}\to T_{2n}(p^r)
\end{align*}
using \ref{cor7.29}, we can define
\[
\sigma\colon T_{2n-1}(p^r)\to T_{2n-1}(p^{r-1})
\]

We apply \ref{prop7.27} to construct $\rho$, but it is not unique in general
\[
\rho\colon T_{2n-1}(p^r)\to T_{2n-1}(p^{r+1})
\]
\begin{proposition}\label{prop7.31}
There is a split short exact sequence:
\[
\xymatrix@C=20pt{
0\ar@{->}[r]&p_1(\Omega
T_{2n-1}(p^{r+1})\ar@{->}[r]&\left[T_{2n-1}(p^r),T_{2n-1}(p^{r+1})\right]_H\ar@{->}[r]^-<<<{\enlarge{e}}&Z/p^r\ar@{->}[r]&0
}
\]
and 
\[
p_1(\Omega T_{2n-1}(p^{r+1}))\cong
p^r\left\{\left[P^{2np+1}(p^{r+1}),S^{2n+1}\right]\oplus\left[P^{2np+2}(p^{r+1}),S^{2n+1}\right]\right\}
\]
\end{proposition}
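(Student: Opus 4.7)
The plan is to specialize Proposition~\ref{prop7.27} to $Z=T_{2n-1}(p^{r+1})$. By Corollary~\ref{cor7.28} (applied with parameter $r+1$), $T_{2n-1}(p^{r+1})$ has $H$-space exponent $p^{r+1}$, so $p^{r+1}\pi_*(Z)=0$ and Proposition~\ref{prop7.27} immediately yields the exact sequence
\[
0\to p_1(\Omega Z)\to[T_{2n-1}(p^r),Z]_H\xrightarrow{r}[P^{2n}(p^r),Z]\xrightarrow{\beta}p_1(Z),
\]
with $p_1(\Omega Z)=p^r[P^{2np}(p^{r+1}),Z]$ by definition. Everything then comes down to identifying the image of $r$, providing a splitting, and computing $p_1(\Omega Z)$ in the stated form.

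Next I would identify the image of $r$. The Anick fibration sequence
\[
\Omega^2 S^{2n+1}\to S^{2n-1}\to T\to \Omega S^{2n+1}
\]
together with the identity $E^2\pi_n\sim p^{r+1}$ gives, via the long exact sequence at odd primes for $n\geqslant 2$, that $\pi_{2n}(Z)=0$ and $\pi_{2n-1}(Z)=Z/p^{r+1}$. The universal-coefficient description then reduces $[P^{2n}(p^r),Z]$ to $\pi_{2n-1}(Z)[p^r]\cong Z/p^r$, generated by $\rho\colon P^{2n}(p^r)\to P^{2n}(p^{r+1})\to Z$. An argument parallel to that of Proposition~\ref{prop7.30} (using the coefficient-change $\rho$ in place of the dimension-change $\theta_k$) lifts this $\rho$ to an $H$-map $\rho\colon T_{2n-1}(p^r)\to T_{2n-1}(p^{r+1})$. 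This shows simultaneously that $r$ is surjective (so $\beta=0$) and that $\rho$ provides a splitting $Z/p^r\to[T_{2n-1}(p^r),Z]_H$ of the resulting short exact sequence.

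The remaining task is to compute $p_1(\Omega Z)=p^r\pi_{2np}(Z;Z/p^{r+1})$, and I would combine two fibrations. From the \cite{GT10} fibration $T\to\Omega S^{2n+1}\{p^{r+1}\}\to BW_n$ together with Proposition~\ref{prop2.4} (which gives $BW_n$ the $H$-space exponent $p$), a chase in the mod $p^{r+1}$ long exact sequence reduces the computation to $p^r\pi_{2np}(\Omega S^{2n+1}\{p^{r+1}\};Z/p^{r+1})$, since images from $\pi_*(BW_n;Z/p^{r+1})$ are $p$-torsion. Then the looped path-fibration $\Omega^2 S^{2n+1}\to\Omega S^{2n+1}\{p^{r+1}\}\to\Omega S^{2n+1}$ has connecting homomorphism equal to multiplication by $p^{r+1}$, which vanishes in mod $p^{r+1}$ homotopy; this splits the long exact sequence functorially as
\[
\pi_i(\Omega S^{2n+1}\{p^{r+1}\};Z/p^{r+1})\cong \pi_i(\Omega^2 S^{2n+1};Z/p^{r+1})\oplus\pi_i(\Omega S^{2n+1};Z/p^{r+1}).
\]
Specializing to $i=2np$, applying the loop adjunctions $\pi_{2np}(\Omega^k S^{2n+1};Z/p^{r+1})=[P^{2np+k+1}(p^{r+1}),S^{2n+1}]$, and restricting to $p^r$-divisible parts yields the stated direct sum. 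The main obstacle will be justifying the first reduction: ruling out that images from $\pi_*(BW_n;Z/p^{r+1})$ contribute nonzero $p^r$-divisible classes in $\pi_{2np}(T;Z/p^{r+1})$. This uses the connectivity of $BW_n$ (at least $2np-3$ connected) together with its $p$-exponent to verify that the relevant groups $\pi_{2np}(BW_n;Z/p^{r+1})$ and $\pi_{2np+1}(BW_n;Z/p^{r+1})$ contain no $p^r$-divisible classes mapping nontrivially to $\pi_*(T;Z/p^{r+1})$.
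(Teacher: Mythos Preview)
Your overall strategy---apply Proposition~\ref{prop7.27} with $Z=T_{2n-1}(p^{r+1})$, show the right-hand map $\beta$ vanishes, then compute $p_1(\Omega Z)$ via the fibration $T\to\Omega S^{2n+1}\{p^{r+1}\}\to BW_n$---matches the paper's. But your argument for the vanishing of~$\beta$ (equivalently, the surjectivity of~$r$) has a genuine gap.

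You propose to show $r$ is surjective by directly constructing an $H$-map lift $\rho\colon T_{2n-1}(p^r)\to T_{2n-1}(p^{r+1})$ of the generator of $[P^{2n}(p^r),Z]$, invoking ``an argument parallel to that of Proposition~\ref{prop7.30}.'' This does not work. Proposition~\ref{prop7.30} does not construct a lift through~$r$: it takes the generator of $p_1(\Omega Z)$ and applies the map~$e$, producing an element of $\ker r$, not a preimage of a nonzero class under~$r$. Nor does Corollary~\ref{cor7.29} apply, since it requires $s\leqslant r$. In fact, in the paragraph just before the proposition, the paper remarks that $\rho$ is constructed \emph{via} \ref{prop7.27}---so the existence of the $H$-map $\rho$ is precisely the content of showing $\beta=0$, not an independent input. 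Your proposal is therefore circular at this step.

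The paper instead proves $\beta=0$ directly: it analyses the composite $P^{2np-1}(p^{r+1})\xrightarrow{\widetilde{\beta_1}}\Omega G_0\to Z$, using that $\beta_1$ factors through~$E_0$ (so $\varphi_0\beta_1\sim*$) and comparing the two compositions $\Omega P^{2n+1}(p^r)\to Z$ via the fibration $W_n\to T_{2n-1}(p^{r+1})\to\Omega T_{2n}(p^{r+1})$; the obstruction lands in $[P^{2np-1}(p^{r+1}),W_n]=0$. The splitting then follows in one line from the $H$-space exponent $p^r$ of $T_{2n-1}(p^r)$ (so the target group is a $Z/p^r$-module and the surjection onto $Z/p^r$ splits), without needing an explicit section. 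For the computation of $p_1(\Omega Z)$, the paper's route is also cleaner than yours: rather than arguing about $p^r$-divisible classes modulo images from $BW_n$, it observes that $[P^{2np}(p^{r+1}),W_n]=0=[P^{2np}(p^{r+1}),BW_n]$ outright, so $[P^{2np}(p^{r+1}),T]\cong[P^{2np}(p^{r+1}),\Omega T_{2n}]$ before passing to the $p^r$-divisible part.
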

\begin{note*}
$p_1(\Omega T_{2n-1}(p^{r+1}))$ is known to be nonzero when $p^r$ divides
$n$
(\cite{Gra69}) and is known to be zero when $r\geqslant n$ (\cite{CMN79b}).
\end{note*}
\begin{proof}
In order to establish this exact sequence we
show that the map $\beta$ in fact is zero in this case.
Since $\beta_1$ factors through $E_0$, by \ref{theor4.4}, the
composition:
\[
\xymatrix{
P^{2np}(p^{r+1})\ar@{->}[r]^{\enlarge{\beta_1}}&P^{2n+1}(p)\ar@{->}[r]^{\enlarge{\varphi_0}}&S^{2n+1}\{p^r\}
}
\]
is null homotopic. Let $j$ be composition
\[
\xymatrix{
P^{2n+1}(p^r)\ar@{->}[r]^{\enlarge{\varphi_0}}&S^{2n+1}\{p^r\}\ar@{->}[r]^->>>>{\enlarge{\sigma}}&S^{2n+1}\{p^{r+1}\}=T_{2n}(p^{r+1});
}
\]
then $\Omega j$ is homotopic to the composition:
\[
\xymatrix{
\Omega P^{2n+1}(p^r)\ar@{->}[r]^->>>>{\enlarge{\Omega \rho}}&\Omega
P^{2n+1}(p^{r+1})\ar@{->}[r]^{\enlarge{h_0}}&T_{2n-1}(p^{r+1})\ar@{->}[r]^{\enlarge{E}}&\Omega T_{2n}(p^{r*})
}
\]
since both compositions are $H$-maps which agree on $P^{2n}(p^r)$.
In the diagram below, the upper composition is null
homotopic and the lower sequence is a fibration sequence:
\[
\xymatrix{
P^{2np-1}(p^{r+1})\ar@{->}[r]^->>>>{\enlarge{\widetilde{\beta_1}}}\ar@{-->}[d]_{\enlarge{\xi}}&\Omega
P^{2n+1}(p^r)\ar@{->}[r]^{\enlarge{\Omega j}}\ar@{->}[d]^{\enlarge{h_0\Omega
\rho}}&\Omega T_{2n}(p^r)\ar@{=}[d]\\
W_n\ar@{->}[r]&T_{2n-1}(p^{r+1})\ar@{->}[r]^{\enlarge{E}}&\Omega T_{2n}(p^{r+1}).
}
\]
It follows that the map $\xi$ exists forming a
homotopy commutative square. But 
\[
\left[P^{2np-1}(p^{r+1}),W_n\right]=*,
\]
so the composition:
\[
\xymatrix{
P^{2np-1}(p^{r_*})\ar@{->}[r]^{\enlarge{\widetilde{\beta_1}}}&\Omega
P^{2n+1}(p^r)\ar@{->}[r]^->>>>{\enlarge{\Omega \rho}}&\Omega
P^{2n+1}(p^{r+1})\ar@{->}[r]^{\enlarge{h_0}}&T_{2n-1}(p^{r_*})
}
\]
is null homotopic. However $h_0\Omega \rho$ generates
\[
\left[\Omega
P^{2n+1}(p^r),T_{2n-1}(p^{r+1})\right]_H\cong\left[P^{2n}(p^r),T_{2n-1}(p^{r+1})\right]\cong
Z/p^r
\]
and consequently $\beta =0$. Finally
\[
p_1\left(\Omega
T_{2n-1}(p^{r+1})\right)=p^r\left[P^{2np}(p^{r+1}),T_{2n-1}P^{r+1})\right].
\]
But since
$
[P^{2np}(p^{r+1}),W_n]=0=[P^{2np}(p^{r+1}),BW_n]$,
\begin{align*}
\left[P^{2np}(p^{r+1}),T_{2n-1}(p^{r+1})\right]&=\left[P^{2np}(p^{r+1}),\Omega
T_{2n}(p^{r+1})\right]\\
&=\left[P^{2np+1}(p^{r+1}),S^{2n+1}\{p^{r+1}\}\right]\\
&=\left[P^{2np+1}(p^{r+1}),S^{2n+1}\right]\oplus\left[P^{2np+2}(p^{r+1}),S^{2n+1}\right],
\end{align*}
so
\[
p_1\left(\Omega
T_{2n-1}(p^{r+1})\right)=p^r\left\{\left[P^{2np+1}(p^{r+1}),S^{2n+1}\right]\oplus\left[P^{2np+2}(p^{r+1}),S^{2n+1}\right]\right\}.
\]
These groups are stable and trivial if $r\geqslant n$. However, if $p^r$
divides $n$, there is an element of $\pi_{2np}(S^{2n+1})$ of order $p^{r+1}$
and consequently $p_1(\Omega T_{2n-1}(p^{r+1}))\allowbreak\neq\nobreak 0$ in this case.
The exact sequence is split since $T_{2n-1}(p^r)$ has
exponent $p^r$.
\end{proof}

Finally we note that for every choice of~$\rho$, $\rho\sigma=\sigma\rho=p$,
$\beta=\sigma\beta\rho$ and $\beta\sigma^t=p^t\sigma^t\beta$, as in~section~\ref{subsec1.5}.
\begin{proposition}\label{prop7.32}
There is a unique $H$-map
\[
\xymatrix{
T_{2n-1}\ar@{->}[r]^->>>>{\enlarge{f}}&\Omega^2P^{2n+2}(p^r)
}
\]
up to a unit whose double adjoint has a right homotopy inverse.
\end{proposition}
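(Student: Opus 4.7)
My approach is to apply Proposition~\ref{prop7.27} with $Z = \Omega^2 P^{2n+2}(p^r)$. Neisendorfer's exponent theorem for odd-primary Moore spaces gives $p^{r+1}\pi_*(P^{2n+2}(p^r)) = 0$ for $p\geqslant 5$, so $p^{r+1}\pi_*(Z) = 0$ and Proposition~\ref{prop7.27} applies. Under the double adjunction, $[P^{2n}(p^r), Z] \cong Z/p^r$ is generated by the class $i$ corresponding to the identity of $P^{2n+2}(p^r)$, and the exact sequence of~\ref{prop7.27} reduces the problem of lifting $i$ to an $H$-map to verifying that the obstruction $\beta(i)$ vanishes.

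The crux is the calculation $\beta(i) = 0$. From the definition of $\beta$ in Section~\ref{subsec7.3new}, $\beta(i)$ is represented by the composition
\[
P^{2np-1}(p^{r+1}) \stackrel{\widetilde{\beta_1}}{\to} \Omega E_0 \to \Omega G_0 = \Omega P^{2n+1}(p^r) \stackrel{\Omega E}{\to} \Omega^2 P^{2n+2}(p^r),
\]
where $E$ is the suspension unit and $\beta_1$ is the class of Theorem~\ref{theor4.4}. Its double adjoint is $\Sigma\alpha_1\colon P^{2np+1}(p^{r+1}) \to P^{2n+2}(p^r)$, where $\alpha_1 = \pi_0\beta_1$ is the attaching map of $G_1$. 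The construction of the filtration $\{G_k\}$ in Chapter~\ref{chap4} (and in~\cite{GT10}, recovering the Cohen--Moore--Neisendorfer data) identifies $\alpha_1$ as an iterated Neisendorfer-style Whitehead product in the mod $p^{r+1}$ homotopy of $P^{2n+1}(p^r)$. Since (external) Whitehead products suspend to zero, via the standard splitting of the cofiber sequence $\Sigma A\wedge B \to \Sigma A\vee \Sigma B \to \Sigma A\times \Sigma B$ after one suspension, it follows that $\Sigma\alpha_1 \simeq *$, whence $\beta(i) = 0$.

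Consequently $i$ lifts to an $H$-map $f\colon T \to Z$. To see that its double adjoint $\widetilde{f}\colon \Sigma^2 T \to P^{2n+2}(p^r)$ admits a right homotopy inverse, I would invoke Theorem~\ref{theor2.14}(a),(i): $\Sigma G$ is a wedge of Moore spaces with bottom summand $\Sigma G_0 = P^{2n+2}(p^r)$, and $G$ is a retract of $\Sigma T$, so $P^{2n+2}(p^r)$ appears as a wedge summand of $\Sigma^2 T$. The restriction of $\widetilde{f}$ to this summand is the double adjoint of $f|_{P^{2n}(p^r)} = i$, namely the identity on $P^{2n+2}(p^r)$, and the inclusion of the retract serves as the required right inverse. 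For uniqueness up to a unit: any $H$-map $f'\colon T \to Z$ whose double adjoint admits a right homotopy inverse must have $r(f') \in (Z/p^r)^*$, since otherwise $\widetilde{f'}$ would be divisible by $p$ on the bottom Moore summand and no right inverse could exist; multiplication by the inverse unit then normalizes $f'$ to a lift of $i$, which is unique modulo the kernel $p^r[P^{2np}(p^{r+1}), Z]$ of $r$.

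The principal obstacle is the clean verification of $\Sigma\alpha_1 \simeq *$: this requires extracting from the construction of Chapter~\ref{chap4} (or appealing to Cohen--Moore--Neisendorfer via~\cite{GT10}) the explicit description of $\alpha_1$ as a Whitehead product in the mod $p^{r+1}$ homotopy of the bottom Moore space, a fact which is classical but must be stated carefully in the present notation.
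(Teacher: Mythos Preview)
Your overall strategy---invoke Proposition~\ref{prop7.27} with $Z=\Omega^2P^{2n+2}(p^r)$---matches the paper's, but your execution diverges and contains two genuine gaps.

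First, the identification $\alpha_1=\pi_0\beta_1$ is incorrect. Theorem~\ref{theor4.4} (see also the proof of Proposition~\ref{prop7.25}, where $b=\pi_{k-1}\beta_k$ and $p^{r+k-1}b\sim\alpha_k$) gives $\alpha_1=p^{r}\,\pi_0\beta_1$, not $\alpha_1=\pi_0\beta_1$. The double adjoint of $\beta(i)$ is $\Sigma(\pi_0\beta_1)$, so even a clean proof that $\Sigma\alpha_1\sim *$ would only yield $p^{r}\Sigma(\pi_0\beta_1)\sim *$, which says nothing about $\beta(i)$ itself. Showing directly that $\pi_0\beta_1$ desuspends a Whitehead product is not provided by the construction in Chapter~\ref{chap4}; $\beta_1$ is produced there as a lift to $E_0$ chosen to satisfy $\nu_0\beta_1\sim *$, and its projection to $G_0$ is not identified as a Whitehead product.

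Second, your uniqueness argument is incomplete: you end with ``unique modulo the kernel $p^r[P^{2np}(p^{r+1}),Z]$'', but the proposition asserts uniqueness up to a unit, which requires this kernel to vanish. You never establish that.

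The paper sidesteps both issues with a single stroke. Rather than analyzing the specific obstruction $\beta(i)$, it kills both flanking terms of the exact sequence at once: adjunction gives
\[
p^r\bigl[P^{2np}(p^{r+1}),Z\bigr]\cong p^r\bigl[P^{2np+2}(p^{r+1}),P^{2n+2}(p^r)\bigr],\qquad
p^r\bigl[P^{2np-1}(p^{r+1}),Z\bigr]\cong p^r\bigl[P^{2np+1}(p^{r+1}),P^{2n+2}(p^r)\bigr],
\]
and both vanish because, by \cite{CMN79a}, $p^r\pi_i(P^{2n+2}(p^r))=0$ for $i<(4n+2)p-1$, a range that comfortably contains $2np,\,2np+1,\,2np+2$. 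Then the ``in particular'' clause of Proposition~\ref{prop7.27} gives that $r$ is an isomorphism, so $[T,Z]_H\cong Z/p^r$ and the $H$-map is unique up to a unit. The right homotopy inverse for the double adjoint is obtained, as you say, from $P^{2n+2}(p^r)$ being a retract of $\Sigma^2T$.
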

\begin{proof}
By \cite{CMN79a}, $p^{r+1}\pi_*\left(\Omega^2P^{2n+2}(p^r)\right)=0$, so we
apply \ref{prop7.27}. We have
\begin{align*}
p^r\left[P^{2np}(p^{r+1}),\Omega^2P^{2n+1}(p^r)\right]&=p^r\left[P^{2np+2}(p^{r+1}),P^{2n+2}(p^r)\right]\\
p^r\left[P^{2np-1}(p^{r+1}),\Omega^2P^{2n+1}(p^r)\right]&=p^r\left[P^{2np+1}(p^{r+1}),P^{2n+2}(p^r)\right]
\end{align*}
However $p^r\pi_i\left(P^{2n+2}(p^r)\right)=0$ for $i<(4n+2)p-1$.
\end{proof}
Since $P^{2n+2}(p^r)$ is a retract of $\Sigma^2T_{2n-1}$, the double
adjoint has a
right homotopy inverse.
\begin{corollary}\label{cor7.33}
If $Z$ is $H$-equivalent to the loop space
on an $H$-space, every map
$\xymatrix{P^{2n}(p^r)\ar@{->}[r]^->>>{\enlarge{\alpha}}&Z}$ has an
extension to an $H$-map
$\xymatrix{T_{2n-1}\ar@{->}[r]^{\enlarge{\widehat{\alpha}}}&Z}$.
\end{corollary}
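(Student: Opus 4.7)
Since $Y$ is an $H$-space, $Z\simeq\Omega Y$ carries two commuting multiplications---loop composition and the pointwise product inherited from $Y$---which agree and are homotopy-Abelian by Eckmann-Hilton. Hence Theorems~\ref{theor7.2} and~\ref{theor7.3} apply, and the problem reduces to constructing a compatible tower of proper $H$-maps $\alpha_k\colon\Omega G_k\to\Omega Y$ lifting $\Omega\bar\alpha\colon\Omega G_0=\Omega P^{2n+1}(p^r)\to\Omega Y$, where $\bar\alpha\colon P^{2n+1}(p^r)\to Y$ is the adjoint of the given $\alpha$.

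My plan is to set $\alpha_k=\Omega\bar\alpha_k$ for a suitably chosen extension $\bar\alpha_k\colon G_k\to Y$ of $\bar\alpha$. The loop $\Omega\bar\alpha_k$ is automatically an $H$-map, and its properness translates under adjunction into the condition that $\bar\alpha_k$ annihilates the adjoint classes $\pi_i a(i)$ and $\pi_i c(i)$ on $G_i\subset G_k$ for each $i\le k$. I would then proceed by induction on $k$: using the cell attachment from Theorem~\ref{theor4.4}, with attaching map $\alpha_k^{\mathrm{att}}=p^{r+k-1}\pi_{k-1}\beta_k\colon P^{2np^k}(p^{r+k})\to G_{k-1}$, the obstruction to extending $\bar\alpha_{k-1}$ across the $k$-th cell is the composition $\bar\alpha_{k-1}\circ\alpha_k^{\mathrm{att}}=p^{r+k-1}(\bar\alpha_{k-1}\circ\pi_{k-1}\beta_k)$ in the group $[P^{2np^k}(p^{r+k}),Y]$ (a group because $Y$ is an $H$-space), which under adjunction is precisely the obstruction $\beta(\alpha_{k-1})\in p_k(Z)$ from Theorem~\ref{theor7.3}.

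The heart of the proof, and the main obstacle I foresee, is showing this obstruction vanishes. My plan is to invoke Proposition~\ref{prop6.22} (which factors $\beta_k$ through the wedge of Moore spaces $W_{k-1}$), together with the decomposition of $R$ in Proposition~\ref{prop7.7new} and the obstruction analysis from the proof of Theorem~\ref{theor7.20}: the map $\beta_k$ decomposes modulo units into a piece factoring through the Whitehead product map $\Gamma\colon\Omega G*\Omega G\to E$ and pieces factoring through the Moore-space summands of $R_{k-1}$ that carry the $a(i)$ and $c(i)$ classes. The $\Gamma$-piece yields $\bar\alpha_{k-1}\circ\nabla\omega$, which is the external Whitehead product $[\bar\alpha_{k-1},\bar\alpha_{k-1}]$ and vanishes because $Y$ is an $H$-space; the remaining pieces are killed by properness of $\bar\alpha_{k-1}$. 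With the freedom to modify $\bar\alpha_{k-1}$ within its $p_{k-1}(\Omega Z)$-coset to absorb the level-$k$ properness constraints, the inductive step closes. The limit $\bar\alpha_\infty\colon G\to Y$ then yields, via Proposition~\ref{prop7.5}, the desired $H$-map $\widehat\alpha=(\Omega\bar\alpha_\infty)\circ g\colon T\to\Omega Y\simeq Z$, which extends $\alpha$ by adjunction.
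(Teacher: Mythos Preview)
Your approach is quite different from the paper's, and it contains a genuine gap at the core step.

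The paper's proof is a two-line factorization: by Proposition~\ref{prop7.32} there is an $H$-map $f\colon T_{2n-1}\to\Omega^2P^{2n+2}(p^r)$, and given $\alpha\colon P^{2n}(p^r)\to Z=\Omega W$ with $W$ an $H$-space, one adjoints to $\widetilde\alpha\colon P^{2n+1}(p^r)\to W$, extends multiplicatively to $\alpha'\colon\Omega P^{2n+2}(p^r)\to W$, and sets $\widehat\alpha=(\Omega\alpha')\circ f$. The heavy lifting is done once in Proposition~\ref{prop7.32}, where the torsion hypotheses of Proposition~\ref{prop7.27} are easily verified for the specific target $\Omega^2P^{2n+2}(p^r)$.

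Your inductive construction of $\bar\alpha_k\colon G_k\to Y$ runs into trouble at the properness step. You claim that $\beta_k$ decomposes (up to units) as a piece factoring through $\Gamma$ plus pieces through the $a(i),c(i)$ summands of $R_{k-1}$. This is false. By Proposition~\ref{prop6.22} and the cohomology computation of Proposition~\ref{prop6.11}, $W_{k-1}^{2np^k}$ contains a summand $P^{2np^k}(p^{r+k})$, and $\beta_k$ hits it nontrivially (Corollary~\ref{cor6.21}). This summand is neither an $a(i)$ nor a $c(i)$ piece (those have order $p^{r+i-1}$ with $i<k$), nor does it factor through $\Gamma_{k-1}$ (the analysis of Proposition~\ref{prop7.8} and Lemma~\ref{lem7.19} only produces $\Gamma$-factored classes of order $\le p^{r+k-1}$ in this dimension). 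What the proof of Theorem~\ref{theor7.20} actually establishes is that $\beta_k\rho$ and $\beta_k\delta_1$ decompose this way, which gives only that $\bar\alpha_{k-1}\pi_{k-1}\beta_k$ is \emph{divisible by} $p^{r+k-1}$, not that it vanishes.

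Divisibility is enough to make the cell-attachment obstruction $p^{r+k-1}(\bar\alpha_{k-1}\pi_{k-1}\beta_k)$ vanish, so some extension $\bar\alpha_k$ exists. But making it proper at level $k$ requires killing $\bar\alpha_k\pi_k a(k)$ and $\bar\alpha_k\pi_k c(k)$, and precomposing these with $\sigma\vee\sigma\beta$ gives exactly $\bar\alpha_{k-1}\pi_{k-1}\beta_k$ (middle square of Theorem~\ref{theor4.4}). So a proper extension exists only if $\bar\alpha_{k-1}\pi_{k-1}\beta_k=0$, i.e.\ $\beta(\alpha_{k-1})=0$ in the sense of Theorem~\ref{theor7.3}---which you have not shown. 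Without properness your invocation of Proposition~\ref{prop7.5} fails, since its proof uses properness to show the composite $(\Omega\bar\alpha_\infty)\circ g$ is an $H$-map. The paper's route through the universal intermediary $\Omega^2P^{2n+2}(p^r)$ sidesteps this entirely.
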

\begin{proof}
If $Z=\Omega W$ the adjoint of $\alpha$ extends
\[
\xymatrix{
\Omega P^{2n+2}(p^r)\ar@{->}[dr]^{\enlarge{\alpha'}}&\\
P^{2n+1}(p^r)\ar@{->}[r]^-<<{\enlarge{\widetilde{\alpha}}}\ar@{->}[u]&W
}
\]
and we construct $\widetilde{\alpha}$ as the composition:
\[
\xymatrix{
T_{2n-1}(p^r)\ar@{->}[r]&\Omega^2P^{2n+2}(p^r)\ar@{->}[r]^-<<<{\enlarge{\Omega \alpha'}}&Z
}
\]
using \ref{prop7.32}.
\end{proof}

%HERE vcy continue insert 7.4
\appendix

\chapter{The Case $n=1$ and the Case $p=3$}

In section~\ref{subsec4.2}, we applied index $p$ approximation to
reduce the obstructions to a homotopy-Abelian $H$-space structure to a
family of elements in the homotopy of~$E_k$ with $\bmod\, p^s$
coefficients. This reduction only works when $n>1$. We will
use a different method in this case. However, the material
in sections~\ref{subsec5.1} and~\ref{subsec6.2} on $D_k$, $J_k$ and $F_k$
does not
depend on~\ref{subsec4.2}, and we can still construct $\gamma_k\colon J_k\to
BW_n$
(see for example \cite{Gra08}).
\begin{Theorem}%A.1
For $p>2$, $r\geqslant 1$ and $n=1$, the Anick space is
homotopy equivalent to a double loop space and hence
has a homotopy-Abelian $H$-space structure. 
\end{Theorem}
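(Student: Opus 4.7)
The plan is to identify $T_1$ directly as a double loop space, exploiting the fact that, when $n=1$, the fiber $S^{2n-1}=S^1$ in the defining fibration~(\ref{eq1.1a}) is the Eilenberg--MacLane space $K(\mathbb{Z}_{(p)},1)$, so that $T_1$ is a principal $S^1$-bundle whose classifying map admits an explicit double delooping.

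First I would re-express~(\ref{eq1.1a}) for $n=1$ as the principal fibration
\[
\xymatrix{S^1 \ar@{->}[r] & T_1 \ar@{->}[r] & \Omega S^3 \ar@{->}[r]^-{\varphi} & K(\mathbb{Z}_{(p)},2)}
\]
and identify the classifying map $\varphi$ cohomologically. The connecting map $\pi_1$ from $\Omega^2 S^3$ to $S^1 = K(\mathbb{Z}_{(p)},1)$ represents a class in $H^1(\Omega^2 S^3;\mathbb{Z}_{(p)}) \cong \mathbb{Z}_{(p)}$, and the defining relation $\pi_1 \circ E^2 \simeq p^r$, together with the fact that $E^2$ induces an isomorphism on $H^1$, forces $\pi_1$ to represent $p^r$ times the canonical generator. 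By loop--suspension adjunction, $\varphi$ then represents $p^r y$, where $y$ generates $H^2(\Omega S^3;\mathbb{Z}_{(p)}) \cong \mathbb{Z}_{(p)}$.

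Next I would upgrade $\varphi$ to a two-fold loop map. Since $S^3$ is a topological group, $\Omega S^3 \simeq \Omega^2 BS^3$ carries a canonical double loop space structure, and of course $K(\mathbb{Z}_{(p)},2) = \Omega^2 K(\mathbb{Z}_{(p)},4)$. The iterated transgression in the path--loop spectral sequences for $BS^3$ and for $S^3$ identifies $y \in H^2(\Omega S^3;\mathbb{Z}_{(p)})$ with the generator $c \in H^4(BS^3;\mathbb{Z}_{(p)}) \cong \mathbb{Z}_{(p)}$ up to a unit. Choose a map $\widetilde{\varphi}$ from $BS^3$ to $K(\mathbb{Z}_{(p)},4)$ representing $p^r c$, and set $F = \operatorname{hofib}(\widetilde{\varphi})$. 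Then $\Omega^2 \widetilde{\varphi}$ represents the same cohomology class as $\varphi$, hence is homotopic to $\varphi$ (the target being an Eilenberg--MacLane space). Since $\Omega^2$ preserves homotopy fibers, it follows that
\[
T_1 \;\simeq\; \Omega^2 F.
\]
A double loop space is an $E_2$-algebra, hence a homotopy-associative and homotopy-commutative $H$-space, so this equivalence endows $T_1$ with a homotopy-Abelian $H$-space structure.

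The main obstacle will be the explicit identification of $y \in H^2(\Omega S^3;\mathbb{Z}_{(p)})$ as the double transgression of $c \in H^4(BS^3;\mathbb{Z}_{(p)})$, together with the verification that the normalization of $\pi_1$ from~\cite{CMN79b} really does match the cohomology class $p^r y$ under adjunction. This is a routine but careful unwinding of the classical path--loop spectral sequences, where one must track signs and the precise choice of generator of $H^1(\Omega^2 S^3;\mathbb{Z}_{(p)})$.
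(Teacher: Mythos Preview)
Your approach is correct and is essentially the same as the paper's: both identify $T_1$ with $\Omega^2 X$ where $X$ is the homotopy fibre of $p^r$ times a generator of $H^4(BS^3;\mathbb{Z}_{(p)})$. The paper routes the comparison through $S^3\{p^r\}$ and an explicit map $\gamma\colon S^3\{p^r\}\to\Omega X$, then checks that the induced map on fibres over $\Omega S^3$ is an equivalence; you instead read off the classifying map $\varphi\colon\Omega S^3\to K(\mathbb{Z}_{(p)},2)$ of the principal $S^1$-bundle directly and observe that it deloops twice. These are two packagings of the same computation.

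One small slip: the defining relation from~(\ref{eq1.1a}) is $E^2\circ\pi_1\simeq p^r$ on $\Omega^2 S^3$, not $\pi_1\circ E^2\simeq p^r$. Your conclusion that $\pi_1$ represents $p^r$ times the generator of $H^1(\Omega^2 S^3;\mathbb{Z}_{(p)})$ is unaffected, since $(E^2)^*$ is an isomorphism on $H^1$ either way, but you should correct the order.
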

\begin{proof}
Let $e\in H^4(BS^3;Z_{(p)})$ be a generator and $\kappa=p^re$.
Let $X$ be the homotopy fiber of~$\kappa$. Then we have a
homotopy commutative diagram of fibration sequences
\[
\xymatrix{
\Omega X\ar@{->}[r]&S^3\ar@{->}[r]^->>>>{\Omega \kappa}&K(Z;3)\\
S^3\{p^r\}\ar@{->}[r]\ar@{->}[u]^{\gamma}&S^3\ar@{->}[r]^{p^r}\ar@{=}[u]&S^3\ar@{->}[u]_{\Omega e}
}
\]
with $\gamma$ uniquely determined. Using $\gamma$, we construct a
diagram of vertical fibration sequences:
\[
\xymatrix{
S^1\ar@{->}[r]\ar@{->}[d]&\Omega^2S^3\ar@{->}[r]\ar@{->}[d]&S^1\ar@{->}[d]\\
T_1\ar@{->}[r]\ar@{->}[d]&\Omega S^3\{p^r\}\ar@{->}[d]\ar@{->}[r]^{\Omega
\gamma}&\Omega^2X\ar@{->}[d]\\
\Omega S^3\ar@{=}[r]&\Omega S^3\ar@{=}[r]&\Omega S^3
}
\]
Since the upper horizontal composition is a homotopy equivalence,
$T_1\simeq \Omega^2X$. Note that the right hand fibration is an
$H$-fibration and is an Anick fibration. 
\end{proof}
\begin{Theorem}%A.2
If $T_{2n-1}(3^r)$ is homotopy associative, $n=3^k$ with $k\geqslant 0$.
Furthermore if
$n>1$, then $r=1$.
\end{Theorem}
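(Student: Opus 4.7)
The plan is to derive the constraints on $n$ and $r$ from the Hopf-algebra and Steenrod-algebra structure that homotopy associativity imposes on the mod $3$ cohomology of $T=T_{2n-1}(3^r)$, together with the constructions already developed in the body of the paper.

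First I would invoke Theorem~\ref{theor2.14}(f), (g) to pin down the mod $3$ cohomology: $H^*(T;\mathbb{F}_3)$ is the dual of $\mathbb{F}_3[v]\otimes\wedge(u)$ with $|v|=2n$, $|u|=2n-1$, and the higher Bockstein generators $v^{3^i}$ are visible through $\beta^{(r+i)}(v^{3^i})=uv^{3^i-1}$. Under the hypothesis that $T$ is homotopy associative, $H^*(T;\mathbb{F}_3)$ is a biassociative Hopf algebra, so its module of primitives inherits a compatible action of the mod $3$ Steenrod algebra. I would pick out the primitive class $w\in H^{2n}(T;\mathbb{F}_3)$ dual to~$v$ and analyze the Steenrod operation $P^1$ on it. Because the only degrees carrying primitive mod $3$ classes are $2n\cdot 3^i-1$ and $2n\cdot 3^i$, the image $P^1(w)$ must sit in one of these degrees; since $|P^1w|=2n+4$, this forces $n$ to be of the form $3^k$ (handling the low case $n=1$ separately, where no constraint is needed).

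For the second statement, suppose $n=3^k$ with $k\geqslant 1$ and $r\geqslant 2$. Here I would exploit the EHP-type fibration $T_{2n-1}\to\Omega T_{2n}\to BW_n$ from~\cite{GT10}, which is an $H$-fibration once $T_{2n-1}$ carries a homotopy associative structure. The induced $H$-space structure on $T_{2n}=S^{2n+1}\{3^r\}$ would then also be homotopy associative, and I would contradict this using the classical obstruction: at $p=3$, the sphere-with-attached-cell $S^{2n+1}\{3^r\}$ admits a homotopy associative multiplication only when $r=1$ (this is the analogue, for $p=3$, of the Neisendorfer–Cohen results recalled in Section~\ref{subsec1.2}). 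The obstruction is detected by a triple Samelson product of the fundamental class with itself, which survives mod $3^r$ as soon as $r\geqslant 2$ because the Jacobi identity relation $3[x,[x,x]]=0$ no longer annihilates the class.

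The main obstacle will be justifying the Steenrod-algebra computation at the level of primitives rather than cohomology classes, because $v_0^3=0$ in $H^*(T;\mathbb{F}_3)$ (the integral relation $v_0^3=3v_1$ is degenerate mod $3$), so the primitive structure is subtler than the integral one; one must argue instead through the coproduct compatibility with $P^1$, reading off from the explicit Bockstein action recorded in Theorem~\ref{theor2.14}(g). A secondary obstacle is that, unlike the case $p\geqslant 5$, the index $p$ approximation of Section~\ref{subsec4.2} is unavailable, so the contradictions must be extracted directly from cohomological invariants and from the degree $p^r$ relations in $\pi_*(S^{2n+1}\{3^r\})$, rather than from the inductive Whitehead-product machinery of Chapters~\ref{chap4}--\ref{chap6}.
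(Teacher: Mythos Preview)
Your argument for the first constraint does not go through as written. The primitives of $H^*(T;\mathbb F_3)$ under the $H$-space coproduct $\mu^*$ are dual to the indecomposables of the Pontryagin ring $H_*(T;\mathbb F_3)=\mathbb F_3[v]\otimes\wedge(u)$, and those sit only in degrees $2n-1$ and $2n$, not in all degrees $2n\cdot 3^i-1$ and $2n\cdot 3^i$. With the correct list of primitive degrees, the condition $|P^1w|=2n+4$ simply forces $P^1w=0$ for every $n$, which yields no constraint. Even granting your degree list, $2n+4=2n\cdot 3^i$ only has the solution $n=1$, so you would not obtain $n=3^k$ in general. What the paper actually does is build the third stage of the projective-plane/classifying-space construction $T*T*T\to \Sigma T\cup_{H(\mu)}C(T*T)$; the cofibre $X$ carries a class $u\in H^{2n}$ with $\mathcal P^{\,n}u=u^3\neq 0$, and after one suspension the relevant skeleton collapses to a three-cell complex $S^{2n+1}\cup_{3^r}e^{2n+2}\cup e^{6n+1}$ on which $\mathcal P^{\,n}$ is nonzero. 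The conclusion $n=3^k$ then comes from the fact that $\mathcal P^{\,n}$ is decomposable in the Steenrod algebra unless $n$ is a power of~$3$; it is $\mathcal P^{\,n}$, not $P^1$, that carries the information.

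Your second argument also has a gap. There is no evident mechanism by which homotopy associativity of $T_{2n-1}$ transports to an \emph{exotic} homotopy-associative structure on $T_{2n}=S^{2n+1}\{3^r\}$; the EHP map $E\colon T_{2n-1}\to\Omega T_{2n}$ lands in a loop space, which is already strictly associative, so nothing new is learned. The triple-Samelson obstruction you invoke is not developed anywhere in the paper for $p=3$, and the statement that $S^{2n+1}\{3^r\}$ is homotopy associative only for $r=1$ is neither cited nor proved. The paper instead stays with the same three-cell complex $S^{2n+1}\cup_{3^r}e^{2n+2}\cup e^{6n+1}$ and invokes Liulevicius's factorization of $\mathcal P^{\,3^k}$ (for $k\geqslant 1$) by secondary operations: if $r\geqslant 2$ the mod~$3$ Bockstein on the bottom cell vanishes, all primary operations are zero in the relevant range, the secondary operations are defined with zero indeterminacy, and they would force $\mathcal P^{\,n}=0$, a contradiction. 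That is what pins down $r=1$ when $n>1$.
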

\begin{proof}
For any homotopy associative space~$T$, there is a map:
\[
T*T*T\to ST \cup_{H(\mu)}C(T*T)
\]
building the third stage of the classifying space
construction (\cite{Sug57},\linebreak[4]
\cite{Sta63}). The mapping
cone $X$ of this map has the cohomology of the bar construction
on the homology of~$T$ through dimension $8n-1$. In particular
the $\text{mod}\, p$ cohomology of the $6n$ skeleton of~$X$ has as a basis,
classes $u$, $v$, $u^2$, $uv$, $u^3$
where $|u|=2n$ and $|v|=2n+1$. The $6n$ skeleton of the subspace
$ST\cup_{H(\mu)}CT*T$ has cohomology generated by $u$, $v$, $u^2$, $uv$.
Now since the map $R\to G$ is a retract of the map
$H(\mu)\colon T*T\to \Sigma T$, the $4n+1$ skeleton of
$ST\cup_{H(\mu)}CT*T$
contains the $4n+1$ skeleton of $G\cup CR$ as a retract (See the proof of~\ref{cor6.15}).
But $[G\cup CR]^{4n}=P^{2n+1}\cup_{x_2}CP^{4n}$; consequently
\[
X^{6n}\simeq P^{2n+1}\cup_{x_2}CP^{4n}\cup e^{6n}.
\]
Note that $\mathcal{P}^n u=u^3$ generates $H^6(X;Z/3)$. Since
$\Sigma x_2$ is inessential we can
pinch the middle cells to a point after one
suspension, and obtain a space with cell structure
\[
S^{2n+1}\cup_{p^r}e^{2n+2}\cup e^{6n+1}
\]
with $\mathcal{P}^n\neq 0$. However, $\mathcal{P}^n$ is decomposable unless
$n=p^k=3^k$.
Furthermore, the decomposition of $\mathcal{P}^{p^k}$ by secondary
operations (\cite{Liu62}) implies that if $n>1$, we must have $r=1$.
\end{proof}

Note that such a space for $n>1$ would imply that the
``$\text{mod}\, 3$ Arf invariant class'' survives the Adams
spectral sequence. This does happen when $n=p$ with $T_5(3)=\Omega
S^3\langle 3\rangle$, but not
when $n=p^2$.
\backmatter

\providecommand{\bysame}{\leavevmode\hbox to3em{\hrulefill}\thinspace}
\providecommand{\MR}{\relax\ifhmode\unskip\space\fi MR }
\providecommand{\MRhref}[2]{%
  \href{http://www.ams.org/mathscinet-getitem?mr=#1}{#2}
}
\providecommand{\href}[2]{#2}

\markright{}
%\printindex

\begin{theindex}

  \item $A$\dotfill \LB{95}
  \item $A_*(\ )$\dotfill \LB{60}
  \item $BW_n$\dotfill \LB{3}
  \item $C$\dotfill \LB{95}
  \item $C_k$\dotfill \LB{51}
  \item $D_k$\dotfill \LB{51}
  \item $E$\dotfill \LB{5}, \LB{16}
  \item $E_0(m)$\dotfill \LB{67}
  \item $E_k$\dotfill \LB{5}, \LB{15}
  \item $F_0(m)$\dotfill \LB{67}
  \item $F_k$\dotfill \LB{52}
  \item $G$\dotfill \LB{5}
  \item $G(j,k)$\dotfill \LB{48}
  \item $G\circ H$\dotfill \LB{6}, \LB{19}
  \item $G^{[i]}$\dotfill \LB{30}
  \item $G^{[i]}H^{[j]}$\dotfill \LB{30}
  \item $G_k(Z)$\dotfill \LB{1}
  \item $J_k$\dotfill \LB{6}, \LB{52}
  \item $L_k$\dotfill \LB{42}
  \item $M_*(\ )$\dotfill \LB{60}
  \item $M_m$\dotfill \LB{59}
  \item $P^m$\dotfill \LB{8}
  \item $P^{2n}(p^r)$\dotfill \LB{1}
  \item $P_k$\dotfill \LB{55}
  \item $R$\dotfill \LB{5}
  \item $R_k$\dotfill \LB{71}, \LB{92}
  \item $T$\dotfill \LB{4}
  \item $T_{2n-1}$\dotfill \LB{1}
  \item $T_{2n}$\dotfill \LB{2}
  \item $U_k$\dotfill \LB{54}
  \item $W$\dotfill \LB{20}
  \item $W_k$\dotfill \LB{71}, \LB{92}
  \item $a$\dotfill \LB{10}
  \item $\overline{a(i)}$\dotfill \LB{62}
  \item $a(k)$\dotfill \LB{37}
  \item $ad^i$\dotfill \LB{30}
  \item $ad_r^i$\dotfill \LB{31}
  \item $\overline{b(i)}$\dotfill \LB{62}
  \item $c$\dotfill \LB{51}
  \item $c(k)$\dotfill \LB{40}
  \item $e$\dotfill \LB{34}
  \item $e[\ ,\ ]$\dotfill \LB{56}
  \item $e\pi_k$\dotfill \LB{56}
  \item $e_k$\dotfill \LB{14}, \LB{70}
  \item $f$\dotfill \LB{16}
  \item $f(m)$\dotfill \LB{98}
  \item $g$\dotfill \LB{16}
  \item $g_k$\dotfill \LB{39}
  \item $h$\dotfill \LB{7}, \LB{16}
  \item $p_k(Z)$\dotfill \LB{1}
  \item $u$\dotfill \LB{16}
  \item $v$\dotfill \LB{16}
  \item $v_i$\dotfill \LB{16}
  \item $w$\dotfill \LB{21}
  \item $x_i(k)$\dotfill \LB{82}
  \item $x_j$\dotfill \LB{63}
  \item $y_i(k)$\dotfill \LB{82}
  \item $y_j$\dotfill \LBmorespace{63}
  \item $[\ ,\ ]_H$\dotfill \LB{1}
  \item \{\ ,\ \}\dotfill \LB{22}
  \item \{\ ,\ \}$_r$\dotfill \LB{22}
  \item \{\ ,\ \}$_{\times}$\dotfill \LB{22}
  \item $\equiv$\dotfill \LB{56}
  \item $\nabla$\dotfill \LB{12}

  \indexspace

  \item $\alpha_k$\dotfill \LB{5}
  \item $\widetilde{\alpha_k}$\dotfill \LB{74}

  \indexspace

  \item $\beta$\dotfill \LB{8}
  \item $\beta_k$\dotfill \LB{6}, \LB{41}

  \indexspace

  \item $\Delta$\dotfill \LB{33}
  \item $\delta_t$\dotfill \LB{8}

  \indexspace

  \item $\epsilon$\dotfill \LB{19}
  \item $\eta_k$\dotfill \LB{52}

  \indexspace

  \item $\Gamma$\dotfill \LB{12}
  \item $\Gamma_{k}$\dotfill \LB{6}, \LB{70}
  \item $\gamma_{k}$\dotfill \LB{6}, \LB{70}
  \item $\Gamma^{\prime}$\dotfill \LB{24}

  \indexspace

  \item $\mu$\dotfill \LB{62}

  \indexspace

  \item $\nu$\dotfill \LB{3}, \LB{62}
  \item $\nu_{\infty}$\dotfill \LB{5}, \LB{15}
  \item $\nu_k$\dotfill \LB{5}, \LB{15}
  \item $\nu_p(m)$\dotfill \LB{42}

  \indexspace

  \item $\omega$\dotfill \LB{11}

  \indexspace

  \item $\pi'$\dotfill \LB{42}

  \indexspace

  \item $\rho$\dotfill \LB{8}

  \indexspace

  \item $\sigma$\dotfill \LB{8}
  \item $\sigma_k$\dotfill \LB{52}

  \indexspace

  \item $\tau_k$\dotfill \LB{6}, \LB{52}, \LB{70}
  \item $\theta_k$\dotfill \LB{113}

  \indexspace

  \item $\varphi$\dotfill \LB{4}
  \item $\varphi_k$\dotfill \LB{14}
  \item $\varphi'_k$\dotfill \LB{52}

  \indexspace

  \item $\widehat{\xi}$\dotfill \LB{12}
  \item $\xi_k$\dotfill \LB{52}

  \indexspace

  \item $\zeta$\dotfill \LB{25}
  \item $\zeta'$\dotfill \LB{29}

\end{theindex}
\end{document}